\DeclareMathAlphabet\mathbfcal{OMS}{cmsy}{b}{n}
\definecolor{darkblue}{rgb}{0,0,0.4}
\tikzstyle{every picture}=[> = latex']
\tikzset{taa/.style={->, double}}
\tikzset{moda/.style={->, dashed}}
\tikzset{alga/.style={->, thick}}
\newcommand{\st}{^\text{st}}
\newread\testin
\def\input@path{{}{draws/}}
\def\mathcenter#1{%
  \vcenter{\hbox{$#1$}}%
}
\DeclareRobustCommand{\widebar}[1]{\overline{#1}{}}
\newcommand\mi@kern[1]{%
  \settowidth\@tempdima{$\mi@obj^{#1}$}
  \kern-\@tempdima
  #1
  \settowidth\@tempdima{$\mi@obj$}
  \kern\@tempdima
}
\newtoks\mi@toksp
\newtoks\mi@toksb
\DeclareRobustCommand{\manyindices}[5]{
  \def\mi@obj{#5}
  \mi@toksp\expandafter{\mi@kern{#2}}
  \mi@toksb\expandafter{\mi@kern{#1}}
  \@mathmeasure4\textstyle{#5_{#1}^{#2}}
  \@mathmeasure6\textstyle{#5_{#3}^{#4}}
  \dimen0-\wd6 \advance\dimen0\wd4
  \@mathmeasure8\textstyle{\hphantom{{}_{#1}^{#2}}#5^{\the\mi@toksp#4}_{\the\mi@toksb#3}}
  \hbox to \dimen0{}{\kern-\dimen0\box8}
}
  \let\textalt\texorpdfstring
  \newcommand{\textalt}[2]{#1}
\newcommand{\RR}{\mathbb R}
\newcommand{\CC}{\mathbb C}
\newcommand{\ZZ}{\mathbb Z}
\newcommand{\FF}{\mathbb F}
\newcommand{\bD}{\mathbb{D}}
\newcommand{\wt}{\widetilde}
\newcommand\HHH{\mathbb{H}}
\newcommand\Hab{{\mathcal H}_{\alphas,\betas}}
\newcommand\HaHb{{\mathcal H}_{\alphas^H,\betas}}
\newcommand\Hapb{{\HD}_{\alphas',\betas}}
\newcommand{\co}{\nobreak\mskip2mu\mathpunct{}\nonscript
  \mkern-\thinmuskip{:}\penalty300\mskip6muplus1mu\relax}
\newcommand{\OneHalf}{{\textstyle\frac{1}{2}}}
\newcommand{\into}{\hookrightarrow}
\newcommand{\bdy}{\partial}
\newcommand{\lra}{\longrightarrow}
\newcommand{\lbracket}{[}
\newcommand{\rbracket}{]}
\newcommand{\spinc}{\mathfrak s}
\DeclareMathOperator{\divis}{div}
\DeclareMathOperator{\Sym}{Sym}
\DeclareMathOperator{\Hom}{Hom}
\DeclareMathOperator{\spin}{spin}
\newcommand{\SpinC}{\spin^c}
\newcommand{\Spinc}{\SpinC}
\DeclareMathOperator{\ind}{ind}
\DeclareMathOperator{\ev}{ev}
\DeclareMathOperator{\gr}{gr}
\DeclareMathOperator{\br}{br} 
\newcommand{\emb}{{\mathrm{emb}}} 
\DeclareMathOperator{\glueop}{glue}
\DeclareMathOperator{\indsq}{ind^\square}
\theoremstyle{plain}
\numberwithin{equation}{section}
\newtheorem{theorem}[equation]{Theorem}
\newtheorem{proposition}[equation]{Proposition}
\newtheorem{lemma}[equation]{Lemma}
\newtheorem{corollary}[equation]{Corollary}
\newtheorem{definition}[equation]{Definition}
\theoremstyle{definition}
\theoremstyle{remark}
\newtheorem{example}[equation]{Example}
\newtheorem{remark}[equation]{Remark}
\newcommand{\HF}{\mathit{HF}}
\newcommand{\HFa}{\widehat {\HF}}
\newcommand{\HFm}{{\HF}^-}
\newcommand{\HFmm}{{\mathbf{HF}}^-}
\newcommand{\CF}{{\mathit{CF}}}
\newcommand{\CFa}{\widehat {\mathit{CF}}}
\newcommand{\CFm}{\mathit{CF}^-}
\newcommand{\CFmm}{\mathbf{CF}^-}
\newcommand{\x}{\mathbf x}
\newcommand{\y}{\mathbf y}
\newcommand{\w}{\mathbf w}
\newcommand\HH{\mathit{HH}}
\newcommand\Hochschild\HH
\newcommand{\Ainf}{A_\infty}
\newcommand{\Alg}{\mathcal{A}}
\newcommand\Blg{\mathcal{B}}
\newcommand{\alphas}{{\boldsymbol{\alpha}}}
\newcommand{\betas}{{\boldsymbol{\beta}}}
\newcommand{\cM}{\mathcal{M}}
\newcommand{\ModPol}{\mathfrak{M}}
\newcommand{\BModPol}{\mathfrak{BM}}
\newcommand{\HModPol}{\mathfrak{HM}}
\newcommand{\cN}{\mathcal{N}}
\newcommand{\ocN}{\widebar{\mathcal{N}}}
\newcommand{\tcN}{\widetilde{\mathcal{N}}}
\newcommand{\tcM}{\widetilde{\mathcal{M}}}
\newcommand{\DD}{\textit{DD}}
\newcommand{\CFD}{\mathit{CFD}}
\newcommand{\CFDD}{\mathit{CFDD}}
\newcommand{\CFA}{\mathit{CFA}}
\newcommand{\CFDa}{\widehat{\CFD}}
\newcommand{\CFDm}{\CFD^-}
\newcommand{\CFAc}{\mathbf{CFA}^-} 
\newcommand{\MAlgc}{\mathbfcal{A}_-} 
\newcommand{\CFK}{\mathit{CFK}}
\newcommand{\CFKm}{\CFK^-}
\newcommand{\CFDDa}{\widehat{\CFDD}}
\newcommand{\CFDDm}{\CFDD^-}
\newcommand{\CFAa}{\widehat{\CFA}}
\newcommand{\Source}{{S^{\mspace{1mu}\triangleright}}}
\newcommand{\SourceSub}[1]{{S_{#1}^{\mspace{1mu}\triangleright}}}
\newcommand{\biSource}{T^{\mspace{1mu}\rotatebox{90}{$\scriptstyle\lozenge$}}}
\newcommand{\bdSource}{S^{\mspace{1mu}{\Box}}}
\newcommand{\cZ}{\mathcal{Z}}
\newcommand{\PtdMatchCirc}{\cZ}
\newcommand{\PMC}{\PtdMatchCirc}
\newcommand{\CircPts}{{\mathbf{a}}}
\newcommand{\dg}{\textit{dg} }
\newcommand{\fModule}{\mathfrak{M}}
\newcommand{\fNodule}{\mathfrak{N}}
\newcommand\Id{\mathbb{I}}
\newcommand\Ground{\mathds{k}}
\newcommand\Groundl{\mathrlap{\hspace{1.25pt}\mathrm{l}}{\mathrm{l}}} 
\newcommand\DT{\boxtimes}
\newcommand\Gen{\mathfrak{S}}
\newcommand\ModFlow{\tcM}
\newcommand\Tensor{\mathcal T}
\newcommand{\Field}{{\FF_2}}
\newcommand{\Ring}{R}
\DeclareMathOperator{\nbd}{nbd}
\newcommand{\dbar}{\bar{\partial}}
\newcommand{\Heegaard}{\mathcal{H}}
\newcommand{\HD}{\Heegaard}
\renewcommand{\th}{^\text{th}}
\renewcommand{\st}{^\text{st}}
\newcommand{\bigGroup}{G'}
\newcommand{\interGroup}{G}
\newcommand{\smallTGroup}{G(\mathbb{T})}
\newcommand{\grb}{\gr'}
\DeclareMathOperator{\Mor}{Mor}
\newcommand{\op}{\mathrm{op}}
\newcommand{\ol}[1]{\overline{#1}{}}
\newcommand\honestalg[3]{\bigl\lbracket
\begin{smallmatrix} #1\@ifempty{#3}{}{&#3} \\ #2 \end{smallmatrix}
\bigr\rbracket}
\newcommand{\lab}[1]{$\scriptstyle #1$}
\newcommand{\lsub}[2]{{}_{#1}#2}
\newcommand{\AsUnDefAlg}{\Alg_-^{0,\mathrm{as}}} 
\newcommand{\UnDefAlg}{\Alg_-^{0}}
\newcommand{\MAlg}{\Alg_-} 
\newcommand{\MAlgg}{\Alg_-^g}
\newcommand{\CFAm}{\CFA^-}
\newcommand{\CFAmb}{\CFA^-_{nu}}
\newcommand{\CFAmg}{\CFA^-_{g}}
\newcommand\unit{\mathbf 1}
\newcommand{\corolla}[1]{\Psi_{#1}}
\newcommand{\wcorolla}[2]{\corolla{#1}^{#2}}
\newcommand{\DegenTree}{\mathord{\downarrow}}
\newcommand{\IdTree}{\DegenTree} 
\newcommand{\kotimes}[1]{\otimes}
\newcommand{\wADiag}{\boldsymbol{\Gamma}}
\newcommand{\wADiagCell}{\boldsymbol{\gamma}}
\newcommand{\wMDiag}{\mathbf{M}}
\newcommand{\TrMPrim}{\mathbf{p}}
\newcommand{\TrMDiag}{\mathbf{m}}
\newcommand{\Trees}{\mathcal{T}}
\newcommand{\wDiagCell}[2]{\wADiagCell^{#1,#2}}
\newcommand{\wModDiagCell}[2]{\mathbf{m}^{#1,#2}}
\newcommand{\wTrPMDiag}[2]{\mathbf{p}^{#1,#2}}
\newcommand{\wTrPMDiagNS}{\mathbf{p}}
\newcommand{\stump}{\top}
\DeclareMathOperator{\RootJoin}{RoJ} 
\DeclareMathOperator{\LeftJoin}{LeJ} 
\DeclareMathOperator{\LRjoin}{LR} 
\newcommand\LRjoinW{\LRjoin} 
\DeclareMathOperator{\wRootJoin}{\RootJoin} 
\newcommand{\Filt}{\mathcal{F}}
\newcommand{\wAlg}{\mathscr A}
\newcommand{\wBlg}{\mathscr B}
\newcommand{\One}{\boldsymbol{1}}
\newcommand{\CDisk}{\mathbb{D}}
\newcommand{\wTreesCx}[3][*]{X_{#1}^{#2,#3}}
\newcommand{\wMTreesCx}[3][*]{X\!M_{#1}^{#2,#3}}
\newcommand{\xwTreesCx}[3][*]{\widetilde{X}_{#1}^{#2,#3}}
\newcommand{\xwMTreesCx}[2]{\widetilde{X\!M}_*^{#1,#2}}
\newcommand\goesto\mapsto
\newcommand\Sphere{\mathbb S}
\newcommand\DegSource{S}
\newcommand{\suppo}[1]{\boldsymbol{[}#1\boldsymbol{]}}
\newcommand{\Associaplex}[2]{{\mathbf X}^{#1,#2}}
\newcommand{\Yvar}{\mathcal{Y}}
\newcommand{\uId}{\lsub{U}\!\CFDDm(\Id)}
\definecolor{darkgreen}{rgb}{0,0.4,0} 
\definecolor{darkbrown}{rgb}{.48,0.33,.24}  
\begin{document}
\title[Bordered \texorpdfstring{$\HFm$}{HF minus} with torus boundary]{Bordered \texorpdfstring{$\HFm$}{HF minus} for three-manifolds with torus boundary}
\author[Lipshitz]{Robert Lipshitz}
\thanks{RL was partly supported by NSF grant DMS-2204214, partly
    supported by NSF Grant DMS-1928930 while in residence at SLMath,
    and partly supported by a Simons Fellowship.}
\address{Department of Mathematics, University of Oregon\\
  Eugene, OR 97403}
\email{lipshitz@uoregon.edu}

\author[Ozsv\'ath]{Peter Ozsv\'ath}
\thanks{PSO was supported by NSF grant DMS-2104536.}
\address {Department of Mathematics, Princeton University\\ New
  Jersey, 08544}
\email {petero@math.princeton.edu}

\author[Thurston]{Dylan~P.~Thurston}
\thanks {DPT was supported by NSF grant DMS-2110143.}
\address{Department of Mathematics\\
         Indiana University,
         Bloomington, Indiana 47405\\
         USA}
\email{dpthurst@indiana.edu}

\begin{abstract}
  We define an invariant for bordered 3-manifolds with torus
  boundary, taking the form of a module over a
  weighted $A$-infinity algebra associated to a torus defined in
  previous work. On setting $U=0$, we obtain
  the bordered three-manifold invariants with torus boundary
  constructed earlier.
\end{abstract} 

\maketitle

\tableofcontents


\section{Introduction}
  
Heegaard Floer homology $\HFmm$ is a 3-manifold
invariant~\cite{OS04:HolomorphicDisks}, which is a graded module over
the ring of formal power series $\Field[[U]]$.
Its $U=0$ specialization is
a somewhat simpler 3-manifold invariant, denoted
$\HFa$. In~\cite{LOT1}, we extended $\HFa$ to define \emph{bordered Floer homology}, which associates 
\begin{itemize}
\item a differential graded algebra
  $\Alg(F)$  to a closed, connected surface $F$;
\item an $\Ainf$-module $\CFAa(Y_1)$ over $\Alg(F)$
  to a 3-manifold $Y_1$ equipped with an identification
  $\partial Y_1\cong F$; and
\item a \dg module (or type $D$ structure) $\CFDa(Y_2)$ over $\Alg(F)$
  to a 3-manifold $Y_2$ equipped with an identification
  $\partial Y_2\cong -F$.
\end{itemize}
A \emph{pairing theorem} expresses as a (suitable) tensor product the
invariant $\HFa$ for a $3$-manifold~$Y$ that is divided into two
pieces $Y_1$ and $Y_2$ along a connected surface $F$: if $\CFa(Y)$
denotes the chain complex whose homology is $\HFa(Y)$ then
\[
  \CFa(Y)\simeq \CFAa(Y_1)\DT \CFDa(Y_2).
\] 

Although $\HFa$ is adequate for many topological applications, the
unspecialized version $\HFmm$ carries rich additional
structure, making it, for example, the basic building block for the
invariant of smooth, closed
$4$-manifolds~\cite{OS06:HolDiskFour}.

In this paper, we construct a generalization of bordered Floer
homology for 3-manifolds with torus boundary,
with a view towards expressing
the unspecialized Heegaard Floer homology $\HFmm(Y)$ for a $3$-manifold
that is divided into two along a torus.

The general structure is as follows. Roughly, to $T^2$ we associate an
algebra $\MAlg=\MAlg(T^2)$. More precisely, $\MAlg$ is a curved formal
deformation of an $\Ainf$-algebra or what we call a weighted
algebra. (See Section~\ref{sec:intro:alg} or \cite{LOT:torus-alg}.) To
a 3-manifold~$Y$ with
boundary identified with $T^2$ we associate a (weighted) right
$\Ainf$-module $\CFAm(Y)$ over $\MAlg$, which is well-defined up to
(weighted) homotopy equivalence. There is also a (weighted) type \DD\
bimodule $\CFDDm(\Id)$ over $\MAlg$ which is, at least
philosophically, associated to the identity cobordism of $T^2$. In
future work~\cite{LOT:torus-pairing} we will establish a pairing theorem:
if $\bdy Y_2=T^2=-\bdy Y_1$ then
\[
  \bigl(\CFAm_{U=1}(Y_1)\otimes_{\Field}\CFAc(Y_2)\bigr)\DT_{\MAlg^{U=1}\otimes\MAlgc}\CFDDm(\Id)\simeq \CFmm(Y_1\cup_{\bdy}Y_2),
\]
where $\DT$ is an appropriate tensor product and $\CFAc$ and $\MAlgc$
denote appropriate completions of $\CFAm$ and $\MAlg$ (see
Section~\ref{sec:boundedness}). With a little further
algebra (see Section~\ref{sec:intro-primitives} or~\cite{LOT:abstract}), we
can re-associate this tensor product as
\[
  \CFAc(Y_2)\DT_{\MAlgc}(\CFAm_{U=1}(Y_1)\DT_{\MAlg^{U=1}}\CFDDm(\Id))
\]
and so, if we define
\[
  \CFDm(Y_1)=\CFAm_{U=1}(Y_1)\DT_{\MAlg^{U=1}}\CFDDm(\Id)
\]
then we recover the familiar-looking pairing theorem
\[
  \CFmm(Y_1\cup_{\bdy}Y_2)\simeq \CFAc(Y_2)\DT_{\MAlgc}\CFDm(Y_1),
\]
which is a strict generalization of the pairing theorem for $\HFa$
for $3$-manifolds glued along a torus boundary~\cite{LOT1}.

This sketch hides some algebraic details about weighted algebras and
modules, as well as the definitions of the invariants and proofs of
the main results. We spell out some of these missing details presently
and give references for some of the others.

\begin{remark}
  In~\cite{OS04:HolomorphicDisks}, $\HFm$ denoted a module defined over a
  polynomial ring $\Field[U]$. The invariant $\HFmm$ we consider here is the
  completion of this module over the ring of power series
  $\Field[[U]]$. With additional bookkeeping, one could work out the
  uncompleted theory; but we have not done so, in part because
  the current
  three-dimensional and four-dimensional applications of Heegaard Floer homology
  can be formulated  in terms of the completed theory.
\end{remark}

\subsection{The torus algebra}\label{sec:intro:alg}
The algebra $\MAlg$ is studied in detail in our previous
paper~\cite{LOT:torus-alg}. We review aspects of the construction here.
\subsubsection{Weighted algebras}
A \emph{weighted $\Ainf$-algebra}, or simply \emph{weighted algebra}, over a
commutative ring $\Ground$ of characteristic $2$ is a curved
$\Ainf$-algebra $\Alg$ over $\Ground[[t]]$
together with an isomorphism
of $\Ground[[t]]$-bimodules $\Alg\cong A\otimes_{\Ground}\Ground[[t]]$
so that the curvature is contained in
$tA$~\cite[Definition~\ref{Abs:def:wAinfty}]{LOT:abstract} (see
also~\cite[Remark~\ref{Abs:remark:weighted-is-deformation}]{LOT:abstract} and~\cite[Definition~\ref{TA:def:weighted-alg}]{LOT:torus-alg}). Given a weighted
$\Ainf$-algebra $\Alg$ we can extract operations
\[
\mu_n^w\co A^{\otimes n}\to A
\]
by 
\[
  \mu_n=\sum_{w=0}^\infty \mu_n^wt^w.
\]
(We are suppressing the grading shifts. A more precise version
is given in Section~\ref{sec:gradings-abstract};  see especially
Formula~\eqref{eq:graded-mu}.)
The condition that the curvature lies in $tA$ is equivalent to the
condition that $\mu_0^0=0$. We will typically refer to
$\Alg=(A,\{\mu_n^w\})$, not $(\Alg,\{\mu_n\})$, as the weighted
$\Ainf$-algebra. The condition that $(\Alg,\{\mu_n\})$ is a
curved $\Ainf$-algebra is equivalent to the 
condition that the $\mu_n^w$ satisfy the \emph{weighted $\Ainf$-relations}
\begin{equation}\label{eq:wAlg-rel}
  \sum_{\substack{1\leq i\leq j\leq n+1\\ w_1+w_2=w}}\mu^{w_1}_{n-j+i+1}(a_1,\dots,a_{i-1},\mu_{j-i}^{w_2}(a_i,\dots,a_{j-1}),a_{j},\dots,a_n)=0,
\end{equation}
for each $w\geq 0$ and $n\geq 0$.

A weighted $\Ainf$-algebra has an \emph{undeformed $\Ainf$-algebra}
$(A,\{\mu_n^0\})$, and an
\emph{underlying $\Ground$-bimodule} $A$.

\subsubsection{Building the algebra in three steps}\label{sec:build-alg-combinatorially}
We define the weighted algebra $\MAlg$ associated to the torus in three steps:
\begin{enumerate}
\item There is an associative algebra $\AsUnDefAlg$, which is the path algebra with relations:
  \[
    \mathcenter{
    \begin{tikzpicture}
      \node at (0,0) (iota0) {$\iota_0$};
      \node at (3,0) (iota1) {$\iota_1$};
      \draw[->, bend left=45] (iota0) to node[above]{\lab{\rho_1}} (iota1);
      \draw[->, bend right=20] (iota1) to node[above]{\lab{\rho_2}} (iota0);
      \draw[->, bend right=20] (iota0) to node[above]{\lab{\rho_3}} (iota1);
      \draw[->, bend left=45] (iota1) to node[above]{\lab{\rho_4}} (iota0);
    \end{tikzpicture}}/(\rho_2\rho_1=\rho_3\rho_2=\rho_4\rho_3=\rho_1\rho_4=0).
  \]
\item We define an $\Ainf$-algebra $\UnDefAlg$ which is a
  deformation of $\AsUnDefAlg$. That is, as an $\Field[U]$-module,
  $\UnDefAlg=\Field[U]\otimes_{\Field}\AsUnDefAlg$; $\UnDefAlg$ has
  trivial differential; and $\mu_2$ on $\UnDefAlg$ is induced by
  multiplication on $\AsUnDefAlg$. 
\item Finally, we define a weighted algebra $\MAlg$ whose undeformed
  $\Ainf$-algebra is $\UnDefAlg$.
\end{enumerate}

It is convenient to formulate $\UnDefAlg$ and $\MAlg$ using its gradings.
Gradings are specified in terms of a group
law on $(\OneHalf\ZZ)\times\ZZ^4$, with multiplication specified by
\begin{multline*}
(m;a,b,c,d)\cdot (m';a',b',c',d')
\\=
\left(m+m'
+\OneHalf\left|
  \begin{smallmatrix}
    a & b\\
    a' & b'
  \end{smallmatrix}
  \right|
+\OneHalf\left|
  \begin{smallmatrix}
    b & c\\
    b' & c'
  \end{smallmatrix}
  \right|
+\OneHalf\left|
  \begin{smallmatrix}
    c & d\\
    c' & d'
  \end{smallmatrix}
  \right|
+\OneHalf\left|
  \begin{smallmatrix}
    d & a\\
    d' & a'
  \end{smallmatrix}
  \right|,a+a',b+b',c+c',d+d'\right).
\end{multline*}
Define
\begin{align*}
  \grb(\rho_1)&=(-\OneHalf;1,0,0,0) & 
  \grb(\rho_2)&=(-\OneHalf;0,1,0,0) \\
  \grb(\rho_3)&=(-\OneHalf;0,0,1,0) & 
  \grb(\rho_4)&=(-\OneHalf;0,0,0,1)
\end{align*}
and let $\bigGroup$ be the subgroup of $(\OneHalf\ZZ)\times\ZZ^4$
generated by $\grb(\rho_1)$, $\grb(\rho_2)$, $\grb(\rho_3)$,
$\grb(\rho_4)$, and $\lambda=(1;0,0,0,0)$. These formulas determine a
grading on $\AsUnDefAlg$ by $\bigGroup$.

The $\Ainf$-algebra $\UnDefAlg$ is uniquely characterized (up to
isomorphism) by the following properties \cite[Theorem~\ref{TA:thm:UnDefAlg-unique}]{LOT:torus-alg}:
\begin{itemize}
\item $\UnDefAlg$ is an $\Ainf$-deformation of $\AsUnDefAlg$,
\item $\UnDefAlg$ is graded by $(\bigGroup,\lambda)$, and
\item
  $\mu_4(\rho_4,\rho_3,\rho_2,\rho_1)=U\iota_1$.
\end{itemize}

The weighted algebra $\MAlg$ is uniquely characterized (up to
isomorphism) by the following properties \cite[Theorem~\ref{TA:thm:MAlg-unique}]{LOT:torus-alg}:
\begin{itemize}
\item The undeformed $\Ainf$-algebra of $\MAlg$ is $\UnDefAlg$.
\item $\MAlg$ is $(\bigGroup,\lambda)$-graded,
  with respect to the weight
  grading $\lambda_w=(1;1,1,1,1)$, and
\item $\mu^1_0=\rho_1\rho_2\rho_3\rho_4+\rho_2\rho_3\rho_4\rho_1+\rho_3\rho_4\rho_1\rho_2+\rho_4\rho_1\rho_2\rho_3$.
\end{itemize}

The ground ring for $\MAlg$ is $\Ground=\Field[U]\langle \iota_0,\iota_1\rangle$.

\begin{example}
  The only term which can cancel 
  $
    \mu_2^0(\mu_4^0(\rho_4,\rho_3,\rho_2,\rho_1),\rho_4)=U\rho_4
  $
  in the $\Ainf$-relation is
  $
    \mu_2^0(\rho_4,\mu_4^0(\rho_3,\rho_2,\rho_1,\rho_4))=U\rho_4.
  $
  Thus, 
  $
    \mu_4^0(\rho_3,\rho_2,\rho_1,\rho_4)=U\iota_0.
  $
  By a similar argument, $\mu_4^0(\rho_2,\rho_1,\rho_4,\rho_3)=U\iota_1$ and $\mu_4^0(\rho_1,\rho_4,\rho_3,\rho_2)=U\iota_0$.

  Similarly, the $5$-input, weight $0$ $\Ainf$-relation with inputs
  $(\rho_3,\rho_4,\rho_3,\rho_2,\rho_1)$ implies that
  $
  \mu_4^0(\rho_3\rho_4,\rho_3,\rho_2,\rho_1)=U\rho_3.
  $
  Thus,
  \[
    \mu_4^0(\mu_4^0(\rho_3\rho_4,\rho_3,\rho_2,\rho_1),\rho_2,\rho_1,\rho_4)=U^2\iota_0,
  \]
  and grading considerations imply that the only term that can cancel
  this one in the $7$-input, weight $0$ $\Ainf$-relation is
  \[
    \mu_6^0(\rho_3\rho_4,\rho_3,\rho_2,\mu_2(\rho_1,\rho_2),\rho_1,\rho_4)=U^2\iota_0.
  \]
\end{example}
Following our previous
paper~\cite[Definition~\ref{TA:def:TilingPattern}]{LOT:torus-alg}, we
will sometimes refer to operations of the form
$\mu_n^w(a_1,\dots,a_n)=U^k\iota_\ell$ as \emph{centered operations}.

\subsubsection{Relationship of the algebra to holomorphic curves}
\label{subsec:RelateHolC}
Let $\PMC$ denote the genus $1$ pointed matched circle.  Given a Reeb
chord $\rho$ in $\PMC$ (of any positive length) there is an associated
algebra element $\rho\in\MAlg$. (In~\cite{LOT1}
this algebra element was denoted $a(\rho)$, but in this paper
denoting it by $\rho$ will not cause confusion.)

\begin{figure}
  \centering
  \includegraphics{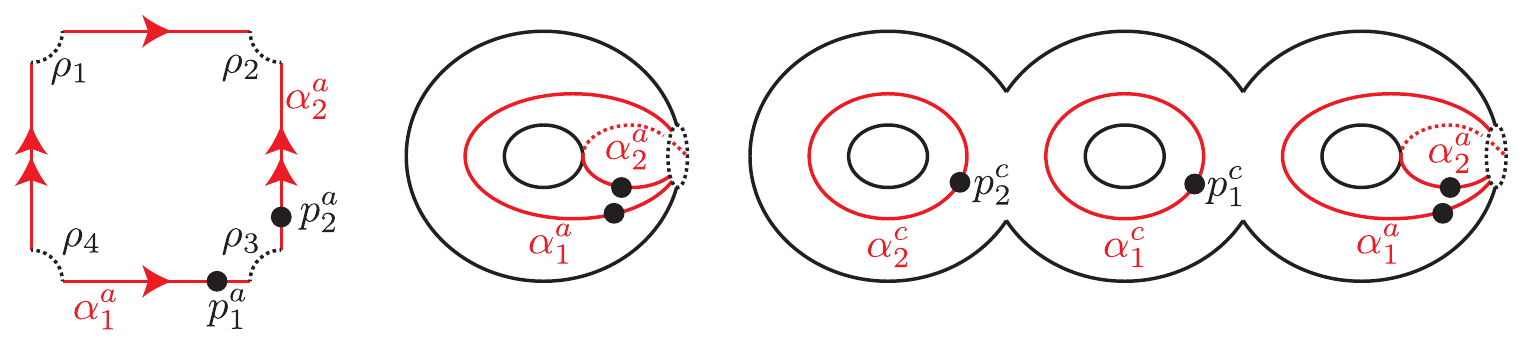}
  \caption[The $\alpha$-curves on a punctured surface]{\textbf{Some curves on punctured Riemann surfaces.} Left and center: the case $g=1$. Right: the case $g=3$.}
  \label{fig:curves-on-surfs}
\end{figure}

Fix $g\geq 1$ and let $\Sigma=\Sigma_g$ be a Riemann surface of genus $g$ with a single
puncture, $\alpha_1^a$, $\alpha_2^a$ arcs in $\Sigma$ and
$\alpha_1^c,\dots,\alpha_{g-1}^c$ circles in $\Sigma$ as shown in
Figure~\ref{fig:curves-on-surfs}. Choose also a marked point $p^a_i$
on $\alpha_i^a$ and $p^c_i$ on $\alpha_i^c$. Note that $\PMC$ is
identified with $(\bdy \Sigma,\bdy(\alpha_1^a\cup\alpha_2^a))$. Let
$\alphas=\alpha_1^a\cup\alpha_2^a\cup\alpha_1^c\cup\dots\cup\alpha_{g-1}^c$.

Fix a 
non-negative integer $w$, a sequence of chords $\rho^1,\dots,\rho^n$ in $\PMC$ so that
$\rho^1$ starts on $\alpha_i^a$, and a split complex structure
$J=j_\Sigma\times j_\bD$ on $\Sigma\times \bD^2$. (We use superscripts in the list of
Reeb chords because $\rho_i$ refers to a specific Reeb chord.) Let
$\cN([\alpha_i^a];\rho^1,\dots,\rho^n;w)$
denote the moduli space of $(j_\Sigma\times j_{\bD})$-holomorphic maps
\[
  u\co (S,\bdy S)\to (\Sigma\times\bD^2, \alphas\times S^1),
\]
where $S$ is a surface with boundary, interior punctures, and boundary
punctures, satisfying the following conditions:
\begin{enumerate}[label=(\arabic*)]
\item The surface $S$ has exactly $n$ boundary punctures
  $q_1,\dots,q_n$. Further, there are distinct points
  $\theta_1,\dots,\theta_n\in(S^1\setminus \{1\})$, ordered
  counterclockwise around $\bdy \bD^2$ (starting at $1\in\bdy \bD^2$), so that at $q_i$, the map
  $u$ is asymptotic to $\rho^i\times\{\theta_i\}$. Moreover, all the
  $q_i$ lie on a single component of $\bdy S$.
\item The surface $S$ has exactly $w$ interior punctures
  $r_1,\dots,r_w$. Further, there are points $z_1,\dots,z_w$ in the
  interior of $\bD^2$ so that at $r_i$, $u$ is asymptotic to
  $S^1\times z_i$ (i.e., a simple orbit).
\item If $\overline{S}$ denotes the result of filling in the punctures
  of~$S$, then the projection $\pi_{\bD}\circ u$ extends to a $g$-fold
  branched cover $\overline{S}\to \bD^2$.
\item\label{item:pa1}
  The map $u$ passes through the points
  $(p^c_1,1),\dots,(p^c_{g-1},1)$ and the point $(p^a_i,1)$.
\end{enumerate}
This space $\cN$ is the moduli space of simple boundary degenerations;
see Definitions~\ref{def:BoundaryDegeneration-simple}
and~\ref{def:CurveCount}. Its expected dimension $\indsq(\vec{\rho},w)$
is computed in Proposition~\ref{prop:emb-ind-bdy-degen};
$\indsq(\vec{\rho},w)=2$ corresponds to the rigid moduli spaces.

We will show that, for appropriate choices of $j_\Sigma$, the moduli
spaces $\cN$ with $\indsq(\vec{\rho},w)\leq 2$ are transversely cut out
and the number of points in them are, in fact, independent of
$j_\Sigma$; this is part of Theorem~\ref{thm:AlgOfSurface} below.

We will often identify $\bD^2\setminus\{1\}$ with a hyperbolic half-plane
$\HHH$ presented as $(-\infty,1]\times \RR\subset \CC$. If
$\overline{S}'=\overline{S}\setminus(\pi_{\bD}\circ u)^{-1}(1)$ then
each component of $\bdy\overline{S}'$ is an arc $A$; and elementary complex
analysis shows that  $\pi_\HHH\circ u|_A\co A\to \{1\}\times\RR$ is
a monotone function. (This is an analogue of \emph{boundary
  monotonicity} from~\cite{LOT1}.)

Fix a point $z\in \Sigma\setminus \alphas$. For any
$u\in \cN([\alpha_i^a];\rho^1,\dots,\rho^n;w)$, let $n_z(u)$ denote the local
multiplicity of $\pi_\bD\circ u$ at $z$. Since
$\Sigma\setminus\alphas$ is connected, $n_z(u)$ is independent of
$z$. In fact, $n_z(u)$ is also independent of $u$: it depends only on
$\rho^1,\dots,\rho^n$ and~$w$. (Specifically, $n_z(u)$ is $w$ plus the
number of times $\rho_4$, or any other $\rho_j$, appears in the $\rho^i$.)

We can use these moduli spaces to define a weighted $\Ainf$-algebra:
\begin{definition}\label{def:holo-def-alg}
  Let $\AsUnDefAlg$ be as in
  Section~\ref{sec:build-alg-combinatorially}. We define operations
  making $\Field[U]\otimes_{\Field}\AsUnDefAlg$ into a weighted $\Ainf$-algebra $\MAlg$ over
  $\Field[U]$ as follows:
  \begin{enumerate}
  \item The operation $\mu_1^0$ vanishes, and the operation $\mu_2^0$
    is inherited from $\UnDefAlg$.
  \item The operation $\mu_0^1$ is $\rho_{1234}+\rho_{2341}+\rho_{3412}+\rho_{4123}$.
  \item The centered operations (operations which output $U^k\iota_i$) are
    given by
    \[
      \mu_n^w(\rho^1,\dots,\rho^n)=\#\cN([\alpha_i^a];\rho^1,\dots,\rho^n;w)U^{n_z(u)}\iota_i
    \]
    where $\iota_i$ is $\iota_0$ if $\rho^1$ starts on $\alpha_1$ and
    $\iota_1$ if $\rho^1$ starts on $\alpha_2$.
  \item For each centered operation
    $\mu_n^w(\rho^1,\dots,\rho^n)=U^m\iota_i$ and any $\rho^0$
    so that $\rho^0\rho^1\neq 0$, we have
    \[
      \mu_n^w(\rho^0\rho^1,\dots,\rho^n)=
      \rho^0\mu_n^w(\rho^1,\dots,\rho^n).
    \]
    Similarly, for any $\rho^{n+1}$ so that
    $\rho^n\rho^{n+1}\neq 0$, we have
    \[
      \mu_n^w(\rho^1,\dots,\rho^n\rho^{n+1})=
      \mu_n^w(\rho^1,\dots,\rho^n)\rho^{n+1}.
    \]
  \end{enumerate}
  The above are the only nonzero operations of the form
  $\mu^w_n(\rho^1,\dots,\rho^n)$.
\end{definition}
We can think of the non-centered operations as corresponding to
holomorphic curves with two components, one of which is at $e\infty$;
see Figure~\ref{fig:non-centered-op}.

\begin{figure}
  \centering
  \includegraphics{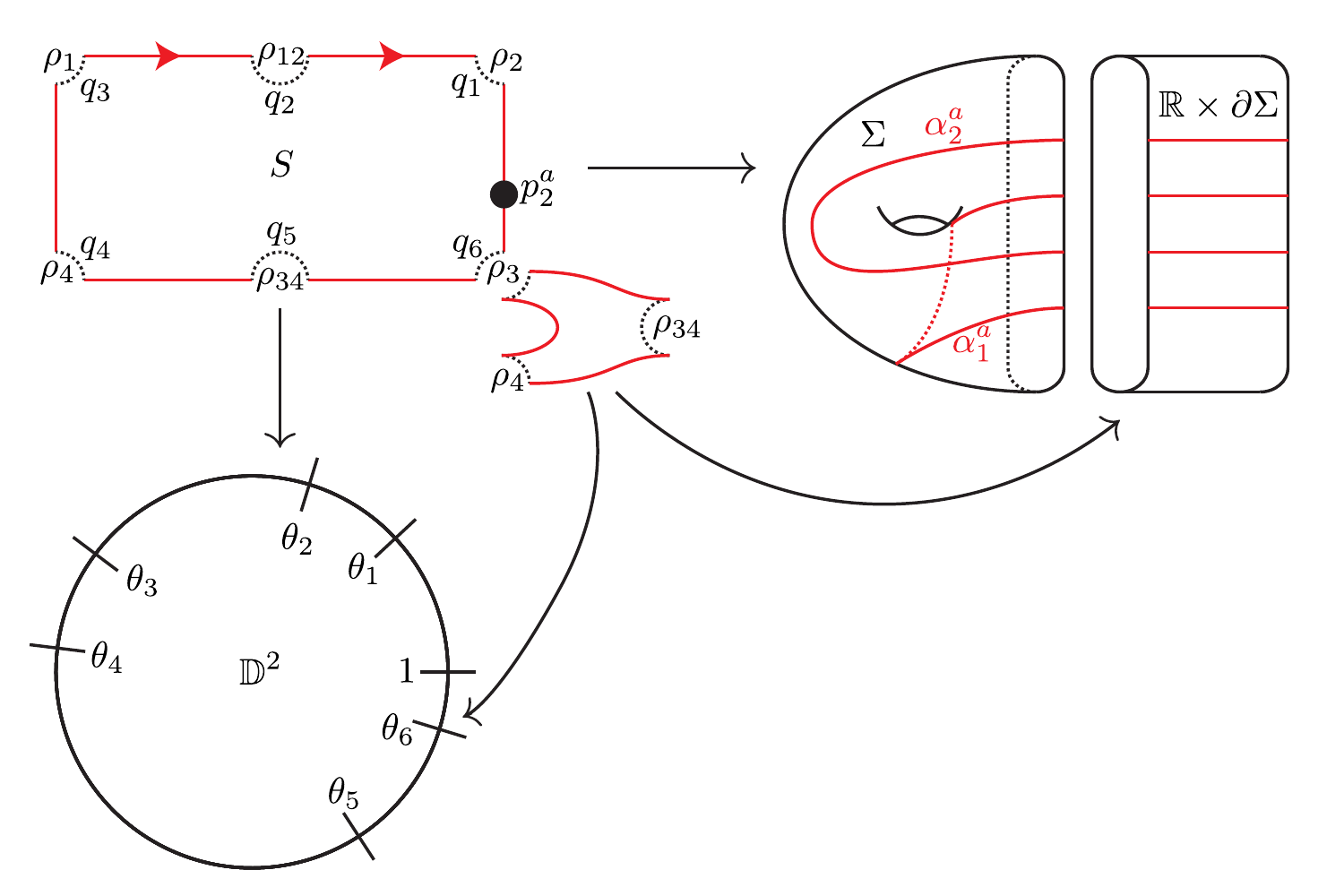}
  \caption[Schematic of holomorphic curves inducing a non-centered operation on $\MAlg$]{\textbf{A schematic for a non-centered operation.} The surface
    $S$ (top-left) maps to $\Sigma$ (top-right) times $\mathbb{D}^2$
    (bottom-left). The component with punctures labeled $\rho_3$,
    $\rho_4$, and $\rho_{34}$ maps to $\RR\times\bdy\Sigma$ (east
    infinity) and the point~$\theta_6$. This schematic corresponds to
    $\mu_6(\rho_2,\rho_{12},\rho_1,\rho_4,\rho_{34},\rho_{34}) = U^2 \rho_4$.}
  \label{fig:non-centered-op}
\end{figure}

If the Heegaard genus $g=1$, it is not hard to describe the spaces
$\cN([\alpha_i^a];\rho^1,\dots,\rho^n;w)$ 
combinatorially; see Section~\ref{sec:GenusOne}.
Using this description, one can show that the operations $\mu_n^w$ define a weighted algebra
in this case, and that algebra agrees with $\MAlg$ from
Section~\ref{sec:build-alg-combinatorially}. (See
Proposition~\ref{prop:IdentifyAlgebraGenusOne}.) Index computations and a
degeneration argument then show that the counts of the spaces
$\cN([\alpha_i^a];\rho^1,\dots,\rho^n;w)$ are, in fact, independent of $g$
(and, as mentioned above, the complex structure). Together, this implies:
\begin{theorem}\label{thm:alg-well-defd}
  Definition~\ref{def:holo-def-alg} defines a weighted $\Ainf$-algebra
  that agrees with the weighted $\Ainf$-algebra defined in
  Section~\ref{sec:build-alg-combinatorially}.
\end{theorem}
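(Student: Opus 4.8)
The plan is to reduce the statement to the case of Heegaard genus $g=1$ and then appeal to the combinatorial analysis available there. For $g=1$, Proposition~\ref{prop:IdentifyAlgebraGenusOne} gives an explicit combinatorial model for the moduli spaces $\cN([\alpha_i^a];\rho^1,\dots,\rho^n;w)$ with $\indsq(\vec\rho,w)\le 2$, verifies that the operations $\mu_n^w$ of Definition~\ref{def:holo-def-alg} satisfy the weighted $\Ainf$-relations~\eqref{eq:wAlg-rel}, and identifies the weighted algebra they define with the algebra $\MAlg$ built in Section~\ref{sec:build-alg-combinatorially}. Because $\mu_0^1$ is prescribed explicitly in part~(2) of Definition~\ref{def:holo-def-alg}, with no reference to the genus, and the non-centered operations are determined from the centered ones together with multiplication in $\AsUnDefAlg$ by part~(4), it will suffice to show that for every $g\ge 1$ the counts $\#\cN([\alpha_i^a];\rho^1,\dots,\rho^n;w)$ of the rigid ($\indsq=2$) moduli spaces coincide with their $g=1$ counterparts; the rest of the theorem then follows directly from the genus-one case.

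I would first record that these counts make sense. By Theorem~\ref{thm:AlgOfSurface}, for generic $j_\Sigma$ the moduli spaces $\cN$ with $\indsq(\vec\rho,w)\le 2$ are transversely cut out --- hence finite point sets when $\indsq=2$ and empty when $\indsq<2$ --- and their mod-$2$ counts are independent of the choice of generic $j_\Sigma$: a generic path of almost complex structures produces a compact $1$-manifold whose two ends compute the two counts, and the index bounds, together with the combinatorics of degenerations at east infinity, preclude any other codimension-one boundary. Thus $\#\cN([\alpha_i^a];\rho^1,\dots,\rho^n;w)\in\Field$ is a well-defined function of $\vec\rho$, $w$, and $g$.

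The remaining, and main, step is independence of $g$; this is where the genuine work lies, and I would carry it out by neck stretching. Write $\Sigma_g$ as the union, along a circle $c$ disjoint from $\alphas$ and from the reference point $z$, of a once-punctured genus-one surface $\Sigma_1'$ containing $\alpha_1^a,\alpha_2^a$ and a genus-$(g-1)$ surface $\Sigma_{g-1}'$ containing $\alpha_1^c,\dots,\alpha_{g-1}^c$, and stretch the neck along $c\times S^1$. Gromov compactness then produces a holomorphic building whose pieces live in $\Sigma_1'\times\bD^2$, in $\Sigma_{g-1}'\times\bD^2$, and in the cylindrical region. Condition~\ref{item:pa1} forces the pieces on the $\Sigma_{g-1}'$ side to pass through all of $(p^c_1,1),\dots,(p^c_{g-1},1)$; an index and branched-cover degree computation --- using $\indsq=2$ together with the fact that $\pi_\bD\circ u$ extends to a $g$-fold branched cover --- then forces these pieces and the cylindrical pieces to be disjoint unions of ``trivial'' branched covers that carry no moduli and contribute a factor of $1$, while the piece on the $\Sigma_1'$ side must be a rigid element of the genus-one moduli space $\cN$ with the same Reeb chords $\vec\rho$ and the same weight $w$. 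A standard gluing argument shows, conversely, that every such genus-one curve comes from a unique broken configuration, so $\#\cN_{\Sigma_g}=\#\cN_{\Sigma_1}$ in $\Field$. Together with the genus-one identification, this shows that every operation of Definition~\ref{def:holo-def-alg} equals the corresponding operation of $\MAlg$; in particular the operations satisfy~\eqref{eq:wAlg-rel}, so Definition~\ref{def:holo-def-alg} defines a weighted $\Ainf$-algebra, and it is $\MAlg$.
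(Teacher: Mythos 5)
Your overall strategy---identify the genus-one counts with $\MAlg$ via Proposition~\ref{prop:IdentifyAlgebraGenusOne}, then show the rigid counts $\#\cN([\alpha_i^a];\vec{\rho};w)$ are independent of $g$ by a degeneration argument---is the paper's strategy. But the specific degeneration you choose, and the claim you make about it, contain a genuine gap. You split $\Sigma_g$ along one separating circle $c$, isolating a genus-one piece carrying $\alpha_1^a,\alpha_2^a$ from a genus-$(g-1)$ piece carrying $\alpha_1^c,\dots,\alpha_{g-1}^c$, and assert that the pieces on the $\Sigma_{g-1}'$ side are ``trivial branched covers that carry no moduli and contribute a factor of~$1$.'' This is not justified and is not true in the relevant sense. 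A limit curve in class $k[\Sigma]$ restricts on the genus-$(g-1)$ side to a degree-$(g-1)$ branched cover of $\HHH$ (degree $k$ over $\Sigma_{g-1}'$) with Lagrangian boundary on the $\alpha^c$-circles, constrained to pass through $(p_1^c,1),\dots,(p_{g-1}^c,1)$ and asymptotic at the neck to a multiplicity-$k$ collection of Reeb orbits over $c$ located at points of $\HHH$; the matching with the genus-one piece is a fibered product over a symmetric product of $\HHH$ recording those locations. What one needs is that the evaluation map from the complementary piece to that symmetric product has degree one---and for a positive-genus piece with $g-1$ Lagrangian circles this is essentially as hard as the statement you are proving. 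This is precisely why the paper instead proceeds by induction on genus, attaching a cylinder at \emph{two points} so that the split-off piece is a \emph{sphere} (Definition~\ref{def:attach-cyl}, Proposition~\ref{prop:StabilizationInvariance}); there the complementary curves are explicit rational functions, and Lemma~\ref{lem:rat-maps} shows the evaluation map to $(\Sym^k(\HHH)\times\Sym^k(\HHH))\setminus\Delta$ is a diffeomorphism, so the fibered product collapses to the moduli space on the $\Sigma$ side. Note that even in that setting the extra piece has a $4k$-dimensional moduli space---it does not ``carry no moduli''---so your phrasing misidentifies where the work lies.

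A secondary but real problem: your first paragraph cites Theorem~\ref{thm:AlgOfSurface} to get regularity of generic $j_\Sigma$ and independence of the counts from $j_\Sigma$, but Theorem~\ref{thm:AlgOfSurface} \emph{is} the precise form of the statement you are proving, so this is circular. Independence of the almost complex structure must be established separately, as the paper does via the enlarged class of admissible (not necessarily split) structures on $\Sigma\times\HHH$ (Theorem~\ref{thm:RegularityOfBoundaryDegenerations} and Proposition~\ref{prop:BoundaryDegenerationsIndepOfJ}); the existence of \emph{split} regular $j$ in genus $\ge 2$ is itself an output of the stabilization argument, not an input to it.
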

(This is proved in Section~\ref{sec:algebra}, as Theorem~\ref{thm:AlgOfSurface}.)

\subsection{The type A module}
\subsubsection{Weighted modules}
Given a weighted $\Ainf$-algebra $\Alg$ over $\Ground$, a weighted
(right) $\Ainf$-module over $\Alg$ is a right $\Ainf$-module $\fModule$ over
$\Alg$ together with an isomorphism
of $\Ground[[t]]$-modules
$\fModule\cong M\otimes_{\Ground}\Ground[[t]]$. Like a weighted algebra,
$M$ inherits operations 
\begin{equation}\label{eq:mod-op-form}
  m_{1+n}^w\co M\otimes_\Ground \otimes
  \overbrace{A\otimes_{\Ground}\dots\otimes_\Ground A}^k\to M,
\end{equation}
for $n,w\geq 0$, defined by
\[
m_{1+k}=\sum m_{1+k}^wt^w.
\]
(We will usually suppress the subscript $\Ground$ from tensor products
over the ground ring $\Ground$ relevant at the time.)
A weighted $\Ainf$-module is specified by the data
$\fModule=(M,\{m_{1+k}^w\})$. The curved $\Ainf$-relation turns into a
compatibility condition for the $m_{1+k}^w$ similar to
Formula~\eqref{eq:wAlg-rel} (see,
e.g.,~\cite[Definition~\ref{Abs:def:wmod}]{LOT:abstract}).

Given weighted $\Ainf$-modules $\fModule$ and $\fNodule$ over $\Alg$,
there is a chain complex $\Mor_\Alg(\fModule,\fNodule)$ of morphisms
between them. The complex $\Mor_\Alg(\fModule,\fNodule)$ is the complex of
morphisms of $\Ainf$-modules over the curved $\Ainf$-algebra~$\Alg$.
Explicitly, an element is a sequence of maps
\[
  f_{1+n}^w\co M\otimes
  \overbrace{A\otimes\dots\otimes A}^n\to N
\]
for $n,w\geq 0$, with differential given by the sum of all ways of
pre-composing with an operation $\mu_m^v$ on $\Alg$ or $m_{1+m}^v$ on
$\fModule$ or post-composing by an operation $m_{1+m}^v$ on
$\fNodule$. This makes the category of weighted $\Ainf$-modules over
$\Alg$ into a \dg category, and so it inherits the usual auxiliary
notions of homomorphisms (cycles in the morphism complex), homotopic
morphisms (morphisms whose difference is a boundary), and homotopy
equivalences.

\begin{remark}
  The definition of a weighted $\Ainf$-module over $\Alg$ depends both
  on $\Alg$ and the ring $\Ground$ that we are viewing $\Alg$ over:
  the operations $m_{1+n}^w$ in Formula~\eqref{eq:mod-op-form} are
  $\Ground$-equivariant. We will later construct two modules, $\CFAm$ and
  $\CFAmb$, which differ mainly in which ground ring they are defined
  over.
\end{remark}
\subsubsection{Unitality conditions}
\label{subsec:Unitality}

A weighted $\Ainf$-algebra $\Alg=(A,\{\mu_n^w\})$ is \emph{unital} if
there is an element $\unit\in A$ so that
$\mu^0_2(\unit,a)=\mu^0_2(a,\unit)=a$ for all $a\in A$ and
$\mu^w_n(a_1,\dots,a_n)=0$ if $(w,n)\neq (0,2)$ and some $a_i=\unit$.

Similarly, an $\Ainf$-module $\fModule$ over a  unital weighted
$\Ainf$-algebra $\Alg$ is called {\em unital} if 
$m^0_2(\x,\unit)=\x$ for all $\x\in M$, and
\[
  m^w_n(\x,a_1,\dots,a_{n-1})=0
\]
if $(w,n)\neq (0,2)$ and some $a_i=\unit$.  Given two  unital
weighted $\Ainf$-modules $\fModule$ and $\fNodule$ over a 
unital weighted $\Ainf$-algebra $\Alg$, there is a complex of 
unital morphisms between them, by requiring that each $f_n^w$
vanishes if some input is the unit $\unit$. This gives rise to the
notion of  unital homotopy equivalences.

There are various other weaker notions of unitality one could work
with
(compare~\cite[Section~\ref{Abs:subsec:UnitsHomPert}]{LOT:abstract});
the above version is sufficient for our purposes. The
notion we are using here is referred to as \emph{strictly unital} in
our previous papers and some other parts of the literature.

\subsubsection{Sketch of the definition of the type A module and statement of invariance}\label{sec:intro-CFA-def}
Fix a bordered Heegaard diagram (in the sense of~\cite{LOT1})
$\HD=(\Sigma_g,\alphas^a,\alphas^c,\betas,z)$ with $\bdy \HD$ the
genus $1$ pointed matched circle, representing a bordered 3-manifold
$(Y,\phi\co T^2\to \bdy Y)$. Write
$\alphas=\alphas^a\cup\alphas^c$. To this data we associate a right
weighted $\MAlg$-module $\CFAm(\HD)$ as follows.
As a left
$\Field\langle\iota_1,\iota_2\rangle[U]$-module, $\CFAm(\HD)$ is the same as
$\Field[U]\otimes_\Field\CFAa(\HD)$, the 
free $\Field[U]$-module generated by $g$-tuples $\x=\{x_i\}\subset
(\alphas\cap \betas)$ with exactly one $x_i$ on each $\alpha$-circle
and $\beta$-circle and at most one $x_i$ on each $\alpha$-arc, with
the same action of the idempotents. The
weighted operations are defined by counting pseudo-holomorphic curves.

\begin{definition}
\label{def:Basic}
A \emph{basic algebra element} is an element of the form
\begin{itemize}
\item  $U^m \rho$ where $m\geq 0$ and $\rho$ is some Reeb chord (the
  \emph{Reeby} elements) or
\item $U^m\iota_i$ where $m>1$ and $\iota_i\in\{\iota_0,\iota_1\}$ is
  one of the two basic idempotents.
\end{itemize}
Given a sequence $\vec{a}=(a_1,\dots,a_m)$ of basic algebra elements,
if we set $U=1$ and drop all of the terms which are idempotents, we
obtain a sequence of chords
$\vec{\rho}=(\rho^{n_1},\dots,\rho^{n_m})$; we call this the
\emph{underlying chord sequence} of the sequence of basic algebra
elements. That is, $\vec{\rho}$ has one term for each Reeby
element in~$\vec{a}$.
\end{definition}

Given generators $\x$ and $\y$ for $\CFAm(\HD)$, an
integer $w$, and a homotopy class $B\in\pi_2(\x,\y)$, let
\[
  \cM^B(\x,\y;\vec{a};w)
\]
denote the moduli space of embedded $J$-holomorphic maps
\[
  u\co (S,\bdy S)\to \bigl(\Sigma\times[0,1]\times\RR,(\alphas\times\{1\}\times\RR)\cup(\betas\times\{0\}\times\RR)\bigr)
\]
asymptotic to $\x\times[0,1]\times\{-\infty\}$,
$\y\times[0,1]\times\{+\infty\}$, and 
at $e\infty$ asymptotic to the chord sequence
$\vec{\rho}$ and $w$ simple Reeb orbits.
Curves in the moduli space are required to satisfy
certain other technical conditions, analogous
to~\cite[Conditions (M-1)--(M-11)$\setminus$(M-9)]{LOT1};
(M-9) is omitted because
curves here are allowed to cross the basepoint~$z$. See
Section~\ref{sec:def-mod-sp}. We are using the notation $J$ to
indicate a family of almost complex structures on
$\Sigma\times[0,1]\times\RR$ depending on the sequence $\vec{a}$ and
the locations in $[0,1]\times\RR$ of the Reeb chords and orbits; this
dependence is made precise in Definition~\ref{def:AdmissibleJs}, of a
coherent family of almost complex structures.  (The reason for using
such families relates to transversality for boundary degenerations. In
particular, the non-Reeby elements of $\vec{a}$ only affect the moduli
space through the choice of $J$, not as asymptotics of the holomorphic
curves $u$.)
The integer $w$ is slightly redundant: it is determined by the data
of $\vec{\rho}$ and~$B$.

Let $\ind(B,\vec{a})$ denote the expected dimension of the moduli
space $\cM^B(\x,\y;\vec{a};w)$. Define a module $\CFAmb(\HD)$ which,
as an $\Field$-vector space, is the same as $\CFAm(\HD)$, i.e.,
\[
  \CFAmb(\HD)=\CFAm(\HD)=\Field[U]\otimes_{\Field}\CFAa(\HD).
\]
Fix a suitably generic coherent family of almost complex structures
$J$; we call such families \emph{tailored}
(Definition~\ref{def:tailored}).  Make $\CFAmb(\HD)$ into a weighted
module over $\MAlg$ with
\[
m_{1+k}^w\co \CFAmb(\HD)\otimes(\MAlg)^{\otimes k}\to \CFAmb(\HD)
\]
given by 
\begin{equation}\label{eq:m-on-CFA}
  m_{1+n}^w(\x,a_1,\dots,a_n)=\sum_\y
  \sum_{\substack{B\in\pi_2(\x,\y)\\\ind(B,a_1,\dots,a_n)=1}}
  U^{m+n_z(B)}\cdot\#\cM^B(\x,\y;a_1,\dots,a_n;w)\cdot\y,
\end{equation}
where $m$ is the total number of factors of $U$ in the
$a_1,\dots,a_n$. The conditions on the almost complex structures
guarantee that these counts are finite (Lemma~\ref{lem:0d-compactness}). 

\begin{remark}
  This use of the term \emph{tailored} for almost complex structures
  is unrelated to the use in the literature on embedded contact
  homology~\cite{CGHH:ECH-OB}.
\end{remark}

\begin{theorem}\label{thm:CFAmb-is}
  Assuming that the bordered Heegaard diagram is provincially
  admissible and the family of almost complex structures $J$ is tailored,
  the operations $m_{1+k}^w$ give $\CFAmb$ the structure
  of a  unital weighted $\Ainf$-module over $\MAlg$
  (with ground ring $\Field\langle \iota_1,\iota_2\rangle$).
\end{theorem}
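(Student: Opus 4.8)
The plan is to establish the weighted $\Ainf$-module relations for $\CFAmb$ by the standard strategy in bordered Floer theory: identify the coefficient of each generator $\y$ in the composite $m$-operations appearing in the structure relation with a signed (here, mod-$2$) count of the boundary points of a suitable one-dimensional moduli space $\cM^B(\x,\y;\vec{a};w)$ with $\ind(B,\vec{a}) = 2$. Concretely, for fixed $\x$, $\y$, $B$, $\vec{a}$ and $w$, I would analyze the compactification of the $1$-dimensional moduli space and show that its ends are in bijection with the terms appearing in the weighted $\Ainf$-module relation (the analogue of Formula~\eqref{eq:wAlg-rel} for modules): namely two-story holomorphic buildings in $\Sigma\times[0,1]\times\RR$ (giving compositions $m_{1+k}^w(m_{1+j}^v(\dots),\dots)$), curves with a component breaking off at east infinity (giving $m_{1+k}^w(\x,a_1,\dots,\mu_j^v(a_i,\dots),\dots,a_n)$), and — crucially for the weighted theory — boundary degenerations carrying off Reeb orbits (contributing the $\mu_0^1$ and centered-operation terms that couple $w$ and $v$). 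The count of each type of end must be shown to be finite; this is where the hypothesis that $J$ is tailored and the diagram provincially admissible enters, via Lemma~\ref{lem:0d-compactness} applied one dimension up.

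The key steps, in order: (1) Invoke the index computation (the map $B,\vec{a}\mapsto\ind(B,\vec{a})$, with the identity $\ind$ additive under juxtaposition of homotopy classes and chord sequences up to the appropriate correction terms for Reeb chord collisions) so that the codimension-one degenerations of an index-$2$ moduli space have total index contributions summing correctly. (2) Prove a gluing result: each configuration of the three types listed above can be glued to give a one-parameter family of honest curves in $\cM^B$, and conversely every end of $\cM^B$ limits to such a configuration. This uses the transversality built into the definition of a tailored coherent family $J$, plus the analysis of boundary degenerations that is set up in Section~\ref{sec:algebra} for Theorem~\ref{thm:alg-well-defd}. (3) Match the combinatorics: check that summing over all such degenerations, for all $(\x,\y,B,\vec{a},w)$ with $\ind = 2$, reproduces exactly the weighted $\Ainf$-module relation, with the powers of $U$ bookkept correctly via $n_z$ (here one uses that $n_z(B)$ is additive and that $n_z$ of a boundary degeneration equals $w$ plus the number of $\rho_j$'s, as noted after Definition~\ref{def:holo-def-alg}). (4) Verify unitality: show directly that $m_{1+n}^w$ with some $a_i = \unit$ vanishes except for $m_2^0(\x,\unit) = \x$ — this follows because the unit does not arise as a Reeb chord asymptotic and the moduli spaces with a trivial-chord input either are empty or reduce to the identity curve.

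The main obstacle I expect is Step (2), specifically controlling the boundary degenerations and the role of Reeb orbits at east infinity. In the $\HFa$ setting one never sees closed orbits, but here the weighted operations $\mu_0^1$ and the centered operations $\mu_n^w(\dots) = U^k\iota_i$ come precisely from curves that, in a limit, split off a boundary degeneration in $\Sigma\times\bD^2$ carrying the orbit data. Getting transversality for these — so that they appear as honest codimension-one ends with the right count — is exactly why the almost complex structures are taken to be coherent families depending on the locations of chords and orbits (and on the non-Reeby entries of $\vec{a}$), rather than a single $J$; one must check that this extra flexibility suffices to cut out the boundary-degeneration strata transversally while remaining compatible under gluing and degeneration. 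I would handle this by reducing to the genus-one model (Section~\ref{sec:GenusOne}), where the boundary degenerations are understood combinatorially and their counts are pinned down by Theorem~\ref{thm:alg-well-defd}, and then transporting the transversality statement along the degeneration argument already used to prove $g$-independence of the algebra. A secondary, more routine obstacle is ensuring all the relevant $1$-dimensional moduli spaces have compact closure (no sequences escaping to infinitely many stories or accumulating orbits), which again is the content of the tailored/provincially-admissible hypotheses applied in dimension one rather than zero.
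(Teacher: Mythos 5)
Your proposal is correct and follows essentially the same route as the paper: the paper's proof of this theorem consists of matching the terms of the weighted $\Ainf$-module relation with the five types of codimension-one ends enumerated in Theorem~\ref{thm:master} (two-story buildings, collisions at east infinity, orbit curve degenerations, and simple/composite boundary degenerations), and your steps (1)--(4) are precisely the content of that theorem together with the combinatorial matching and the strict-unitality check. The only slight bookkeeping discrepancy is that the $\mu_0^1$ terms arise from orbit curves escaping to east infinity rather than from boundary degenerations (which instead account for the centered and composite operations), but this does not affect the validity of the argument.
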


The key step in the proof of Theorem~\ref{thm:CFAmb-is} is the
following analysis of the codimension-1 boundary of $\cM^B(\x,\y;\vec{\rho};w)$:
\begin{theorem}\label{thm:master}
  Fix a provincially admissible bordered Heegaard diagram $\HD$ and a
  tailored family of almost complex structures $J$ on
  $\Sigma\times[0,1]\times\RR$.  Given a homology class $B$ and 
  sequence
  $\vec{a}=(a_1,\dots,a_k)$ of basic algebra elements so that
  $\ind(B;a_1,\dots,a_k;w)=2$, the sum of the following is
  $0\pmod{2}$:
  \begin{enumerate}[label=(e-\arabic*), ref=(e-\arabic*)]
  \item 
    \label{end:TwoStoryBuilding}
    The number of two-story holomorphic buildings, i.e., 
    \[
    \sum_{\substack{\x'\\B_1+B_2=B\\w_1+w_2=w\\j=0,\dots,n}}\bigl(\#\cM^{B_1}(\x,\x';a_1,\dots,a_j;w_1)\bigr)\bigl(\#\cM^{B_2}(\x',\y;a_{j+1},\dots,a_n;w_2)\bigr).
    \]
  \item 
   \label{end:Collision}
   The number of collisions of levels, i.e.,
    \[
    \sum_{a_ia_{i+1}\neq 0} \#\cM^B(\x,\y;a_1,\dots,a_ia_{i+1},\dots,a_n;w)
    \]
  \item 
    \label{end:EscapingOrbit}
    The number of orbit curve degenerations, which corresponds to
    \[
      \sum_{i=0}^{n+1}\,
      \sum_{\substack{\sigma\in\{\rho_{1234},\rho_{2341},\\\rho_{3412},\rho_{4123}\}}}
      \#\cM(\x,\y;a_1,\dots,a_{i-1},\sigma,a_{i},\dots,a_n; w-1).
    \]
  \item 
    \label{end:SimpleBoundaryDegeneration}
    If $\x=\y$, the number of simple boundary degeneration ends, which
    corresponds to
    \[
      U^k\#\cN([\alpha_i^a];\vec{\rho};w),
    \]
    where $\alpha_i^a$ is the $\alpha$-arc containing an element of $\x$,
    $\vec{\rho}$ is the underlying chord sequence of $\vec{a}$, and $k$ is the sum of all 
    the $U$-powers in the~$a_i$; except that if any $a_i$ is of the form
    $U^m\iota_j$ then this count is defined to be zero.
  \item
    \label{end:CompositeBoundaryDegeneration}
    The number of composite boundary degeneration ends, which corresponds to
    pairs $(u,v)$ where:
    \begin{itemize}
    \item $u$ is an index one embedded curve in
      $\cM^{B_1}(\x,\y,a_1,\dots,a_{i-1},b,a_{j+1},\dots,a_n;w_1)$ for
      some $1\leq i<j\leq n$ and basic algebra element $b$;
    \item $v\in \cN(b;a_i,\dots,a_j;w_2)$ is a composite boundary
      degeneration (Definition~\ref{def:composite-bdy-degen}; see also
      Figure~\ref{fig:non-centered-op});
    \item either $a_i=b\sigma$ or $a_j=\sigma b$ for some Reeb chord
      $\sigma$; and
    \item $w$ is the sum $w=w_1+w_2$ and $m$
      is the energy
      $E(\sigma,a_{i+1},\dots,a_j,w_2)/4$ or
      $E(a_{i},\dots,a_{j-1},\sigma,w_2)/4$ from
      Formula~\eqref{eq:ExtendEnergy} (i.e., $w_2$ plus a quarter the
      total length of the algebra elements).
      \end{itemize}
  \end{enumerate}
\end{theorem}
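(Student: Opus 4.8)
The plan is to run the standard compactness-and-gluing machinery for moduli spaces of pseudo-holomorphic curves in $\Sigma\times[0,1]\times\RR$, analyzing the Gromov-compactification of the one-dimensional moduli space $\cM^B(\x,\y;\vec a;w)$ whose expected dimension is $\ind(B;\vec a;w)=2$ (recalling that we quotient by the $\RR$-translation, so the relevant space is one-dimensional). The codimension-one boundary strata of the compactification are, by the general degeneration principles for this kind of moduli space (a variant of the analysis in \cite{LOT1}, modified to allow the curve to cross $z$ and to allow boundary degenerations and orbit curves), exactly the five types listed: two-story buildings, collisions of Reeb chords (where consecutive $\theta_i$-levels come together and the asymptotics multiply), escaping-orbit-curve degenerations (an orbit curve of weight one splits off at $e\infty$, producing the $\rho_{1234}+\rho_{2341}+\rho_{3412}+\rho_{4123}$ terms), simple boundary degenerations (the $\cN$ spaces of Section~\ref{subsec:RelateHolC}, possible only when $\x=\y$), and composite boundary degenerations. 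The theorem is then the statement that the total count of boundary points of a compact one-manifold is $0\pmod 2$; so the real content is (a) establishing that the compactification is a compact one-manifold with boundary, (b) identifying its boundary strata with exactly the five enumerated families, and (c) checking that each such stratum is a transversely cut out zero-manifold contributing with the stated multiplicity (in particular the $U$-powers $U^{m+n_z(B)}$, $U^k$, and the energy-to-weight bookkeeping in \ref{end:CompositeBoundaryDegeneration}).

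First I would set up the Gromov compactness statement: any sequence in $\cM^B(\x,\y;\vec a;w)$ with $\ind=2$ has a subsequence converging to a holomorphic building, and a dimension count (using additivity of the index under the various degenerations, together with the transversality furnished by the tailored family $J$ from Definition~\ref{def:tailored}) rules out all degenerations of codimension $\geq 2$; this is where the coherence/tailoredness hypotheses on $J$ do their work, exactly as in the analogous arguments of \cite{LOT1} and in the proof of Theorem~\ref{thm:AlgOfSurface}. Next I would enumerate, case by case, what a codimension-one degeneration can be. The "interior" story-breaking in the $\RR$-direction gives \ref{end:TwoStoryBuilding}. Degenerations at $e\infty$ split into: the $\theta_i$ coming together, which by boundary monotonicity forces the corresponding Reeb chords to concatenate and gives \ref{end:Collision}; a disk-at-east-infinity bubbling off that is an honest orbit curve of weight $1$, which by the computation of $\mu_0^1$ gives \ref{end:EscapingOrbit}; and a disk-at-east-infinity bubbling off that limits onto a boundary degeneration over $\Sigma$, which is either a simple boundary degeneration in $\cN([\alpha_i^a];\vec\rho;w)$ (only when $\x=\y$, since such a degeneration carries the whole main curve away) — giving \ref{end:SimpleBoundaryDegeneration} — or a composite boundary degeneration glued to a lower-weight main curve, giving \ref{end:CompositeBoundaryDegeneration}. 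For each I would invoke the corresponding gluing lemma to see that these strata are precisely the boundary of the one-manifold, so that summing their cardinalities mod $2$ gives $0$.

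The main obstacle is the transversality and gluing analysis for the boundary degenerations, i.e.\ steps involving \ref{end:SimpleBoundaryDegeneration} and \ref{end:CompositeBoundaryDegeneration}. Ordinary story-breaking and chord-collision gluing are routine in this subject, but boundary degenerations over $\Sigma$ are notoriously non-generic: the relevant $\cN$ spaces are not cut out transversely for a generic split $J$, which is the whole reason tailored coherent families are introduced. So I would need to (i) verify that for a tailored $J$ the spaces $\cN$ with $\indsq\leq 2$ are transversely cut out and their counts are $J$-independent (this is Theorem~\ref{thm:AlgOfSurface}, which I may assume), and (ii) prove the matching gluing result: a main curve $u$ of index one together with a rigid boundary degeneration $v$ glues to give a genuine end of $\cM^B$, and conversely every such end arises this way. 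The energy/weight bookkeeping — that gluing a composite boundary degeneration $v\in\cN(b;a_i,\dots,a_j;w_2)$ onto $u$ shifts the weight by $w_2$ and the $U$-power by $E/4$ as in Formula~\eqref{eq:ExtendEnergy} — has to be checked by tracking $n_z$ and the local multiplicities across the degeneration, using the fact established in Section~\ref{subsec:RelateHolC} that $n_z(u)$ equals $w$ plus the number of $\rho_4$'s (equivalently any $\rho_j$'s). Once these gluing and index identities are in place, the conclusion is the formal "$\partial\partial=0$"-style statement that the boundary of a compact $1$-manifold has an even number of points.
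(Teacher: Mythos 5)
Your overall strategy is the same as the paper's: compactify the one-dimensional moduli space, use the embedded index formula and its additivity to classify the codimension-one degenerations, and then invoke gluing results to see that each listed configuration contributes exactly one end. You also correctly isolate the two genuinely delicate points, namely transversality for the $\cN$ spaces (which is indeed delegated to the tailored-family conditions and Theorem~\ref{thm:AlgOfSurface}) and the gluing of composite boundary degenerations (which the paper handles indirectly, by first gluing the main component to a one-parameter family of \emph{simple} boundary degenerations and then extracting the composite gluing via a local-degree argument for stratified spaces, Lemma~\ref{lem:StratifiedSpaces-moduli} and Proposition~\ref{prop:glue-join} --- the product of two cylindrical ends is not a cylindrical end, so the naive SFT gluing you allude to does not directly apply).

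There is, however, one concrete omission: your claim that the codimension-one boundary strata are \emph{exactly} the five enumerated types is not true, and the discrepancy is not resolved by a dimension count. Because curves here are allowed to cross the basepoint $z$, the compactified moduli space also has codimension-one ends consisting of a constant (trivial) main component together with a sphere bubble, or together with a Maslov-index-$2$ disk bubble with boundary on $T_\beta$, $T_{\alpha,1}$, or $T_{\alpha,2}$; these configurations have total index $2$ and genuinely occur. (In the $\HFa$ setting of~\cite{LOT1} they are excluded by $n_z=0$, which is precisely the hypothesis you have dropped.) The theorem's list is complete only because these extra ends contribute \emph{zero} to the count: sphere bubbles are excluded by Condition~\ref{item:tail-spheres} of the tailored family together with Lemma~\ref{lem:OS-no-spheres}, and the algebraic count of index-$2$ disks with boundary on a single Lagrangian torus through a fixed point vanishes by Lemma~\ref{lem:OS-no-disks}. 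Without this step your ``boundary of a compact $1$-manifold is even'' identity would contain additional terms not appearing in the statement, so you need to add it to close the argument.
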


Since the almost complex structure used above depends on the sequence of basic
algebra elements and not just the underlying chord sequence,
there is no guarantee that the actions $m^w_{1+n}$ are
$U$-equivariant, a property which is useful, for example, when
we construct $\CFDm$ from $\CFAm$.

We refine $\CFAmb$ to construct a module $\CFAm$ whose operations are 
$U$-equivariant as follows.
In Section~\ref{sec:GroundingU} we extend the definition
of $\CFAmb(\HD)$ to a weighted module $\CFAmg(\HD)$ over a larger
algebra, denoted $\MAlgg$. As an algebra, $\MAlgg$ is identified with
$\MAlg[X,e]/(X^2=0)$ and has a differential given by $dX=1+e$. The previous action of
$U$ on $\CFAmb(\HD)$ is now interpreted as the action by $U\cdot
e$, where $U$ is in the ground field and the algebra element $e$
represents a pinching condition on the almost complex structure
(Definition~\ref{def:sufficient-pinching} and
Section~\ref{sec:GroundingU}). In modules, the element~$X$ acts by
counting holomorphic curves in a one-parameter family of
almost complex structures. 

There is a quasi-isomorphism $\MAlg\to \MAlgg$ of weighted algebras,
and the module $\CFAm(\HD)$ over $\MAlg$ is defined by restricting
scalars (as in Lemma~\ref{lem:RestrictionFunctor}) under this
quasi-isomorphism; see Definition~\ref{def:CFAm}.
The actions on $\CFAm(\HD)$ are $U$-equivariant
(i.e., $U$ is in the ground field).
The action on $\CFAm(\HD)$ defined by this restriction of scalars
incorporates counts of curves with respect to variations of almost
complex structures.  The fact that $\CFAmg(\HD)$ is a weighted
$\Ainf$-module over $\MAlgg$ is Theorem~\ref{thm:CFAmg-is} and the
corresponding fact for $\CFAm(\HD)$ is Theorem~\ref{thm:CFAm-is}.
The weighted module $\CFAm$ is homotopy equivalent
to the more directly constructed $\CFAmb$ from
Theorem~\ref{thm:CFAmb-is} (of course not $U$-equivariantly); see
Proposition~\ref{prop:CFA-is-CFA}.

The module $\CFAm$ has some familiar properties.
Each generator $\x$ represents a $\SpinC$-structure
$\spinc(\x)\in\Spinc(Y)$. If $\spinc(\x)\neq\spinc(\y)$ then
$\pi_2(\x,\y)=\emptyset$ (see Section~\ref{sec:background}), so
$\CFAm(\HD)$ decomposes as a direct sum of weighted $\Ainf$-modules
\begin{equation}\label{eq:spinc-decomp}
  \CFAm(\HD)=\bigoplus_{\spinc\in\Spinc(Y)}\CFAm(\HD,\spinc).
\end{equation}
These follow from corresponding splittings of $\CFAmg(\HD)$.

The second key theorem about $\CFAm$ is its invariance:
\begin{theorem}\label{thm:CFAm-invt}
  Up homotopy equivalence (of  unital weighted $\Ainf$-modules),
  $\CFAm(\HD,\spinc)$ depends only on the bordered 3-manifold
  $(Y,\phi)$ specified by $\HD$, and its underlying $\SpinC$ structure~$\spinc$.
\end{theorem}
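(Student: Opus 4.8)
The plan is to follow the standard bordered Floer strategy: invariance of a Heegaard-Floer-type invariant is proved by checking independence of the auxiliary choices that go into the construction, namely (a) the tailored family of almost complex structures $J$, (b) isotopies of the $\alpha$- and $\beta$-curves, (c) handleslides among the $\beta$-curves and among the $\alpha$-circles (the $\alpha$-arcs are fixed by the boundary parametrization), and (d) index-zero/three stabilizations of the Heegaard diagram. Since any two provincially admissible bordered Heegaard diagrams for $(Y,\phi)$ representing the same $\SpinC$-structure are connected by a sequence of such moves (this is the bordered analogue of the Reidemeister/Cerf-theory result from~\cite{LOT1}, and no new topology is needed here), it suffices to produce, for each elementary move, a homotopy equivalence of unital weighted $\Ainf$-modules $\CFAm(\HD,\spinc)\simeq\CFAm(\HD',\spinc)$. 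I would carry these out in the listed order, as each is a weighted refinement of the corresponding argument in~\cite{LOT1}.

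For change of almost complex structure one constructs a continuation map by counting index-zero curves in a one-parameter family of tailored coherent almost complex structures interpolating between $J_0$ and $J_1$; the weighted $\Ainf$-morphism relations (the analogue of \ref{thm:master} with the family parameter playing the role of an extra $\RR$-direction) follow from examining the codimension-one boundary of the index-one moduli spaces in the family, which now also includes orbit-curve degenerations, collisions of levels, simple and composite boundary degenerations exactly as in Theorem~\ref{thm:master}, plus the two endpoints of the interval. A standard homotopy-of-homotopies argument shows the continuation map is independent of the path and is a homotopy equivalence with homotopy inverse the reverse continuation. Isotopy invariance is the special case where $\HD$ and $\HD'$ differ by an isotopy, and can either be absorbed into the almost-complex-structure argument (moving the curves along with an ambient isotopy) or treated by the same continuation-map technique. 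Handleslide invariance is proved by the triangle map: one builds a third diagram $\gammas$ obtained from $\betas$ (resp.\ $\alphas$) by a small Hamiltonian translate composed with the handleslide, there is a canonical top-graded generator $\Theta\in\CFAm$-type complex of the triangle, and counting index-zero holomorphic triangles (with Reeb chords and orbits at $e\infty$, weights tracked as before) gives a weighted $\Ainf$-morphism; associativity of the triangle maps, proved by a count of index-zero rectangles, identifies the composite of the two handleslide maps with the continuation map, giving the homotopy equivalence. Stabilization invariance is the usual local model: adding a small canceling pair of curves supported in a ball away from the boundary and the basepoint introduces a single new intersection point and, by a neck-stretching/Gromov-compactness argument identical to the closed case, the resulting module is identified with $\CFAm(\HD,\spinc)$ on the nose (up to unital homotopy equivalence).

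Two points require genuine care. First, everything must be done \emph{weightedly and strictly unitally}: the continuation, triangle, and rectangle maps are sequences of maps $f_{1+n}^w$ with $n,w\ge 0$, vanishing when an input is the unit, and the relevant degeneration analysis must account for the new phenomena in the $U$-deformed setting — escaping Reeb orbits contributing the $\rho_{1234}$-type terms and, crucially, the simple and composite boundary degenerations of \ref{thm:master}\ref{end:SimpleBoundaryDegeneration}--\ref{end:CompositeBoundaryDegeneration}, which have no analogue in the $U=0$ theory of~\cite{LOT1}. One must check these boundary-degeneration contributions either cancel in pairs or are matched by the algebra's centered operations in each of the morphism relations; this is the main obstacle, and it is handled by the same coherence/transversality package (tailored coherent families of almost complex structures) that makes Theorem~\ref{thm:master} work, now extended over the relevant moduli of domains (intervals, triangles, rectangles). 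Second, one should either work $\SpinC$-structure by $\SpinC$-structure throughout, using the splitting~\eqref{eq:spinc-decomp}, or equivalently note that all the maps constructed respect the $\SpinC$-decomposition because the relevant homotopy classes do; this is why the statement is phrased per $\spinc$. Finally, since $\CFAm$ is defined from $\CFAmg$ by restriction of scalars along the quasi-isomorphism $\MAlg\to\MAlgg$ (Definition~\ref{def:CFAm}), it is cleanest to prove invariance at the level of $\CFAmg$ over $\MAlgg$ — where the curve counts defining the continuation/triangle maps live naturally, including the $X$-action counts in one-parameter families — and then deduce the statement for $\CFAm$ by functoriality of restriction of scalars (Lemma~\ref{lem:RestrictionFunctor}), just as invariance of $\CFAmg$ is Theorem~\ref{thm:CFAmg-is}'s companion and $\CFAm$'s module structure is Theorem~\ref{thm:CFAm-is}.
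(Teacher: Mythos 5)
Your overall architecture is the paper's: prove invariance of the non-$U$-equivariant module under the elementary Heegaard moves (change of $J$ via continuation maps over interpolating tailored families, isotopies, handleslides via triangle maps with quadrilateral-counting homotopies, stabilization via neck-stretching), extend to $\CFAmg$, and then deduce the statement for $\CFAm$ by restriction of scalars along $i\co\AsUnDefAlg[U]\to\MAlgg$ using Lemma~\ref{lem:RestrictionFunctor}. That matches Propositions~\ref{prop:J-inv}, \ref{prop:iso-inv}, \ref{prop:handleslide-inv}, \ref{prop:stab-inv}, Theorem~\ref{thm:CFAmg-invt}, and the two-line deduction of Theorem~\ref{thm:CFAm-invt}.

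There is, however, a concrete gap in your move list. You assert that the $\alpha$-arcs are fixed by the boundary parametrization and that only $\beta$-handleslides and $\alpha$-circle handleslides are needed. That is false: by \cite[Proposition 4.10]{LOT1} the generating set of moves includes handleslides of an $\alpha$-\emph{arc} over an $\alpha$-circle, and this is precisely the hardest case — the one Proposition~\ref{prop:handleslide-inv} singles out. Without it your sequence of moves does not connect arbitrary diagrams for $(Y,\phi)$. Moreover, this case is where the genuinely new difficulty of the minus theory lives, and your proposal only gestures at it. In the $\HFa$ argument one shows the triangle map is a chain map by checking that rigid flows out of the top generator $\Theta_\ell$ cancel in pairs; in the present setting curves may cross the basepoint $z$, so such flows could a priori carry powers of $U$ and fail to cancel. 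The paper's fix is Lemma~\ref{lem:CancelInPairs}: by pinching the almost complex structure along auxiliary separating circles and degenerating to a genus-one model diagram $(\Sigma,\alphas',\alphas)$ (Proposition~\ref{prop:IndexOneFlowsIsotopyDiag}, via the index/branching computation of Lemma~\ref{lem:IndexBranching}), one forces $n_z(B)=0$ for every nonempty rigid moduli space $\cM^B(\Theta_\ell,\y,\vec{a},\vec{b})$, after which the $\HFa$ cancellation applies verbatim. An analogous statement is also needed for the quadrilateral count. Your plan, which treats the handleslide step as a routine weighted upgrade of \cite{LOT1}, would stall exactly here; you need both to add the $\alpha$-arc handleslide to your move list and to supply this $n_z=0$ lemma (or an equivalent mechanism) before the triangle and rectangle maps are even chain maps.
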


Invariance is first spelled out for $\CFAmb$
(Theorem~\ref{thm:CFAmb-invt}).
The proof uses an
adaptation of the invariance proof in the ordinary bordered setting,
with a little extra care taken for boundary degenerations.
The invariance result has a straightforward extension to $\CFAmg$
(Theorem~\ref{thm:CFAmg-invt}). Invariance for $\CFAm$ now follows from 
basic properties of the restriction functor (Lemma~\ref{lem:RestrictionFunctor}).

\begin{remark}
  The definitions are set up so that for a Heegaard diagram $\HD$ decomposed as
  a gluing of two bordered Heegaard diagrams $\HD_1\cup\HD_2$, the basepoint
  lies on the type $A$ side~$\HD_1$ and near the boundary.
\end{remark}
\subsection{The dualizing bimodule}
In the $\HFa$ case of bordered Floer homology, we also defined a
module $\CFDa(\HD)$ using pseudo-holomorphic curves.  We
observed that, setting $\CFDDa(\Id)$ to be the type \DD\
bimodule associated to the identity cobordism,
$\CFDa(\HD)\cong \CFAa(\HD)\DT\CFDDa(\Id)$~\cite{LOT4}. In this paper, we define
$\CFDm(\HD)$ to be $\CFAm(\HD)\DT\CFDDm(\Id)$, where $\CFDDm(\Id)$ is
defined algebraically. Making sense of this requires some more algebra.

\subsubsection{Weighted type D structures}
Given a weighted $\Ainf$-algebra $(\Alg,\{\mu_k^w\})$ over a ring
$\Ground$ and an element $X$ in the center of $\Ground$ (e.g., $X=1$
or $X=U$), a \emph{(left) weighted type $D$ module over $\Alg$}
consists of a $\Ground$-module $P$ and a map
$\delta^1\co P\to \Alg\otimes P$ satisfying a structure 
equation, stated in terms of the map
$\delta^n\co P\to \Alg^{\otimes n}\otimes P$ defined by iterating
$\delta^1$ $n$ times.  The structure equation is
\[
  \sum_{n,w\geq 0} (\mu_n^w\otimes\Id)\circ \delta^n=0.
\]
We need some additional hypotheses to  ensure that this sum converges; see
Section~\ref{sec:boundedness}.

\begin{remark}
  We have specialized the definition of weighted type $D$ structure
from our previous
paper~\cite[Definition~\ref{Abs:def:wD}]{LOT:abstract} to the case of
charge $1$.
\end{remark}
\subsubsection{Trees}
Given \dg algebras $\Alg$ and $\Blg$ over $\Ground$ and $\Groundl$,
respectively, a type \DD\ bimodule over $\Alg$
and $\Blg$ is a type $D$ module over the tensor product algebra
$\Alg\otimes_\Field\Blg$. To
extend this definition to weighted $\Ainf$-algebras, we need an
analogue of the tensor product in the weighted $\Ainf$
context. This is an adaptation of a construction of
Saneblidze-Umble~\cite{SU04:Diagonals} to the weighted case. We
explain the background on trees that we need now and the tensor
product of weighted algebras and the definition of weighted type \DD\
bimodules in Section~\ref{sec:intro-DD}.

Following~\cite[Section~\ref{Abs:sec:wAlgs}]{LOT:abstract}, by a \emph{marked
  tree} we mean a planar, rooted tree together with a subset $I$ of the non-root
leaves of $T$. Leaves in $I$ are called \emph{inputs}, and the root is the
\emph{output}. The inputs of $T$ inherit an ordering from the orientation on the
plane. Vertices of $T$ that are not in $I$ and are not the output are
\emph{internal}; in particular, some leaves are considered internal.  A \emph{weighted tree} is a marked tree $T$ together with a
function $w$, the \emph{weight}, from the internal vertices of $T$ to
$\ZZ_{\geq 0}$. A weighted tree is \emph{stable} (or \emph{stably-weighted}) if
any internal vertex with valence less than $3$ has positive
weight. (An example is drawn in Figure~\ref{fig:stableTrees}.) The \emph{total weight} of $T$ is the sum of the
weights of the internal vertices of $T$. Let $\Trees_{n,w}$ be the set of stably-weighted trees with $n$ inputs and total weight $w$.

The \emph{$n$-input, weight $w$ corolla} $\wcorolla{n}{w}$ has a single internal
vertex, which has weight $w$, and $n$ inputs; $\wcorolla{n}{w}\in \Trees_{n,w}$
as long as $w\geq 0$ or $n\geq 2$.

The \emph{dimension} of $T$ is defined by $\dim(T)=n+2w-v-1$, where $v$ is the number of
internal vertices. The \emph{differential} of a stably-weighted tree is the
formal linear combination of all stably-weighted trees obtained by inserting an
edge in $T$, i.e., the sum of $S$ over pairs $(S,e)$ where $S$ is a
stably-weighted tree, $e$ is an edge between two internal vertices of
$S$, and the result of collapsing the
edge $e$ and adding the weights of the two vertices at the endpoints
of $e$ is the original tree~$T$.

Given stably-weighted trees $S$ and $T$, let $T\circ_i S$ be the
result of connecting the output of $S$ to the $i\th$ input of $T$. It will
be convenient to include two degenerate stably-weighted trees: the
\emph{identity tree}~$\IdTree$ with one input and no internal
vertices; and the \emph{stump}~$\stump$ with no inputs and no internal
vertices. We extend the operation $\circ_i$ to these generalized weighted
trees: composition with $\IdTree$ is the identity map, and
$S\circ_i \stump$ vanishes if the $i\th$ input of $S$ does not feed
into a valence $3$, weight $0$ vertex, and deletes the $i\th$ input to
$S$ if the $i\th$ input does feed into a valence $3$, weight $0$
vertex.  Let $\xwTreesCx{n}{w}$ be the $\Field$-vector space spanned
by the stably-weighted trees of total weight $w$ with $n$ marked
inputs, including the two degenerate stably-weighted trees (which span
$\xwTreesCx{1}{0}$ and $\xwTreesCx{0}{0}$, respectively). Let
$\wTreesCx{n}{w}=\xwTreesCx{n}{w}$ if $(n,w)\not\in\{(0,0),(1,0)\}$
and $\wTreesCx{0}{0}=\wTreesCx{1}{0}=\{0\}$.

\begin{figure}
  \centering
  \includegraphics{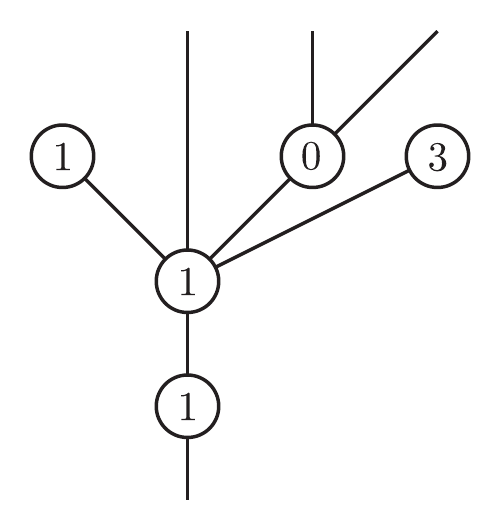}
  \caption[Stably-weighted trees and composition of weighted
  operations] {\textbf{A stably-weighted tree}. The tree has three
    inputs and total weight $6$. The corresponding
    operation is
    $(x_1,x_2,x_3)\mapsto
    \mu^1_{1}(\mu^1_{4}(\mu^1_0,x_1,\mu^0_{2}(x_2,x_3),\mu^3_0))$.}
  \label{fig:stableTrees}
\end{figure}

If $S$ and $T$ are stably-weighted trees, where $S$ has $n$ inputs, let $S\circ T=\sum_{i=1}^n S\circ_iT$. If $S_1,S_2,T_1$ and $T_2$ are stably-weighted trees so that $S_1$ and $S_2$ have the same number of inputs, let
\[
  (S_1,S_2)\circ(T_1,T_2)=\sum_{i=1}^n (S_1\circ_i T_1,S_2\circ_iT_2).
\]
We will sometimes also consider pairs of trees $(S_1,S_2)$ and $(T_1,T_2)$ where
$S_1$ has $n$ inputs and $S_2$ has $n-1$, and define
\[
  (S_1,S_2)\circ'(T_1,T_2)=\sum_{i=1}^{n-1} (S_1\circ_{i+1}T_1,S_2\circ_iT_2).
\]

\subsubsection{Algebra diagonals and weighted type DD bimodules}\label{sec:intro-DD}

Fix a weighted algebra $\Alg=(A,\{\mu_n^w\})$ over $\Field[\Yvar_1,\Yvar_2]$. In
the case of bordered Floer homology, $\Yvar_1$ will act by $1$ and $\Yvar_2$
will act by $U$.

Given a stably-weighted tree $T$ with $n$ inputs there is an induced map
$\mu(T)\co A^{\otimes n}\to A$ gotten by replacing each internal
vertex $v$ of $T$ with valence $n+1$ and weight $w(v)$ by the
operation $\mu_n^{w(v)}$, feeding the elements of $A^{\otimes n}$
into the inputs (in order), and composing according to the
edges of the tree. (An example is shown in
Figure~\ref{fig:stableTrees}.) So, given two weighted algebras $\Alg$, $\Blg$ and a
pair of stably-weighted trees $(S,T)$ with $n$ inputs each,
there is a corresponding map
$\mu_{\Alg}(S)\otimes_{\Field} \mu_{\Blg}(T)\co (A\otimes_{\Field} B)^{\otimes
  n}\cong A^{\otimes n}\otimes_{\Field} B^{\otimes n}\to A\otimes_{\Field} B$.

\begin{definition}\cite[Definition~\ref{Abs:def:wDiagCells}]{LOT:abstract}
  \label{def:algebra-diagonal}
  A \emph{weighted algebra diagonal} consists of an element
  \[
    \wDiagCell{n}{w}\in \bigoplus_{w_1,w_2\leq
      w}\Yvar_1^{w-w_1}\Yvar_2^{w-w_2}\xwTreesCx{n}{w_1}\otimes_{\Field}\xwTreesCx{n}{w_2}
    \subset \bigoplus_{w_1,w_2\leq
      w}\xwTreesCx{n}{w_1}\otimes_{\Field}\xwTreesCx{n}{w_2}\otimes_{\Field}\Field[\Yvar_1,\Yvar_2]
  \]
  of dimension $n+2w-2$,
  for each $n,m\geq 0$ with
  $n+2w\geq 2$, satisfying the following properties:
  \begin{itemize}
  \item \textbf{Compatibility}: 
    \begin{equation}\label{eq:wCell-compat}
      \bdy\wDiagCell{n}{x}=\sum_{\substack{v+w=x\\1\leq i\leq n\\i-1\leq j\leq
          n}}
      \wDiagCell{j-i+1}{v}\circ \wDiagCell{n+i-j}{w}.
    \end{equation}
  \item \textbf{Non-degeneracy}:
    \begin{itemize}
    \item $\wDiagCell{2}{0}=\wcorolla{2}{0}\otimes\wcorolla{2}{0}$.
    \item $\wDiagCell{0}{1}=\wcorolla{0}{1}\otimes\wcorolla{0}{1}+
    \Yvar_1 \stump\otimes\wcorolla{0}{1}+\Yvar_2 \wcorolla{0}{1}\otimes \stump.$
    \item At most one of each pair of trees in the diagonal is a
      degenerate tree.
    \end{itemize}
  \end{itemize}
\end{definition}
(The second non-degeneracy property corresponds to
specializing~\cite[Definition~\ref{Abs:def:wDiagCells}]{LOT:abstract}
to the diagonals with \emph{maximal seed}.)

Given a weighted algebra diagonal $\wADiag$ and weighted algebras
$\Alg$ and $\Blg$ over ground rings $\Ground$ and $\Groundl$
over $\Field[\Yvar_1,\Yvar_2]$ we can form the tensor product
$\Alg\otimes_{\wADiag}\Blg$ with underlying
$(\Ground\otimes_{\Field}\Groundl)$-module
$A\otimes_{\Field[\Yvar_1,\Yvar_2]}B$, differential $\mu_1^0$ the
usual tensor differential on the tensor product, and
operations $\mu_n^w$ given by
\[
  \mu_n^w=\sum_{n_{S,T}(S\otimes T)\in\wDiagCell{n}{w}}n_{S,T}\mu_\Alg(S)\otimes\mu_\Blg(T).
\]
It follows from the definition that this is, indeed, a weighted
$\Ainf$-algebra (cf. \cite[Section~\ref{Abs:sec:w-alg-tens}]{LOT:abstract}).
We now define a weighted type \DD\ bimodule over $\Alg$ and $\Blg$ to be a
weighted type $D$ module over $\Alg\otimes_{\wADiag}\Blg$. (As was the case for
type $D$ modules, we require some additional boundedness hypothesis to ensure
this sum converges; see Section~\ref{sec:boundedness}.)

Note that a weighted type \DD\ bimodule is determined by a
$(\Ground,\Groundl)$-bimodule $P$ and a map of
$(\Ground,\Groundl)$-bimodules
$\delta^1\co P\to (\Alg\otimes_{\Field}\Blg)\otimes P$; the structure
equation that $\delta^1$ must satisfy depends on the diagonal
$\wADiag$.

In order to make use of this construction, we need to know that:
\begin{theorem}\cite[Theorem~\ref{Abs:thm:wDiag-exists}]{LOT:abstract}
  A weighted algebra diagonal exists.
\end{theorem}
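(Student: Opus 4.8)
The plan is the obstruction-theoretic (acyclic-models) construction familiar from the case of the associahedra: build the cells $\wDiagCell{n}{w}$ one at a time, by induction on $N := n+2w$, at each stage solving a linear equation inside an acyclic complex, while carrying along the weights and the bookkeeping variables $\Yvar_1,\Yvar_2$.

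First I would isolate the homological input. For every pair $(n,w)$ with $n+2w\geq 2$, the complex $\xwTreesCx{n}{w}$, with its edge-insertion differential, is acyclic in positive degrees with $H_0 = \Field$: it is the cellular chain complex of a contractible polytope (the weighted associahedron, whose top cell is the corolla $\wcorolla{n}{w}$ of dimension $n+2w-2$), and one proves this by an explicit contracting homotopy --- collapse internal edges and merge weights toward a single vertex, suitably organized as a discrete Morse function on stably-weighted trees. (For the two exceptional values, $\xwTreesCx{0}{0}$ and $\xwTreesCx{1}{0}$ are one-dimensional, in degree $0$, spanned by $\stump$ and $\IdTree$ respectively.) By the Künneth theorem over $\Field$ it follows that the ambient complex in which $\wDiagCell{n}{w}$ must live --- namely $\bigoplus_{w_1,w_2\leq w}\Yvar_1^{\,w-w_1}\Yvar_2^{\,w-w_2}\,\xwTreesCx{n}{w_1}\otimes_{\Field}\xwTreesCx{n}{w_2}$ with the tensor differential --- is also acyclic in positive degrees.

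The induction then runs as follows. For $N = 2$ the only pairs are $(2,0)$ and $(0,1)$; since $\wcorolla{2}{0}$ and $\wcorolla{0}{1}$ are $0$-dimensional, one takes $\wDiagCell{2}{0}$ and $\wDiagCell{0}{1}$ to be the elements named in the non-degeneracy axiom and verifies \eqref{eq:wCell-compat} directly --- both sides reduce to $0$ once the $\IdTree$/$\stump$ composition conventions are applied. Now suppose $\wDiagCell{n'}{w'}$ has been built, compatibly, for all $(n',w')$ with $n'+2w' < N$, homogeneous of dimension $n'+2w'-2$. For $(n,w)$ with $n+2w = N$, set
\[
  \beta_{n}^{w}\;:=\;\sum_{\substack{v+v'=w\\ 1\leq i\leq n\\ i-1\leq j\leq n}}\wDiagCell{j-i+1}{v}\circ\wDiagCell{n+i-j}{v'}.
\]
Every summand is already defined: the only ones that threaten to reintroduce $\wDiagCell{n}{w}$ are compositions with $\wDiagCell{1}{0} = \IdTree\otimes\IdTree$, contributing $n\cdot\wDiagCell{n}{w}$ from the left and $n\cdot\wDiagCell{n}{w}$ from the right, which cancel mod $2$. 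Since weights add and $\Yvar$-powers multiply, $\beta_n^w$ lies in the ambient complex above, homogeneous of degree $n+2w-3$, and a formal computation --- using $\bdy^2 = 0$ on trees, the relation \eqref{eq:wCell-compat} for the lower cells, and associativity of the grafting $\circ$ --- shows $\bdy\beta_n^w = 0$. If $N \geq 4$ then $n+2w-3 \geq 1$, and acyclicity yields a homogeneous $\wDiagCell{n}{w}$ of dimension $n+2w-2$ with $\bdy\wDiagCell{n}{w} = \beta_n^w$; the last non-degeneracy clause (at most one tree in each pair degenerate) is automatic, as $\stump$ and $\IdTree$ have degree $0$ while the filling has positive degree. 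This leaves only $N = 3$, i.e.\ $(3,0)$ and $(1,1)$, where $\beta_n^w$ has degree $0$ and acyclicity alone is insufficient; there I would write $\wDiagCell{3}{0}$ and $\wDiagCell{1}{1}$ down by hand --- equivalently, check $[\beta_n^w] = 0$ in $H_0$ of the ambient complex --- just as the lowest cells of the classical Saneblidze--Umble diagonal are given explicitly.

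The hard part is the homological input of the second paragraph. Although the weighted associahedra are contractible, establishing this --- and, more pertinently, doing so in a form that applies to the obstruction $\beta_n^w$ --- takes genuine combinatorial work, since the cell structure, with its nullary ``bud'' vertices and its weight function, is considerably more intricate than that of the ordinary associahedron. The rest is bookkeeping: the vanishing $\bdy\beta_n^w = 0$, the tracking of $\Yvar_1,\Yvar_2$-powers through $\circ$, the check that the $\IdTree$/$\stump$ conventions make the telescoping in \eqref{eq:wCell-compat} close up, and the finitely many low-$N$ base cases.
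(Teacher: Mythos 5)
Your sketch is essentially the argument of the cited reference: this paper does not prove the theorem itself but quotes it from \cite{LOT:abstract}, where the construction is exactly the obstruction-theoretic induction you describe, with the acyclicity of the stably-weighted-tree complexes supplied by identifying $\xwTreesCx{n}{w}$ with the cellular chains of the contractible associaplex $\Associaplex{n}{w}$ (recalled in Section~\ref{sec:associaplex}). Two small remarks. First, the terms you worry about ``reintroducing'' $\wDiagCell{n}{w}$ via $\wDiagCell{1}{0}=\IdTree\otimes\IdTree$ do not actually occur: diagonal cells are only defined for $n+2w\geq 2$ and $\wTreesCx{1}{0}=\{0\}$, so the right-hand side of \eqref{eq:wCell-compat} involves only cells of strictly smaller $n+2w$ and no cancellation is needed. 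Second, and more importantly, the one load-bearing step you leave unproved is the acyclicity of $\xwTreesCx{n}{w}$ in positive degrees; you correctly identify this as the crux and propose the right mechanism (contractibility of the weighted associahedron, via a contracting homotopy or discrete Morse function), but as written your proof is complete only modulo that combinatorial input, which is where essentially all the work in \cite{LOT:abstract} lies.
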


\subsubsection{The dualizing bimodule, and a graded miracle}\label{sec:intro-DD-Id}
Let $\MAlg^{U=1}$ be the result of setting $U=1$ in $\MAlg$. Fix a
weighted algebra diagonal. View $\MAlg$ as an algebra over
$\Field[\Yvar_2]$ by letting $\Yvar_2$ act by $U$, and $\MAlg^{U=1}$ as an
algebra over $\Field[\Yvar_1]$ by letting $\Yvar_1$ act by $1$.

The \emph{dualizing bimodule} $\CFDDm(\Id)$ is the type \DD\ bimodule
over $\MAlg^{U=1}$ and $\MAlg$ generated by two elements,
$\iota_0\otimes\iota_0$ and $\iota_1\otimes\iota_1$, with differential
$\delta^1$ given by
\begin{equation}\label{eq:DD-id-diff}
\begin{split}
  \delta^1(\iota_0\otimes\iota_0)&=(\rho_1\otimes\rho_3+\rho_3\otimes\rho_1+\rho_{123}\otimes\rho_{123}+\rho_{341}\otimes\rho_{341})\otimes(\iota_1\otimes\iota_1)\\
  \delta^1(\iota_1\otimes\iota_1)&=(\rho_2\otimes\rho_2+\rho_4\otimes\rho_4+\rho_{234}\otimes\rho_{412}+\rho_{412}\otimes\rho_{234})\otimes(\iota_0\otimes\iota_0)
\end{split}
\end{equation}

Even though we do not give a holomorphic curve interpretation of
$\CFDDm(\Id)$ in this paper, the Heegaard diagram it corresponds to is
helpful for understanding the terms that appear: see Figure~\ref{fig:DD-id-HD}.

By taking advantage of the gradings on $\MAlg$, it is not hard to show
that $\CFDDm(\Id)$ is, in fact, a type \DD\ bimodule (again of charge
1 in the terminology of \cite{LOT:abstract}); see
Section~\ref{sec:DD-Id}. In particular, although the bimodule
structure relation depends on a weighted algebra diagonal, it turns
out that the relation holds for any choice of weighted algebra
diagonal.
 
\begin{figure}
  \centering
  \includegraphics{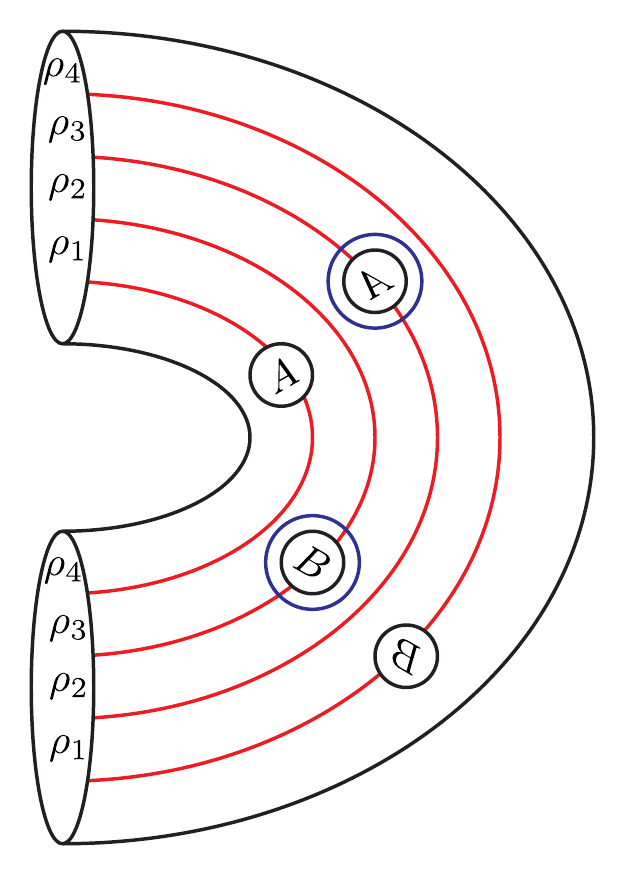}
  \caption[Heegaard diagram corresponding to the dualizing bimodule $\CFDD^-(\Id)$]{\textbf{The type bimodule $\CFDDm(\Id)$.} Although we do
    not associate bimodules to bordered Heegaard diagrams with two
    boundary components in this paper, we expect that the bimodule
    $\CFDD^-(\Id)$ corresponds to this bordered Heegaard diagram, as
    $\CFDDa(\Id)$ does. The positive domains with index $1$ correspond
    to the terms in
    Formula~\eqref{eq:DD-id-diff}. (Compare~\cite[Proposition
    10.1]{LOT2} or~\cite[Theorem 1]{LOT4}.)}
  \label{fig:DD-id-HD}
\end{figure}

\subsection{First pairing theorem}
\subsubsection{More abstract algebra}
Just as a weighted algebra diagonal allows us to take the external tensor
product of weighted $\Ainf$-algebras, there is a notion of weighted
module diagonals, which allows us to take the external tensor product
of weighted modules. To state it, we need a variant of
weighted trees.  Given a stably-weighted tree $T$, the \emph{leftmost
  strand} of $T$ is the minimal path from the leftmost input of
$T$ to the root of $T$.  A \emph{weighted module tree} is a
stably-weighted tree in which the leftmost leaf is an input; equivalently, a
weighted module tree has no leaves to the left of its
leftmost strand. The weighted module trees span a quotient complex
$\wMTreesCx{1+n}{w}$ of the weighted trees complex
$\wTreesCx{1+n}{w}$.
\begin{definition}\cite[Definition~\ref{Abs:def:w-mod-diag}]{LOT:abstract}
  Given a weighted algebra diagonal $\wADiag$, a \emph{weighted module
    diagonal} compatible with $\wADiag$ is a formal linear combination of pairs of
  trees
  \begin{multline*}
    \wModDiagCell{n}{w}\in \bigoplus_{w_1,w_2\leq w}
    \Yvar_1^{w-w_1}\Yvar_2^{w-w_2}\xwMTreesCx{n}{w_1}\otimes_{\Field}\xwMTreesCx{n}{w_2}\otimes_{\Field}\Field[\Yvar_1,\Yvar_2]\\
    \subset \bigoplus_{w_1,w_2\leq w}
    \xwMTreesCx{n}{w_1}\otimes_{\Field}\xwMTreesCx{n}{w_2}\otimes_{\Field}\Field[\Yvar_1,\Yvar_2]
  \end{multline*}
  of dimension $n+2w-2$, satisfying the following conditions:
  \begin{itemize}
  \item \textbf{Compatibility}: 
    \begin{equation}\label{eq:wModDiagCell-compat}
    \bdy\wModDiagCell{n}{x}=\sum_{v+w=x}\left(
      \sum_{j=1}^n\wModDiagCell{n-j+1}{v}\circ_1\wModDiagCell{j}{w}+\sum_{2\leq
      i\leq j\leq n}\wModDiagCell{n+1+i-j}{v}\circ_i\wDiagCell{j-i}{w}
    \right).
    \end{equation}
  \item \textbf{Non-degeneracy}:
    $\wModDiagCell{2}{0}=\wcorolla{2}{0}\otimes\wcorolla{2}{0}$.
  \end{itemize}
\end{definition}

\begin{theorem}\cite[Theorem~\ref{Abs:thm:wModDiag-exists}]{LOT:abstract}
  Given any weighted algebra diagonal $\wADiag$ there is a weighted
  module diagonal compatible with $\wADiag$.
\end{theorem}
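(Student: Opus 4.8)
The plan is to build the module diagonal one cell at a time by induction on $n+2w$, in the same spirit as the construction of the algebra diagonal $\wADiag$ (the preceding theorem) but now \emph{relative} to the already-fixed $\wADiag$: the compatibility relation \eqref{eq:wModDiagCell-compat} \emph{prescribes} $\bdy\wModDiagCell{n}{w}$ in terms of cells $\wModDiagCell{n'}{w'}$ with $n'+2w'<n+2w$ together with the given algebra cells $\wDiagCell{\bullet}{\bullet}$, so the only thing to prove at each stage is that this prescribed chain is a boundary, and then to choose a bounding cell $\wModDiagCell{n}{w}$. I would begin at $(n,w)=(2,0)$, where non-degeneracy forces $\wModDiagCell{2}{0}=\wcorolla{2}{0}\otimes\wcorolla{2}{0}$ and the right side of \eqref{eq:wModDiagCell-compat} vanishes for dimension reasons (every term carries a factor of negative dimension, such as $\wModDiagCell{1}{0}$ or $\wDiagCell{0}{0}$), consistently with $\bdy(\wcorolla{2}{0}\otimes\wcorolla{2}{0})=0$.

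For the inductive step, fix $(n,w)$ with $n+2w\geq 3$, assume all smaller cells have been constructed satisfying \eqref{eq:wModDiagCell-compat}, and set
\[
  \zeta_{n,w}:=\sum_{v+w'=w}\Bigl(\sum_{j=1}^{n}\wModDiagCell{n-j+1}{v}\circ_1\wModDiagCell{j}{w'}
  +\sum_{2\leq i\leq j\leq n}\wModDiagCell{n+1+i-j}{v}\circ_i\wDiagCell{j-i}{w'}\Bigr),
\]
the chain that \eqref{eq:wModDiagCell-compat} demands be $\bdy\wModDiagCell{n}{w}$. The first task is to check that $\zeta_{n,w}$ is a cycle. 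This is a $\bdy^2=0$-style bookkeeping computation: expand $\bdy\zeta_{n,w}$, rewrite the boundary of each module cell via the inductive case of \eqref{eq:wModDiagCell-compat} and the boundary of each algebra cell via \eqref{eq:wCell-compat}, and verify that, after applying the associativity identities for the grafting operations $\circ_i$ on trees, everything cancels in pairs over $\Field=\FF_2$. Tracking weights shows $\zeta_{n,w}$ lies in the $\Yvar_1^{w-w_1}\Yvar_2^{w-w_2}$-graded piece of $\bigoplus_{w_1,w_2\leq w}\xwMTreesCx{n}{w_1}\otimes_{\Field}\xwMTreesCx{n}{w_2}\otimes_{\Field}\Field[\Yvar_1,\Yvar_2]$, in dimension $n+2w-3$.

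The second task is to realize $\zeta_{n,w}$ as $\bdy$ of some $\wModDiagCell{n}{w}$ in dimension $n+2w-2$. Here I would invoke the acyclicity, in the relevant positive degree, of the ambient complex in which $\wModDiagCell{n}{w}$ lives — the module-side counterpart of the acyclicity underlying the existence of the algebra diagonal $\wADiag$. Granting that the (completed) weighted trees and module-trees complexes are acyclic in the needed range — with the degenerate trees $\IdTree$ and $\stump$ retained, since they carry the facet data of the two sums in \eqref{eq:wModDiagCell-compat} — a Künneth argument over $\FF_2$ gives the same for the tensor products $\xwMTreesCx{n}{w_1}\otimes_{\Field}\xwMTreesCx{n}{w_2}$, so $\zeta_{n,w}$ is a boundary; and since $\bdy$ and the composition operations respect the $(w_1,w_2)$-bigrading, a bounding chain can be chosen with the prescribed $\Yvar_1^{w-w_1}\Yvar_2^{w-w_2}$ coefficients. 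The finitely many lowest cases, where $\zeta_{n,w}$ has dimension $0$, I would handle by direct inspection, also confirming that the normalization $\wModDiagCell{2}{0}=\wcorolla{2}{0}\otimes\wcorolla{2}{0}$ is respected.

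The step I expect to be the main obstacle is the cycle check for $\zeta_{n,w}$: the module diagonal contributes two families of grafting terms and the algebra diagonal a third, so one must juggle all of these and verify the pairwise cancellation against the full boundary structure of the weighted module-trees complexes. The other genuinely substantive ingredient — rather than bookkeeping — is having the acyclicity of those complexes available at the outset; without it the obstruction at some stage need not vanish and the induction cannot proceed.
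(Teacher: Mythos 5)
Your proposal is correct and is essentially the argument of the cited proof: the paper itself only quotes this result from \cite{LOT:abstract}, where it is established by exactly this induction on $n+2w$ — the compatibility relation prescribes $\bdy\wModDiagCell{n}{w}$ in terms of smaller module cells and the given algebra cells, the prescribed chain is checked to be a cycle via the inductive hypothesis and Formula~\eqref{eq:wCell-compat}, and a bounding cell exists because the weighted (module) trees complexes are the cellular chains of contractible complexes (the associaplexes of Section~\ref{sec:associaplex}), hence acyclic in positive degrees. Your two flagged points — the cancellation bookkeeping in the cycle check and the need for acyclicity as an external input — are indeed where the real content lies, and both are supplied in \cite{LOT:abstract}.
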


Given a weighted module diagonal $\TrMDiag$ and weighted (right)
modules $\fModule$, $\fNodule$ over weighted algebras $\Alg$, $\Blg$ (over $\Ground$
and $\Groundl$) the tensor product $\fModule\otimes_{\wMDiag}\fNodule$ is the
weighted (right) module over $\Alg\otimes_{\wMDiag}\Blg$ defined as
follows. The underlying $(\Ground\otimes_{\Field}\Groundl)$-module is
$M\otimes_{\Field[\Yvar_1,\Yvar_2]}N$. To define the operations on
$\fModule\otimes_{\wMDiag}\fNodule$, note that given a weighted $\Ainf$-module
$(M,\{m_{1+n}^w\})$ over a weighted $\Ainf$-algebra
$(A,\{\mu_{n}^w\})$ and a weighted module tree $T$ with $n+1$
inputs there is an induced map
\[
  m^M(T)\co M\otimes A^{\otimes n}\to M
\]
defined by replacing each $(1+k)$-input, weight $w$ vertex on the
leftmost strand of $T$ with the operation $m_{1+k}^w$ on $M$, every
other $k$-input, weight $w$ vertex with the operation $\mu_{k}^w$ on
$A$, and composing according to the tree. Then the operation $m_{1+n}^w$
on $M\otimes_{\TrMDiag}N$ is defined by
\[
  m_{1+n}^w=\sum_{n_{S,T}(S\otimes T)\in \wModDiagCell{1+n}{w}}n_{S,T}\cdot m^M(S)\otimes m^N(T).
\]
(The operation $m_{1}^0$ is the usual differential on the tensor product.)

Next, given a weighted module $M$ and weighted type $D$ structure $P$
over a weighted algebra $\Alg$, there is an associated chain complex
$M\DT_{\Alg} P$ with underlying module $M\otimes_{\Field[\Yvar_1,\Yvar_2]}P$ and differential
\begin{equation}\label{eq:d-on-box-prod}
  \bdy=\sum_{n,w\geq 0}(m_{1+n}^w\otimes \Id_P)\circ(\Id_M\otimes \delta^n).
\end{equation}
(Again, convergence requires some boundedness for $M$ and/or $P$; see
Section~\ref{sec:boundedness}.)

\begin{proposition}\cite[Lemmas~\ref{Abs:lem:w-DT-sq-0} and~\ref{Abs:lem:w-DT-bifunc}]{LOT:abstract}
  The endomorphism $\bdy$ on $M\DT P$ is, indeed, a 
  differential. Moreover, changing $M$ or $P$ by a homotopy equivalence
  changes $M\DT P$ by a homotopy equivalence.
\end{proposition}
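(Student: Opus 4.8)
The plan is to prove the two assertions in turn, in each case reducing everything to the weighted $\Ainf$-relations for the modules together with the weighted type $D$ structure equation for $P$. We work over $\Field$, so no signs intervene, and we assume throughout the boundedness hypotheses of Section~\ref{sec:boundedness}, which is exactly what makes the infinite sums below converge; all operations are grading preserving by construction, so this extra structure comes along for free.

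For $\bdy^2=0$: expand $\bdy^2$ from Formula~\eqref{eq:d-on-box-prod}. Since $\delta^n$ is obtained by iterating $\delta^1$, applying $\Id_M\otimes\delta^{n_2}$ after $(m_{1+n_1}^{w_1}\otimes\Id_P)\circ(\Id_M\otimes\delta^{n_1})$ merely extends the ladder of $\delta^1$'s to $\delta^{n_1+n_2}$, so
\[
  \bdy^2 = \sum_{N}\Bigl(\,\sum_{\substack{n_1+n_2=N\\w_1,w_2\geq 0}} \bigl(m_{1+n_2}^{w_2}\circ(m_{1+n_1}^{w_1}\otimes\Id_{A^{\otimes n_2}})\bigr)\otimes\Id_P\Bigr)\circ(\Id_M\otimes\delta^N).
\]
For fixed $N$ (and each total weight), the inner sum is precisely the set of ``module--module composition'' terms in the weighted $\Ainf$-module relation for $M$; by that relation it equals the set of remaining terms, in which some operation $\mu_j^{w_2}$ — including, for $j=0$, a curvature operation $\mu_0^{w_2}$ — is applied to a consecutive block of $j$ algebra inputs that is then fed, together with the other inputs, into an operation $m^{w_1}$ on $M$. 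Fix such a term and such a block, occupying positions $i+1,\dots,i+j$ among the $N$ algebra inputs produced by $\delta^N$. Using the ladder identity $\delta^N = (\Id_{A^{\otimes(i+j)}}\otimes\delta^{N-i-j})\circ(\Id_{A^{\otimes i}}\otimes\delta^j)\circ\delta^i$ (the $\delta$'s acting on the trailing $P$-factors), that block is exactly the output of $\delta^j$ applied to an intermediate element of $P$; summing over $j$ and $w_2$ with the rest of the configuration held fixed, the corresponding factor is $\sum_{j,w_2}(\mu_j^{w_2}\otimes\Id_P)\circ\delta^j$, which vanishes by the weighted type $D$ structure equation. Hence every term cancels and $\bdy^2=0$.

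For bifunctoriality, the plan is to show that $\DT$ is a strict dg-functor in each variable. Given a morphism $f\in\Mor_\Alg(M,M')$ of weighted $\Ainf$-modules, define $f\DT\Id_P\co M\DT P\to M'\DT P$ by the formula for $\bdy$ with each $m_{1+n}^w$ replaced by $f_{1+n}^w$. The same computation as above — now invoking the description of the differential on $\Mor_\Alg(M,M')$ (pre- and post-composition with algebra and module operations) in place of the module relation, and again using the type $D$ structure equation to kill the terms with an internal $\mu_j^{w}$ — yields
\[
  \bdy_{M'\DT P}\circ(f\DT\Id_P) + (f\DT\Id_P)\circ\bdy_{M\DT P} = (df)\DT\Id_P.
\]
Thus $f\mapsto f\DT\Id_P$ is a chain map from $\Mor_\Alg(M,M')$ to the complex of $\Field$-linear maps $M\DT P\to M'\DT P$; it is additive, sends $\Id_M$ to $\Id_{M\DT P}$, and respects composition strictly, since composing $f\DT\Id_P$ with $g\DT\Id_P$ concatenates the two ladders of $\delta^1$'s into one, reproducing the composition formula for $g\circ f$. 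An identical discussion with a morphism $\psi$ of type $D$ structures $P\to P'$ in place of $f$ produces $\Id_M\DT\psi\co M\DT P\to M\DT P'$ with the same formal properties. Consequently homotopy equivalences are sent to homotopy equivalences: if $f'$ is a homotopy inverse to $f$, say $f'\circ f=\Id_M+dh$, then $(f'\DT\Id_P)\circ(f\DT\Id_P)=(f'\circ f)\DT\Id_P=\Id_{M\DT P}+d(h\DT\Id_P)$, and symmetrically on the other side; likewise for $\Id_M\DT\psi$. Composing, if $M\simeq M'$ and $P\simeq P'$ then $M\DT P\simeq M'\DT P\simeq M'\DT P'$.

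The main obstacle is the combinatorial bookkeeping in the two chain-level identities: one must check that the weighted $\Ainf$-module relations (and the morphism-complex differential) split into exactly the ``module--module'' terms produced by $\bdy^2$ (resp.\ by the anticommutator with $\bdy$) plus ``internal-algebra'' terms, that the algebra curvature $\mu_0^w$ really occurs as the $j=0$ case of the latter and is therefore absorbed by the $j=0$ summand of the type $D$ structure equation, and that the ladder identity for iterated $\delta^1$'s is applied consistently. The conceptual mechanism — ``the type $D$ structure equation swallows every internal algebra operation'' — is identical to the unweighted case; the weighting only enlarges the index sets. This is carried out in full in \cite[Lemmas~\ref{Abs:lem:w-DT-sq-0} and~\ref{Abs:lem:w-DT-bifunc}]{LOT:abstract}.
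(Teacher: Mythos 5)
The paper does not actually prove this proposition here: it is quoted verbatim from the companion paper \cite{LOT:abstract} (Lemmas on $\DT$-squared-zero and bifunctoriality), so there is no in-text argument to compare against. Your proof is the standard one and, in outline, is what the cited lemmas do: expand $\bdy^2$ using the ladder identity $\delta^{n_1+n_2}=(\Id\otimes\delta^{n_2})\circ\delta^{n_1}$, convert the resulting module--module compositions into internal-algebra insertions via the weighted $\Ainf$-module relation (with the curvature $\mu_0^w$ correctly appearing as the $j=0$ insertion), and kill each insertion by regrouping the iterated $\delta^1$'s so that the factor $\sum_{j,w}(\mu_j^{w}\otimes\Id_P)\circ\delta^j$ appears and vanishes by the type $D$ structure equation. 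The chain-map identity $\bdy\circ(f\DT\Id_P)+(f\DT\Id_P)\circ\bdy=(df)\DT\Id_P$ and the strictness of composition on the module side are also correct.

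The one imprecise step is your claim that the discussion for a type $D$ morphism $\psi\co P\to P'$ has ``the same formal properties,'' in particular strict compatibility with composition. Over a weighted (or even ordinary) $\Ainf$-algebra, $(\Id_M\DT\psi_2)\circ(\Id_M\DT\psi_1)$ is only chain homotopic to $\Id_M\DT(\psi_2\circ\psi_1)$, not equal to it: the composite inserts $\psi_1^1$ and $\psi_2^1$ into two separate ladders joined by two module operations, and reassociating these into a single ladder requires the module relation and produces an explicit homotopy (this is why $\DT$ is only a ``bifunctor up to homotopy'' in the bordered literature). This does not break your conclusion---preservation of homotopy equivalences only needs functoriality up to homotopy, so if $\psi'\circ\psi\simeq\Id_P$ then $(\Id_M\DT\psi')\circ(\Id_M\DT\psi)\simeq\Id_M\DT(\psi'\circ\psi)\simeq\Id_{M\DT P}$---but you should either exhibit the homotopy for composition on the type $D$ side or explicitly invoke the homotopy-bifunctoriality statement rather than asserting strictness.
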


Let $\MAlg^{U=1}$ and $\CFAm_{U=1}(Y)$ be the result of setting
$U=1$ on $\MAlg$ and $\CFAm(Y)$, respectively.  We are now able to
state the first version of the pairing theorem, to be proved in a future paper:
\begin{theorem}\label{thm:pairing1}\cite{LOT:torus-pairing}
  Given 3-manifolds $Y_1$ and $Y_2$ with $\bdy Y_1=T^2=-\bdy Y_2$, for
  any choice of weighted algebra and module diagonals there is a
  homotopy equivalence
  \[
    \CFmm(Y_1\cup_{T^2}Y_2)\simeq \bigl(\CFAm_{U=1}(Y_1)\otimes_{\wMDiag}\CFAc(Y_2)\bigr)\DT_{\MAlg^{U=1}\otimes_{\wADiag}\MAlgc}\CFDDm(\Id).
  \]
\end{theorem}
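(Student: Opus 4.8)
The plan is to adapt the ``time-dilation''/neck-stretching proof of the $\HFa$ pairing theorem of~\cite{LOT1,LOT4} to the weighted $\HFm$ setting. First I would fix bordered Heegaard diagrams $\HD_1$ for $Y_1$ and $\HD_2$ for $Y_2$, both with boundary the genus-one pointed matched circle $\PMC$, together with the standard bordered diagram $\HD_{\Id}$ for the product cobordism $T^2\times[0,1]$ underlying Figure~\ref{fig:DD-id-HD}, and glue these three along the two copies of $\PMC$ to obtain an admissible closed Heegaard diagram $\HD$ for $Y_1\cup_{T^2}Y_2$, placing the surviving basepoint on the $\HD_2$-side. Then $\CFmm(Y_1\cup_{T^2}Y_2)$ is computed by counting embedded holomorphic curves in $\Sigma\times[0,1]\times\RR$ with $U$ recording multiplicity at that basepoint. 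Since both sides of the asserted equivalence are homotopy invariants -- the right-hand side by Theorem~\ref{thm:CFAm-invt}, the invariance of $\CFDDm(\Id)$ from Section~\ref{sec:DD-Id}, and the homotopy-invariance of the module and box tensor products -- it suffices to produce a chain-level isomorphism for this one choice of data.

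Next I would insert long cylindrical necks in $\HD$ along the two copies of $\PMC$ and let the neck lengths go to infinity along a generic family of almost complex structures adapted to the stretching. A compactness argument in the spirit of symplectic field theory shows that curves in $\HD$ degenerate into holomorphic buildings whose pieces live over $\HD_1$, over $\HD_{\Id}$, over $\HD_2$, and in the neck regions; a matching gluing theorem then shows that, for long enough necks, each such configuration glues back to a unique curve in $\HD$. The bookkeeping of how the Reeb-chord asymptotics on the two sides of a neck interleave, and how the interior Reeb orbits (the weights $w$) are distributed between the sides, is exactly what a weighted algebra diagonal $\wADiag$ and a compatible weighted module diagonal $\wMDiag$ record (Section~\ref{sec:intro-DD}); this is the weighted, Saneblidze--Umble-flavored analogue of the pairing combinatorics of~\cite{LOT1}. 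Curves over $\HD_1$ are counted by the operations defining $\CFAm(\HD_1)$, but their multiplicity at the auxiliary basepoint of $\HD_1$ -- now an interior point of $Y$ -- no longer contributes to the power of $U$, which forces the specialization to $\CFAm_{U=1}(Y_1)$; curves over $\HD_2$ are counted by $\CFAm(\HD_2)$ retaining the full $U$-grading, producing $\CFAc(Y_2)$ once one passes to the completion needed for convergence (Section~\ref{sec:boundedness}); and the curves over $\HD_{\Id}$, together with the boundary degenerations supported there, assemble into a weighted type \DD\ bimodule over $\MAlg^{U=1}$ and $\MAlg$ generated by $\iota_0\otimes\iota_0$ and $\iota_1\otimes\iota_1$ in the correct gradings, which by the rigidity underlying Section~\ref{sec:DD-Id} -- or by a direct count in the small diagram $\HD_{\Id}$ -- must be $\CFDDm(\Id)$ of~\eqref{eq:DD-id-diff}. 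Comparing the resulting total count, organized according to $\wADiag$ and $\wMDiag$, with the module tensor product and the box product~\eqref{eq:d-on-box-prod} identifies the differential on $\CFmm(Y_1\cup_{T^2}Y_2)$ with that of $\bigl(\CFAm_{U=1}(Y_1)\otimes_{\wMDiag}\CFAc(Y_2)\bigr)\DT_{\MAlg^{U=1}\otimes_{\wADiag}\MAlgc}\CFDDm(\Id)$, compatibly with the $\SpinC$ splitting~\eqref{eq:spinc-decomp}; the invariance inputs above then upgrade this to a homotopy equivalence.

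The main obstacle will be the degeneration analysis in the presence of \emph{boundary degenerations}: the moduli spaces $\cN$ of Section~\ref{subsec:RelateHolC} have no analogue in the $\HFa$ pairing theorem, and when a neck is stretched they can split off, so one must show that the resulting ends either cancel in pairs or are absorbed by the curved term $\mu_0^1$ of $\MAlg$ and by the $\CFDDm(\Id)$-factor of the tensor product. Here Theorem~\ref{thm:master} carries the essential weight, in particular the composite boundary degeneration ends~\ref{end:CompositeBoundaryDegeneration} and the orbit-curve ends~\ref{end:EscapingOrbit}. A secondary, more bookkeeping-heavy difficulty is checking that the interleaving and weight data produced by the two necks really do assemble into a genuine pair of compatible weighted diagonals, that the curve counts match the $U$-equivariant models $\CFAm$ and $\CFAc$ rather than the more directly defined $\CFAmb$ (via the comparison of Proposition~\ref{prop:CFA-is-CFA}), and that all the infinite sums converge uniformly once the relevant $U$-adic and $t$-adic completions are taken -- for which the boundedness results of Section~\ref{sec:boundedness} are essential.
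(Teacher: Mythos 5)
First, a point of order: this paper does not prove Theorem~\ref{thm:pairing1}. The theorem is stated with a citation to the forthcoming paper \cite{LOT:torus-pairing}, and the text says explicitly that its ``proof will be given in~\cite{LOT:torus-pairing}.'' So there is no in-paper argument to compare yours against, and your proposal can only be assessed as a standalone plan.

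As a plan, the shape is plausible and very likely matches the intended strategy (glue $\HD_1\cup\HD_{\Id}\cup\HD_2$, stretch two necks, and match the interleaving combinatorics with a diagonal), but the proposal leaves genuine gaps at exactly the points where the content of the theorem lies. First, identifying the curves over the middle piece with the algebraically defined $\CFDDm(\Id)$ requires a moduli theory for doubly-bordered Heegaard diagrams in the minus setting, which this paper explicitly declines to develop (the caption to the figure for $\CFDDm(\Id)$ says bimodules are not associated to two-boundary-component diagrams here); the grading argument of Section~\ref{sec:DD-Id} only verifies that the given $\delta^1$ satisfies the structure equation, not that curve counts produce it. Second, the claim that the interleaving of Reeb chords and the distribution of interior orbits across a neck ``is exactly what a weighted algebra diagonal records'' is the heart of the matter and is asserted rather than argued: one must exhibit the geometrically produced trees, verify the compatibility equation~\eqref{eq:wCell-compat} including the curved terms, and handle the boundary degenerations that can split off in the neck limit --- Theorem~\ref{thm:master} controls degenerations only for a fixed bordered diagram with the coherent families of Definition~\ref{def:AdmissibleJs}, and does not apply verbatim to the two-parameter stretching family. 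Third, your reduction to ``one choice of data'' does not cover the quantifier ``for any choice of weighted algebra and module diagonals'': Proposition~\ref{prop:l-b-DT} gives independence of the module diagonal, but independence of the algebra diagonal is not established anywhere in this paper (cf.\ Remark~\ref{rem:alg-diag-dependent}), so a geometric argument producing one particular $\wADiag$ does not by itself yield the statement as written.
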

(Here, $\CFAc$ and $\MAlgc$ are completions of $\CFAm$ and $\MAlg$;
see Section~\ref{sec:boundedness}.)

\begin{remark}
  Recall that for a general pointed matched circle, $\CFDDa(\Id)$ is a
  type \DD\ bimodule over $\Alg(\PMC)$ and
  $\Alg(-\PMC)=\Alg(\PMC)^\op$~\cite[Section 8.1]{LOT2}. For the genus
  $1$ pointed matched circle, there is an isomorphism
  $\Alg(\PMC)\cong \Alg(-\PMC)$ exchanging $\rho_1$ and $\rho_3$ and
  fixing $\rho_2$ (and $\rho_4$). To obtain the type \DD\ identity
  module described here, we have composed the action by $\Alg(-\PMC)$
  with this isomorphism. (We did the same thing in~\cite[Section
  10.1]{LOT2}; in the notation of~\cite[Section 3.1]{LOT4}, $\rho_3$ is
  identified with $a_o(\rho_1)$.)
\end{remark}

\subsection{Primitives and the type D module}\label{sec:intro-primitives}
In this subsection we sketch the definition of
$\CFDm(\HD)$ and state a version of the pairing theorem closer to the
pairing theorem for the hat case.

\subsubsection{Module diagonal primitives and the definition of CFD}
We will define $\CFDm(\HD)=\CFAm_{U=1}(\HD)\DT\CFDDm(\Id)$, but
making sense of this one-sided box product requires some additional
abstract algebra and, before that, some additional forestry.
\begin{definition}\cite[Section~\ref{Abs:sec:w-prim}]{LOT:abstract}
  We describe here two ways to join a sequence of weighted trees
  together; the first is appropriate for trees in $\wTreesCx{*}{*}$
  and the second for trees in $\wMTreesCx{*}{*}$. Specifically, given
  weighted trees $S_1,\dots,S_n$ in $\wTreesCx{*}{*}$ their \emph{weight
    $w$ root joining} $\wRootJoin^w(S_1,\dots,S_n)$ is the result of adding a
  new root vertex with weight $w$ and feeding the outputs of
  $S_1,\dots,S_n$ into the new vertex. The \emph{root joining} is the
  sum over $w$ of $\Yvar_1^w$ times the weight $w$ root joining. In the
  special case $n=0$ (i.e., we are joining no trees) the root joining
  is the sum over $w>0$ of $\Yvar_1^w$ times the $0$-input, weight $w$ corolla.
    
  Given weighted trees $T_1,\dots,T_n$ in $\wMTreesCx{*}{*}$ their
  \emph{left joining} $\LeftJoin(T_1,\dots,T_n)$ is obtained by feeding the
  output of each $T_i$ into the leftmost input of $T_{i+1}$. (One does
  not introduce a new $2$-valent vertex in this process.) In the
  special case that $n=0$ (i.e., we are joining no trees) we declare
  the left joining to be the generalized tree
  $\IdTree$.

  So, the number of internal vertices of
  $\wRootJoin^w(S_1,\dots,S_n)$ is one more than the sum of the numbers of internal
  vertices of $S_1,\dots,S_n$, while the number of internal vertices
  of $\LeftJoin(T_1,\dots,T_n)$ is equal to the sum of the
  number of internal vertices of $T_1,\dots,T_n$.

  Combining these operations, given a sequence of pairs of trees
  $(T_1,S_1),\dots,(T_n,S_n)$ in
  $\wMTreesCx{*}{*}\otimes_{\Field} \wTreesCx{*}{*}$ so that for each $i$,
  $T_i$ has one more input than $S_i$, the \emph{left-root joining} is
  \[
  \LRjoinW((T_1,S_1),\dots,(T_n,S_n))=\LeftJoin(T_1,\dots,T_n)\otimes\wRootJoin^{*}(S_1,\dots,S_n).
  \]
  The left-root joining is an element of (a completion of)
  $\wMTreesCx{*}{*}\otimes_{\Field} \wTreesCx{*}{*}$.

  There is a related operation which gives an element of 
  $\wMTreesCx{*}{*}\otimes_{\Field} \wMTreesCx{*}{*}$, defined by
  \[
    \LRjoinW'((T_1,S_1),\dots,(T_n,S_n)) =
    \LeftJoin(T_1,\dots,T_n) \otimes
    \wRootJoin^{*}(\DegenTree,S_1,\dots,S_n).
  \]
\end{definition}

\begin{definition}\cite[Definition~\ref{Abs:def:wM-prim}]{LOT:abstract}
  Given a weighted algebra diagonal $\wADiag$, a \emph{weighted module
    diagonal primitive} compatible with $\wADiag$ consist of a linear
  combination of trees for $n\geq 1$ and $w\geq 0$, $(n,w)\neq (1,0)$,
  of the form
  \begin{equation}\label{eq:wM-prim-1}
    \wTrPMDiag{n}{w}=\sum_{(S,T)}(S,T)\in \bigoplus_{w_1,w_2\leq w}\Yvar_1^{w-w_1} \Yvar_2^{w-w_2}\bigl(\wMTreesCx{n}{w_1}\otimes_{\Field} \xwTreesCx{n-1}{w_2}\bigr),
  \end{equation}
  of dimension $n+2w-2$, satisfying the following conditions:
  \begin{itemize}
  \item \textbf{Compatibility}:
    \begin{equation}
      \label{eq:wM-prim-compat}
      \partial \wTrPMDiag{*}{*} = 
      \LRjoinW((\wTrPMDiag{*}{*})^{\otimes\bullet})+\wTrPMDiag{*}{*}\circ'\wDiagCell{*}{*}.
    \end{equation}
    Here, $\wTrPMDiag{*}{*}=\sum_{n,w}\wTrPMDiag{n}{w}$ and
    $(\wTrPMDiag{*}{*})^{\otimes\bullet} = \sum_{n \ge
      0}(\wTrPMDiag{*}{*})^{\otimes n}$.
  \item \textbf{Non-degeneracy}: $\wTrPMDiag{2}{0}=\wcorolla{2}{0}\otimes \DegenTree$.
  \end{itemize}
\end{definition}

\begin{proposition}\cite[Proposition~\ref{Abs:prop:wprim-exist}]{LOT:abstract}
  Given any weighted algebra diagonal, there is a compatible weighted
  module diagonal primitive.
\end{proposition}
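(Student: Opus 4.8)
The plan is an obstruction-theoretic induction, of exactly the shape used in~\cite{LOT:abstract} to construct weighted algebra and module diagonals (\cite[Theorem~\ref{Abs:thm:wDiag-exists}]{LOT:abstract} and~\cite[Theorem~\ref{Abs:thm:wModDiag-exists}]{LOT:abstract}). The Compatibility relation~\eqref{eq:wM-prim-compat}, $\bdy\wTrPMDiag{*}{*}=\LRjoinW\bigl((\wTrPMDiag{*}{*})^{\otimes\bullet}\bigr)+\wTrPMDiag{*}{*}\circ'\wDiagCell{*}{*}$, has the shape of a Maurer--Cartan equation for the ``one-sided'' joining operations, so one builds the components $\wTrPMDiag{n}{w}$ one at a time, ordered by the \emph{complexity} $d=n+2w$ (for which $\bdy$ lowers complexity by one). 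First I would fix the given weighted algebra diagonal $\wADiag=\{\wDiagCell{n}{w}\}$ and record the crucial bookkeeping point: the complexity-$d$ part of the right-hand side of~\eqref{eq:wM-prim-compat} depends only on $\wADiag$ and on the $\wTrPMDiag{n'}{w'}$ with $n'+2w'<d$ --- in $\LRjoinW$ each root joining adds weight, each left joining applied to $\geq 2$ arguments lowers the number of inputs, the $k=0$ term involves no primitive data at all, and $\wTrPMDiag{m}{v}\circ'\wDiagCell{\ell}{u}$ lowers complexity by $\ell+2u\geq 2$. The base case is $d=2$, i.e.\ $(n,w)=(2,0)$: Non-degeneracy forces $\wTrPMDiag{2}{0}=\wcorolla{2}{0}\otimes\DegenTree$, and the complexity-$2$ part of the right-hand side vanishes, so~\eqref{eq:wM-prim-compat} holds there.

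For the inductive step, assume $\wTrPMDiag{n'}{w'}$ has been constructed for all $n'+2w'<d$ so that~\eqref{eq:wM-prim-compat} holds in every complexity below $d$; fix $(n,w)$ with $n+2w=d$, and let $Z_{n,w}$ be the complexity-$(n,w)$ part of the right-hand side of~\eqref{eq:wM-prim-compat}, an element of
\[
  \bigoplus_{w_1,w_2\leq w}\Yvar_1^{w-w_1}\Yvar_2^{w-w_2}\bigl(\wMTreesCx{n}{w_1}\otimes_{\Field}\xwTreesCx{n-1}{w_2}\bigr)
\]
of dimension $n+2w-3$. Two things then need checking: (i) that $Z_{n,w}$ is a cycle, $\bdy Z_{n,w}=0$; and (ii) granting (i), that $Z_{n,w}$ is a boundary, so that I may set $\wTrPMDiag{n}{w}$ to be any chain of dimension $n+2w-2$ with $\bdy\wTrPMDiag{n}{w}=Z_{n,w}$ --- this chain sits in the correct $\Yvar_1,\Yvar_2$-graded summand because the joinings $\LeftJoin$ and $\wRootJoin^{*}$ track the weight splitting in the prescribed way --- whence~\eqref{eq:wM-prim-compat} holds in complexity $d$ and the induction closes. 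For (ii) I would invoke the acyclicity of the weighted tree complexes in positive degrees: these are cellular chain complexes of contractible polytopes whose top cells are the corollas, so their homology vanishes in dimension $n+2w-3$ once $n+2w\geq 4$, and the finitely many cases with $n+2w=3$ are disposed of by direct inspection using the Non-degeneracy normalization. This is the same acyclicity input that~\cite{LOT:abstract} uses to build the weighted algebra and module diagonals.

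The main obstacle is (i), the closedness $\bdy Z_{n,w}=0$. Here one applies $\bdy$ to the two summands on the right-hand side of~\eqref{eq:wM-prim-compat}, using the Leibniz rules for $\bdy$ relative to $\LeftJoin$, $\wRootJoin^{*}$, $\circ_1$ and $\circ'$ --- so that $\bdy$ of a joined configuration is the sum over applying $\bdy$ to a single factor, plus the terms obtained by merging two adjacent joining vertices --- and then substitutes the known relations $\bdy\wTrPMDiag{n'}{w'}=Z_{n',w'}$ (the inductive hypothesis, valid below complexity $d$) together with the algebra-diagonal Compatibility~\eqref{eq:wCell-compat} for $\bdy\wDiagCell{*}{*}$. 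After these substitutions the terms cancel in pairs; this cancellation is precisely the statement that the ``one-sided'' differential assembled from $\LRjoinW$ and $\circ'$ squares to zero, and it is the only genuinely computational step. The argument runs parallel to the corresponding closedness check in the construction of the weighted module diagonal; the new features --- the algebra tree carrying one input fewer than the module tree, and the appearance of the left-root joining $\LRjoinW$ and of $\circ'$ in place of the ordinary joining and $\circ$ --- change which pairs of terms cancel, not the mechanism by which they do.
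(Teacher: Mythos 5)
This proposition is only quoted here with a citation to \cite{LOT:abstract}; the present paper contains no proof of it, and the acyclic-models induction you describe (build $\wTrPMDiag{n}{w}$ by increasing $n+2w$, check the obstruction is closed via the Leibniz rule together with the inductive hypothesis and the algebra-diagonal compatibility, then kill it using contractibility of the associaplexes) is exactly the mechanism that \cite{LOT:abstract} uses for all of its diagonal-existence results, so your proposal is correct and matches the intended argument. One small bookkeeping caveat: your claim that ``each root joining adds weight'' is not literally true (the $\Yvar_1^0$ term of $\wRootJoin^{*}$ adds no weight); what saves the induction there is that the weight-$0$ root joining of a single tree produces an unstable bivalent root vertex and hence vanishes in the stably-weighted trees complex, so the obstruction genuinely depends only on strictly lower complexity.
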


From now on, fix a weighted algebra diagonal $\wADiag$. For example, the proof
of existence of weighted diagonals~\cite[Lemma 5.3 and Theorem
6.11]{LOT:abstract} induces a specific choice of weighted algebra
diagonal, and we may fix that one throughout.

\begin{definition}\cite[Definition~\ref{Abs:def:wM-prim}]{LOT:abstract}
  Given a weighted module diagonal primitive, there is an associated
  weighted module diagonal defined by
  \[
    \LRjoinW'((\wTrPMDiag{*}{*})^{\otimes\bullet}).
  \]
\end{definition}
It is not hard to show that, for any weighted module diagonal
primitive, this defines a weighted module
diagonal~\cite[Lemma~\ref{Abs:lem:wprim-gives-diag}]{LOT:abstract}.

In the following definition, for notational reasons, we view a
left-left type \DD\ bimodules
$P$ over $\Alg$ and $\Blg$ as a left-right type \DD\ bimodule over
$\Alg$ and $\Blg^\op$.
\begin{definition}\cite[Definition~\ref{Abs:def:w-one-sided-DT}]{LOT:abstract}
  Given a weighted module diagonal primitive $\wTrPMDiagNS$ compatible with the
  weighted algebra diagonal $\wADiag$, weighted algebras $\Alg$ and
  $\Blg$, a weighted module $\fModule$ over $\Alg$, and a weighted type \DD\
  bimodule $P$ over $\Alg$ and $\Blg$ (compatible with $\wADiag$), we
  define the \emph{one-sided box tensor product of $\fModule$ and $P$},
  \[
    \fModule\DT_{\Alg}P,
  \]
  to be the weighted type $D$ structure $(M\otimes_{\Ground}P,\delta^1)$
  where $\delta^1$ is given by
  \[
    \delta^1(x\otimes y)=m_1^0(x)\otimes y \otimes 1+
    \sum_{n,w\geq 0}\sum_{(S,T)\in \wTrPMDiag{n}{w}}(m^M(S) \otimes \Id_{P}\otimes
    \mu^{\Blg^{\op}}(T^\op))\circ (\Id_M \otimes\delta_P).
  \]
\end{definition}
The fact that this defines a weighted type $D$ structure
is~\cite[Proposition~\ref{Abs:prop:one-sided-DT-works}]{LOT:abstract}. (Again,
we have specialized to the charge-1 case. Also, we again need some boundedness
to ensure the sums converge; see Section~\ref{sec:boundedness}.)

Recall that we have fixed a weighted algebra diagonal 
$\wADiag$. Given a weighted module diagonal primitive $\wTrPMDiagNS$
compatible with $\wADiag$, define
\[
  \CFDm(\HD)=\CFAm_{U=1}(\HD)\DT_{\MAlg^{U=1}}\CFDDm(\Id).
\]
\begin{theorem}\label{thm:intro-CFD}
  The object $\CFDm(\HD)$ is a weighted type $D$ structure. Further,
  up to homotopy equivalence, $\CFDm(\HD)$ depends on $\HD$ only
  through the bordered 3-manifold it specifies, and is independent of
  the choice of weighted module diagonal primitive $\wTrPMDiagNS$.
\end{theorem}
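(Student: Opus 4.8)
The plan is to reduce Theorem~\ref{thm:intro-CFD} to formal properties of the one-sided box tensor product together with the already-established properties of $\CFAm$ and $\CFDDm(\Id)$. First I would check that $\CFDm(\HD)$ is a well-defined weighted type $D$ structure: by definition $\CFDm(\HD)=\CFAm_{U=1}(\HD)\DT_{\MAlg^{U=1}}\CFDDm(\Id)$, and \cite[Proposition~\ref{Abs:prop:one-sided-DT-works}]{LOT:abstract} says that the one-sided box product of a weighted module with a weighted type \DD\ bimodule is a weighted type $D$ structure, \emph{provided} the relevant sums converge. So the first real task is a boundedness check: one must verify that the pair $(\CFAm_{U=1}(\HD),\CFDDm(\Id))$ satisfies whatever boundedness hypothesis is imposed in Section~\ref{sec:boundedness} that guarantees convergence of $\delta^1$. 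Here I would use that $\CFDDm(\Id)$ has only two generators and an explicit, finite $\delta^1$ with no $U$-powers (Formula~\eqref{eq:DD-id-diff}), so the type \DD\ side is bounded in the appropriate sense; combined with provincial admissibility of $\HD$ (which controls the domains appearing in $\CFAm$, hence the weights $w$ and $U$-powers that can appear for fixed output), this should give the needed finiteness.

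Next I would address topological invariance. The module $\CFAm(\HD,\spinc)$, and hence $\CFAm_{U=1}(\HD)$, depends on $\HD$ only through the bordered 3-manifold up to weighted homotopy equivalence by Theorem~\ref{thm:CFAm-invt}. The bimodule $\CFDDm(\Id)$ is fixed once and for all (and, as noted after Figure~\ref{fig:DD-id-HD}, its defining relation holds for \emph{any} weighted algebra diagonal). Therefore, by the bifunctoriality statement for the box product --- changing $M$ or $P$ by a homotopy equivalence changes $M\DT P$ by a homotopy equivalence (the analogue of \cite[Lemma~\ref{Abs:lem:w-DT-bifunc}]{LOT:abstract} in the one-sided setting, which is part of \cite[Proposition~\ref{Abs:prop:one-sided-DT-works}]{LOT:abstract}) --- a homotopy equivalence $\CFAm_{U=1}(\HD)\simeq \CFAm_{U=1}(\HD')$ induces a homotopy equivalence of type $D$ structures $\CFDm(\HD)\simeq\CFDm(\HD')$. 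This gives the dependence on $(Y,\phi)$ alone.

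For independence of the weighted module diagonal primitive $\wTrPMDiagNS$: I would invoke the fact that any two weighted module diagonal primitives compatible with the \emph{same} fixed weighted algebra diagonal $\wADiag$ are homotopic, in the sense of chain homotopy in the complex of primitives. (This is the primitive-level analogue of the uniqueness-up-to-homotopy results for weighted diagonals in \cite{LOT:abstract}; existence is \cite[Proposition~\ref{Abs:prop:wprim-exist}]{LOT:abstract}, and a homotopy between any two choices is obtained by the same obstruction-theoretic argument, the relevant obstruction groups vanishing for dimension reasons since the space of primitives with fixed $(n,w)$ is acyclic in the appropriate range.) Such a homotopy of primitives then induces a homotopy equivalence between the two resulting one-sided box products, by a standard mapping-cone/continuation-map argument: the homotopy $H$ between primitives defines a degree-$1$ type $D$ morphism whose "$\delta^1$-differential" is the difference of the two $\delta^1$'s, exhibiting the two $\CFDm(\HD)$'s as homotopy equivalent type $D$ structures. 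I expect the main obstacle to be precisely this last point --- promoting a homotopy at the level of module diagonal primitives to a homotopy equivalence of the associated one-sided box products --- since it requires setting up the right notion of morphism between the relevant type $D$ structures and checking that the primitive homotopy assembles (via $\LRjoinW$) into a valid such morphism; the bookkeeping with the $\LRjoinW((\wTrPMDiag{*}{*})^{\otimes\bullet})$ terms in the compatibility equation~\eqref{eq:wM-prim-compat} is the delicate part. (It is conceivable that the cleanest route instead cites a general statement from \cite{LOT:abstract} to the effect that the one-sided box product is, up to homotopy, independent of the choice of compatible primitive; if such a statement is available there, I would use it directly and this paragraph collapses to a citation.)
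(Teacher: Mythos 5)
Your proposal is correct and takes essentially the same route as the paper: the paper packages all three claims into Corollary~\ref{cor:bounded-DTs}, deduced from the boundedness condition that $\CFDDm(\Id)$ is right-filtered short over the completion $\MAlgc$ (Proposition~\ref{prop:l-b-DT}, via Lemma~\ref{lem:admis-bounded}), invariance of $\CFAm$ (Theorem~\ref{thm:CFAm-invt}), and the homotopy-invariance results for the one-sided box product in \cite{LOT:abstract}. In particular, independence of the module diagonal primitive is handled exactly as in your fallback, by direct citation to \cite{LOT:abstract} rather than by an argument carried out in this paper.
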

This invariance theorem follows from the fact that $\CFAm$ is
a weighted $\Ainf$-module
(Theorem~\ref{thm:CFAm-is}), invariance of $\CFAm$
(Theorem~\ref{thm:CFAm-invt}), and abstract 
algebra
(specifically,~\cite[Lemma~\ref{Abs:lem:w-one-side-DT-id-chain-map}
and Corollary~\ref{Abs:cor:w-DT-preserve-hequiv}]{LOT:abstract}).

\subsubsection{Second pairing theorem}
\begin{theorem}\label{thm:pairing2}\cite{LOT:torus-pairing}
  Given 3-manifolds $Y_1$ and $Y_2$ with $-\bdy Y_1=T^2=\bdy Y_2$,
  there is a homotopy equivalence
  \[
    \CFmm(Y_2\cup_{T^2}Y_1)\simeq \CFAc(Y_2)\DT\CFDm(Y_1).
  \]
\end{theorem}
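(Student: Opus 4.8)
The plan is to deduce Theorem~\ref{thm:pairing2} from the first pairing theorem, Theorem~\ref{thm:pairing1}, together with the abstract re-association of weighted box tensor products explained in Section~\ref{sec:intro-primitives}; the geometric content is entirely contained in Theorem~\ref{thm:pairing1}, and the full proof is given in~\cite{LOT:torus-pairing}. First I would fix the conventions: orient $T^2$ so that $\bdy Y_2\cong T^2\cong-\bdy Y_1$, so that $\CFAc(Y_2)$ is a completed weighted module over $\MAlgc$ while $\CFAm_{U=1}(Y_1)$ is a weighted module over $\MAlg^{U=1}$ (for the $-T^2$ boundary via the genus-one identification $\Alg(\PMC)\cong\Alg(-\PMC)$ recalled in the Remark after Theorem~\ref{thm:pairing1}). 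Fix the weighted algebra diagonal $\wADiag$ chosen above, a compatible weighted module diagonal primitive $\wTrPMDiagNS$, and let $\wMDiag=\LRjoinW'((\wTrPMDiagNS)^{\otimes\bullet})$ be the module diagonal it induces. With these choices Theorem~\ref{thm:pairing1} supplies a homotopy equivalence
\[
  \CFmm(Y_2\cup_{T^2}Y_1)\;\simeq\;\bigl(\CFAm_{U=1}(Y_1)\otimes_{\wMDiag}\CFAc(Y_2)\bigr)\DT_{\MAlg^{U=1}\otimes_{\wADiag}\MAlgc}\CFDDm(\Id),
\]
all of whose infinite sums converge precisely because $\CFAc$ and $\MAlgc$ are the relevant completions (Section~\ref{sec:boundedness}).

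Next I would re-associate the right-hand side. The abstract input is the associativity of the weighted box tensor product proved in~\cite{LOT:abstract}: for weighted modules $M$ over $\Alg$ and $N$ over $\Blg$ and a weighted type \DD\ bimodule $P$ over $\Alg$ and $\Blg$ compatible with $\wADiag$, the two-sided tensor product along the module diagonal $\wMDiag$ and the one-sided box tensor product along its defining primitive $\wTrPMDiagNS$ are related by a homotopy equivalence
\[
  \bigl(M\otimes_{\wMDiag}N\bigr)\DT_{\Alg\otimes_{\wADiag}\Blg}P\;\simeq\;N\DT_{\Blg}\bigl(M\DT_{\Alg}P\bigr),
\]
where on the right $M\DT_{\Alg}P$ denotes the one-sided box tensor product built from $\wTrPMDiagNS$ (a weighted type $D$ structure over $\Blg$), boxed with $N$ as in Formula~\eqref{eq:d-on-box-prod}. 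Applying this with $M=\CFAm_{U=1}(Y_1)$, $\Alg=\MAlg^{U=1}$, $N=\CFAc(Y_2)$, $\Blg=\MAlgc$, and $P=\CFDDm(\Id)$ rewrites the right-hand side as
\[
  \CFAc(Y_2)\DT_{\MAlgc}\bigl(\CFAm_{U=1}(Y_1)\DT_{\MAlg^{U=1}}\CFDDm(\Id)\bigr)=\CFAc(Y_2)\DT\CFDm(Y_1),
\]
using the definition $\CFDm(Y_1)=\CFAm_{U=1}(Y_1)\DT_{\MAlg^{U=1}}\CFDDm(\Id)$, which is a well-defined weighted type $D$ structure by Theorem~\ref{thm:intro-CFD}. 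Concatenating the two equivalences gives the statement.

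The main obstacle does not lie here: it is Theorem~\ref{thm:pairing1} itself, whose proof requires a genuine holomorphic-curve neck-stretching and gluing analysis---now carrying the extra boundary-degeneration and weight bookkeeping forced by the deformed torus algebra---and is deferred to~\cite{LOT:torus-pairing}. Granting Theorem~\ref{thm:pairing1}, the only remaining points needing care are (i) that the weighted module diagonal appearing there may be taken to be exactly the one, $\wMDiag$, induced by the primitive $\wTrPMDiagNS$ used to define $\CFDm$, so that the associativity statement applies verbatim, and (ii) that the boundedness hypotheses of Section~\ref{sec:boundedness} hold for each of the iterated box products, so all the sums in sight converge; both are routine once the completions $\CFAc$, $\MAlgc$ and the boundedness of $\CFDDm(\Id)$ are in place.
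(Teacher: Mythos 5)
Your proposal is correct and follows exactly the route the paper takes: Theorem~\ref{thm:pairing1} (deferred to~\cite{LOT:torus-pairing}) combined with the re-association homotopy equivalence $\bigl(\fModule\otimes_{\TrMDiag}\fNodule\bigr)\DT P\simeq \fNodule\DT\bigl(\fModule\DT_{\TrMPrim}P\bigr)$, which is Proposition~\ref{prop:l-b-DT} and is applied to these particular modules in Corollary~\ref{cor:bounded-DTs}, with the boundedness hypotheses handled as in Section~\ref{sec:boundedness}. Nothing further is needed.
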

This follows from the definitions, Theorem~\ref{thm:pairing1} (whose
proof will be given in~\cite{LOT:torus-pairing}), and abstract algebra
(specifically,~\cite[Proposition~\ref{Abs:prop:one-sided-DT-works}]{LOT:abstract}).

\subsection*{Related work}
There are several other approaches to giving a bordered extension of
$\HFm$. Zemke has given a construction of an algebra associated to the
torus and type $D$ and $A$ modules associated to 3-manifolds with
torus boundary, satisfying a pairing theorem~\cite{Zemke:bordered},
with both computational and conceptual
applications~\cite{Zemke:bordered,Zemke:lattice}. His construction
begins from the link surgery theorem~\cite{ManolescuOzsvath:surgery},
and both repackages and extends that result. The relationship of his
construction to the construction given here, or indeed to the $\HFa$
case of bordered Floer homology, is currently unknown.

Hanselman-Rasmussen-Watson have given a reformulation of the $\HFa$ case of
bordered Floer homology with torus boundary in terms of immersed
curves in the torus~\cite{HRW}. Their construction starts from $\Alg$,
$\CFDa$, and $\CFAa$, and the invariance and pairing results for
classical bordered Floer homology are ingredients in their
reformulation. Recently, Hanselman has used the rational surgery formula~\cite{OS11:RatSurg} to
refine this immersed curve invariant to compute $\HFm$, by equipping
it with an appropriate local system over $\Field[U]$~\cite{Hanselman:HFm}.

\subsection*{Acknowledgments}
We thank Rumen Zarev for helpful conversations. The proofs of
stabilization invariance in
Sections~\ref{subsec:StabilizationInvarianceBDeg}
and~\ref{sec:CFA-stab-invariance} are adapted from a book in progress
by Andr{\'a}s Stipsicz, Zolt{\'a}n Szab\'o, and the second author, and
we thank them for helpful conversations. We also thank Wenzhao Chen, Jonathan Hanselman, and 
Shikhin Sethi for comments on an early draft of this paper.


\section{Background}\label{sec:background}
\subsection{Topological preliminaries}
The topological context for the $\HFmm$ version of bordered Floer
homology---the notions of Heegaard diagrams, $\SpinC$-structures, and
so on---is the same as the setting for the original, $\HFa$
version~\cite{LOT1}. For the reader's convenience, we recall briefly much of
that terminology. The differences from our previous
monograph are summarized at the end of the section.

The unique genus $1$ pointed matched circle, shown in
Figure~\ref{fig:pmc}, consists of an oriented circle $Z$, $4$ points
$\mathbf{a}\subset Z$, and one additional basepoint
$z\in Z\setminus\mathbf{a}$. Order the points in $\mathbf{a}$ as
$a_1,a_2,a_3,a_4$ using the orientation of $Z$ and so that $z$ lies
between $a_4$ and $a_1$. We view the points as matched in pairs,
$a_1\leftrightarrow a_3$ and $a_2\leftrightarrow a_4$. When referring
to $a_i$ we view the index $i$ as an element of $\ZZ/4\ZZ$. A
\emph{Reeb chord} (or simply \emph{chord}) in $Z$ is a homotopy class of orientation-preserving
maps $\rho\co ([0,1],\{0,1\})\to (Z,\mathbf{a})$. There are four
\emph{short Reeb chords} $\rho_1,\dots,\rho_4$, where $\rho_i$ has
endpoints $a_i$ and $a_{i+1}$ and is otherwise disjoint from
$\mathbf{a}$; we will view the indices $i$ of the $\rho_i$ as elements
of $\ZZ/4\ZZ$ as well.

\begin{figure}
  \centering
  \begin{tikzpicture}
    \draw (0,0) circle (1);
    \draw[->] (1,0) arc (0:30:1);
    \fill[color=red] (1,0) circle (.1);
    \fill[color=red] (0,1) circle (.1);
    \fill[color=red] (-1,0) circle (.1);
    \fill[color=red] (0,-1) circle (.1);
    \fill (.707,-.707) circle (.1);
    \node at (1.3,0) (a1) {$\textcolor{red}{a_1}$};
    \node at (0,1.3) (a2) {$\textcolor{red}{a_2}$};
    \node at (-1.3,0) (a3) {$\textcolor{red}{a_3}$};
    \node at (0,-1.3) (a4) {$\textcolor{red}{a_4}$};
    \node at (.9,-.9) (z) {$z$};
  \end{tikzpicture}
  \caption[The genus $1$ pointed matched circle]{\textbf{The genus $1$
      pointed matched circle.}}
  \label{fig:pmc}
\end{figure}
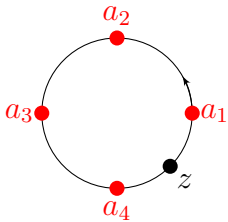

A Reeb chord $\rho$ has an \emph{initial endpoint} $\rho^-=\rho(0)$
and a \emph{terminal endpoint} $\rho^+=\rho(1)$.  Reeb chords $\rho$
and $\sigma$ are \emph{consecutive} if $\rho^+=\sigma^-$. Consecutive
Reeb chords can be concatenated; we write the concatenation of $\rho$
and $\sigma$ as $\rho\sigma$. (In~\cite{LOT1} we wrote the
concatenation of $\rho$ and $\sigma$ as $\rho\uplus\sigma$ because we
also considered sets of Reeb chords, but in the present setting
$\rho\sigma$ will not cause confusion.) Any Reeb chord can be obtained
by successively concatenating the four short Reeb chords, and we often write
$\rho_{i\dots j}$ to denote the concatenation
$\rho_i\rho_{i+1}\cdots\rho_j$. A Reeb chord $\rho$ has a
\emph{support} $\suppo{\rho}\in H_1(Z,\mathbf{a})$. We will also often
consider sequences $\vec{\rho}=(\rho^1,\dots,\rho^n)$ of Reeb chords;
the support of such a sequence is
$\suppo{\vec{\rho}}=\suppo{\rho^1}+\cdots+\suppo{\rho^n}$.

A \emph{bordered Heegaard diagram} consists of a compact, oriented
surface-with-boundary $\Sigma$, whose genus is denoted $g$; two arcs
$\alpha_1^a,\alpha_2^a\subset \Sigma$ with boundary on $\bdy \Sigma$;
$g-1$ circles $\alpha_1^c,\dots,\alpha_{g-1}^c$ and $g$ circles
$\beta_1,\dots,\beta_g$ in the interior of $\Sigma$; and a basepoint
$z\in \bdy \Sigma$ that is not an endpoint of either
$\alpha_i^a$. All of the $\alpha$-curves are required to be disjoint
from each other,
as are all of the $\beta$-circles. If we let $\alphas$ (respectively
$\betas$) be the union of the $\alpha$-curves (respectively $\beta$-circles), then
$\Sigma\setminus \alphas$ and $\Sigma\setminus\betas$ are required to
be connected. We will also typically assume that
$\alphas\pitchfork\betas$. (Unlike~\cite{LOT1}, we 
consider here only bordered Heegaard diagrams for 3-manifolds with genus $1$
boundary.)

The (oriented) boundary of a bordered Heegaard diagram
$\HD=(\Sigma,\alphas,\betas,z)$ is the genus $1$ pointed matched
circle. In particular, each Reeb chord in the genus $1$ pointed
matched circle corresponds to an arc in
$(\bdy\Sigma,\alphas\cap\bdy\Sigma)$.
Given a chord $\rho$, we let
$[\rho^-]$ (respectively $[\rho^+]$) be the $\alpha$-arc containing
the initial (respectively terminal) endpoint of $\rho$.

From a bordered Heegaard diagram one can construct a
3-manifold whose boundary is identified with a standard torus---i.e.,
a \emph{bordered 3-manifold} (with torus boundary); we refer to our
monograph~\cite[Construction 4.6]{LOT1} for the construction. Up to
diffeomorphism (rel boundary), every bordered 3-manifold with torus
boundary arises this way~\cite[Lemma 4.9]{LOT1}. Two bordered Heegaard
diagrams represent diffeomorphic bordered 3-manifolds if and only if
they become diffeomorphic as diagrams after a sequence of
\emph{isotopies}, \emph{handleslides} among the $\beta$-circles or of
$\alpha$-arcs or circles over $\alpha$-circles, and
\emph{stabilizations}~\cite[Proposition 4.10]{LOT1}. (These moves are
collectively called \emph{Heegaard moves}.)

Given a bordered Heegaard diagram $\HD$, a \emph{generator} (for the
bordered Floer modules) is a $g$-tuple of points
$\x=\{x_i\}_{i=1}^g\subset \alphas\cap\betas$ so that $\x$ has exactly
one point on each $\alpha$-circle and each $\beta$-circle and,
consequently, one point on one of the two $\alpha$-arcs. Let
$\Gen(\HD)$ denote the set of generators for $\HD$.

Given generators $\x$ and $\y$, let $\pi_2(\x,\y)$ denote the set of
homology classes, or domains, connecting $\x$ to $\y$. That is, an
element of $\pi_2(\x,\y)$ is a linear combination $B$ of connected
components of $\Sigma\setminus(\alphas\cup\betas)$ (\emph{regions})
satisfying the following condition. Let $\bdy^\alpha(B)$ (respectively
$\bdy^\beta(B)$, $\bdy^\bdy(B)$) denote the part of $\bdy(B)$ lying in
$\alphas$ (respectively $\betas$, $\bdy\Sigma$). Then,
$B\in \pi_2(\x,\y)$ if and only if $\bdy(\bdy^\beta(B))=\x-\y$.
Equivalently, $B$ is an element of
\begin{multline*}
  H_2(\Sigma\times[0,1]\times[-1,1], \x\times[0,1]\times\{-1\}\cup\y\times[0,1]\times\{1\}\\ \cup\betas\times\{0\}\times[-1,1]\cup(\alphas\cup\bdy\Sigma)\times\{1\}\times[-1,1])
\end{multline*}
so that the projection of $\bdy B$ to
$H_1(\x\times[0,1]\times\{-1\},\x\times\{0,1\}\times\{-1\})=\ZZ^g$ is
$(1,\dots,1)$, and similarly for $\y$. Note that unlike in our
monograph~\cite{LOT1}, we do not require that $B$ have multiplicity
$0$ at the region adjacent to $z$.

A domain $B$ has a multiplicity $n_z(B)$ at the region adjacent to $z$.
A \emph{periodic domain} is an element of $\pi_2(\x,\x)$ (for some
generator $\x$) with $n_z(B)=0$. A domain $B$ is \emph{provincial} if
$\bdy^\bdy (B)=0$. The set of periodic domains is in canonical
bijection with $H_2(Y,\bdy Y)$ (where $Y$ is the 3-manifold specified
by the Heegaard diagram); and the set of provincial periodic domains
is in canonical bijection with $H_2(Y)$. The boundary of a
domain $B$, $\bdy^\bdy B$, is an element of $\ZZ^4$, via the
multiplicities at $\rho_1$, $\rho_2$, $\rho_3$, and $\rho_4$,
respectively. Let $n_p(B)$ be the average multiplicity of $B$ at the
boundary (or the puncture $p$). That is, if $\bdy^\bdy B=(a,b,c,d)$
then $n_p(B)=(a+b+c+d)/4$. (In this notation, $n_z(B)=d$.)

There is a concatenation map
$\pi_2(\w,\x)\times\pi_2(\x,\y)\to\pi_2(\w,\y)$ which we will either
write as $(B_1,B_2)\mapsto B_1*B_2$ or $(B_1,B_2)\mapsto B_1+B_2$,
depending on the context.

A bordered Heegaard diagram is called \emph{provincially admissible}
if every non-zero provincial periodic domain has both positive and negative
coefficients, and \emph{admissible} if every periodic domain has both
positive and negative coefficients. Every bordered Heegaard diagram is
isotopic to an admissible one, and any two admissible (respectively
provincially admissible) Heegaard diagrams can be connected by a
sequence of Heegaard moves through admissible (respectively
provincially admissible) diagrams~\cite[Proposition 4.25]{LOT1} (whose
proof is cited to~\cite[Section 5]{OS04:HolomorphicDisks}).

Every generator $\x$ represents a $\SpinC$-structure
$\spinc(\x)\in\Spinc(Y)$~\cite[Section 4.3]{LOT1}. Further,
$\spinc(\x)=\spinc(\y)$ if and only if
$\pi_2(\x,\y)\neq\emptyset$~\cite[Lemma 4.21]{LOT1}. (In~\cite{LOT1},
we consider only domains with multiplicity $0$ at $z$, but since we
can add copies of $[\Sigma]$ to the domain,
$\pi_2(\x,\y)\neq\emptyset$ if and only if there is a
$B\in\pi_2(\x,\y)$ with $n_z(B)=0$.) If $\spinc(x)=\spinc(y)$, the set
$\pi_2(\x,\y)$ is an affine copy of $\ZZ\oplus H_2(Y,\bdy Y)$.

When considering holomorphic curves, we will attach a cylindrical end
to $\bdy\Sigma$, giving a non-compact surface with a single end. We
will abuse notation and denote that surface by~$\Sigma$, as well; it
will be clear from context which of the two versions of $\Sigma$ we
are discussing. We can think of this non-compact $\Sigma$ as a closed
surface $\overline{\Sigma}$ minus a point $p$, and will often refer to
the puncture (point at infinity) of $\Sigma$ as $p$. The non-compact
version of $\Sigma$ still has a circle at infinity, and we can talk
about Reeb chords in that circle (which are the same as the Reeb
chords for the compact version).

We recapitulate the conventions that differ in this paper from our
monograph~\cite{LOT1}:
\begin{itemize}
\item In this paper, the only pointed matched circle we consider is
  the (unique) pointed matched circle representing the torus, and we
  consider only the middle strands grading. So, the algebra $\Alg$ here is
  sometimes denoted $\Alg(\PMC_1,0)$ or $\Alg(T^2,0)$ in the
  literature.
\item We have dropped the notation $\uplus$ from concatenation of Reeb
  chords.
\item The notation $\pi_2(x,y)$ denotes all domains connecting $x$ and
  $y$, not just those with $n_z=0$.
\item We are not distinguishing in the notation between the compact
  surface-with-boundary $\Sigma$ (denoted $\overline{\Sigma}$
  in~\cite{LOT1}) and the result of attaching a cylindrical end to it
  (denoted $\Sigma$ in~\cite{LOT1}). We are using $\overline{\Sigma}$
  to denote the one-point compactification of the non-compact surface
  $\Sigma$. (This was denoted $\Sigma_{\widebar{e}}$ in~\cite{LOT1}.)
\end{itemize}

\subsection{\texorpdfstring{$G$}{G}-set gradings on weighted algebras, modules and type \texorpdfstring{$D$}{D} structures}
\label{sec:gradings-abstract}
Like the $\HFa$ case, the algebra $\MAlg$ is graded by a
non-commutative group with a distinguished central element, and the
modules $\CFAm$ and $\CFDm$ are graded by sets with right- and
left-actions by this group, respectively. Also like the $\HFa$ case,
there are several natural choices for the non-commutative group. In
this section we describe what a group-valued grading on a weighted
module or type $D$ structure means abstractly; the gradings on $\CFAm$
and $\CFDm$ themselves are constructed in
Section~\ref{sec:gradings}. In Section~\ref{sec:big-gr-group} we
recall one of the options for a grading group. (This material is
used in Sections~\ref{sec:DD-Id} and~\ref{sec:gradings}.)

Fix a group $G$ and central elements $\lambda_d,\lambda_w\in G$. We
call $\lambda_w$ the \emph{weight grading}; when
$\lambda_w$ is the
identity element of $G$ then we will often write $\lambda_d$ simply
as~$\lambda$.
Recall that a \emph{$(G,\lambda_d,\lambda_w)$-graded weighted algebra} is a
weighted algebra $\Alg=(A,\{\mu_n^w\})$ together with subspaces
$A_g\subset A$,
$g\in G$, of \emph{homogeneous elements of grading $g$}, so that
$A_g\cap A_h=\{0\}$ if $g\neq h$ and 
\begin{equation}\label{eq:graded-mu}
  \mu_n^k\co A_{g_1}\otimes\cdots\otimes A_{g_n}\to
  A_{\lambda_d^{n-2}\lambda_w^kg_1\cdots g_n}
\end{equation}
\cite[Section~\ref{TA:sec:W-gradings}]{LOT:torus-alg}.
That is, if we write $\gr(a)=g$ whenever $a\in A_g$ then
\[
\gr(\mu_n^k(a_1,\dots,a_n))=\lambda_d^{n-2}\lambda_w^{k}\gr(a_1)\cdots\gr(a_n).
\]
We will assume that either $A=\bigoplus_{g\in G}A_g$ (i.e., is generated
by the homogeneous elements), or is the completion of $\bigoplus_{g\in
  G}A_g$ with respect to some filtration. (The former will be the case
for $\MAlg$ and the latter for its completion $\MAlgc$ used in Section~\ref{sec:CFD}.)

Given a set $S$ with a right action of $G$, a grading of a weighted right
module $\fModule=(M,\{m_{1+n}^k\})$ by $S$ is a collection of
subspaces $M_s\subset M$, $s\in S$, so that $M_s\cap M_t=\{0\}$
if $s\neq t$ and
\[
  m_{1+n}^k\co M_s\otimes A_{g_1}\otimes\cdots\otimes A_{g_n}\to
  M_{s\cdot \lambda_d^{n-1}\lambda_w^kg_1\cdots g_n}.
\]
Writing $\gr(M_s)=s$, this is equivalent to 
\[
\gr(m_{1+n}^k(\x,a_1,\dots,a_n))=\gr(\x)\cdot \lambda_d^{n-1}\lambda_w^{k}\gr(a_1)\cdots\gr(a_n).
\]
Again, we assume that either $M=\bigoplus_{s\in S}M_s$ or that $M$ is
the completion of this direct sum with respect to a filtration.

Given a set $S$ with a left action by $G$, a grading of a weighted
type $D$ structure $(P,\delta^1\co P\to A\otimes P)$ by $G$ consists
of subspaces $P_s$, $s\in S$, satisfying $P_s\cap P_t=\{0\}$ if $s\neq
t$ and
\[
  \delta^1\co P_s\to \bigoplus_{g\cdot t=s} \lambda_d^{-1}A_g\otimes P_t.
\]
That is, writing $\gr(P_s)=s$,
\[
\gr(\delta^1(x))=\lambda_d^{-1}\gr(x).
\]
We again assume that either $P=\bigoplus_{s\in S}P_s$ or that $P$ is
the completion of this direct sum with respect to a filtration.

Given weighted algebras $\Alg$ and $\Alg'$ graded by
$(G,\lambda_d,\lambda_w)$ and $(G',\lambda_d',\lambda_w')$, and a
weighted algebra diagonal, the tensor product $\Alg\otimes\Alg'$
inherits a grading by the group
$G\times_{\ZZ}G'=G\times G'/\langle \lambda_d^{-1}\lambda_d'\rangle$
with central elements $\lambda_d=\lambda_d'$ and
$\lambda_w\lambda_w'\lambda_d^{-2}$
(compare~\cite[Section~\ref*{Abs:sec:w-alg-tens}]{LOT:abstract}).
Here, we assume that the variables $\Yvar_1$ and
$\Yvar_2$ in the definition of a weighted diagonal---see
Section~\ref{sec:intro-DD}---have gradings $\lambda_d^{-2}\lambda_w$
and $(\lambda_d')^{-2}\lambda_w'$, respectively. This gives rise to
the notion of a graded, weighted type \DD\ structure $P$ over $\Alg$ and
$\Alg'$, graded by a left $(G\times_\ZZ G')$-set $T$.

Continuing in this vein, given a weighted \DD\ structure~$P$ as above
and a weighted module~$\fModule$ graded by
the $G$-set $S$, the one-sided box product $\fModule\DT P$ is graded
by the $G'$-set
$S\times_GT$~\cite[Proposition~\ref*{Abs:prop:one-sided-DT-works}]{LOT:abstract}.
Given another weighted module $\fModule'$ graded by the $G'$-set $S'$,
the triple box product $\fModule\DT P\DT\fModule'$ is graded by the
$\ZZ$-set $(S\times_{G}T\times_{G'}(S')^\op,\lambda_d=\lambda_d')$.

\subsection{The big grading group}\label{sec:big-gr-group}
Because we will need it in Section~\ref{sec:DD-Id}, we recall the
definition of the big grading group
$\bigGroup$~\cite[Section~\ref*{TA:sec:big-group}]{LOT:torus-alg}
(also recalled in the introduction). Other options for
gradings are discussed in Section~\ref{sec:gradings}.

Consider the group $\OneHalf\ZZ\times\ZZ^4$ with multiplication
\begin{multline*}
(m;a,b,c,d)\cdot (m';a',b',c',d')
\\=
\left(m+m'
+\OneHalf\left|
  \begin{smallmatrix}
    a & b\\
    a' & b'
  \end{smallmatrix}
  \right|
+\OneHalf\left|
  \begin{smallmatrix}
    b & c\\
    b' & c'
  \end{smallmatrix}
  \right|
+\OneHalf\left|
  \begin{smallmatrix}
    c & d\\
    c' & d'
  \end{smallmatrix}
  \right|
+\OneHalf\left|
  \begin{smallmatrix}
    d & a\\
    d' & a'
  \end{smallmatrix}
  \right|,a+a',b+b',c+c',d+d'\right).
\end{multline*} 
The
elements
\begin{align*}
  \grb(\rho_1)&=(-1/2;1,0,0,0) & \grb(\rho_2)&=(-1/2;0,1,0,0) \\
  \grb(\rho_3)&=(-1/2;0,0,1,0) & \grb(\rho_4)&=(-1/2;0,0,0,1)\\
  \lambda&=(1;0,0,0,0)
\end{align*}
generate an index-2 subgroup of this group, which we
denote~$\bigGroup$. A grading $\grb$ on the algebra $\MAlg$ by
$\bigGroup$ is
determined by the gradings of the $\rho_i$ and $\grb(U)=(-1;1,1,1,1)$. We will
call the first entry in the grading the \emph{Maslov component}, and
the remaining four the \emph{$\SpinC$ component}. For a Reeb
chord~$\rho$, the $\SpinC$
component of $\grb(\rho)$ is just the support of~$\rho$, while the Maslov
component is $-|\rho|/4$ if $|\rho|$ is divisible by $4$, and
$-1/2-\lfloor |\rho|/4\rfloor$ otherwise.
(This is also $\iota(\rho)$ from Eq.~\eqref{eq:iota}.)
For this grading, the
central element is $\lambda_d=\lambda=(1;0,0,0,0)$ and the weight grading
is $\lambda_w=(1;1,1,1,1)$. (Note that $\grb(U)=\grb(\rho_{1234})$ and
$\lambda_w=\lambda^2\grb(U)$, so both lie in $\bigGroup$.)

\subsection{The associaplex}
\label{sec:associaplex}

In the course of studying the algebraic background behind weighted
$\Ainf$-algebras, we introduced a CW complex called the (weighted)
\emph{associaplex} \cite[Sec.\ 8]{LOT:abstract}.

Recall that the \emph{associahedron}, relevant to $\Ainf$-algebras, is
a polytope that is a compactification of the space of $n+1$ points on
$\bdy D^2$, one of which is distinguished  (or
equivalently of $n$ points on a line), modulo symmetries. The \emph{associaplex}
$\Associaplex{n}{w}$, relevant to weighted $\Ainf$-algebras, is
instead a compactification of the space of $w$ distinct, unordered
points in the interior of $D^2$ and $n+1$ points in $\bdy D^2$, one of
which is distinguished, again modulo symmetries (M\"obius
transformations). The most familiar compactification is the
Deligne-Mumford compactification, which adds strata corresponding to
decompositions into disks and spheres connected at nodes, each with
enough special points to be stable. The Deligne-Mumford
compactification recovers the associahedron when all the points are on
the boundary, but records more information than we need about interior
points, so  $\Associaplex{n}{w}$ is obtained from the Deligne-Mumford compactification by collapsing all the spheres.  So, interior marked
points collide as in a symmetric product, recording just the
multiplicities. On the boundary, more information is remembered,
through disks bubbling off.  We refer the reader to our previous
paper~\cite[Section 8]{LOT:abstract} for further details, and a direct
construction not using the Deligne-Mumford compactification.


\section{Moduli spaces of holomorphic curves}\label{sec:moduli}
The main goal of this section is to prove Theorem~\ref{thm:master},
the workhorse technical result guaranteeing that $\CFAm$ and $\CFDm$
are well-defined. We start with some examples, in
Section~\ref{sec:moduli-examples}, to illustrate the kinds of
codimension-1 degenerations which occur; this section is not needed
for the rest of the paper.  The work starts in
Section~\ref{sec:def-mod-sp}, where we define the various moduli
spaces. Key to that is the formulation of the families of almost
complex structures we will use, in Section~\ref{sec:J}. Next, we
compute the expected dimensions of the moduli spaces, in
Section~\ref{sec:ind}. Section~\ref{sec:pinched} formulates a
condition, being sufficiently pinched, that is required to guarantee
that boundary degenerations correspond to operations on the algebra.
(Part of the proof that condition suffices is in
Section~\ref{sec:algebra}.)  In Section~\ref{sec:transversality}, we
show that for appropriate families of almost complex structures, most of the
moduli spaces are transversely cut out by the
$\dbar$-equations. (Again, a few remaining cases are deferred to
Section~\ref{sec:algebra}.)  Section~\ref{sec:gluing} collects the
gluing results we will need, showing that near various kinds of broken
holomorphic curves the moduli space behaves like a
manifold-with-boundary. Finally, Section~\ref{sec:compactness}
combines these ingredients to prove Theorem~\ref{thm:master}.

Throughout this section we fix a bordered Heegaard diagram
$\HD=(\Sigma,\alpha_1^a,\alpha_2^a,\alphas^c,\betas,z)$ with genus
$g$.

\subsection{Some examples}\label{sec:moduli-examples}
Before diving into the section's work, we describe examples of the
different degenerations in Theorem~\ref{thm:master}. We will refer to
these examples later in the section to elucidate some of the more
complicated definitions. Definitions of the moduli spaces discussed
here are given in Section~\ref{sec:def-mod-sp}, but a reader familiar
with bordered Floer homology can likely read this section without
having read those definitions first.

The first two kinds of degenerations in Theorem~\ref{thm:master} are familiar
from the $\HFa$ case. For example, consider the genus-1 bordered Heegaard
diagram and domain shown in Figure~\ref{fig:split-degen-eg}.
The moduli space shown is denoted
$\cM^B(x,y;\rho_3,\rho_4,\rho_3,\rho_2,\rho_{12};0)$, where $B$ is the
domain. (The $0$ denotes that there
are no orbits on this curve, only chords.) The moduli space is 1-dimensional,
corresponding to the location of the boundary branch point (end of the cut). As the
branch point approaches the chord $\rho_{34}$ (cut shrinks to zero) the chords
$\rho_3$ and $\rho_4$ come together, degenerating a split curve, just as would
happen for bordered $\HFa$; this is a collision end. The main component of the result is an element of
$\cM^B(x,y;\rho_{34},\rho_3,\rho_2,\rho_{12};0)$. (In general, the chords involved
in the split curve degeneration can be arbitrarily long.)
Another collision end (and split curve) occurs in Figure~\ref{fig:orbit-curve-eg} when the branch
point approaches~$\rho_{34}$; we discuss the other ends of these
examples after giving another example.

Consider the Heegaard diagram and domain shown in
Figure~\ref{fig:simple-bdy-degen-eg}. Here, the domain $B$ has multiplicity $1$
everywhere in the diagram; the curve is an element of
$\cM^B(x,x;\rho_4,\rho_3,\rho_2,\rho_1;0)$. As the branch point approaches the
left edge of the rectangle, the curve breaks as a two-story building, with the
bottom story in $\cM^{B_1}(x,x;\rho_4,\rho_3;0)$ and the top story in
$\cM^{B_2}(x,x;\rho_2,\rho_1;0)$. Again, this kind of degeneration (but not this
particular domain) occurs in the $\HFa$ case. Indeed, this is the kind of
degeneration that corresponds most closely to broken flows in Morse theory.

The remaining ends are new. In Figure~\ref{fig:simple-bdy-degen-eg}, as the
branch point approaches the right boundary of the rectangle (cut shrinks to
zero), the curve degenerates into a component with boundary entirely in the
$\alpha$-curves (a boundary degeneration) and a bigon mapped to $x$ by a
constant map (a trivial strip). Let $\HHH$ denote the half-plane
$\HHH=(-\infty,1]\times \RR=\{x+iy\mid x\leq 1\}$. By rescaling, the component
with boundary entirely in the $\alpha$-curves inherits a map to
$(\Sigma\times\HHH,\alphas\times\{1\}\times\RR)$, asymptotic at $\infty$ in
$\HHH$ to the point $x$. This is an element in $\cN(x;\rho_4,\rho_3,\rho_2,\rho_1;0)$, and is called a \emph{simple boundary degeneration}.

In Figure~\ref{fig:split-degen-eg}, when the branch point approaches $\rho_{12}$
(i.e., the cut becomes as long as possible), the curve splits into a main
component, in $\cM^{B_1}(x,y;\rho_3,\rho_2;0)$ and a boundary degeneration in
$\cN(\rho_2;\rho_4,\rho_3,\rho_2,\rho_{12})$. The two are connected by a join curve at
$e\infty$, with asymptotics $\rho_1$, $\rho_2$, and $\rho_{12}$. This is called a
\emph{composite boundary degeneration}.

In Figure~\ref{fig:orbit-curve-eg} we see one more kind of degeneration,
involving an orbit. As the branch point approaches the puncture in the middle,
the curve splits off a component at $e\infty$ with an orbit on one end and the
length-4 chord $\rho_{2341}$ on the other. The moduli space before the degeneration is
$\cM^B(x,y;\rho_4,\rho_3,\rho_4,\rho_3,\rho_{23},\rho_2,\rho_{12};1)$. After
the degeneration, the main component is a curve in 
$\cM^B(x,y;\rho_4,\rho_3,\rho_{2341},\rho_4,\rho_3,\rho_{23},\rho_2,\rho_{12};0)$
and a curve at $e\infty$ called an \emph{orbit curve}.

\begin{figure}
  \centering
  \includegraphics{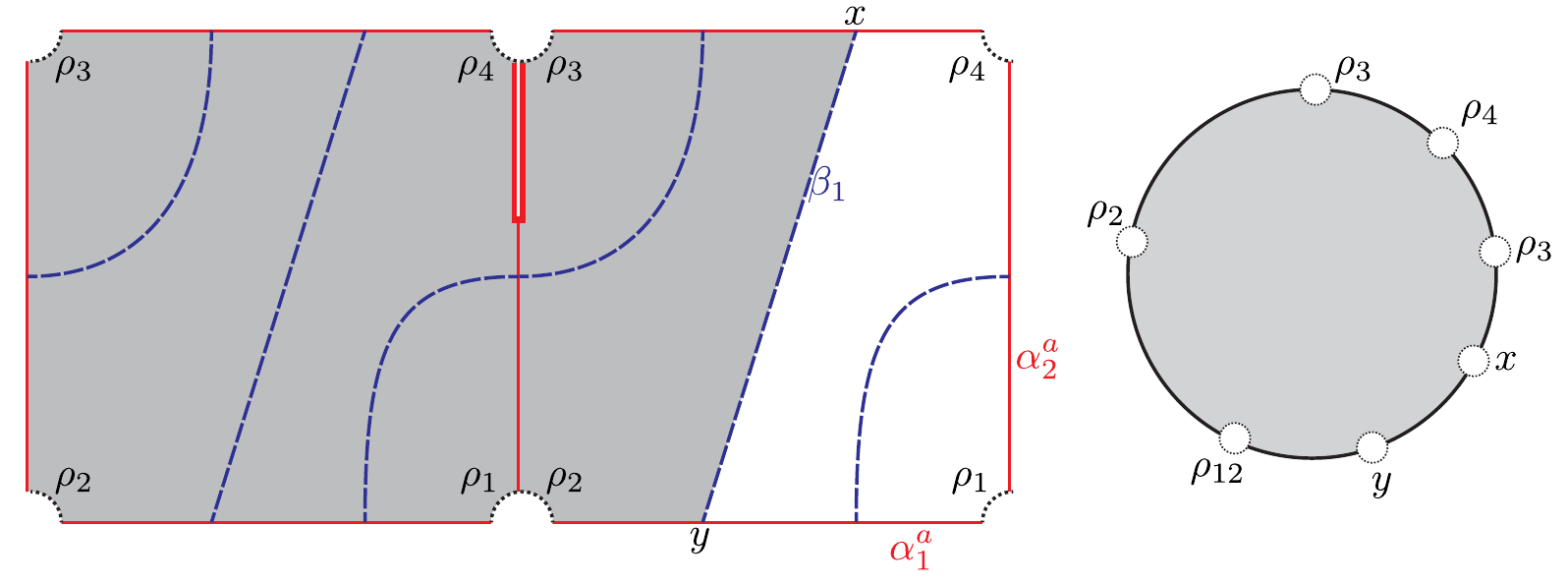}
  \caption[Example of a split curve end and a composite boundary
  degeneration]{\textbf{A split curve and a composite boundary
      degeneration.} Right: the decorated source of a holomorphic
    curve. Left: its image. The end of the slit is a boundary branch
    point. As the slit shrinks to zero, the curve degenerates a split
    curve with $e\infty$ asymptotics $\rho_3$ and $\rho_4$ and
    $w\infty$ asymptotic $\rho_{34}$; this is a collision end. As the slit approaches the chord
    $\rho_{12}$ the curve degenerates a composite boundary
    degeneration (a boundary degeneration and a join curve).}
  \label{fig:split-degen-eg}
\end{figure}

\begin{figure}
  \centering
  \includegraphics{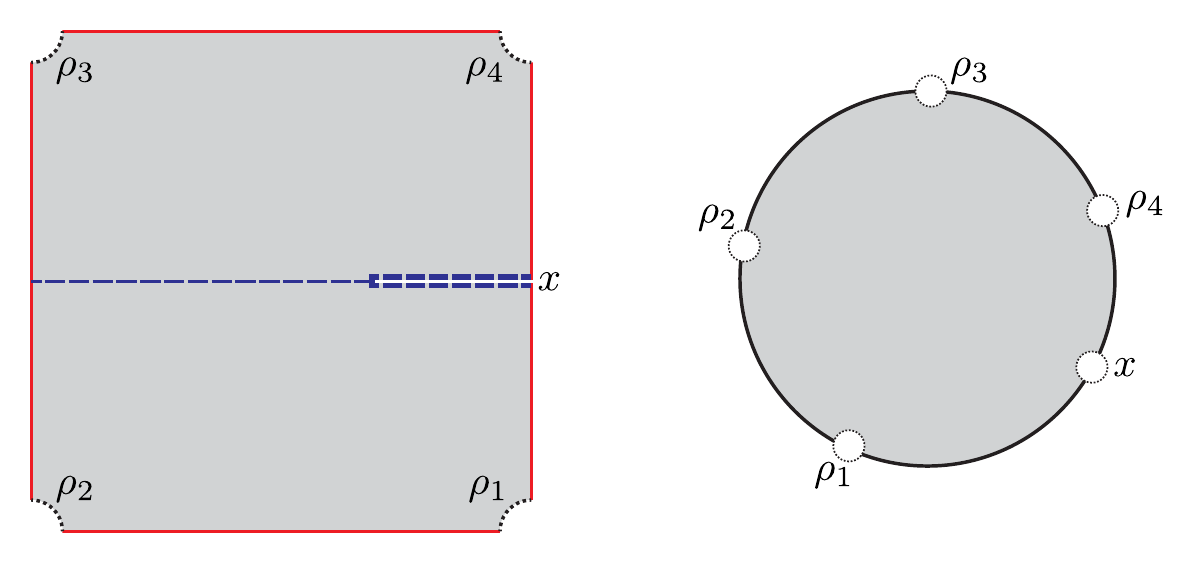}
  \caption[Example of a 2-story end and a simple boundary
  degeneration]{\textbf{A two-story end and a simple boundary
      degeneration.} When the slit gets long, approaching the left
    edge, the curve degenerates to a 2-story holomorphic
    building. When the slit shrinks to zero, the curve becomes a
    simple boundary degeneration.}
  \label{fig:simple-bdy-degen-eg}
\end{figure}

\begin{figure}
  \centering
  \includegraphics{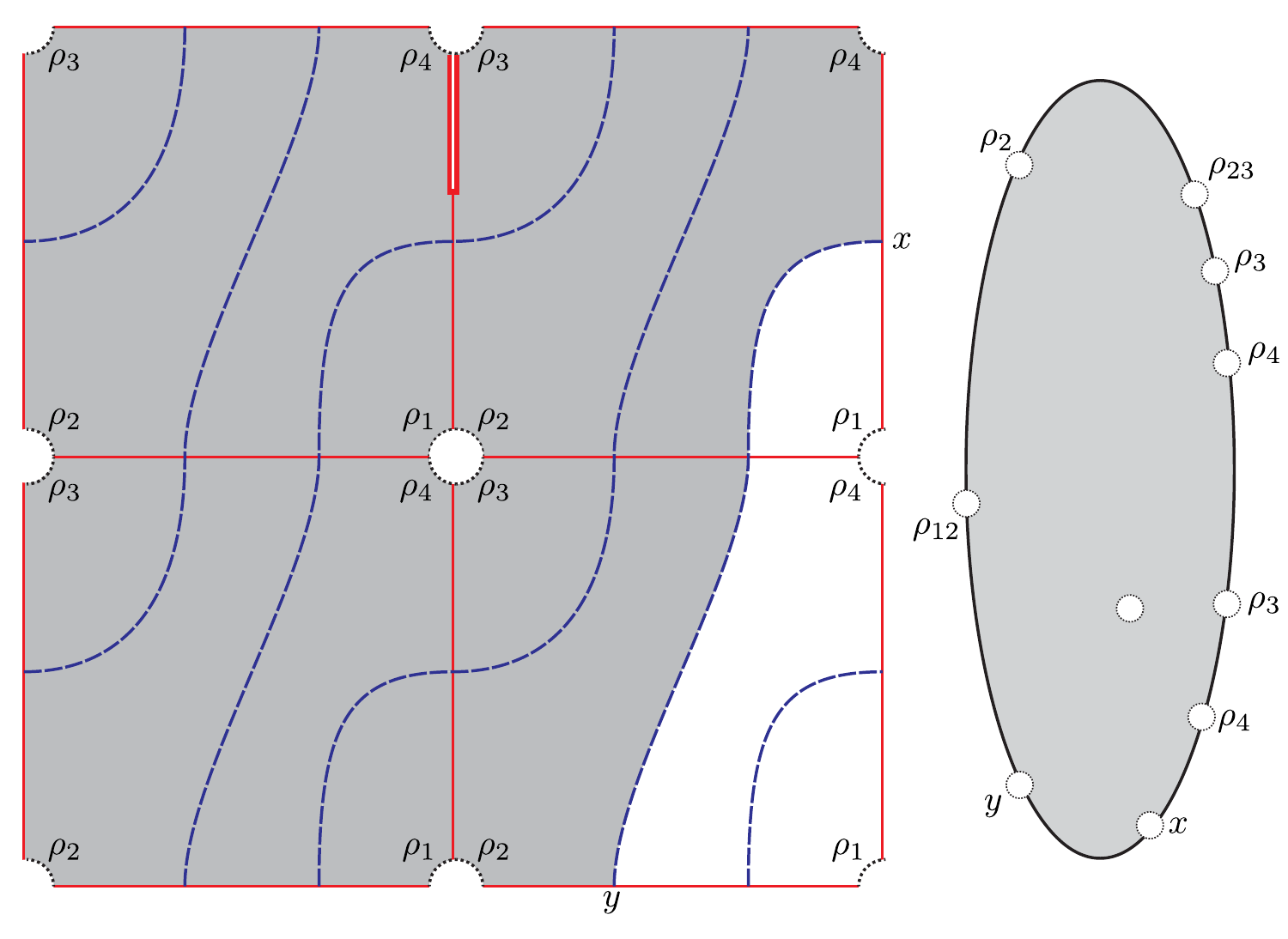}
  \caption[Example of an orbit curve end and a collision end.]{\textbf{An orbit curve end and a collision end.} When the
    slit approaches the puncture, an orbit curve degenerates. When the
    slit shrinks to zero, a split curve degenerates.}
  \label{fig:orbit-curve-eg}
\end{figure}

\subsection{Definitions of the moduli spaces}\label{sec:def-mod-sp}
In this subsection, we will define the moduli spaces of holomorphic
curves used later in the paper. Briefly, these are:
\begin{enumerate}[label=(\arabic*)]
\item\label{item:m-fixed-source} The moduli spaces of curves in
  $\bigl(\Sigma\times[0,1]\times\RR,(\alphas\times\{1\}\times\RR)\cup(\betas\times\{0\}\times\RR)\bigr)$
  with a fixed source. These are denoted $\cM^B(\x,\y;\Source)$ where
  $\x$ and $\y$ are generators, $B$ is a homology class, and $\Source$
  is a decorated Riemann surface.
\item\label{item:m-embedded} The moduli spaces of embedded curves in
  $\bigl(\Sigma\times[0,1]\times\RR,(\alphas\times\{1\}\times\RR)\cup(\betas\times\{0\}\times\RR)\bigr)$.
  These are denoted $\cM^B(\x,\y;\vec{a};w)$ where $\vec{a}$ is
  a sequence of basic algebra elements and $w\in\ZZ_{\geq 0}$ is the number of orbits.
\item\label{item:m-east-infty} The moduli spaces of holomorphic curves
  in
  $\bigl(\RR\times \partial
  \Sigma\times[0,1]\times\RR,\alphas\times\{1\}\times\RR\bigr)$ with a
  fixed source, i.e., curves at $e\infty$. These are denoted
  $\cN(\biSource)$.
\item\label{item:m-degen-fixed} The moduli spaces of curves in
  $\bigl(\Sigma\times\HHH,\alphas\times\{1\}\times\RR\bigr)$ with a
  fixed source (where
  $\HHH=(-\infty,1]\times \RR$). These are denoted $\cN(\x;\bdSource)$ or
  $\cN(*;\bdSource)$ depending on whether the asymptotics at $\infty$
  in~$\HHH$
  are fixed or allowed to vary.
\item\label{item:m-degen-embedded} The moduli spaces of embedded curves in
  $\bigl(\Sigma\times\HHH,\alphas\times\{1\}\times\RR\bigr)$. These
  are denoted $\cN(\x;\vec{a};w)$ or $\cN(*;\vec{a};w)$, again
  depending on whether the asymptotics at $\infty$ are fixed or
  not. (A variant $\cN([\alpha_i^a];\vec{a};w)$ is introduced in
  Section~\ref{sec:transversality}.) Again, $\vec{a}$ is a sequence of
  basic algebra elements.
\end{enumerate}

Properties of these moduli spaces are developed in later
subsections. For the construction of $\CFAa$ and $\CFDa$, we do not
use directly the moduli spaces with a fixed source; rather, those are
used as auxiliary steps in understanding the moduli spaces of embedded
curves. Types~\ref{item:m-fixed-source},~\ref{item:m-embedded},
and~\ref{item:m-east-infty} are generalizations of moduli spaces
considered for bordered $\HFa$~\cite[Chapter 5]{LOT1};
Types~\ref{item:m-degen-fixed} and~\ref{item:m-degen-embedded} are
analogues of boundary degenerations or disk bubbles (see,
e.g., \cite{FOOO1}, and also~\cite{OhZhu11:scale}).

Before turning to the construction of the moduli spaces themselves, we
describe the kinds of almost complex structures we will use.
\subsubsection{Families of almost complex structures}
\label{sec:J}

Difficulties with transversality force us to use somewhat more
intricate families of almost complex structures than for the case of
$\HFa$. In this section, we specify the compatibility conditions those
families are required to satisfy. We will define two notions:
\begin{itemize}
\item An ``$\eta$-admissible almost complex structure''
  (Definition~\ref{def:ac}). This spells out the conditions a single
  almost complex structure is required to satisfy.
\item A ``coherent family of $\eta$-admissible almost complex
  structures'' (Definition~\ref{def:AdmissibleJs}). This spells out
  the compatibility conditions for families of almost complex
  structures needed to define points in our moduli spaces which, in
  turn, are used to define the operations $m_{1+n}^w$.
\end{itemize}
Transversality conditions for those
families are deferred to Section~\ref{sec:transversality}; a coherent
family of almost complex structures which is also sufficiently generic
for counting curves will be called a ``tailored family''
(Definition~\ref{def:tailored}). For technical reasons, we will not be
able to work with a single almost complex structure for all moduli spaces.

A key ingredient in these definitions is a particular moduli space of
marked polygons:
\begin{definition}\label{def:polygons}
  A \emph{bimodule component} is a conformal disk with $n\geq 2$
  boundary marked points and some number of interior marked points, a
  choice of two distinguished boundary marked points called
  $\pm\infty$, and a function from the other marked points to
  $\ZZ_{>0}$, called the \emph{energy}, so that the energy of each
  interior marked point is a multiple of $4$. We require that there be
  at least one marked point other than $\pm\infty$.

  Forgetting all of the marked points except $\pm\infty$ induces an
  identification of the disk minus $\{\pm\infty\}$ with
  $[0,1]\times\RR$, well-defined up to translation.  Under this
  identification, the
  boundary of the disk is divided into two parts, which we refer to as
  $\{0\}\times\RR$ and $\{1\}\times\RR$. The energies of the boundary
  marked points on $\{1\}\times\RR$ (respectively $\{0\}\times\RR$)
  defines a sequence $\vec{E}$ (respectively $\vec{E}'$). Call the
  marked points labeled by $\pm\infty$ the \emph{module vertices}, the
  marked points on $\{0\}\times\RR$ the \emph{left algebra vertices},
  and the marked points on $\{1\}\times\RR$ the \emph{right algebra
    vertices}.

  Given a pair of non-negative integers $(k,\ell)$, sequences of
  positive integers $\vec{E}'=(E'_1,\dots,E'_k)$ and
  $\vec{E}=(E_1,\dots,E_\ell)$, and a non-negative integer $w$,
let $\ModPol_{k,\ell,\vec{E'},\vec{E},w}$ be a copy of the
  associaplex $\Associaplex{k+1+\ell}{w}$.
  We think of $\ModPol_{k,\ell,\vec{E'},\vec{E},w}$ as a 
  compactification of the space of module polygons with $k$ vertices on
  $\{0\}\times\RR$, $\ell$ vertices on $\{1\}\times\RR$, total energy
  of interior punctures $4w$, and energies of the boundary punctures
  specified by $\vec{E}'$ and $\vec{E}$. 
  
  \begin{figure}
    \centering
    \includegraphics{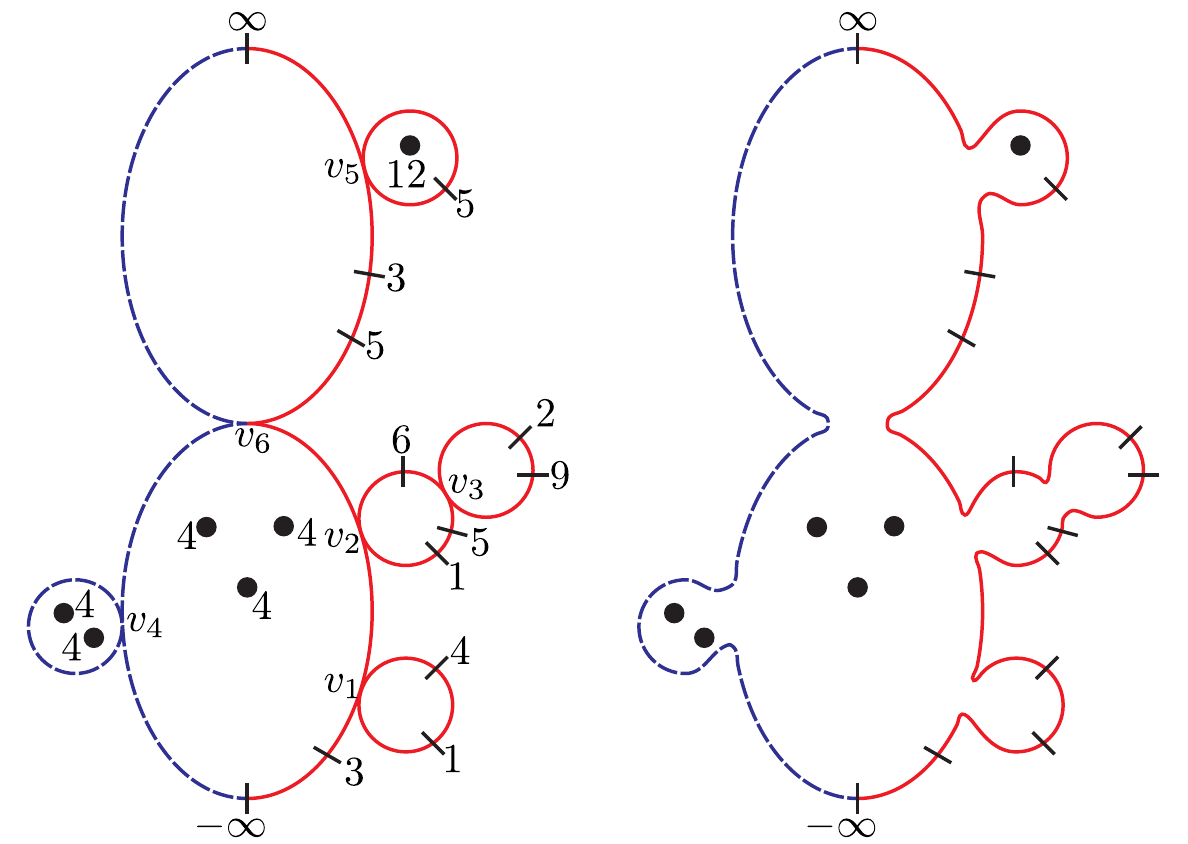}
    \caption[A bimodule polygon and its gluing]{\textbf{A bimodule
        polygon $x$ (left) and its gluing $\widetilde{x}$
        (right)}. The left boundary is \textcolor{blue}{dashed} and the right boundary is
      \textcolor{red}{solid}. Disks in~$x$ whose boundary is entirely \textcolor{red}{solid} are left algebra-type
      components, disks whose boundary is entirely \textcolor{blue}{dashed} are right
      algebra-type components, and the two disks with both \textcolor{blue}{dashed} and
      \textcolor{red}{solid} boundaries are bimodule components. The energies of the
      nodes $v_1,\dots,v_5$ are $5$, $23$, $11$, $8$, and $17$,
      respectively. The node $v_6$ is not assigned an energy.}
    \label{fig:bimod-poly-glue}
  \end{figure}
  
  By definition, a point $x\in \ModPol_{k,\ell,\vec{E'},\vec{E},w}$ is
  a tree of disks meeting at nodes. We can glue the disks in $x$
  together at the nodes to obtain a single disk $\widetilde{x}$. (See Figure~\ref{fig:bimod-poly-glue}.)
  The identification between the boundary of $x$ and $\widetilde{x}$
  identifies the arcs in the boundary of each disk in $x$ with a
  subset of either $\{0\}\times\RR$ or $\{1\}\times\RR$. Call a disk
  in $x$ a \emph{left algebra-type component} if its whole boundary
  corresponds to a subset of $\{0\}\times\RR$, a \emph{right algebra-type
    component} if its whole boundary corresponds to a subset of
  $\{1\}\times\RR$, and a \emph{bimodule-type component}
  otherwise. Suppose
  $v$ is a node of $x$ so that at least one of the disks meeting at
  $v$ is a (left or right) algebra-type component. Cutting $x$ at $v$ gives
  two trees of disks, one of which consists entirely of algebra-type
  components. Define the energy of $v$ to be sum of the energies of the
  marked points (not nodes) of that tree of algebra-type components.
  With this definition, each bimodule-type component in $x$ is a bimodule component.

  Let $\ModPol$ be the disjoint union of all the
  $\ModPol_{k,\ell,\vec{E}',\vec{E},w}$ and, given a positive integer
  $E$, let $\ModPol_E$ be the disjoint union of all
  $\ModPol_{k,\ell,\vec{E'},\vec{E},w}$ so that
  $E'_1+\cdots+E'_k+E_1+\cdots+E_\ell+4w=E$. We call a point in
  $\ModPol$ a \emph{bimodule polygon}. In particular, a bimodule
  component is a special case of a bimodule polygon.
\end{definition}

We can also define a notion of module polygons: a \emph{module
  polygon} is an element of $\ModPol_{0,n,,\vec{E},w}$ for some $n$,
$\vec{E}$, and $w$. (A \emph{module component} is a module polygon
with a single component.) However, some components of a module polygon
may be bimodule components, because algebra-type components can bubble
off along $\{0\}\times\RR$.
 
\begin{figure}
  \centering
  \includegraphics{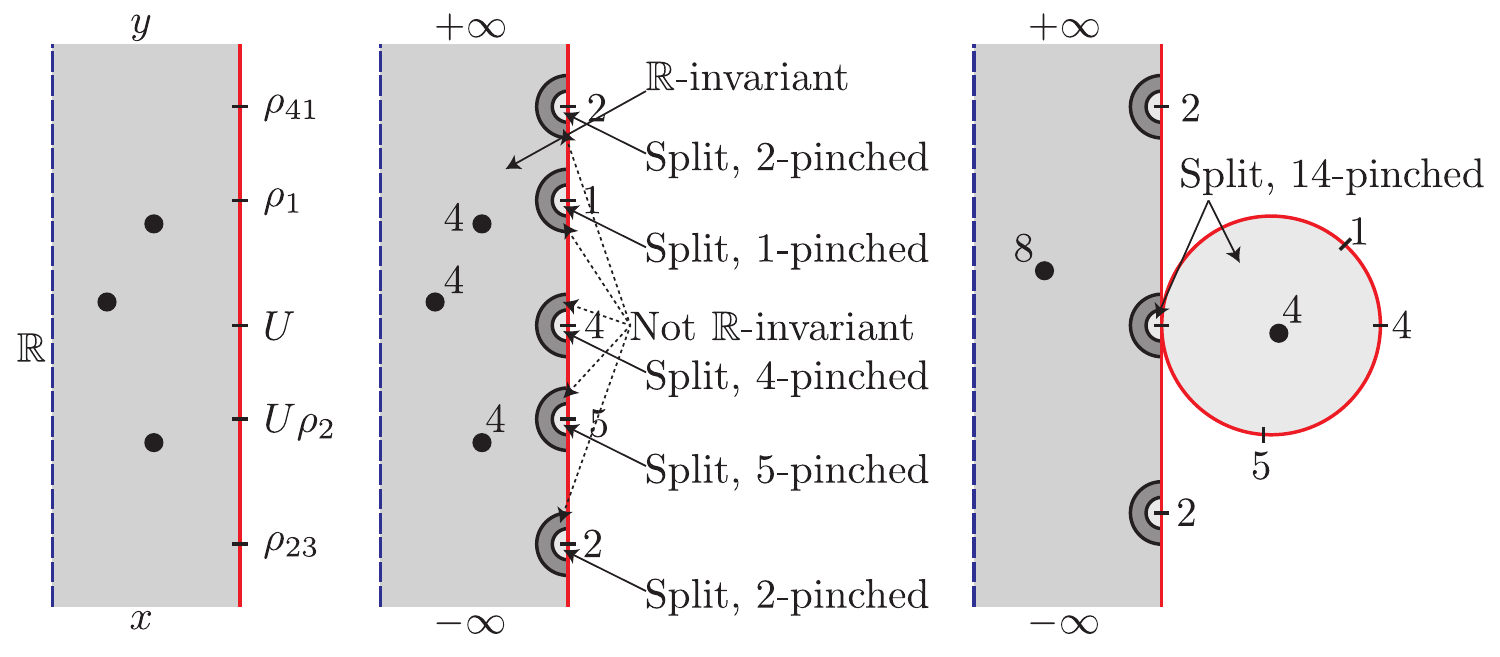}
  \caption[Polygons and almost complex structures]{\textbf{Polygons and almost complex structures.} Left: a moduli space of curves we will consider, corresponding to a term $m_{6}^3(x,\rho_{23},U\rho_{2},U,\rho_1,\rho_{41})=y$. Center: a corresponding module polygon and the conditions on an $\eta$-admissible almost complex structure for that module polygon. In the figure, $n$-pinched means $\eta(n)$-pinched. Right: a degeneration of such a module polygon, where two interior marked points come together and a disk bubbles off the boundary, together with the conditions imposed on a corresponding almost complex structure.}
  \label{fig:poly-cx-str}
\end{figure}

\begin{figure}
  \centering
  \includegraphics{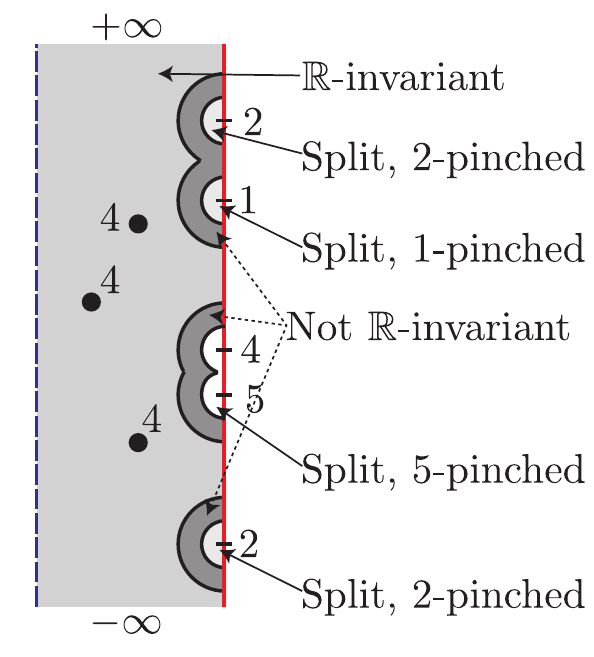}
  \caption[Polygons and almost complex structures, continued]{\textbf{Polygons
      and almost complex structures, continued.} If $\delta$ is larger
    than a quarter the distance between two of the algebra marked points,
    the various disks in Condition~\ref{NearlyConstant} are not disjoint.}
  \label{fig:poly-cx-str-overlap}
\end{figure}

A second ingredient to specify the complex structures we will consider
is a pinching function and a pinched almost complex structure. (The
two notions come together in Definition~\ref{def:ac}.)
\begin{definition}\label{def:pinching-fn}
  A \emph{pinching function} is a monotone-decreasing function
  $\eta\co \ZZ_{>0}\to \RR_{>0}$.
\end{definition}

\begin{definition}
  \label{def:ePinched}
  Fix, once and for all, a small positive number $\delta_0$.  Given another
  real number $\epsilon>0$, call a complex structure $j$ on $\Sigma$
  \emph{$\epsilon$-pinched} if:
  \begin{enumerate}
  \item There is a circle $C\subset \Sigma\setminus\alphas$ separating
    $\alpha_1^a\cup \alpha_2^a$ from
    $\alpha_1^c\cup\cdots\cup\alpha_{g-1}^c$ which has a neighborhood
    disjoint from the $\alpha$-curves biholomorphic to
    $\{z\in\CC\mid 1<|z|<1/\epsilon\}$.
  \item There is no essential circle in the component of $\Sigma\setminus C$
    containing $\alpha_1^a\cup \alpha_2^a$ which has a neighborhood
    biholomorphic to $\{z\in\CC\mid 1<|z|<1/\delta_0\}$.
  \end{enumerate}
\end{definition}
(Later, we will sometimes consider limits as $\epsilon\to 0$. The
second bullet point is to ensure that no other curves in this
component of $\Sigma\setminus C$ collapse in the result.)

Since we are pinching a specified isotopy class of curves $C$, the
space of $\epsilon$-pinched complex structures on $\Sigma$ is
contractible.

We are now ready to turn to the conditions on our almost complex
structures.  

\begin{definition}\label{def:ac}
  Fix a pinching function $\eta$ and a bimodule component $P$
  with $n$ right algebra
  vertices. Recall that there is an identification of $P$ with
  $[0,1]\times\RR$, well-defined up to translation. An almost complex
  structure $J$ on $\Sigma\times P$ is \emph{$\eta$-admissible} if it
  satisfies the following conditions.
  \begin{enumerate}[label=(J-\arabic*)]
  \item\label{item:J-piD} The projection map
    $\pi_P\co \Sigma\times P \to P$ is
    $J$-holomorphic.
  \item\label{item:J-s-t} Thinking of $P$ as $[0,1]\times\RR$, if
    $\partial/\partial s$ and $\partial/\partial t$ are the vector
    fields generating translation in $[0,1]$ and $\RR$, respectively,
    $J(\partial/\partial s)=\partial/\partial t$.
  \item\label{item:J-split} There is a neighborhood $D$ of the
    puncture in $\Sigma$ so that the almost complex structure $J$ splits as
    $j_{\Sigma}\times j_{P}$ over $D\times P\subset \Sigma\times
    P$. Also, $J$ splits as $j_{\Sigma}\times j_{P}$ over some
    neighborhood of $\{0\}\times\RR$.
  \item \label{NearlyConstant}Let $(1,t_1),\dots,(1,t_n)$ be the
    images of the right algebra vertices under some holomorphic identification
    of $P$ with $[0,1]\times\RR$. (The $t_i$ are well-defined up to an
    overall translation.) Then, there is some $0<\delta<1/2$ so that:
    \begin{enumerate}[label=(\alph*), ref=\ref{NearlyConstant}(\alph*)]
    \item\label{item:J-const-away-punct} outside the balls of radius
      $2\delta$ around the $(1,t_i)$, $J$ is
      invariant under (small) translations in $\RR$, and
    \item\label{item:J-const-at-punct} inside the ball of radius $\delta$ around $(1,t_i)$, $J$ is
      split as $j_i\times j_P$, for some complex structure $j_i$ on $\Sigma$.
    \end{enumerate}
  \item \label{PinchedOverVertex} If $E_i$ is the energy of the $i\th$
    marked point of $P$ then $j_i$ is $\eta(E_i)$-pinched.
  \end{enumerate}

  We extend this definition to the boundary of $\ModPol$ as
  follows. Consider a boundary point $P$ of $\ModPol$, consisting of
  some bimodule components and some algebra components. By an
  $\eta$-admissible almost complex structure $J$ over such a
  configuration we mean an $\eta$-admissible almost complex structure
  in the sense above over each of the bimodule components, so
  that these agree near $\pm\infty$ in the obvious sense (using
  Condition~\ref{item:J-const-away-punct}).
  The almost complex structures on the bimodule components determine an
  almost complex structure on $\Sigma\times P_0$ for each algebra-type
  component
   $P_0$ of $P$ by condition~\ref{item:J-const-at-punct}: we
  simply require the complex structure to be split as
  $j_i\times j_{P_0}$, if $P_0$ is attached at the $i\th$ marked point
  on the right, or as $j_\Sigma\times j_{P_0}$ if $P_0$ is attached at a
  marked point on the left.
\end{definition}

In particular, Conditions~\ref{item:J-piD} and~\ref{item:J-s-t} imply
that we can (and often will) view $J(P)$ as a family of complex
structures on $\Sigma$, parameterized by
$P$. Condition~\ref{item:J-const-away-punct} implies that $J$ has a
well-defined asymptotic behavior at $\pm\infty$: near $\pm\infty$, $J$
is specified by a 1-parameter family of almost complex structures
$j_s$, $s\in[0,1]$, on $\Sigma$. (By condition~\ref{item:J-split},
$j_s = j_\Sigma$ near $s=0$.)

Figures~\ref{fig:poly-cx-str} and~\ref{fig:poly-cx-str-overlap} give schematics of what
Conditions~\ref{NearlyConstant} and~\ref{PinchedOverVertex} require.

We make the space of $\eta$-admissible almost complex structures into
a bundle over $\ModPol$ as follows. Over the top stratum of
$\ModPol$, the definition of this bundle is clear, using the
$C^\infty$ topology on the space of endomorphisms of
$T(\Sigma\times [0,1]\times\RR)$, say. Next, suppose $P$ is a boundary
point of $\ModPol$, and that $\glueop(P)$ is obtained by gluing together
the components of $P$ (with small gluing parameter $\epsilon$, say).
Because the
almost complex structures are split near the marked points, the
$\eta$-admissible almost complex structure over $P$ induces an
$\eta$-admissible almost complex structure $\glueop(J)$ over
$\glueop(P)$. Then, for
each triple of positive real numbers $\delta$, $\delta'$, and
$\epsilon$ and open
neighborhood $V$ of $J$, we declare
there to be a basic open neighborhood $V$ of $J$ consisting of those $J''$
which agree with $\glueop(J')$ on a disk of radius $\delta$ around the
algebra
marked points and are within distance $\delta'$ of $\glueop(J')$ in the
$C^\infty$ topology everywhere, for some $J'\in V$ and gluing
parameter $\epsilon'<\epsilon$. (Here, as in Definition~\ref{def:ac},
distances are measured with respect to the identification of $\glueop(P)$
and the bimodule components of $P$ with $[0,1]\times\RR$.)

\begin{lemma}\label{lem:admis-contract}
  The fibers of the projection from the space of $\eta$-admissible
  almost complex structures projects to $\ModPol$ are contractible.
\end{lemma}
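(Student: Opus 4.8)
The statement asserts that the space of $\eta$-admissible almost complex structures, regarded as a bundle over $\ModPol$, has contractible fibers. The plan is to verify contractibility fiberwise, first over the top stratum, and then to propagate the conclusion to the boundary strata using the gluing/neighborhood structure that was just described. Over a fixed point $P$ in the top stratum of $\ModPol$ (a single bimodule component with a chosen identification with $[0,1]\times\RR$ and a chosen set of marked points with energies), the fiber is the set of all $\eta$-admissible $J$ on $\Sigma\times P$: these are the $J$ satisfying Conditions~\ref{item:J-piD}--\ref{PinchedOverVertex}. First I would observe that each of these conditions is either an equality constraint that carves out an affine subspace of the space of all almost complex structures compatible with $\pi_P$, or a constraint that pins $J$ down on an open subset of $\Sigma\times P$ (namely near the puncture, near $\{0\}\times\RR$, and inside the balls of radius $\delta$ around the $(1,t_i)$) to lie in a contractible space. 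Concretely: Conditions~\ref{item:J-piD} and~\ref{item:J-s-t} say that $J$ is equivalent to a map $P\to \mathcal{J}(\Sigma)$, $p\mapsto j_p$, where $\mathcal{J}(\Sigma)$ is the (contractible) space of complex structures on $\Sigma$ tamed in the appropriate sense; this identifies the whole fiber with a space of maps $P\to\mathcal{J}(\Sigma)$ subject to the remaining boundary/pinching conditions.

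The heart of the top-stratum argument is then to check that the subset of such maps cut out by Conditions~\ref{item:J-split}, \ref{NearlyConstant}, and~\ref{PinchedOverVertex} is convex, or at least star-shaped, hence contractible. For Condition~\ref{item:J-split} and~\ref{item:J-const-away-punct} (translation-invariance near $\pm\infty$ and outside the $2\delta$-balls, split near the puncture and near $\{0\}\times\RR$) these are all closed linear conditions on the map $p\mapsto j_p$, so they are preserved under convex combinations. Condition~\ref{item:J-const-at-punct} requires $J$ to split as $j_i\times j_P$ inside the $\delta$-ball around $(1,t_i)$, with $j_i$ itself $\eta(E_i)$-pinched by Condition~\ref{PinchedOverVertex}; here the key input is the remark, stated immediately after Definition~\ref{def:ePinched}, that the space of $\epsilon$-pinched complex structures on $\Sigma$ is contractible (for fixed isotopy class of pinched curve $C$ and fixed $\delta_0$). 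So the only subtlety is the auxiliary real parameter $\delta$ appearing in Condition~\ref{NearlyConstant}: different $J$'s in the fiber may come with different choices of $\delta$. I would handle this by noting that the conditions for a given $J$ with parameter $\delta$ imply the conditions with any smaller $\delta'$ (the balls shrink, and ``invariant under small translations outside the bigger balls'' implies the same outside the smaller balls), so one can first retract the fiber onto the subspace where $\delta$ is taken to be some fixed small value $\delta_*$ less than a quarter of the minimal spacing of the $t_i$ (cf.\ Figure~\ref{fig:poly-cx-str-overlap}), and on that subspace the conditions are a genuine intersection of contractible pieces along overlaps where they agree by~\ref{item:J-split}--\ref{item:J-const-at-punct}; a partition-of-unity / straight-line-homotopy argument in $\mathcal{J}(\Sigma)$ (which is convex in suitable local coordinates, or at least contractible via the standard retraction onto a fixed $j_\Sigma$ rel the fixed regions) then contracts it. The main obstacle I anticipate is precisely bookkeeping this interaction between the varying auxiliary parameter $\delta$ and the overlap regions, and making sure the homotopies one writes down stay inside the class of $\eta$-admissible structures (in particular that interpolating between two pinched $j_i$'s stays pinched, which is exactly what the cited contractibility of the pinched-structures space gives us).

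Finally, for boundary strata of $\ModPol$: a boundary point $P$ is a tree of bimodule components and algebra components, and by Definition~\ref{def:ac} an $\eta$-admissible $J$ over $P$ is just a compatible collection of $\eta$-admissible structures over the bimodule components (agreeing near $\pm\infty$) together with the forced split structures over the algebra components. Thus the fiber over $P$ is a fiber product, over the gluing/matching conditions near the nodes, of the fibers over the individual bimodule components, each of which is contractible by the top-stratum argument; the matching conditions near $\pm\infty$ are again closed linear conditions, so the fiber product is contractible. Since the lemma only asserts contractibility of fibers (not local triviality of the bundle), this suffices, and I would remark that the basic-open-neighborhood description given before the lemma statement is consistent with, though not needed for, this fiberwise conclusion. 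I would close by noting this is the input that lets one choose coherent families of almost complex structures by the usual obstruction-theoretic extension over the strata of $\ModPol$, which is how it is used in Section~\ref{sec:transversality}.
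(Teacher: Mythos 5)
There is a genuine gap at the point you yourself flag as the main subtlety: the interaction with the auxiliary parameter $\delta$. You claim that the conditions in~\ref{NearlyConstant} for a given $J$ with parameter $\delta$ imply the conditions for any smaller $\delta'$, so that the fiber retracts onto the subspace with a fixed small $\delta_*$. This is false, and in fact the dependence on $\delta$ is not monotone in either direction: shrinking $\delta$ weakens Condition~\ref{item:J-const-at-punct} (splitting is required on a smaller ball) but \emph{strengthens} Condition~\ref{item:J-const-away-punct}, since ``translation-invariant outside the balls of radius $2\delta'$'' is a condition on a \emph{larger} region than ``outside the balls of radius $2\delta$.'' A $J$ that is translation-invariant outside the $2\delta$-balls may fail to be invariant on the annular regions between radius $2\delta'$ and $2\delta$, so it need not be $\eta$-admissible with parameter $\delta_*<\delta$, and the proposed retraction does not exist as described. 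This is precisely where the paper's proof does real work: after first normalizing so that the pinching occurs along a fixed representative curve $C$ and with the uniform amount $\eta(\max_i E_i)$ (two preliminary homotopy equivalences $\mathcal{J}_1\into\mathcal{J}_0\into\mathcal{J}$ that your write-up also elides), it shows each fixed-$\delta$ subspace $\mathcal{J}_{1,\delta}$ is contractible, and then handles the union $\bigcup_\delta\mathcal{J}_{1,\delta}$ by exhibiting, for nearby $\delta_1,\delta_2$, a common subspace of structures varying only over a $(\delta_1,\delta_2)$-collar which includes into $\mathcal{J}_{1,\delta_1}$, $\mathcal{J}_{1,\delta_2}$, and their intersection as homotopy equivalences, concluding by induction. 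Your argument needs to be replaced by something of this Mayer--Vietoris type (or some other device for comparing different values of $\delta$); it cannot be reduced to a single $\delta_*$.

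Two smaller cautions. First, ``straight-line homotopy'' in the space of almost complex structures is not literally available (convex combinations of complex structures are not complex structures), so even the fixed-$\delta$ contractibility should be routed through the standard contractibility of compatible almost complex structures, as the paper does, rather than through convexity. Second, your treatment of boundary strata as a ``fiber product of contractible fibers over closed linear matching conditions'' is looser than needed; the paper simply reduces to the interior stratum, and if you do want to argue the boundary case directly you should justify why the matching at $\pm\infty$ does not destroy contractibility rather than asserting it.
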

\begin{proof}
  Fix a point $P$ in $\ModPol$; without loss of generality, we may
  assume that $P$ is in the interior of $\ModPol$, i.e., consists of a
  single bimodule component. Let $\mathcal{J}$ denote the space of
  almost complex structures as in Definition~\ref{def:ac} for $P$.
  
  Fix a single curve $C$ representing the isotopy class where the
  pinching occurs. There is a subspace of $\mathcal{J}_0$ of
  $\mathcal{J}$ where the pinching occurs along $C$. The inclusion
  $\mathcal{J}_0\into \mathcal{J}$ is a homotopy equivalence, so it
  suffices to show that $\mathcal{J}_0$ is contractible.

  A key notion we will use is that of a collar. Given a number
  $\delta\in(0,1/2)$, let $F_\delta$ be the union of the closed balls
  of radius $\delta$ around the $t_i$ and $G_\delta$ the complement of
  the union of the open balls of radius $2\delta$ around the $t_i$. By
  construction, $F_\delta$ and $G_\delta$ are closed and at distance
  $d(F_\delta,G_\delta)=\delta$ from each other. By a
  \emph{$(\delta_1,\delta_2)$-collar} we mean a union of strips in
  $([0,1]\times\RR)\setminus(F_{\delta_1}\cup G_{\delta_2})$
  separating $F_{\delta_1}$ from $G_{\delta_2}$. Such a collar exists
  as long as $F_{\delta_1}\cap G_{\delta_2}=\emptyset$.

  Let $E=\max\{E_1,\dots,E_n\}$. 
  
  There is a subspace $\mathcal{J}_1\subset \mathcal{J}_0$ so that on
  each disk around $(1,t_i)$, $J$ is $\eta(E)$-pinched rather than
  just $\eta(E_i)$-pinched as in
  Condition~\ref{PinchedOverVertex}. That is, $J$ is $\eta(E)$-pinched
  on all of $F_\delta$. The inclusion
  $\mathcal{J}_1\into \mathcal{J}_0$ is a homotopy equivalence. (One
  can see this by deforming $\mathcal{J}_0$ into $\mathcal{J}_1$ by
  pinching more along $C$, using a collar.)

  For each $\delta\in(0,1/2)$, let $\mathcal{J}_{1,\delta}$ be the
  subspace of $\mathcal{J}_{1}$ defined using the parameter
  $\delta$. Contractibility of the space of almost complex structures
  on $\Sigma$ compatible with a given area form implies that each
  $\mathcal{J}_{1,\delta}$ is contractible.

  Finally, to see that $\mathcal{J}_{1}$ is contractible, suppose that
  $\delta_1$ and $\delta_2$ are close enough that there is a
  $(\delta_1,\delta_2)$-collar. Fix such a collar. Then the subspace
  of $\mathcal{J}_1$ of almost complex structures which vary only over
  the collar (i.e., satisfy either
  Condition~\ref{item:J-const-away-punct}
  or~\ref{item:J-const-at-punct} outside the collar) is contractible
  (for the same reason $\mathcal{J}_{1,\delta}$ is) and includes into
  $\mathcal{J}_{1,\delta_1}$, $\mathcal{J}_{1,\delta_2}$, and
  $\mathcal{J}_{1,\delta_1}\cap \mathcal{J}_{1,\delta_2}$ as homotopy
  equivalences. So, it follows by an inductive argument that the
  union $\mathcal{J}_1=\bigcup_\delta\mathcal{J}_{1,\delta}$ is also
  contractible.
\end{proof}
We will often shorthand Lemma~\ref{lem:admis-contract} by saying that
the space of $\eta$-admissible almost complex structures is
contractible.

\begin{definition}
  \label{def:AdmissibleJs}
  Fix a pinching function $\eta$.  A \emph{coherent family of
    $\eta$-admissible almost complex structures} is a continuous
  section of the bundle of $\eta$-admissible almost complex structures
  (over $\ModPol$) satisfying the following compatibility
  conditions:
  \begin{enumerate}[label=(\arabic*)]
  \item\label{item:admis-at-infty} All of the almost complex structures agree near $\pm\infty$,
    i.e., they all correspond to the same 1-parameter family of
    complex structures $j_s$, $s\in[0,1]$, on $\Sigma$. We will
    sometimes write $J_\infty$ for the $\RR$-invariant almost complex
    structure on $\Sigma\times[0,1]\times\RR$ corresponding to this
    path $j_s$.
  \item\label{item:admis-Pprime} If $P$ is in the boundary of $\ModPol$ and $P'$ is a bimodule
    component of $P$, then the restriction of $J(P)$ to $P'$
    agrees with $J(P')$.
  \end{enumerate}
\end{definition}
Note that we do not require any compatibility on the algebra-type
components. (It will turn out that the counts of holomorphic curves
corresponding to these components are independent of the complex
structure, by Theorem~\ref{thm:AlgOfSurface}.)

\begin{lemma}\label{lem:admis-J-exists}
  For any pinching function $\eta$, a coherent family $J$ of
  $\eta$-admissible almost complex structures exists.
\end{lemma}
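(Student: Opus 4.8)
The plan is to build the section one cell at a time, by induction on the dimension of cells in the CW complex $\ModPol$, using at each step the contractibility of the fibers of the bundle of $\eta$-admissible almost complex structures (Lemma~\ref{lem:admis-contract}) to extend over the cell.

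First I would fix, once and for all, a smooth path $j_s$, $s\in[0,1]$, of complex structures on $\Sigma$ with $j_s=j_\Sigma$ near $s=0$, and let $J_\infty$ be the corresponding $\RR$-invariant almost complex structure on $\Sigma\times[0,1]\times\RR$. By Condition~\ref{item:J-const-away-punct}, any $\eta$-admissible almost complex structure over a bimodule component is $\RR$-invariant near $\pm\infty$; so one may replace the bundle of $\eta$-admissible almost complex structures by its sub-bundle $\mathcal{J}^{J_\infty}$ consisting of those whose restriction near $\pm\infty$ equals $J_\infty$. Its fibers are still contractible: the argument proving Lemma~\ref{lem:admis-contract} goes through verbatim, since pinning down the pinching-free behavior on a collar near $\pm\infty$ does not interfere with the collar deformations used there. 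A section of $\mathcal{J}^{J_\infty}$ automatically satisfies Condition~\ref{item:admis-at-infty}, so it remains to find such a section that also satisfies Condition~\ref{item:admis-Pprime}.

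Now recall that $\ModPol$ is a CW complex: it is the disjoint union of the compact polytopes $\ModPol_{k,\ell,\vec{E}',\vec{E},w}\cong\Associaplex{k+1+\ell}{w}$, glued along boundary faces (and, for each $E$, the subcomplex $\ModPol_E$ is finite). A $d$-dimensional cell is a stratum of configurations of a fixed combinatorial type — a tree of disks with prescribed marked points and energies — whose moduli space has dimension $d$. I would construct the section by induction on $d$: having defined it on the $(d-1)$-skeleton, extend it over each $d$-cell. The point is that on the boundary of a $d$-cell the section is already completely determined and $\eta$-admissible, for two reasons. Over an algebra-type component the almost complex structure is forced by Conditions~\ref{item:J-const-at-punct} and~\ref{item:J-split} (this is exactly the extension of Definition~\ref{def:ac} to $\bdy\ModPol$). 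Over a bimodule-type component it is the value of the already-defined section — and here one needs that each bimodule-type component occurring in a degeneration is a point of a cell of dimension strictly less than $d$: if a bimodule component of type $(n,w)$ (moduli dimension $n+2w-2$) degenerates, a pinch produces two bimodule pieces whose dimensions sum to $d-1$, while a boundary or interior bubble produces a single bimodule piece of dimension at most $d-1$ (one loses at least two boundary marked points, or trades an interior marked point for a boundary node). Since the section on the boundary of a $d$-cell records exactly the restriction to its bimodule-type sub-components, Condition~\ref{item:admis-Pprime} is built in by this inductive procedure.

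Finally, each $d$-cell is (the interior of) a compact polytope, hence contractible, and the sub-bundle $\mathcal{J}^{J_\infty}$ restricted to it has contractible fibers, so there is no obstruction to extending the section from the boundary over the cell. Carrying this out over all cells in increasing dimension yields a section of $\mathcal{J}^{J_\infty}$, which is the desired coherent family. The two steps needing real care are the combinatorial well-foundedness claim — that the moduli dimension strictly drops under every degeneration, so the induction terminates — and the bookkeeping needed to verify that the gluing topology on the bundle of $\eta$-admissible almost complex structures (the one defined just before Lemma~\ref{lem:admis-contract}) is compatible enough with the cell structure of $\ModPol$ for this cell-by-cell obstruction-theoretic extension to be legitimate. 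All of the genuine analysis is already packaged in Lemma~\ref{lem:admis-contract}, so I expect the bundle-topology bookkeeping, not any analytic estimate, to be the main annoyance.
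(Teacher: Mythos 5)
Your proposal is correct and follows essentially the same strategy as the paper's proof: an inductive extension over the strata of $\ModPol$, using contractibility of the fibers (Lemma~\ref{lem:admis-contract}) to extend the section from its (already-determined) values on the boundary of each stratum over the interior; the paper organizes the induction by total energy and then by the number of interior and boundary punctures rather than by cell dimension, but this is only bookkeeping. The one step you defer---extending the section from the boundary of a stratum to a neighborhood of the boundary compatibly with the gluing topology on the bundle---is exactly the point the paper handles by making consistent choices of gluing parameters in the style of Seidel's consistent strip-like ends, so your diagnosis of where the remaining work lies is accurate.
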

\begin{proof}
  The proof is an induction on the total energy.
  First, fix any
  translation-invariant almost complex structure $J_\infty$ on
  $\Sigma\times[0,1]\times\RR$. We will require that all of the
  $\eta$-admissible almost complex structures $J(P)$ agree with  $J_\infty$
  outside a neighborhood of their algebra punctures.
  Suppose we have constructed $J(P)$ for all polygons $P$ with energy
  $<E_0$ and consider the space of polygons with energy $E_0$. Fix
  some $\eta(E_0)$-pinched almost complex structure on $\Sigma$; we
  will require that almost complex structures agree with this one near
  punctures with energy $E_0$. (This only occurs when the polygon has
  a single bimodule component and that component has a single algebra
  puncture, with energy $E_0$, and no interior punctures.) In
  particular, if $P$ has no algebra
  punctures and no punctures, $J(P)$ is given by $J_\infty$.
  
  The family $J$ is already defined on the subspace of $\ModPol_{E_0}$
  consisting of trees of disks where at least two of the components
  are bimodule components, because each of these components is in
  $\ModPol_E$ for some $E<E_0$ and on all algebra-type components
  $J$ is required to be constant.  We extend $J$ to the strata where
  there is a single bimodule polygon inductively on the
  number of interior punctures and, for each number of interior
  punctures, inductively on the number of boundary punctures. Suppose
  we have constructed $J$ for all bimodule components with $<m$ interior
  punctures and with $m$ interior punctures and $<n$ boundary
  punctures. We will discuss the induction increasing $n$; increasing
  $m$ is similar. (Though it is not needed here, note that $n$
  and $m$ are bounded in terms of $E_0$.)

  Each point $P$ in the boundary of the subspace of $\ModPol_{E_0}$
  with $m$ interior punctures and $n$ boundary punctures is a tree of
  bimodule and algebra-type components where each bimodule component is earlier
  in the induction. So, we have already defined the almost
  complex structure on $\Sigma\times P$ or, equivalently, the family
  of complex structures on $\Sigma$ parameterized by $P$.
  Further, if there is more than one bimodule
  component in $P$, the chosen almost complex structures for those
  components agree with the ones chosen before for $E<E_0$. A
  neighborhood of $P$ is given by gluing together the components of
  $P$ at the nodes, with various parameters. Given some $\delta>0$ and
  a gluing $\tilde{P}$ of $P$ small enough that all punctures on the
  same algebra-type component of $P$ lie within $\delta$ of each
  other in $\tilde{P}$, define the complex structures in
  $\delta$-neighborhoods of the punctures of $\tilde{P}$ to agree with
  the corresponding complex structure at the puncture of $P$, and
  choose interpolating complex structures for the regions between
  $\delta$-neighborhoods and $2\delta$-neighborhoods. Making these
  choices consistently across the different boundary strata defines
  $J$ in a neighborhood of the boundary of $\ModPol_{E_0}$. (Spelling
  out how to make such consistent choices takes some work, but is
  essentially the same as making consistent choices of strip-like ends
  in Floer theory~\cite[Section (9g)]{SeidelBook}.) Contractibility of
  the space of
  $\eta$-admissible almost complex structures guarantees we
  can then extend this family $J$ to all of $\ModPol_{E_0}$.
\end{proof}

To relate the above spaces of almost complex structures and the sequences of algebra elements used in the definitions of the $A_\infty$-operations, we 
use the notion of the energy of a sequence of algebra elements.
Recall that a \emph{basic algebra element} is an
algebra element of the form $U^i\rho^j$ or $U^i\iota_j$, with
those the first kind called
Reeby elements. Define
the \emph{energy} of a basic algebra element to be $E(U^i\rho^j)=4i+|\rho^j|$
and $E(U^i\iota_j)=4i$, where $|\rho^j|$ denotes the length of
$\rho^j$. More generally, given a sequence of basic algebra elements
$\vec{a}$ and a non-negative integer $w$, define the \emph{energy} of
$(\vec{a},w)$ to be
\begin{equation}
  \label{eq:ExtendEnergy}
  E(\vec{a},w)=4w+\sum E(a_j).
\end{equation}
The definition is chosen so that
\begin{equation}
  \label{eq:EisMuInvariant}
  E(\mu^w_k(a_1,\dots,a_k),0)=E(a_1,\dots,a_k,w),
\end{equation}
in the sense that each basic term in the sum on the left-hand side has the
indicated energy.

Now, given a sequence $(a_1,\dots,a_m)$ of basic algebra elements and
a non-negative integer $w$, there is a corresponding component of
$\ModPol$ where the main stratum has $w$ interior marked points of
weight $4$, no boundary marked points on $\{0\}\times\RR$, and $m$ boundary marked points on $\{1\}\times\RR$ labeled by the energies
$E(a_1),\dots,E(a_m)$ (in that order). Let
$\ModPol(a_1,\dots,a_m;w)$ denote this component. Given an
$\eta$-admissible family $J$ of almost complex structures, define
\[
  J(a_1,\dots,a_m;w)
\]
to be the restriction of $J$ to $\ModPol(a_1,\dots,a_m;w)$. That
is, $J(a_1,\dots,a_m;w)$ consists of an almost complex structure on
$\Sigma\times P$ for each polygon
$P\in \ModPol(a_1,\dots,a_m;w)$. See Figure~\ref{fig:poly-cx-str}.

\subsubsection{Maps to \texorpdfstring{$\Sigma\times[0,1]\times\RR$}{the main part}}\label{sec:main-part}

Fix a sequence of basic algebra elements $\vec{a}=(a_1,\dots,a_m)$ in
the sense of Definition~\ref{def:Basic}.  Our moduli spaces will
depend on this sequence: the underlying chord sequence (in the sense
of Definition~\ref{def:Basic}) specifies the asymptotics of the
holomorphic curves, while the whole sequence of basic algebra
elements will specify a family of almost complex structures as in
Section~\ref{sec:J}.

Fix a pinching function $\eta$ and a coherent family $J$ of
$\eta$-admissible almost complex structures.

As we did for $\HFa$, we will encode the asymptotics of our
holomorphic curves in terms of decorations of the source:

\begin{definition}\label{def:decorated-source}
  A \emph{decorated source} $\Source$ is a smooth surface $S$ with
  boundary, interior punctures, and boundary punctures, together with:
  \begin{enumerate}
  \item A labeling of each interior puncture by a positive integer,
    its \emph{multiplicity}, and
  \item A labeling of each boundary puncture by either $-\infty$,
    $+\infty$, or a basic algebra element $a$.
  \end{enumerate}
\end{definition}
We will refer to the boundary punctures labeled by algebra elements as
\emph{algebra punctures}. Sometimes it will be more convenient to talk
about the \emph{ramification} of an interior puncture, which is one
less than its multiplicity. (The terminology is the same as for zeroes
of holomorphic maps.)

A \emph{multiplicity $r$ Reeb orbit} is a map $S^1\to \partial
\Sigma\times[0,1]\times\RR$ which maps to $[0,1]\times\RR$ by a
constant map and which winds around $\partial\Sigma$ $r$ times.

The following is the analogue of~\cite[Definition 5.2]{LOT1}:
\begin{definition}\label{def:respectful}
  Let $\Source$ be a decorated source. We say a smooth map
  \begin{equation}
  \label{eq:u-source-targ}
    u\co (S,\bdy S) \to (\Sigma\times[0,1]\times\RR,\alphas\times\{1\}\times\RR\cup\betas\times\{0\}\times\RR)
  \end{equation}
  \emph{respects $\Source$} if $u$ is
  proper and:
  \begin{itemize}
  \item at each interior puncture with multiplicity $r$, $u$ is
    asymptotic to a multiplicity $r$ Reeb orbit;
  \item at each boundary puncture labeled $-\infty$, $u$ is asymptotic
    to $x\times[0,1]\times\{-\infty\}$ for some
    $x\in\alphas\cap\betas$;
  \item at each boundary puncture labeled $+\infty$, $u$ is asymptotic
    to $y\times[0,1]\times\{+\infty\}$ for some
    $y\in\alphas\cap\betas$;
  \item at each boundary puncture labeled $U^m\rho$, $u$ is asymptotic
    to $\rho\times\{(1,t)\}$ for some $t\in\RR$; and
  \item at each boundary puncture labeled $U^m\iota_j$, $u$ is
    asymptotic to a point $(x,1,t)$ where $x$ lies in the interior of
    the $\alpha$-arc corresponding to $\iota_j$. (In particular, these last
    punctures are removable singularities.) 
  \end{itemize}
\end{definition}
Given a map $u$ as in Definition~\ref{def:respectful}, where $\Source$
has $n$ boundary punctures and $w$ interior punctures, $n+2w\geq 3$,
projection to $[0,1]\times\RR$ specifies a module polygon
$P(u)\in \ModPol$ with $n$ vertices, the images of the punctures (as
long as those images are distinct). The
family $J$ then specifies a corresponding almost complex structure on
$\Sigma\times[0,1]\times\RR$, $J(P(u))$. In the special case that
$n=2$ and $w=0$, $P(u)$ would be a bigon with no interior punctures,
which we disallowed; in this case, let $J(P(u))$ be $J_\infty$, the
$\RR$-invariant almost complex structure that all the $J(P)$ agree
with near $\pm\infty$.

The following is the analogue of~\cite[Definition 5.3]{LOT1}:
\begin{definition}\label{def:moduli-fixed-source}
  Given generators $\x,\y\in\Gen(\HD)$, a homology class
  $B\in\pi_2(\x,\y)$, and a decorated source $\Source$, let
  $\tcM^B(\x,\y;\Source)$ be the set of maps $u$ as in Formula~\eqref{eq:u-source-targ}
  so that:
  \begin{enumerate}[label=(M-\arabic*),series=moduli]
  \item The map $u$ is $(j,J(P(u))$-holomorphic for some almost complex
    structure $j$ on $S$.
  \item The map $u$ respects $\Source$.
  \item\label{item:bdy-mon} For each $t\in\RR$ and each $i$, $u^{-1}(\beta_i\times\{(0,t)\})$ consists
    of exactly one point, $u^{-1}(\alpha^c_i\times\{(1,t)\})$ consists
    of exactly one point, and  $u^{-1}(\alpha^a_i\times\{(1,t)\})$
    consists of at most one point.
  \end{enumerate}
  Let $\cM^B(\x,\y;\Source)$ be the quotient of
  $\tcM^B(\x,\y;\Source)$ by the action of $\RR$ by translation.
\end{definition}
Condition~\ref{item:bdy-mon} is called \emph{strong boundary
  monotonicity}. In particular, this condition implies that if
$\cM^B(\x,\y;\Source)\neq\emptyset$ then the algebra
punctures, and
hence the basic algebra elements labeling them, are ordered. (In
particular, the images in $\{1\}\times\RR$ of the algebra punctures
are distinct, so $P(u)$ is defined.) Specifically, if we
fill in the punctures of $S$ to obtain a surface $\overline{S}$ then
there must be an arc in $\bdy \overline{S}$ starting at a (filled-in)
$-\infty$ puncture and ending at a (filled-in) $+\infty$-puncture, and
containing all the algebra punctures. The boundary orientation
orients that arc, and hence orders the algebra punctures. Thus,
$\Source$ specifies a list $(a_1,\dots,a_k)$ of basic algebra elements.

\begin{definition}\label{def:moduli-embedded}
  Given generators $\x,\y\in\Gen(\HD)$,
  a sequence of basic algebra elements $\vec{a}$,
  a homology class $B\in\pi_2(\x,\y)$, and a non-negative integer $r$,
  let $\cM^B(\x,\y;\vec{a};w,r)$ be the
  union over all decorated sources $\Source$ of the set of
  $u\in\cM^B(\x,\y;\Source)$ so that:
  \begin{enumerate}[resume*=moduli]
  \item The map $u$ is an embedding.
  \item If we list the algebra punctures of $\Source$ according to
    the $\RR$-coordinates $u$ sends them to, the corresponding
    sequence of algebra elements is $\vec{a}$.
  \item The source $\Source$ has $w$ interior punctures with total
    ramification $r$.
  \end{enumerate}
  Let $\cM^B(\x,\y;\vec{a};w)=\cM^B(\x,\y;\vec{a};w,0)$.
\end{definition}
The spaces $\cM^B(\x,\y;\vec{a};w)$ will be used to define the modules
$\CFAmb(\HD)$ and $\CFAm(\HD)$ and, indirectly, $\CFDm(\HD)$.

\subsubsection{Maps to east infinity}\label{sec:e-infty}

Here we discuss the relevant holomorphic curves in
$\RR\times\bdy\Sigma\times[0,1]\times\RR$. The curves which appear are
essentially the same as in the case of $\HFa$~\cite{LOT1}, except that
chords can be longer, curves can also be asymptotic to orbits, and
punctures labeled by $U^m\iota_j$ can collide with other punctures.

To shorten notation, write $Z=\partial \Sigma$. The manifold
$\RR\times Z\times[0,1]\times\RR$ is non-compact in several
directions; by \emph{far east $\infty$} we mean
$\{\infty\}\times Z\times[0,1]\times\RR$, and by \emph{west infinity}
we mean $\{-\infty\}\times Z\times[0,1]\times\RR$.

\begin{definition}\label{def:bi-decorated-source}
  A \emph{bi-decorated source} $\biSource$ is a smooth surface $T$ with
  boundary, interior punctures, boundary punctures, and boundary
  marked points, together with:
  \begin{enumerate}
  \item A labeling of each puncture by either $w\infty$ or $e\infty$,
  \item A labeling of each interior puncture by a positive integer,
    its \emph{multiplicity},
  \item A labeling of each boundary puncture by some Reeby algebra
    element, and 
  \item A labeling of each boundary marked point by some non-Reeby basic algebra
    element (i.e., of the form $U^m\iota_j$).
  \end{enumerate}
  We require that the sum of the $U$-powers labeling the $w\infty$
  punctures is equal to the sum of the $U$-powers labeling the
  $e\infty$ punctures plus the sum of the $U$-powers labeling the
  boundary marked points.
\end{definition}

\begin{definition}\label{def:e-inf-mod-space}
  Given a bi-decorated source $\biSource$, a smooth map
  \[
    u\co (T,\bdy T)\to (\RR\times Z\times [0,1]\times\RR,\RR\times
    \mathbf{a}\times \{1\}\times\RR)
  \]
  \emph{respects $\biSource$} if $u$ is proper and:
  \begin{itemize}
  \item The map $\pi_\bD\circ u\co T\to [0,1]\times\RR$ is constant, where
    $\pi_\bD$ is projection from $\RR\times Z\times [0,1]\times\RR$ to
    the last two factors,
  \item At each interior puncture with multiplicity $r$, $u$ is
    asymptotic to a multiplicity $r$ Reeb orbit, at $\{-\infty\}\times
    Z$ if the puncture is labeled $w\infty$ and at $\{\infty\}\times Z$ if
    the puncture is labeled $e\infty$, 
  \item At each boundary puncture labeled $(e\infty,U^m\rho)$, $u$ is
    asymptotic to $\{\infty\}\times \rho\times\{(1,t)\}$, while at
    each boundary puncture labeled $(w\infty,U^m\rho)$, $u$ is asymptotic
    to $\{-\infty\}\times \rho\times\{(1,t)\}$ (for some $t\in\RR$), and
  \item Each boundary marked point labeled by $U^m\iota_j$ is mapped
    to a point on an element of $\mathbf{a}$ corresponding to $\iota_j$.
  \end{itemize}

  Let $\tcN(\biSource)$ denote the space of holomorphic maps $u$ which
  respect $\biSource$, and
  $\cN(\biSource)=\tcN(\biSource)/(\RR\times\RR)$ denote the quotient
  of $\tcN(\biSource)$ by translation in the two $\RR$ factors.
\end{definition}
Note that a map $u$ as in Definition~\ref{def:e-inf-mod-space} is
holomorphic if and only if its projection to $\RR\times Z$ is
holomorphic.
Also, the sum of the energies of the
$w\infty$ (boundary and interior) punctures is equal to the
sum of the energies of the
$e\infty$ punctures and the boundary marked points.

We will see later that, in codimension $1$, only four kinds of curves
at $e\infty$ show up:
\begin{enumerate}
\item \emph{Split curves}, as defined in~\cite{LOT1}. This is the case
  that $T$ consists of a single disk with three boundary punctures,
  two labeled $e\infty$ and one labeled $w\infty$, and no interior
  punctures. If the
  $e\infty$ boundary punctures are labeled $a$ and $b$ then
  the $w\infty$ puncture by the product $ab$. Reading
  counterclockwise around the boundary, the cyclic ordering of these
  punctures is $(a,b,ab)$. See Figure~\ref{fig:splits}, left.
\item \emph{Join curves}, also as defined in~\cite{LOT1}. This is the case
  that $T$ consists of a single disk with three boundary punctures,
  two labeled $w\infty$ and one labeled $e\infty$, and no interior punctures. The
  $w\infty$ boundary punctures are labeled $a$ and $b$, and
  the $e\infty$ puncture by the concatenation $ab$. Reading
  counterclockwise around the boundary, the cyclic ordering of these
  punctures is $(b,a,ab)$. Further,
  these occur only as parts of composite boundary degenerations
  (described below).
\item \emph{Orbit curves}, which are new. The surface $T$ is a disk
  with one boundary puncture, labeled by $w\infty$ and some length-4
  Reeb chord, and one interior puncture labeled $e\infty$ and
  multiplicity $1$. The corresponding map to $\RR\times Z$ is a
  degree-1 branched cover, with a single boundary branch point. (The
  whole boundary is mapped to a ray on one component of
  $\RR\times\mathbf{a}$, going out to $-\infty$, i.e., west infinity.)
  See Figure~\ref{fig:OrbitCurve}.
\item \emph{Pseudo split curves}, consisting of a single disk with two
  boundary punctures, one labeled $(e\infty, U^m\rho)$ and the other
  $(w\infty, U^{m+n}\rho)$, and a boundary marked point labeled
  $U^n\iota_j$; see Figure~\ref{fig:splits}, center. (These correspond to a puncture labeled by $U^m\rho$
  colliding with a puncture labeled $U^n\iota_j$.)
\end{enumerate}

\begin{figure}
\input{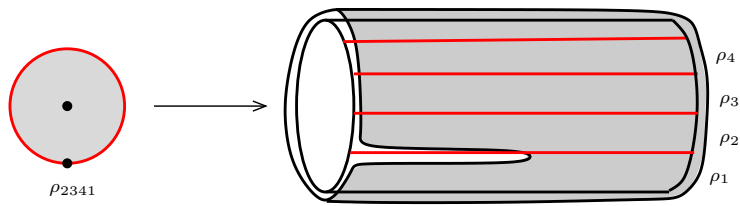}
\caption[An orbit curve]{\label{fig:OrbitCurve} {\bf{An orbit curve}.}  The source is
  drawn on the left; the map to $\RR\times Z$ is indicated on the
  right.}
\end{figure}

\begin{figure}
  \centering
  \includegraphics{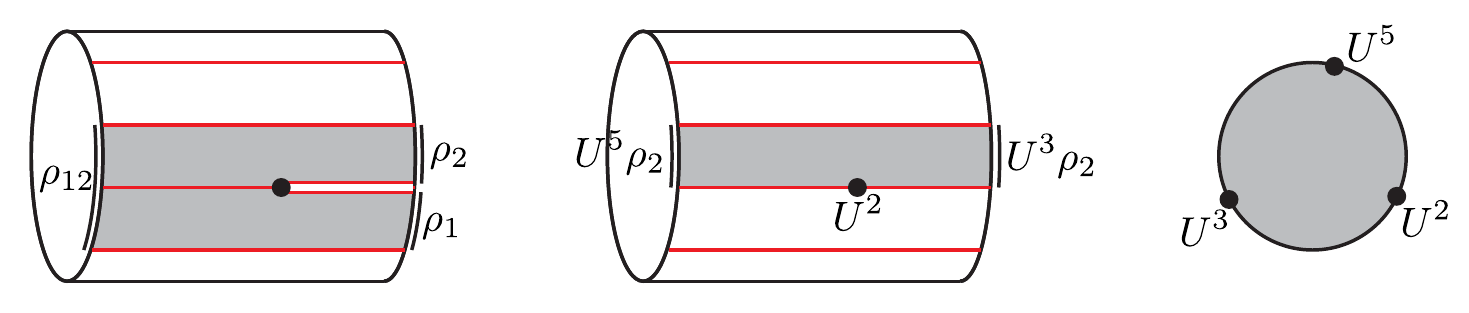}
  \caption[A split curve, a pseudo split curve, and a fake
      split curve]{\textbf{A split curve, a pseudo split curve, and a fake
      split curve.} Left: the image of a split curve in $\RR\times Z$,
    with boundary $(a,b,ab)=(\rho_1,\rho_2,\rho_{12})$. Center: the
    image of a pseudo-split curve in $\RR\times Z$. Right: the source
    of a fake split curve; this curve is mapped to
    $\Sigma\times[0,1]\times\RR$ by a constant map.}
  \label{fig:splits}
\end{figure}

It will be convenient to view one more kind of curve as a kind of
degenerate case of an $e\infty$ curve:
\begin{enumerate}[resume]
\item \emph{Fake split curves} consist of a disk with three
  boundary marked points (or punctures), labeled $U^m\iota_j$,
  $U^n\iota_j$, and $U^{m+n}\iota_j$, mapped to
  $\Sigma\times[0,1]\times\RR$ by a constant map to some point
  $(x,1,t)$ where $x$ lies in the interior of the $\alpha$-arc
  corresponding to $\iota_j$. See Figure~\ref{fig:splits}, right.
\end{enumerate}

\subsubsection{Boundary degenerations}\label{sec:bdy-degen}
Let $\HHH$ denote the half plane
\begin{equation}
  \label{eq:DefH}
  \HHH=(-\infty,1]\times \RR=\{x+iy\mid x\leq 1\}\subset \CC,
\end{equation}
with $\bdy \HHH=\{1\}\times \RR$. Let $\overline{\HHH}$ denote the
one-point compactification of $\HHH$, which is just a disk.

As usual, we fix a complex structure $j$ on $\Sigma$.  Boundary
degenerations are holomorphic maps from a surface $S$ to the space
$\Sigma\times \HHH$, equipped with the product complex structure $J$,
which is $j$ on the $\Sigma$ factor and the usual complex structure
$\HHH$ on the second. (In Section~\ref{sec:alg-bdeg}, we consider
boundary degenerations using a more general class of almost complex
structure on $\Sigma\times\HHH$.)

We start by defining a general boundary degeneration, which is needed
for the compactness statement in Section~\ref{sec:compactness}. We then
specialize to the kinds of boundary degenerations that occur in
codimension $1$: \emph{simple boundary degenerations} and
\emph{composite boundary degenerations}.

\begin{definition}
  \label{def:BoundaryDegeneration-source}
  A \emph{boundary degeneration source} $\bdSource$ consists of:
  \begin{enumerate}[label=(\arabic*)]
  \item A surface-with-boundary $S$ with interior and boundary punctures,
  \item A labeling of each interior puncture by a positive integer
    (which will be the multiplicity of the Reeb orbit), and
  \item A labeling of each boundary puncture by a basic algebra element.
  \end{enumerate}
  We will call the interior punctures and the boundary punctures
  labeled by elements $U^m\rho$ \emph{east punctures} and the boundary
  punctures labeled by elements $U^m\iota_j$ \emph{removable
    punctures}.

  A map
    \[
    u \co (S,\bdy S)\to (\Sigma\times \overline{\HHH}, \alphas\times\bdy \overline{\HHH})
  \]
  \emph{respects $\bdSource$} if it satisfying the following
  conditions:
  \begin{enumerate}[label=(BD-\arabic*),ref=(BD-\arabic*)]
  \item At each interior puncture with multiplicity (label) $r$, $u$
    is asymptotic to a multiplicity $r$ Reeb orbit.
  \item At each boundary puncture $q$ labeled by $U^m\rho$,
    $u$ is asymptotic to $\rho\times\{(1,t_q)\}$ for some
    $t_q\in\RR\cup\{\infty\}$.
  \item At each boundary puncture $q$ labeled by $U^m\iota_j$, $u$ is
    asymptotic to $x\times\{(1,t_q)\}$ for some $x$ in the
    $\alpha$-arc corresponding to $q$ and some
    $t_q\in\RR\cup\{\infty\}$. 
  \item  Exactly one $t_q$ is equal to $\infty$. Denote this $q$ by $q_\infty$.
  \item\label{item:bd-monotone} For each $t\in\RR$ which is not one of
    the $t_q$ and each $i=1,\dots,g-1$,
    $\overline{u}^{-1}(\alpha_i^c\times(1,t))$ consists of exactly one
    point and $u^{-1}((\alpha_1^a\cup\alpha_2^a)\times(1,t))$ consists
    of exactly one point.  (This condition is called \emph{boundary
      monotonicity}.)
  \item\label{item:bd-U-weight} The energy of the label of $q_\infty$
    is the sum of the energies of the labels of the other punctures
    (including the interior punctures). (This is a constraint on the
    power of~$U$ in the label of $q_\infty$.)
  \end{enumerate}  
  Let $\tcN(*;\bdSource)$ be the moduli space of maps $u$ respecting
  $\bdSource$ which are $(j_S,J)$-holomorphic with respect to some complex
  structure $j_S$ on $S$. Let
  \(
    \cN(*;\bdSource)
  \)
  be the quotient of $\tcN(*,\bdSource)$ by translation and scaling
  on~$\HHH$ (i.e., $(x,s,t)\mapsto (x,a(s-1)+1,at+b)$ for
  $a\in\RR_{>0}$
  and $b\in\RR$).

  For each boundary puncture $q$ there is an evaluation map
  \[
    \ev_q\co \cN(*;\bdSource)\to (\alpha_1^a\cup\alpha_2^a\cup\{p\})\times\alpha_1^c\times\cdots\times\alpha_{g-1}^c
  \]
  defined by
  \[
    \ev_q(u) = (\pi_\Sigma\circ u)\bigl((\pi_\HHH\circ u)^{-1}(t_q)\bigr).
  \]
  We have $p\in \ev_q(u)$ if and only if $q$ is labeled by a Reeby
  element. The
  evaluation map corresponding to $q_\infty$ (the puncture mapped to
  $\infty$) will come up most frequently, so let
  \[
    \ev = \ev_{q_\infty}.
  \]
  Given $\x\in
  (\alpha_1^a\cup\alpha_2^a\cup\{p\})\times\alpha_1^c\times\cdots\times\alpha_{g-1}^c$,
  let 
  \[
    \cN(\x;\bdSource)=\ev^{-1}(\x)\subset \cN(*;\bdSource).
  \]
  If $\x$ has the form $\{p,y_1,\dots,y_{g-1}\}$, so $q_\infty$ is
  labeled $U^m\rho$, we will also write $\cN(\x;\bdSource)$ as
  $\cN(\rho\times\y;\bdSource)$ where $\y=\{y_1,\dots,y_{g-1}\}$.

  Given $\bdSource$ and $u\in \tcN(*;\bdSource)\neq\emptyset$, the
  orientation of $\bdy \HHH$ gives an ordering
  $\vec{a}(\bdSource)=(a;a_1,\dots,a_m)$ of the algebra elements
  marking $\bdSource$, where we declare that the algebra element $a$
  labeling $q_\infty$ comes first. By Condition~\ref{item:bd-monotone},
  this ordering is independent of the choice of $u$.
\end{definition}

In codimension-1, two types of configurations involving boundary degenerations
can occur. So, we name them:
\begin{definition}\label{def:BoundaryDegeneration-simple}
  A \emph{simple boundary degeneration source} is a boundary
  degeneration source so that $q_\infty$ is labeled $U^m\iota_j$ and
  no other boundary puncture is labeled $U^m\iota_j$. A
  \emph{(possibly non-embedded) simple boundary degeneration} is a
  holomorphic curve $u$ respecting a simple boundary degeneration
  source.

  Let $\x$ be a $g$-tuple of points in
  $(\alpha_1^a\cup\alpha_2^a)\times \alpha^c_1\times\dots\times
  \alpha^c_{g-1}$, $\vec{a}$ be a sequence of
  Reeby algebra elements, and $w$ and $r$ be non-negative integers.
  Let $\cN(\x;\vec{a};w,r)$ be the set of pairs
  $(\bdSource,u)$ where $u$ is a simple
  boundary degeneration with source $\bdSource$ such that 
 \begin{enumerate}[label=(BDs-\arabic*),ref=(BDs-\arabic*),series=BDs]
 \item The map $u$ is an embedding. 
  \item The source $\bdSource$ has $w$ interior punctures with total
    ramification $r$. (As above, the ramification of a puncture is one
    less than its multiplicity.)
  \item The sequence of algebra element asymptotics of $u$ is
    $(U^n;\vec{a})$ (for some appropriate $n$).
  \end{enumerate}
  We will be particularly interested in the case $r=0$, so let
  \[
    \cN(\x;\vec{a};w)=\cN(\x;\vec{a};w,0)
  \]
  We call an element of $\cN(\x;\vec{a};w)$ a
  \emph{simple boundary degeneration based at $\x$}.

  Sometimes it will be more convenient not to fix the point $\x$; let
  \begin{align*}
    \cN(*;\vec{a};w,r)&=\bigcup_\x\cN(\x;\vec{a};w,r)\\
    \cN(*;\vec{a};w)&=\bigcup_\x\cN(\x;\vec{a};w,0).
  \end{align*}
  After discussing transversality, we will also consider another space
  $\cN([\alpha_i^a];\rho^1,\dots,\rho^n;w)$ which is equal to
  $\cN(\x;\vec{\rho};w)$ for some generic $\x$ intersecting $\alpha_i^a$; see
  Section~\ref{sec:transversality}.
\end{definition}
See Figure~\ref{fig:bdy-degen-source}, left.

\begin{definition}\label{def:composite-bdy-degen}
  A \emph{composite boundary degeneration source} is a boundary
  degeneration source so that none of the punctures are labeled by
  elements of the form $U^m\iota_j$. A \emph{(possibly non-embedded)
    composite boundary degeneration} is a triple $(\bdSource, u,v)$
  where $\bdSource$ is a composite boundary degeneration source, $u$
  is a holomorphic curve respecting $\bdSource$, and $v$ is a join curve, such that
  the Reeb chord $u$ is asymptotic to at~$q_\infty$ is one of the
  $w\infty$ Reeb chords of $v$.

  Let $\x$ be a $(g-1)$-tuple of points in
  $\alpha^c_1\times\dots\times \alpha^c_{g-1}$, $\vec{a}$ be a
  sequence of Reeby basic algebra elements, $b$ be another Reeby basic algebra element, and $w$ and
  $r$ be non-negative integers.  Let
  $\cN(b;a_1,\dots,a_n;w,r)$ denote the moduli space of
  composite boundary degenerations
  $(\bdSource,u,v)$ such that
  \begin{enumerate}[label=(BDc-\arabic*),ref=(BDc-\arabic*)]
 \item The map $u$ is an embedding. 
  \item The source $\bdSource$ has $w$ interior punctures with total
    ramification $r$.
  \item\label{item:l-or-r} Either:
    \begin{enumerate}[label=(\alph*)]
    \item\label{item:right-ext} $u$ is asymptotic to $c$ at
      $\infty$ and to the sequence of Reeby basic algebra elements
      $(a_1,\dots,a_{n-1})$ along $\{1\}\times\RR$,
      $a_n=cb$ for some basic algebra element $c$, and $v$ is a join
      curve with asymptotics $(c,b)$ at $w\infty$ and $a_n$
      at $e\infty$, or
    \item\label{item:left-ext} $u$ is asymptotic to $c$ at $\infty$ and to the
      sequence of Reeby basic algebra elements $(a_2,\dots,a_{n})$ along
      $\{1\}\times\RR$, $a_1=bc$, and $v$ is a join
      curve with asymptotics $(b,c)$ at $w\infty$ and $a_1$
      at $e\infty$.
    \end{enumerate}
  \end{enumerate}
  An element of $\cN(b;a_1,\dots,a_n;w,r)$ is a
  \emph{composite boundary degeneration}. In Point~\ref{item:l-or-r},
  the first case is called \emph{right-extended} and the second case
  is called \emph{left-extended}.  As before, we are mostly interested
  in the case $r=0$ and will drop it from the notation in that case.
\end{definition}
See Figure~\ref{fig:bdy-degen-source}, right.
Since there is a unique join curve asymptotic to chords $\sigma$,
$\tau$, and $\sigma\tau$, the moduli space of composite boundary
degenerations with given Reeb chords is in bijection with the space of maps $u$ as in
Definition~\ref{def:composite-bdy-degen}.

\begin{figure}
  \centering
  \includegraphics{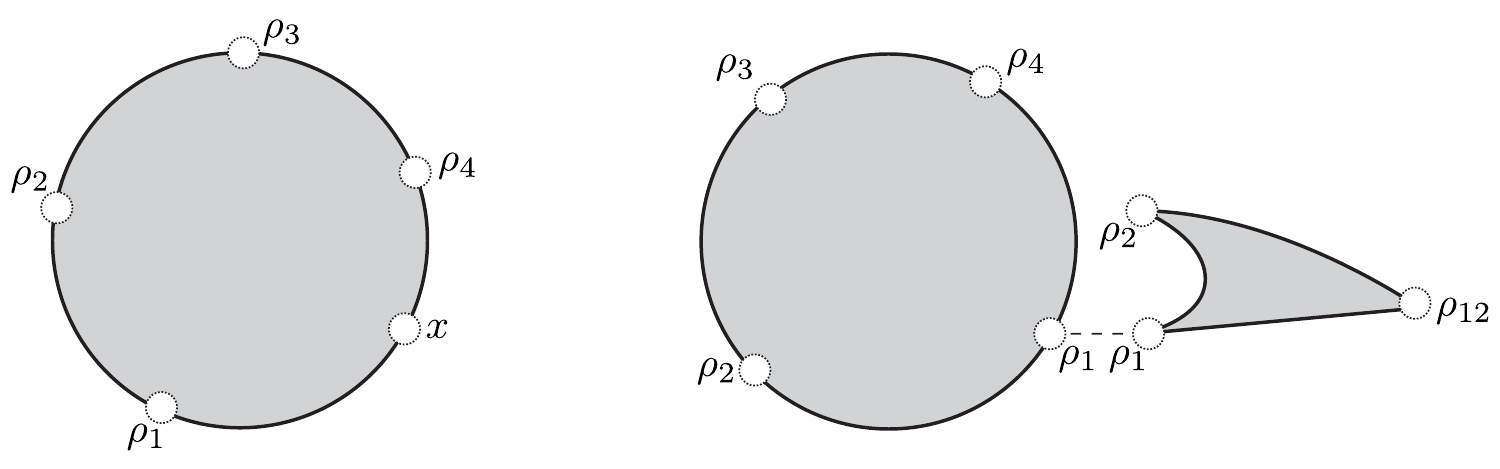}
  \caption[Decorated sources for boundary
  degenerations]{\textbf{Sources for boundary degenerations.} Left:
    the source for the simple boundary degeneration arising in
    Figure~\ref{fig:simple-bdy-degen-eg}. The moduli space of embedded
    curves here is denoted $\cN(\x;\rho_4,\rho_3,\rho_2,\rho_1;0,0)$
    where $\x$ is the point on the $\alpha$-arc labeled $x$ in
    Figure~\ref{fig:simple-bdy-degen-eg}.  Right: the source for the
    right-extended composite boundary degeneration arising in
    Figure~\ref{fig:split-degen-eg}.  This moduli space is denoted
    $\cN(\rho_2;\rho_4,\rho_3,\rho_2,\rho_{12};0,0)$. The join curve
    in Definition~\ref{def:composite-bdy-degen} has asymptotics
    $(\rho_1,\rho_2)$ at $w\infty$ and $\rho_{12}$ at $e\infty$. In
    the notation of Definition~\ref{def:composite-bdy-degen},
    $c=\rho_1$ and $b=\rho_2$.}
  \label{fig:bdy-degen-source}
\end{figure}

\begin{remark}
  We will see in Section~\ref{sec:algebra} that rigid, simple boundary
  degenerations (with inputs Reeb chords) correspond to centered
  operations on $\MAlg$, and hence also to centered tiling patterns
  from~\cite{LOT:torus-alg}. Composite boundary degenerations
  correspond to the left-extended and right-extended tiling patterns
  from~\cite{LOT:torus-alg}.
\end{remark}

\subsubsection{The tautological correspondence}\label{sec:tautological}
For computing the index, it will be convenient to formulate certain of
our holomorphic curves in terms of the symmetric
product. Specifically, fix a coherent family of $\eta$-admissible
almost complex structures, for some $\eta$. Consider a module polygon
$P$ with a single component, and a $J(P)$-holomorphic map
$u\co S\to \Sigma\times[0,1]\times\RR$ as in
Formula~\eqref{eq:u-source-targ} with $\pi_\bD\circ u$ a branched
cover (or, equivalently, non-constant on each component). We can equally well view $u$ as a
map to $\Sigma\times P$.  There is a corresponding map
$\phi\co P\to\Sym^g(\Sigma)$ defined by
\[
  \phi(x)=(\pi_{\Sigma}\circ u)\bigl((\pi_P\circ u)^{-1}(x)\bigr)
\]
where $\pi_\Sigma\co \Sigma\times P\to \Sigma$ and
$\pi_{P}\co \Sigma\times P\to P$ are the obvious projections. (The
set $(\pi_P\circ u)^{-1}(x)$ has $g$ elements, counted with
multiplicity.) By the removable singularities theorem (and
Condition~\ref{item:J-split}), we can also
view $\phi$ as a map from $\overline{P}$ to $\Sym^g(\overline{\Sigma})$, where
$\overline{\Sigma}$ is the result of filling in the algebra and
interior puncture of
$\Sigma$, and $\overline{P}$ is the result of filling in the
punctures of $P$. The map $\phi$
sends arcs on the boundary of the disk to the three Lagrangian tori
\begin{align*}
  T_\betas &= \beta_1\times\cdots\times \beta_g,\\
  T_{\alphas,1}&=\overline{\alpha_1^a}\times\alpha_1^c\times\cdots\times\alpha_{g-1}^c,\\
  T_{\alphas,2}&=\overline{\alpha_2^a}\times\alpha_1^c\times\cdots\times\alpha_{g-1}^c.
\end{align*}

By Condition~\ref{item:J-piD}, for each point $x\in P$, the almost
complex structure $J(P)$ restricts 
to a complex structure $j_x$ on $\Sigma=\pi_P^{-1}(x)$. The complex
structure $j_x\times\cdots\times j_x$ on $\overline{\Sigma}^g$ induces a complex
structure $\Sym^g(j_x)$ (and, in fact, a smooth structure) on
$\Sym^g(\overline{\Sigma})$, characterized by the property that the projection
$\Sigma^g\to \Sym^g(\overline{\Sigma})$ is holomorphic. This construction makes
$P\times \Sym^g(\overline{\Sigma})$ into a smooth fiber bundle over $P$, with an
almost complex structure induced by the $\Sym^g(j_x)$ and the complex
structure on $P$.

There are two ways that the map $\phi$ induced by $u$ might intersect
the diagonal. First, if $x\in P$ is a branch point of $\pi_P\circ u$
of ramification $r$ then $\phi(x)$ lies in the stratum of the diagonal with
$r$ repeated entries. Second, if there are two distinct points
$a,b\in (\pi_P\circ u)^{-1}(x)$ so that
$\pi_\Sigma(u(a))=\pi_\Sigma(u(b))$ then $\phi(x)$ lies in the
diagonal. The latter case, of course, corresponds to
self-intersections of $u$. By unique continuation of $J$-holomorphic
curves (and its asymptotic behavior
near the corners), $u$
has at most finitely many self-intersections, and by the
Riemann-Hurwitz formula, $\pi_P\circ u$ has at most finitely many
branch points. So, there are finitely many points $x\in P$ so that
$\phi(x)$ lies in the diagonal. Away from the diagonal, $u$ being
$J(P)$-holomorphic implies that $\phi$ is $\Sym^g(J(P))$-holomorphic;
since the intersections with the diagonal are discrete, it follows
that $\phi$ is $\Sym^g(J(P))$-holomorphic everywhere. (In particular,
this implies that $\phi$ is smooth.)

It is possible to translate the conditions that $u$
encounters a certain sequence of Reeb chords to the curve $\phi$, but
we will not need this. If all of the Reeb chords are length-1, $u$
corresponds to an ordinary holomorphic polygon $\phi$ in a Heegaard
multi-diagram
$(\overline{\Sigma},\alphas^1,\alphas^2,\alphas^1,\alphas^2,\dots,\betas)$,
where
$\alphas^1=\{\alpha_1^c,\dots,\alpha_{g-1}^c,\overline{\alpha}_1^a\}$
and
$\alphas^2=\{\alpha_1^c,\dots,\alpha_{g-1}^c,\overline{\alpha}_2^a\}$
or vice-versa. Further, if $u$ is an embedding, the index of the
$\overline{\partial}$ operator at $u$ and $\phi$ agrees
(cf.~\cite[Proposition 4.8 and Section 13]{Lipshitz06:CylindricalHF});
we will use this fact in Section~\ref{sec:ind}.

\subsection{Expected dimensions}\label{sec:ind}
In this subsection, we compute the expected dimensions of the various
moduli spaces introduced in Section~\ref{sec:def-mod-sp}. As in the
case of bordered $\HFa$~\cite[Chapter 5]{LOT1} or the cylindrical
formulation of Heegaard Floer homology~\cite[Section
4]{Lipshitz06:CylindricalHF}, there are two versions of the index
formulas: a version where the topological type of the source is fixed
but the maps may not be embeddings and a version where the Euler
characteristic of the source is determined by the holomorphic curve
being embedded (like the adjunction formula for closed
curves). The index formulas with a fixed source are given in
Propositions~\ref{prop:index-source}
and~\ref{prop:index-source-bdy-degen}; we deduce them by a gluing
argument similar to the $\HFa$ case~\cite{LOT1}. The embedded index
formulas (generalizations of the formula $\mu=e+n_\x+n_\y$) are given
in Propositions~\ref{prop:emb-ind}
and~\ref{prop:emb-ind-bdy-degen}. We deduce these from Sarkar's index
formula for holomorphic polygons~\cite{Sarkar11:IndexTriangles}, via
the symmetric product reformulation of the moduli spaces.

\subsubsection{Index formulas for a fixed source}
We start with the index formulas for curves with a given topological
source.

\begin{proposition}\label{prop:index-source}
  The expected dimension of the moduli space $\widetilde{\cM}^B(\x,\y;\Source)$ is
  given by
  \begin{equation}\label{eq:index-source}
    \ind(B,\Source,\vec{a};w,r)\coloneqq  g-\chi(\Source)+2e(B)+m+w-r
  \end{equation}
  where $\vec{a}=\vec{a}(\Source)$;  
  $m=|\vec{a}|$ denotes the number of algebra punctures of
  $S$; $w$ denotes the number of interior punctures of $S$;
  and $r$ is the total ramification at the Reeb orbits.
\end{proposition}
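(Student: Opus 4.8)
The plan is to adapt the gluing argument from the $\HFa$ case \cite[Chapter 5]{LOT1} to the present weighted setting, accounting for the new features: interior punctures asymptotic to Reeb orbits, and the fact that domains are now permitted to cross the basepoint. First I would recall the structure of the index: for a fixed decorated source $\Source$, the moduli space $\widetilde{\cM}^B(\x,\y;\Source)$ is cut out by a $\dbar$-operator on maps $u\co S\to\Sigma\times[0,1]\times\RR$ satisfying the asymptotic and boundary conditions, and its expected dimension is the index of the linearization of this operator. The standard approach is to compute this index by a \emph{doubling} or \emph{gluing} argument: one degenerates the curve into pieces whose indices are individually understood (bigons, triangles, the trivial strips over the generators $\x$ and $\y$), and then uses additivity of the index under gluing together with the known index of the trivial pieces.

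The key steps, in order, are as follows. (1) Establish the index formula for the simplest building blocks. The trivial strip $\x\times[0,1]\times\RR$ contributes $g$ to the index (one for each of the $g$ points $x_i$), which accounts for the $g$ in the formula. (2) Handle the contribution of the domain $B$ via the Euler measure and point measures: as in the cylindrical formulation \cite[Section 4]{Lipshitz06:CylindricalHF}, a domain of Euler measure $e(B)$ contributes $2e(B)$ to the index, with the factor of $2$ coming from the two-real-dimensional target factor $[0,1]\times\RR$. (3) Account for the algebra punctures: each of the $m=|\vec{a}|$ boundary algebra punctures is a free marked point on the boundary, each contributing $+1$ to the expected dimension, since we quotient only by the overall $\RR$-translation (which is already removed when passing to $\cM$, not $\widetilde{\cM}$). (4) Account for the interior punctures asymptotic to Reeb orbits: each of the $w$ interior punctures contributes, but a puncture of ramification $r_i$ (multiplicity $r_i+1$) is constrained, so the net contribution is $w-r$ where $r=\sum r_i$ is the total ramification. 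This last point is the genuinely new ingredient and should be checked against the symmetric-product reformulation of Section~\ref{sec:tautological}, where a branch point of ramification $r$ forces $\phi$ into a deeper stratum of the diagonal. (5) Assemble the pieces: by additivity of the index under the relevant gluing, the total is $g-\chi(\Source)+2e(B)+m+w-r$, where the $-\chi(\Source)$ term collects the topological contribution of the source surface itself (its Euler characteristic enters with a sign because more complicated sources have more parameters).

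The main obstacle will be Step (4): correctly bookkeeping the contribution of interior Reeb-orbit punctures and their ramification. In the $\HFa$ theory there are no such punctures, so this term is entirely new. The subtlety is that an interior puncture asymptotic to a multiplicity-$(r_i+1)$ orbit simultaneously (a) adds a puncture to the source, increasing the moduli of the domain of $\dbar$, and (b) imposes a constraint forcing the branched cover $\pi_\bD\circ u$ to have a branch point of that ramification there. One must verify that the net effect on the linearized index is exactly $1-r_i$ per puncture. I would do this by the symmetric-product translation: under the tautological correspondence, $u$ becomes a map $\phi\co \overline P\to\Sym^g(\overline\Sigma)$, and the index of $\phi$ can be computed via Sarkar's formula \cite{Sarkar11:IndexTriangles} for holomorphic polygons (when chords are short) and extended by degeneration; the ramification appears there as a correction from intersection with the diagonal stratum. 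A secondary, more routine check is that allowing domains to cross $z$ does not alter the formula—this is immediate, since $n_z(B)$ enters neither $e(B)$ nor the boundary data of the linearized operator, only the $U$-power recorded separately in Formula~\eqref{eq:m-on-CFA}.
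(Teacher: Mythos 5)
Your proposal identifies the correct terms and the correct overall strategy (additivity of the index under gluing, reduction to known building blocks), and steps (1)--(3) and (5) are consistent in spirit with what the paper does. But the mechanism differs in a way that matters at exactly the step you flag as the main obstacle. The paper's proof is a capping argument in the style of~\cite[Proposition 5.8]{LOT1}: after reducing to the case where all boundary punctures carry Reeb chords of length $\leq 3$ (by pregluing split curves, each of which raises both sides of Formula~\eqref{eq:index-source} by one) and all orbits are simple (by pregluing a degree-$k$ branched cover $(z-1)\cdots(z-k)$ of the cylinder to a multiplicity-$k$ orbit puncture, which raises the expected dimension by $2(k-1)$ while changing $w-r$ by the same amount), one caps $\bdy\Sigma$ with a disk, preglues a $1$-gon to each remaining chord and a copy of the capping disk to each simple orbit, and applies the closed-case formula $\ind(u')=g-\chi(S')+2e(B')$ of~\cite{Lipshitz06:CylindricalHF}. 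The bookkeeping $\chi(S')=\chi(S)+w$, $e(B')=e(B)+m/2+w$, together with the observation that each $1$-gon (index $1$, codimension-$1$ matching) and each capping disk (index $2$, codimension-$2$ matching) contributes net zero, yields the formula. In particular the ramification term is dispatched by the elementary pregluing reduction to simple orbits, not by any appeal to the symmetric product.

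Your proposed verification of step (4) via the tautological correspondence and Sarkar's formula would not work as described. Sarkar's formula computes the Maslov index $\mu(B)$ of a homotopy class of polygons with fixed conformal structure on the source polygon; it depends only on $B$ and the combinatorics of the corners, and does not see $\chi(\Source)$ at all. The quantity in Proposition~\ref{prop:index-source} is the index of the $\dbar$-operator at a map with a \emph{fixed source topology} (which, for non-embedded curves, is not determined by $B$), and the relation between $\mu(B)$, $\chi(S)$, and the intersections with the diagonal is precisely the content of the \emph{embedded} index formula, Proposition~\ref{prop:emb-ind} --- which is where the paper does invoke Sarkar's formula, and which is proved \emph{after} and using Proposition~\ref{prop:index-source}. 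So routing the fixed-source computation through the symmetric product either proves the wrong statement or is circular. The fix is the one the paper uses: reduce to simple orbits by an explicit preglue and then verify that filling a simple orbit puncture with a disk changes the index by the amount predicted by the closed formula.
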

\begin{proof}
  We adapt the proof of~\cite[Proposition 5.8]{LOT1}, the strategy of which is
  to deduce the result from the closed case~\cite[Formula (6)]{Lipshitz06:CylindricalHF}.

  \begin{figure}
    \centering
    \includegraphics{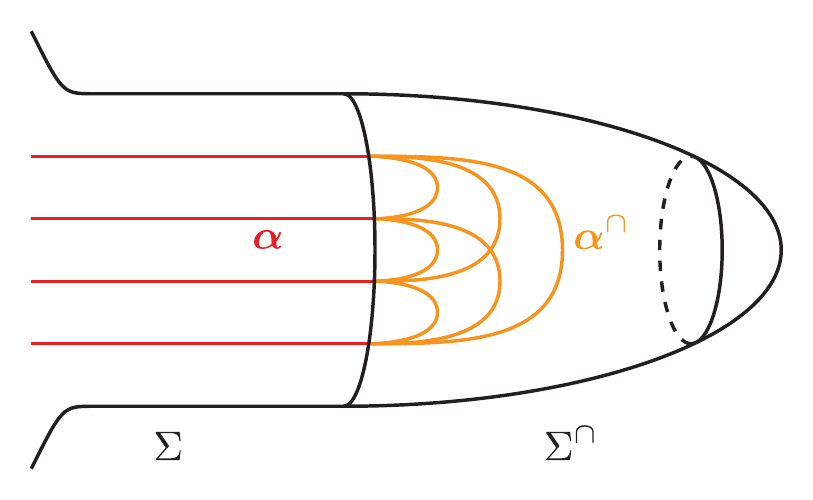}
    \caption[Capping off $\Sigma$, for an index computation]{\textbf{Capping off $\Sigma$.} This figure is adapted from~\cite[Figure 5.2]{LOT1}.}
    \label{fig:capping}
  \end{figure}

  First, we reduce to the case that
  all punctures are labeled by Reeb chords.  Each boundary puncture labeled by
  a non-Reeby basic algebra element (of the form $U^k\iota_j$) increases the expected
  dimension by $1$, because it increases the dimension of
  $\ModPol$ by $1$ and otherwise does not affect the index, and
  also increases Formula~\eqref{eq:index-source} by $1$, because it
  increases $m$ by $1$. So, it suffices to consider the case that
  there are no punctures labeled $U^k\iota_j$. For punctures labeled
  $U^k\rho$, the integer $k$ does not affect the expected dimension or
  Formula~\eqref{eq:index-source}. So, it suffices to assume all
  boundary punctures are labeled simply by Reeb chords.
  
  Fix a curve $u\co S\to \Sigma\times[0,1]\times\RR$ with the
  specified asymptotics.  By
  (pre)gluing split curves to the boundary punctures, we can arrange that at
  each boundary puncture the curve $u$ is asymptotic to a Reeb chord of length
  $\leq 3$. Gluing each split curve increases both the expected dimension and
  $m$ by $1$, so it suffices to verify the result assuming each chord in
  $\vec{a}$ has length $\leq 3$. Similarly, one can replace a Reeb orbit with
  multiplicity $k>1$ with $k$ simple Reeb orbits by pregluing to the map
  $(z-1)\cdots(z-k)\co \CC\setminus\{1,\dots,k\}\to\CC\setminus\{0\}$. This
  increases the expected dimension by $2(k-1)$, decreases $r$ in
  Formula~(\ref{eq:index-source}) by $k-1$, and increases $w$ by $k-1$. So, we may assume $r=0$, that is,
  that every Reeb orbit is simple.

  Now, glue the boundary of $\Sigma$ to a disk $\Sigma^\cap =
  \overline{\HHH}$ and glue the
  $\alpha$-arcs to the six arcs $\alphas^\cap$ shown in
  Figure~\ref{fig:capping}. Each of the Reeb chords in $u$ can be pre-glued to a
  holomorphic $1$-gon in $(\Sigma^\cap,\alphas^\cap)$ and each Reeb orbit in $u$
  can be pre-glued to a copy of $\Sigma^\cap$ itself to obtain a
  map
  $u'\co S'\to (\Sigma\cup\Sigma^\cap)\times[0,1]\times\RR$. From the closed
  case of the index formula~\cite[Formula (6)]{Lipshitz06:CylindricalHF}, the
  expected dimension at $u'$ is given by
  \[
    \ind(u')=g-\chi(S')+2e(B').
  \]
  We have $\chi(S')=\chi(S)+w$ and $e(B')=e(B)+m/2+w$. So, 
  \[
    \ind(u')=g-\chi(S)-w+2e(B)+m+2w=g-\chi(S)+2e(B)+m+w.
  \]
  On the other hand, each of the one-gons has index $1$ but the pre-gluing
  imposes a codimension-1 constraint, and each disk has index $2$ but gluing it
  to a Reeb orbit imposes a codimension-2 constraint. So,
  $\ind(u)=\ind(u')$. This proves the result.
\end{proof}

Here is the analogue for boundary degenerations:
\begin{proposition}\label{prop:index-source-bdy-degen}
  For a source $\bdSource$ for a boundary degeneration, with the
  punctures other than $p_\infty$ labeled by a sequence of algebra
  elements $\vec{a}$, $w$ Reeb orbits with total ramification $r$, and
  covering $\Sigma$ with multiplicity $k$, and for any point
  $\x\in(\alpha_1^a\cup\alpha_2^a\cup\{p\})\times\alpha_1^c\times\cdots\times\alpha_{g-1}^c$,
  the expected dimension of the moduli space of holomorphic maps
  $\tcN(\x,\bdSource)$ is
  \begin{equation}\label{eq:index-source-bdy-degen}
    \ind(k,\bdSource,\vec{a};w,r)\coloneqq  g-\chi(S)+2k(1-2g)+|\vec{a}|+\epsilon+w-r.
  \end{equation}
  where $\epsilon=1$ if $p$ is a coordinate of $\x$ (i.e., the
  puncture $q_\infty$ is labeled by a Reeby element $U^m\rho$) and is $0$ otherwise.
\end{proposition}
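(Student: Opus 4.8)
The plan is to adapt, essentially verbatim, the proof of Proposition~\ref{prop:index-source}: reduce to the case that every puncture is labeled by a Reeb chord and every Reeb orbit is simple, then cap off $\Sigma$ together with its $\alpha$-arcs and deduce the formula from the closed index formula \cite[Formula (6)]{Lipshitz06:CylindricalHF}. The only genuinely new feature is the distinguished puncture $q_\infty$, which carries the point constraint $\ev(u)=\x$ and is responsible for the term $\epsilon$.

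First I would make the standard reductions. A removable boundary puncture other than $q_\infty$ raises the expected dimension of $\tcN(*;\bdSource)$ by one (it contributes a free marked point on $\partial\overline{\HHH}$) and raises $|\vec a|$ by one, affecting nothing else in Formula~\eqref{eq:index-source-bdy-degen}; powers of $U$ are irrelevant throughout; so I may assume there are no removable punctures except possibly $q_\infty$ itself (which is removable precisely when $\epsilon=0$). Replacing a multiplicity-$k'$ Reeb orbit by $k'$ simple orbits, via pregluing $(z-1)\cdots(z-k')$, raises the expected dimension by $2(k'-1)$, lowers $r$ by $k'-1$, and raises $w$ by $k'-1$, so I may assume $r=0$. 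Finally, pregluing split curves shortens the chords; each split curve raises both the expected dimension and $|\vec a|$ by one, so I may assume every Reeby label has length $\le 3$.

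Second I would cap off. Glue a disk $\Sigma^\cap=\overline{\HHH}$ onto $\partial\Sigma$ and the arcs $\alphas^\cap$ of Figure~\ref{fig:capping} onto the $\alpha$-arcs, producing a closed surface $\overline{\Sigma}$ of genus $g$ with $\overline{\alphas}$ a union of $g$ circles. Preglue a holomorphic $1$-gon (times $\overline{\HHH}$) to each Reeby puncture --- there are $|\vec a|$ of them along $\{1\}\times\RR$, together with the puncture at $q_\infty$ when $\epsilon=1$, for a total of $|\vec a|+\epsilon$ --- and preglue a copy of $\Sigma^\cap$ to each simple Reeb orbit; when $\epsilon=0$, the removable puncture $q_\infty$ is simply filled in. This yields $u'\colon S'\to\overline{\Sigma}\times\overline{\HHH}$ with domain $\overline B$, where $\chi(S')=\chi(S)+w$ and, exactly as in Proposition~\ref{prop:index-source}, $e(\overline B)=e(B)+(|\vec a|+\epsilon)/2+w$. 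Since a boundary degeneration has boundary only on the $\alpha$-curves, its domain has no corners, so $e(B)=k\,e(\Sigma)=k(1-2g)$, where $B=k[\Sigma]$ is $k$ times the once-punctured surface. The closed index formula then gives $\ind(u')=g-\chi(S')+2e(\overline B)=g-\chi(S)+2k(1-2g)+|\vec a|+\epsilon+w$. Each $1$-gon has index $1$ and its pregluing imposes a codimension-$1$ matching condition, and each orbit cap has index $2$ imposing a codimension-$2$ matching; in addition, capping off $q_\infty$ is exactly the operation that imposes $\ev(u)=\x$ (for $\epsilon=1$ the constraint on the $p$-coordinate is automatic and only $g-1$ coordinates are cut down, for $\epsilon=0$ all $g$ are, matching the $1$-gon/filling-in accounting). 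Hence $\ind(u')$ equals the expected dimension of $\tcN(\x;\bdSource)$ after the reductions, and restoring $r$ gives Formula~\eqref{eq:index-source-bdy-degen}.

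The main obstacle is precisely the bookkeeping around $q_\infty$: verifying that ``capping off $q_\infty$'' and ``imposing $\ev(u)=\x$'' really do change the dimension by the same amount in both the $\epsilon=0$ and $\epsilon=1$ cases, and that the rescaling action $\RR_{>0}\ltimes\RR$ on $\overline{\HHH}$ interacts correctly with this --- in particular that pregluing at a puncture sitting at $\infty\in\partial\overline{\HHH}$ remains index-neutral. Once that point is pinned down, the remainder is a direct transcription of the $\HFa$ index computation.
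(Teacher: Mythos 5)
Your overall strategy is the one the paper uses (reduce to short chords and simple orbits, cap off $\Sigma$ and the $\alpha$-arcs, preglue $1$-gons and orbit caps, and read the answer off a known index formula), and your arithmetic lands on Formula~\eqref{eq:index-source-bdy-degen}. But the central step contains a genuine gap: two compensating errors of size $g$ that happen to cancel.

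First, the identity $\ind(u')=g-\chi(S')+2e(\overline B)$ is the index formula for curves in $\Sigma\times[0,1]\times\RR$ asymptotic to generators at the two strip ends. Your capped-off curve $u'$ maps to $\overline{\Sigma}\times\overline{\HHH}$ with all boundary on $\overline{\alphas}\times\partial\overline{\HHH}$; there are no strip ends and no $\beta$-curves, so that formula does not apply. The correct count for this object is Riemann--Roch for a disk on a Lagrangian torus, $\ind(u')=-\chi(S')+2\mu$ with $\mu=e(\overline B)+g$ (the extra $g$ being the Maslov contribution of the degree-$g$ projection to $\overline{\HHH}$), which exceeds your expression by $g$. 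Second, your claim that capping off $q_\infty$ ``is exactly the operation that imposes $\ev(u)=\x$,'' so that the whole capping procedure is index-neutral relative to $\tcN(\x;\bdSource)$, is off by $g$ in the opposite direction: replacing $\HHH$ by $\overline{\HHH}$ \emph{forgets} the codimension-$g$ constraint (codimension $g-1$ plus one additional $1$-gon when $\epsilon=1$) that $u$ pass through the prescribed point of $\x$ at $\infty$, so in fact $\ind(u')=\ind(u)+g$ (plus $2r$ before the reduction to simple orbits). You flagged this bookkeeping as ``the main obstacle'' and then asserted it works; as written it does not, and the fact that your final formula is correct is an artifact of the two discrepancies cancelling. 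The repair is to separate the two ledgers, as the paper does: compute the honest index of the capped curve as $-\chi(S')+2\mu$, and independently record that the unconstrained capped moduli space is $g$ dimensions larger than $\tcN(\x;\bdSource)$.
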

(With regards to $\epsilon$, recall that the centered operations on
$\MAlg$ count simple boundary degenerations with index $2$.)
\begin{proof}
We will assume that $p$ is not one of the coordinates of $\x$, though
the other case is similar. As in the proof of
Proposition~\ref{prop:index-source}, it suffices to assume that all of
the punctures except $q_\infty$ are labeled simply by Reeb chords with
$r=0$.
  We use a gluing argument, similar to the proof of
Proposition~\ref{prop:index-source}. Fix a holomorphic curve $u\co
S\to \Sigma\times\HHH$. Let $\overline{\HHH}$ denote the one-point
compactification of $\HHH$. For each Reeb chord $\rho^i$ of
$\Sigma$ preglue $u$ to one of the $1$-gons in
Figure~\ref{fig:capping}, as in the proof of
Proposition~\ref{prop:index-source}, and for each Reeb orbit preglue
$u$ to a $0$-gon. Here, the $1$-gons and $0$-gon are maps to
$\Sigma^\cap\times \overline{\HHH}$ which map to $\overline{\HHH}$ by constant maps. In
particular, the moduli space of $1$-gons has index $1$ and the moduli
space of $0$-gons has index $2$,
and pregluing imposes a $1$-dimensional (respectively $2$-dimensional)
constraint for each $1$-gon (respectively $0$-gon). So, if we let
$u'\co S'\to (\Sigma\cup\Sigma^\cap)\times \overline{\HHH}$ denote the capped-off
curve we have $\ind(u')=\ind(u)+g+2r$: the $g$ comes from having replaced
$\HHH$ by $\overline{\HHH}$ and, in the process, forgotten the codimension-$g$
constraint that $u$ passes through a specified point on each
$\alpha$-curve at $\infty$. The index of the $\dbar$-operator
at $u'$ is given by $\ind(u')=-\chi(S')+2\mu$. These quantities
satisfy
\begin{align*}
  \chi(S')&=\chi(S)+w\\
  \mu&=e(B)+|\vec{\rho}|/2+w+g+r.
\end{align*}
(The $g$ comes from the Maslov index of the projection to $\overline{\HHH}$.)
The Euler measure $e(B)$ is $k(1-2g)$.
Combining these gives
\[
  \ind(u)=-\chi(S)-w+2e(B)+|\vec{\rho}|+2w+2g+r-g-2r=g-\chi(S)+2k(1-2g)+|\vec{\rho}|+w-r
\]
as claimed.
\end{proof}

\subsubsection{Embedded index formulas}
Before stating the embedded index formulas we recall some notation.

Given a single Reeb chord $\rho$ starting at a point $s\in Z$ we define
\begin{equation}\label{eq:iota}
  \iota(\rho)=-m(\suppo{\rho},s)
\end{equation}
to be negative the average multiplicity of the support of $\rho$ near
the point $s$. Explicitly, if
$\rho$ has length $4k$ then $\iota(\rho)=-k$, while if $\rho$ has length
$4k+\ell$, $1\leq \ell\leq 3$, then $\iota(\rho)=-k-1/2$. Given two chords $\rho,\rho'$ we defined
\begin{equation}
  L(\rho,\rho')=m(\suppo{\rho'},\bdy \suppo{\rho}),
\end{equation}
the average multiplicity of the support of $\rho'$ at the boundary of the
support of $\rho$~\cite[Section 3.3.1]{LOT1}. For example,
$L(\rho_1,\rho_2)=1/2=-L(\rho_2,\rho_1)$, $L(\rho_1,\rho_{234})=0$, and
$L(\rho_{1234}\rho',\rho'')=L(\rho',\rho'')$. In general,
$L(\rho,\rho')=-L(\rho',\rho)$ (compare~\cite[Lemma 3.35]{LOT1}).

Given a sequence $\vec{\rho}=(\rho^1,\dots,\rho^m)$ of Reeb chords, let
\begin{equation}
  \iota(\vec{\rho})=\sum_i\iota(\rho^i)+\sum_{i<j}L(\rho^i,\rho^j)
\end{equation}
(compare~\cite[Formula (5.58)]{LOT1}).

\begin{lemma}\label{lem:iota-is-gr}
  If $\vec{\rho}=(\rho^1,\dots,\rho^m)$ is a sequence of Reeb chords so that the
  right idempotent of $\rho^i$ is equal to the left idempotent of $\rho^{i+1}$
  for all $i$, then $\iota(\vec{\rho})$ is the Maslov component of
  \[
    \grb(\rho^1)\cdot\cdots\cdot\grb(\rho^m).
  \]
\end{lemma}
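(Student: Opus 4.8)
The plan is to reduce the lemma to a short induction in the big grading group $\bigGroup$ plus one finite computation on a basis. First I would record the shape of $\grb(\rho)$: by the description of $\bigGroup$ in Section~\ref{sec:big-gr-group}, the Maslov component of $\grb(\rho)$ is $\iota(\rho)$ (this is the parenthetical in Eq.~\eqref{eq:iota}) and the $\SpinC$ component of $\grb(\rho)$ is the support $\suppo{\rho}\in H_1(Z,\mathbf a)\cong\ZZ^4$. I would then abbreviate the multiplication on $\OneHalf\ZZ\times\ZZ^4$ as $(s;v)\cdot(s';v') = (s+s'+\OneHalf B(v,v');\, v+v')$, where $B$ is the $\ZZ$-bilinear, antisymmetric form on $\ZZ^4$ given by the sum of the four $2\times 2$ determinants appearing in the group law. (In particular $B$ annihilates the class $(1,1,1,1)$, consistently with $L$ depending on $\rho$ only through $\bdy\suppo{\rho}$.)

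Next I would prove by induction on $m$ that the Maslov component of $\grb(\rho^1)\cdots\grb(\rho^m)$ equals $\sum_i\iota(\rho^i)+\OneHalf\sum_{i<j}B(\suppo{\rho^i},\suppo{\rho^j})$. Indeed, the $\SpinC$ component of $\grb(\rho^1)\cdots\grb(\rho^{k-1})$ is $\sum_{i<k}\suppo{\rho^i}$, so right-multiplying by $\grb(\rho^k)$ adds $\iota(\rho^k)+\OneHalf B(\sum_{i<k}\suppo{\rho^i},\suppo{\rho^k})$ to the Maslov component, and by bilinearity of $B$ this extra term equals $\iota(\rho^k)+\OneHalf\sum_{i<k}B(\suppo{\rho^i},\suppo{\rho^k})$. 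Comparing with the definition $\iota(\vec{\rho})=\sum_i\iota(\rho^i)+\sum_{i<j}L(\rho^i,\rho^j)$, the lemma reduces to the single identity $\OneHalf B(\suppo{\rho},\suppo{\rho'})=L(\rho,\rho')$ for Reeb chords $\rho,\rho'$.

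For that identity I would argue by bilinearity. Both $L(\rho,\rho')=m(\suppo{\rho'},\bdy\suppo{\rho})$ and $\OneHalf B(\suppo{\rho},\suppo{\rho'})$ are $\ZZ$-bilinear in $(\suppo{\rho},\suppo{\rho'})\in(\ZZ^4)^2$, since the boundary map $\bdy$, the average-multiplicity pairing, and the determinants are each linear in every variable; so it suffices to check the identity when $\rho,\rho'$ range over the four short chords $\rho_1,\dots,\rho_4$, whose supports $e_1,\dots,e_4$ are the standard basis of $H_1(Z,\mathbf a)$. A direct computation gives $B(e_i,e_j)$ equal to $1$ if $j\equiv i+1$, to $-1$ if $j\equiv i-1$, and to $0$ otherwise (indices mod $4$); unwinding $m(\suppo{\rho_j},a_{i+1})-m(\suppo{\rho_j},a_i)$ as an average over the two arcs adjacent to each endpoint gives $2L(\rho_i,\rho_j)$ equal to the same expression. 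As a consistency check this agrees with the sample values $L(\rho_1,\rho_2)=\OneHalf$, $L(\rho_1,\rho_{234})=0$, the antisymmetry $L(\rho,\rho')=-L(\rho',\rho)$, and $L(\rho_{1234}\rho',\rho'')=L(\rho',\rho'')$ recorded after Eq.~\eqref{eq:iota}.

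I do not anticipate a genuine obstacle: the whole argument is the bilinearity induction of the second paragraph together with the basis check of the third. Note that the hypothesis that the right idempotent of $\rho^i$ equals the left idempotent of $\rho^{i+1}$ is not actually used in the computation; it is present because that is precisely the regime in which $\grb(\rho^1)\cdots\grb(\rho^m)$ is the grading of a nonzero algebra element, which is how the lemma is applied (e.g.\ in the embedded index formulas of Section~\ref{sec:ind}). The only point demanding care is matching the cyclic-indexing and sign conventions for $\bdy$ and for the average multiplicity against the group-law determinants on the four basis vectors, but this is a finite check, already pinned down by the values $\iota(\rho_1)=-\OneHalf$ and $L(\rho_1,\rho_2)=\OneHalf$ in the excerpt.
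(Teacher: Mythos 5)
Your proof is correct and follows essentially the same route as the paper's: both reduce the lemma to the observation that the Maslov correction term in the group law of $\bigGroup$ (half the sum of the four $2\times 2$ determinants) equals $L(\rho,\rho')$, with the product unwound one factor at a time. The only cosmetic difference is that the paper verifies this identity by directly expanding $m(\suppo{\rho'},\bdy\suppo{\rho})$ in coordinates, whereas you check it on the four short chords and extend by bilinearity; both are fine.
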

\begin{proof}
  Perform the product $\grb(\rho^1)\cdot\cdots\cdot\grb(\rho^m)$ so that it is
  right-associated (i.e., so that $\grb(\rho^{m-1})\grb(\rho^m)$ is the first
  product, followed by $\grb(\rho^{m-2})(\grb(\rho^{m-1})\grb(\rho^m))$, and so on). The Maslov
  component of $\grb(\rho^i)$ is, by definition, $\iota(\rho^i)$.
  Let $\suppo{\rho^i}=(a,b,c,d)$ and
  $\suppo{\rho^{i+1}}+\cdots+\suppo{\rho^m}=(a',b',c',d')$. Then
  \begin{align*}
    \sum_{j\mid i<j}L(\rho^i,\rho^j)&=m\bigl((a',b',c',d'),\bdy(a,b,c,d)\bigr)\\
                              &=\frac{1}{2}\bigl((a-b)(a'+b')+(b-c)(b'+c')+(c-d)(c'+d')+(d-a)(d'+a')\bigr)\\
                              &=\frac{1}{2}\left(\left|\begin{smallmatrix}a & b\\a'&b'\end{smallmatrix}\right|+\left|\begin{smallmatrix}b & c\\b'&c'\end{smallmatrix}\right|
+\left|\begin{smallmatrix}c & d\\c'&d'\end{smallmatrix}\right|+\left|\begin{smallmatrix}d & a\\d'&a'\end{smallmatrix}\right|\right).
  \end{align*}
  Comparing this to the formula for multiplication in $\bigGroup$ gives the result.
\end{proof}

Next we state the embedded index formulas for holomorphic curves and
boundary degenerations (the analogues of~\cite[Proposition
5.62]{LOT1}), and a basic additivity property of these formulas. We
turn to the proofs after we have given all the statements. 

Recall that, given a domain $B$ and a tuple of points $\x$ in
$\Sigma$, $n_\x(B)$ is the sum over the points $x_i$ in $\x$ of the
average local multiplicity at $x_i$, and $e(B)$ is the Euler measure
of $B$. 
\begin{proposition}\label{prop:emb-ind}
  For an embedded holomorphic curve
  $u\co S\to \Sigma\times[0,1]\times\RR$ in a homology class
  $B\in\pi_2(\x,\y)$ with sequence of basic algebra elements $\vec{a}$,
  underlying chord sequence $\vec{\rho}$, $w$ Reeb orbits, and
  total ramification $r$ at the Reeb orbits, we have
  \begin{align}
    \chi(S)&=\chi_{\emb}(B,\vec{a},w,r)\coloneqq g+e(B)-n_\x(B)-n_\y(B)-\iota(\vec{\rho})+r\\
    \ind(u)&=\ind(B,\vec{a},w,r)\coloneqq e(B)+n_\x(B)+n_\y(B)+|\vec{a}|+\iota(\vec{\rho})+w-2r.
             \label{eq:emb-ind}
  \end{align}
  Further, if $u$ has $d$ transverse double points (and is otherwise embedded)
  then $\chi(S)=\chi_{\emb}(B,\vec{\rho},w,r)+2d$ and
  $\ind(u)=\ind(B,\vec{\rho},w,r)-2d$.
\end{proposition}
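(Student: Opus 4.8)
The plan is to deduce both formulas from the fixed-source formula (Proposition~\ref{prop:index-source}) together with Sarkar's Maslov-index formula for holomorphic polygons, via the symmetric-product reformulation of Section~\ref{sec:tautological}. First observe that the two displayed identities are equivalent: for a curve with source $S$ in class $B$, Proposition~\ref{prop:index-source} gives $\ind(u)=g-\chi(S)+2e(B)+|\vec a|+w-r$, and substituting this into either claimed identity and simplifying (using $|\vec a|=m$) shows that $\chi(S)=\chi_{\emb}(B,\vec a,w,r)$ holds exactly when $\ind(u)=\ind(B,\vec a,w,r)$ holds. Likewise, the double-point statement reduces to the embedded one: if $u$ has $d$ transverse double points then $B$, $\vec\rho$, $w$, $r$ are the same as in the embedded case, so once $\chi(S)=\chi_{\emb}(B,\vec\rho,w,r)+2d$ is known, the index statement follows immediately from Proposition~\ref{prop:index-source}. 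So it is enough to prove the embedded $\chi$-formula and to show that a transverse double point raises $\chi(S)$ by $2$.

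Next I would reduce to a convenient normal form. As in the proof of Proposition~\ref{prop:index-source}, pre-gluing split curves at the east punctures reduces to the case that every chord of $\vec\rho$ has length $1$, and one may also strip off the idempotent punctures $U^m\iota_j$; each such operation preserves embeddedness and, by Lemma~\ref{lem:iota-is-gr} together with multiplicativity of $\grb$ under concatenation of chords, leaves $\iota(\vec\rho)$, $e(B)$, $n_\x(B)$, $n_\y(B)$, $w$, $r$ unchanged while changing $|\vec a|$ in exactly the way $\ind(u)$ changes—so it suffices to prove Formula~\eqref{eq:emb-ind} in this normal form. Then, capping off $\Sigma$ along its boundary with the disk $\Sigma^\cap$ and the arcs $\alphas^\cap$ of Figure~\ref{fig:capping} and pre-gluing each length-$1$ Reeb chord to a $1$-gon in $(\Sigma^\cap,\alphas^\cap)$, the tautological correspondence turns the embedded curve $u$ into a holomorphic $(|\vec\rho|+2)$-gon $\phi$ in $\Sym^g(\overline\Sigma)$ with boundary on $T_\betas$, $T_{\alphas,1}$, $T_{\alphas,2}$ (alternating as in Section~\ref{sec:tautological}) and with $w$ interior marked points where $\phi$ meets the diagonal, with the multiplicities recorded by the Reeb orbits. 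Because $u$ is embedded, the $\dbar$-index at $u$ equals the $\dbar$-index at $\phi$ by~\cite[Proposition 4.8 and Section 13]{Lipshitz06:CylindricalHF}, so $\ind(u)=\ind(\phi)$.

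Now I would apply Sarkar's index formula~\cite{Sarkar11:IndexTriangles} to $\phi$, which expresses $\ind(\phi)$ as $e(\mathcal D(\phi))$ plus the point-multiplicities of the domain $\mathcal D(\phi)$ at the corners, corrected by a term depending only on $g$ and the number of corners, and translate each ingredient back through the capping and the tautological correspondence: $e(\mathcal D(\phi))$ differs from $e(B)$ by the Euler measures of the capping $1$-gons; the multiplicities at the $\x$- and $\y$-corners are $n_\x(B)$ and $n_\y(B)$; the multiplicities at the chord corners, together with the $g$-dependent correction, assemble into the Maslov component of $\grb(\rho^1)\cdots\grb(\rho^m)$, which equals $\iota(\vec\rho)$ by Lemma~\ref{lem:iota-is-gr}; and the $w$ interior marked points contribute $w-2r$, the $-2r$ being the shift caused by interior ramification. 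This gives $\ind(u)=\ind(B,\vec\rho,w,r)$ and hence, by Proposition~\ref{prop:index-source}, $\chi(S)=\chi_{\emb}(B,\vec\rho,w,r)$. Finally, a transverse double point of $u$ forces $\phi$ to hit the diagonal at one extra point, so $\mathcal D(\phi)$ picks up $+2$ in the corner-independent part of Sarkar's count (equivalently $\chi(S)$ gains $2$); this is the usual adjunction bookkeeping of~\cite[Section 4]{Lipshitz06:CylindricalHF} transplanted to the present setting.

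The hard part will be the translation dictionary in the third paragraph: matching the capped-off symmetric-product domain $\mathcal D(\phi)$, its corner multiplicities, and Sarkar's $g$-dependent correction term with the quantities $e(B)$, $n_\x(B)$, $n_\y(B)$, $\iota(\vec\rho)$ and $w-2r$—in particular keeping careful track of the idempotents and the cyclic ordering at the east punctures so that the chord-corner contributions really do sum to $\iota(\vec\rho)$, and correctly handling the interior ramification $r$, which affects $\chi(S)$ (the $+r$ term in $\chi_{\emb}$) but not $\ind(\phi)$ directly. The chord-and-idempotent portion of this is essentially the computation of~\cite[Chapter 5]{LOT1}; the genuinely new input is the contribution of the Reeb orbits, i.e.\ of the interior marked points of $\phi$.
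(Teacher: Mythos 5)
Your outline is essentially the paper's proof: reduce to the case where all chords have length one and all orbits are simple, pass to the symmetric product via the tautological correspondence, and evaluate Sarkar's polygon formula, with the orbit contributions being the genuinely new bookkeeping. Two points in your second and third paragraphs need repair before the "translation dictionary" can be carried out.

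First, the capping construction you invoke is the wrong one for this step. Gluing in $\Sigma^\cap$ and the arcs $\alphas^\cap$ of Figure~\ref{fig:capping} and pre-gluing a $1$-gon at each chord is the reduction used to prove the \emph{fixed-source} formula (Proposition~\ref{prop:index-source}); after that pre-gluing there are no east punctures left, so the symmetric-product curve is a bigon, not a $(|\vec{\rho}|+2)$-gon, and the chord corners whose multiplicities you want to assemble into $\iota(\vec{\rho})$ have disappeared. What the proof actually does is only fill in the puncture $p$ of $\Sigma$ (one-point compactification), so that the $\alpha$-arcs close up into circles $\overline{\alpha}_1^a,\overline{\alpha}_2^a$ meeting at $p$ and each Reeb chord asymptotic becomes a corner of the polygon \emph{at $p$}. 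Relatedly, the identity you want is not $\ind(u)=\ind(\phi)$ on the nose but $\ind(u)=\mu(B)+|\vec{\rho}|$: Sarkar's $\mu$ is the expected dimension with the conformal structure of the polygon fixed, and the $|\vec{\rho}|$ boundary punctures each contribute a modulus.

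Second, Sarkar's formula requires $\alphas^i\pitchfork\alphas^j$, and in the multi-diagram $(\overline{\Sigma},\alphas^1,\alphas^2,\alphas^1,\dots,\betas)$ the same $\alpha$-curves are repeated many times. So the deferred computation must include choosing perturbations: the paper perturbs each $\alpha$-circle (this is where a contribution of $(g-1)|\vec{\rho}|/2$ enters, after checking that the changes to $e$, $n_\x$, $n_\y$ and the jittered intersection numbers cancel appropriately), and shows by an averaging argument that the $\alpha$-arcs need not be perturbed, after which the jittered intersection numbers at $p$ give $-|\vec{\rho}|/4+\sum_{i<j}L(\rho^i,\rho^j)$ and combine with Sarkar's $-g(n-2)/2$ and the $+|\vec{\rho}|$ above to produce $|\vec{\rho}|+\iota(\vec{\rho})$. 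Without identifying the perturbation issue your dictionary cannot close, so you should treat it as part of the proof rather than as routine matching.
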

That is, Formula~\eqref{eq:emb-ind} in the case $r=0$ gives the expected dimension of
$\cM^B(\x,\y;\vec{a};w)$.
We will often write $\chi_{\emb}(B,\vec{a},w)$ and $\ind(B,\vec{a},w)$
for $\chi_{\emb}(B,\vec{a},w,0)$ and $\ind(B,\vec{a},w,0)$ (i.e., for
curves where each Reeb orbit is simple).

Similar formulas to Proposition~\ref{prop:emb-ind} compute the expected
dimension of the space of embedded boundary degenerations. That is:
\begin{proposition}\label{prop:emb-ind-bdy-degen}
  For a boundary degeneration as in
  Proposition~\ref{prop:index-source-bdy-degen}, if the map to
  $\Sigma\times\HHH$ is an embedding and the projection to $\Sigma$
  has degree $k$ then the Euler characteristic of $\bdSource$ and the
  expected dimension of $\cN(\x;\vec{a};w,r)$ are given by
  \begin{align}
    \chi(S)&=\chi_{\emb}(\vec{a},w,r)\coloneqq g+k(1-2g)-2kg-\iota(\vec{\rho})+r=g+k-4kg-\iota(\vec{\rho})+r\\
    \ind(u)&=\indsq(\vec{a},w,r)\coloneqq
             k(1-2g)+2kg+|\vec{a}|+\epsilon+\iota(\vec{\rho})+w-2r\label{eq:emb-ind-bdy-degen}\\
    &=k+|\vec{a}|+\epsilon+\iota(\vec{\rho})+w-2r\nonumber
  \end{align}
  where $\vec{\rho}$ is the chord sequence underlying $\vec{a}$ and,
  as in Proposition~\ref{prop:index-source-bdy-degen}, $\epsilon$ is
  $1$ if the asymptotic at $\infty$ is $U^m\rho$ and $0$ if the
  asymptotic at $\infty$ is $U^m\iota_j$.
\end{proposition}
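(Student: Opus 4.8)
The plan is to mirror the proof of Proposition~\ref{prop:emb-ind} but starting from the fixed-source index for boundary degenerations (Proposition~\ref{prop:index-source-bdy-degen}) instead of Proposition~\ref{prop:index-source}, and to pass from the fixed-source count to the embedded count via an adjunction-type argument in the symmetric product. Concretely, I first reduce (as in the proof of Proposition~\ref{prop:index-source-bdy-degen}) to the case where every interior puncture is a simple Reeb orbit, i.e.\ $r=0$; the corrections to $\chi(S)$, $\ind$, and $w$ introduced by this reduction are exactly the $+r$ and $-2r$ (and the shift of $w$) already present in the stated formulas, so it suffices to establish the $r=0$ case. I will keep the powers of $U$ irrelevant throughout: as in Proposition~\ref{prop:index-source}, a non-Reeby puncture $U^m\iota_j$ contributes $+1$ to both the dimension of $\ModPol$ and to $|\vec a|$, and the power $m$ affects nothing, so I may assume $\vec a=\vec\rho$.

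The heart of the argument is the passage between $\chi(S)$ and $e(B)$, using the embedding hypothesis. Via the tautological correspondence of Section~\ref{sec:tautological}, the boundary degeneration $u\co S\to \Sigma\times\HHH$ corresponds to a holomorphic map $\phi\co \overline{\HHH}\to \Sym^g(\overline\Sigma)$ into (a fiber bundle over $\overline\HHH$ with fiber) the symmetric product, landing on the $\alpha$-tori $T_{\alphas,1}$ or $T_{\alphas,2}$; this map represents a domain on the Heegaard multi-diagram with only $\alpha$-curves, and Sarkar's index formula~\cite{Sarkar11:IndexTriangles} (or rather its specialization, since there is a single Lagrangian here) relates its Maslov index to the Euler measure and point multiplicities. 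Because $u$ is embedded, the relation between $\chi(S)$ and $e(B)$ is governed by an adjunction formula: writing $k$ for the multiplicity with which $u$ covers $\Sigma$, I compute $e(B)=k\cdot e(\Sigma)=k(2-2g)$ — wait, more precisely $e(B)$ is the Euler measure, which for a $k$-fold branched cover of $\Sigma$ (with the boundary ``half'' removed, since $B$ lives on the $\alpha$-side) works out to $k(1-2g)$ as recorded in Proposition~\ref{prop:index-source-bdy-degen}. The adjunction computation then yields $\chi(S)=g+e(B)-n_\x(B)-n_\x(B)-\iota(\vec\rho)$; but here both ends of the domain are the same tuple $\x$ (a boundary degeneration connects $\x$ to itself at $\infty$), and moreover $B$ is the full $\alpha$-degeneration domain, so $n_\x(B)$ can be re-expressed in terms of $k$ and $g$. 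Matching $e(B)=k(1-2g)$ and $n_\x(B)+n_\x(B)=2kg$ (each of the $g$ points of $\x$ is covered with total multiplicity $k$, averaged) gives $\chi_{\emb}=g+k(1-2g)-2kg-\iota(\vec\rho)+r = g+k-4kg-\iota(\vec\rho)+r$, as claimed.

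For the index formula, I combine this with Proposition~\ref{prop:index-source-bdy-degen}: substituting $\chi(S)=\chi_{\emb}(\vec a,w,r)$ and $e(B)=k(1-2g)$ into $\ind = g-\chi(S)+2k(1-2g)+|\vec a|+\epsilon+w-r$ gives
\begin{align*}
  \ind(u)&=g-\bigl(g+k(1-2g)-2kg-\iota(\vec\rho)+r\bigr)+2k(1-2g)+|\vec a|+\epsilon+w-r\\
  &=k(1-2g)+2kg+|\vec a|+\epsilon+\iota(\vec\rho)+w-2r,
\end{align*}
which is exactly Formula~\eqref{eq:emb-ind-bdy-degen}, and $k(1-2g)+2kg=k$ gives the simplified form. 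The double-point correction ($d$ transverse double points raise $\chi(S)$ by $2d$ and lower $\ind$ by $2d$) follows from the standard adjunction/self-intersection bookkeeping, exactly as in Proposition~\ref{prop:emb-ind}.

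The main obstacle I anticipate is getting the adjunction bookkeeping in the symmetric-product picture exactly right in the boundary-degeneration setting: unlike the closed or cylindrical cases, here $\overline\HHH$ is a disk (one boundary Lagrangian $T_{\alphas,i}$), the curve $\phi$ can hit the diagonal both at branch points of $\pi_\HHH\circ u$ and at genuine self-intersections of $u$, and one must be careful that $\iota(\vec\rho)$ (which encodes the ``corner contributions'' at $e\infty$ via $\iota$ and $L$, cf.\ Lemma~\ref{lem:iota-is-gr}) is precisely the correction term produced by the asymptotic behaviour of $\phi$ near the punctures pushed to $\bdy\HHH$. I would handle this by the same capping-off-and-gluing device used in Propositions~\ref{prop:index-source} and~\ref{prop:index-source-bdy-degen} — pre-gluing a one-gon to each Reeb chord and a zero-gon (a copy of $\Sigma^\cap$) to each Reeb orbit — reducing to a closed/boundaryless situation where Sarkar's formula and the ordinary adjunction formula apply directly, and then tracking how each gluing shifts $\chi$, $e(B)$, $n_\x$, and the index, checking that the net effect reproduces $\iota(\vec\rho)$. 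The rest is routine arithmetic of the type already carried out in the cited propositions.
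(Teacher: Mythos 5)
Your overall skeleton is sound and is close to the paper's, but it runs the derivation in the opposite direction and leaves the central computation undone. The paper proves Propositions~\ref{prop:emb-ind} and~\ref{prop:emb-ind-bdy-degen} in a single argument whose logical order is: reduce to length-$1$ chords and simple orbits, pass to the symmetric product via the tautological correspondence, prove the \emph{index} formula directly from Sarkar's formula $\mu(D)=e(D)+n_{\x}(D)+n_{\y}(D)-\sum\bdy^{\alphas^i}D\cdot\bdy^{\alphas^j}D-g(n-2)/2$, and only then read off the Euler-characteristic formula from the fixed-source index formula (Propositions~\ref{prop:index-source} and~\ref{prop:index-source-bdy-degen}). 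You propose the reverse: establish $\chi(S)$ first ``by adjunction'' and deduce $\ind(u)$ from Proposition~\ref{prop:index-source-bdy-degen}. Given the fixed-source formula the two directions are logically equivalent, so this is a legitimate reorganization; and your bookkeeping ($e(B)=k(1-2g)$, $n_\x(B)=kg$ since the domain is $k[\Sigma]$) is correct.

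The gap is that the step carrying all the content is asserted rather than proved. The identity $\chi(S)=g+e(B)-2n_\x(B)-\iota(\vec{\rho})+r$ is not an off-the-shelf adjunction formula: in Proposition~\ref{prop:emb-ind} the analogous identity is itself a \emph{consequence} of the Sarkar computation, not an input to it, so invoking it ``by analogy'' is circular unless you supply an independent derivation. The paper deliberately avoids the translate-intersection (adjunction) route — see the Remark following the proof of Propositions~\ref{prop:emb-ind} and~\ref{prop:emb-ind-bdy-degen}, which explains that the argument of intersecting a curve with its translates requires product-like almost complex structures near the boundary and does not transfer directly here (though for boundary degenerations, where $J$ on $\Sigma\times\HHH$ is split, it might be salvageable). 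Your fallback — cap off and apply ``Sarkar's formula and the ordinary adjunction formula'' — is the right move, but at that point you are doing the paper's proof, and the hard part is exactly the piece you defer as ``routine arithmetic'': choosing the perturbations of the repeated $\alpha$-circles so that the Euler measure, $n_\x$, and jittered intersection contributions can be tracked (the $(g-1)|\vec{\rho}|/2$ term), the averaging argument showing the $\alpha$-arcs need not be perturbed, and the verification that the remaining jittered intersection numbers assemble into $\sum_i\iota(\rho^i)+\sum_{i<j}L(\rho^i,\rho^j)=\iota(\vec{\rho})$ via Lemma~\ref{lem:iota-is-gr}. Without that computation the proof is incomplete; with it, your argument becomes the paper's.
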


Since we will most often be interested in the case $r=0$, let
\begin{align*}
  \chi_{\emb}(\vec{a},w)&=\chi_{\emb}(\vec{a},w,0)& \indsq(\vec{a},w)&=\indsq(\vec{a},w,0).
\end{align*}

To know that the index formulas induce a well-behaved grading on the Floer
complexes, we need to know they are additive:
\begin{proposition}\label{prop:emb-ind-add}
  Suppose $B\in\pi_2(\w,\x)$, $B'\in\pi_2(\x,\y)$, $\vec{a}$ and
  $\vec{a}'$ are sequences of Reeb chords compatible with $B$ and $B'$, and
  $w$ and $w'$ are non-negative integers. Let $(\vec{a},\vec{a}')$ be the
  concatenation of the sequences $\vec{a}$ and $\vec{a}'$. Then
  \[
    \ind(B+B',(\vec{a},\vec{a}'),w+w')=\ind(B,\vec{a},w)+\ind(B',\vec{a}',w').
  \]
\end{proposition}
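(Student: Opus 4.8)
The plan is to prove additivity termwise, by showing that each of the ingredients $e(B)$, $n_\x(B)$, $|\vec{a}|$, $\iota(\vec{\rho})$, and $w$ in Formula~\eqref{eq:emb-ind} behaves additively (or almost additively, with cross-terms that cancel) under concatenation. First I would observe that the Euler measure is additive, $e(B+B')=e(B)+e(B')$, and that $|\vec{a},\vec{a}'|=|\vec{a}|+|\vec{a}'|$ and $w+w'$ are trivially additive. The point multiplicities require a small argument: $n_\w(B+B')=n_\w(B)$ since $B'\in\pi_2(\x,\y)$ does not meet $\w$'s neighborhoods in the relevant way--more precisely, $n_\w(B+B')=n_\w(B)+n_\w(B')$ in general, and similarly $n_\y(B+B')=n_\y(B)+n_\y(B')$, so the contribution $n_\w(B+B')+n_\y(B+B')$ from the concatenation equals $n_\w(B)+n_\w(B')+n_\y(B)+n_\y(B')$. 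Comparing with the sum $\bigl(n_\w(B)+n_\x(B)\bigr)+\bigl(n_\x(B')+n_\y(B')\bigr)$ of the two separate indices, additivity of this block reduces to the identity $n_\x(B)+n_\x(B')=n_\w(B')+n_\y(B)$, i.e.\ that the multiplicity of $B$ at $\x$ plus the multiplicity of $B'$ at $\x$ equals what you get by ``sliding'' the intermediate generator--this is exactly the bilinearity/additivity lemma for $n_\x$ that appears in~\cite{LOT1} (the analogue of \cite[Lemma 4.?]{LOT1}), and I would cite it.

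The only genuinely non-additive piece is $\iota(\vec{\rho})$, because of the cross-terms $\sum_{i<j}L(\rho^i,\rho^j)$: writing $\vec{\rho}=(\rho^1,\dots,\rho^m)$ and $\vec{\rho}'=(\rho^{m+1},\dots,\rho^{m+m'})$, we have
\[
  \iota(\vec{\rho},\vec{\rho}')=\iota(\vec{\rho})+\iota(\vec{\rho}')+\sum_{\substack{i\leq m<j}}L(\rho^i,\rho^j).
\]
So the key step is to show that the extra cross-term $\sum_{i\leq m<j}L(\rho^i,\rho^j)$ is cancelled by a corresponding cross-term coming from the point multiplicities; equivalently, I would package the argument via Lemma~\ref{lem:iota-is-gr}: the right idempotent compatibility of $\vec{\rho}$ and $\vec{\rho}'$ with $B,B'$ (and the fact that $B+B'$ makes sense) means $\iota(\vec{\rho},\vec{\rho}')$ is the Maslov component of $\grb(\rho^1)\cdots\grb(\rho^{m+m'})$, which is manifestly the product of the Maslov components of $\grb(\rho^1)\cdots\grb(\rho^m)$ and $\grb(\rho^{m+1})\cdots\grb(\rho^{m+m'})$ only up to the group-law cross-term; so the cleanest route is to combine everything into a single statement that $\ind(B,\vec{a},w)$ equals (minus) the Maslov component of a product of grading-group elements, one for each of $B$, each algebra puncture, and each orbit, and then additivity follows because the grading group has a well-defined associative product and the central element $\lambda$ contributes additively in the exponent $n-2\to (n-2)+(n'-2)$ versus $(n+n'-2)$--here one has to be careful about the shift by $2$, which is why the statement is genuinely about concatenation and not a trivial sum.

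Concretely, the cleanest proof I would write: recall (from the grading construction in Section~\ref{sec:gradings}, or by direct inspection) that there is a grading-group element $\gr(B)\in\bigGroup$ (or a slightly larger group) assigned to a domain, and that the index formula~\eqref{eq:emb-ind} can be rewritten as $\ind(B,\vec{a},w) = (\text{Maslov component of } \gr(B)\cdot\grb(\rho^1)\cdots\grb(\rho^m))\cdot(\text{something linear in }|\vec a|, w)$, with the grading-group product built so that $\gr(B+B')=\gr(B)\cdot\gr(B')$ and the multiplicities $n_\x$, the $\iota$ term, and the Maslov shift all get absorbed into a single associative product. Then $\ind(B+B',(\vec a,\vec a'),w+w')$ and $\ind(B,\vec a,w)+\ind(B',\vec a',w')$ agree because the Maslov component of a product is the sum of the Maslov components plus the bilinear correction, and the bilinear correction is exactly what the point-multiplicity identity $n_\x(B)+n_\x(B')=n_\w(B')+n_\y(B)$ supplies. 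If one prefers to avoid invoking the not-yet-constructed gradings, the alternative is the brute-force route: expand both sides using Formula~\eqref{eq:emb-ind}, cancel the manifestly additive terms, and reduce to the single identity
\[
  n_\x(B)+n_\x(B')+\sum_{i\leq m<j}L(\rho^i,\rho^j) = n_\w(B')+n_\y(B),
\]
which I would verify by the same boundary-multiplicity computation used in~\cite[proof of Proposition 5.69]{LOT1} (the $\HFa$ analogue), noting that our $L$-function and $\iota$-function are the same as there and that the only new inputs--longer chords and Reeb orbits--contribute additively (a length-$4k+\ell$ chord behaves under $L$ like its length-$\ell$ reduction by the identity $L(\rho_{1234}\rho',\rho'')=L(\rho',\rho'')$, and orbits contribute $0$ to all the $L$-terms). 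The main obstacle is bookkeeping the interaction between $n_\x$ at the intermediate generator and the cross-term $\sum L(\rho^i,\rho^j)$--getting the signs and the half-integer averaging exactly right--so I would lean on Lemma~\ref{lem:iota-is-gr} and the corresponding $\HFa$ result rather than recompute from scratch.
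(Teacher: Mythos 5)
Your overall strategy is sound and would yield a complete proof, but it is genuinely different from the paper's. The paper's proof is a one-liner: Proposition~\ref{prop:emb-ind} identifies $\ind(B,\vec{a},w)$ with the Maslov index of the corresponding polygon in $\Sym^g(\overline{\Sigma})$ (plus the manifestly additive term $|\vec{a}|$), and the Maslov index is additive under concatenation of homotopy classes, so all of the cross-term bookkeeping you are doing by hand is subsumed at once. Your route is the direct, termwise one used in the $\HFa$ case in~\cite{LOT1}: expand Formula~\eqref{eq:emb-ind}, cancel the additive terms, and reduce to a single identity relating the multiplicities at the intermediate generator to the linking of the boundary multiplicities. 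That is a perfectly good (if more laborious) alternative, and it has the virtue of not depending on the tautological correspondence.

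One concrete correction: the identity you display has the cross-term on the wrong side. With $B\in\pi_2(\w,\x)$ and $B'\in\pi_2(\x,\y)$, cancelling $e$, $|\vec{a}|$, $w$, and the additive parts of $n_\w$, $n_\y$, and $\iota$ from $\ind(B+B')-\ind(B)-\ind(B')$ leaves
\[
n_\w(B')+n_\y(B)-n_\x(B)-n_\x(B')+\sum_{i\leq m<j}L(\rho^i,\rho^j),
\]
so the identity you need is
\[
n_\x(B)+n_\x(B')=n_\w(B')+n_\y(B)+\sum_{i\leq m<j}L(\rho^i,\rho^j),
\]
not the version with the $L$-sum added to the left-hand side. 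Since $L$ is antisymmetric and half-integer valued, the sign is not cosmetic. The corrected identity is exactly the content of the multiplicity lemma in~\cite{LOT1} that you propose to cite (and is also what one gets by comparing Maslov components in $g'(B*B')=g'(B)g'(B')$ together with Lemma~\ref{lem:iota-is-gr}), so with that sign fixed your argument goes through; your observations that long chords reduce to their length-$(\ell\bmod 4)$ parts under $L$ and that orbits contribute nothing to the cross-terms are correct and are what make the $\HFa$ computation apply verbatim here.
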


To state the analogue for boundary degenerations, we need one more concept.
Given a sequence of chords $\vec{\rho}$ and integers $w,r$, the
\emph{domain multiplicity} of $(\vec{\rho},w,r)$ is
\begin{equation}
  \label{eq:domain-mult}
  k=w+r+\frac{1}{4}\sum_{\rho^i\in\vec{\rho}}|\rho^i|.
\end{equation}
Given a sequence of basic algebra elements $\vec{a}$, the domain
multiplicity of $(\vec{a},w,r)$ is the domain multiplicity of
$(\vec{\rho},w,r)$ where $\vec{\rho}$ is the underlying sequence of chords.
If there is a boundary degeneration $u$ with Reeb chords
$\vec{\rho}$, $w$ Reeb orbits and ramification $r$ at the Reeb orbits
then the multiplicity of $u$ is $[\pi_\Sigma\circ u]=k[\Sigma]$, where
$k$ is the domain multiplicity. In particular, the domain multiplicity
is an integer. Additionally, the support
$\suppo{\vec{\rho}}\in H_1(Z,\mathbf{a})$ of $\vec{\rho}$ is of the form
$(\ell,\ell,\ell,\ell)$ (where $\ell=k-w-r$), that is, it is equal at
$\rho_1$, $\rho_2$, $\rho_3$, and $\rho_4$. We call a sequence of
chords $\vec{\rho}$
\emph{equidistributed} if this latter condition holds, and call a
sequence
of basic algebra elements equidistributed if its underlying chord
sequence is.

\begin{lemma}\label{lem:bdy-degen-ind-add}
  Let $B\in\pi_2(\x,\y)$, $\vec{a}=(a_1,\dots,a_n)$ and
  $\vec{a}'=(a'_1,\dots,a'_m)$ be sequences of Reeb chords,
  and $w,w',r,r'\in\ZZ_{\geq 0}$. Assume that $\vec{a}'$ is
  equidistributed and $k$ is the domain multiplicity of
  $(\vec{a}',w',r')$. Fix $i$ and let
  \[
    \vec{b}=(a_1,\dots,a_i,a'_1,\dots,a'_m,a_{i+1},\dots,a_n)
  \]
  Then
  \[
    \ind(B+k[\Sigma],\vec{b},w+w',r+r')=\ind(B,\vec{a},w,r)+\indsq(\vec{a}',w',r').
  \]
  (This corresponds to the case of a simple boundary degeneration.)

  Similarly, if we let
  \[
    \vec{b}'=(a_1,\dots,a_ia'_1,a'_2,\dots,a'_m,a_{i+1},\dots,a_n)
  \]
  or
  \[
    \vec{b}'=(a_1,\dots,a_i,a'_1,\dots,a'_{m-1},a'_ma_{i+1},\dots,a_n)
  \]
  then 
  \[
    \ind(B+k[\Sigma],\vec{b}',w+w',r+r')=\ind(B,\vec{a},w,r)+\indsq(\vec{a}',w',r')-1.
  \]
  (This corresponds to the case of a composite boundary degeneration.)
\end{lemma}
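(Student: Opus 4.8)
The plan is to prove both identities by plugging the relevant data straight into the embedded index formula~\eqref{eq:emb-ind} of Proposition~\ref{prop:emb-ind}, and reducing the claim to four additivity statements: additivity of $e$, $n_\x$, $n_\y$ under concatenation of homology classes; additivity of the number of algebra punctures; additivity of the weight $w$ and ramification $r$; and additivity of the quantity $\iota$ of the underlying chord sequence. The first is standard: since $[\Sigma]$ has multiplicity one on every region, $n_\x(k[\Sigma])=kg$, $n_\y(k[\Sigma])=kg$, and $e(k[\Sigma])=k(1-2g)$ (the Euler measure of a degree-$k$ cover of $\Sigma$, exactly as computed in the proof of Proposition~\ref{prop:emb-ind-bdy-degen}); these three together contribute $k(1-2g)+2kg=k$. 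The weight and ramification are visibly additive, since the source underlying $\vec{b}$ (or $\vec{b}'$) has $w+w'$ interior punctures with total ramification $r+r'$. So everything comes down to the number of algebra punctures and $\iota$.

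The heart of the matter is the identity $\iota(\vec\rho_{\vec b})=\iota(\vec\rho)+\iota(\vec\rho')$, where $\vec\rho$, $\vec\rho'$ are the chord sequences underlying $\vec a$, $\vec a'$ and $\vec\rho_{\vec b}$ is the chord sequence underlying $\vec b$. Expanding $\iota$ via its definition $\iota(\vec\sigma)=\sum_j\iota(\sigma^j)+\sum_{j<l}L(\sigma^j,\sigma^l)$, the $\iota$-terms together with the pairs lying entirely inside $\vec\rho$ (in their original order) and entirely inside $\vec\rho'$ assemble into $\iota(\vec\rho)+\iota(\vec\rho')$; what remains is the cross-sum $\sum_{j\le i}\sum_t L(\rho^j,\rho'^t)+\sum_{j>i}\sum_t L(\rho'^t,\rho^j)$, in which $t$ ranges over \emph{all} of $\vec\rho'$ precisely because $\vec\rho'$ was inserted as a single contiguous block after $\rho^i$. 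Since $L$ is additive and antisymmetric in its (support) arguments, this cross-sum equals $\sum_{j\le i}L\bigl(\rho^j,\textstyle\sum_t\suppo{\rho'^t}\bigr)-\sum_{j>i}L\bigl(\rho^j,\textstyle\sum_t\suppo{\rho'^t}\bigr)$, and equidistribution of $\vec a'$ gives $\sum_t\suppo{\rho'^t}=(\ell,\ell,\ell,\ell)$. The determinant formula for $L$ in the proof of Lemma~\ref{lem:iota-is-gr} then shows $L(\rho,(\ell,\ell,\ell,\ell))=0$ (the bracket telescopes), so the cross-sum vanishes. Feeding $B+k[\Sigma]$, $\vec b$, $w+w'$, $r+r'$ into~\eqref{eq:emb-ind} and subtracting $\ind(B,\vec a,w,r)$ then leaves $k+|\vec a'|+\iota(\vec\rho')+w'-2r'=\indsq(\vec a',w',r')$, which is the simple-boundary-degeneration case (here $\epsilon=0$, as $\vec a'$ is the part of a simple boundary degeneration away from $q_\infty$).

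For the composite case the only difference is that the chord sequence underlying $\vec b'$ is obtained from $\vec\rho_{\vec b}$ by concatenating two consecutive chords ($\rho^i\rho'^1$, or else $\rho'^m\rho^{i+1}$) into a single chord; thus $|\vec b'|=|\vec b|-1$, while $B+k[\Sigma]$, $w+w'$, $r+r'$ are unchanged. It then suffices to observe that $\iota$ is unchanged under concatenating two consecutive chords. This is because $\grb$ is multiplicative, so by Lemma~\ref{lem:iota-is-gr} the value $\iota$ of a chord sequence is the Maslov component of the product $\grb(\rho^1)\cdots\grb(\rho^p)$, which does not see the replacement of $\grb(\sigma)\grb(\tau)$ by $\grb(\sigma\tau)$; equivalently, $\iota(\sigma\tau)=\iota(\sigma)+\iota(\tau)+L(\sigma,\tau)$, so the extra $L(\sigma,\tau)$ exactly absorbs the $L$-pairing contributed by the pair $(\sigma,\tau)$, while the $L$-pairings of $\sigma$ and $\tau$ with the other chords recombine by linearity. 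Hence $\ind(B+k[\Sigma],\vec b',w+w',r+r')=\ind(B+k[\Sigma],\vec b,w+w',r+r')-1$, giving the stated formula in both composite forms.

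I expect the only real obstacle to be bookkeeping rather than anything substantive: one has to be careful about the ordering convention in the definition of $\iota$ (which chord of a pair is the first argument of $L$) so that the cross-sum telescopes with the right signs, and about which value of $\epsilon$ is intended in $\indsq(\vec a',w',r')$ in the statement. There is no analytic input here — the entire lemma is combinatorics of the grading group $\bigGroup$ plus the additivity of $e$, $n_\x$, $n_\y$.
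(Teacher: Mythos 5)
Your proof is correct, but it is not the route the paper takes. The paper's entire proof of Lemma~\ref{lem:bdy-degen-ind-add} (bundled with Proposition~\ref{prop:emb-ind-add}) is one sentence: the identities are ``immediate from the fact that these index formulas agree with the Maslov index in the symmetric product and additivity of the Maslov index'' --- i.e., it routes everything through the tautological correspondence of Section~\ref{sec:tautological} and the identifications already established in Propositions~\ref{prop:emb-ind} and~\ref{prop:emb-ind-bdy-degen}. You instead verify the identities directly from the closed formulas, which reduces the whole lemma to (i) $e(k[\Sigma])+n_\x(k[\Sigma])+n_\y(k[\Sigma])=k(1-2g)+2kg=k$, (ii) additivity of $|\vec{a}|$, $w$, $r$, (iii) the vanishing of the cross-terms $\sum L(\rho^j,\cdot)$ because $\suppo{\vec{\rho}{}'}=\ell[Z]$ and $L(\rho,\ell[Z])=0$, and (iv) invariance of $\iota$ under concatenating consecutive chords (Lemma~\ref{lem:iota-is-gr}). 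Your computation checks out, and it has two advantages over the paper's: it makes explicit exactly where the equidistribution hypothesis enters (it is precisely what kills the cross-pairings, a point invisible in the paper's appeal to Maslov additivity), and it pins down the $\epsilon$-convention --- your arithmetic shows both displayed identities hold when $\indsq(\vec{a}',w',r')$ is computed with $\epsilon=0$, with the explicit $-1$ in the composite case accounting for the merged chord; had one instead used $\epsilon=1$ there, the identity would be off by one, so flagging this was not mere pedantry. What the paper's approach buys is brevity and uniformity with Proposition~\ref{prop:emb-ind-add}, at the cost of leaning on the (somewhat delicate) identification of $\indsq$ with a symmetric-product Maslov index for configurations involving boundary degenerations.
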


The proofs of the embedded index formulas will be based on Sarkar's
index formula for polygons~\cite{Sarkar11:IndexTriangles}, so we
recall that next. Fix a (classical, not bordered) Heegaard
multi-diagram $(\Sigma_g,\alphas^1,\dots,\alphas^n)$, points
$\x^{i,i+1}\in T_{\alphas^i}\cap T_{\alphas^{i+1}}\subset
\Sym^g(\Sigma)$, and a point
$\x^{n,1}\in T_{\alphas^n}\cap T_{\alphas^1}$. Assume that
$\alphas^i\pitchfork \alphas^j$ for $i\neq j$. Given oriented arcs $A\subset
\alphas^i$ and $B\subset \alphas^j$, $i\neq j$, define the
\emph{jittered intersection number} $A\cdot B$ of $A$ and $B$ as
follows. If $A$ and $B$ intersect only at interior points then $A\cdot
B$ is their intersection number in the usual sense. Otherwise, if,
say, an endpoint of $A$ intersects $B$ then there are four directions
one can push $A$ off $\alphas^i$ slightly so that the pushoff is
transverse to $\alphas^j$ (and hence $B$); let $A\cdot B$ be the
average of the four intersection numbers of these pushoffs with
$B$. So, $A\cdot B\in\frac{1}{4}\ZZ$.

Given a homotopy class of $n$-gons $D\in
\pi_2(\x^{1,2},\x^{2,3},\dots,\x^{n,1})$, Sarkar proves that the
Maslov index of $D$ (expected dimension of the moduli space with a
fixed complex structure on the source) is
\begin{equation}\label{eq:Sarkar-formula}
  \mu(D)=e(D)+n_{\x^{1,2}}(D)+n_{\x^{n,1}}(D)-\left(\sum_{1<i<j\leq n}\bdy^{\alphas^i}(D)\cdot\bdy^{\alphas^j}(D)\right)-\frac{g(n-2)}{2}.
\end{equation}
Here, $\bdy^{\alphas^i}(D)$ is the part of the cellular $1$-chain
$\bdy D$ lying in $\alphas^i$ and $\cdot$ denotes the jittered
intersection number.  The $n$-gon maps on Heegaard Floer homology
count holomorphic $n$-gons with
Maslov index $3-n$. In the case of a bigon in
$\pi_2(\x,\y)$, Formula~\eqref{eq:Sarkar-formula} reduces to the
familiar formula $e(D)+n_\x(D)+n_\y(D)$.

\begin{proof}[Proof of Propositions~\ref{prop:emb-ind} and~\ref{prop:emb-ind-bdy-degen}]
  We will prove the formulas for the index at embedded holomorphic
  curves; the formulas for the Euler characteristic of the source then
  follow from Propositions~\ref{prop:index-source}
  and~\ref{prop:index-source-bdy-degen}.

  It suffices to prove the result when $\vec{a}$ consists entirely of
  chords. Specifically, the proof we will give does not depend on the
  particular choice of almost complex structure. The $U$-powers
  labeling punctures only affect the choice of almost complex
  structure, as do punctures labeled $U^n\iota_j$ (except for adding
  $1$ to both the expected dimension and Formula~\eqref{eq:emb-ind}).
  
  By additivity of the index under gluing, if $\rho^i=\rho'\rho''$
  then the expected dimension of the moduli
  space of curves asymptotic to $(\rho^1,\dots,\rho^i,\dots,\rho^n)$
  is one less than the expected dimension of the moduli space of
  curves asymptotic to $(\rho^1,\dots,\rho',\rho'',\dots,\rho^n)$: the
  two differ by gluing on a split curve. Replacing
  $(\rho^1,\dots,\rho^i,\dots,\rho^n)$ by
  $(\rho^1,\dots,\rho',\rho'',\dots,\rho^n)$ also increases the right
  side of Equation~\eqref{eq:emb-ind}: the only term that changes is
  $|\vec{\rho}|$. (The fact that $\iota$ does not change follows from
  Lemma~\ref{lem:iota-is-gr}.) So, it suffices to prove the result
  when all of the chords $\rho^i$ have length $1$. Similarly, we can
  assume that all of the Reeb orbits are simple.

  After these simplifications, the index $\ind$ is essentially the
  same as the Maslov index in the symmetric product, under the
  tautological correspondence (Section~\ref{sec:tautological}).
  More precisely, because $\mu$ gives the
  expected dimension for polygons with a fixed conformal structure and
  $\ind$ allows the conformal structure to vary,
  \[
    \ind(u)=\mu(B)+|\vec{\rho}|.
  \]

  The next step is to apply Sarkar's formula to compute $\mu(B)$. For
  definiteness, assume that a point of $\x$ lies on $\alpha_1^a$. Then
  (since all the $\rho^i$ have length $1$, which is odd), the polygon
  has boundary
  \[
    (T_\betas,T_{\alphas,1},T_{\alphas,2},T_{\alphas,1},\dots,T_{\alphas,(1+|\vec{\rho}|)\bmod 2}).
  \]
  In particular, the same $\alpha$-curves are repeated many times.  To
  apply Sarkar's formula, we must first choose perturbations of these $\alpha$-curves
  to make them transverse. How we do this perturbation affects $n_\x$,
  $n_\y$, and the $\bdy^{\alphas^i}(D)\cdot\bdy^{\alphas^j}(D)$ terms.

  \begin{figure}
    \centering
    \includegraphics{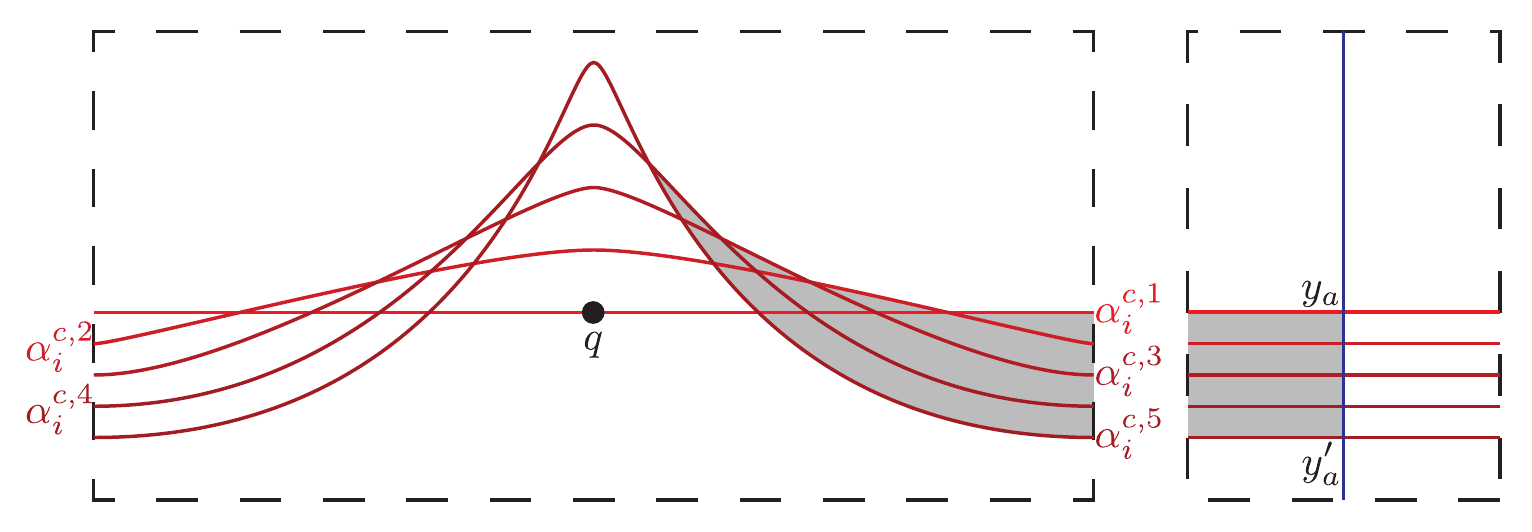}
    \caption[Perturbing $\alpha$-circles]{\textbf{Perturbing
        $\alpha$-circles.} The figure corresponds to the case
      $|\vec{\rho}|=4$. The original intersection point
      $y_a\in \alpha_a^c\cap \beta_b$, the corresponding point
      $y'_a\in \alpha_a^{c,5}\cap \beta_b$, and the point $q$ are also indicated.}
    \label{fig:ind-pert-circ}
  \end{figure}

  \begin{figure}
    \centering
    \includegraphics{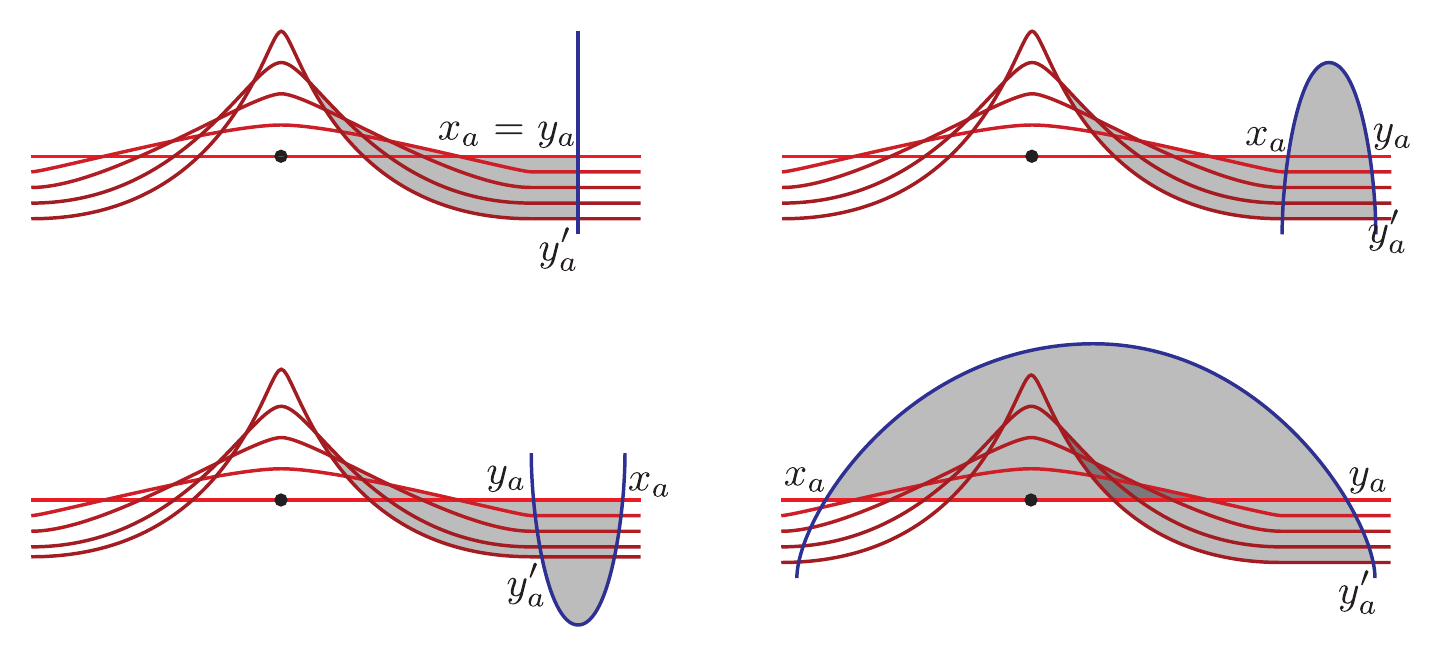}
    \caption{\textbf{Multiplicities in the perturbed diagram.} In the
      first case, $n_\x$ and $n_\y$ both increase by $1/4$. In the
      second, $n_\x$ increases by $1/2$ while $n_\y$ is unchanged. In
      the third, $n_\x$ is unchanged while $n_\y$ increases by
      $1/2$. In the fourth, $n_\x$ and $n_\y$ are both unchanged. In
      the first three cases, $\ell=0$ and the jittered intersection
      number is $-(4-2)/4+0=-1/2$, while in the last $\ell=1$ and the
      jittered intersection number is $-(4-2)/2+1/2=0$.}
    \label{fig:ind-pert-circ-2}
  \end{figure}
  
  For each $\alpha$-circle $\alpha_m^c$, choose the perturbations
  $\alpha_{m}^{c,i}$ as shown in Figure~\ref{fig:ind-pert-circ}, so
  that $\alpha_m^{c,i}$ and $\alpha_m^{c,j}$ are disjoint outside the
  figure for $i\neq j$ and the points $\x$ and $\y$ are outside the
  region where the perturbed circles intersect each other. Locally,
  the perturbed domain is obtained from the original
  domain of bigons by gluing on (adding) the shaded region in the
  figure. The Euler measure of the shaded region is $|\vec{\rho}|/4$
  and the jittered intersection number is
  $-(|\vec{\rho}|-2)/4+\ell/2$, where $\ell$ is the multiplicity with
  which a point $q$ on $\alpha_m^{c}$ in the perturbation region
  occurs in $\bdy^\alpha B$. The
  perturbation increases $n_\x+n_\y$ by $1/2-\ell/2$. (See Figure~\ref{fig:ind-pert-circ-2} for
  some examples.) So, the contribution
  to Formula~\eqref{eq:Sarkar-formula} is
  $|\vec{\rho}|/2-1/2+\ell/2-\ell/2+1/2=|\vec{\rho}|/2$. There
  are $g-1$ such $\alpha$-circles, for a total contribution of
  \[
    \frac{(g-1)|\vec{\rho}|}{2}.
  \]
  
  We claim that for the $\alpha$-arcs we can apply Sarkar's formula
  without perturbing. We prove this by induction on the number of
  $\alpha$-arcs not perturbed. Suppose the formula holds if the first
  $\ell$ copies of the $\alpha_i^a$ are not perturbed. There are two
  natural directions to push off the $(\ell+1)\st$ copy of an
  $\alpha$-arc. The Euler measure of the domain is the same whether
  the $\alpha$-arc is perturbed or not. By definition, the
  $n_{\x^{1,2}}$ and $n_{\x^{n,1}}$ terms and the
  $\bdy^{\alphas^i}\cdot\bdy^{\alphas^j}$-terms for the un-perturbed
  curve are the average of the terms for the two directions of
  perturbing (since both can be viewed as averages over pushing off
  the $\alpha$-curve in four directions). So, since the formula holds
  for either direction of perturbation, it also holds for the average
  of the two directions, i.e., for not perturbing at all.
  
  For the un-perturbed $\alpha$-arcs, the big sum in
  Formula~\eqref{eq:Sarkar-formula} contributes
  \[
    -\frac{1}{4}|\vec{\rho}| +\sum_{i<j}L(\rho^i,\rho^j)=\frac{1}{2}\sum_i\iota(\rho^i)+\sum_{i<j}L(\rho^i,\rho^j);
  \]
  see Figure~\ref{fig:IntNum}.

  \begin{figure}
    \centering
    \includegraphics[scale=.83333]{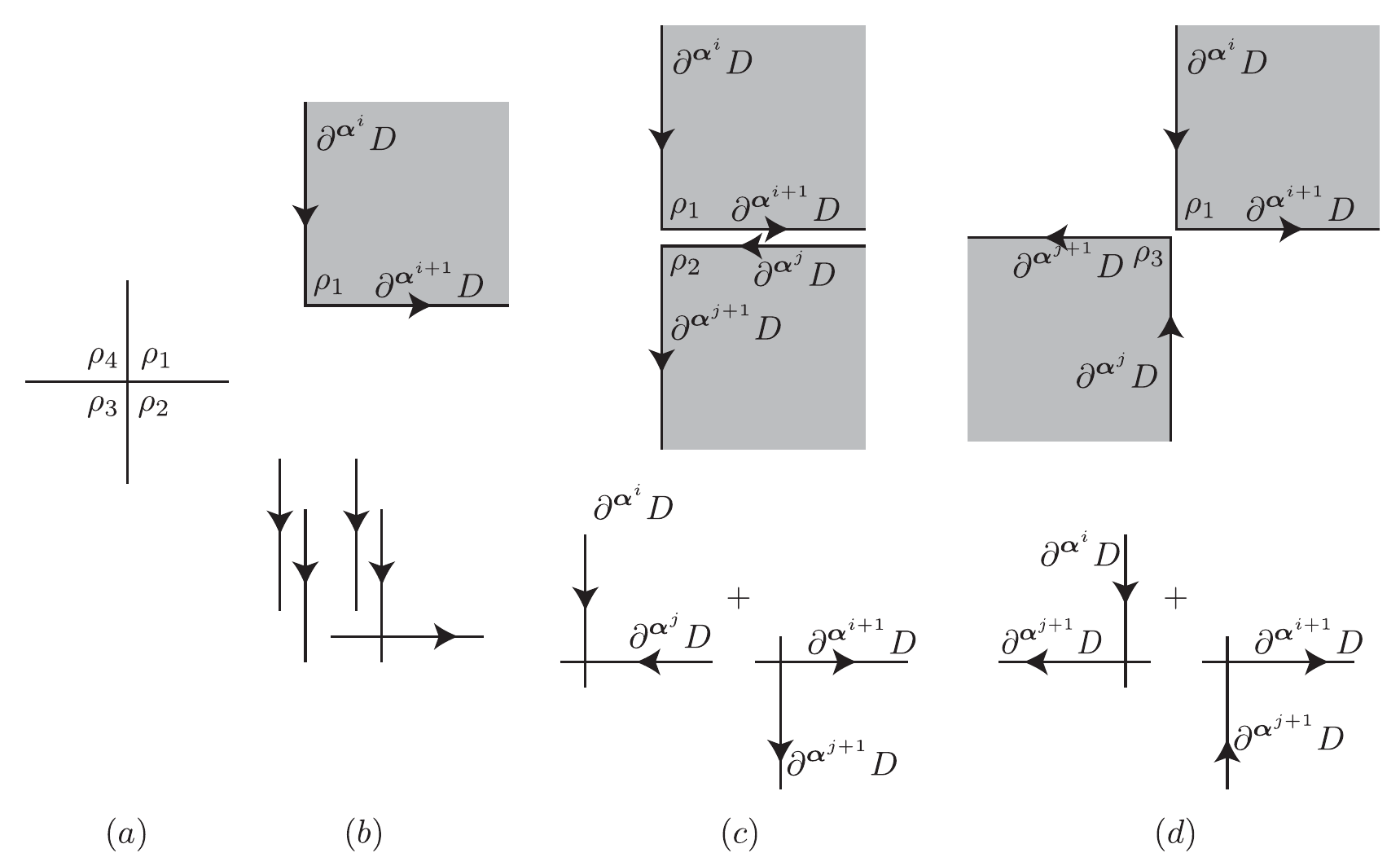}
    \caption[Jittered intersection numbers and
    chords]{\textbf{Jittered intersection numbers and chords.} (a) The
      chords around the point $p=\alpha_1^a\cap \alpha_2^a$. (b) The
      contribution to the jittered intersection number
      $\bdy^{\alphas^i}D\cdot \bdy^{\alphas^{i+1}}D$ coming from the
      arcs into and out of $\rho_1$ is $1/4$. (c) The contribution
      from a chord $\rho_1$ followed later by a chord $\rho_2$ is
      $-1/2$, while the contribution from $\rho_2$ followed later by
      $\rho_1$ is $+1/2$. (d) The contribution from $\rho_1$ followed
      later by $\rho_3$ is $0$, since in this case the two nonzero
      terms cancel. The other combinations of length-1 chords are
      similar.}
    \label{fig:IntNum}
  \end{figure}
  
  Finally, for Formula~\eqref{eq:Sarkar-formula}, each Reeb chord
  corresponds to a $\pi/2$-corner, so contributes $-1/4$ to the Euler
  measure. For Formula~\eqref{eq:emb-ind}, each Reeb chord is viewed
  as having two $\pi/2$-corners, so contributes $-1/2$ to the Euler
  measure. Also, each orbit contributes $0$ to
  Formula~\eqref{eq:Sarkar-formula} but  $-1$ to Formula~\eqref{eq:emb-ind}. So, the Euler measure term in
  Formula~\eqref{eq:Sarkar-formula} is actually
  \[
    e(B)+\frac{1}{4}|\vec{\rho}|+w.
  \]

  Adding up these contributions, Formula~\eqref{eq:Sarkar-formula}
  gives
  \begin{align*}
    \ind(B,\vec{\rho},w,0)&=e(B)+\frac{1}{4}|\vec{\rho}|+w+n_\x(B)+n_\y(B)-\frac{1}{4}|\vec{\rho}|
                            +\sum_{i<j}L(\rho^i,\rho^j)\\
    &\qquad\qquad\qquad\qquad+\frac{(g-1)|\vec{\rho}|}{2}-\frac{g|\vec{\rho}|}{2}+|\vec{\rho}|\\
                          &=e(B)+w+n_\x(B)+n_\y(B)+\sum_{i<j}L(\rho^i,\rho^j)+\frac{|\rho|}{2}\\
                          &=e(B)+2+n_\x(B)+n_\y(B)+|\vec{\rho}|+\sum_i\iota(\rho^i)+\sum_{i<j}L(\rho^i,\rho^j)\\
                          &=e(B)+n_\x(B)+n_\y(B)+|\vec{\rho}|+\iota(\vec{\rho})+w,
  \end{align*}
  as desired.
\end{proof}

\begin{proof}[Proof of Proposition~\ref{prop:emb-ind-add} and Lemma~\ref{lem:bdy-degen-ind-add}]
  This is immediate from the fact that these index formulas agree with
  the Maslov index in the symmetric product and additivity of the
  Maslov index.
\end{proof}

\begin{remark}
  In the case of $\HFa$, instead of reducing to Sarkar's formula we
  proved the embedded index formula (the analogue of
  Proposition~\ref{prop:emb-ind}) by studying how a holomorphic curve
  intersects its translates (similar to the computation
  in the closed case~\cite{Lipshitz06:CylindricalHF}). That argument
  requires the
  almost complex structure be a product near the boundary of the
  strip, to avoid having intersection points disappear on the
  boundary, so does not apply immediately to the kinds of almost
  complex structures considered here. On the other hand, the argument
  relying on Sarkar's formula would not apply directly to the higher
  genus boundary case of bordered Floer theory, since there is not a
  direct identification with Heegaard multi-diagrams in that case.
\end{remark}

\subsection{Pinched almost complex structures}\label{sec:pinched}
So far, we have not explained the role of the pinching function
(Definition~\ref{def:pinching-fn}). In this section,
we will see one reason it is required. (The other is in Section~\ref{sec:algebra}.)

\begin{lemma}\label{lem:bdy-deg-ind-2}
  Given a sequence of basic algebra elements $\vec{a}$ and
  non-negative integers $w$ and $r$, for any $\epsilon$ sufficiently
  small (depending on $\vec{a}$, $w$, and the bordered Heegaard diagram), if $j$
  is an $\epsilon$-pinched complex structure for which
  $\cN(\x,\vec{a};w,r)$ contains a non-constant curve, then
  \[
    \indsq(\vec{a},w,r)\geq 2.
  \]
  Further, if $\indsq(\vec{a},w,r)=2$ then no pair of consecutive
  elements of $\vec{a}$ are multipliable, i.e., have $a_ia_{i+1}\neq
  0$. In particular, no element of $\vec{a}$ has the form $U^n\iota_j$.
\end{lemma}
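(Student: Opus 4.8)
The plan is to deduce both assertions from a neck-stretching (Gromov--SFT) compactness argument as $\epsilon\to 0$, together with the embedded index formula of Proposition~\ref{prop:emb-ind-bdy-degen}, its additivity (Lemma~\ref{lem:bdy-degen-ind-add}), and the observation that in the formula $\indsq(\vec a,w,r)=k+|\vec a|+\epsilon+\iota(\vec\rho)+w-2r$ the dependence on the genus $g$ has cancelled, so the non-trivial part of the estimate is a genus-zero computation localized on the arc side of the pinched surface.

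For the first assertion I would argue by contradiction. Suppose there are $\epsilon_n\to 0$, $\epsilon_n$-pinched complex structures $j_n$, and points $\x_n$ so that, computed with $j_n$, the space $\cN(\x_n;\vec a;w,r)$ contains a non-constant curve $u_n$ while $\indsq(\vec a,w,r)\le 1$. Since $\vec a$, $w$ and $r$ are fixed, the domain multiplicity $k$ of Equation~\eqref{eq:domain-mult} is fixed, $\pi_\Sigma\circ u_n$ represents $k[\Sigma]$, and $k\ge 1$ because $u_n$ is non-constant; in particular the $u_n$ have uniformly bounded energy and finitely many topological source types. By the first clause of Definition~\ref{def:ePinched} the neck along the separating curve $C$ is stretched as $n\to\infty$, while the second clause keeps $j_n$, restricted to the two components $\Sigma_a\supset\alpha_1^a\cup\alpha_2^a$ and $\Sigma_c\supset\alpha_1^c\cup\cdots\cup\alpha_{g-1}^c$ of $\Sigma\setminus C$, in a compact family. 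Passing to a subsequence so that $\x_n$ and these restricted structures converge, Gromov--SFT compactness produces a limiting holomorphic building $\mathbf u$: one group of components (``arc-side'') in $\widehat{\Sigma_a}\times\HHH$ carrying the chord asymptotics $\vec\rho$ at the puncture $p$ of $\Sigma$, the point of $\x$ on $\alpha_1^a\cup\alpha_2^a$, and the asymptotic condition at $\infty$; a group in the neck symplectization $\RR\times S^1\times\HHH$; and a group (``circle-side'') in $\widehat{\Sigma_c}\times\HHH$ with boundary on the $\alpha_i^c$ and carrying the remaining points of $\x$.

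Now the index is additive over $\mathbf u$: $\indsq(\vec a,w,r)$ is the sum of the indices of the components, each taken modulo its automorphism group, and for $j_n$ generic every component has non-negative index (trivial cylinders contributing $0$). The key point is that the arc-side part is itself a boundary degeneration for $\Sigma_a$ --- a genus-zero surface with the two arcs which, by the $g$-independence noted above, is exactly the genus-one picture --- and carries all of the Reeb-chord data; the genus-one analysis (Section~\ref{sec:GenusOne}, Proposition~\ref{prop:IdentifyAlgebraGenusOne}) shows that a non-constant such curve has index at least $2$. Since the domain is $k[\Sigma]$ with $k\ge 1$, the arc-side part is non-constant, so $\indsq(\vec a,w,r)=\ind(\text{arc-side})+(\text{non-negative rest})\ge 2$, the desired contradiction. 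I expect the main obstacle to be exactly this step: organizing the SFT compactness so the limit decomposes as described (ruling out stray configurations, and using the $\delta_0$-clause to keep the arc side from itself degenerating), and justifying the reduction of the arc-side index bound to the genus-one count; the pinching function, and the freedom to shrink $\epsilon$ with a bound depending on $\HD$, are precisely what make this neck-stretching available.

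For the second assertion I would use the first. If $\indsq(\vec a,w,r)=2$ and $a_ia_{i+1}\neq 0$, set $\vec a'=(a_1,\dots,a_{i-1},a_ia_{i+1},a_{i+2},\dots,a_n)$. Because the Maslov component $\iota$ of a composable chord sequence is the Maslov part of the associative product of the gradings $\grb$ (Lemma~\ref{lem:iota-is-gr}), replacing $\vec a$ by $\vec a'$ leaves $\iota(\vec\rho)$, $k$, $w$, $r$ and the asymptotic at $\infty$ unchanged and lowers $|\vec a|$ by one, so $\indsq(\vec a',w,r)=1$. Taking a non-constant $u\in\cN(\x;\vec a;w,r)$ and deforming it (for generic $j$) within its moduli space so that the $\HHH$-heights of the $i\th$ and $(i+1)\st$ algebra punctures collide, Gromov compactness yields a split curve --- or a pseudo-split curve, if $a_{i+1}=U^n\iota_j$ --- at east infinity together with a main component in $\cN(\x';\vec a';w,r)$, which still represents $k[\Sigma]$ with $k\ge 1$ and hence is non-constant; this contradicts the first assertion for $\epsilon$ small. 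Thus no consecutive pair of $\vec a$ is multipliable. In particular no $a_i$ equals $U^n\iota_j$: if $|\vec a|\ge 2$ a neighbour of such a puncture has adjoining idempotent $\iota_j$ (the sequence is composable by strong boundary monotonicity), giving a multipliable pair; and if $|\vec a|\le 1$ the formula of Proposition~\ref{prop:emb-ind-bdy-degen}, together with equidistribution of the domain $k[\Sigma]$ and $k\ge 1$, forces $\indsq\ge 3$.
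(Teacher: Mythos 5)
Your overall strategy for the first assertion (pinch the neck, localize to the arc side, and use the $g$-independence of the formula for $\indsq$) is the same as the paper's, but two of your key steps do not go through as written. First, the heart of the lemma is the arc-side (genus-one) index bound itself, and you never prove it: you cite the genus-one analysis of Proposition~\ref{prop:IdentifyAlgebraGenusOne}, but that result runs the implication in the other direction (it derives $\br(u)=0$ \emph{from} $\ind(u)=2$); it does not assert that every non-constant boundary degeneration in genus one has index at least $2$. The missing content is the Riemann--Hurwitz computation: since $\pi_\HHH\circ u$ is a $1$-fold branched cover the source has genus $0$, so $\chi(S)=1-w+r$, while $\chi(S)=-k+|\vec{\rho}|/2-\br$, and combining these with Proposition~\ref{prop:index-source-bdy-degen} yields $\indsq(\vec{\rho},w,r)=2+2\br\geq 2$. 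Second, your reduction via an SFT building with ``each component of non-negative index'' needs genericity of the almost complex structures $j_n$, which you do not have: the lemma must hold for \emph{arbitrary} $\epsilon$-pinched $j$, and the $j_n$ are given by the contradiction hypothesis, not chosen. The paper avoids this entirely: because genus-one curves are determined by their combinatorics and branch points, non-emptiness of the compactified moduli space implies non-emptiness of the uncompactified one, and the index bound above is purely topological, requiring no transversality.

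Your argument for the second assertion has a more decisive problem. You propose to deform a curve in an index-$2$ moduli space until the $i$th and $(i+1)$st punctures collide, but $\cN(\x;\vec{a};w,r)$ has dimension $\indsq(\vec{a},w,r)-2=0$, so there is no one-parameter family in which such a collision can occur; colliding punctures is a codimension-one phenomenon visible only in positive-dimensional moduli spaces. (Your index bookkeeping via Lemma~\ref{lem:iota-is-gr} is correct, but the degeneration producing the putative index-$1$ curve does not exist.) The paper's argument is local and much shorter: if $\rho^i$ and $\rho^{i+1}$ are multipliable, the boundary arc of the source between the two punctures must turn around at the shared endpoint, forcing a boundary branch point of $\pi_\Sigma\circ u$, whence $\br\geq 1/2$ and $\indsq=2+2\br\geq 3$. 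Likewise, the exclusion of elements $U^n\iota_j$ falls out of the equality case $|\vec{a}|=|\vec{\rho}|$ in $\indsq(\vec{a},w,r)\geq\indsq(\vec{\rho},w,r)$, rather than from your case analysis on $|\vec{a}|$.
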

\begin{proof}
  We start by reducing to the genus $1$ case, using the
  $\epsilon$-pinched condition. Fix a sequence $\epsilon_i\to 0$ and
  suppose the moduli spaces $\cN(\x;\vec{a};w,r)$ are non-empty
  with respect to a sequence of $\epsilon_i$-pinched almost complex
  structures. Choose a convergent subsequence of these complex
  structures.
  In the limit, we obtain a complex structure on the genus-1 surface
  containing $\alpha_1^a\cup\alpha_2^a$ so that the compactified
  moduli space $\ocN(\x;\vec{a};w,r)$ is non-empty for this complex
  structure. This moduli space consists of trees of holomorphic disks
  mapped to a genus-1 surface (because the map to $\HHH$ is a 1-fold
  branched cover); such holomorphic maps are determined by their
  combinatorics and branched points. In particular, it is easy to see
  that, in the genus $1$ case, whenever the compactified moduli space
  $\ocN(\x;\vec{a};w,r)$ is non-empty, the uncompactified space
  $\cN(\x;\vec{a};w,r)$ is also non-empty.
  
  So, it suffices to prove the result in the case that $g=1$. Fix a
  boundary degeneration $u\co S\to \Sigma\times\HHH$. 
  As observed above, since $\pi_\HHH\circ u$ is a 1-fold branched
  cover of $\HHH$, $S$ has genus $0$. So, $\chi(S)=1-w+r$. On the
  other hand, $\pi_\Sigma\circ u$ is a branched cover of degree
  $k>0$. So, if
  $\br$ denotes the ramification number of $\pi_\Sigma\circ u$ and the
  homology class of $\pi_\Sigma\circ u$ is $k[\Sigma]$ then by the
  Riemann-Hurwitz formula,
  $\chi(S)=-k+|\vec{\rho}|/2-\br$, where $\rho$ is the associated
  chord sequence. (Specifically, the Euler measures
  satisfy $e(S)=ke(\Sigma)-\br=-k-\br$ and the Euler characteristic of
  $S$ is $\chi(S)=e(S)+|\vec{\rho}|/2$: since when computing $e$ each
  puncture corresponds to an arc with two corners at its end, each
  puncture decreases $e(S)$ by $2/4$.) So,
  \[
    w+|\vec{\rho}|/2-r=1+k+\br.
  \]
  Hence, by Proposition~\ref{prop:index-source-bdy-degen}, the index is
  given by
  \[
    \indsq(\vec{a},w,r)\geq \indsq(\vec{\rho},w,r)=1-1+w-r-2k+|\vec{\rho}|+w-r=-2k+|\vec{\rho}|+2w-2r=2+2\br.
  \]
  Thus, the index is at least $2$. Further, if the index is equal to
  $2$ then the first inequality is an equality so
  $|\vec{a}|=|\vec{\rho}|$, hence no element of
  $\vec{a}$ has the form $U^n\iota_j$.  Finally, to see that no pair
  of consecutive chords in $\vec{\rho}$ are multipliable, note that if
  $\rho^i$ and $\rho^{i+1}$ are multipliable then there is a boundary
  branch point of $\pi_\Sigma\circ u$ between the corresponding
  punctures, so $\br\geq 1/2$ and hence the index is at least $3$.
\end{proof}

Since for any $C$ there are finitely many pairs of a sequence of basic
algebra elements $\vec{a}$ and non-negative integer $w$ so that the
energy satisfies $E(\vec{a},w)\leq C$, by Lemma~\ref{lem:bdy-deg-ind-2},
for each energy $C$ there is an $\eta(C)>0$ so that if $J$ is
$\eta(C)$-pinched then the conclusion of Lemma~\ref{lem:bdy-deg-ind-2}
hold for any $(\vec{a},w,r)$ with energy $E(\vec{a},w)\leq C$. This is
the condition we require for our pinching function:

\begin{definition}\label{def:sufficient-pinching}
  A pinching function $\eta$ is \emph{sufficient} if for each positive
  integer $C$, for every $(\vec{a},w)$ with $E(\vec{a},w)\leq C$,
  Lemma~\ref{lem:bdy-deg-ind-2} holds for $(\vec{a},w)$ with $\epsilon=\eta(C)$.
\end{definition}

\begin{corollary}\label{cor:pinch-exist}
  There exist sufficient pinching functions.
\end{corollary}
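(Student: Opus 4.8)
The plan is to package the paragraph immediately preceding Definition~\ref{def:sufficient-pinching} into an honest construction. Fix a positive integer $C$ and let $S_C$ be the set of pairs $(\vec{a},w)$, with $\vec{a}$ a sequence of basic algebra elements and $w\in\ZZ_{\geq 0}$, such that $E(\vec{a},w)\leq C$. First I would check that $S_C$ is finite and non-empty: the energy of a basic algebra element is at least $1$ (since $E(U^m\rho)=4m+|\rho|\geq 1$ and $E(U^m\iota_j)=4m\geq 8$), so $E(\vec{a},w)=4w+\sum E(a_j)\leq C$ forces $w\leq C/4$ and $|\vec{a}|\leq C$; moreover each $a_j$ has energy at most $C$, and there are only finitely many Reeb chords of bounded length and finitely many bounded powers of $U$, so there are finitely many choices for each $a_j$, hence finitely many pairs in $S_C$; and $S_C\neq\emptyset$ for $C\geq 1$ since, e.g., $((\rho_1),0)\in S_C$.

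Next I would invoke Lemma~\ref{lem:bdy-deg-ind-2}: for each $(\vec{a},w)\in S_C$ it furnishes a threshold $\epsilon_0(\vec{a},w)>0$, depending only on $\vec{a}$, $w$, and the fixed bordered Heegaard diagram (but \emph{not} on $r$), such that for every $\epsilon\leq\epsilon_0(\vec{a},w)$, every non-negative integer $r$, and every $\epsilon$-pinched complex structure $j$ for which $\cN(\x;\vec{a};w,r)$ contains a non-constant curve, one has $\indsq(\vec{a},w,r)\geq 2$, with no two consecutive elements of $\vec{a}$ multipliable when equality holds. I would then set
\[
  \eta(C)=\min\bigl\{\,\epsilon_0(\vec{a},w)\ :\ (\vec{a},w)\in S_C\,\bigr\},
\]
which is a positive real number, being a minimum over a finite non-empty set of positive numbers.

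To see that $\eta$ is sufficient in the sense of Definition~\ref{def:sufficient-pinching}, fix $C$ and $(\vec{a},w)$ with $E(\vec{a},w)\leq C$; then $(\vec{a},w)\in S_C$, so $\eta(C)\leq\epsilon_0(\vec{a},w)$, and Lemma~\ref{lem:bdy-deg-ind-2} applies with $\epsilon=\eta(C)$ (for all $r$), which is exactly the required conclusion. Finally $\eta$ is a pinching function: it takes values in $\RR_{>0}$ by construction, and it is monotone-decreasing because $S_C\subseteq S_{C+1}$, so the minimum defining $\eta(C+1)$ is over a larger set and hence $\eta(C+1)\leq\eta(C)$.

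There is no substantive obstacle here; the argument is entirely bookkeeping on top of Lemma~\ref{lem:bdy-deg-ind-2}. The only point that must be handled with care is that the threshold produced by that lemma is allowed to depend on $(\vec{a},w)$ but must be uniform in $r$ — which is why it matters that the compactness/genus-reduction argument in its proof yields, via Riemann--Hurwitz, the bound $\indsq(\vec{a},w,r)\geq 2+2\br\geq 2$ with no constraint on $r$ — together with the observation that only finitely many pairs $(\vec{a},w)$ are relevant for each energy bound $C$.
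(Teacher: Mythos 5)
Your proposal is correct and is exactly the argument the paper intends: the paper's proof is a one-line "immediate from Lemma~\ref{lem:bdy-deg-ind-2}," relying on the paragraph just before Definition~\ref{def:sufficient-pinching} which records the same finiteness-of-bounded-energy-pairs observation and the resulting minimum of thresholds. You have simply spelled out the bookkeeping (finiteness, positivity, monotonicity, uniformity in $r$) that the paper leaves implicit.
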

\begin{proof}
  This is immediate from Lemma~\ref{lem:bdy-deg-ind-2}.
\end{proof}

\subsection{Transversality}\label{sec:transversality}
We recall two results of Ozsv\'ath-Szab\'o about the non-existence of
sphere and disk bubbles. For both spheres in $\Sym^g(\Sigma)$ and
disks in $\Sym^g(\Sigma)$ with boundary on some $T_{\alpha,i}$ or
$T_\beta$, the homology class is a multiple of $\Sigma$. Positivity of
domains implies that the moduli spaces in homology class $n[\Sigma]$
are empty unless $n>0$. The relevance of this to the moduli spaces
under consideration is via the tautological correspondence from
Section~\ref{sec:tautological}.

\begin{lemma}\cite[Lemma 3.13]{OS04:HolomorphicDisks}\label{lem:OS-no-spheres}
  Given any complex structure $i$ on $\Sigma$, the set of points in
  $\Sym^g(\Sigma)$ lying on holomorphic spheres has codimension $2$.
\end{lemma}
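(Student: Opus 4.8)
The plan is to reduce the statement to the classical results of Ozsv\'ath--Szab\'o on the structure of holomorphic spheres in symmetric products, exactly as cited. First I would recall the description of holomorphic spheres in $\Sym^g(\Sigma)$: since $H_2(\Sym^g(\Sigma);\ZZ)$ is generated by the class of $\{x_1,\dots,x_{g-1}\}\times\Sigma$ together with classes coming from $H_2(\Sigma)=0$ (as $\Sigma$ is a surface with boundary, or the relevant class after capping off), any nonconstant holomorphic sphere represents a positive multiple $n[\Sigma]$, $n>0$. The locus swept out by such spheres is a complex subvariety of $\Sym^g(\Sigma)$; one checks that it is proper, hence of complex codimension at least $1$, and a dimension count (the image of the evaluation map of the moduli space of spheres in class $n[\Sigma]$, modulo the $3$-dimensional automorphism group of $\PP^1$) shows the codimension is in fact at least $1$ in the complex sense, i.e. real codimension $2$.

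The key steps, in order, are: (1) identify the homology class of any holomorphic sphere in $\Sym^g(\Sigma)$ as $n[\Sigma]$ for some $n\geq 1$, using that all spherical classes in the symmetric product are pulled back from the obvious $\Sigma$-factor; (2) compute the expected dimension of the moduli space $\cM(n)$ of parametrized holomorphic spheres in class $n[\Sigma]$, via the index formula in the symmetric product, and subtract $\dim\PSL(2,\CC)=6$ to get the dimension of the unparametrized moduli space; (3) bound the dimension of the image of the evaluation map $\ev\colon \cM(n)/\PSL(2,\CC)\to\Sym^g(\Sigma)$ and conclude that the union over all $n$ of these images has codimension at least $2$; (4) invoke properness/Gromov compactness to see that this union is closed, so its complement is the claimed open dense set. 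Since this is precisely \cite[Lemma~3.13]{OS04:HolomorphicDisks}, the ``proof'' is really just the observation that nothing in our setup has changed: our $\Sigma$ is a surface with one puncture, but after capping off (or equivalently working with $\overline\Sigma$ as in the tautological correspondence of Section~\ref{sec:tautological}) the statement is identical, and positivity of the class $n[\Sigma]$ with $n>0$ is what forces emptiness when $n\leq 0$.

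The main obstacle is mostly bookkeeping: making sure that the homology computation for spheres is unaffected by the puncture of $\Sigma$, and that ``codimension $2$'' is interpreted correctly when the complex structure $i$ is non-generic (so the moduli spaces of spheres need not be cut out transversely). The resolution, as in Ozsv\'ath--Szab\'o, is that we do not need transversality: the set of points on holomorphic spheres is a countable union of images of holomorphic maps from lower-dimensional parameter spaces, each of which is a proper complex-analytic subset once we fix the class $n[\Sigma]$, and a complex-analytic subset of positive codimension automatically has real codimension at least $2$. So I would simply cite \cite[Lemma~3.13]{OS04:HolomorphicDisks} directly, perhaps with a one-line remark that the presence of the puncture and the cylindrical end does not affect the argument because the relevant classes are supported away from the puncture.

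\begin{proof}
  This is~\cite[Lemma 3.13]{OS04:HolomorphicDisks}. As explained
  there, any holomorphic sphere in $\Sym^g(\Sigma)$ represents a class
  of the form $n[\Sigma]$ with $n>0$; for fixed $n$, the space of
  such spheres, modulo reparametrization, maps by evaluation to a
  complex-analytic subset of $\Sym^g(\Sigma)$ of positive complex
  codimension, hence of real codimension at least $2$. Taking the
  union over $n$ (a countable union of such sets, which is closed by
  Gromov compactness) gives the result. The puncture of $\Sigma$ and
  the cylindrical end attached to $\bdy\Sigma$ do not affect this
  argument, since the classes $n[\Sigma]$ are represented away from
  the puncture (cf.\ the tautological correspondence of
  Section~\ref{sec:tautological}, working with $\overline{\Sigma}$).
\end{proof}
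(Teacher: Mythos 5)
The paper gives no proof of this lemma at all---it is stated purely as a citation of \cite[Lemma 3.13]{OS04:HolomorphicDisks}---and your proposal likewise ultimately just invokes that citation, so the two agree. (Your reconstructed sketch of the internal argument is not quite the one Ozsv\'ath--Szab\'o actually use: they argue via the Abel--Jacobi map to the Jacobian, on which holomorphic spheres are necessarily constant, so that spheres are confined to the projective-space fibers over the special divisors, rather than via an evaluation-map dimension count for moduli of spheres; but since both you and the paper simply defer to the citation, this difference is immaterial.)
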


\begin{corollary}\label{cor:no-spheres}
  For a generic choice of complex structure $i$ on $\Sigma$ and any
  $\RR$-invariant admissible almost complex structure $J$ sufficiently close to
  $i\times j_\bD$, there are no holomorphic spheres through points in
  $(T_{\alpha,1}\cup T_{\alpha,2})\cap T_\beta$.
\end{corollary}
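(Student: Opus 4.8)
The plan is to pass, via the tautological correspondence of Section~\ref{sec:tautological}, to the symmetric product and there combine Lemma~\ref{lem:OS-no-spheres} with a parametric transversality argument and Gromov compactness. First I would record that, since the bordered Heegaard diagram fixed throughout this section satisfies $\alphas\pitchfork\betas$, the intersection $(T_{\alpha,1}\cup T_{\alpha,2})\cap T_\beta$ inside $\Sym^g(\overline{\Sigma})$ is a finite set of points $p_1,\dots,p_N$, each a $g$-tuple of transverse $\alpha$--$\beta$ intersection points and therefore independent of the conformal structure on $\overline{\Sigma}$. As recalled just before Lemma~\ref{lem:OS-no-spheres}, every holomorphic sphere in $\Sym^g(\overline{\Sigma})$ represents a class $n[\Sigma]$ with $n>0$, so only countably many homology classes occur.

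Next I would show that for a generic complex structure $i$ on $\overline{\Sigma}$ no $\Sym^g(i)$-holomorphic sphere meets any $p_k$. For each class $A=n[\Sigma]$ one sets up the universal moduli space of pairs $(i,u)$, with $u$ a simple $i$-holomorphic sphere in class $A$ carrying one marked point, together with its evaluation map to $\Sym^g(\overline{\Sigma})$; Lemma~\ref{lem:OS-no-spheres} says exactly that for every $i$ the image of this evaluation has codimension at least $2$. Viewing this over the space of complex structures, the locus of triples for which the marked point is sent to $p_k$ then projects to the space of complex structures by a map of negative expected dimension, so Sard--Smale shows that the set of $i$ for which some simple sphere in class $A$ meets $p_k$ is meagre; since an arbitrary holomorphic sphere has the same image as a simple one, taking the countable union over $A$ and the finite union over the $p_k$ produces a generic $i$ with no holomorphic sphere through $(T_{\alpha,1}\cup T_{\alpha,2})\cap T_\beta$. (Alternatively, to sidestep parametric transversality in $\Sym^g(\overline{\Sigma})$, one can keep $i$ fixed and make a generic isotopy of the $\beta$-circles---a Heegaard move---so that the finite set of points moves and automatically avoids the fixed, codimension-$2$ sphere locus of Lemma~\ref{lem:OS-no-spheres}.)

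Finally I would transfer this to nearby $J$. By the tautological correspondence a $J$-holomorphic sphere, for $J$ an $\RR$-invariant admissible almost complex structure on $\Sigma\times[0,1]\times\RR$ close to $i\times j_\bD$, corresponds to a holomorphic sphere in $\Sym^g(\overline{\Sigma})$ for an almost complex structure close to $\Sym^g(i)$ (being compact, such a sphere lies where the correspondence applies). If the conclusion failed there would be $J_m\to i\times j_\bD$ carrying $J_m$-holomorphic spheres through some fixed $p_k$; once the energy is bounded---which it is in every setting where the corollary is applied, since sphere bubbles arise as limits inside moduli spaces $\cM^B(\x,\y;\vec{a};w)$ of a fixed homology class, hence of bounded energy, so that only finitely many classes $n[\Sigma]$ occur---Gromov compactness would produce a $\Sym^g(i)$-holomorphic stable sphere, and hence a $\Sym^g(i)$-holomorphic sphere, through $p_k$, contradicting the previous step. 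I expect the main obstacle to be exactly this interplay: a uniform meaning for ``sufficiently close'' requires an a priori energy bound so that Gromov compactness applies to the finitely many relevant sphere classes, and the transversality needed to produce the generic $i$ is genuinely a statement about $\Sym^g(\overline{\Sigma})$---where one varies only the conformal structure downstairs, not the almost complex structure on the symmetric product---which is precisely the content that Lemma~\ref{lem:OS-no-spheres} packages.
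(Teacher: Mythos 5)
Your argument is essentially the paper's: the proof given there is a single sentence declaring the corollary immediate from Lemma~\ref{lem:OS-no-spheres} and Gromov compactness, and your three steps are exactly the natural unpacking of that sentence. The only place your elaboration is on thin ice is the Sard--Smale step (the family $\Sym^g(i)$ is too restricted a family of almost complex structures on the symmetric product for standard parametric transversality), but your parenthetical alternative---moving the finitely many intersection points off the fixed codimension-two sphere locus by a generic isotopy---is the clean way to obtain the genericity, and your observation that Gromov compactness requires the energy bound supplied by the fixed homology class in each application is a correct reading of how the corollary is actually used.
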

\begin{proof}
  This is immediate from the previous lemma and Gromov compactness.
\end{proof}

\begin{lemma}\cite[Theorem 3.15]{OS04:HolomorphicDisks}\label{lem:OS-no-disks}
  If the moduli space of disks with boundary on $T_{\beta}$
  (respectively $T_{\alpha,1}$, $T_{\alpha,2}$) through a given point
  $p\in T_\beta$ (respectively $p\in T_{\alpha,1}$,
  $p\in T_{\alpha,2}$) is transversely cut out, then the count of
  Maslov index $2$ disks through $p$ vanishes.
\end{lemma}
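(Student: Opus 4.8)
This is exactly \cite[Theorem~3.15]{OS04:HolomorphicDisks} (just as Lemma~\ref{lem:OS-no-spheres} is \cite[Lemma~3.13]{OS04:HolomorphicDisks}); I would sketch the argument, which lives entirely in $\Sym^g(\Sigma)$ via the tautological correspondence of Section~\ref{sec:tautological}. The plan is first to pin down the homology class of the disks in question. A holomorphic disk $u\co(D^2,\bdy D^2)\to(\Sym^g(\Sigma),T_\beta)$ has domain a $2$-chain $B$ on $\Sigma$ with $\bdy B$ supported on $\betas$; since $\Sigma\setminus\betas$ is connected, $B=n[\Sigma]$ where $n\geq 0$ is the local multiplicity, and positivity of holomorphic domains forces $n\geq 1$ unless $u$ is constant. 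The Maslov index of the disk class $n[\Sigma]$ is $2n$, so a nonconstant Maslov index $2$ disk with boundary on $T_\beta$ is precisely one in class $[\Sigma]$; the same discussion applies with $T_\beta$ replaced by $T_{\alpha,1}$ or $T_{\alpha,2}$.

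The second step is to show that, once transversality holds, the moduli space $\mathcal{N}_p$ of Maslov index $2$ disks through a generic point $p$ is a compact $0$-manifold, so that its count mod $2$ is well-defined. Compactness uses the minimality of $[\Sigma]$ among positive classes: a sphere bubble or disk bubble splitting off would carry a positive multiple of $[\Sigma]$ and leave a class of multiplicity $\leq 0$ on the principal component (impossible by positivity and stability), and $[\Sigma]$ does not decompose as a sum of two positive classes with boundary on $T_\beta$, so no breaking into two disks occurs either. Then $\#\mathcal{N}_p$ is independent of the generic point $p$ and of the generic $\RR$-invariant almost complex structure $J$ (as in Corollary~\ref{cor:no-spheres}) by the usual cobordism argument.

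The last step is to evaluate this invariant count in a convenient model. I would degenerate $\Sigma$ along a long neck, pinching $g-1$ separating circles so as to present $\Sigma$ as $g$ once-punctured tori joined by necks, each carrying a single $\beta$-circle (and its partner $\alpha$-curve) in minimal position, and track how the class $[\Sigma]$ and its Maslov index $2$ representatives through $p$ degenerate; this reduces $\#\mathcal{N}_p$ to a product of genus $1$ counts, and in genus $1$ the analogous count vanishes because there is no holomorphic disk in the torus representing its fundamental class with boundary on an essential simple closed curve (readily checked by hand via a degree obstruction). Alternatively one computes directly in a standard genus $g$ Heegaard diagram for $\#^g(S^1\times S^2)$. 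The argument for $T_{\alpha,1}$ and $T_{\alpha,2}$ is identical. The main obstacle is this model computation together with the bubbling analysis of the second step; note that in $\CFa$, where one counts only domains with $n_z=0$, the statement is vacuous since $n_z([\Sigma])\geq 1$, but in the present setting curves are allowed to cross $z$, so the vanishing genuinely has to be proved rather than assumed.
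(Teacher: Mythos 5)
The first thing to say is that the paper offers no proof of this lemma at all: it is stated as a citation to \cite[Theorem 3.15]{OS04:HolomorphicDisks} and used as a black box, so your opening observation that this is exactly Ozsv\'ath--Szab\'o's theorem is the entirety of what the paper relies on, and a bare citation would have matched it. Your first two steps are correct and standard: $\pi_2(\Sym^g(\Sigma),T_\beta)$ is generated by a class with domain $[\Sigma]$ and Maslov index $2$, positivity pins down the class of a nonconstant index-$2$ disk, and minimality of $[\Sigma]$ gives compactness and invariance of the count under generic changes of $J$ and of the marked point.

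The third step, which is where all the content of the theorem lives, has a genuine gap. The claim that pinching $g-1$ separating circles reduces $\#\mathcal{N}_p$ to a product of genus-$1$ counts cannot be correct as stated: a Maslov index $2$ disk on each of the $g$ genus-$1$ pieces would have total index $2g$, whereas the class $[\Sigma]$ has index $2$, so the Gromov limits of your degenerating family are not ``one genus-$1$ disk per piece.'' Since the domain $[\Sigma]$ has multiplicity $1$ at the pinched circles, any limit must pass through the nodes of the degenerate surface, and one would have to analyze disks in $\Sym^g$ of a nodal surface and prove a compactness theorem there before extracting any conclusion; the sketch does not engage with this. (Note that the paper's own pinching arguments, e.g.\ Lemma~\ref{lem:bdy-deg-ind-2} and Proposition~\ref{prop:StabilizationInvariance}, are deliberately carried out in the cylindrical models $\Sigma\times\HHH$ and $\Sigma\times[0,1]\times\RR$, where pinching is a degeneration of the almost complex structure on a fixed target and SFT compactness applies.) The fallback of ``computing directly in a standard genus $g$ diagram for $\connectsum^g(S^1\times S^2)$'' does not help either: the count depends only on $(\Sigma,\betas,J)$, and any two genus-$g$ configurations of $\betas$ are diffeomorphic, so no choice of diagram makes the computation easier. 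Your genus-$1$ vanishing (via the Euler characteristic/degree obstruction for a degree-one disk onto the torus with boundary on an essential circle) is correct, but the reduction to it is the hard part and is not supplied; absent a correct model computation, the honest course is to do what the paper does and quote \cite[Theorem 3.15]{OS04:HolomorphicDisks}.
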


Next, we turn to the transversality we will need for boundary
degenerations, continuing with the discussion from Section~\ref{sec:bdy-degen}:
\begin{definition}
  \label{def:RegularJ}
  The complex structure $j$ is called \emph{regular for energy $E_0$ boundary
  degenerations} if for all $(\vec{a};w,r)$ with
  $\indsq(\vec{a};w,r)\leq 2$ and $E(\vec{a},w)\leq E_0$, the moduli space
  $\cN(*;\vec{a};w,r)$ is smoothly cut out, and every generator $\x$
  is a regular value of the evaluation map $\ev\co \cN(*;\vec{a};w,r)\to (\alpha_1^a\cup\alpha_2^a)\times
    \alpha^c_1\times\dots\times \alpha^c_{g-1}$.
\end{definition}
In particular, if $j$ is regular for energy $E_0$ boundary
degenerations then for generic $\x$ and $E(\vec{a},w)\leq E_0$, the space $\cN(\x;\vec{a};w)$ is a
smooth manifold of dimension $\indsq(\vec{a};w)-2$.

The proof of existence of regular $j$ is given in Theorem~\ref{thm:AlgOfSurface} below.

Next we discuss the dependence of the moduli space of boundary
degenerations based at $\x$ on the point $\x$:

\begin{lemma}\label{lem:bd-ev-proper}
  Suppose that $\indsq(\vec{a},w,r)=2$ and $j$ is regular for energy
  $E(\vec{a},w)$ boundary degenerations and is
  $\eta(E(\vec{a},w))$-pinched for some sufficient pinching
  function $\eta$. Then, the map
  \[
    \ev\co \cN(*;\vec{a};w,r)\to (\alpha_1^a\cup\alpha_2^a)\times
    \alpha^c_1\times\dots\times \alpha^c_{g-1}
  \]
  is proper.
\end{lemma}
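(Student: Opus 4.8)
The plan is to argue by contradiction, combining Gromov--SFT compactness with the index estimate of Section~\ref{sec:pinched}. Suppose $\ev$ were not proper: then there would be a compact set $K$ in the target and a sequence $u_n\in\cN(*;\vec{a};w,r)$ with $\ev(u_n)\to\x_\infty\in K$ admitting no subsequence convergent in $\cN(*;\vec{a};w,r)$. We may assume $\cN(*;\vec{a};w,r)\neq\emptyset$, so it contains a non-constant boundary degeneration; since $j$ is $\eta(E(\vec{a},w))$-pinched with $\eta$ sufficient, Lemma~\ref{lem:bdy-deg-ind-2} applies, and because $\indsq(\vec{a},w,r)=2$ it tells us that $r=0$, that no entry of $\vec{a}$ has the form $U^n\iota_j$, and that no two consecutive entries of $\vec{a}$ are multipliable.

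Since all the $u_n$ have the fixed energy $E(\vec{a},w)$, the next step is to pass to a subsequence converging to a broken holomorphic building $U$. I would first pin down the possible shape of $U$: a nonempty collection of boundary-degeneration components mapped to $\Sigma\times\HHH$ (possibly stacked into several levels under the translation/scaling action on $\HHH$), together with curves at east infinity (split, join, orbit, or pseudo-split curves, as in Section~\ref{sec:e-infty}), and possibly sphere or disk bubbles in $\Sym^g(\Sigma)$ attached via the tautological correspondence of Section~\ref{sec:tautological}. Sphere and disk bubbles are excluded by Lemmas~\ref{lem:OS-no-spheres} and~\ref{lem:OS-no-disks} and Corollary~\ref{cor:no-spheres}, using that $j$ is regular for energy $E(\vec{a},w)$ boundary degenerations, so the relevant disk moduli spaces are transversely cut out and carry no index-$2$ elements. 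The component $u_\infty$ carrying the puncture $q_\infty$---the one evaluated by $\ev$, which limits to $\x_\infty$---is a boundary degeneration of positive domain multiplicity, hence non-constant, so $\indsq(u_\infty)\geq 2$ again by Lemma~\ref{lem:bdy-deg-ind-2}.

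Then I would run the index/energy bookkeeping. Formal dimension is additive under the Gromov limit (Proposition~\ref{prop:emb-ind-add}, Lemma~\ref{lem:bdy-degen-ind-add}, and the standard indices of east-infinity curves computed as in Section~\ref{sec:ind}), with total $\indsq(\vec{a},w,0)=2$; since $\indsq(u_\infty)\geq 2$ the component $u_\infty$ consumes the entire budget, so there is no second boundary-degeneration level, every east-infinity curve must be rigid, and all remaining components are trivial. Finally I would eliminate the east-infinity curves entirely: a split curve at east infinity would exhibit two consecutive entries of $\vec{a}$ whose product is nonzero, and a pseudo-split curve a $U^n\iota_j$ entry, both excluded by Lemma~\ref{lem:bdy-deg-ind-2}; a join curve (which occurs only inside a composite boundary degeneration) or an escaping orbit would, by additivity of the index, leave $u_\infty$ with $\indsq(u_\infty)=1<2$, again contradicting Lemma~\ref{lem:bdy-deg-ind-2}; and a second boundary-degeneration level would force total index $\geq 4$. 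Hence $U=u_\infty$ up to trivial components, with asymptotic data exactly $(\vec{a},w,0)=(\vec{a},w,r)$, so $U\in\cN(*;\vec{a};w,r)$ and $u_n\to U$ there, contradicting the choice of the sequence; therefore $\ev^{-1}(K)$ is compact.

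The main obstacle will be the middle two steps taken together. Because the full compactness theorem is only proved later (Section~\ref{sec:compactness}), one must extract the shape of the limiting building $U$ from general compactness directly, and one must check that Lemma~\ref{lem:bdy-deg-ind-2}, together with the non-bubbling results, is strong enough to kill \emph{every} codimension-$\geq 1$ degeneration of $\cN(*;\vec{a};w,r)$---not merely those that change the index. The delicate point in the accounting is recognizing that each split, join, or orbit curve at east infinity, and each composite boundary degeneration, ``absorbs'' one unit of index from the remaining boundary-degeneration component, so that component is left with $\indsq<2$ and is ruled out by Lemma~\ref{lem:bdy-deg-ind-2}.
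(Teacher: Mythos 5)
Your proof is correct and follows essentially the same route as the paper's: extract a (possibly broken) limit via compactness of $\ModPol$ and SFT/Gromov compactness, then use Lemma~\ref{lem:bdy-deg-ind-2} together with additivity of the index (Lemma~\ref{lem:bdy-degen-ind-add}) to rule out every broken configuration. The paper's version is terser, packaging your case-by-case elimination into the single observation that, by the non-multipliability clause of Lemma~\ref{lem:bdy-deg-ind-2}, any curve at east infinity must connect at least two non-constant boundary-degeneration components, which immediately overruns the index budget of $2$.
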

\begin{proof}
  The proof is similar to, but easier than, the proof of
  Theorem~\ref{thm:master} in Section~\ref{sec:compactness}, but we
  will also give fewer details than
  in that proof, so the reader may find it easier to read that proof
  first.

  Fix a sequence in $\cN(*;\vec{a};w,r)$. Considering just the images
  in $\HHH$ of the
  punctures gives a sequence of algebra-type polygons. By compactness of
  $\ModPol$, by taking a subsequence we may assume this sequence of polygons converges in $\ModPol$, to some
  tree of disks
  (perhaps union some sphere bubbles). Then, applying Gromov
  compactness (or the version from symplectic field
  theory~\cite{BEHWZ03:CompactnessInSFT}, if we think of the marked
  points of the disks as punctures), we may assume that the sequence
  of holomorphic curves itself converges. The limit is either an
  element of $\cN(*;\vec{a};w,r)$ or a broken polygon (perhaps
  union some sphere bubbles). In the former case, the result is
  immediate from continuity of the evaluation map, so it remains to
  rule out the latter case. It follows from the second half of
  Lemma~\ref{lem:bdy-deg-ind-2} (the statement about no consecutive
  pair of chords being multipliable) that any constant component
  mapped to $p$ (i.e., curve at $e\infty$) connects to at least two
  other components. So, there are at least two non-constant
  components. But this violates Lemma~\ref{lem:bdy-deg-ind-2} and
  additivity of the index (Lemma~\ref{lem:bdy-degen-ind-add}).
\end{proof}

Lemmas~\ref{lem:bdy-deg-ind-2} and~\ref{lem:bd-ev-proper} imply that the spaces
$\cN(\x,\vec{a};w,r)$ for different regular values $\x$ of $\ev$
(intersecting the same $\alpha$-arc) are cobordant. So, we will often
want to fix $\x$ but not include it in the notation. Let
\begin{align*}
  \cN([\alpha_i^a];\vec{a};w)&=\cN(\x;\vec{a};w),
\end{align*}
where $\alpha_i^a$ is the $\alpha$-arc containing a coordinate in
$\x$. By Lemma~\ref{lem:bdy-deg-ind-2}, if $\indsq(\vec{a},w)=2$ and
$\cN([\alpha_i^a];\vec{a};w)\neq\emptyset$ then all of the $a_i$ are
Reeby elements. Also, if $a_1=U^\ell\rho^1$ and $a_n=U^m\rho^n$,
then necessarily $\alpha_i^a=[(\rho^1)^-]=[(\rho^n)^+]$, the
$\alpha$-arc containing the initial endpoint of $\rho^1$ and the
terminal endpoint of $\rho^n$.

\begin{definition}
  \label{def:CurveCount}
  For $i=1,2$, let $\#\cN([\alpha_i^a];\vec{a};w)$ be
  the number of points in $\cN(\x;\vec{a};w))$, for a
  generic choice of point $\x$ with a component in $\alpha_i^a$. (It
  follows from Lemma~\ref{lem:bd-ev-proper} that this number is
  independent of the choice of $\x$.)
\end{definition}

In Section~\ref{sec:alg-bdeg}, we prove that the degrees appearing in
Definition~\ref{def:CurveCount} are independent of the choice of $j$;
indeed, according to Theorem~\ref{thm:alg-well-defd} (also proved in
that section), these are identified with terms in the operations on
$\MAlg$.

Returning to the main component, we have:

\begin{proposition}\label{prop:transv-exists}
  For any pinching function $\eta$, the set of coherent families of
  $\eta$-admissible almost complex structures so that the moduli
  spaces $\cM^B(\x,\y;\Source)$ are all transversely cut out is
  co-meager. Similarly, the set of coherent families of
  $\eta$-admissible almost complex structures so that the moduli
  spaces of disks appearing in Lemma~\ref{lem:OS-no-disks} are
  transversely cut out is also co-meager.
\end{proposition}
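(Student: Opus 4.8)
The plan is to adapt the standard transversality argument from bordered Floer homology, as in \cite[Chapter~5]{LOT1}, to the present, slightly more intricate, setting of coherent families over $\ModPol$. The key point is that transversality is achieved by perturbing the family of almost complex structures $J$, and the main work is to verify that the perturbations allowed by the notion of a coherent $\eta$-admissible family are still enough to cut out the moduli spaces transversely. The main obstacle, as usual, is the presence of curves with multiply-covered components (or, in the symmetric-product picture, components mapping into the ``thin part'' near the diagonal where the $\dbar$-operator is not surjective for generic $J$); these will need to be handled by the Ozsv\'ath--Szab\'o-style arguments of the previous subsection, which is why the disk-bubble statement is included as a companion.

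First I would fix a coherent $\eta$-admissible family $J_0$ (which exists by Lemma~\ref{lem:admis-J-exists}), and consider the universal moduli space over a Banach manifold of perturbations of $J_0$ that stay coherent and $\eta$-admissible. The allowed perturbations are those that vary $J$ over the bimodule components of $\ModPol$ away from the algebra punctures, in the region where Conditions~\ref{item:J-const-away-punct} and~\ref{PinchedOverVertex} leave freedom; by Lemma~\ref{lem:admis-contract} this space of perturbations is large (contractible fibers), in particular nonempty and path-connected, which is what makes the Sard--Smale argument go through. The standard step is then to show that the linearized $\dbar$-operator, together with the derivative in the $J$-direction, is surjective at every point of the universal moduli space; this reduces, as in \cite[Proposition~5.6]{LOT1} and \cite{McDuffSalamon04:Jhol}, to showing that a somewhere-injective curve has a point in its image lying in the region where we are free to perturb $J$ and away from all the other curves, $\alpha$- and $\beta$-curves, the basepoint region, and the pinched neighborhoods. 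For curves $u\co S\to\Sigma\times[0,1]\times\RR$ representing $\cM^B(\x,\y;\Source)$, somewhere-injectivity in the interior of $\Sigma\times(0,1)\times\RR$ follows from the usual argument (the curve is nonconstant on each component by the source being a genuine decorated source, and multiple-cover phenomena are confined to the symmetric-product diagonal), so such a point exists and the universal moduli space is a Banach manifold; then Sard--Smale gives a co-meager set of regular $J$ in the perturbation space, and since this is true for each $(\x,\y,\Source)$ and there are countably many, a countable intersection finishes it. The same scheme applies verbatim to the finitely many types of disks in Lemma~\ref{lem:OS-no-disks}: these are disks in $\Sym^g(\Sigma)$ with boundary on a single torus $T_\betas$ or $T_{\alphas,i}$, and for generic compatible $j$ on $\Sigma$ the relevant moduli spaces are transversely cut out by \cite[Theorem~3.15]{OS04:HolomorphicDisks}; one only needs to check that the genericity can be arranged within the family, which again follows from the freedom in choosing $j$ over the non-pinched part of $\Sigma$.

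The hard part will be the interaction between transversality and the constraints built into the almost complex structures: near the algebra punctures $J$ is forced to be split and pinched (Conditions~\ref{item:J-const-at-punct} and~\ref{PinchedOverVertex}), so one cannot perturb there, and one must argue that no somewhere-injective curve is ``trapped'' entirely in these frozen regions. This is where the pinching machinery of Section~\ref{sec:pinched}, and in particular Lemma~\ref{lem:bdy-deg-ind-2} together with the index additivity of Lemma~\ref{lem:bdy-degen-ind-add}, does the job: any component of a limiting configuration living over the pinched part is (after rescaling) a boundary degeneration, and the index estimates force such components to contribute positively to the index, so a curve of the expected (low) index cannot have its image concentrated there. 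Thus for index-$\leq 2$ curves $u$ there is always a regular point in the perturbable region, which is exactly what is needed; boundary degenerations themselves are handled separately and their transversality is deferred to Section~\ref{sec:algebra} (Theorem~\ref{thm:AlgOfSurface}), as the statement indicates. I would organize the proof by first recalling the Banach-manifold setup and the Sard--Smale scheme, then proving the somewhere-injective-point lemma for $\cM^B(\x,\y;\Source)$ using the pinching estimates to exclude the frozen regions, then invoking \cite[Theorem~3.15]{OS04:HolomorphicDisks} for the disk moduli spaces, and finally taking the countable intersection to conclude co-meagerness.
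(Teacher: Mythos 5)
Your proposal follows essentially the same route as the paper: the paper's proof is a two-sentence citation of the standard Sard--Smale/somewhere-injectivity argument from~\cite{LOT1} and~\cite{Lipshitz06:CylindricalHF}, which is exactly the scheme you spell out. One small correction on the disk statement: \cite[Theorem~3.15]{OS04:HolomorphicDisks} (Lemma~\ref{lem:OS-no-disks} here) gives the \emph{vanishing of the count assuming} transversality, not the transversality itself; the paper instead observes that a Maslov index~$2$ disk has domain $[\Sigma]$, hence is injective at every interior point, so the usual perturbation argument applies directly. Your worry about curves ``trapped'' in the frozen regions near the algebra punctures is also resolved more simply than via the boundary-degeneration index estimates: any $u\in\cM^B(\x,\y;\Source)$ projects to $[0,1]\times\RR$ as a branched cover with asymptotics at $\pm\infty$, so its image necessarily meets the $\RR$-invariant region where perturbations are permitted.
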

\begin{proof}
  This is an easy adaptation of the case for bordered
  $\HFa$~\cite{LOT1}, which in turn is an adaptation of the
  cylindrical case for Heegaard Floer homology~\cite[Section
  3]{Lipshitz06:CylindricalHF} (which in turn is adapted from
  McDuff-Salamon~\cite{MS04:HolomorphicCurvesSymplecticTopology}). (For
  the second statement in the proposition note that the domain of a
  Maslov index $2$ disk is a copy of $\Sigma$, so these disks are
  somewhere injective---in fact, injective at every interior point.)
\end{proof}

Finally, we collect all of the transversality conditions we will
require for the modules $\CFAm$ and $\CFDm$ to be defined:
\begin{definition}\label{def:tailored}
  Call an $\eta$-admissible family of almost complex structures $J$
  \emph{tailored} if the following conditions hold:
  \begin{enumerate}[label=(TJ-\arabic*)]
  \item\label{item:tail-pinched} The pinching function $\eta$ is
    sufficient in the sense of
    Definition~\ref{def:sufficient-pinching}.
  \item\label{item:tail-main-comp} The moduli spaces
    $\cM^B(\x,\y;\Source)$ are transversely cut out for all data $(B,\Source,\vec{a},w,r)$ such that
    $\ind(B,\Source,\vec{a},w,r)\leq 2$.
  \item\label{item:tail-bdy} For each sequence of basic algebra
    elements $\vec{a}$, integer $w$, decorated source $\Source$
    compatible with $(\vec{a},w)$,  and homology class $B$ so that
    $\ind(B,\Source,\vec{a};w,r)\leq 1$ and each $a_i\in\vec{a}$, the following
    holds. Let $q_i$ be the puncture of $\Source$ corresponding to $a_i$.
    \begin{itemize}
    \item If $a_i=U^n\iota_j$ then for each
      $u\in \cM^B(\x,\y;\Source)$, the complex structure on $\Sigma$
      corresponding to $q_i$ is regular for energy $E(a_i)$ boundary degenerations,
      and $\w=\pi_\Sigma(u(q_i))$ is a regular value of each
      evaluation map
      $\ev\co\cN(*;\vec{a}';w',r')\to (\alpha_1^a\cup\alpha_2^a)\times
      \alpha_1^c\times\cdots\times\alpha_{g-1}^c$, for each sequence
      of basic algebra elements $a'$ and integers $w'$ and $r'$ so
      that the moduli space has index $\leq 2$. So, the index $\leq2$
      moduli spaces $\cN(\w;\bdSource)$ are transversely cut out.
    \item If $a_i=U^n\rho$ then for each $u\in \cM^B(\x,\y;\Source)$,
      the complex structure on $\Sigma$ corresponding to $q_i$ is
      regular for energy $E(a_i)$ boundary degenerations, and if we let
      $p\times \w=\pi_\Sigma(u(q_i))$, where
      $\w\in \alpha_1^c\times\cdots\times\alpha_{g-1}^c$, then the
      moduli spaces $\cN(\rho^i\times \w;\bdSource)$ with index $\leq 2$
      are transversely cut out.
    \end{itemize}
    (By Condition~\ref{item:tail-main-comp}, there are finitely many
    holomorphic curves $u$ for each tuple of data
    $(\vec{a},w,\Source,B)$, so this amounts to countably many
    conditions overall. Also, the moduli spaces $\cN$ here are
    computed with respect to the complex structure induced by $J$ at
    $q_i$.)
  \item\label{item:tail-spheres} There is no $\Sym^g(j)$-holomorphic
    sphere in $\Sym^g(\Sigma)$ passing through any point in
    $(T_{\alpha,1}\cup T_{\alpha,2})\cap T_\beta$, for any complex
    structure $j$ appearing in $J_\infty$.
  \item\label{item:tail-disk} The moduli spaces of $J_\infty$-holomorphic disks
    with boundary on $T_\beta$ and index $\leq 2$ are transversely cut
    out, as are the moduli spaces of holomorphic disks with boundary
    on $T_{\alpha,1}$ and the moduli space of disks with boundary on
    $T_{\alpha,2}$.
  \end{enumerate}
\end{definition}

\begin{corollary}
  \label{cor:TailoredExist}
  Tailored families almost complex structures exist.
\end{corollary}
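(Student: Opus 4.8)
The plan is to fix once and for all a sufficient pinching function $\eta$, which exists by Corollary~\ref{cor:pinch-exist}; this secures condition~\ref{item:tail-pinched} of Definition~\ref{def:tailored}. It then remains to produce a coherent family of $\eta$-admissible almost complex structures --- one exists, by Lemma~\ref{lem:admis-J-exists} --- satisfying conditions~\ref{item:tail-main-comp}--\ref{item:tail-disk}. Each of these will be seen to hold either for a co-meager set of families or after finitely many further auxiliary choices, and since only countably many individual transversality conditions are involved, intersecting them produces a tailored family by a Baire category argument.

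Conditions~\ref{item:tail-main-comp} and~\ref{item:tail-disk} are immediate from Proposition~\ref{prop:transv-exists}: its first assertion is precisely transversality of the $\cM^B(\x,\y;\Source)$ with $\ind\leq 2$, and its second is transversality of the $J_\infty$-holomorphic disks with boundary on $T_\beta$, $T_{\alpha,1}$, $T_{\alpha,2}$ appearing in Lemma~\ref{lem:OS-no-disks}. For condition~\ref{item:tail-spheres} I will choose the $\RR$-invariant almost complex structure $J_\infty$ underlying the family so that its one-parameter family $j_s$, $s\in[0,1]$, stays within a small neighborhood of a generic complex structure $i$ on $\Sigma$; Corollary~\ref{cor:no-spheres} then rules out $\Sym^g(j_s)$-holomorphic spheres through $(T_{\alpha,1}\cup T_{\alpha,2})\cap T_\beta$. (Equivalently, by Lemma~\ref{lem:OS-no-spheres} the sphere-carrying locus has codimension $2$, so a dimension count over the path $j_s$ shows a generic path avoids the finitely many intersection points.) This choice is compatible with the genericity needed for~\ref{item:tail-disk}, and with $\eta$-admissibility, since $\eta$-admissibility imposes no pinching on $J_\infty$ itself.

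The substantive point, and the expected main obstacle, is condition~\ref{item:tail-bdy}: the complex structures ``at the algebra punctures'' of a curve in $\cM^B(\x,\y;\Source)$ are themselves part of the coherent family $J$, so the regularity of those structures for boundary degenerations and the regularity of the relevant evaluation maps must be arranged without spoiling transversality of the main moduli spaces. The plan is to decouple these. Re-running the inductive construction of Lemma~\ref{lem:admis-J-exists} on total energy, at the stage treating energy $E_0$ I will choose the fixed $\eta(E_0)$-pinched complex structure $j_{E_0}$ on $\Sigma$ --- the one the family is required to agree with near punctures of energy $E_0$ --- to be in addition regular for energy $E_0$ boundary degenerations in the sense of Definition~\ref{def:RegularJ}; such $j_{E_0}$ exists by the existence-of-regular-$j$ assertion of Theorem~\ref{thm:AlgOfSurface} (and can be taken generic in the contractible space of $\eta(E_0)$-pinched complex structures). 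One then extends $J$ over the interiors of the strata of $\ModPol$; crucially, the genericity of Proposition~\ref{prop:transv-exists} can be achieved by perturbations supported on the bimodule components, away from the algebra punctures (where $J$ is pinned to $j_i\times j_P$) and away from $\pm\infty$, so condition~\ref{item:tail-main-comp} is compatible with these pinned choices. With that done, every $\cM^B(\x,\y;\Source)$ with $\ind(B,\Source,\vec{a};w,r)\leq 1$ is a finite set of curves, and the complex structure induced at each algebra puncture $q_i$ of such a curve is $j_{E(a_i)}$, so the regularity-of-$j$ clauses of~\ref{item:tail-bdy} hold by construction. The remaining clause --- that $\pi_\Sigma(u(q_i))$ be a regular value of each relevant evaluation map $\ev\colon\cN(*;\vec{a}';w',r')\to(\alpha_1^a\cup\alpha_2^a)\times\alpha_1^c\times\cdots\times\alpha_{g-1}^c$ --- follows because each such $\ev$ is proper (Lemma~\ref{lem:bd-ev-proper}, applicable since $\eta$ is sufficient and $j_{E(a_i)}$ is regular), hence has co-meager regular values; as there are only finitely many points $\pi_\Sigma(u(q_i))$ to consider, a final small generic perturbation of $J$ supported near the (already transversely cut out) curves $u$, not affecting~\ref{item:tail-main-comp}, moves all of them onto regular values.

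What will require care is the bookkeeping: making the choices of the fixed structures $j_E$ and of the path $j_s$ at the successive inductive stages glue consistently across the strata of $\ModPol$ --- essentially the same consistency argument as in the proof of Lemma~\ref{lem:admis-J-exists}, modeled on consistent choices of strip-like ends --- and confirming that the perturbations used for~\ref{item:tail-main-comp} and for the evaluation-value step can be localized on the bimodule components away from the pinned regions, which in turn rests on somewhere-injectivity of the curves in question. Granting this, intersecting the countably many co-meager conditions yields a tailored family, proving the corollary.
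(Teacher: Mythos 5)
Your proposal is correct and follows essentially the same route as the paper: fix a sufficient pinching function (Corollary~\ref{cor:pinch-exist}), choose $J_\infty$ generic and close to split to handle Conditions~\ref{item:tail-spheres} and~\ref{item:tail-disk} via Corollary~\ref{cor:no-spheres} and Proposition~\ref{prop:transv-exists}, run the induction of Lemma~\ref{lem:admis-J-exists} with generic perturbations for Condition~\ref{item:tail-main-comp}, and use Sard's theorem plus genericity of the asymptotic points to secure Condition~\ref{item:tail-bdy}. Your extra explicitness about choosing the pinned complex structures $j_{E}$ to be regular for boundary degenerations (via Theorem~\ref{thm:AlgOfSurface}) and about the properness of the evaluation maps is a welcome elaboration of steps the paper leaves implicit, but it is not a different argument.
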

\begin{proof}
  Fix a sufficient pinching function
  (Condition~\ref{item:tail-pinched}), which exists by
  Corollary~\ref{cor:pinch-exist}.
  Conditions~\ref{item:tail-spheres},~\ref{item:tail-disk}, and the
  special case of~\ref{item:tail-main-comp} with no algebra punctures
  give restrictions on the $\RR$-invariant almost complex structure
  $J_\infty$ from Condition~\ref{item:admis-at-infty}. By
  Corollary~\ref{cor:no-spheres} and
  Proposition~\ref{prop:transv-exists}, a generic almost complex
  structure close to a split one satisfies these conditions. So, start by fixing such a
  generic, $\RR$-invariant almost complex structure $J_\infty$. Then,
  construct the admissible family of almost complex structures as in
  the proof of Lemma~\ref{lem:admis-J-exists}, agreeing with
  $J_\infty$ near $\pm\infty$, using
  Proposition~\ref{prop:transv-exists} to guarantee the moduli spaces
  $\cM$ are transversely cut out
  (Condition~\ref{item:tail-main-comp}). To ensure
  Condition~\ref{item:tail-bdy}, by Sard's theorem, for generic points
  $\w\in \alpha_1^c\times\cdots\times\alpha_{g-1}^c$ and
  $\w'\in
  (\alpha_1^a\cup\alpha_2^a)\times\alpha_1^c\times\cdots\times\alpha_{g-1}^c$,
  the moduli spaces $\cN(\rho\times \w;\bdSource)$ and
  $\cN(\w';\bdSource)$ are smoothly cut out.  For a generic choice of
  $J$, the moduli spaces $u\in\cM^B(\x,\y;\Source)$ with
  $\ind(B,\Source,\vec{a};w,r)=1$ will only be asymptotic to such generic
  $(\rho,\w)$, giving Condition~\ref{item:tail-bdy}.
\end{proof}

\subsection{Gluing}\label{sec:gluing}
In this section, we prove gluing theorems corresponding to the
different cases in Theorem~\ref{thm:master}. In each case, 
to state the gluing theorem we need to define what it means for a
family of unbroken holomorphic curves (as in
Section~\ref{sec:main-part}) to converge to a broken holomorphic
curve, since the gluing theorems assert that near a broken curve, the
space of unbroken curves has a particular form (or, more precisely, a
single end).

A \emph{two-story building} is a pair
\[
  (u,u')\in \cM^{B}(\x,\x';a_1,\dots,a_j;w_1)\times \cM^{B'}(\x',\y;a_{j+1},\dots,a_n;w_2).
\]
Convergence to a two-story building is the usual notion from
symplectic field theory~\cite{BEHWZ03:CompactnessInSFT}: a sequence of
holomorphic curves
$u_i\in \cM^{B_1+B_2}(\x,\y;a_1,\dots,a_n;w_1+w_2)$ as in
Definition~\ref{def:moduli-embedded} converges to a two-story building
$(u^1,u^2)$ if,
as in~\cite[Definition 5.21]{LOT1}, the maps $\pi_\Sigma\circ u_i$ and
$\pi_\bD\circ u_i$ converge to $(\pi_\Sigma\circ u^1,\pi_\Sigma\circ u^2)$ and $(\pi_\bD\circ u^1,\pi_\bD\circ u^2)$ in the sense
of~\cite{BEHWZ03:CompactnessInSFT}. (Implicitly, we also are assuming
that the polygons $P_i\in \ModPol$ underlying the $u_i$ converge to
the broken polygon $(P,P')$ corresponding to $(u,u')$, so by the
definition of a coherent family of almost complex structures,
$J(P_i)\to J((P_1,P_2))$.)

\begin{proposition}\label{prop:glue-2-story}
  Fix a tailored family of almost complex structures $J$, generators $\x,\x',\y$, homology classes
  $B_1\in\pi_2(\x,\x')$, $B_2\in\pi_2(\x',\y)$, non-negative integers
  $w_1,w_2$, a sequence of basic algebra elements $\vec{a}=(a_1,\dots,a_n)$, and an
  integer $0\leq j\leq n$. Assume that
  $\ind(B_1+B_2,\vec{a},w_1+w_2)=2$. Near each two-story building $
  (u^1,u^2)\in \cM^{B_1}(\x,\x';a_1,\dots,a_j;w_1)\times
  \cM^{B_2}(\x',\y;a_{j+1},\dots,a_n;w_2)$ the 1-manifold
  $\cM^{B_1+B_2}(\x,\y;a_1,\dots,a_n;w_1+w_2)$ is homeomorphic
  to $(0,\infty)$.
\end{proposition}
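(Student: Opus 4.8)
The plan is to adapt the standard SFT gluing argument (as in \cite{LOT1} and, ultimately, \cite{Lipshitz06:CylindricalHF}) to the present setting, where the new features are the weight data $w_i$, the dependence of the almost complex structure on the underlying polygon $P\in\ModPol$, and the possible presence of Reeb orbits. First I would fix the two-story building $(u^1,u^2)$ with $u^1\in\cM^{B_1}(\x,\x';a_1,\dots,a_j;w_1)$ and $u^2\in\cM^{B_2}(\x',\y;a_{j+1},\dots,a_n;w_2)$, and note that by Proposition~\ref{prop:emb-ind-add} we have $\ind(B_1,a_1,\dots,a_j,w_1)+\ind(B_2,a_{j+1},\dots,a_n,w_2)=\ind(B_1+B_2,\vec a,w_1+w_2)=2$. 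By Condition~\ref{item:tail-main-comp} (transversality for a tailored $J$), both $u^1$ and $u^2$ are transversely cut out and, being quotients by the $\RR$-translation action, each lives in a $0$-dimensional moduli space; hence both have index $1$ as unparametrized curves, i.e.\ index $2$ before quotienting. So I would set things up as a gluing of two rigid (mod translation) curves, exactly the configuration treated in the cylindrical and bordered theories.

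The core of the argument is a pregluing-and-Newton's-method construction. I would first record the corresponding statement on the level of underlying polygons: the images $P^1,P^2\in\ModPol$ of $u^1,u^2$ under projection to $[0,1]\times\RR$ determine a broken polygon, and a neighborhood of it in $\ModPol$ is identified with $[0,\infty)$ (via the gluing/neck-length parameter $T$), with the $J$-admissible family varying continuously by Lemma~\ref{lem:admis-contract} and the coherence conditions in Definition~\ref{def:AdmissibleJs}. For each large $T$ I would preglue $u^1$ and $u^2$ along the matching $-\infty$/$+\infty$ asymptotics at $\x'$ (and match up the Reeb-orbit asymptotics and chord asymptotics, which fit together since the asymptotic data agree), obtaining an approximately holomorphic map $u_T^{\mathrm{pre}}$ with source the glued surface $S=S^1\natural_{\x'} S^2$; here $\chi(S)=\chi(S^1)+\chi(S^2)-g$ matches $\chi_{\emb}(B_1+B_2,\vec a,w_1+w_2)$ by the additivity in Proposition~\ref{prop:emb-ind-add}. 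Then I would appeal to the implicit function theorem / Newton iteration: surjectivity of the linearized $\dbar$-operator at $u^1$ and $u^2$ (Condition~\ref{item:tail-main-comp}) implies surjectivity of the linearized operator at $u_T^{\mathrm{pre}}$ for $T$ large, with a uniformly bounded right inverse, so a genuine $J$-holomorphic curve $u_T$ near $u_T^{\mathrm{pre}}$ exists and is unique in a neighborhood; varying $T\in[T_0,\infty)$ and quotienting by translation gives a half-open arc, i.e.\ a homeomorphism of a neighborhood of $(u^1,u^2)$ in $\cM^{B_1+B_2}(\x,\y;\vec a;w_1+w_2)$ with $(0,\infty)$. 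Finally I would observe that strong boundary monotonicity (Definition~\ref{def:moduli-embedded}, Condition~\ref{item:bdy-mon}) and the embeddedness condition are open, hence persist under the gluing, and that no spurious curves bubble off because $J$ is tailored; so the glued curves indeed lie in the embedded moduli space, and conversely every curve in that moduli space close to the broken one arises this way (this is the surjectivity half of gluing, via Gromov/SFT compactness in Section~\ref{sec:compactness}).

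The bookkeeping I expect to be routine is the index arithmetic, the matching of asymptotic data, and the fact that the two-story configuration is cut out transversely—these are all consequences of Propositions~\ref{prop:emb-ind}--\ref{prop:emb-ind-add} and the tailoredness hypothesis. The main obstacle, and the step I would spend the most care on, is making the gluing compatible with the varying family of almost complex structures: unlike in \cite{LOT1}, where the complex structure is essentially fixed along the strip, here $J$ genuinely depends on the polygon $P(u)$, so the pregluing has to simultaneously glue the polygons in $\ModPol$ and interpolate the almost complex structures across the neck in a way consistent with Definition~\ref{def:AdmissibleJs}\ref{item:admis-Pprime}. The key point that makes this manageable is that near $\pm\infty$ all the $\eta$-admissible almost complex structures agree with the single $\RR$-invariant $J_\infty$ (Condition~\ref{item:admis-at-infty}), so along the neck region where the gluing happens there is literally no interpolation needed—$J$ is already translation-invariant there—and the standard linear and quadratic estimates for $\dbar_{J_\infty}$ on the cylindrical neck apply verbatim. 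I would cite the analogous gluing results in \cite{LOT1} (in particular the analogue of \cite[Proposition~5.30]{LOT1} or its neighbors) and emphasize only the modification needed to accommodate the $\ModPol$-parametrized family, deferring the fully detailed estimates, as they are identical to the ones in the hat case.
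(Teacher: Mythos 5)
Your proposal is correct and follows essentially the same route as the paper, whose proof simply invokes the gluing argument from the $\HFa$ case and \cite[Appendix A]{Lipshitz06:CylindricalHF}; you have unpacked that citation rather than replaced it. In particular, you correctly isolate the only genuinely new point—compatibility of the gluing with the $\ModPol$-parameterized family of almost complex structures—and resolve it the same way the paper's setup does, via the agreement of all the $J(P)$ with the single $\RR$-invariant $J_\infty$ near $\pm\infty$.
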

\begin{proof}
  This follows from the same argument as for bordered
  $\HFa$~\cite{LOT1} (which in turn used the gluing lemma
  from~\cite[Appendix A]{Lipshitz06:CylindricalHF}).
\end{proof}

The definition of a sequence $u_i\in \cM^{B}(\x,\y;a_1,\dots,a_n;w)$
\emph{approaching a collision end} is also the same as in the case of
bordered $\HFa$~\cite[Section 5.4]{LOT1}, where the $u_i$ converge to a curve
consisting of a main component and a split curve at $e\infty$. That
is, we require that the sources of the $u_i$ converge to a nodal
Riemann surface with two components, one of which has $w$ interior
punctures and has boundary punctures corresponding to $\x$, $\y$, and
the algebra elements $a_1,\dots,a_ia_{i+1},\dots,a_n$, and the other
has three boundary punctures corresponding to $a_i$, $a_{i+1}$, and
$a_ia_{i+1}$ and no interior punctures. If the elements $a_i$ and
$a_{i+1}$ are Reeby then this is the case of \emph{degenerating a
  split curve}; if exactly one of $a_i$ is Reeby this is
\emph{degenerating a pseudo split curve}, and if neither is Reeby this
is \emph{degenerating a fake split curve}. For convergence to a split
or pseudo split
curve, we further require that the maps $\pi_\Sigma\circ u_i$ and
$\pi_\bD\circ u_i$ converge in the usual SFT
sense~\cite{BEHWZ03:CompactnessInSFT} to the corresponding projections
of a pair of curves $(u,u')$, where $\pi_\Sigma\circ u'$ is a split or
pseudo split
curve at $e\infty$. (Again, convergence of the sources implies that
the underlying polygons in $\ModPol$ also converge, and hence so does
the coherent family of almost complex structures.) The resulting curve
$u\in \cM^B(\x,\y;a_1,\dots,a_ia_{i+1},\dots,a_n;w)$ is the \emph{main
  component} of the limit. For convergence to a fake split curve,
the limit $\pi_\Sigma\circ u'$ is a constant map to some point in the
interior of $\Sigma$.

We give the gluing result for split curves here; gluing for pseudo
split and fake split curves is Proposition~\ref{prop:glue-pseudo-split}.

\begin{proposition}\label{prop:glue-split}
  Fix a tailored family of almost complex structures $J$, generators
  $\x,\y$, a homology class $B\in\pi_2(\x,\y)$, algebra elements
  $\vec{a}$, and a non-negative integer $w$ so that
  $\ind(B,\vec{a},w)=2$. Near each split curve end, the moduli space
  $\cM^{B}(\x,\y;\vec{a};w)$ has an odd number of ends.
\end{proposition}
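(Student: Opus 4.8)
The plan is to follow the standard analytic strategy from bordered Floer homology (as in~\cite[Section 5.4]{LOT1}): convergence of the $u_i$ to a split curve end is a genuine limit statement in the SFT sense, so the content is a gluing theorem asserting that near such a limit the moduli space $\cM^B(\x,\y;\vec{a};w)$ is a manifold-with-boundary whose boundary point is the split curve configuration. Since $\ind(B,\vec{a},w) = 2$, the quotiented moduli space $\cM^B(\x,\y;\vec{a};w)$ is $1$-dimensional, so showing it is a manifold-with-boundary near the degeneration with exactly one boundary point gives the ``odd number of ends'' claim. First I would set up the broken configuration: a main component $u \in \cM^B(\x,\y;a_1,\dots,a_ia_{i+1},\dots,a_n;w)$ (which has index $1$ after the collision, by the index additivity for split curves, i.e.\ Proposition~\ref{prop:emb-ind-add} combined with the fact that concatenating $a_i$ and $a_{i+1}$ drops $|\vec{a}|$ by one) together with the unique split curve $u'$ at $e\infty$ with asymptotics $(a_i, a_{i+1}, a_ia_{i+1})$. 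Both components are transversely cut out: $u$ by Condition~\ref{item:tail-main-comp} of tailoredness, and the split curve $u'$ is rigid and automatically transverse (a three-punctured disk at $e\infty$, just as in~\cite{LOT1}).

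Next, the actual gluing. I would invoke the gluing lemma from~\cite[Appendix A]{Lipshitz06:CylindricalHF}, exactly as was done for bordered $\HFa$ in~\cite[Section 5.4]{LOT1}: one pre-glues $u$ and $u'$ along the matching Reeb chord end with a gluing parameter, corrects to an honest holomorphic curve via a Newton-iteration/implicit-function argument using the surjectivity of the linearized $\dbar$-operator on the glued curve (which follows from transversality of the two pieces and a standard linear-gluing estimate), and verifies the glued curve lies in $\cM^B(\x,\y;\vec{a};w)$, i.e.\ has the right asymptotics $\vec{a}$ rather than the collided sequence. The only new wrinkle relative to $\HFa$ is that the almost complex structure now varies over $\ModPol$; but this is precisely addressed by the definition of a coherent family of $\eta$-admissible almost complex structures (Definition~\ref{def:AdmissibleJs}): as the underlying polygons $P_i$ converge to the broken polygon corresponding to the collision, $J(P_i) \to J$ of the broken configuration, so the gluing can be run in this varying family just as in the ordinary case. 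I would also note, as in~\cite{LOT1}, that boundary monotonicity (Condition~\ref{item:bdy-mon}) is an open and closed condition and is preserved under gluing, so no generic curves are introduced or lost.

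Finally, I would record that the gluing produces a half-open arc $(0,\epsilon) \hookrightarrow \cM^B(\x,\y;\vec{a};w)$ limiting to the split curve configuration, and that (by uniqueness in the gluing theorem and Gromov/SFT compactness) this arc accounts for all nearby unbroken curves; hence the split curve end contributes exactly one end to the $1$-manifold, in particular an odd number.

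The main obstacle I expect is not conceptual but bookkeeping: verifying that the gluing construction is compatible with the coherent family of almost complex structures in a way that is uniform as the polygon degenerates — i.e.\ that the neck-stretching in $\Sigma \times [0,1]\times\RR$ and the simultaneous degeneration in $\ModPol$ interact correctly, so that $J(P_{\mathrm{glued}})$ is $C^0$-close (on the relevant region) to the pre-glued almost complex structure. This is handled by Condition~\ref{item:admis-Pprime} and the fact that the $\eta$-admissible structures are split near the algebra punctures (Condition~\ref{item:J-split} and Condition~\ref{item:J-const-at-punct}), so the neck region where gluing happens sees a product almost complex structure — exactly the setting in which the gluing estimates of~\cite[Appendix A]{Lipshitz06:CylindricalHF} apply verbatim. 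So in the end the proof is: reduce to transversality of the two pieces (already assumed), apply the cited gluing lemma in the coherent family, and invoke compactness for uniqueness of the end.
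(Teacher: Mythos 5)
Your proposal is correct and follows essentially the same route as the paper: the paper's proof simply cites the $\HFa$ gluing argument from~\cite{LOT1} and records the one new point you also identify, namely that the coherence conditions on the family of almost complex structures (via the definition of energy on the boundary of $\ModPol$) guarantee that the almost complex structure on the main component of the limit agrees with the one defining $\cM^B(\x,\y;a_1,\dots,a_ia_{i+1},\dots,a_n;w)$. The only minor caveat is that the proposition asserts an \emph{odd} number of ends rather than exactly one because, as in~\cite{LOT1}, the general count is the degree of an evaluation-map matching, but this does not affect the validity of your argument.
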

\begin{proof}
  This follows from the same argument as the case of
  $\HFa$~\cite{LOT1}. Note that it follows from the way the energy is
  defined at points in the boundary of $\ModPol$
  (Definition~\ref{def:polygons}) and the definition of a coherent
  family of almost complex structures
  (Definition~\ref{def:AdmissibleJs}) that the almost complex
  structure on the main component of the limit curve agrees with the
  almost complex structure used to construct
  $\cM^B(\x,\y;a_1,\dots,a_ia_{i+1},\dots,a_n;w)$.
\end{proof}

The definition of orbit curve degenerations is similar to collision ends.
A sequence $u_i\in \cM^{B}(\x,\y;\vec{a};w)$,
\emph{degenerates an orbit curve} if the $u_i$ converge to a broken
curve with two components, one
an element of $\cM^B(\x,\y;a^1,\dots,\sigma,\dots,a^n;w-1)$,
where $\sigma$ is a Reeb chord of length $4$, and the other an orbit curve at $e\infty$.

\begin{proposition}\label{prop:glue-orbit}
  Fix a tailored family of almost complex structures $J$, generators $\x,\y$, a
  homology class $B\in\pi_2(\x,\y)$, basic algebra elements
  $a_1,\dots,a_n$, and a positive integer $w$ so that
  $\ind(B,a_1,\dots,a_n,w)=2$. Near each orbit curve
  degeneration, the moduli space $\cM(\x,\y;a_1,\dots,a_n;w)$ is
  homeomorphic to $(0,\infty)$.
\end{proposition}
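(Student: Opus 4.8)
The plan is to run the same neck-stretching gluing argument used for split curves in Proposition~\ref{prop:glue-split}, with the orbit curve playing the role of the split curve. Write the limiting broken configuration as $(u,v)$, where $u\in\cM^{B}(\x,\y;a_1,\dots,a_{i-1},\sigma,a_i,\dots,a_n;w-1)$ is the main component and $v$ is the orbit curve at $e\infty$ whose $w\infty$-chord is the length-$4$ Reeb chord $\sigma$ and whose single interior puncture is a simple orbit. First I would pin down the index bookkeeping: by Formula~\eqref{eq:emb-ind} (using $\iota(\sigma)=-1$ and $L(\sigma,\cdot)=L(\cdot,\sigma)=0$, since $\sigma$ is equidistributed), inserting $\sigma$ into $\vec{a}$ while dropping the orbit count from $w$ to $w-1$ decreases the index by one, so $u$ lies in a moduli space of expected dimension $1$; by the tailored hypothesis (Condition~\ref{item:tail-main-comp}) it is transversely cut out, hence rigid modulo translation. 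The orbit curve $v$ is also rigid: its source has genus $0$ and $\pi_{\RR\times Z}\circ v$ is a degree-one branched cover with a single boundary branch point, so its regularity is elementary and its moduli space modulo the $\RR\times\RR$-action is a single, transversely cut out point, consistent with the count in item~\ref{end:EscapingOrbit} of Theorem~\ref{thm:master}.

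Next I would carry out the pregluing and apply the gluing lemma. Since the chord $\sigma$ at a boundary puncture of $u$ matches the $w\infty$-chord of $v$, for each large $R$ one splices $u$ and $v$ along a Reeb-chord neck of modulus $R$, obtaining an approximately holomorphic map $u_R$ in homology class $B$ whose source carries $n$ algebra punctures labeled $(a_1,\dots,a_n)$ and $w$ interior orbit punctures: the $w-1$ orbits of $u$ together with the single orbit of $v$, which no longer participates in the gluing and simply becomes an asymptotic of $u_R$. The linearized $\dbar$-operator at $u_R$ is surjective because those at $u$ and $v$ are and the neck contributes no cokernel, and the cutoff and Sobolev estimates are exactly those of the split-curve case~\cite[Appendix~A]{Lipshitz06:CylindricalHF}; so for $R>R_0$ there is a unique honest holomorphic curve near $u_R$, and these assemble into a homeomorphism from $(R_0,\infty)$ onto the portion of $\cM^{B}(\x,\y;a_1,\dots,a_n;w)$ near $(u,v)$. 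As in Proposition~\ref{prop:glue-split}, coherence of the almost complex structures (Definition~\ref{def:AdmissibleJs}) together with the assignment of energy on $\bdy\ModPol$ (Definition~\ref{def:polygons}) ensures the almost complex structure on the main component of any such limit is the one used to define $\cM^{B}(\x,\y;a_1,\dots,a_{i-1},\sigma,a_i,\dots,a_n;w-1)$. I would also check that the glued curves are embedded: they are $C^0$-close to the embedded broken configuration, and since $\ind(B,a_1,\dots,a_n,w)=2$ is exactly the embedded index, the Euler characteristic of the source is forced, ruling out double points.

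Finally, to see the moduli space near $(u,v)$ is precisely one copy of $(0,\infty)$ rather than several, I would argue by symplectic field theory compactness~\cite{BEHWZ03:CompactnessInSFT} that any sequence in $\cM^{B}(\x,\y;a_1,\dots,a_n;w)$ converging to $(u,v)$ eventually has its neck region inside the gluing chart, hence lies in its image by the uniqueness half of the implicit function theorem; rigidity of both $u$ and $v$, together with additivity of the index (Proposition~\ref{prop:emb-ind-add} and Lemma~\ref{lem:bdy-degen-ind-add}), precludes any other configuration of the same total index limiting to $(u,v)$.

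I expect the only genuinely new point, beyond transcribing the split-curve gluing, to be confirming the transversality and rigidity of the orbit curve $v$ and, correspondingly, that gluing $v$ to $u$ introduces exactly one gluing parameter: no residual modulus from the branch point of $v$ (its position is pinned by rigidity of $v$) and none from the $S^1$ of rotations of the orbit (the $e\infty$ orbit is simple and does not enter the gluing). The actual neck-gluing is the same Reeb-chord gluing that appears in Proposition~\ref{prop:glue-split}, so the remaining analysis is routine.
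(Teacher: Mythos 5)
Your proposal is correct and follows essentially the same route as the paper: the paper's proof simply observes that both the (index-one, hence rigid) main component and the orbit curve are transversely cut out, that the evaluation-map transversality condition in the standard join-curve gluing lemma~\cite[Proposition 5.25]{LOT1} is vacuous since only one chord is glued (your ``exactly one gluing parameter, no residual modulus'' observation), and then invokes that gluing result. Your index bookkeeping ($\iota(\sigma)=-1$, $L$-terms vanishing, so the main component has index $1$) and your transversality argument for the orbit curve match what the paper does, just spelled out in more detail.
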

\begin{proof}
  This follows from the same proof as, say, for join curves in the
  $\HFa$ case. Specifically, as long as both the moduli space of main
  components and the moduli space of orbit curves are transversely cut
  out, and their evaluation maps to $\RR^j/\RR$ are transverse, we can
  glue them~\cite[Proposition 5.25]{LOT1}. In this case, $j=1$ (it is
  the number of chords where the gluing occurs), so the condition that
  the evaluation maps be transverse is vacuous. The moduli space of
  main components is transversely cut out by assumption. The moduli
  space of orbit curves is transversely cut out because the moduli
  space of constant maps from a disk to $[0,1]\times\RR$ is
  transversely cut out, as is the moduli space of maps from the orbit
  curve component to $\RR\times \bdy\Sigma$.
\end{proof}

We have the following gluing result for pseudo split curves and fake
split curves:

\begin{proposition}\label{prop:glue-pseudo-split}
  With notation as in Proposition~\ref{prop:glue-orbit}, near each
  pseudo split curve degeneration or fake split curve degeneration,
  the moduli space $\cM(\x,\y;\vec{a};w)$ is homeomorphic
  to $(0,\infty)$.
\end{proposition}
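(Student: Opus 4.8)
The plan is to treat this in exactly the same spirit as the split-curve and orbit-curve gluings (Propositions~\ref{prop:glue-split} and~\ref{prop:glue-orbit}), using the fact that pseudo split and fake split curves are \emph{inert} degenerations: the holomorphic data on the main component is essentially unchanged, and all that really happens is that two consecutive boundary marked points of the underlying polygon in $\ModPol$ collide (with a right algebra-type disk bubbling off) while the accompanying family of almost complex structures adjusts accordingly. Concretely, in a pseudo split curve degeneration of a sequence $u_i\in\cM^B(\x,\y;\vec{a};w)$ a consecutive pair consisting of a Reeby element and an idempotent — say $a_i=U^m\rho$ and $a_{i+1}=U^n\iota_j$ with $U^m\rho\cdot U^n\iota_j=U^{m+n}\rho\neq 0$, or with the roles of Reeby element and idempotent reversed — collides, and the limit consists of a main component
\[
  u\in\cM^B(\x,\y;a_1,\dots,a_{i-1},U^{m+n}\rho,a_{i+2},\dots,a_n;w)
\]
together with a pseudo split curve at $e\infty$; in a fake split curve degeneration a consecutive pair $(a_i,a_{i+1})=(U^m\iota_j,U^n\iota_j)$ collides, and the limit consists of a similar main component (now with $a_ia_{i+1}=U^{m+n}\iota_j$) together with a constant ghost disk. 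In both cases the extra piece at $e\infty$ is uniquely determined by its asymptotics and carries no moduli.

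I would then carry out the following steps. First, the index bookkeeping: by the embedded index formula (Proposition~\ref{prop:emb-ind}) the underlying chord sequence is unchanged by the collision and $|\vec{a}|$ drops by one, so merging $a_i$ and $a_{i+1}$ into their product lowers the index by exactly one; hence the main component is rigid, and since $J$ is tailored its moduli space is transversely cut out by Condition~\ref{item:tail-main-comp}. The $e\infty$ piece is automatically regular: a pseudo split curve is rigidly determined by its asymptotics (its projection to $\RR\times Z$ being a holomorphic branched cover of a fixed region, exactly as for ordinary split curves), and a fake split curve is a constant map from a thrice-marked disk, whose moduli space is a point. Second, the almost-complex-structure compatibility: when the polygon $P_i\in\ModPol$ degenerates, the disk carrying the two colliding marked points is a (right) algebra-type component, and the main bimodule component $P'$ of the limit lies in the component of $\ModPol$ labeled by the merged energies because $E(a_ia_{i+1})=E(a_i)+E(a_{i+1})$; by Condition~\ref{item:admis-Pprime} of Definition~\ref{def:AdmissibleJs}, $J(P_i)$ restricted to $P'$ converges to $J(P')$, which is precisely the family used to define the moduli space of main components. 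This is the same verification as in the proof of Proposition~\ref{prop:glue-split}; it also shows that the pinching data of Condition~\ref{PinchedOverVertex} varies continuously across the degeneration, since the merged marked point has energy $E(a_i)+E(a_{i+1})$. Third, the gluing itself: with both pieces transversely cut out and a single node connecting them, and with the only matching condition being an evaluation map to $\RR^1/\RR$ (so that the condition that the evaluation maps be transverse is vacuous), the gluing lemma of~\cite[Appendix~A]{Lipshitz06:CylindricalHF} (compare~\cite[Proposition~5.25]{LOT1}) applies, and the single gluing parameter identifies a neighborhood of the broken curve in $\cM(\x,\y;\vec{a};w)$ with $(0,\infty)$.

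The analytic input is entirely standard — gluing a rigid appendage along one neck — so the one place that needs real care is the combinatorial bookkeeping of the family of almost complex structures and pinching data across the degeneration, which I expect to be the main (though purely organizational) obstacle. In particular, just as in Proposition~\ref{prop:glue-split}, one must check that the coherent family restricts to the correct almost complex structure on the main component even though no coherence is imposed on the algebra-type bubble, and that the energy accounting makes the pinching hypothesis \ref{PinchedOverVertex} continuous in the limit; both follow directly from the definitions of Section~\ref{sec:J}.
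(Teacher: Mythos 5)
Your proposal is correct and follows the same route as the paper, whose entire proof is the remark that the argument is essentially the same as for orbit curves (Proposition~\ref{prop:glue-orbit}): a rigid, automatically transverse appendage at $e\infty$ glued along a single neck with a vacuous evaluation-map condition. Your more detailed index bookkeeping and verification of the almost-complex-structure coherence across the collision are exactly the checks implicitly invoked there.
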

\begin{proof}
  The proof is essentially the same as the proof of
  Proposition~\ref{prop:glue-orbit}.
\end{proof}

Next, we turn to gluing boundary degenerations. Simple boundary
degenerations are straightforward. A sequence $u_i$ \emph{converges to a
simple boundary degeneration end} if the corresponding family of polygons
$P_i\in\ModPol$ converges to a broken polygon where one component is a
module component and the other is an algebra-type component, and over the
algebra-type component the sequence $u_i$ converge to a simple boundary degeneration
(Definition~\ref{def:BoundaryDegeneration-simple}). (Here, convergence
of the $u_i$ has a similar meaning to the previous cases, but note
that the targets $\Sigma\times P_i$ are themselves splitting in the
process; compare~\cite[Section 9.1]{BEHWZ03:CompactnessInSFT}.)
\begin{proposition}\label{prop:glue-degen}
  Fix a tailored family of almost complex structures $J$, generators $\x,\y$, a
  homology class $B\in\pi_2(\x,\y)$, a sequence of basic algebra
  elements $\vec{a}$, and a positive integer $w$ so that
  $\ind(B,\vec{a},w)=2$.  Near each simple boundary
  degeneration (with the specified total homology class and
  asymptotics), the moduli space $\cM(\x,\y;\vec{a};w)$
  has an odd number of ends.
\end{proposition}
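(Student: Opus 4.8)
The plan is to reduce this to a standard pregluing/Newton-iteration argument, combined with the index and transversality results already in place. First I would observe that, because $\ind(B,\vec{a},w)=2$, the relevant simple boundary degeneration $v\in\cN([\alpha_i^a];\vec{\rho};w_2)$ must have $\indsq(\vec{\rho},w_2)=2$ (by Lemma~\ref{lem:bdy-deg-ind-2} it is at least $2$, and additivity of the index, Lemma~\ref{lem:bdy-degen-ind-add}, forces the complementary main component $u$ to have $\ind=0$ after rescaling), and the main component is a rigid curve in $\cM^{B_1}(\x,\y;\vec{b};w_1)$ for the appropriate sequence $\vec{b}$ of basic algebra elements obtained from $\vec{a}$ by the boundary-degeneration bubbling. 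By Definition~\ref{def:tailored}, in particular Conditions~\ref{item:tail-main-comp} and~\ref{item:tail-bdy}, both the main-component moduli space and the boundary-degeneration moduli space are transversely cut out near the configuration in question, and $\x$ (resp.\ the relevant point $\w$ on the $\alpha$-curves) is a regular value of the relevant evaluation map. This is exactly the input needed for gluing.

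Next I would set up the gluing itself. The geometric picture is that near the end, the target $\Sigma\times P_i$ is degenerating as the module polygon $P_i\in\ModPol$ approaches the boundary stratum consisting of a module component glued to an algebra-type component; the pinching function guarantees (via Condition~\ref{PinchedOverVertex} and sufficiency, Definition~\ref{def:sufficient-pinching}) that over the algebra-type component the almost complex structure is $\eta(E)$-pinched with $E$ large enough that Lemma~\ref{lem:bdy-deg-ind-2} applies, so the only degenerations of this type that occur in codimension one are the simple boundary degenerations. The pregluing construction takes the pair $(u,v)$ together with a large gluing parameter and produces an approximately holomorphic curve; because everything is transversely cut out, a Newton/implicit-function-theorem argument (as in, e.g., the appendix to~\cite{Lipshitz06:CylindricalHF}, cited for the analogous statements Propositions~\ref{prop:glue-2-story} and~\ref{prop:glue-split}) upgrades this to a genuine holomorphic curve, and shows the set of such curves near the broken configuration is parametrized by $(R_0,\infty)$ for the gluing parameter. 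The key point, exactly as in Proposition~\ref{prop:glue-split}, is that the dimension count works out so that there is a single ray of glued curves; since $\x$ is a regular value of the evaluation map from $\cN$, no further constraints enter. Hence each simple boundary degeneration contributes an odd number (in fact one) of ends to $\cM(\x,\y;\vec{a};w)$.

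The main obstacle is handling the non-compactness of the target correctly: unlike the two-story and split-curve cases, here the symplectic manifold $\Sigma\times P_i$ itself is degenerating as $P_i$ approaches the boundary of $\ModPol$, so the gluing analysis is of the SFT-with-splitting-target type (compare~\cite[Section~9.1]{BEHWZ03:CompactnessInSFT}) rather than gluing along a fixed contact hypersurface. I would address this by working in the coordinates where $P$ is identified with $[0,1]\times\RR$ and the algebra-type component sits over a long neck, using Condition~\ref{item:J-const-at-punct} (the almost complex structure is split $j_i\times j_P$ near the algebra vertex) so that the neck-stretching is along the product region $\Sigma\times(\text{neck})$; there the analysis is standard. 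A secondary subtlety is the possible rescaling in the $\HHH$-direction for the boundary-degeneration component, but this only contributes the overall $\RR_{>0}\ltimes\RR$ symmetry that is already quotiented out in the definition of $\cN(*;\bdSource)$, so it does not affect the count of ends. I would note that, as in the earlier gluing propositions, the coherence of the family $J$ (Definition~\ref{def:AdmissibleJs}) ensures the almost complex structure on the main component of the limit is precisely the one used to define $\cM^{B_1}(\x,\y;\vec{b};w_1)$, so no mismatch arises.
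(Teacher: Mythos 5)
Your proposal is essentially the paper's argument: both treat this as standard SFT gluing where the target $\Sigma\times P$ splits at the node of the limiting polygon, with transversality supplied by the tailored conditions. Two small corrections. First, for a \emph{simple} boundary degeneration end in an index-$2$ moduli space, the main component is necessarily the \emph{constant} trivial strip with $\x=\y$ and no algebra elements left on it (this is why Theorem~\ref{thm:master}\ref{end:SimpleBoundaryDegeneration} only occurs when $\x=\y$); your phrasing suggests a possibly nonconstant rigid main component carrying a residual sequence $\vec{b}$, which is the \emph{composite} case and requires the much more indirect argument of Proposition~\ref{prop:glue-join}. Second, the one substantive verification the paper singles out is the Morse--Bott matching in the $\RR$-coordinate of $[0,1]\times\RR$ (where along $\{1\}\times\RR$ the bubble attaches): the evaluation maps to $\RR$ from the two pieces must be transverse, which holds trivially because the almost complex structure on one side is $\RR$-invariant; your discussion addresses only the evaluation to the $\alpha$-curves, so you should add this point.
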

\begin{proof}
  Let $P_1$ and $P_2$ be the two components of the limit polygon (in
  $\ModPol$) and let $u_i\co S_i\to \Sigma\times P_i$ be the
  holomorphic curve over $P_i$. View $\Sigma\times P_i$ as having a
  cylindrical end at the node where $P_1$ and $P_2$ meet. Then, the
  proposition is the usual kind of gluing in symplectic field theory
  (e.g.,~\cite[Proposition A.2]{Lipshitz06:CylindricalHF}), except
  that, as usual for bordered Floer theory, this is a Morse-Bott
  problem, because of the $\RR$-coordinate in $[0,1]\times \RR\cong
  P_i$. So, we must verify that the evaluation maps to $\RR$ from the
  moduli space near $u_1$ is transverse to evaluation map to from the
  moduli space near $u_2$; but since the almost complex structure on $P_2$ is
  $\RR$-invariant, this is trivially satisfied.
\end{proof}

Convergence to composite boundary degeneration ends is similar, except
that for a composite boundary degeneration there are three components:
a module component $P_1$ connected to an algebra-type component $P_2$ with
three boundary marked points (corresponding to the join curve)
connected to another algebra-type component $P_3$ with $\geq 5$ boundary
marked points. The gluing result for composite boundary degenerations,
however, is not as straightforward: if we view the node between $P_1$
and $P_2$ (or between $P_2$ and $P_3$) as a cylindrical end, then the
holomorphic curve approaches that end at the same time that it approaches
the puncture of $\Sigma$. A product of two cylindrical ends is not a
cylindrical end, so this does not fit into the usual framework for SFT
gluing.

So, the proof of the gluing result for composite boundary degenerations
will be somewhat indirect. Before giving it, we recall some convenient
results from our monograph~\cite[Section 5.5]{LOT1} that let us use basic
properties of maps of stratified spaces to compute degrees.

We take a very weak notion of a stratified space: an $n$-dimensional
stratified space is a topological space $X$ written as a union of
disjoint subspaces $\{X_i\}_{i=0}^n$ so that each $X_i$ is a
topological manifold of dimension $i$ and the closure of $X_k$ is
contained in
$\bigcup_{i\leq k}X_i$, together with a choice of smooth structure on
each $X_i$. Given stratified spaces $X$ and $Y$, a \emph{stratified
  map} $f\co X\to Y$ is a continuous map so that for each stratum
$Y_i$, $f^{-1}(Y_i)$ is a union of connected components of strata of
$X$ and the restriction of $f$ to each $X_j\subset f^{-1}(Y_i)$ is a
smooth map $X_j\to Y_i$.

A map $f\co X\to Y$ of topological spaces is \emph{proper near
  $q\in Y$} if there is an open neighborhood $U\ni q$ so that
$f|_{f^{-1}(U)}\co f^{-1}(U)\to U$ is proper. If $f\co X\to Y$ is a
stratified map of $n$-dimensional stratified spaces which is proper
near $q\in Y$, the \emph{local degree} of $f$ near $q$ is the degree
at any regular value of $f|_{X_n}\co X_n\to Y_n$ near $q$.

Let $\RR_+^m=[0,\infty)^m$, viewed as an $m$-dimensional stratified
space in the obvious way.

\begin{lemma}
  \label{lem:StratifiedSpaces-moduli}
  If $f\co X\to \RR_+^m$ is a stratified map which is proper near $0$,
  and there is an $(m-1)$-dimensional face $\phi\subset \RR_+^m$ whose
  preimage under $f$, denoted $X^\phi$, forms the boundary of the
  manifold with boundary $X_m\cup X^\phi$, then the local degree of $f$
  near $0$ coincides with the local degree of $f$ restricted to
  $X^{\phi}$, mapping to a copy of $\RR_+^{m-1}$.
\end{lemma}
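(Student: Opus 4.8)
The plan is to deduce the statement from the mod-$2$ count of the boundary points of a compact $1$-manifold (the argument carries orientations along if one prefers the oriented version). Working mod $2$, after relabeling coordinates we may assume $\phi=\{x_1=0\}\subset\RR_+^m$, identified with $\RR_+^{m-1}$ via $(0,x_2,\dots,x_m)\mapsto(x_2,\dots,x_m)$; write $\pi\co\RR_+^m\to\RR_+^{m-1}$ for the projection forgetting $x_1$. Fix the open set $U\ni 0$ on which $f$ is proper and a small $\delta>0$ with $[0,\delta]^m\subset U$. Applying Sard's theorem to the smooth restrictions $f|_{X_j}$ and to their composites with $\pi$ and with $x_1$, I would choose $r\in(0,\delta)^{m-1}$ and $p_1\in(0,\delta)$ so that $p:=(p_1,r)$ is a regular value of $f|_{X_m}$, the point $(0,r)\in\phi$ is a regular value of $f|_{X^\phi}\co X^\phi\to\phi$, and the segment $\overline I:=\{(t,r)\mid 0\le t\le\delta\}$ is met transversally by $f$ on every stratum it touches (so that $(0,r)$ lies in the top stratum of $\phi$, and only the components of $X_m$ mapping onto the open stratum $Y_m$, together with the corresponding $(m-1)$-dimensional part of $X^\phi$, enter the preimage of $\overline I$).

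Next I would set $W:=X_m\cup X^\phi$ (the manifold with boundary $\partial W=X^\phi$ furnished by hypothesis), $g:=f|_W$, and $\overline M:=\big(g|_{g^{-1}(U)}\big)^{-1}(\overline I)$. Since $\overline I\subset U$ is compact and $g|_{g^{-1}(U)}$ is proper, $\overline M$ is compact; and by the transversality arranged above it is a compact $1$-manifold with boundary, whose boundary consists precisely of $g^{-1}\big((\delta,r)\big)$ (where $g$ is an honest submersion off $\partial W$, as $(\delta,r)\in Y_m$) together with $g^{-1}\big((0,r)\big)\subset X^\phi$ (where the manifold-with-boundary chart of $W$ at a preimage point identifies a neighborhood in $\overline M$ with $[0,\epsilon)$, its endpoint on $\partial W$). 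Now $g^{-1}\big((\delta,r)\big)=f^{-1}\big((\delta,r)\big)$ has, mod $2$, the cardinality equal to the local degree of $f$ near $0$ — here I would also use that any two regular values of $f|_{X_m}$ in the connected set $(0,\delta)^m$ give the same count, by the same compact-$1$-manifold argument applied to a generic joining path and properness of $f$ near $0$ — while $g^{-1}\big((0,r)\big)=(f|_{X^\phi})^{-1}\big((0,r)\big)$ has, mod $2$, cardinality equal to the local degree of $f|_{X^\phi}\co X^\phi\to\RR_+^{m-1}$ near $0$. Since a compact $1$-manifold has an even number of boundary points, these two local degrees agree mod $2$, which is the claim.

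The step I expect to require the most care is the transversality bookkeeping at the corner point $(0,r)$: one must check that $(0,r)$ genuinely lies in the interior of the top stratum of the face $\phi$, that the preimage of $\overline I$ only involves the components of $X_m$ mapping onto $Y_m$ and the matching $(m-1)$-dimensional part of $X^\phi$, and that the "regular value'' conditions for the finitely many strata and the two auxiliary projections can be imposed simultaneously. Once that is arranged, $\overline M$ is an honest compact $1$-manifold with boundary and the parity argument is immediate. Everything else — compactness of $\overline M$ and independence of the local degree from the chosen regular value — is a direct transcription of the arguments already used for two-story and split-curve ends in this section.
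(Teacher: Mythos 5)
Your proof is correct and is, in essence, the proof the paper relies on: the paper's own "proof" of this lemma is just the citation to \cite[Lemma~5.38]{LOT1}, and that lemma is established by exactly your device of pulling back a short generic segment running from the face $\phi$ into the interior of $\RR_+^m$ and counting the boundary points of the resulting compact $1$-manifold mod $2$. The one place where care is genuinely needed (and which you already flag) is that compactness of $\overline{M}$ requires the generic segment's preimage to actually avoid the strata of $X$ of dimension $\leq m-1$ not contained in $X^\phi$, not merely to meet them transversally; this is the same bookkeeping implicit in the cited lemma.
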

\begin{proof}
  This is essentially~\cite[Lemma~5.38]{LOT1}.
\end{proof}

\begin{proposition}\label{prop:glue-join}
  Fix a tailored family of almost complex structures $J$, generators $\x,\y$, a
  homology class $B\in\pi_2(\x,\y)$, a sequence of basic algebra elements
  $\vec{a}$, and a positive integer $w$ so that
  $\ind(B,\vec{a},w)=2$. Near each composite boundary
  degeneration (with the specified total homology class and
  asymptotics), the moduli space $\cM(\x,\y;\vec{a};w)$
  has an odd number of ends.
\end{proposition}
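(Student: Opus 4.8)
The plan is to follow the stratified‑space strategy of~\cite[Section~5.5]{LOT1}, via Lemma~\ref{lem:StratifiedSpaces-moduli}. As the discussion before the statement explains, a composite boundary degeneration is the limit of a sequence in $\cM(\x,\y;\vec{a};w)$ that is degenerating along two necks at once: the $e\infty$ neck at which the join curve~$v'$ detaches from the boundary‑degeneration component~$u'$, and the neck near the puncture~$p$ of~$\Sigma$ along which~$u'$ (a branched cover of~$\Sigma$, living over $\Sigma\times\HHH$) forms. Each neck, taken alone, is controlled: detaching~$v'$ is an ordinary join‑curve degeneration, glued as in Proposition~\ref{prop:glue-orbit} (a Morse--Bott gluing whose matching map lands in $\RR/\RR$, so its transversality hypothesis is automatic), and the formation of~$u'$ is a simple boundary degeneration, glued as in Proposition~\ref{prop:glue-degen}. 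But a product of two cylindrical ends is not a cylindrical end, so neither gluing theorem applies directly to the composite configuration; the remedy is to interpolate with a two‑parameter model and extract the end count from its corner.

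Concretely, I would fix the broken configuration --- the index‑one main curve $u_{\mathrm{main}}$ together with the composite boundary degeneration presented as $(u',v')$, where $u'$ is a boundary degeneration and $v'$ a join curve attached to~$u'$ along a length‑$4$ Reeb chord (Definition~\ref{def:composite-bdy-degen}) --- and, for $(\sigma,\tau)\in\RR_{\ge0}^2$, build a preglued approximate solution $G_{\sigma,\tau}$ that uses~$\tau$ to control the length of the $e\infty$ neck between $v'$ and $u'$ and~$\sigma$ to control the $\HHH$‑rescaling reconnecting~$u'$ to $u_{\mathrm{main}}$ across a neck encircling~$p$, with $(\sigma,\tau)=(0,0)$ the fully broken configuration. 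Solving the $\dbar$‑equation near $G_{\sigma,\tau}$ --- unobstructed, by the two one‑neck analyses above and by the transversality built into a tailored family --- yields a stratified space~$\mathcal{X}$ near the composite boundary degeneration, together with a stratified map $\pi\colon\mathcal{X}\to\RR_{\ge0}^2$. Properness of~$\pi$ near~$0$, and the absence of any further breaking in the relevant energy range, follow exactly as in the proof of Lemma~\ref{lem:bd-ev-proper}, using compactness of $\ModPol$, SFT compactness~\cite{BEHWZ03:CompactnessInSFT}, Lemma~\ref{lem:bdy-deg-ind-2}, and additivity of the index (Lemma~\ref{lem:bdy-degen-ind-add}). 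Over the face where only the $e\infty$ neck is fully stretched, $\mathcal{X}$ recovers the collar of the composite‑boundary‑degeneration end of $\cM(\x,\y;\vec{a};w)$ as a manifold‑with‑boundary $X_2\cup X^\phi$ (via the join‑curve gluing); over the complementary face, where only the neck near~$p$ is fully stretched, $\mathcal{X}$ is governed by the simple‑boundary‑degeneration gluing of Proposition~\ref{prop:glue-degen}, which shows the corresponding one‑parameter slice has an odd number of ends.

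Given that structure, I would finish by applying Lemma~\ref{lem:StratifiedSpaces-moduli} along each face: the number of ends of $\cM(\x,\y;\vec{a};w)$ near the composite boundary degeneration equals the local degree of $\pi|_{X^\phi}$ near~$0$, which equals the local degree of~$\pi$ near $0\in\RR_{\ge0}^2$, which in turn equals the local degree of~$\pi$ along the complementary face --- and that is odd by Proposition~\ref{prop:glue-degen}. Hence $\cM(\x,\y;\vec{a};w)$ has an odd number of ends near the composite boundary degeneration. I expect the real obstacle to be the middle paragraph: constructing~$\mathcal{X}$ with the asserted face structure amounts to carrying out the simultaneous but genuinely two‑parameter gluing of the $e\infty$ neck and the neck near~$p$ when the two do not assemble into a single cylindrical end, which requires tracking the two exponential decay rates independently and controlling how the rescaling of~$\HHH$ meshes with the pregluing near the puncture of~$\Sigma$ --- precisely the reason the argument is routed through the stratified‑space formalism rather than a direct gluing estimate.
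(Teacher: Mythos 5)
Your overall strategy---produce a two-parameter stratified space mapping to $\RR_{\ge0}^2$ with the composite boundary degeneration at the corner, and use Lemma~\ref{lem:StratifiedSpaces-moduli} to transfer a known degree from one face to the face whose points are gluings into $\cM(\x,\y;\vec{a};w)$---has the right shape, and it is the shape of the paper's argument. But the way you propose to \emph{construct} that stratified space is exactly the step that does not work, and you have left it undone. You build $\mathcal{X}$ by pregluing at the broken configuration $(u_{\mathrm{main}},u',v')$ with two independent neck parameters $(\sigma,\tau)$ and assert that solving $\dbar$ near $G_{\sigma,\tau}$ is unobstructed ``by the two one-neck analyses.'' It is not: the join curve $v'$ is attached to $u'$ along the chord $c$ that $u'$ outputs at $\infty\in\HHH$, which is simultaneously the puncture $p$ of $\Sigma$, so the $e\infty$ neck and the $\HHH$-neck overlap in the target and the two gluings do not decouple. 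This is precisely the ``product of two cylindrical ends is not a cylindrical end'' obstruction flagged just before the proposition, and Lemma~\ref{lem:StratifiedSpaces-moduli} only computes degrees once $\mathcal{X}$ and its face structure exist; your closing paragraph concedes the construction is the real obstacle without supplying it. Your face identification is also off: $v'$ is attached to \emph{both} $u'$ (via $c$) and $u_{\mathrm{main}}$ (via $b$), so ``stretch only the $e\infty$ neck'' and ``stretch only the neck near $p$'' do not describe two independent faces of a compactification.

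The paper's proof sidesteps the two-parameter gluing entirely and never glues at the composite configuration. Writing $u_1$ for the main component and $u_2$ for the boundary degeneration, it uses Lemma~\ref{lem:bd-ev-proper} and Proposition~\ref{prop:extended-is-composite} to produce, for each small $t>0$, a \emph{simple} boundary degeneration $u_t$ through the points $(\pi_\bD\circ u_1)^{-1}(1,t)$ and homotopic to $u_2$; gluing $u_1$ to $u_t$ is a standard one-neck Morse--Bott gluing already established in Proposition~\ref{prop:glue-degen}. Varying $t$ and the gluing parameter $s$ yields a two-parameter family of honest curves $u_{s,t}$ with the chord $a_j$ split as $c,b$, and the map to $\RR_+^2$ is given by the evaluation differences $\ev_c-\ev_{a_{j-1}}$ and $\ev_b-\ev_c$, not by neck lengths. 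The face $\ev_c-\ev_{a_{j-1}}=0$ consists of the broken pairs $(u_1,u_t)$, where the degree is $1$ by the simple-boundary-degeneration gluing; Lemma~\ref{lem:StratifiedSpaces-moduli} then gives degree $1$ at the corner and hence on the face $\ev_b-\ev_c=0$, whose points are the desired gluings. To repair your proof you would need either to actually carry out the two-parameter gluing analysis you defer, or to replace your construction of $\mathcal{X}$ by one along these lines, which in particular requires invoking Proposition~\ref{prop:extended-is-composite} so that the interpolating family $u_t$ is known to exist.
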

\begin{proof}
  Fix a composite boundary degeneration end
  \[
    \bigl(u_1\co S_1\to \Sigma\times[0,1]\times\RR,\ u_2\co S_2\to \Sigma\times\HHH,\ 
    v\co T\to \RR\times \bdy \Sigma\times[0,1]\times\RR\bigr),
  \]
  where $v$ is the join component. (Here, $(u_2,v)$ is a composite
  boundary degeneration from
  Definition~\ref{def:composite-bdy-degen}.)  For definiteness, assume
  we are considering Case~\ref{item:right-ext} of
  Definition~\ref{def:composite-bdy-degen} (the right extended
  case). Let $(a_1,\dots,a_{i-1},b,a_{j+1},\dots,a_n)$
  be the algebra elements on $u_1$ and $(a_i,\dots,a_{j-1},c)$
  the algebra elements on $u_2$. So, $v$ is asymptotic to $(c,b)$
  at $w\infty$ and $a_j$ at $e\infty$, and $a_j=cb$.

  \begin{figure}
    \centering
    \includegraphics{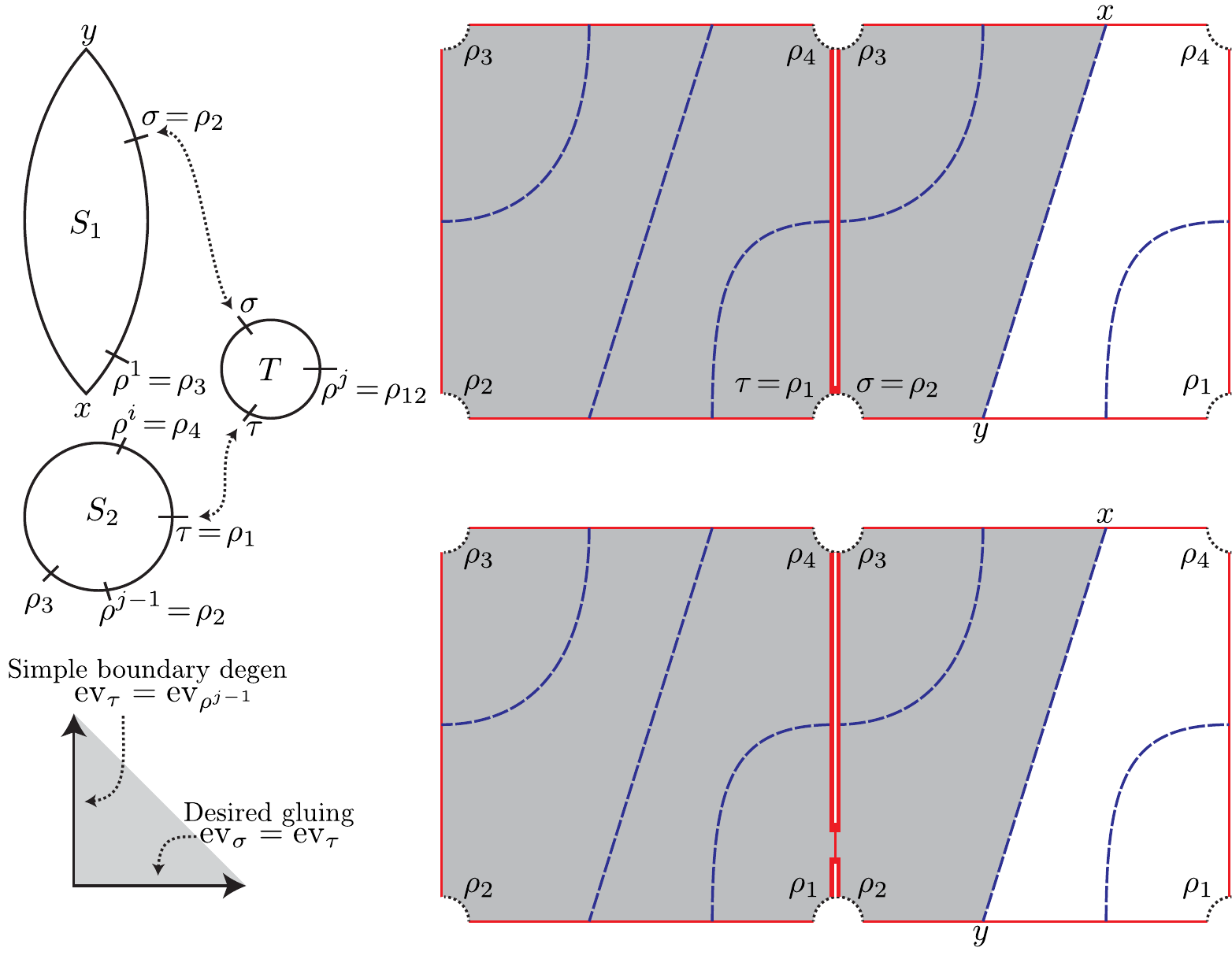}
    \caption[Gluing composite boundary degenerations]{\textbf{Gluing
        composite boundary degenerations.} This is an example of the
      argument in Proposition~\ref{prop:glue-join}. Near the composite
      boundary degeneration in the top row is a family of simple
      boundary degenerations. Gluing them gives a two-parameter family
      of curves, parameterized by the locations of the two boundary
      branch points in the lower-right figure. The desired gluings of
      the composite boundary degeneration are the boundary of this
      space where one cut has shrunk, i.e., $\ev_\sigma=\ev_\tau$.}
    \label{fig:GlueComposite}
  \end{figure}
  
  Instead of gluing $u_1$, $u_2$, and $v$ together directly, we will
  consider gluing $u_1$ to a family of simple boundary degenerations
  near $T$ and see $(u_1,u_2,v)$ as an end of a family arising that
  way, via Lemma~\ref{lem:StratifiedSpaces-moduli}. The argument is
  illustrated in Figure~\ref{fig:GlueComposite}.

  After translating in $\RR$, we can assume that $u_1$ is asymptotic
  to $\sigma$ at the point $(1,0)\in[0,1]\times\RR$. Then, for small
  enough $\epsilon$, for each $t\in(0,\epsilon)$, by
  Lemma~\ref{lem:bd-ev-proper} and
  Proposition~\ref{prop:extended-is-composite} there is a simple
  boundary degeneration $u_t$ through the points
  $(\pi_\bD\circ u_1)^{-1}(1,t)$ homotopic to $u_2$. By
  Proposition~\ref{prop:glue-degen} we can glue $u_1$ to $u_t$. This
  gives a two-parameter family of curves $u_{s,t}$ where $s$ is the
  gluing parameter. The algebra asymptotics of $u_{s,t}$ are
  $(a_1,\dots,a_{j-1},c,b,a_{j+1},\dots,a_n)$.

  Consider the compactification of the moduli space containing the
  $u_{s,t}$.  We have two functions from this compactified moduli
  space to $\RR_+$: the difference in $\RR$-coordinates
  $\ev_{c}-\ev_{a_{j-1}}$ and the difference in
  $\RR$-coordinates $\ev_{b}-\ev_{c}$. The product of these
  maps is a map to the first quadrant $\RR_+^2$. By considering the
  possible limits, this map to $\RR_+^2$ is proper near the
  origin. The subspace with $\ev_{c}-\ev_{a_{j-1}}=0$
  corresponds to the simple boundary degenerations $(u_1,u_t)$, so
  maps by a degree $1$ map to the positive $y$-axis. So, by
  Lemma~\ref{lem:StratifiedSpaces-moduli} the map to $\RR_+^2$ is
  locally degree $1$ near the origin. Hence, the restriction to the
  subspace where $\ev_b-\ev_c=0$ is also degree $1$ near the
  origin; but points in this subspace are the desired gluing of the
  composite boundary degeneration.
\end{proof}

\subsection{Compactness and the codimension-1 boundary}\label{sec:compactness}

We now have all the tools needed to prove Theorem~\ref{thm:master}.
Before doing so, we note that the counts used to define $\CFAa$ are finite:
\begin{lemma}\label{lem:0d-compactness}
  Fix generators $\x$ and $\y$, $B\in\pi_2(\x,\y)$, a sequence of
  algebra elements $\vec{a}$, and an integer $w$, so that
  $\ind(B,\vec{a},w)=1$. Then, for any tailored family of almost complex
  structures $J$, the moduli space $\cM^B(\x,\y;\vec{a};w)$ has
  finitely many elements.
\end{lemma}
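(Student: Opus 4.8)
The plan is to deduce finiteness from transversality together with Gromov--SFT compactness, using the same analysis of limits that underlies Theorem~\ref{thm:master}. First I would record the easy reductions. Since $\ind(B,\vec{a},w)=1$, the embedded index formula (Proposition~\ref{prop:emb-ind}) determines $\chi(S)$ for any $u\in\cM^B(\x,\y;\vec{a};w)$, and the numbers of boundary and interior punctures of the source are fixed by $\vec{a}$ and $w$; hence $\cM^B(\x,\y;\vec{a};w)$ is a union of finitely many moduli spaces of embedded curves with a fixed topological source. By Condition~\ref{item:tail-main-comp} in the definition of a tailored family, each of these is transversely cut out, hence a smooth manifold of dimension $\ind(B,\vec{a},w)-1=0$; in particular $\cM^B(\x,\y;\vec{a};w)$ is discrete, so it suffices to prove it is compact.

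Next I would take a sequence $u_i\in\cM^B(\x,\y;\vec{a};w)$. The underlying module polygons $P(u_i)$ all lie in the compact space $\ModPol(\vec{a};w)$, so after passing to a subsequence they converge in $\ModPol$, possibly to a point of $\partial\ModPol$; this forces the targets $\Sigma\times P(u_i)$ to split. Applying SFT compactness~\cite{BEHWZ03:CompactnessInSFT} (in the form appropriate to splitting targets, as in Section~9 of that reference) to a further subsequence, the $u_i$ converge to a holomorphic building whose components are of the types enumerated in Section~\ref{sec:def-mod-sp}: a (possibly broken) main part, curves at $e\infty$ (split, join, orbit, pseudo split, and fake split curves), and boundary degenerations. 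There are no sphere or disk bubbles, by Conditions~\ref{item:tail-spheres} and~\ref{item:tail-disk}, Lemmas~\ref{lem:OS-no-spheres} and~\ref{lem:OS-no-disks}, and positivity of domains.

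The key step is to rule out any nontrivial degeneration. If one occurred, then by additivity of the index (Proposition~\ref{prop:emb-ind-add} and Lemma~\ref{lem:bdy-degen-ind-add}), together with the facts that the curves at $e\infty$ are rigid and that every boundary degeneration has $\indsq\geq 2$ (Lemma~\ref{lem:bdy-deg-ind-2}), some nontrivial holomorphic component of the limit would lie in a moduli space of expected dimension $\leq\ind(B,\vec{a},w)-2=-1$: a two-story break forces $\ind(B_1)+\ind(B_2)=1$ with each $\geq 1$ if nonempty, a split/pseudo-split/fake-split or orbit end lowers the index of the main component by $1$, and a simple or composite boundary degeneration forces a factor of index $\leq 1-\indsq\leq -1$. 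But Conditions~\ref{item:tail-main-comp} and~\ref{item:tail-bdy} guarantee that all the moduli spaces of main components and of boundary degenerations of index $\leq 2$ that can arise here are transversely cut out, hence empty when the expected dimension is negative. This contradiction shows the limiting building is a single element of $\cM^B(\x,\y;\vec{a};w)$. Therefore $\cM^B(\x,\y;\vec{a};w)$ is sequentially compact; being metrizable it is compact, and a compact discrete space is finite.

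I expect the only real work to be the bookkeeping in the third paragraph: verifying that each admissible degeneration does force a component of negative expected dimension (or an $\indsq\leq 1$ boundary degeneration), via the index formulas of Section~\ref{sec:ind}. Since this is a sub-case of the codimension-one analysis that will be carried out for Theorem~\ref{thm:master} in Section~\ref{sec:compactness}, it introduces no genuinely new difficulty here.
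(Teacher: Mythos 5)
Your proposal is correct and follows essentially the same route as the paper, which proves this lemma jointly with Theorem~\ref{thm:master} in Section~\ref{sec:compactness}: pass to an SFT limit over the compact space $\ModPol$, use additivity of the index together with $\indsq\geq 2$ for boundary degenerations and transversality to show the limit cannot be broken, and conclude that the zero-dimensional, transversely cut out moduli space is compact, hence finite. The index bookkeeping you defer in your third paragraph is exactly Steps~2--3 of the paper's argument and works out as you anticipate.
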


\begin{proof}[Proof of Lemma~\ref{lem:0d-compactness} and Theorem~\ref{thm:master}]
  Fix $\x$, $\y$, $B$, $\vec{a}$, and $w$ as in the statement of
  Lemma~\ref{lem:0d-compactness} so that $\ind(B,\vec{a},w)\leq 1$ for
  Lemma~\ref{lem:0d-compactness} and $2$ for Theorem~\ref{thm:master}
  Fix also a decorated source $\Source$ with corresponding sequence of
  algebra elements $\vec{a}$ and $w$ Reeb orbits, and such that
  $\chi(S)=\chi_{\emb}(B,\vec{a},w)$.

  \emph{Step 1.} The general structure of the limit. Consider a sequence of
  holomorphic polygons $u_n$, with respect to almost complex
  structures $J(P_i)$ for some sequence of $P_i$. By compactness of
  $\ModPol$ (which is a quotient of the Deligne-Mumford moduli
  space), there is a subsequence so that the $P_i$ converge to some
  (possibly broken) polygon $P_\infty\in\ModPol$. Write the components
  of $P_\infty$ as $P_{\infty,1},\dots,P_{\infty,m}$. From the
  definition of a coherent family of admissible almost complex
  structures, the $J(P_i)$ for that subsequence converge to
  $J(P_\infty)$. The argument from ordinary bordered Floer
  homology~\cite[Proposition 5.23]{LOT1}, which boils down to applying
  symplectic field theory compactness first to the $\pi_{P}\circ u_n$,
  then to $\pi_\Sigma\circ u_n$ in a neighborhood of the puncture, and
  then to the $u_n$ on the complement of a smaller neighborhood, gives
  a convergent subsequence of the $u_n$. The limit has one component
  for each $P_{\infty,i}$ ($i=1,\dots,m$); these come in six kinds:
  
  \begin{itemize}
  \item \emph{Main components}, which are maps to
    $\Sigma\times P_{\infty,i}$ where $P_{\infty,i}$ is a module
    component. In particular, part of the boundary of these components
    is mapped to $\betas\times \{0\}\times\RR$.
  \item \emph{Boundary degeneration components}, which are maps to
    $\Sigma\times P_{\infty,i}$ where $P_{\infty,i}$ is an algebra
    polygon. In particular, all of the boundary of these components is
    mapped to $\alphas\times \bdy P_{\infty,i}$.
  \item \emph{$E\infty$ components}, which are mapped to $(\RR\times \bdy
    \Sigma)\times P_{\infty,i}$.
  \item Fake \emph{$e\infty$ components}, which are mapped to
    $\Sigma\times P_{\infty,i}$ where the map to $\Sigma$ is constant
    (which is necessarily to a point on
    $\alpha_1^a\cup\alpha_2^a$). (This happens, for instance, if a
    pair of boundary marked points labeled $U^a$ and $U^b$ collide;
    see the discussion of fake split curves in
    Section~\ref{sec:e-infty}.)  Below, we will abuse terminology and
    also refer to these as components at $e\infty$, because they play
    the same role in the argument.
  \item \emph{Disk bubbles}, which are maps to $\Sigma\times
    P_{\infty,i}$ so that the map to $P_{\infty,i}$ is a constant map
    to the boundary while in $\Sigma$ all of
    the boundary of the component is mapped to one of $\alpha_1^a\cup
    \alphas^c$, $\alpha_2^a\cup\alphas^c$, or $\betas$.
  \item \emph{Sphere bubbles}, which have no boundary and are mapped
    to $P_{\infty,i}$ by a constant map.
  \end{itemize}
  These components are glued together by a \emph{gluing graph}, whose
  vertices correspond to the components and whose edges indicate how
  the components intersect or, more accurately, are glued together in
  $u_{n_i}$ for $i\gg 0$. It follows from boundary monotonicity (or,
  essentially equivalently, from the symmetric product formulation in
  Section~\ref{sec:tautological}) that this gluing graph is, in fact,
  a tree (compare~\cite[Lemma 5.57]{LOT1}). There are also some other degenerations which we can
  immediately see do not occur. Pairs of simple Reeb orbits colliding
  has codimension $2$ (see Formula~\ref{prop:index-source}
  or~\ref{prop:emb-ind}). Formation of interior nodes (collapsing
  circles in the source~$S$) also has codimension $2$: this increases $\chi$ by
  $2$, so decreases Formula~\ref{prop:index-source} by $2$. Arcs in
  $S$ collapsing to form nodes without splitting at $\pm\infty$ or
  $e\infty$ or creating a boundary degeneration violates strong
  boundary monotonicity, as in the $\HFa$ case~\cite[Lemma 5.50]{LOT1}.

  \emph{Step 2.} Applying the index formula.  
  Consider a broken holomorphic curve with a main component $u_0$
  (which may be broken at $\pm\infty$),
  boundary degenerations
  $u_1,\dots,u_b$, and $e\infty$ components
  $v_1,\dots,v_c$, where $v_1,\dots,v_c$ have a total of $m$ $w\infty$
  algebra elements and $w'$ $w\infty$ Reeb orbits, with total ramification
  $r$ (of the Reeb orbits). Recall that there are
  $k$ algebra elements and $w$ simple Reeb orbits at (far) east
  $\infty$. After gluing together these components, we have
  \begin{align*}
    \ind(B,\Source,\vec{a};w,0)&=g-\chi(S)+2e(B)+k+w\leq 1\text{ or }2\\
    \chi(S)&=\chi(S_0)+\cdots+\chi(S_b)-bg\\
    e(B)&=e(B_0)+e(B_1)+\cdots+e(B_b).
  \end{align*}
  Thus,
  \begin{multline*}
    \ind(B_0,\SourceSub{0},\vec{a}_0;w_0,0)+\ind(B_1,\bdSource_1,\vec{a}_1;w_1,0)+\cdots+\ind(B_b,\bdSource_b,\vec{a}_b;w_b,0)\\
    +k-m+w-w'+r
    \leq 1\text{ or }2.
  \end{multline*}
  By Lemma~\ref{lem:bdy-deg-ind-2},
  each $\ind(B_i,\bdSource_i,\vec{a}_i;w_i,0)$, $i>0$, is at least $2$, and since
  the gluing graph is a tree, $k-m\geq -b$. Since Reeb orbits are
  never created (though they can collide or hit the boundary and turn
  into chords via orbit curves), $w-w'\geq 0$ and, in fact, $w-w'\geq r\geq 0$. Also,
  $\ind(B_0,\SourceSub{0},\vec{a}_0;w_0,0)\geq 0$. Thus, the left side is
  bounded below by $2b-b$, so $b\leq 1$ or $2$.

  \emph{Step 3.} Classifying the possibilities in the index $1$ case.
  In the index $1$ case, $b\leq 1$. If $b=1$ then we must
  have $\ind(B_0,\SourceSub{0},\vec{a}_0;w_0,0)=0$, so in fact $u_0$ is
  constant. Then, the curve consists of a single boundary degeneration,
  which has index at least $2$, a contradiction. If $b=0$ then
  $\ind(B_0,\SourceSub{0},\vec{a}_0;w_0,0)=1$ so $k-m=0$ and
  $w-w'+r=0$. Thus, there are also no curves at $e\infty$ so the limit
  is an element of $\cM^B(\x,\y;\Source)$, not a broken holomorphic
  curve. This proves Lemma~\ref{lem:0d-compactness}.

  \emph{Step 4.} Classifying the possibilities in the index $2$ case.
  In the index $2$ case, the remaining possibilities are:
  \begin{itemize}
  \item $\ind(B_0,\SourceSub{0},\vec{a}_0;w_0,0)=2$, $b=0$, $k=m$, and $w=w'$. In
    this case, there are no boundary degenerations or curves at
    $e\infty$. So, there are the following subcases:
    \begin{itemize}
    \item A 2-story holomorphic building.
    \item A constant map and a sphere bubble.
    \item A constant map and a disk bubble.
    \end{itemize}
    The latter two kinds of ends do not contribute to the count by
    properties~\ref{item:tail-spheres} and~\ref{item:tail-disk} of the
    definition of a tailored family of almost complex structures, and
    Lemma~\ref{lem:OS-no-disks}.
  \item $\ind(B_0,\SourceSub{0},\vec{a}_0;w_0,0)=1$, $b=0$, $k=m+1$, and $w=w'$. This
    corresponds to degenerating a split curve or a pseudo split curve at
    $e\infty$, or to a fake split curve; all of these cases are
    collision ends. (This uses
    the fact that if $b=0$ no component at $e\infty$ can have more
    than one $w\infty$ chord.)
  \item $\ind(B_0,\SourceSub{0},\vec{a}_0;w_0,0)=1$, $b=0$, $k=m$, and
    $w=w'+1$. This corresponds to degenerating an orbit curve. (In
    particular, we must have $r=0$.)
  \item $\ind(B_0,\SourceSub{0},\vec{a}_0;w_0,0)=1$, $b=1$, $k=m-1$, and $w=w'$. This
    corresponds to a composite boundary degeneration. (Again, this
    uses the fact that the gluing graph is a tree to rule out more
    complicated components at $e\infty$.)
  \item $\ind(B_0,\SourceSub{0},\vec{a}_0;w_0,0)=0$, $b=1$, $k=m$, and $w=w'$. In
    this case, the main component is constant, so in particular has no
    algebra elements on it. Thus, since the gluing graph is a tree, there
    are no components at $e\infty$, either. Thus, this case
    corresponds to a simple boundary degeneration, connected to a
    constant main component.
  \item $\ind(B_0,\SourceSub{0},\vec{a}_0;w_0,0)=0$, $b=0$, $k=m$, and
    $w=w'+2$ or $w=w'+1$ and $r=1$. These cases are excluded by the
    same reasoning as the previous case.
  \item $\ind(B_0,\SourceSub{0},\vec{a}_0;w_0,0)=0$, $b=2$, $k=m-2$, and $w=w'$. This
    is prohibited by the fact that the gluing graph is a tree.
  \end{itemize}
  
  So, we have shown that the degenerations which can contribute are of
  the types specified in the theorem. Conversely,
  Propositions~\ref{prop:glue-2-story},~\ref{prop:glue-split},~\ref{prop:glue-orbit},~\ref{prop:glue-pseudo-split},~\ref{prop:glue-degen},
  and~\ref{prop:glue-join} show that each of these degenerations does
  occur (with multiplicity one).
\end{proof}


\section{The algebra of boundary degenerations}\label{sec:algebra}
\label{sec:alg-bdeg}

Let $\Sigma$ be a surface of genus $g$.  For each energy $E$, fix a
complex structure $j_E$ on $\Sigma$ which is regular for energy $E$
boundary degenerations, in the sense of Definition~\ref{def:RegularJ}.
The counts from Definition~\ref{def:CurveCount} can be organized into
a weighted $\Ainf$-algebra, as follows.

\begin{definition}
  \label{def:AinfAlgebraOfHeegaardSurface}
  Let $\Alg=\AsUnDefAlg$ be the associative algebra from
  Section~\ref{sec:build-alg-combinatorially}. The
  multiplication is an operation $\mu_2^0\co \Alg\otimes\Alg\to\Alg$. Define
  \[
    \mu_0^1=\rho_{1234}+\rho_{2341}+\rho_{3412}+\rho_{4123}
  \]
  and for $w>1$ let $\mu_0^w=0$. For non-negative integers $(w,n)$
  with $n+2w>2$ and $n\geq 1$, let
  $\mu_n^w\co \Alg[U]^{\otimes n}\to \Alg[U]$ be the map characterized by
  \begin{equation}\label{eq:AlgSurf}
    \begin{split}
    \mu_n^w(a_1,\dots, a_n)&=
    \#\cN([\alpha_i^a];a_1,\dots,a_n;w)
    \iota_i U^{n_z+m} \\
    &+ 
    \sum_{\substack{a_1=\rho b\\\indsq(b,a_2,\dots,a_n,w)=2}} \#\cN([\alpha_i^a];b,a_2,\dots,a_n;w) 
    \rho U^{n_z+m}  \\
    &+ 
    \sum_{\substack{a_n=b\rho\\\indsq(a_1,\dots,a_{n-1},b,w)=2}} \#\cN([\alpha_i^a];a_1,a_2,\dots,a_{n-1},b;w)
    \rho U^{n_z+m}
    \end{split}
  \end{equation}
  where the first term only occurs if $\indsq(\vec{a},w)=2$, and the
  moduli spaces are defined by using $J_{E(a_1,\dots,a_n,w)}$ (cf.\
  Formula~\eqref{eq:ExtendEnergy}). In each
  case, the $\alpha$-arc $\alpha_i^a$ is determined by the sequence of
  algebra elements (which must be Reeby for the moduli space to be
  non-empty, by Lemma~\ref{lem:bdy-deg-ind-2}).
  The number $n_z$ is the degree with which the curve covers $z$,
  i.e., the degree of the map to $\Sigma$, and $m$ is
  the total $U$-power in $a_1,\dots,a_n$.

  We call $(\Alg[U],\{\mu_n^w\})$
  the \emph{algebra associated to $\Sigma$ and $\{j_E\}$}.
\end{definition}

Our aim in this section is to prove the following precise form of
Theorem~\ref{thm:alg-well-defd}:

\begin{theorem}
  \label{thm:AlgOfSurface}
  For any energy $E$, there exists a choice of regular $j_E$ for
  energy $E$, in the sense of
  Definition~\ref{def:RegularJ}. Moreover, for any such
  choices $j_E$, the operations $\mu^w_n$ in
  Definition~\ref{def:AinfAlgebraOfHeegaardSurface} coincide with the
  weighted $\Ainf$-algebra associated to the torus, as defined
  in~\cite{LOT:torus-alg}.
\end{theorem}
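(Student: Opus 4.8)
The plan is to break Theorem~\ref{thm:AlgOfSurface} into two parts: (a) existence of a regular complex structure $j_E$ for each energy, and (b) the identification of the resulting operations with the algebra $\MAlg$ from~\cite{LOT:torus-alg}. Part (a) follows from standard transversality arguments: since a boundary degeneration projects to $\HHH$ as a branched cover and to $\Sigma$ as a positive-degree branched cover, the curves are somewhere injective (after passing to the underlying simple curve), so a generic $j_E$ makes the moduli spaces $\cN(*;\vec{a};w,r)$ with $\indsq \leq 2$ transversely cut out, and a further application of Sard's theorem makes every generator $\x$ a regular value of the evaluation map. This is entirely parallel to Proposition~\ref{prop:transv-exists}, so I would state it briefly and refer to the analogous arguments in~\cite{LOT1} and~\cite{Lipshitz06:CylindricalHF}.

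For part (b), the key structural input is that $\MAlg$ is \emph{uniquely characterized} (up to isomorphism) among weighted $\Ainf$-algebras by the three properties recalled in Section~\ref{sec:build-alg-combinatorially}: that its undeformed algebra is $\UnDefAlg$, that it is $(\bigGroup,\lambda)$-graded with the specified weight grading, and that $\mu_0^1 = \rho_{1234}+\rho_{2341}+\rho_{3412}+\rho_{4123}$. (And $\UnDefAlg$ is similarly uniquely characterized by being an $\Ainf$-deformation of $\AsUnDefAlg$, being graded, and having $\mu_4(\rho_4,\rho_3,\rho_2,\rho_1) = U\iota_1$.) So the plan is: first, show $(\Alg[U],\{\mu_n^w\})$ from Definition~\ref{def:AinfAlgebraOfHeegaardSurface} really is a weighted $\Ainf$-algebra; then verify it satisfies each of the characterizing properties; uniqueness then forces it to be $\MAlg$.

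The first step --- that the $\mu_n^w$ satisfy the weighted $\Ainf$-relations --- is the heart of the matter, and is where I expect the main work. The relation for fixed $(n,w)$ should come from analyzing the codimension-one boundary of the $1$-dimensional moduli spaces $\cN([\alpha_i^a];\vec{a};w)$ (those with $\indsq = 3$), exactly in the spirit of Theorem~\ref{thm:master} but one dimension down and with no main component present. The ends are: two-story boundary-degeneration buildings (giving the $\mu^{w_1}\circ\mu^{w_2}$ terms), collisions of consecutive algebra inputs (the inner-$\mu_2$ terms, handled using the last two summands of Formula~\eqref{eq:AlgSurf} together with the extension condition (4) in Definition~\ref{def:holo-def-alg}), orbit-curve degenerations (giving the $\mu_0^1$ insertions that produce the $\rho_{1234}$-type terms), and pseudo/fake split curves. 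The bookkeeping of which $\alpha$-arc and which $U$-power appears, and the fact that $n_z$ is additive under gluing, needs care but is routine given Lemma~\ref{lem:bdy-degen-ind-add} and the index computations of Section~\ref{sec:ind}; the compactness and gluing inputs are literally the boundary-degeneration special cases of the machinery already assembled for Theorem~\ref{thm:master} (Propositions~\ref{prop:glue-degen} and~\ref{prop:glue-join}, Lemma~\ref{lem:bdy-deg-ind-2}), restricted to the case with no $\betas$-boundary.

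For the remaining verifications: $\mu_1^0 = 0$ and $\mu_2^0$ being associative multiplication is immediate from Definition~\ref{def:AinfAlgebraOfHeegaardSurface}; $\mu_0^1$ has the right value by fiat; the undeformed operations $\mu_n^0$ reduce, via the $\epsilon$-pinched $\to$ genus-$1$ degeneration argument of Lemma~\ref{lem:bdy-deg-ind-2}, to the genus-$1$ combinatorial count, which Proposition~\ref{prop:IdentifyAlgebraGenusOne} identifies with $\UnDefAlg$ (in particular $\mu_4^0(\rho_4,\rho_3,\rho_2,\rho_1)$ is the count of the unique index-$2$, multiplicity-$1$ boundary degeneration covering $\alpha_1^a$, which equals $U\iota_1$). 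The grading statement follows because the embedded index formula of Proposition~\ref{prop:emb-ind-bdy-degen} says $\indsq(\vec{a},w) = 2$ exactly when the Maslov component of $\lambda_d^{n-2}\lambda_w^w\grb(a_1)\cdots\grb(a_n)$ vanishes (using Lemma~\ref{lem:iota-is-gr} and $\grb(U) = \grb(\rho_{1234})$), while the $\SpinC$ component and $U$-power are matched by the equidistribution constraint on the support; hence nonzero operations only occur in the grading dictated by $(\bigGroup,\lambda)$. Finally, independence of the choice of $\{j_E\}$ follows either from the uniqueness characterization itself (any two choices give algebras isomorphic to $\MAlg$, hence to each other) or, more directly, from a standard cobordism/parametrized-moduli-space argument showing the counts $\#\cN([\alpha_i^a];\vec{a};w)$ are unchanged under a path of regular $j_E$'s --- the obstruction to which is again controlled by Lemma~\ref{lem:bdy-deg-ind-2}. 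The main obstacle throughout is the $\Ainf$-relation bookkeeping in the first step; everything else is a matter of quoting the appropriate earlier result.
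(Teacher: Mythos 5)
Your architecture differs substantially from the paper's, and two of your steps have genuine gaps. The paper's proof of Theorem~\ref{thm:AlgOfSurface} deliberately \emph{avoids} proving the weighted $\Ainf$-relations holomorphically: it establishes (i) independence of the counts from the (possibly non-split) admissible $J$ on $\Sigma\times\HHH$ (Theorem~\ref{thm:RegularityOfBoundaryDegenerations} and Proposition~\ref{prop:BoundaryDegenerationsIndepOfJ}), (ii) the genus-one identification of the counts with the combinatorial immersed-disk/tiling-pattern counts (Proposition~\ref{prop:IdentifyAlgebraGenusOne}), and (iii) genus-independence via a two-point stabilization/neck-stretching argument with a Blaschke-product matching condition (Proposition~\ref{prop:StabilizationInvariance}); the $\Ainf$-relations are then inherited from the combinatorial proof in the prequel. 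The remark following the list of assertions (a-1)--(a-4) explicitly notes that a direct holomorphic proof of the relations ``is not necessary.'' Your plan inverts this: you make the holomorphic $\Ainf$-relation proof the heart of the argument and then invoke the uniqueness characterizations of $\UnDefAlg$ and $\MAlg$. That route is not unreasonable in principle, but it requires compactness and gluing results for the ends of the one-dimensional spaces $\cN$ themselves---in particular gluing of stacked boundary degenerations in the $\HHH$-direction---which the paper never develops; Propositions~\ref{prop:glue-degen} and~\ref{prop:glue-join}, which you cite, glue a boundary degeneration to a \emph{main} component in $\Sigma\times[0,1]\times\RR$ and do not cover these ends. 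Also, the uniqueness theorems only identify the algebra up to isomorphism, whereas the rest of the paper needs the literal identification of the counts $\#\cN([\alpha_i^a];\vec{\rho};w)$ with the structure constants of $\MAlg$ (e.g.\ in Theorem~\ref{thm:master}\ref{end:SimpleBoundaryDegeneration}).

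The more serious gap is in your part (a). A regular $j_E$ in the sense of Definition~\ref{def:RegularJ} is an honest complex structure on $\Sigma$, with the moduli spaces computed for the \emph{split} structure $j_E\times j_\HHH$ on $\Sigma\times\HHH$, and the projection $\pi_\Sigma\circ u$ has degree $k$ which is typically greater than one. Somewhere-injectivity of $u$ does not give transversality by a routine Sard--Smale argument here, because the allowed perturbations are constrained to the $\Sigma$-factor of a product structure. The paper flags exactly this point in the remark after Theorem~\ref{thm:RegularityOfBoundaryDegenerations}: generic regular $J$ in the admissible class need not be split, and the existence of regular \emph{split} $j$ is only obtained via automatic transversality for planar sources in genus one (Hofer--Lizan--Sikorav) together with the stabilization argument of Section~\ref{subsec:StabilizationInvarianceBDeg}. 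Relatedly, you attribute the reduction of the counts to genus one to the pinching argument of Lemma~\ref{lem:bdy-deg-ind-2}; that argument only controls the \emph{index} of nonempty moduli spaces. The identification of the actual counts across genera---which you need even just to verify $\mu_4^0(\rho_4,\rho_3,\rho_2,\rho_1)=U\iota_1$ and $\mu_0^1$ on a genus-$g$ surface before invoking uniqueness---is Proposition~\ref{prop:StabilizationInvariance}, whose neck-stretching, matching-condition, and gluing analysis is absent from your plan. Without that ingredient neither your transversality claim nor your verification of the characterizing properties goes through.
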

In particular, Theorem~\ref{thm:AlgOfSurface} implies that the
operations are $U$-equivariant, or equivalently the counts of the
moduli spaces satisfy
\begin{equation}
  \label{eq:Noduli-U-equivariant}
  \#\cN([\alpha_i^a];U^{i_1}\rho^1,\dots,U^{i_n}\rho^n;w)=\#\cN([\alpha_i^a];\rho^1,\dots,\rho^n;w).
\end{equation}

In Equation~\eqref{eq:AlgSurf} of
Definition~\ref{def:AinfAlgebraOfHeegaardSurface}, the last two sums
on the right correspond to counting composite boundary
degenerations, according to the following:
\begin{proposition}\label{prop:extended-is-composite}
  Fix a regular $j$, a non-negative integer
  $w$, a sequence basic algebra elements $\vec{a}=(a_1,\dots,a_n)$ so
  that $\indsq(\vec{a},w)=2$, and a factorization
  $a_1=b c$. For a generic tuple
  $\w=\{w_i\in \alpha_i^c\cap \beta_{\sigma(i)}\}$, the space of
  composite boundary degenerations
  $\#\cN(b;a_1,\dots,a_n;w)$ asymptotic to $b\times\w$
  is transversely cut out, and the count of simple boundary
  degenerations $\#\cN([\alpha_i^a];c,a_2,\dots,a_n;w)$ agrees
  modulo 2 with the count of composite boundary degenerations
  $\#\cN(b;a_1,\dots,a_n;w)$. (As usual, $\alpha_i^a$ is
  determined by the other data.)

  A similar statement holds for boundary degenerations of the form
  $\cN([\alpha_i^a];a_1,\dots,a_{n-1},b;w)$ for any factorization
  $a_n=bc$.
\end{proposition}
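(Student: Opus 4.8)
The plan is to identify, for a fixed regular almost complex structure $j$, the moduli space $\cN(b;a_1,\dots,a_n;w)$ of composite boundary degenerations with a boundary-degeneration moduli space $\cN([\alpha_i^a];c,a_2,\dots,a_n;w)$ by the gluing/un-gluing principle that underlies Proposition~\ref{prop:glue-join}, but run in reverse: there we glued a simple boundary degeneration to a main component near a composite boundary degeneration, and here the composite boundary degeneration $(u_2,v)$ is itself the result of a join-curve degeneration of a \emph{larger} simple boundary degeneration. Concretely, with $a_1 = bc$ and $v$ a join curve with $w\infty$ asymptotics $(b,c)$ and $e\infty$ asymptotic $a_1$, I claim the limit $(u_2,v)$ sits at the end of a one-parameter family of simple boundary degenerations in $\cN([\alpha_i^a];c,a_2,\dots,a_n;w)$, the parameter being (after modding out by the scaling and translation action on $\HHH$) the separation in $\RR$ of the punctures labeled $b$ and $c$ as they collide to produce the concatenation $a_1$.

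First I would set up the relevant index bookkeeping: by Lemma~\ref{lem:bdy-degen-ind-add}, $\indsq(\vec a, w) = 2$ forces $\indsq(c,a_2,\dots,a_n,w) = 3$ (splitting the length-one-longer chord $a_1 = bc$ into $b$ and $c$ raises the embedded index by one — this is exactly the split-curve gluing count used in the proof of Proposition~\ref{prop:emb-ind}), so after fixing the evaluation at $\w$ the space $\cN(\x; c,a_2,\dots,a_n; w)$ is a $1$-manifold. Then I would invoke Lemma~\ref{lem:bd-ev-proper}: the evaluation map is proper, so for generic $\w$ this $1$-manifold is compact-with-boundary, and by the compactness analysis of Theorem~\ref{thm:master} (specialized to boundary degenerations, as in Lemma~\ref{lem:bd-ev-proper}'s proof) its only ends are (i) two-story breakings, ruled out because they'd force two non-constant boundary-degeneration components, contradicting $\indsq \ge 2$ and additivity; and (ii) the join-curve / split-curve degeneration in which the $b$ and $c$ punctures collide at $e\infty$ — precisely a composite boundary degeneration in $\cN(b;a_1,\dots,a_n;w)$. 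Counting boundary points of a compact $1$-manifold mod $2$ then gives $\#\cN([\alpha_i^a];c,a_2,\dots,a_n;w) \equiv \#\cN(b;a_1,\dots,a_n;w) \pmod 2$. Transversality of the composite degeneration space for generic $\w$ follows from Definition~\ref{def:tailored}\ref{item:tail-bdy} applied at energy $E(\vec a, w)$, or directly from Sard applied to the relevant evaluation maps, just as in Corollary~\ref{cor:TailoredExist}.

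The symmetric ("terminal") case, with $a_n = bc$ and the collision happening at the last puncture rather than the first, is formally identical after reversing orientations: the only bookkeeping change is whether the join curve is left-extended or right-extended in the sense of Definition~\ref{def:composite-bdy-degen}, and the degree argument is unaffected. I would dispatch it in a sentence.

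The main obstacle I anticipate is the gluing input: I need that near a composite boundary degeneration $(u_2, v)$ — where $v$ is a join curve at $e\infty$ glued to the boundary degeneration $u_2$ which itself touches the puncture $p$ of $\Sigma$ — the nearby locus in $\cN([\alpha_i^a];c,a_2,\dots,a_n;w)$ really does form a half-open arc with $(u_2,v)$ as its single end. This is the same "product of two cylindrical ends is not a cylindrical end" difficulty that forced the indirect stratified-degree argument in Proposition~\ref{prop:glue-join}, so I would handle it the same way: rather than gluing the join curve directly, view the family near $(u_2,v)$ as cut out inside the model $\cN(*; c, a_2,\dots, a_n; w)$ (no main component, so only \emph{one} cylindrical-end direction is in play — the $e\infty$ collision), apply the standard SFT split-curve gluing of \cite{BEHWZ03:CompactnessInSFT} / \cite[Appendix A]{Lipshitz06:CylindricalHF} to attach $v$ to $u_2$ along the single chord $a_1 = bc$, and note the Morse–Bott evaluation condition is vacuous because only one chord is being glued. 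Then pull back along the proper evaluation map $\ev$ at $\w$, using Lemma~\ref{lem:StratifiedSpaces-moduli} if a stratified-degree argument is cleaner than a direct count. Everything else — finiteness, the tree structure of the gluing graph, the exclusion of sphere and disk bubbles by \ref{item:tail-spheres}, \ref{item:tail-disk}, and Lemma~\ref{lem:OS-no-disks} — is imported verbatim from the proof of Theorem~\ref{thm:master}.
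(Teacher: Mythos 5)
Your index bookkeeping at the very first step is wrong, and the error is fatal to the architecture of the argument. The claim that $\indsq(\vec a,w)=2$ forces $\indsq(c,a_2,\dots,a_n;w)=3$ is false: the two indices are equal. (The relation ``splitting a chord into two raises the index by one'' applies to splitting one boundary puncture of the \emph{main} moduli space into two punctures; here nothing is split---the chord $b$ is removed from the boundary-degeneration configuration entirely and becomes the output prefix of the operation, while the join curve hands it back to the main component. The correct comparison is the composite case of Lemma~\ref{lem:bdy-degen-ind-add}, whose extra $-1$ exactly compensates; or compute directly: the curve $u$ in a composite boundary degeneration has chords $c,a_2,\dots,a_n$ and the same domain $k[\Sigma]$ as the simple one, and moving $c$ from a finite boundary puncture to $q_\infty$ decreases $|\vec a|$ by one while increasing $\epsilon$ by one in Proposition~\ref{prop:emb-ind-bdy-degen}, leaving $\indsq$ unchanged.) Consequently $\cN(\x;c,a_2,\dots,a_n;w)$ is $0$-dimensional, not $1$-dimensional---as it must be, since $\#\cN([\alpha_i^a];c,a_2,\dots,a_n;w)$ is one of the two finite counts the proposition equates. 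The $1$-manifold you describe does not exist, and even if it did, counting the boundary points of a compact $1$-manifold whose interior is one moduli space and whose ends are the other would prove that the \emph{end} count vanishes, not that the two counts agree.

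The geometric mechanism is also misidentified: $\cN([\alpha_i^a];c,a_2,\dots,a_n;w)$ has no punctures labeled $b$ and $c$ whose separation could serve as a parameter. The one-parameter family that actually interpolates between the two moduli spaces is $\ev^{-1}(\gamma\times\w)$ for a path $\gamma$ in the $\alpha$-arc running into the puncture $p$: simple degenerations sit over interior points of the arc, and composite ones over $p$ itself (the evaluation hits $p$ exactly when $q_\infty$ carries a Reeby label). Making that cobordism rigorous requires a gluing statement at the corner where the evaluation point and the chord asymptotic collide at $p$---precisely the ``product of two cylindrical ends'' difficulty you flag---and you cannot borrow the resolution from Proposition~\ref{prop:glue-join}, because the stratified-degree argument there \emph{invokes} Proposition~\ref{prop:extended-is-composite} to manufacture the family of simple boundary degenerations it glues; your route is circular. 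The paper avoids all of this: in genus one both moduli spaces are combinatorial (branched covers of the torus as in Section~\ref{sec:GenusOne} and Proposition~\ref{prop:IdentifyAlgebraGenusOne}), where the bijection is immediate, and the general case reduces to genus one by the neck-stretching argument of Proposition~\ref{prop:StabilizationInvariance}.
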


Theorem~\ref{thm:AlgOfSurface} follows from the following assertions, verified below:
\begin{enumerate}[label=(a-\arabic*),ref=(a-\arabic*)]
\item \label{a:IndepOfJ} The holomorphic curve counts
  $\#\cN([\alpha_i^a];\vec{a};w)$ are independent of the choice of
  regular $j$.
\item \label{a:Regular} There are choices of $j$ that are regular.
\item \label{a:IndepOfGenus} The holomorphic curve counts are
  independent of the choice of genus $g$ or, equivalently, the choice
  of Heegaard surface $\Sigma$.
\item \label{a:IdentifyWithTorusAlg} If $g=1$, the holomorphic curve
  counts agree with the coefficients in the weighted $\Ainf$
  operations for the torus algebra $\MAlg$ as defined combinatorially
  in our previous paper~\cite{LOT:torus-alg}.
\end{enumerate}

\begin{remark}
  It follows that the operations $\mu^w_n$ endow $\Alg$ with the
  structure of a weighted $\Ainf$-algebra. This could
  be proved directly using the compactification of the spaces of
  pseudo-holomorphic curves, in the spirit of Section~\ref{sec:CFA};
  but this is not necessary, in view of the identification from
  Step~\ref{a:IdentifyWithTorusAlg}, since a combinatorial proof of
  the $\Ainf$-relations for the torus algebra is given
  in~\cite[Theorem~\ref{TA:thm:AinftyAlgebra}]{LOT:torus-alg}.
\end{remark}

As a step towards establishing~\ref{a:IndepOfJ}, we set up a suitable
generalization of the complex structures used on $\Sigma\times\HHH$
and study boundary degenerations for this generalization. This is done
in Section~\ref{sec:GenBDeg}. We then
verify~\ref{a:IdentifyWithTorusAlg} in
Section~\ref{sec:GenusOne}. (This is almost immediate from the
definition of $\MAlg$.)
The theorem then follows quickly from the stabilization invariance of
boundary degeneration counts, which is proved in
Section~\ref{subsec:StabilizationInvarianceBDeg}.

The $\beta$-circles on the bordered Heegaard diagram are irrelevant to
the moduli spaces considered in this section. So, we will typically
fix a \emph{partial Heegaard diagram} $(\Sigma,\alphas)$ consisting of
a compact, connected, oriented surface $\Sigma$ with some genus $g$
and one boundary component, $g-1$ circles
$\alpha_1^c,\dots,\alpha_{g-1}^c$, two arcs $\alpha_1^a,\alpha_2^a$
with boundary on $\bdy\Sigma$, and a basepoint $z\in \bdy\Sigma$, so
that the $\alpha$ arcs and circles are pairwise disjoint and linearly
independent in $H_1(\Sigma,\bdy\Sigma)$ and $z$ is just before one of
the endpoints of $\alpha_1^a$ with respect to the orientation of
$\bdy\Sigma$. As usual, we will also use $\Sigma$ to denote
$\Sigma\setminus\bdy\Sigma$, which we view as a punctured Riemann
surface, and $\alpha_i^c$ to denote the corresponding non-compact arc
in this punctured surface. In particular, moduli spaces of holomorphic
curves mean curves in this punctured Riemann surface, not the surface
with boundary.

\subsection{Generalized boundary degenerations}
\label{sec:GenBDeg}

To establish~\ref{a:IndepOfJ}, we need to generalize somewhat the
almost complex structures we allow on $\Sigma\times \HHH$, beyond
those considered in Section~\ref{sec:bdy-degen}.

\begin{definition}\label{def:ac-HHH}
  An almost complex structure $J$ on $\Sigma\times\HHH$ is
  \emph{admissible to energy $E_0$} if it satisfies the following conditions:
  \begin{enumerate}[label=(JH-\arabic*)]
  \item The projection map
    $\pi_\HHH\co \Sigma\times \HHH \to \HHH$ is $J$-holomorphic.
  \item For each $(p,\zeta)\in \Sigma\times\HHH$, $J$ preserves the
    tangent space $T_\zeta\HHH\subset T_{(p,\zeta)}(\Sigma\times\HHH)$.
  \item The complex structure is split, as $j_\Sigma\times j_\bD$,
    over a neighborhood of the puncture of $\Sigma$.
  \item Each of the complex structures on $\Sigma\times\{\zeta\}$ induced
    by $J$ is $\eta(E_0)$-pinched in the sense of
    Definition~\ref{def:ePinched}, for a sufficient pinching function
    $\eta$ (Definition~\ref{def:sufficient-pinching}).
  \end{enumerate}
\end{definition}  

Sufficiently pinched product complex structures on $\Sigma\times\HHH$
are admissible. For an admissible complex structure $J$,
we can formulate the notion of boundary degenerations analogous to
Definition~\ref{def:BoundaryDegeneration-simple}, with the additional condition:
\begin{enumerate}[label=(BDs-\arabic*),ref=(BDs-\arabic*),resume, series=BDs]
\item If $\vec{a}=(a_1,\dots,a_n)$ is the sequence of basic algebra elements
  $u$ is asymptotic to, then $u$ is asymptotic to $a_1$ at
  $(1,0)\in\bdy\HHH$ and to $a_2$ at $(1,1)\in\bdy\HHH$.
\end{enumerate}
(If $J$ splits as a product $j_\Sigma\times j_\HHH$,
this condition is equivalent to dividing out by
the action of the (two-dimensional) group of conformal automorphisms of $\HHH$.
For non-split $J$, the conformal group no longer acts holomorphically.)
These more general boundary degenerations can be
collected into moduli spaces, which we denote
$\cN(J;\x;\vec{a};w)$, and then used to
define operations $\mu_n^w$ associated to $\Sigma$ and $J$, immediately
generalizing Definition~\ref{def:AinfAlgebraOfHeegaardSurface}. As
in the split case, let
\[
  \cN(J;*;\vec{a};w)=\bigcup_\x\cN(J;\x;\vec{a};w).
\]
Also as in the split case, let 
\[
  \cN(J;[\alpha_i^a];\vec{a};w)
\]
be the preimage of a regular value of the evaluation map from
$\cN(J;*;\vec{a};w)$ to $\alpha_i^a$ (where, as usual, $\alpha_i^a$ is
the $\alpha$-arc containing the initial endpoint of $a_1$ and the
terminal endpoint of $a_n$).

For admissible almost complex structures, we have the following
transversality package, which is also the key step in proving that
the count of $\cN(J;[\alpha_i^a];\vec{a};w)$ is well-defined:

\begin{theorem}
  \label{thm:RegularityOfBoundaryDegenerations}
  For a generic $J$ which is admissible to energy $E_0$, if
  $E(\vec{a},w)\leq E_0$ then the moduli spaces
  $\cN(J;*;\vec{a};w)$ with 
  \[
    \indsq(\vec{a},w)\leq 2
  \]
  are manifolds of dimension $\indsq(\vec{a},w)+g-2$.
  Further, any two such generic $J_1$ and $J_2$ can be connected by a
  path $\{J_t\}_{t\in [1,2]}$ so that 
  the moduli spaces
  \[
    \cN(\{J_t\};*;\vec{a};w) \coloneqq \bigcup_{t\in[1,2]}
    \cN(J_t;*;\vec{a};w)
  \]
  is a manifold-with-boundary, of dimension $\indsq(\vec{a},w)+g-1$.
\end{theorem}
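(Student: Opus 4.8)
The plan is to prove Theorem~\ref{thm:RegularityOfBoundaryDegenerations} by the standard machinery for proving transversality of $\dbar$-operators via universal moduli spaces, adapted to the setting of boundary degenerations. First I would set up the relevant Banach manifold of maps: fix the topological type of the source $\bdSource$ (with the Euler characteristic determined by embeddedness, via Proposition~\ref{prop:emb-ind-bdy-degen}) and consider the bundle over the space $\mathcal{J}$ of almost complex structures admissible to energy $E_0$ (Definition~\ref{def:ac-HHH}) whose fiber is the space of maps $u\colon (S,\bdy S)\to(\Sigma\times\HHH,\alphas\times\bdy\HHH)$ respecting $\bdSource$ that are holomorphic. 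The key analytic input is that the linearization of $\dbar$, together with the derivative in the $J$-direction, is surjective at every solution; this is where the somewhere-injectivity of the map enters. Here the crucial observation is that the projection $\pi_\Sigma\circ u$ is a branched cover of $\Sigma$ of positive degree $k$, so the curve is somewhere injective in the $\Sigma$-factor away from the (finitely many) branch and double points; since admissible $J$ are required to be products $j_\Sigma(\zeta)\times j_\HHH$ near the puncture and to have $\pi_\HHH$ holomorphic, perturbations of $j_\Sigma$ in a small ball around an injective point propagate through the linearized operator, giving surjectivity of the universal operator. Then the Sard--Smale theorem yields a comeager set of regular $J$, and for such $J$ the moduli space $\cN(J;*;\vec{a};w)$ is cut out transversely of the dimension predicted by the index formula; the evaluation map is then handled by the same argument applied fiberwise (as in Definition~\ref{def:RegularJ} and the proof strategy referenced in Theorem~\ref{thm:AlgOfSurface}). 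One subtlety is that I must also check, exactly as in Lemma~\ref{lem:bd-ev-proper}, that no degenerations into broken configurations can occur in dimension $\leq 2$: this follows from Lemma~\ref{lem:bdy-deg-ind-2} (no two non-constant components) together with the additivity of the index (Lemma~\ref{lem:bdy-degen-ind-add}), which forces the limit to again be an honest boundary degeneration, so the moduli spaces are closed manifolds (or manifolds-with-boundary for the parametrized version).

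For the parametrized statement, I would run the same universal-moduli-space argument but now with a one-parameter family: given two regular endpoints $J_1,J_2$, connect them by a generic path $\{J_t\}$ inside $\mathcal{J}$, and apply the parametrized transversality theorem to the total space $\cN(\{J_t\};*;\vec{a};w)=\bigcup_t\cN(J_t;*;\vec{a};w)$. Generically this is a manifold-with-boundary of dimension one higher, with boundary exactly $\cN(J_1;*;\vec{a};w)\sqcup\cN(J_2;*;\vec{a};w)$; again compactness in dimension $\leq 2$ (now the parametrized dimension is $\leq 3$, and the extra story would push us to index $\geq 2+2=4$ by Lemma~\ref{lem:bdy-deg-ind-2}, a contradiction, except for the genuine boundary at $t=1,2$) shows there is no escape of mass. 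This gives the required cobordism and hence, combined with assertion~\ref{a:IndepOfJ}, well-definedness of $\#\cN([\alpha_i^a];\vec{a};w)$.

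I expect the main obstacle to be verifying surjectivity of the universal linearized operator in the presence of the two structural constraints on $J$: that $\pi_\HHH$ be holomorphic and that $J$ be split near the puncture of $\Sigma$ and $\eta(E_0)$-pinched. These constraints restrict the allowable perturbations, so one must argue carefully that an injective point of $\pi_\Sigma\circ u$ can always be found in the region where $J$ is \emph{not} constrained to be a fixed product structure (i.e., away from the pinching neck and away from the puncture neighborhood). Since the domain $B=k[\Sigma]$ has $\pi_\Sigma\circ u$ covering all of $\Sigma$, and the pinching/puncture regions are proper subsets, such an injective point exists generically; the perturbation argument is then the usual one (cf.\ the references to \cite{MS04:HolomorphicCurvesSymplecticTopology} and \cite{Lipshitz06:CylindricalHF} cited in Proposition~\ref{prop:transv-exists}), with the $\pi_\HHH$-holomorphicity handled by restricting to perturbations supported away from $\bdy\HHH$-asymptotic directions. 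The remaining bookkeeping—matching the index formula, checking the evaluation-map transversality, and ruling out broken limits—is routine given Propositions~\ref{prop:emb-ind-bdy-degen} and Lemmas~\ref{lem:bdy-deg-ind-2},~\ref{lem:bdy-degen-ind-add}.
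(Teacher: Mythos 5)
Your proposal is essentially the paper's intended argument: the paper's proof of this theorem is a one-line reference to the standard universal-moduli-space/Sard--Smale transversality scheme (``similar to the proof of Proposition~\ref{prop:transv-exists}, left to the reader,'' which in turn cites \cite{LOT1}, \cite{Lipshitz06:CylindricalHF}, and \cite{MS04:HolomorphicCurvesSymplecticTopology}), and that is exactly what you spell out, including the correct identification of the main subtlety (perturbing $J$ only within the admissibility constraints) and the parametrized version for the cobordism. One small imprecision: for $k>1$ the projection $\pi_\Sigma\circ u$ is nowhere injective, so the fiberwise perturbation should be justified, as in the cited cylindrical-setting argument, by the fact that over a generic $\zeta\in\HHH$ the $g$ points of $(\pi_\HHH\circ u)^{-1}(\zeta)$ have distinct $\Sigma$-images (boundary monotonicity plus finiteness of double points), rather than by somewhere-injectivity of the $\Sigma$-projection itself.
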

We call almost complex structures as in the first part of
Theorem~\ref{thm:RegularityOfBoundaryDegenerations} \emph{regular}.
\begin{proof}
  The proof is similar to the proof of
  Proposition~\ref{prop:transv-exists} and is left to the reader.
\end{proof}

\begin{remark}
  Although Theorem~\ref{thm:RegularityOfBoundaryDegenerations}
  establishes the existence of admissible almost
  complex structures $J$ that are regular, in a sense analogous to
  Definition~\ref{def:RegularJ}, these $J$ need not be split.  The
  fact that we can take $J$ to be split will be established in
  Section~\ref{subsec:StabilizationInvarianceBDeg}.
\end{remark}

From Theorem~\ref{thm:RegularityOfBoundaryDegenerations}, we conclude
the following:

\begin{proposition}
  \label{prop:BoundaryDegenerationsIndepOfJ}
  Consider a sequence of basic algebra elements $\vec{a}$ and an integer
  $w$ so that $\indsq(\vec{a},w)=2$. Then, for any regular
  $J$ the moduli space $\cN(J;[\alpha_i^a];\vec{a};k)$
  consists of finitely many points, and the modulo-2 count
  $\#(\cN(J;[\alpha_i^a];\vec{a};w))$ is independent of the
  choice of regular, admissible (to energy $E(\vec{a},w)$) $J$.
\end{proposition}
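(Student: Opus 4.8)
The plan is to run the standard degree-and-cobordism argument for boundary degenerations, now in the setting of admissible (rather than split) almost complex structures: Theorem~\ref{thm:RegularityOfBoundaryDegenerations} supplies the transversality and the parametrized cobordism, and the finiteness/independence reduce to ruling out genuine degenerations, which is exactly the index estimate of Lemma~\ref{lem:bdy-deg-ind-2} together with additivity of the index (Lemma~\ref{lem:bdy-degen-ind-add}), packaged as in the proof of Lemma~\ref{lem:bd-ev-proper} and the compactness analysis of Section~\ref{sec:compactness}.

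First, fix a regular $J$ admissible to energy $E(\vec{a},w)$ with $\indsq(\vec{a},w)=2$. By Theorem~\ref{thm:RegularityOfBoundaryDegenerations}, $\cN(J;*;\vec{a};w)$ is a smooth $g$-manifold, and the evaluation map $\ev\colon\cN(J;*;\vec{a};w)\to(\alpha_1^a\cup\alpha_2^a)\times\alpha_1^c\times\cdots\times\alpha_{g-1}^c$ has a $g$-dimensional target. I claim $\ev$ is proper. Given a sequence $u_i$ with $\ev(u_i)\to\x$, apply compactness as in the proof of Lemma~\ref{lem:bd-ev-proper}: after passing to a subsequence the images in $\HHH$ of the punctures converge in $\ModPol$ and the curves converge to a (possibly broken) holomorphic limit whose gluing graph is a tree, by the symmetric-product reformulation of Section~\ref{sec:tautological} and boundary monotonicity. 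Since an admissible $J$ restricts to an $\eta(E(\vec{a},w))$-pinched complex structure on each fiber $\Sigma\times\{\zeta\}$ (Definition~\ref{def:ac-HHH}(JH-4)), Lemma~\ref{lem:bdy-deg-ind-2} applies: each non-constant boundary-degeneration component has $\indsq\ge 2$, so by additivity there is at most one such component, of index exactly $2$, and the rest are constant; the constant $e\infty$ components, if any, would have to connect to at least two other components (using the ``no consecutive multipliable chords'' half of Lemma~\ref{lem:bdy-deg-ind-2}, which also excludes fake- and pseudo-split configurations since no $U^n\iota_j$ appears in $\vec{a}$), forcing a second non-constant component, a contradiction. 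Hence the limit is unbroken and lies in $\cN(J;*;\vec{a};w)$ with $\ev=\x$, which proves properness. Consequently $\cN(J;\x;\vec{a};w)=\ev^{-1}(\x)$ is a compact $0$-manifold, hence finite, for every regular value $\x$; and its mod-$2$ cardinality is independent of the choice of regular value with $\alpha^a$-coordinate in a fixed arc $\alpha_i^a$, by standard mod-$2$ degree theory for the proper map $\ev$ between $g$-manifolds (the admissible regular values meeting $\alpha_i^a$ form a dense, path-connected subset of the connected manifold $\alpha_i^a\times\alpha_1^c\times\cdots\times\alpha_{g-1}^c$). This common value is $\#\cN(J;[\alpha_i^a];\vec{a};w)$.

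For independence of $J$, let $J_1$ and $J_2$ be two regular almost complex structures, both admissible to energy $E(\vec{a},w)$. Choose a generic path $\{J_t\}_{t\in[1,2]}$ as in the second half of Theorem~\ref{thm:RegularityOfBoundaryDegenerations}, so that $\cN(\{J_t\};*;\vec{a};w)$ is a $(g+1)$-manifold-with-boundary whose boundary is $\cN(J_1;*;\vec{a};w)\sqcup\cN(J_2;*;\vec{a};w)$. Pick $\x$ a common regular value of the evaluation maps of $J_1$, $J_2$, and the parametrized family. Then $\ev^{-1}(\x)\subset\cN(\{J_t\};*;\vec{a};w)$ is a $1$-manifold-with-boundary with boundary $\cN(J_1;\x;\vec{a};w)\sqcup\cN(J_2;\x;\vec{a};w)$. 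The same compactness argument as above, now combined with compactness of $[1,2]$ (so that an escaping sequence has $t_i\to t_\infty$ and $J_{t_i}\to J_{t_\infty}$), shows $\ev^{-1}(\x)$ has no ends other than its boundary, since by the index bookkeeping every limit configuration is unbroken and again lies in $\ev^{-1}(\x)$. Hence $\ev^{-1}(\x)$ is a compact $1$-manifold, so its boundary has even cardinality, giving
\[
  \#\cN(J_1;[\alpha_i^a];\vec{a};w)\equiv\#\cN(J_2;[\alpha_i^a];\vec{a};w)\pmod 2.
\]

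The main obstacle is the properness claim, i.e.\ excluding genuine degenerations of index-$2$ boundary degenerations both for a single $J$ and along the path $\{J_t\}$; once the index estimate of Lemma~\ref{lem:bdy-deg-ind-2} and additivity are invoked, this is a routine cobordism computation, and no analytic input is needed beyond Theorem~\ref{thm:RegularityOfBoundaryDegenerations} and the compactness package of Section~\ref{sec:compactness}.
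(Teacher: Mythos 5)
Your proposal is correct and follows essentially the same route as the paper: properness of $\ev$ via the compactness-plus-index argument of Lemma~\ref{lem:bd-ev-proper} (whose key input is Lemma~\ref{lem:bdy-deg-ind-2}), and independence of $J$ via the parametrized cobordism $\cN(\{J_t\};*;\vec{a};w)$ from Theorem~\ref{thm:RegularityOfBoundaryDegenerations} together with a mod-$2$ degree argument. (One cosmetic point: you do not need the set of regular values to be path-connected -- the mod-$2$ degree of a proper map to a connected target is independent of the regular value by the standard argument of taking the preimage of a transverse path.)
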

\begin{proof}
  Both statements reduce to compactness results. For finiteness, by a
  similar argument to the proof of Lemma~\ref{lem:bd-ev-proper}, the
  evaluation map $\ev\co \cN(J;*;\vec{a};k)\to
  \alpha_i^a$ is proper, and hence the preimage of any point, i.e., 
  $\cN(J;[\alpha_i^a];\vec{a};k)$, is compact. For the
  second statement, 
  if $J_1$ and $J_2$ are both regular, we connect them by a regular
  path $\{J_t\}_{t\in[1,2]}$ in the sense of  
  Theorem~\ref{thm:RegularityOfBoundaryDegenerations}, so the
  resulting moduli space $\cN(\{J_t\};*;\vec{a};w)$ is a
  manifold-with-boundary.  By a similar argument to the proof of
  Lemma~\ref{lem:bd-ev-proper} (the key step in which is
  Lemma~\ref{lem:bdy-deg-ind-2}), the space
  $\cN(\{J_t\};*;\vec{a};w)$ is compact. The evaluation
  map extends continuously to
  $\cN(\{J_t\};*;\vec{a};w)$. Hence, its degree
  restricted to $t=1$ coincides with its degree restricted to $t=2$;
  i.e.,
  \[
    \#\cN(J_1;[\alpha_i^a];\vec{a};w)=
    \#\cN(J_2;[\alpha_i^a];\vec{a};w),
  \]
  as desired.
\end{proof}

Proposition~\ref{prop:BoundaryDegenerationsIndepOfJ} implies that the
moduli spaces are $U$-equivariant, in the sense of
Formula~\eqref{eq:Noduli-U-equivariant}, since the $U$-powers in the
algebra elements only affect the moduli spaces through which complex
structures are considered. So, for the rest of the section, it will
suffice to consider moduli spaces
$\#\cN(J;[\alpha_i^a];\rho^1,\rho^2,\dots,\rho^n;w)$ where the
asymptotics are Reeb chords.

\subsection{The genus-one case}
\label{sec:GenusOne}

In~\cite{LOT:torus-alg}, we gave several definitions of a weighted
$\Ainf$-algebra $\MAlg(T^2)$; we recall the one which is most directly
connected with the pseudo-holomorphic curves.
For this section, it
suffices to consider only split complex structures on
$\Sigma\times\HHH$, as in Section~\ref{sec:bdy-degen}, not the more
general admissible almost complex structures of
Definition~\ref{def:ac-HHH}. (See also
Proposition~\ref{prop:IdentifyAlgebraGenusOne} below.) 

As usual, equip the torus $T^2$
with two embedded curves $\alpha_1^a$ and $\alpha_2^a$ that intersect in a
single, transverse intersection point $p$,  i.e., the result of
collapsing the dotted circle in the left or middle of Figure~\ref{fig:curves-on-surfs}.
The operations in the algebra count maps $f\co \Delta\to T^2$ of
the disk to the torus  with the
following properties:
\begin{itemize}
  \item The boundary of the disk is mapped to $\alpha_1^a\cup\alpha_2^a$.
  \item The map $f$ is an immersion except for possible branching
    at points of $\bdy \Delta$ that map to $p$.
\end{itemize}
In the operations, the weight $w$ counts the number of preimages of
$p$ in the interior of $\Delta$, and the sequence of algebra elements
corresponds to the corners around $p$, in the cyclic order they appear
along $\partial \Delta$. Equivalently, the operations
count tilings of the disk by squares marked as on the left of
Figure~\ref{fig:curves-on-surfs}, where the tiling is arranged so that
exactly four tiles meet at each corner in the interior of $\Delta$.

Counts of such maps can be given an explicit, combinatorial
description in terms of planar graphs, as in our previous paper~\cite{LOT:torus-alg}. (See
especially~\cite[Section~\ref{TA:subsec:Immersions}]{LOT:torus-alg} for the
correspondence between the two points of view.)

\begin{proposition}
  \label{prop:IdentifyAlgebraGenusOne}
  When $g=1$, any choice of complex structure $j$ on $\Sigma$ is regular, and the operations
  $\{\mu^w_n\co \Alg^{\otimes n}\to \Alg[U]\}$ coincide with the weighted
  operations for $\MAlg$. In particular, the operations $\mu^w_n$ make
  $\Alg[U]$ into a weighted algebra.
\end{proposition}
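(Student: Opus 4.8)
The plan is to reduce the statement to the combinatorial counts already established in \cite{LOT:torus-alg} by exploiting the fact that, in genus one, the projection $\pi_\HHH\circ u$ is automatically a one-fold branched cover of $\HHH$, so a boundary degeneration $u\colon S\to\Sigma\times\HHH$ is essentially determined by its image in $\Sigma$ together with the combinatorial data of where the branch points lie. First I would address regularity: when $g=1$, a boundary degeneration source $\bdSource$ has $\pi_\HHH\circ u$ a degree-one branched cover of a disk, hence $S$ has genus $0$; the $\dbar$-operator on such a curve is surjective automatically, because the curve is (an immersion away from boundary branch points and hence) somewhere injective and the relevant moduli problem for maps of a disk into a surface with Lagrangian boundary is unobstructed in dimension reasons analogous to \cite[Lemma 3.13 and Theorem 3.15]{OS04:HolomorphicDisks}. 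So any $j$ on $\Sigma$ is regular, which is exactly the first claim and also what Theorem~\ref{thm:AlgOfSurface} needs for $g=1$. (Here I would lean on the same argument sketched in the proof of Lemma~\ref{lem:bdy-deg-ind-2}, namely that in genus one the compactified and uncompactified moduli spaces agree and consist of trees of holomorphic disks mapped to a genus-one surface, determined by combinatorics and branch points.)

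Next I would make the tautological identification between the analytic moduli spaces $\cN([\alpha_i^a];\rho^1,\dots,\rho^n;w)$ and the combinatorial objects (immersions $f\colon\Delta\to T^2$ with boundary on $\alpha_1^a\cup\alpha_2^a$, branched only at boundary preimages of $p$, with $w$ interior preimages of $p$ and corners recording $\vec\rho$) counted in \cite{LOT:torus-alg}. In the split case $J=j_\Sigma\times j_\HHH$, a holomorphic boundary degeneration is precisely a holomorphic map to $\Sigma=T^2\setminus\{p\}$ whose extension $\overline{S}\to\overline{\Sigma}$ is an immersion except at boundary points over $p$, with $\pi_\HHH\circ u$ providing the disk domain; conversely any such immersion, equipped with the pullback complex structure on $S$ and the tautological map to $\HHH$ via the branched cover, is a holomorphic boundary degeneration. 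This is the content of \cite[Section~\ref{TA:subsec:Immersions}]{LOT:torus-alg} translated into the present language, and I would cite it rather than reprove it. The correspondence matches the weight $w$ (interior preimages of $p$ $\leftrightarrow$ interior Reeb orbits, i.e.\ interior ramification over $1\in\overline\HHH$), matches the cyclic sequence of corners with $\vec\rho$, and matches $n_z(u)$ with the degree of the map to $\Sigma$. Under this dictionary Definition~\ref{def:AinfAlgebraOfHeegaardSurface} becomes, term by term, the combinatorial definition of $\{\mu_n^w\}$ for $\MAlg$ recalled at the start of Section~\ref{sec:GenusOne}, including the two composite-degeneration sums which correspond (via Proposition~\ref{prop:extended-is-composite}) to the left- and right-extended tiling patterns of \cite{LOT:torus-alg}.

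Finally, once the counts agree with those of \cite{LOT:torus-alg}, the statement that $\Alg[U]$ with these operations is a weighted algebra is immediate from \cite[Theorem~\ref{TA:thm:AinftyAlgebra}]{LOT:torus-alg}, which proves the weighted $\Ainf$-relations combinatorially; no further compactness argument about the curves themselves is needed here. I expect the main obstacle to be the bookkeeping in the tautological correspondence: carefully matching the decorations of a boundary-degeneration source (interior multiplicities, boundary punctures labeled by basic algebra elements, the distinguished puncture $q_\infty$ at $\infty\in\overline\HHH$) with the combinatorial tiling/immersion data, in particular tracking $U$-powers and verifying that the $\epsilon$-term (whether $q_\infty$ is Reeby or idempotent) corresponds correctly to the "centered vs.\ extended" dichotomy, and confirming that the genericity of $\x$ used in Definition~\ref{def:CurveCount} matches the genericity used on the combinatorial side. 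Checking that no unexpected boundary degeneration appears in genus one beyond those enumerated combinatorially is handled by the index estimate of Lemma~\ref{lem:bdy-deg-ind-2} together with the genus-$0$ constraint on $S$, so the enumeration is complete.
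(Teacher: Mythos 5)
Your overall route is the same as the paper's: reduce to the combinatorial counts of \cite{LOT:torus-alg} via the observation that in genus one $\pi_\HHH\circ u$ is a degree-one branched cover, so the source is planar and the curve is determined by $f=\pi_\Sigma\circ u$ together with its branching data, and then invoke the combinatorial proof of the weighted $\Ainf$-relations. That part is fine, and your remark that completeness of the enumeration follows from the index estimate of Lemma~\ref{lem:bdy-deg-ind-2} is essentially the paper's Riemann--Hurwitz computation showing that for $\ind=2$ the total ramification of $\pi_\Sigma\circ u$ vanishes, i.e., all branching of $f$ is concentrated at the punctures (it would be better to actually carry out that computation, since it is what guarantees the analytic curves are exactly the immersions-with-corner-branching that the tiling patterns count, but the idea is there).

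The genuine gap is in your justification of the first claim, that \emph{every} $j$ is regular. You argue that the curve is somewhere injective and therefore the $\dbar$-operator is surjective; but somewhere injectivity only yields transversality for \emph{generic} almost complex structures (one perturbs $J$ near an injective point), and the proposition asserts regularity for an arbitrary fixed $j$ --- which is needed, since the split complex structures used here are far from generic. Your citation of \cite[Lemma 3.13 and Theorem 3.15]{OS04:HolomorphicDisks} does not help either: those results concern sphere bubbles and counts of Maslov index $2$ disks, not surjectivity of the linearization. The correct tool, and the one the paper uses, is automatic transversality in the sense of Hofer--Lizan--Sikorav \cite{HLS97:GenericityHoloCurves}: because the source is a punctured disk (genus zero) and the target fiber $\Sigma$ is a Riemann surface, the cokernel of the linearized operator vanishes for index/positivity reasons, independently of $j$. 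With that substitution the regularity claim goes through; without it, the step as written fails.
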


\begin{proof}
  Consider a boundary degeneration
  $u\co \DegSource \to \Sigma\times \HHH$. Since $g=1$,
  $\pi_\HHH\circ u$ is one-to-one, so $\DegSource$ is planar.  The
  condition that $\ind(u)=2$ ensures that the branching of
  $f=\pi_\Sigma\circ u$ occurs at the punctures of $S$. Specifically,
  similar to the proof of Proposition~\ref{prop:emb-ind}, the
  Riemann-Hurwitz formula gives
  $1-w=\chi(S)=e(B)+|\vec{\rho}|/2-\br(u)$ where $B$ is the domain
  $\br(u)$ is the total ramification of $\pi_\Sigma\circ u$. If
  $B=k[\Sigma]$, this reduces to $1-w=-k+|\vec{\rho}|/2-\br(u)$. On
  the other hand, the index formula from
  Proposition~\eqref{eq:index-source-bdy-degen} gives
  $2=1-\chi(S)-2k+|\vec{\rho}|+w$; since $\chi(S)=1-w$, this reduces
  to $2k+2=2w+|\vec{\rho}|$. Combining these two gives $\br(u)=0$.

  The complex structure on $\DegSource$ is determined by the complex
  structure on $\Sigma$ and the branched cover $f$.  This gives the
  one-to-one correspondence between the boundary degenerations counted
  in the $\Ainf$-algebra operations associated to the Heegaard surface
  with genus one (as in
  Definition~\ref{def:AinfAlgebraOfHeegaardSurface}) and the branched
  covers counted in the $\Ainf$-algebra operations in the torus
  algebra~\cite{LOT:torus-alg}.

  Regularity of (any) $j$ follows, for instance, from the fact that
  the source curves are punctured disks and Hofer-Lizan-Sikorav's
  automatic transversality argument~\cite{HLS97:GenericityHoloCurves}
  (as in~\cite[Proposition~\ref{LOT1:prop:east_transversality}]{LOT1}).
\end{proof}

\subsection{Stabilization invariance of boundary degeneration counts}
\label{subsec:StabilizationInvarianceBDeg}

The algebra associated to $\Sigma$ and $J$ was shown to be independent
of $J$ in Proposition~\ref{prop:BoundaryDegenerationsIndepOfJ}.  In
this section, we show that it is also independent of the Heegaard
surface $\Sigma$. This is done by a familiar neck-stretching argument,
analogous to the one from the cylindrical proof of stabilization invariance~\cite[Section~12]{Lipshitz06:CylindricalHF}.
As in the forthcoming book on Heegaard Floer homology~\cite{OSSz},
we stabilize at two points rather than one, in the
interest of technical simplicity; see Figure~\ref{fig:stab-alg}.

We will continue to work in the setting of admissible but not
necessarily split almost complex structures, as in
Definition~\ref{def:ac-HHH}. We require the almost complex structure
to be split near the points where the stabilization occurs, in the
following sense:
\begin{definition}
  \label{def:ProductLike}
  Fix a finite set of points $\{p_i\}_{i=1}^m\subset \Sigma$.  An
  admissible almost complex structure $J$ on $\Sigma\times\HHH$ is said to
  be {\em product-like near $\{p_i\}_{i=1}^m$} if there is a
  neighborhood $D_1\amalg \dots\amalg D_m\subset \Sigma$ with
  $p_i\in D_i$, so that the restriction $J|_{(D_1\amalg\dots\amalg
    D_m)\times\HHH}$ is split, as the product of a complex structure
  on $D_i$ and the standard complex structure on $\HHH$.
\end{definition}

\begin{definition}\label{def:attach-cyl}
  Fix a partial Heegaard diagram $(\Sigma,\alphas)$ of genus $g$,
  points $z_R$ and $w_R$ in the interior of $\Sigma$, and an
  admissible almost complex structure which is product-like near $z_R$
  and $w_R$. Fix also disjoint disk neighborhoods of $z_R$ and $w_R$
  so that $J$ is split over these neighborhoods and a real parameter
  $T$.  There is an associated almost complex structure $J(T)$ on the
  surface $\Sigma'$ of genus $g+1$, obtained by attaching a cylinder
  with length $T$ to $\Sigma$ at the points $z_R$ and $w_R$, using the
  chosen disk neighborhoods. That is, if $D_z$ and $D_w$ are the
  chosen disks around $z_R$ and $w_R$, with some fixed identifications
  with the unit disk $D^2\subset \CC$ (identifying $z_R$ and $w_R$
  with the origin), then we build $(\Sigma', J(T))$ by deleting the
  open disks of radius $1/2$ from $D_z$ and $D_w$ and gluing the
  complex cylinder $([0,\pi]\times [0,T/2])/(0,t)\sim(\pi,t)$ to the
  result by the canonical identifications.  View $\Sigma'$ as a
  partial Heegaard diagram via the $g+1$ curves from $\Sigma$,
  $\alpha_1^a,\alpha_2^a,\alpha_1^c,\dots,\alpha_{g-1}^c$, and an
  additional circle $\alpha_g^c$ which contains one arc that runs
  through the attached cylinder as in Figure~\ref{fig:stab-alg}.
\end{definition}
Note that the point $z_R$ in Definition~\ref{def:attach-cyl} is
different from the point $z$ in (or near) $\bdy\Sigma$ in the
definition of a Heegaard diagram.

\begin{figure}
  \centering
  \includegraphics{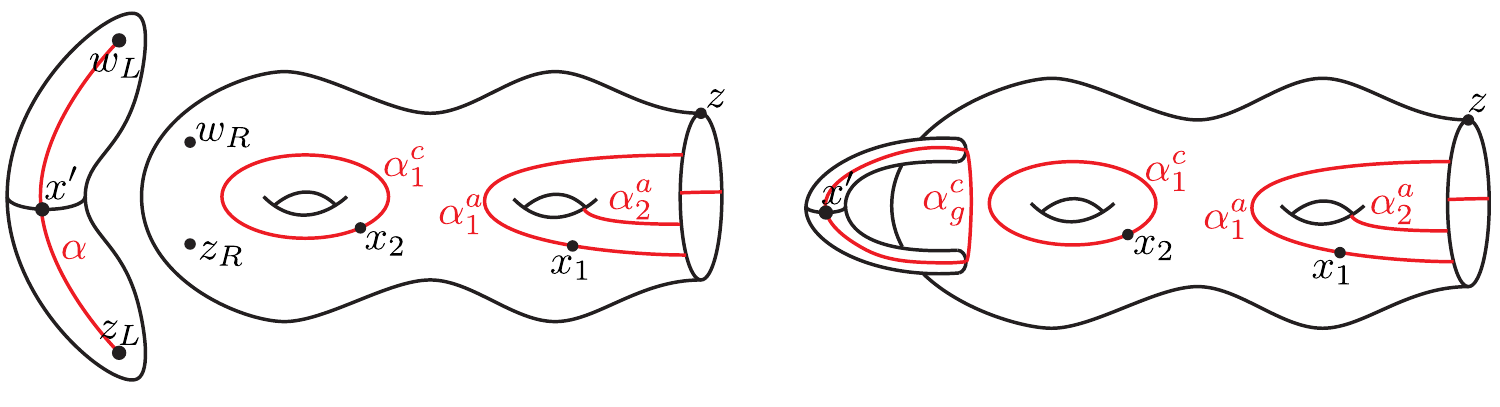}
  \caption[Stabilizing boundary degenerations]{\textbf{Stabilizing at two points.} Left: the sphere
    $\Sphere$ and the surface $\Sigma$. Right: the stabilized surface
    $\Sigma'$, obtained by gluing $\Sigma$ and $\Sphere$ at two
    points.}
  \label{fig:stab-alg}
\end{figure}

Fix a point $x'\in\alpha_g$ which is at the center of the attached cylinder.
Given $\x\in (\alpha^a_1\cup\alpha^a_2)\times \alpha_1\times\dots\times
\alpha_{g-1}$, there is an associated
$\x'\in(\alpha^a_1\cup\alpha^a_2)\times \alpha_1\times\dots\times
\alpha_{g-1}\times \alpha_{g}$ that agrees with $\x$ on the first
$g$ factors, and whose component on $\alpha_g$ is $x'$.

Let $\Sphere$ denote the 2-sphere, and fix points $w_L,z_L\in \Sphere$
and an arc $\alpha\subset\Sphere$ of a great circle connecting $w_L$
and $z_L$.

\begin{definition}\label{def:stabilizing-degen}
  Fix a point $x'\in\alpha$.
A {\em stabilizing boundary degeneration at $x'$} is a 
connected Riemann surface $\DegSource$ and a map
$u\co (\DegSource,\bdy\DegSource) \to ((\Sphere\setminus\{z_L,w_L\})\times \HHH,\alpha\times\bdy\HHH)$ with the following properties:
\begin{itemize}
  \item $\DegSource$ is equipped with a boundary puncture $q$ with the property that
    $u$ is asymptotic to $(x',\infty)$ at $q$.
  \item $\DegSource$ is equipped with two more sets of interior
    punctures $\{w_1,\dots,w_k\}$ and $\{z_1,\dots,z_k\}$ so that
    $\pi_\Sphere\circ u$ is asymptotic to the (simple) Reeb orbit
    around $z_L$ (respectively $w_L$) at $z_i$ (respectively $w_i$).
  \item The map $u$ is holomorphic, with respect to the split complex
    structure on $\Sphere\times\HHH$ and some complex structure on
    $\DegSource$.
  \item The projection to $\HHH$, $\pi_\HHH\circ u$, has degree $1$. (In particular, $S$
    is planar.)
\end{itemize}
\end{definition}

Let $\cN(x';k)$ denote the moduli space of stabilizing boundary degenerations
asymptotic to $x'\in\alpha$.

The diagonal
$\Delta\subset \Sym^k(\HHH)$ consists of all elements $\x$ with a
repeated entry.  More generally, given a Cartesian product of
symmetric products $(\Sym^k(\HHH))^{\times m}$, let $\Delta\subset
(\Sym^k(\HHH))^{\times m}$ denote the diagonal consisting of those
elements $\x_1\times\dots\times \x_m$ for which either some $\x_i$ has
a repeated entry, or $\x_i\cap \x_j$ is nonempty, for some $i\neq j$.
So, $\Delta$ is the preimage of the diagonal under the quotient map
$(\Sym^k(\HHH))^{\times m}\to \Sym^{mk}(\HHH)$.

There is an evaluation map 
$\ev\co \cN(x';k)\to
\Sym^k(\HHH)\times \Sym^k(\HHH)$ defined by
\[
  u\mapsto \Bigl(\bigl\{\pi_{\HHH}(u(z_1)),\dots,\pi_{\HHH}(u(z_k))\bigr\},
  \bigl\{\pi_{\HHH} (u(w_1)),\dots,\pi_\HHH(u(w_k))\bigr\}\Bigr).
\]

\begin{lemma}\label{lem:rat-maps}
  The evaluation map induces a proper map of degree $1$
  \[
    \ev\co \cN(x';k)\to \left(\Sym^k(\HHH)\times \Sym^k(\HHH)\right)\setminus\Delta.
  \]
  In fact, the evaluation map is a diffeomorphism.
\end{lemma}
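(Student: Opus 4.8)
The plan is to prove Lemma~\ref{lem:rat-maps} by making the correspondence completely explicit: the degree hypothesis on $\pi_\HHH$ rigidifies the source, and Schwarz reflection then turns each stabilizing boundary degeneration into a rational self-map of $\CC P^1$, which is determined by its zero and pole divisors up to a scalar that is in turn fixed by the asymptotic at $x'$. This is the boundary-degeneration analogue of the identification of moduli of stabilizing curves with spaces of meromorphic functions in the cylindrical proof of stabilization invariance~\cite[Section 12]{Lipshitz06:CylindricalHF} (compare also~\cite{OSSz}), and I do not expect a genuine obstacle.

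First I would normalize the source. Since the interior punctures $z_i,w_i$ of $\DegSource$ map under $\pi_\HHH\circ u$ to finite points $a_i,b_i\in\HHH$, the map $\pi_\HHH\circ u$ extends holomorphically over them to a proper degree-one map onto $\HHH$, hence is a biholomorphism; reparametrizing, we may take $\DegSource=\HHH\setminus\{a_1,\dots,a_k,b_1,\dots,b_k\}$ with $\pi_\HHH\circ u$ the inclusion, $a_i$ the image of $z_i$ and $b_i$ the image of $w_i$. All remaining data is then carried by the single holomorphic map $f=\pi_{\Sphere}\circ u\co\HHH\setminus\{a_i,b_j\}\to\Sphere\setminus\{z_L,w_L\}$. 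Identify $\Sphere$ with $\CC P^1$ so that $z_L=0$, $w_L=\infty$, and the great-circle arc $\alpha$ joining them is the closed ray $\RR_{\ge 0}$ (every great circle through $z_L$ and $w_L$ is a line through the origin). By finite-energy boundary regularity (as in~\cite{LOT1}), $f$ extends continuously over $\bdy\HHH$, carrying $\bdy\HHH$ into $\alpha$; near $a_i$ (resp.\ $b_i$) it has a simple zero (resp.\ simple pole), being asymptotic to the simple Reeb orbit around $0$ (resp.\ $\infty$); and it extends over $\infty\in\bdy\overline{\HHH}$ with value $x'$. Because $z_i$ and $w_i$ are distinct punctures and $x'\notin\{0,\infty\}$, the resulting holomorphic map $\bar f\co\overline{\HHH}\to\CC P^1$ has no boundary zeros or poles and no interior zeros or poles beyond the $a_i$ and $b_i$.

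Next I would reflect. Let $\sigma$ be the anti-holomorphic involution of $\CC P^1$ restricting to reflection of $\overline{\HHH}$ across the circle $\bdy\overline{\HHH}$, and let $\tau$ be complex conjugation, which fixes $\alpha=\RR_{\ge 0}$ pointwise. Since $\bar f(\bdy\overline{\HHH})\subset\alpha$ and $\alpha$ is real-analytic, Schwarz reflection produces a holomorphic extension $F\co\CC P^1\to\CC P^1$ with $F\circ\sigma=\tau\circ F$; it has simple zeros exactly at $\{a_i,\sigma(a_i)\}$ and simple poles exactly at $\{b_i,\sigma(b_i)\}$, hence is rational of degree $2k$, so
\[
  F(\zeta)=C\prod_{i=1}^{k}\frac{(\zeta-a_i)\,(\zeta-\sigma(a_i))}{(\zeta-b_i)\,(\zeta-\sigma(b_i))}
\]
for a constant $C$. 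The symmetry $F\circ\sigma=\tau\circ F$ forces $C\in\RR$, and evaluating at the $\sigma$-fixed point $\infty$ gives $C=F(\infty)=x'$. Thus $u$ is determined by the unordered pair $(\{a_i\},\{b_i\})\in\bigl(\Sym^k(\HHH)\times\Sym^k(\HHH)\bigr)\setminus\Delta$ (the two divisors are disjoint), so $\ev$ is injective. Conversely, for any such pair the displayed formula with $C=x'$ restricts to a holomorphic map $\overline{\HHH}\to\CC P^1$ which, again by the reflection identity, sends $\bdy\overline{\HHH}$ into $\RR_{\ge 0}$, has simple zeros and poles precisely at the $a_i$ and $b_i$, and takes the value $x'$ at $\infty$; the map $\zeta\mapsto(\zeta,\bar f(\zeta))$ is then a stabilizing boundary degeneration with $\ev$-image the given pair. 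This assignment is manifestly smooth and is a two-sided inverse to $\ev$, so $\ev$ is a diffeomorphism; in particular it is proper and of degree one.

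The only points requiring care are the boundary regularity used to invoke Schwarz reflection, which is standard for finite-energy holomorphic curves asymptotic to chords and orbits, and the bookkeeping in the previous paragraph that $F$ has degree exactly $2k$ so that the product formula is exhaustive; both are immediate from Definition~\ref{def:stabilizing-degen} together with the fact that $x'$ lies in the interior of $\alpha$. The substantive content is simply the uniqueness of $\bar f$ given its zeros and poles, which the reflection argument delivers directly.
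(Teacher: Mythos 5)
Your proof is correct and follows essentially the same route as the paper's: extend $\pi_\Sphere\circ u$ over the boundary, apply Schwarz reflection to obtain a rational function determined by its (simple, conjugate-symmetric) zeros and poles together with a scalar, and pin down the scalar using the asymptotic value $x'$ at the distinguished boundary point. The only differences are cosmetic — you place the distinguished puncture at $\infty$ rather than at $0$ (which slightly simplifies the normalization of the constant), and the paper is a bit more explicit that the smooth structure on $\cN(x';k)$ agrees with that on the space of rational functions before concluding the map is a diffeomorphism.
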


\begin{proof}
  We think of $\Sphere$ as the one-point compactification of $\CC$, so
  that $\alpha$ corresponds to $\RR^{+}\cup \{\infty\}$, and $z_L$ and
  $w_L$ correspond to $0$ and $\infty$. (This uses the fact that
  $\alpha$ was an arc of a great circle.) Under this parameterization,
  the point $x'$ corresponds to some real number $r>0$.

  For notational simplicity, we also reparameterize the domain $\HHH$
  by $\HHH^+=\{x+iy\mid y\geq 0\}$, so that the point at infinity in
  $\HHH$ corresponds to the origin $0\in \HHH^+$.

  Curves in $\cN(k,x')$ correspond to
  holomorphic maps $f\colon \HHH^+\to \CC$ satisfying:
  \begin{itemize}
  \item $f(\RR)\subset \RR^+$; in fact, $f(\RR)$ is a closed interval
    in $\RR^+$.
  \item $f(0)=r$.
  \item The poles and zeroes of $f$ do not occur on $\partial \HHH^+$.
  \item All of the poles and zeroes of $f$ are simple.
  \end{itemize}
  (The last condition corresponds to the condition that curves 
  in $\cN(k,x')$ are required to be asymptotic to simple Reeb orbits.)
  After a Schwartz reflection, these in turn correspond to meromorphic
  (and hence rational) 
  functions $F\colon \CC\cup\{\infty\} \to \CC\cup\{\infty\}$
  with:
  \begin{itemize}
  \item $F(\RR)\subset \RR^+$
  \item $F(0)=r$.
  \item There are no zeroes or poles of $F$ on $\RR$.
  \item All of the zeros and poles of $F$ are simple.
  \end{itemize}
  Such functions are specified by their set of zeroes $Q\subset \HHH^+$ 
  and poles $P\subset \HHH^+$ (all of which come in conjugate pairs, since
  they came from a Schwartz reflection),
  and a scaling parameter $\kappa$:
  \[ 
    F(\zeta)=\kappa
    \left(\frac{\prod_{q\in Q}(\zeta-q)(\zeta-{\overline q})}
      {\prod_{p\in P}(\zeta-p)(\zeta-{\overline p})}\right);
  \]
  where $\kappa$ is expressed in terms of $r$ (i.e. $x'$) by
  \[ \kappa \cdot \left(\frac{\prod_{q\in Q} |q|}{\prod_{p\in P}|p|}\right)^2=r.\]
  The image of the map $f$ under the evaluation map $\ev$ is the pair $(Q,P)$.
  Thus, $\ev$ induces a one-to-one correspondence. Since it is a
  continuous map between manifolds, it follows that $\ev$ is a
  homeomorphism.

To see that $\ev$ is a diffeomorphism, we argue as follows.  
The smooth structure on $\cN(k,x')$ agrees with the smooth structure
induced on the rational functions in $\zeta\in\CC$.  Consider the map that
associates to a rational function in $\zeta$ its (unordered) set of roots
$Q$ and poles $P$. If none of those roots or poles is repeated, the
Jacobian of the map is non-zero.
\end{proof}

Boundary degenerations in $\Sigma'$ will be obtained by gluing
boundary degenerations in $\Sigma$ to stabilizing boundary
degenerations. To formulate this, we choose also two points $z_R$ and
$w_R$ on $\Sigma$.

\begin{proposition}
  \label{prop:StabilizationInvariance}
  Fix an energy $E_0$.
  If $J$ is regular to energy $E_0$ and product-like near points $z'_R,w'_R\in\Sigma$
  and $z_R$ and $w_R$ are chosen generically in the neighborhoods of
  $z'_R$ and $w'_R$ where $J$ is split then for $T$ sufficiently large
  the associated complex structure $J(T)$ over $\Sigma'$ is also
  regular to energy $E_0$.  Moreover, for any $\vec{\rho}$ and $m$
  with $E(\vec{\rho},m)\leq E_0$ and $\indsq(\vec{\rho},m)=2$,
  \[
    \#\cN(J;[\alpha_i];\rho^1,\dots,\rho^n;m)
    =\#\cN(J(T);[\alpha_i];\rho^1,\dots,\rho^n;m).
  \]
\end{proposition}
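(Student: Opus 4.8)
The plan is to run a neck-stretching argument, directly parallel to the cylindrical proof of stabilization invariance~\cite[Section~12]{Lipshitz06:CylindricalHF}, with boundary degenerations on $\Sphere$ playing the role that trivial strips play there. For $T\gg 0$ the goal is to set up a bijection between the index-$2$ boundary degenerations counted in $\#\cN(J(T);[\alpha_i];\vec\rho;m)$ and those counted in $\#\cN(J;[\alpha_i];\vec\rho;m)$, by showing that every curve on $\Sigma'$ is close to a ``broken'' building consisting of one boundary degeneration on $\Sigma$ and one stabilizing boundary degeneration on $\Sphere$ (Definition~\ref{def:stabilizing-degen}), glued along the orbits at the neck, and that conversely each such building glues uniquely and regularly for $T$ large. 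Along the way this also gives regularity of $J(T)$ to energy $E_0$.

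First I would identify the limiting objects. Fix $T_i\to\infty$ and $u_i\in\cN(J(T_i);[\alpha_i];\vec\rho;m)$ with $\indsq(\vec\rho,m)=2$ and $E(\vec\rho,m)\le E_0$. Applying the compactness theorem of symplectic field theory~\cite{BEHWZ03:CompactnessInSFT} to the projections to $\Sigma'$, to $\HHH$ near the neck, and then to $u_i$ off the neck (just as in Step~1 of the proof of Theorem~\ref{thm:master}), a subsequence converges to a holomorphic building. Running the same index bookkeeping as in Step~2 there, using Lemma~\ref{lem:bdy-deg-ind-2} (every boundary degeneration on $\Sigma$ has index $\ge 2$) and the fact that a stabilizing boundary degeneration on $\Sphere$, once its orbit data at the neck is pinned down by matching, is rigid (this is where Lemma~\ref{lem:rat-maps} enters), one finds that the limit must consist of exactly one boundary degeneration $u_R\colon S_R\to\Sigma\times\HHH$ for the original partial Heegaard diagram $(\Sigma,\alphas)$ carrying all of $\vec\rho$ and with $\indsq=2$, together with exactly one stabilizing boundary degeneration $u_L\in\cN(x';k)$; here $k$ is the common multiplicity with which $u_R$ and $u_L$ cover $\Sigma$ and $\Sphere$ respectively, and they are glued along $k$ simple Reeb orbits at $z_R\sim z_L$ and $k$ at $w_R\sim w_L$. (Since $\alpha_g^c$ lies entirely on $\Sphere$ together with the neck, it contributes nothing to the $\Sigma$-side; and no $e\infty$ components, disk bubbles, or sphere bubbles survive, by the argument of Theorem~\ref{thm:master} together with Conditions~\ref{item:tail-spheres} and~\ref{item:tail-disk} and genericity of $z_R,w_R$.)

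The matching datum is a point of $(\Sym^k(\HHH)\times\Sym^k(\HHH))\setminus\Delta$: the unordered $\HHH$-heights of the $k$ neck-orbits over $z_R$ and the $k$ over $w_R$, read off either from $u_R$ or from $u_L$, since the two $\HHH$-factors are literally identified in the stretching. On the $\Sigma$-side this evaluation is tautological: because $z_R,w_R$ are generic and $\indsq=2$ forbids branching of $\pi_\Sigma\circ u_R$ there, a curve in $\cN(J;[\alpha_i];\vec\rho;m)$ automatically meets $z_R$ and $w_R$ with multiplicity $k$, so the $\Sigma$-side moduli space of buildings is canonically $\cN(J;[\alpha_i];\vec\rho;m)$ equipped with an evaluation map to $(\Sym^k(\HHH)\times\Sym^k(\HHH))\setminus\Delta$ recording those determined heights. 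By Lemma~\ref{lem:rat-maps} the corresponding evaluation on the $\Sphere$-side, $\ev\colon\cN(x';k)\to(\Sym^k(\HHH)\times\Sym^k(\HHH))\setminus\Delta$, is a diffeomorphism, so for each of the finitely many points of $\cN(J;[\alpha_i];\vec\rho;m)$ there is exactly one matching $u_L$. This gives a bijection between limiting buildings and $\cN(J;[\alpha_i];\vec\rho;m)$. To upgrade it to an equality of counts and to regularity of $J(T)$, I would invoke the Morse--Bott gluing package at the neck (as in the proofs of Propositions~\ref{prop:glue-degen} and~\ref{prop:glue-join}, ultimately~\cite[Appendix~A]{Lipshitz06:CylindricalHF}): each building is transversally cut out — on the $\Sigma$-side because $J$ is regular to energy $E_0$, on the $\Sphere$-side because $\ev$ is a submersion, and the fibered-product (matching) condition is transverse for the same reason — hence for $T$ large it glues to a unique, regular curve in $\cN(J(T);[\alpha_i];\vec\rho;m)$, and the compactness discussion shows these are all of them. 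Since there are finitely many $(\vec\rho,m)$ with $E(\vec\rho,m)\le E_0$, one $T_0$ works for all, yielding both claims.

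The main obstacle is the gluing step. The degeneration at the neck is genuinely Morse--Bott: the Reeb orbits around $z_R,w_R$ come in $S^1$-families, there are $2k$ of them, and their heights sweep out a $2k$-complex-dimensional family on each side, so one must glue the $\Sigma$- and $\Sphere$-sides compatibly along all $2k$ orbits simultaneously. This is handled by the standard pre-gluing/implicit-function-theorem argument, the crucial analytic input being transversality of the matching, which Lemma~\ref{lem:rat-maps} supplies in the strongest form: the $\Sphere$-side evaluation is not merely proper of degree one but an honest diffeomorphism, so the fibered product with the (zero-dimensional) $\Sigma$-side is automatically cut out transversally and is a finite set. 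A secondary, routine point needed for the compactness half is that $J$, and hence every $J(T)$, remains $\eta(E_0)$-pinched, so that Lemma~\ref{lem:bdy-deg-ind-2} continues to apply to the $\Sigma$-component of any limiting building, exactly as in the proof of Lemma~\ref{lem:bd-ev-proper}.
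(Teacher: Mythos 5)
Your proposal is correct and follows essentially the same route as the paper's proof: SFT compactness at the stretched neck producing a pair consisting of a boundary degeneration $u_R$ on $\Sigma$ and a stabilizing boundary degeneration $u_L\in\cN(x';k)$, index additivity together with Lemma~\ref{lem:bdy-deg-ind-2} to constrain the limit, and the fact that the evaluation map of Lemma~\ref{lem:rat-maps} is a diffeomorphism so that the fibered product over $(\Sym^k(\HHH)\times\Sym^k(\HHH))\setminus\Delta$ is identified with $\cN(J;[\alpha_i];\vec\rho;m)$ itself, followed by standard gluing. The only cosmetic difference is that the paper records the index match by an explicit computation ($g_T=g+1$, $\chi_T=\chi+1-4k$) and notes via a maximum-modulus argument that the two components can only meet along orbits, points you treat more implicitly.
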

\begin{proof}
  Since there are finitely many pairs $(\vec{\rho},m)$ with a given
  energy, it suffices to prove the result one tuple $(\vec{\rho},m)$ at
  a time. 
  Fix a tuple of points
  $\x\in (\alpha_1^a\cup\alpha_2^a)\times
  \alpha_1^c\times\dots\times\alpha_{g-1}^c$.  Let $\{u_T\}$ be a
  sequence of curves in $\cN(J(T);\x\times x_0;\rho^1,\dots,\rho^n;m)$
  with $T\to \infty$, with domain $k[\Sigma]$.
  By a compactness argument (similar to the proof of~\cite[Proposition
  5.24]{LOT1} or~\cite[Proposition 9.6]{LOT1}), for a generic choice
  of $z_R$ and $w_R$, there is a subsequence
  of the $u_T$ which converges to a pair $(u_L,v_R)$ where:
  \begin{itemize}
  \item $u_L$ is a stabilizing boundary degeneration, in $\cN(x';k)$.
  \item $u_R\in \cN(J;\x;\rho^1,\dots,\rho^n;m)$.
  \item The $k$-tuple of points
    $(\pi_\HHH\circ u_R)\bigl((\pi_\Sigma\circ u_R)^{-1}(z_R)\bigr)$
    agree with $\{(\pi_\HHH\circ u_L)(\{w_1,\dots,w_k\})$ and similarly
    the $k$-tuple of points
    $(\pi_\HHH\circ u_R)\bigl((\pi_\Sigma\circ u_R)^{-1}(w_R)\bigr)$
    agree with $\{(\pi_\HHH\circ u_L)(\{z_1,\dots,z_k\})$.
  \end{itemize}
  In particular, the condition that $u_L$ has only simple Reeb orbits at
  $z_L$ is equivalent to the condition that $z_R$ is a regular value for
  $\pi_{\Sigma}\circ u_R$; by Sard's theorem, generic $z_R$ and
  $w_R$ satisfy this condition. The fact that $u_L$ and $u_R$ meet
  only along orbits, not also Reeb chords, follows from elementary
  complex analysis: if they met along a Reeb chord, a maximum modulus
  theorem argument would imply that $\pi_\HHH\circ u_R$ is constant.

  We claim that 
  \[
    \ind(u)=\ind(u_T).
  \]
  Indeed, if we use a subscript $T$ to denote the terms in
  Formula~\eqref{eq:index-source-bdy-degen} for $u_T$ then $g_T=g+1$
  and $\chi_T=\chi+1-4k$, while the other terms are unchanged, giving
  the result. (Alternatively, the equality of indices also follows
  from Proposition~\ref{prop:emb-ind-bdy-degen}.)
  
  So, it follows from Lemma~\ref{lem:bdy-deg-ind-2} that if
  $\indsq(\rho^1,\dots,\rho^n;m)\leq 1$ then the corresponding moduli space must
  be empty.

  Assume next that $\indsq(\rho^1,\dots,\rho^n;m)=2$. 
  It follows from standard gluing arguments (see the proof
  of~\cite[Propositions 5.30 and 5.31]{LOT1} and the citations there) that
  for sufficiently large $T$, $\cN(J(T),\x\times x_0;\rho^1,\dots,\rho^n;m)$
  is in the range of a gluing map from
  \[
    \cN(J;\x;\rho^1,\dots,\rho^n;m)
    \times_{(\Sym^k(\HHH)\times\Sym^k(\HHH))\setminus\Delta}
    \cN(x';k).
  \]
  This uses the fact that the map
  $\cN(J;\x;\rho^1,\dots,\rho^n;m)\times
  \cN(x';k)\to\Sym^m(\HHH)\times\Sym^m(\HHH)$ in the
  definition of the fibered product is a submersion, which follows
  from Lemma~\ref{lem:rat-maps}.  Further, since the map from the
  second factor is, in fact, a diffeomorphism, this fibered product is
  identified with its first factor. Thus, for $T$ large enough,
  \[
    \cN(J;\x;\rho^1,\dots,\rho^n;m)\cong \cN(J(T),\x\times x_0;\rho^1,\dots,\rho^n;m),
  \]
  as desired.
\end{proof}

\begin{proof}[Proof of Theorem~\ref{thm:AlgOfSurface}]
  This is an induction on the genus of $\Sigma$; the base case is
  Proposition~\ref{prop:IdentifyAlgebraGenusOne}, and the inductive
  step is provided by Proposition~\ref{prop:StabilizationInvariance}.
\end{proof}

Finally, we turn briefly to composite boundary, observing that the
same argument shows that the counts of composite boundary
degenerations are the same as the counts of simple ones
(Proposition~\ref{prop:extended-is-composite}):

\begin{proof}[Proof of Proposition~\ref{prop:extended-is-composite}]
  In the genus $1$ case, this follows from the description of boundary
  degenerations in Section~\ref{sec:GenusOne}: there is an obvious
  bijection between the two moduli spaces. (In the language
  of~\cite{LOT:torus-alg}, both correspond to the same tiling
  patterns.) The higher genus case then follows by the same
  degeneration argument that was used to prove
  Proposition~\ref{prop:StabilizationInvariance}.
\end{proof}


\section{The module \textalt{$\CFAmb$}{CFAnu}}\label{sec:CFA}
In this section, we construct a version of the main invariant of
3-manifolds with torus boundary, the weighted $\Ainf$-module
$\CFAmb(\HD)$, and prove it is independent of the choice of Heegaard
diagram $\HD$. The module $\CFAmb(\HD)$ has a mild algebraic defect: the
actions on it are not $U$-equivariant. Remedying this defect requires
one additional complication, which we discuss in
Section~\ref{sec:GroundingU}. 
So, the ground ring for $\CFAmb(\HD)$ is
$\Field$, not $\Field[U]$. On the other hand, if one is interested
in studying $\HFm(Y)/(U^n)$ for some $n$, the slightly simpler
construction in this section suffices (for an appropriate choice of
almost complex structures): one can arrange that the actions are
$U$-equivariant up to any fixed order in $U$
(Remark~\ref{rem:UtoTheN}). The grading on $\CFAmb$ is deferred to
Section~\ref{sec:CFD}.

\subsection{The definition of \textalt{$\CFAmb$}{CFAnu} as a weighted module}
Fix a provincially admissible bordered Heegaard diagram $\HD$.

The weighted $\Ainf$-module $\CFAmb(\HD)$ is the left
$\Field[U]$-module generated by $\Gen(\HD)$, the set of generators for
$\HD$. The idempotents of
$\Alg(\bdy\HD)$ act on $\CFAmb(\HD)$ by $m_2^0(\x,\iota_i)=\x$
if $\x\cap \alpha_{i+1}^a\neq\emptyset$ and $m_2^0(\x,\iota_i)=0$
otherwise. (Recall that, for our conventions, $\iota_0$ corresponds to
$\alpha_1^a$ and $\iota_1$ corresponds to $\alpha_2^a$.)
We also define $m^w_{1+n}(\x,a_1,\dots,a_n)=0$ for $n+2w>1$ if either
$\iota_i\in \{a_1,\dots,a_n\}$.

To define the rest of the weighted $\Ainf$-module structure, it
suffices to specify the operation
$m_{1+n}^w(\x,a_1,\dots,a_n)$ for each $n$, $w$, and
basic algebra sequence  $(a_1,\dots,a_n)$,
i.e., where each $a_i$ is of the form 
$U^{r_i}$ with $r_i\geq 1$ or $U^{r_i}\rho^i$ with $r_i\geq 0$ and
$\rho^i$ some Reeb chord.
Fix a tailored family of almost complex structures.
Define 
\begin{equation}\label{eq:CFA-m-def}
  m_{1+n}^w(\x,a_1,\dots,a_n)=\sum_{\y\in\Gen(\HD)}\sum_{\substack{B\in\pi_2(\x,\y)\\\ind(B;\rho_1,\dots,\rho_n;w)=1}}\#\cM^B(\x,\y,a_1,\dots,a_n;w)
  U^{n_z(B)+\sum_{i=1}^n r_i} \y.
\end{equation}

For this to make sense, we need the following:
\begin{lemma}\label{lem:admis}
  Fix generators $\x,\y\in\Gen(\HD)$, basic algebra elements $a_1,\dots,a_n$,
  and $w\in\ZZ$.  Provincial admissibility of the bordered Heegaard diagram $\HD$
  implies that there are finitely many homology
  classes $B\in\pi_2(\x,\y)$ so that $\cM^B(\x,\y;a_1,\dots,a_n;w)$ is
  non-empty.
  Further, given a tailored family of almost complex structures, if
  $\ind(B;a_1,\dots,a_n;w)=1$ then $\cM^B(\x,\y;a_1,\dots,a_n;w)$
  consists of finitely many points.
\end{lemma}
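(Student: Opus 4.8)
The plan is to dispatch the two assertions separately. The second one—that $\cM^B(\x,\y;\vec a;w)$ is finite when $\ind(B;\vec a;w)=1$—requires no new work: it is precisely Lemma~\ref{lem:0d-compactness}, which was proved in Section~\ref{sec:compactness} together with Theorem~\ref{thm:master}. So the content is entirely in the first assertion, the finiteness of the set of homology classes $B\in\pi_2(\x,\y)$ with $\cM^B(\x,\y;\vec a;w)\neq\emptyset$, and this is a routine adaptation of the corresponding statement for bordered $\HFa$ (\cite[Chapter~5]{LOT1}, ultimately \cite[Section~5]{OS04:HolomorphicDisks}).

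First I would reduce to the underlying chord sequence $\vec\rho=(\rho^1,\dots,\rho^k)$ of $\vec a$: the non-Reeby entries of $\vec a$ affect $\cM^B(\x,\y;\vec a;w)$ only through the choice of (tailored) almost complex structure, so whether the moduli space is empty depends only on $(\x,\y,B,\vec\rho,w)$. Next, the standard positivity observation: if $u\in\cM^B(\x,\y;\vec a;w)$ then $\pi_\Sigma\circ u$ meets each fiber $\{q\}\times[0,1]\times\RR$, $q\in\Sigma\setminus(\alphas\cup\betas)$, with non-negative multiplicity, so $B$ is a positive domain. The key bookkeeping point—and the one place requiring a moment's care—is that $\bdy^\bdy B$ and $n_z(B)$ are pinned down by $(\vec\rho,w)$: the $e\infty$ asymptotics force $\bdy^\bdy B=\suppo{\vec\rho}$, and since each Reeb orbit wraps once around $\bdy\Sigma$ one gets $n_z(B)=w+(\text{the }\rho_4\text{-coefficient of }\suppo{\vec\rho})$. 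Consequently, if $B,B'\in\pi_2(\x,\y)$ both have non-empty moduli space (for the same $(\vec a,w)$), then $P\coloneqq B-B'$ satisfies $\bdy^\bdy P=0$ and $n_z(P)=0$, i.e.\ $P$ is a provincial periodic domain in the sense of Section~\ref{sec:background}. So the problem reduces to: for a fixed positive domain $B'$, only finitely many provincial periodic domains $P$ have $B'+P$ positive.

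That last finiteness is the usual consequence of provincial admissibility, and I would run the standard argument: let $\mathcal P\subset\ZZ^{\{\text{regions}\}}$ be the lattice of provincial periodic domains and $V=\mathcal P\otimes_\ZZ\RR$ the rational subspace it spans. If there were infinitely many $P\in\mathcal P$ with all coefficients bounded below by those of $-B'$, then normalizing and passing to a subsequence the vectors $P/\lVert P\rVert$ would converge to a nonzero $v\in V$ with all coefficients $\geq0$; since the cone of such vectors in $V$ is rational polyhedral, it would then contain a nonzero integral point, i.e.\ a nonzero provincial periodic domain with no negative coefficient, contradicting provincial admissibility of $\HD$. I do not expect a genuine obstacle anywhere in this—the only nontrivial check is the reduction in the previous paragraph (confirming that $n_z(B)$ is determined by $(\vec\rho,w)$, so that differences of relevant classes are honest provincial periodic domains); once that is in place, both the positivity input and the lattice-theoretic finiteness are verbatim from the $\HFa$ case.
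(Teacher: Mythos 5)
Your proof is correct and follows the same route as the paper's: positivity of the domain, the observation that $(\vec{\rho},w)$ pins down the boundary multiplicities so that any two relevant classes differ by a provincial periodic domain, and an appeal to Lemma~\ref{lem:0d-compactness} for the second assertion. The only difference is the final finiteness step: the paper invokes the area form from~\cite[Proposition 4.28]{LOT1}, for which all provincial periodic domains have area zero (so all relevant domains have equal area, and there are finitely many positive domains of a given area), whereas you run the equivalent renormalization/rational-cone argument directly; the two are standard and interchangeable.
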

\begin{proof}
  This is a simple adaptation of the proof in the closed
  case~\cite[Lemma 4.13]{OS04:HolomorphicDisks} (see
  also~\cite[Proposition 4.28]{LOT1}). If the moduli space is
  non-empty then the domain $B$ must have only non-negative
  coefficients. The sequence of chords $\rho^i$ corresponding to the
  $a_i$ and the integer $w$
  determine the multiplicities of $B$ near $\partial
  \Sigma$. Consequently, any two such domains $B$ differ by a
  provincial periodic domain.
  Provincial admissibility implies that there is an area form on $\Sigma$ so
  that all provincial periodic domains have area $0$~\cite[Proposition
  4.28]{LOT1}. In particular, all domains $B$ so that the moduli space
  is non-empty have the same area. There are only finitely many
  positive domains with a given area, so the result follows.

  The finiteness statement is Lemma~\ref{lem:0d-compactness}.
\end{proof}

\begin{remark}
  \label{rem:HFAmFree}
  Since the basepoint $z$ is adjacent to the boundary, it follows that
  $m^0_1(\x)$ is in the $\Field$-span of ${\mathfrak S}(\HD)$; 
  in particular, $H_*(\CFAmb(\HD),m^0_1)$ is a free (left) $\Field[U]$-module.  
\end{remark}

As mentioned in the introduction, since $\pi_2(\x,\y)=\emptyset$
unless $\x$ and $\y$ represent the same $\SpinC$-structure (see
Section~\ref{sec:background}), the module $\CFAmb(\HD)$ decomposes as a
direct sum
\[
  \CFAmb(\HD)=\bigoplus_{\spinc\in\Spinc(Y)}\CFAmb(\HD,\spinc).
\]

\subsection{Proof of the structure equation}
Recall that Theorem~\ref{thm:CFAmb-is} states that $\CFAmb$ satisfies
the weighted $\Ainf$-relations.
\begin{proof}[Proof of Theorem~\ref{thm:CFAmb-is}]
  As usual, this follows by considering the ends of the
  $1$-dimensional moduli spaces, i.e., from Theorem~\ref{thm:master}.
  Consider the coefficient of $U^m\y$ in the weight $w$ $\Ainf$-relation with inputs
  $(\x,a_1,\cdots,a_n)$. Since $\CFAmb$ is defined
  to be strictly unital, if any of the $a_i$ is an idempotent then
  verifying the weighted $\Ainf$-relation is straightforward, and is
  left to the reader. So, assume that all the $a_i$ are basic algebra
  elements.  Then, this relation has the following kinds of terms:
  \begin{enumerate}
  \item Terms of the form $m_{1+n-j}^{w-w'}\bigl(m_{1+j}^{w'}(\x,
    a_1,\dots,a_j),a_{j+1},\dots,a_n\bigr)$.
    These are in bijection with terms of
    Type~\ref{end:TwoStoryBuilding} in Theorem~\ref{thm:master}.
  \item Terms of the form
    $m^w_n\bigl(\x,a_1,\dots,a_{j-1},\mu_2^0(a_j,a_{j+1}),a_{j+2},\dots,a_n\bigr)$. These
    are in bijection with terms of Type~\ref{end:Collision} in
    Theorem~\ref{thm:master}.
  \item Terms of the form
    $m_{2+n}^w\bigl(\x,a_1,\dots,a_j,\mu_0^1,a_{j+1},\dots,a_n\bigr)$.
    These are in bijection with terms of Type~\ref{end:EscapingOrbit}
    in Theorem~\ref{thm:master}.
  \item Terms of the form
    $m_{2+n-j}^{w-w'}\Bigl(\x,a_1,\dots,a_{i},\mu_j^{w'}\bigl(a_{i+1},\dots,a_{i+j}\bigr),a_{i+j+1},\dots,a_n\Bigr)$ where
    $j+2w'>2$ and $j\geq 1$. There are two sub-cases:
    \begin{enumerate}
    \item If $w-w'=n-j=0$, these correspond to terms of
      Type~\ref{end:SimpleBoundaryDegeneration} in
      Theorem~\ref{thm:master}.
    \item Otherwise, these correspond to terms of
      Type~\ref{end:CompositeBoundaryDegeneration} in Theorem~\ref{thm:master}.
    \end{enumerate}
  \end{enumerate}
  So, by Theorem~\ref{thm:master}, the coefficient of $U^m\y$ vanishes
  (modulo 2).
\end{proof}

\subsection{Invariance under changes of almost complex structures and
  isotopies}\label{sec:CFA-iso-invariance}
In this section, we note invariance of $\CFAmb$ under isotopies and
changes in the almost complex structure.

\begin{proposition}\label{prop:J-inv}
  Fix a provincially admissible bordered Heegaard diagram
  $\HD=(\Sigma,\alphas,\betas)$.  If $J$ and $J'$ are tailored
  families of almost
  complex structures then the modules $\CFAmb(\HD;J)$ and
  $\CFAmb(\HD;J')$ computed with respect to $J$ and $J'$ are
  $\Ainf$-homotopy equivalent.
\end{proposition}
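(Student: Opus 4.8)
The plan is to follow the familiar "continuation map" strategy from bordered Floer theory (as in \cite[Chapter 6]{LOT1} and the cylindrical setting \cite[Section 3.4]{Lipshitz06:CylindricalHF}), adapted to handle the new kinds of degenerations. First I would choose a generic path $\{J_t\}_{t\in[0,1]}$ of coherent families of $\eta$-admissible almost complex structures connecting $J$ to $J'$, using a single sufficient pinching function $\eta$ (valid for both $J$ and $J'$, shrinking $\eta$ if necessary) so that the path stays within the class of coherent families and the relevant parametrized moduli spaces are transversely cut out, by a parametrized version of Proposition~\ref{prop:transv-exists}. One should also arrange that the path passes through only finitely many $J_t$ for which some index-$0$ moduli space $\cM^B(\x,\y;\Source)$ fails to be transverse, and that the boundary-degeneration data ($\cN$-moduli spaces at the punctures) stay regular along the path; this is where Theorem~\ref{thm:RegularityOfBoundaryDegenerations} and Lemma~\ref{lem:bd-ev-proper} are used. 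Along this path one counts index-$0$ holomorphic curves (in the parametrized sense: $\ind(B,\vec a,w)=0$ but the curve exists for isolated $t$) to define a weighted module morphism $\Phi\colon \CFAmb(\HD;J)\to\CFAmb(\HD;J')$, with $\Phi_{1+n}^w(\x,a_1,\dots,a_n)$ the count of points in $\bigcup_t\cM^{B}_{J_t}(\x,\y;a_1,\dots,a_n;w)$ with $\ind(B,\vec a,w)=0$, weighted by the appropriate power of $U$.

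Second, I would verify that $\Phi$ is a cycle in the weighted morphism complex $\Mor_{\MAlg}(\CFAmb(\HD;J),\CFAmb(\HD;J'))$, i.e.\ it satisfies the weighted $\Ainf$-morphism relations. This follows, as usual, from analyzing the codimension-$1$ boundary of the parametrized $1$-dimensional moduli spaces $\bigcup_t\cM^{B}_{J_t}(\x,\y;\vec a;w)$ with $\ind(B,\vec a,w)=1$. The compactness and gluing analysis is exactly that of Theorem~\ref{thm:master} with one extra parameter: the ends are (i) two-story buildings with one story at $J_t$ and one story an $\RR$-invariant $J_\infty$-curve (giving the terms $m^{J'}\circ\Phi$ and $\Phi\circ m^{J}$), (ii) collisions of levels (giving $\mu_2^0$-terms), (iii) orbit-curve degenerations (giving $\mu_0^1$-terms), (iv) boundary degenerations, both simple and composite (giving the higher $\mu_n^w$-terms), and (v) the new end where $t\to 0$ or $t\to 1$ (giving the identity-like contributions that make $\Phi$ reduce to the identity at the endpoints, once one checks the index-$0$ count at a fixed generic $J$ is just $\x\mapsto\x$). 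Since $\eta$ is fixed along the path, the almost complex structures on boundary-degeneration and $e\infty$ components are the ones used in $\MAlg$, so these ends genuinely assemble into the $\Ainf$-morphism relations; one invokes Theorem~\ref{thm:AlgOfSurface} to identify the boundary-degeneration counts with the algebra operations.

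Third, to upgrade $\Phi$ to a homotopy equivalence I would run the same construction for the reverse path to get $\Psi\colon\CFAmb(\HD;J')\to\CFAmb(\HD;J)$, and then for a homotopy (a generic $2$-parameter family of almost complex structures interpolating between the concatenated path $J\rightsquigarrow J'\rightsquigarrow J$ and the constant path at $J$) to build a weighted module homotopy $H$ with $\Psi\circ\Phi-\id=d(H)$ in the morphism complex; symmetrically $\Phi\circ\Psi\simeq\id$. This is the standard "homotopy of homotopies" argument and the bookkeeping is the same as in \cite[Chapter 6]{LOT1}, now carrying along the weights $w$ and the $U$-powers. I would also note that $\Phi$, $\Psi$, and $H$ respect the $\SpinC$-splitting since all homology classes involved connect generators in a fixed $\SpinC$-structure, so the equivalence is compatible with \eqref{eq:spinc-decomp}.

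The main obstacle I expect is the parametrized version of the boundary-degeneration analysis: one must ensure that along the path the $\cN$-moduli spaces appearing in composite boundary degenerations remain transverse and that their evaluation maps remain proper (the analogue of Lemma~\ref{lem:bd-ev-proper} in families), and that the gluing result for composite boundary degenerations (Proposition~\ref{prop:glue-join}, whose proof already uses the somewhat delicate stratified-space degree argument of Lemma~\ref{lem:StratifiedSpaces-moduli}) survives the introduction of the path parameter. Concretely, the product of two cylindrical ends near a composite boundary degeneration now sits inside a parametrized moduli space, so the degree computation must be carried out for the stratified map to $\RR_+^2\times[0,1]$ rather than $\RR_+^2$; this is straightforward in principle but requires care. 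Everything else—transversality for the main component, the tree structure of the gluing graph, the index count ruling out more than one boundary degeneration—goes through verbatim from Section~\ref{sec:compactness} with an extra $+1$ in the expected dimension from the path parameter.
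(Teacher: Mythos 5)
Your overall skeleton (define a morphism by counting rigid curves for interpolating data, derive the $\Ainf$-morphism relation from ends of one-dimensional moduli spaces, build the homotopies from a further-parametrized family) matches the paper's, but the mechanism you propose for the interpolation has a genuine gap, in two ways.

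First, you conflate two distinct strategies. You set up a \emph{path} $\{J_t\}_{t\in[0,1]}$ of coherent families and define $\Phi$ by counting points of $\bigcup_t\cM^B_{J_t}$ with $\ind=0$, i.e.\ curves existing at isolated parameter values $t_*$. In that parametrized picture, the two-story ends of the one-dimensional space $\bigcup_t\cM^B_{J_t}$ (with $\ind=1$) consist of \emph{two $J_{t_*}$-holomorphic stories at the same bifurcation time $t_*$}; they produce terms $m^{J_{t_*}}\circ\phi_{t_*}+\phi_{t_*}\circ m^{J_{t_*}}$, not $m^{J'}\circ\Phi+\Phi\circ m^{J}$, so the claimed morphism relation does not follow from your end analysis. (Your phrase ``one story an $\RR$-invariant $J_\infty$-curve'' belongs to the continuation-map picture, in which there is a single non-$\RR$-invariant interpolating structure and no union over $t$; making the bifurcation picture work instead requires handling one bifurcation at a time and is essentially unworkable for an $\Ainf$-structure with infinitely many operations.) Relatedly, simple boundary degenerations do not actually appear in these index-one moduli spaces (the main component would be forced to be constant), so item (iv) of your list is partly spurious.

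Second, even in the continuation-map reading, ``an interpolating almost complex structure'' is not well-defined here, because the structures are coherent families over $\ModPol$: one must specify where the interpolation from $J$ to $J'$ occurs \emph{relative to the algebra punctures}, and what happens when punctures escape above or below the interpolation region. The paper handles this by enlarging the polygon moduli space to $\BModPol$, whose components carry an extra distinguished puncture anchoring the interpolation, with the family required to agree with $J'$ over top bimodule components and with $J$ over bottom ones; the extra puncture also accounts for why the $\ind=0$ moduli spaces are rigid. Your proposal contains no analogue of this, and without it the moduli spaces defining $\Phi$ (and, for the homotopy, the two-distinguished-point spaces replacing your ``path of paths'') are not defined within the framework of Definition~\ref{def:AdmissibleJs}. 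Your concern about composite boundary degenerations in families is legitimate but secondary to these issues.
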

\begin{proof}
  The proof is similar to the proof of the analogous result for
  $\CFAa$~\cite[Section
  7.3.1]{LOT1} or ordinary Heegaard Floer homology~\cite[Proposition
  7.1]{OS04:HolomorphicDisks}, but with some superficial complications
  because we are considering families of almost complex structures. 

  Recall from Definition~\ref{def:polygons} that a bimodule component
  has two distinguished boundary punctures, labeled $\pm\infty$, and
  some other interior and boundary punctures labeled by energies. The
  $\pm\infty$ punctures divide the rest of the boundary into
  $\{0\}\times\RR$ and $\{1\}\times\RR$. A \emph{bimodule map component}
  is defined similarly to a bimodule component, except with a third
  distinguished puncture, which we call $(1,0)$, along the edge
  $\{1\}\times\RR$. There is a compactified moduli space $\BModPol$ of
  bimodule map polygons, whose boundary points are trees with one
  vertex a bimodule map component and the other vertices bimodule
  components or algebra-type components. Given a point $P$ in the boundary of
  $\BModPol$, each bimodule component of $P$ (if there are
  any) either comes above or below the bimodule map
  component. Call bimodule components \emph{top} or \emph{bottom}
  bimodule components in these two cases; see Figure~\ref{fig:BModPol}.

  \begin{figure}
    \centering
    \includegraphics{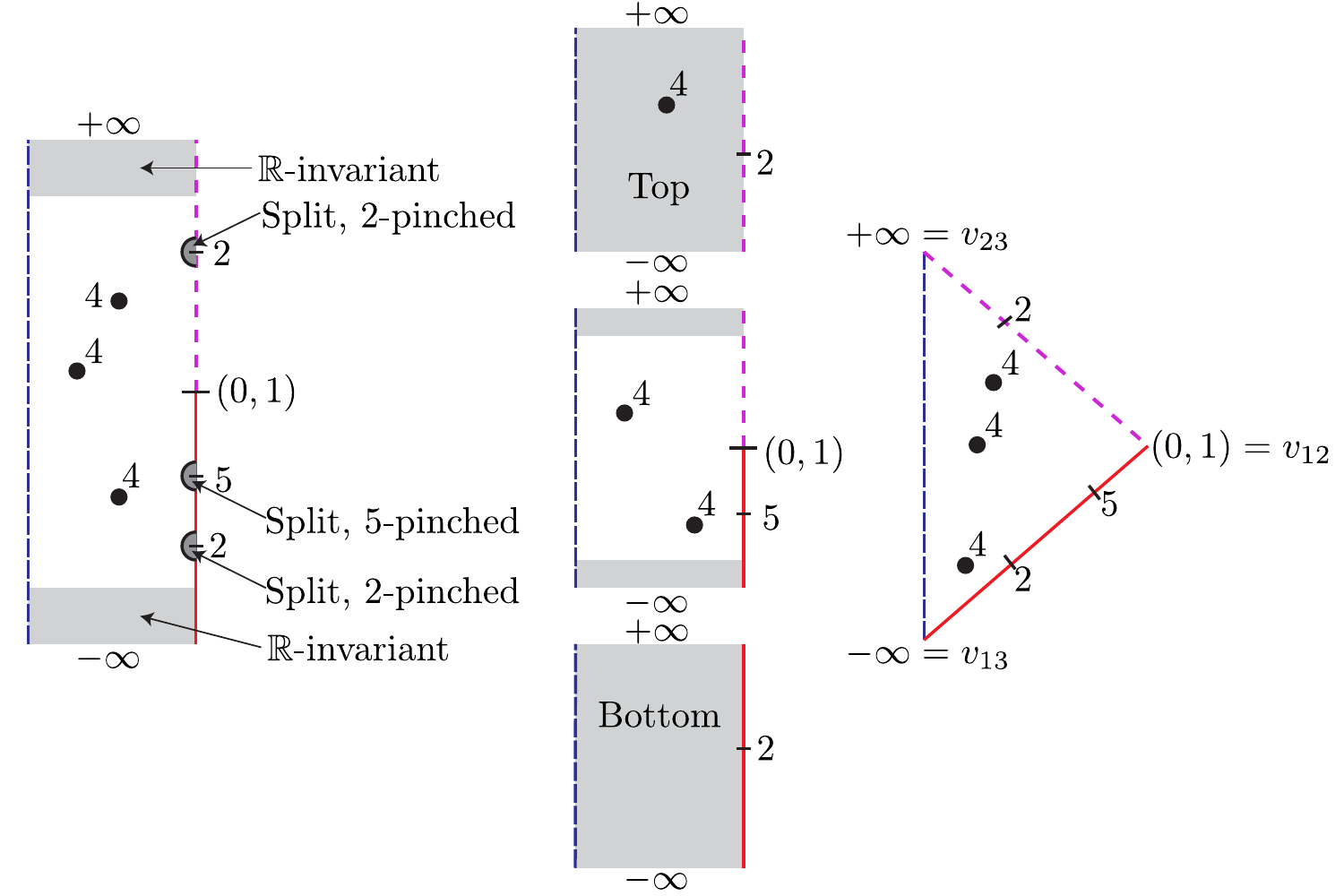}
    \caption[Bimodule map polygons]{\textbf{Bimodule map polygons.} Left: a bimodule map component,
      drawn as $[0,1]\times\RR$ with marked points, with some
      conditions on the almost complex structure marked. Center: a
      point in the boundary of $\BModPol$, with a top bimodule
      component and a bottom bimodule component. Right: in
      Section~\ref{sec:CFA-hs-invariance}, we will think of bimodule
      map polygons as triangles.}
    \label{fig:BModPol}
  \end{figure}
  
  An \emph{interpolating tailored family of almost complex structures}
  is a continuous family of almost complex structures $\wt{J}$ on
  $\Sigma\times P$ parameterized by $P\in \BModPol$ so that
  \begin{itemize}
  \item Each $J(P)$ satisfies
    Conditions~\ref{item:J-piD},~\ref{item:J-s-t},~\ref{item:J-split},~\ref{item:J-const-at-punct},
    and~\ref{PinchedOverVertex} of the definition of an
    $\eta$-admissible almost complex structure
    (Definition~\ref{def:ac}), for some sufficient pinching function
    $\eta$, and preserves $\{x\}\times TP$ for each $x\in \Sigma$,
  \item $\wt{J}$ agrees with the $\RR$-invariant almost complex
    structure $J'_\infty$ specified by $J'$ near $+\infty$ and with the
    $\RR$-invariant almost complex structure $J_\infty$ specified by $J$ near $-\infty$,
  \item $\wt{J}$ agrees with $J'$ over every top bimodule component,
    and with $J$ over every bottom bimodule component, and
  \item $\wt{J}$ satisfies the obvious analogue of the coherence
    condition~\ref{item:admis-Pprime} from Definition~\ref{def:AdmissibleJs} over the bimodule
    map component, and
  \item $\wt{J}$ satisfies the obvious analogues of
    Conditions~\ref{item:tail-pinched} (being sufficiently pinched),
    and~\ref{item:tail-main-comp} (the moduli spaces of main
    components being transversely cut out, up to index $1$), of a
    tailored family (Definition~\ref{def:tailored}).
  \end{itemize}
  The proof that interpolating tailored families exist is similar to
  the proof of Corollary~\ref{cor:TailoredExist}.

  Given an interpolating tailored family $\wt{J}$,
  define moduli spaces
  $\cM^B(\wt{J};\x,\y;\vec{a};w)$ as in
  Definition~\ref{def:moduli-embedded}, but using $\wt{J}$. Note
  that, because of the extra puncture, these moduli spaces are one
  dimension larger than the moduli spaces
  $\cM^B(\x,\y;\vec{a};w)$.
  Define $f^{w}_{1+n}\co \CFAmb(\HD;J)\otimes\MAlg^{\otimes n}\to
  \CFAmb(\HD;J')$ by
  \[
    f_{1+n}^w(\x,a_1,\dots,a_n)=\sum_{\y\in\Gen(\HD)}\sum_{\substack{B\in\pi_2(\x,\y)\\\ind(B;\vec{a};w)=0}}\#\cM^B(\wt{J};\x,\y;\vec{a};w)
    U^{n_z(B)+m} \y,
  \]
  where $m$ is the sum of the $U$-powers in the elements of $\vec{a}$.
  The index-1 moduli spaces
  $\cM^B(\wt{J};\x,\y;\vec{a};w)$ have ends corresponding
  to two-story buildings, collisions of levels, orbit curve
  degenerations, and composite boundary degenerations. (Simple
  boundary degenerations do not occur in index $1$ moduli spaces.)  In
  the $(n+1)$-input, weight $w$ $\Ainf$-relation for
  $F=\{f_{1+m}^v\}$, these correspond to terms of the form
  \begin{align*}
  f_{1+m}^v&(m_{1+n-m}^{w-v}(\x,a_1,\dots,a_{n-m}),a_{n-m+1},\dots,a_{n})\quad
  \text{and}\\
  m_{1+m}^v&(f_{1+n-m}^
             {w-v}(\x,a_1,\dots,a_{n-m}),a_{n-m+1},\dots,a_{n});\\
    f_{n}^w&(\x,a_1,\dots,a_ia_{i+1},\dots,a_{n});\\
  f_{n+2}^{w-1}&(\x,a_1,\dots,a_i,\mu_0^1,a_{i+1},\dots,a_n); \quad
  \text{and}\\
  f_{1+m}^v&(\x,a_1,\dots,a_{i-1},\mu_{n-m+1}^{w-v}(a_{i},\dots,a_{i+n-m}),
    a_{i+n-m+1},\dots,a_{n}) \quad\text{with $n-m+1>2$},
  \end{align*}
  respectively. Compactness and gluing results for these index-1
  moduli spaces follow as in Section~\ref{sec:compactness}.  So, the
  number of ends is even, and the $f_{1+n}^w$ form a weighted
  $\Ainf$-module homomorphism.
  
  Define a homomorphism $G=\{g_{1+n}^w\}$ from $\CFAmb(\HD;J')$ to
  $\CFAmb(\HD;J)$ similarly, with the roles of $J$ and $J'$ exchanged,
  using an interpolating tailored family of almost complex structures
  $\overline{J}$. Finally, to show that $G\circ F$ and $F\circ G$ are
  homotopic to the identity maps, consider a moduli space $\HModPol$
  \emph{bimodule homotopy polygons} which have four distinguished
  marked points, two of which are in $\{1\}\times\RR$, and
  corresponding families of almost complex structures on
  $\Sigma\times P$. One kind of boundary point of $\HModPol$ occurs
  when the distance between the two distinguished points on
  $\{1\}\times \RR$ goes to infinity; these ends are identified with
  pairs of bimodule map polygons, and correspond to $G\circ F$ (or
  $F\circ G$). Another end is where the two distinguished points on
  $\{1\}\times\RR$ collide; identify these polygons with bimodule
  polygons by forgetting the extra pair of marked points. These ends
  then correspond to the identity map. Other ends of the space
  correspond to homotopy terms.
\end{proof}

\begin{proposition}\label{prop:iso-inv}
  If provincially admissible bordered Heegaard diagrams
  $\HD=(\Sigma,\alphas,\betas)$ and $\HD'=(\Sigma,\alphas',\betas')$
  differ by an isotopy of the $\alpha$- and $\beta$-curves then the
  modules $\CFAmb(\HD)$ and $\CFAmb(\HD')$ are $\Ainf$-homotopy
  equivalent.
\end{proposition}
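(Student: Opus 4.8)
The plan is to prove isotopy invariance by the standard two-step reduction: first reduce an arbitrary isotopy of the $\alpha$- and $\beta$-curves to the special case of isotopies supported away from $\bdy\Sigma$ (and not crossing the region adjacent to $z$), together with the change-of-almost-complex-structure invariance of Proposition~\ref{prop:J-inv}; then handle the remaining local moves. As in the $\HFa$ case \cite[Section~7.3]{LOT1}, an isotopy of $\alphas$ or $\betas$ can be realized, after a small perturbation, as a composition of: (a) isotopies that are ambient-isotopic to the identity rel the boundary and introduce no new intersection points (which do not change the diagram up to diffeomorphism, hence do not change the moduli spaces once the almost complex structure is carried along, so $\CFAmb$ changes only by the change-of-$J$ equivalence of Proposition~\ref{prop:J-inv}); and (b) finger moves, i.e., a single pair of canceling intersection points is created or destroyed by pushing one curve across another. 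So the real content is invariance under a finger move.

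For the finger move, I would use a continuation-map argument essentially identical to the one in the proof of Proposition~\ref{prop:J-inv}, but now with a time-dependent isotopy of the Lagrangians rather than a time-dependent almost complex structure. Concretely, I would consider a family of bordered Heegaard diagrams $\HD_t$, $t\in[0,1]$, interpolating between $\HD$ and $\HD'$ via the finger move, chosen to agree with $\HD$ near $-\infty$ and with $\HD'$ near $+\infty$ on the bimodule map polygons of Figure~\ref{fig:BModPol}; i.e., I would build an ``interpolating tailored family'' of \emph{pairs} (almost complex structure, moving Lagrangian boundary conditions) over $\BModPol$, using the same pinching function $\eta$ and the same coherence conditions as in Definition~\ref{def:tailored} and the proof of Proposition~\ref{prop:J-inv}. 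Counting rigid (index $0$) elements of the resulting moduli spaces $\cM^B(\x,\y;\vec a;w)$ with moving boundary conditions defines maps $f^w_{1+n}\co \CFAmb(\HD)\otimes\MAlg^{\otimes n}\to\CFAmb(\HD')$ and, reversing the isotopy, $g^w_{1+n}$ in the other direction. The codimension-$1$ degeneration analysis of Theorem~\ref{thm:master} applies verbatim to these moduli spaces (two-story buildings, collisions of levels, orbit-curve degenerations, and composite boundary degenerations; simple boundary degenerations do not appear in index $1$), which shows $F=\{f^w_{1+n}\}$ is a weighted $\Ainf$-module homomorphism. Then, exactly as in Proposition~\ref{prop:J-inv}, a further moduli space $\HModPol$ of bimodule homotopy polygons — with the distance between two distinguished marked points on $\{1\}\times\RR$ as the extra parameter — produces a homotopy between $G\circ F$ (resp.\ $F\circ G$) and the identity.

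Two points deserve care, and I regard the first as the main obstacle. The first is that our isotopies may, in principle, create or destroy intersection points of $\alpha$-curves with the \emph{same} $\alpha$-arc, or move intersection points near $\bdy\Sigma$; because the basepoint $z$ sits on $\bdy\Sigma$ and domains are allowed arbitrary multiplicity at the region adjacent to $z$, one must check that the relevant moduli spaces remain compact and that provincial admissibility is preserved (or can be restored by a further small isotopy through provincially admissible diagrams, using \cite[Proposition 4.25]{LOT1}). The needed compactness is the analogue of Lemma~\ref{lem:admis}: the chord sequence $\vec\rho$ and $w$ pin down the boundary multiplicities of $B$, so any two relevant classes differ by a provincial periodic domain, and provincial admissibility gives a uniform area bound. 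The moving-boundary-condition version of this argument is standard (cf.\ \cite[Section~7.3]{LOT1}), but must be combined with the new boundary-degeneration phenomena; here one invokes Lemma~\ref{lem:bdy-deg-ind-2} and the fact that we work with a sufficient pinching function throughout the interpolating family, so that no extra boundary degenerations sneak in at index $0$ or $1$. The second, more routine, point is to verify that the interpolating family can be chosen generically enough for all the relevant index $\le 1$ (and, for $\HModPol$, index $\le 2$) moduli spaces to be transversely cut out; this follows by the same Sard--Smale argument as Corollary~\ref{cor:TailoredExist} and Proposition~\ref{prop:transv-exists}, now applied over $\BModPol$ and $\HModPol$. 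Granting these, $\CFAmb(\HD)\simeq\CFAmb(\HD')$ as unital weighted $\Ainf$-modules, which is the assertion.
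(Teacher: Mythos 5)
Your proposal is correct and follows essentially the same route as the paper: the paper's proof reduces to a small Hamiltonian isotopy and defines the chain maps by counting curves with varying boundary conditions, mirroring the proof of Proposition~\ref{prop:J-inv} exactly as you describe. Your additional remarks on compactness, provincial admissibility, and transversality fill in details the paper leaves implicit, but do not change the argument.
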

\begin{proof}
  Again, the proof is similar to the case of $\HFa$, as well as to the
  proof of Proposition~\ref{prop:J-inv}. It suffices to consider the
  case of a small Hamiltonian isotopy, where one defines chain maps by
  counting holomorphic curves as in the proof of
  Proposition~\ref{prop:J-inv}, except now with varying boundary
  conditions instead of varying almost complex structures (compare,
  for example~\cite[Section 9]{Lipshitz06:CylindricalHF}).
\end{proof}

\subsection{Handleslide invariance}\label{sec:CFA-hs-invariance}
The goal of this section is to prove:
\begin{proposition}\label{prop:handleslide-inv}
  Let $\HD$ be a bordered Heegaard diagram, and $\HD'$ the result of
  performing a handleslide on $\HD$, of one $\beta$-circle over
  another, one $\alpha$-circle over another, or an $\alpha$-arc over
  an $\alpha$-circle. Assume that both $\HD$ and $\HD'$ are
  provincially admissible. Then $\CFAmb(\HD)\simeq \CFAmb(\HD')$.
\end{proposition}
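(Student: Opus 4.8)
The plan is to adapt the handleslide-invariance proof for bordered $\HFa$~\cite[Chapter 7]{LOT1} (itself modeled on the closed case~\cite{OS04:HolomorphicDisks,Lipshitz06:CylindricalHF}) to the weighted setting, by counting pseudo-holomorphic triangles. I will treat a $\beta$-handleslide in detail; the two $\alpha$-cases (an $\alpha$-circle over an $\alpha$-circle, or an $\alpha$-arc over an $\alpha$-circle) are handled by the symmetric argument with the triangle region placed on the $\alpha$-side, exactly as in~\cite{LOT1}. Using Proposition~\ref{prop:iso-inv} I first reduce to the case that $\HD=(\Sigma,\alphas,\betas,z)$ and $\HD'=(\Sigma,\alphas,\betas',z)$ with each $\beta_i$ meeting $\beta_i'$ transversely in two points and $\beta_i\cap\beta_j'=\emptyset$ for $i\neq j$, so that $(\Sigma,\betas,\betas',z)$ is the standard diagram for $\#^g(S^1\times S^2)$ with its distinguished top generator $\Theta=\Theta_{\betas\betas'}$, which is a cycle for the differential (since $z$ is adjacent to $\partial\Sigma$, $\Theta$ has no $U$-terms in its boundary). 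I define a weighted $\Ainf$-module morphism $F=\{F^w_{1+n}\}\co\CFAmb(\HD)\to\CFAmb(\HD')$ by counting holomorphic triangles with $\Theta$ as a fixed input: for generators $\x,\y$, a basic algebra sequence $\vec a=(a_1,\dots,a_n)$ with total $U$-power $m$, and a homology class $B\in\pi_2(\x,\Theta,\y)$ with $\ind(B,\vec a,w)=0$,
\[
  F^w_{1+n}(\x,a_1,\dots,a_n)=\sum_{\y}\sum_{B}\#\cM^B(\x,\Theta,\y;a_1,\dots,a_n;w)\,U^{n_z(B)+m}\,\y,
\]
the evident analogue of Formula~\eqref{eq:CFA-m-def} ($\Theta$ carries no algebra inputs).

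The technical setup mirrors the proof of Proposition~\ref{prop:J-inv}: I replace the module polygons of Definition~\ref{def:polygons} by \emph{triangle polygons} — module polygons carrying one extra distinguished boundary puncture (corresponding to $\Theta$) on the $\{0\}\times\RR$ edge, so the underlying curves map to $\Sigma\times\Delta$ for $\Delta$ a triangle — and I build coherent families of sufficiently pinched $\eta$-admissible almost complex structures over these (Definitions~\ref{def:ac} and~\ref{def:AdmissibleJs}), transverse to the index $\leq 1$ triangle moduli spaces; existence follows from the argument of Corollary~\ref{cor:TailoredExist}. Running the compactness and gluing analysis of Sections~\ref{sec:gluing} and~\ref{sec:compactness} on the index-$1$ triangle moduli spaces, the codimension-$1$ ends are: two-story buildings (with the extra story an $m^0_1$ or $m^w_{1+n}$ on one of the three objects involved, or a second triangle), collisions of algebra levels, orbit-curve degenerations, and simple or composite boundary degenerations. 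The index bookkeeping of Section~\ref{sec:ind} and Lemma~\ref{lem:bdy-deg-ind-2} applies verbatim, so orbit curves and boundary degenerations contribute, respectively, an inserted $\mu_0^1$ factor and an inserted $\mu^w_n$ of $\MAlg$, exactly as in Theorem~\ref{thm:master}, while a $\betas\betas'$-type story reduces to $m^0_1(\Theta)=0$. Summing the ends shows that $F$ is a morphism of weighted $\Ainf$-modules over $\MAlg$.

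To see that $F$ is a homotopy equivalence I run the standard associativity argument: I build the reverse triangle map $G$ (using the handleslide back), and, counting holomorphic rectangles in quadruple diagrams $(\Sigma,\alphas,\betas,\betas',\betas'')$ and the $\alpha$-side analogues, produce the homotopies exhibiting $G\circ F$ and $F\circ G$ as identities. The key input is the model fact that triangle counting on the $\betas\betas'\betas''$-region sends $\Theta_{\betas\betas'}\otimes\Theta_{\betas'\betas''}$ to $\Theta_{\betas\betas''}$ plus terms not hitting $\Theta_{\betas\betas''}$ (proved by the small-translate / nearest-point computation of the closed case, which goes through here since $z$ lies near $\partial\Sigma$), which identifies the relevant composite with the identity up to homotopy. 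The rectangle moduli spaces have the same four types of ends as the triangle spaces together with the interior degeneration of a rectangle into two triangles, and the orbit-curve and boundary-degeneration ends are controlled exactly as before.

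I expect the main obstacle to be essentially organizational: maintaining coherent, sufficiently pinched families of almost complex structures over the triangle and rectangle polytopes so that the gluing theorems of Section~\ref{sec:gluing} apply to the \emph{new} ends (orbit curves and boundary degenerations) in the triangle/rectangle setting, and treating the $\alpha$-handleslide cases, where the boundary Reeb chords interact with a triangle carried on the $\alpha$-side; for the latter I would follow the bordered $\alpha$-handleslide discussion of~\cite[Chapter 7]{LOT1}, using that $\partial\Sigma$ plays no role in boundary degenerations beyond the index accounting already established in Section~\ref{sec:moduli}.
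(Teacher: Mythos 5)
Your overall architecture (triangle maps, the same classification of codimension-one ends including orbit curves and boundary degenerations, quadrilaterals for the homotopies) matches the paper's, but there is a genuine gap at the point you dispose of most quickly: the ends where a rigid curve flows out of the $\Theta$-generator (the paper's Type~\ref{e:pair} ends). You assert that $\Theta$ is a cycle ``since $z$ is adjacent to $\partial\Sigma$, $\Theta$ has no $U$-terms in its boundary,'' and later that the nearest-point computation ``goes through here since $z$ lies near $\partial\Sigma$.'' Neither claim is justified in the minus theory: unlike the $\HFa$ case, curves are now allowed to cross $z$, and a domain out of $\Theta$ with boundary entirely on the two families of sliding curves can a priori have $n_z>0$ (e.g.\ after adding copies of $[\Sigma]$ or, in the $\alpha$-case, domains reaching $\partial\Sigma$ through generalized generators that are Reeb chords). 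Rigid flows out of $\Theta$ covering $z$ would contribute $U$-power terms that the hat-case cancellation-in-pairs argument says nothing about. The paper isolates exactly this issue as Lemma~\ref{lem:CancelInPairs} and proves it by a dedicated argument: pinch along auxiliary circles $C_1,C_2,C_3$ to degenerate to a genus-one diagram with two pairs of $\alpha$-arcs, establish the index/branching formula of Lemma~\ref{lem:IndexBranching} there, and classify the rigid flows out of $\theta^a_\ell$ as precisely the two small bigons (Proposition~\ref{prop:IndexOneFlowsIsotopyDiag}), concluding $n_z(B)=0$ so that these ends cancel in pairs as in the hat case. Your proposal contains no substitute for this step.

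A secondary but related point: you treat the $\beta$-handleslide in detail and dismiss the $\alpha$-cases as ``symmetric,'' whereas the $\alpha$-arc-over-$\alpha$-circle case is the genuinely harder one and is the case the paper works out. There the $\Theta$-input is a \emph{generalized} generator (its $\alpha$-arc component may be a Reeb chord), and the pinching conditions required near algebra punctures on the $\alphas$-edge and the $\alphas^H$-edge of the triangle are mutually incompatible (Figure~\ref{fig:handleslide-pinch}), which forces the non-constant ``tailored triangular'' families of almost complex structures and feeds directly into the Lemma~\ref{lem:CancelInPairs} setup. Finally, for the last step the paper does not use a $\Theta\otimes\Theta\mapsto\Theta$ computation: it shows $f'\circ f$ is homotopic to the small-isotopy triangle map $\phi$ and then proves $\phi^0_1$ is an isomorphism by reducing to the $U=0$ specialization via $\CFAmb\cong\CFAa\otimes_\Field\Field[U]$; your nearest-point route would again require controlling $U$-powers in the $\betas\betas'\betas''$-triangles, i.e.\ another instance of the same missing lemma.
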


The three cases  in the proposition are similar, but
there are some complications for $\alpha$-handleslides that do not
occur for $\beta$-handleslides, and for $\alpha$-handleslides the case
of sliding an arc over a circle is slightly more complicated than the
case of sliding a circle. So, we will focus on that most complicated
case and leave the others to the reader.

\begin{figure}
  \centering
  \includegraphics{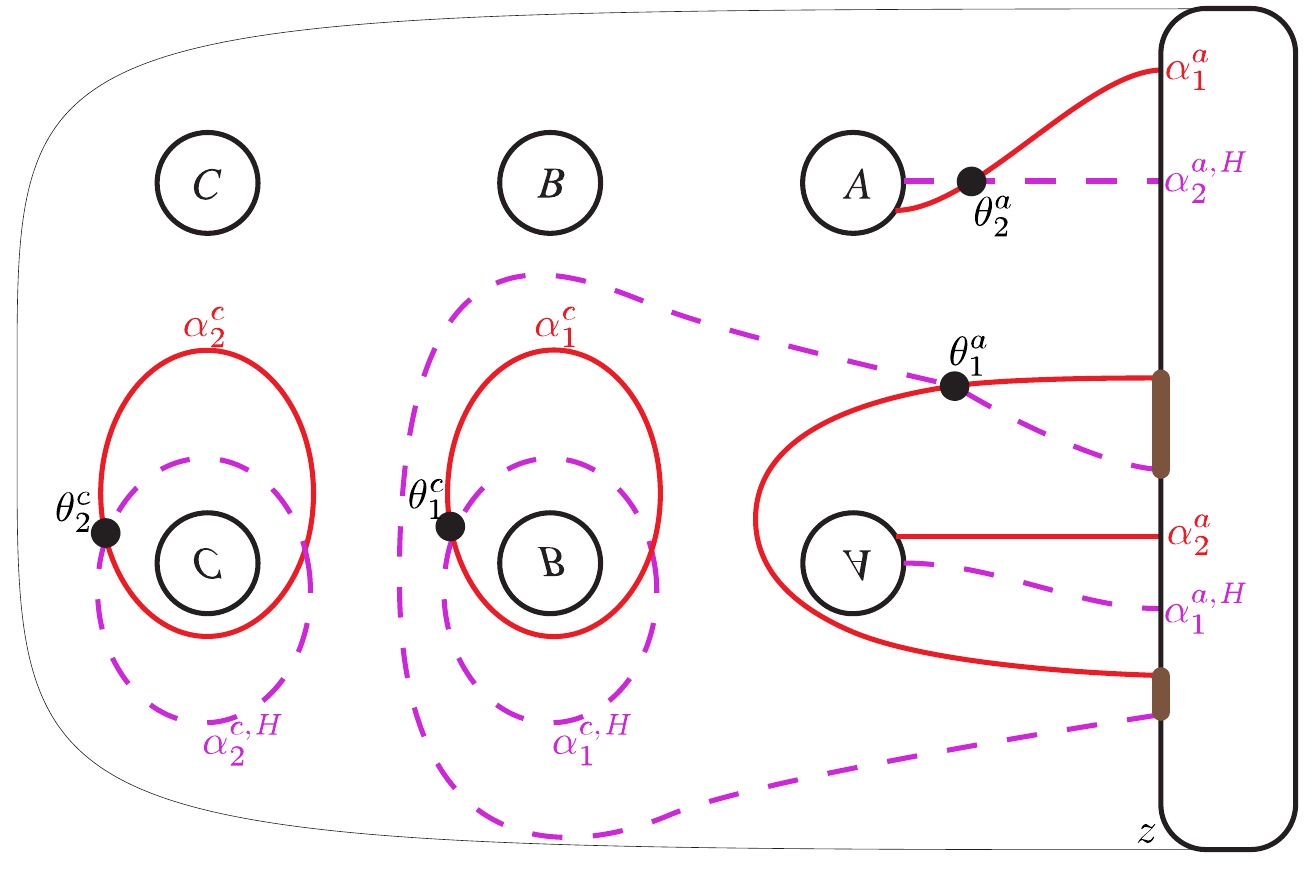}
  \caption[Handleslide curves]{\textbf{Handleslide curves.} The two
    short Reeb chords connecting $\alpha_1^a$ and $\alpha_1^{a,H}$ are
    indicated by \textcolor{darkbrown}{thick} segments. This figure is
    adapted from~\cite[Figure 6.3]{LOT1}.}
  \label{fig:hs-curves}
\end{figure}

Write $\HD=(\Sigma,\alphas,\betas)$ and
$\HD'=(\Sigma,\alphas^H,\betas)$, where $\alphas^H$ is obtained from
$\alphas$ by a handleslide. Like the case of $\HFa$, a quasi-isomorphism between $\CFAmb(\HD)$ and $\CFAmb(\HD')$ is constructed by counting pseudo-holomorphic triangles.
We will consider the case of sliding
$\alpha_1^a$ over $\alpha_1^c$; the case of sliding $\alpha_2^a$ over
an $\alpha$-circle is identical. Choose the curves $\alphas^H$ so that
each circle $\alpha_i^{c,H}$ intersects $\alpha_i^c$ in two points and
each $\alpha_i^{a,H}$ intersects $\alpha_i^a$ in one point, and all
the $\alpha$-curves are disjoint other than this. Further require that
there is a small triangle with boundary on $\bdy\Sigma$, $\alpha_1^a$,
and $\alpha_1^{a,H}$ (in that counter-clockwise order), and two small
triangles with boundary on $\bdy\Sigma$, $\alpha_2^a$, and
$\alpha_2^{a,H}$. See Figure~\ref{fig:hs-curves}. For each
$\alpha$-circle, there are two bigons with boundary on
$(\alpha_i^c,\alpha_i^{c,H})$ from one intersection point $\theta_i^c$
to the other point $\eta_i^c$. Let $\Theta_1$ (respectively $\Theta_2$)
be the $g$-tuple of points in $\alphas\cap \alphas^H$ consisting of
all the points $\theta_i^c$ and the unique point $\theta_i^a$ in
$\alpha_1^a\cap \alpha_1^{a,H}$ (respectively
$\alpha_2^a\cap \alpha_2^{a,H}$). (Roughly, the condition about the
triangles with boundary on $\alpha_i^a$, $\alpha_i^{a,H}$, and
$\bdy \Sigma$ means that $\theta_i^a$ corresponds to the top-graded generator
in a closed Heegaard diagram.)

We start by describing the families of almost complex structures we
will use. Let $\BModPol$ be the compactified moduli space of bimodule
map polygons from the proof of Proposition~\ref{prop:J-inv}.  Let
$\Delta$ be a disk with three boundary punctures. Label the edges of
$\Delta$ as $e_1,e_2,e_3$ counter-clockwise, and let $v_{ij}$ be the
puncture between $e_i$ and $e_j$.  Given a bimodule map component, there
is a unique biholomorphic identification of
$P\setminus\{\infty,-\infty,(0,1)\}$ with $\Delta$.
Fix tailored families of almost complex
structures for $\HD$ and $\HD'$. A \emph{tailored triangular family of
almost complex structures} is a family $J(P)$ of almost complex
structures on $\Sigma\times P$ for $P\in\BModPol$ such that:
\begin{enumerate}
\item Each $J(P)$ agrees with the $\RR$-invariant almost complex structure
  for $\HD$ in a neighborhood of $v_{13}$ and with the $\RR$-invariant
  almost complex structure for $\HD'$ in a neighborhood of $v_{23}$.
\item The projection $\pi_\Delta$ is $J(P)$-holomorphic, and $J(P)$
  preserves $\{x\}\times T\Delta$ for each $x\in \Sigma$.
\item $J$ is split near the puncture of $\Sigma$.
\item\label{item:tri-J-pinched} $J(P)$ is sufficiently pinched in the
  following sense. For each algebra puncture along $e_{1}$
  (respectively $e_{2}$), each $J(P)$ is constant and sufficiently
  pinched for the diagram $\HD$ (respectively $\HD'$) in an
  $\epsilon$-neighborhood of the puncture. (Here, the
  $\epsilon$-neighborhoods are with respect to a metric coming from
  identifying $\Delta$ with an unbounded, closed region in $\CC$,
  e.g., $[-1,1]\times\RR\cup \RR_{>0}\times[-1,1]$.)
\item On the compactification of $\BModPol$, for bimodule components
  at the bottom, $J(P)$ agrees with the chosen family for $\HD$, and
  for bimodule components at the top $J(P)$ agrees with the chosen
  family for $\HD'$.
\item $\wt{J}$ satisfies the obvious analogue of the coherence
  condition from Definition~\ref{def:AdmissibleJs} over the bimodule
  map component.
\item The moduli spaces considered below, including the boundary
  degenerations which occur in the proof of
  Proposition~\ref{prop:handleslide-inv}, are transversely cut out.
\end{enumerate}
The proof that tailored triangular families exist is similar to the
proof of Corollary~\ref{cor:TailoredExist}. Note that the pinching
conditions along $e_1$ and $e_2$ are different and incompatible; see
Figure~\ref{fig:handleslide-pinch}.

\begin{figure}
  \centering
  \includegraphics{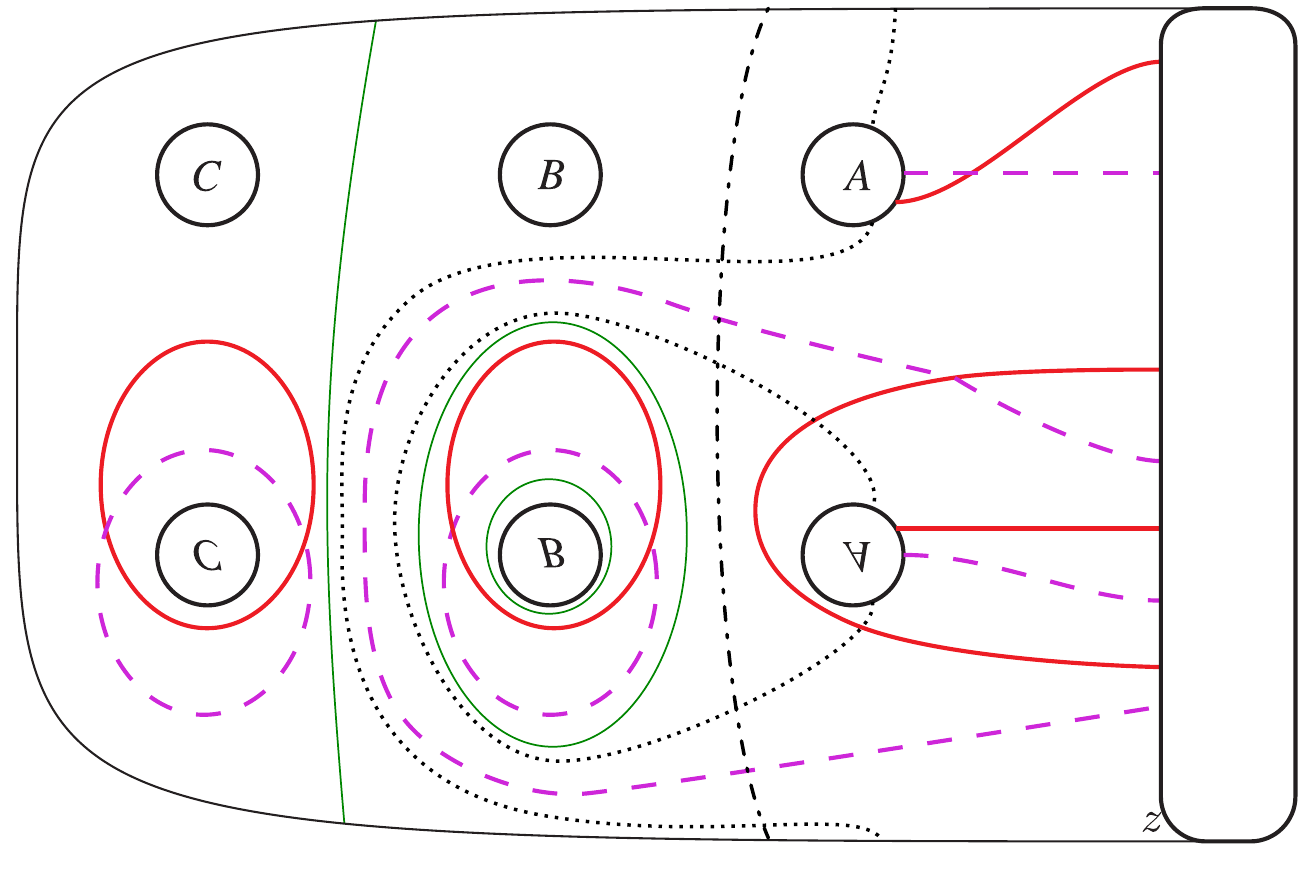}
  \caption[Pinching the almost complex structure for
  handleslides]{\textbf{Pinching the almost complex structure for
      handleslides.} The condition of being sufficiently
    $\epsilon$-pinched for $\alphas$ requires pinching along the
    dot-dashed circle, near chords on $e_1$. The condition of being
    sufficiently pinched for $\alphas^H$ requires
    pinching along the dotted circle, near chords on $e_2$. Note
    that these two pinchings cannot both be done at once, forcing
    the almost complex structure to be non-constant. For the proof
    of handleslide invariance we additionally pinch along the \textcolor{darkgreen}{thin}
    circles (denoted $C_1$, $C_2$, and $C_3$ in the proof of
    Lemma~\ref{lem:CancelInPairs}).}
  \label{fig:handleslide-pinch}
\end{figure}

The chain homotopy equivalence
\begin{equation}
  \label{eq:HandleSlideMap}
  f_{1+n}^w\co \CFAmb(\HD)\otimes \MAlg^{\otimes n}\to\CFAmb(\HD')
\end{equation}
is defined by counting index-0 maps to $\Sigma\times\Delta$ with
boundary on
$(\alphas\times e_1)\cup (\alphas^H\times e_2)\cup(\betas\times e_3)$,
and asymptotic to $\Theta_1$ or $\Theta_2$ at $v_{12}$. (The analogue
of $f$ was denoted $f^{\alphas,\alphas^H,\betas}$ in~\cite{LOT1}.) Let
$\cM^B(\x,\y,\Theta;\vec{a};w)$ denote the moduli spaces of such
$J$-holomorphic curves, asymptotic to the sequence of basic algebra elements
$\vec{a}$ along the edges $e_1\cup e_2$, and to $w$ orbits. Then
\[
  f_{1+n}^w(\x,a_1,\dots,a_n) = \sum_{\y,B}\#\cM^B(\x,\y,\Theta;a_1,\dots,a_n;w)U^{n_z(B)+m} \y
\]
where the sum is over homology classes $B$ of triangles so that the
moduli space is $0$-dimensional, and $m$ is the number of factors of
$U$ appearing in $a_1,\dots,a_n$. This makes sense by analogues of
Lemma~\ref{lem:0d-compactness} (so that every individual moduli space
is compact) and Lemma~\ref{lem:admis} (so that the sum is finite).

In effect, we are using $\Theta_i$ as generators feeding into a
triangle map. As in the closed case, to see that the induced map is a
chain map (and, in our case, an $\Ainf$-homomorphism), one must check
that $\Theta_i$ is a cycle in a suitable sense. In the closed case,
this was the statement that finite energy flow lines out of $\Theta$
cancel in pairs.  We formulate the analogous statement for the
bordered case as follows.

\begin{definition}
  \label{def:GenGen}
  A {\em generalized generator} for the diagram $(\Sigma,\alphas^H,\alphas)$
  consists of a $g$-tuple $\{x_i\}_{i=1}^{g}$,
  with $x_i\in \alpha^c_i\cap \alpha^{c,H}_i$ for $i=1,\dots,g-1$ and $x_g$
  either
  the intersection point between $\alpha^{a}_j$ and $\alpha^{a,H}_j$ 
  for $j\in 1,2$ or a Reeb chord with initial point on $\alpha^{a,H}_j$
  and terminal point at $\alpha^{a}_k$.
\end{definition}

Given generalized generators $\x$ and $\y$ for
$(\Sigma,\alphas^H,\alphas)$, we will be interested in moduli spaces
of embedded curves that connect then, labeled
$\cM^B(\x,\y;\vec{a},\vec{b};w)$.  Here, $\vec{a}$ specifies the basic
algebra elements for the $\alpha^{a,H}_i$
encountered over $\{1\}\times \RR$; $\vec{b}$ specifies the
basic algebra elements for
$\alpha^{a}_j$ encountered over $\{0\}\times\RR$; and $w$
specifies the number of Reeb orbits (all of which are required to be
simple). To see that these moduli spaces are well behaved, recall that
we require the almost complex structure to agree with the standard,
split complex structure near the puncture in $\Sigma$. So, if we let
$\overline{\Sigma}$ denote the result of filling in the puncture, by
the removable singularities theorem any holomorphic curve in
$\Sigma\times \Delta$ extends to a proper map to
$\overline{\Sigma}\times \Delta$, where the generalized generators
correspond to ordinary Heegaard Floer generators, and the usual
Fredholm theory and compactness results for symplectic field theory
apply. (If we do not fill in the puncture, the product of a
neighborhood of the puncture with a neighborhood of $v_{23}$ is not a
cylindrical end, but rather a product of two cylindrical ends and, as
far as we know, the analytic underpinnings of pseudoholomorphic curves
have not been developed in this setting.)

\begin{lemma}
  \label{lem:CancelInPairs}
  Fix $\ell\in\{1,2\}$.  There exist tailored families of almost
  complex structures $J$ so that for any generalized generator $\y$,
  if $\cM^B_{J}(\Theta_\ell,\y,\vec{a},\vec{b})$ is a non-empty, rigid
  moduli space then $n_z(B)=0$.
\end{lemma}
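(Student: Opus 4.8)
The plan is to run the bordered analogue of the closed-case argument that the top generator $\Theta$ is a cycle (Ozsv\'ath--Szab\'o's proof of handleslide invariance), with all of the extra work concentrated near the basepoint $z$, which now sits on $\bdy\Sigma$. Since the three types of handleslide are parallel, I would keep working with the slide of $\alpha_1^a$ over $\alpha_1^c$ and fix $\ell$. The underlying reason the statement is true is a grading heuristic: for a curve from $\Theta_\ell$ to $\y$ the index is (essentially) the grading drop, $\Theta_\ell$ is arranged to be the top generator (this is exactly the content of the small-triangle conditions imposed on $\alphas^H$ and of using $\theta_i^c$ rather than $\eta_i^c$), and in the grading that forgets $U$ the class $B$ with $n_z(B)\ge 1$ would drop the reduced grading by too much; but because curves out of $\Theta_\ell$ may carry the Reeb-chord asymptotics $\vec a,\vec b$ near $\bdy\Sigma$, this heuristic must be made rigorous by a neck-stretching argument, which is where the circles $C_1,C_2,C_3$ enter.

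First I would fix the complex structure. In addition to the conditions defining a tailored family for the two-$\alpha$-set diagram $(\Sigma,\alphas^H,\alphas)$, I would require $J$ to be sufficiently pinched along three further circles $C_1,C_2,C_3$ disjoint from all the $\alpha$-curves, arranged as in Figure~\ref{fig:handleslide-pinch}: $C_1$ and $C_2$ further separate the ``handleslide regions'' (where $\alphas$ and $\alphas^H$ differ) from the rest of $\Sigma$, and $C_3$ cuts off a disk $R$ containing the basepoint $z$, the little triangle adjacent to $z$, and the arc intersection point $\theta_\ell^a$. The existence of such $J$ follows exactly as for tailored families (Corollary~\ref{cor:TailoredExist}): requiring the complex structure to be sufficiently pinched along finitely many disjoint curves only cuts the still-nonempty, contractible space of admissible structures down to a still-nonempty one; one may also take $C_3$ to lie on the $z$-side of $\theta_\ell^a$.

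Now suppose for contradiction that for such a $J$ there is a generalized generator $\y$ and a class $B\in\pi_2(\Theta_\ell,\y)$ with $n_z(B)\ge 1$ whose moduli space $\cM^B_{J}(\Theta_\ell,\y,\vec a,\vec b)$ is non-empty and rigid. Taking a sequence of curves in these spaces while sending the gluing parameters along $C_1,C_2,C_3$ to infinity, and using that $J$ is split near the puncture of $\Sigma$ (so every such curve extends across the puncture), the SFT compactness argument from Step~1 of the proof of Theorem~\ref{thm:master} applies and produces a broken holomorphic curve whose components are distributed among the pieces cut out by the $C_i$. Since $n_z(B)\ge 1$, the limit must contain a component $v$ mapped into the piece $R$ with $\pi_\Sigma\circ v$ non-constant (covering the region adjacent to $z$ with positive multiplicity), and with boundary on $\alpha_\ell^a$, $\alpha_\ell^{a,H}$, and $\bdy\Sigma$; having put $C_3$ to the $z$-side of $\theta_\ell^a$, $v$ carries no corner at $\Theta_\ell$. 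Thus $v$ is a boundary degeneration (possibly carrying some Reeb-chord asymptotics), for the genus-$0$ piece $R$ playing the role of the genus-$1$ reduction; by the Riemann--Hurwitz computation in the proof of Lemma~\ref{lem:bdy-deg-ind-2} it has $\indsq\ge 2$. By additivity of the index for boundary-degeneration splittings (Proposition~\ref{prop:emb-ind-add} and Lemma~\ref{lem:bdy-degen-ind-add}), the expected dimension of the original moduli problem equals that of the remaining (non-empty, hence non-negative-dimensional) components plus $\indsq(v)-2\ge 0$, plus the $2$ that the splitting costs --- forcing the original expected dimension to be positive, contradicting rigidity. Hence $n_z(B)=0$.

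Finally I would note what this buys us and flag the hard part. This $n_z=0$ statement is precisely what is needed in the next subsection to see that the triangle-counting map $f$ of \eqref{eq:HandleSlideMap} is a weighted $\Ainf$-homomorphism: once every rigid curve out of $\Theta_\ell$ carries $n_z=0$ and hence no power of $U$, the ``$\bdy\Theta_\ell=0$'' terms reduce to the hat-flavored statement that $\Theta_\ell$ is a cycle, i.e.\ the standard closed-diagram computation that the finite-energy curves out of $\Theta_\ell$ cancel in pairs. The main obstacle is the index bookkeeping in the third paragraph: one must confirm that the bubbled-off component near $z$ genuinely has $\indsq\ge 2$ even when forced to carry some of the asymptotics from $\vec a$ or $\vec b$ (so ``boundary degeneration in $R$'' is interpreted generously enough that Lemma~\ref{lem:bdy-deg-ind-2}'s argument still applies verbatim), and that $C_1,C_2,C_3$ can simultaneously be taken disjoint from the $\alpha$-curves, compatible with the pinching already demanded for $\alphas$ and $\alphas^H$ along $e_1$ and $e_2$, and fine enough to localize the degeneration. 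This is the bordered analogue of the closed-case fact that adjoining $[\Sigma]$ to a domain raises the Maslov index by $2$, and it is exactly where the designed choice of $\alphas^H$ (via the small-triangle condition making $\theta_\ell^a$ top) does its work.
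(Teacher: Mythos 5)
Your overall strategy (stretch the neck along auxiliary circles, then argue that a rigid curve out of $\Theta_\ell$ crossing $z$ would force an index-$\geq 2$ piece to split off) is in the right family, and the paper also pinches along three circles $C_1,C_2,C_3$. But the crucial step of your argument has a genuine gap, and it is exactly the step you flag as ``the hard part.'' You assert that the component $v$ covering the region at $z$ is a boundary degeneration with $\indsq(v)\geq 2$ ``by the Riemann--Hurwitz computation in the proof of Lemma~\ref{lem:bdy-deg-ind-2}.'' That lemma concerns curves in $\Sigma\times\HHH$ whose projection to $\Sigma$ is a degree-$k$ branched cover of \emph{all} of $\Sigma$, i.e.\ whose domain is $k[\Sigma]$ and whose boundary lies entirely on $\alphas$. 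The component you describe is nothing of the sort: it sits in a region meeting $\bdy\Sigma$, its domain is not a multiple of $[\Sigma]$ (a class $B$ with $n_z(B)\geq 1$ need not contain $[\Sigma]$), and it may carry a corner at $\theta_\ell^a$ and Reeb-chord asymptotics from $\vec a,\vec b$. So the ``splitting costs $2$'' conclusion is unsupported; indeed the model diagram does carry nonconstant positive domains out of $\theta_\ell^a$ with $n_z>0$ (Figure~\ref{fig:SampleCurve}), and excluding them in index $1$ requires an actual classification, not a bubbling heuristic. There is also a set-up problem: a circle $C_3$ disjoint from all $\alpha$-curves cannot cut off a disk containing both $z$ and $\theta_\ell^a$ as you describe, and this is not how the paper's $C_1,C_2,C_3$ are configured.

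What the paper does instead: the circles degenerate $\Sigma$ to the genus-one two-arc diagram $\HD^0$ containing all four arcs (not a disk around $z$). In $\HD^0$ one proves the explicit planar index formula $\ind=1+2\br+m$ (Lemma~\ref{lem:IndexBranching}) and then classifies the rigid curves out of $\theta_\ell^a$ outright: they are the two small bigons, by a foliation argument (Proposition~\ref{prop:IndexOneFlowsIsotopyDiag}), and these have $n_z=0$. The destabilization bookkeeping (the count $|J|$ of circle-components of $\y$ not equal to $\theta_i^c$, Equations~\eqref{eq:ind-terms-destab} and~\eqref{eq:ind-B-ind-B1}) gives $\ind(B)=\ind(B_1)+|J|$, so either the genus-one limit is constant ($|J|=1$) or it is rigid and hence a small bigon ($|J|=0$); in both cases $n_z=0$. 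To repair your write-up you would need to supply these two ingredients---the index formula in the model diagram and the classification of its rigid curves---rather than cite Lemma~\ref{lem:bdy-deg-ind-2}.
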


The complex structures used in the lemma are indicated in
Figure~\ref{fig:handleslide-pinch}.  We postpone the proof of
Lemma~\ref{lem:CancelInPairs} to Section~\ref{subsec:cancel-in-pairs}.
Before that, we use it to deduce
Proposition~\ref{prop:handleslide-inv}, and the following intermediate
result:

\begin{lemma}
  \label{lem:HandleSlideMap}
  For a family of almost complex structures as in Lemma~\ref{lem:CancelInPairs},
  the map $f$ from 
  Equation~\eqref{eq:HandleSlideMap} is a homomorphism of
  weighted $A_{\infty}$ modules.
\end{lemma}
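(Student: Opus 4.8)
The plan is to adapt the proof of Proposition~\ref{prop:J-inv}, replacing the strip-with-an-extra-marked-point by the triangle $\Delta$ and analyzing the codimension-$1$ boundary of the $1$-dimensional moduli spaces of triangles, with one new family of ends controlled by Lemma~\ref{lem:CancelInPairs}. Concretely, fix $n$, $w$, generators $\x\in\Gen(\HD)$, $\y\in\Gen(\HD')$, and a basic algebra sequence $\vec{a}=(a_1,\dots,a_n)$, and consider the moduli spaces $\cM^B(\x,\y,\Theta_\ell;\vec{a};w)$ of expected dimension $1$ over a tailored triangular family of almost complex structures. As in the proofs of Theorem~\ref{thm:master} and Proposition~\ref{prop:J-inv} --- now with target $\Sigma\times\Delta$, viewing neighborhoods of $v_{13}$ and $v_{23}$ as $\RR$-invariant cylindrical ends for $\HD$ and $\HD'$ respectively, and using the filling-in of the puncture of $\Sigma$ described before Lemma~\ref{lem:CancelInPairs} to put ourselves in the standard analytic setting --- compactness and gluing identify the ends of these moduli spaces. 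Besides interior points, the ends are: two-story buildings with an $\HD$-curve below and a triangle above; two-story buildings with a triangle below and an $\HD'$-curve above; collisions of levels along $e_1\cup e_2$; orbit-curve degenerations; composite boundary degenerations; and degenerations at the corner $v_{12}$, in which a curve connecting $\Theta_\ell$ to a generalized generator $\y'$ for $(\Sigma,\alphas^H,\alphas)$ in the sense of Definition~\ref{def:GenGen} splits off a triangle with corner $\y'$. Simple boundary degenerations do not occur, since they require index at least $2$ by Lemma~\ref{lem:bdy-deg-ind-2}, exactly as in Proposition~\ref{prop:J-inv}.

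The first five families of ends are precisely in bijection with the terms in the weighted $\Ainf$-module homomorphism relation for $F=\{f_{1+n}^w\}$ --- the two two-story families giving the terms $f\circ m^{\HD}$ and $m^{\HD'}\circ f$, and the remaining three giving the terms where a $\mu_2^0$, a $\mu_0^1$, or a higher $\mu_j^v$ with $j+2v>2$ is inserted among the algebra inputs --- with the same dictionary between ends and terms as in the proof of Proposition~\ref{prop:J-inv}. So the lemma reduces to showing that the total contribution of the sixth family of ends is $0\pmod 2$. Such an end contributes a product of the number of points of a rigid moduli space $\cM^{B_1}(\Theta_\ell,\y';\vec{a}',\vec{b}';w_1)$ with a triangle count, summed over the splitting of the data. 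By Lemma~\ref{lem:CancelInPairs}, every rigid curve out of $\Theta_\ell$ appearing here has $n_z(B_1)=0$; in particular it misses a neighborhood of $z$, hence is a curve for the diagram $(\Sigma,\alphas^H,\alphas)$ carrying no factors of $U$. For such curves the closed-case argument applies: with the small-triangle normalization, $\Theta_\ell$ is the top-graded generator, and the area/grading argument used to prove handleslide invariance in ordinary Heegaard Floer homology, as adapted to the bordered setting in~\cite[Section~6.3]{LOT1}, shows that for each fixed $(\vec{a}',\vec{b}',w_1)$ the rigid curves out of $\Theta_\ell$ cancel in pairs. Hence the sixth family contributes $0\pmod 2$, and summing over all ends gives the $\Ainf$-module homomorphism relation for $F$.

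The main obstacle is the analysis of the corner-$v_{12}$ ends: one must check that the broken-off curves are exactly curves between generalized generators of $(\Sigma,\alphas^H,\alphas)$ --- which is where Definition~\ref{def:GenGen} and the possibility of Reeb-chord asymptotics on both $\alphas$ and $\alphas^H$ enter --- so that Lemma~\ref{lem:CancelInPairs} is applicable, and then transplant the closed-case cancellation-in-pairs argument to this bordered situation in the presence of algebra asymptotics. The remaining compactness and gluing statements for the triangle moduli spaces, and the verification that tailored triangular families exist, are routine variants of those already established in Sections~\ref{sec:gluing} and~\ref{sec:compactness} and of Corollary~\ref{cor:TailoredExist}, so I would treat them briefly.
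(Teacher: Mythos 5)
Your proposal follows essentially the same route as the paper's proof: classify the codimension-one ends of the one-dimensional triangle moduli spaces, match all ends except those at the corner $v_{12}$ with the terms $f\circ m$, $m\circ f$, and $f(\dots,\mu_j^v(\dots),\dots)$ of the weighted $\Ainf$-homomorphism relation, and use Lemma~\ref{lem:CancelInPairs} to guarantee $n_z=0$ for the broken-off flows out of $\Theta_\ell$ so that the cancellation-in-pairs argument from the $\HFa$ case applies. The only end you omit from the enumeration is the one consisting of a triangle together with a punctureless disk bubble with boundary on a single Lagrangian torus, which the paper disposes of via Lemma~\ref{lem:OS-no-disks}; this is a routine addition that does not affect the argument.
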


\begin{proof}
  A similar argument to the proof of Theorem~\ref{thm:master}
  shows that the codimension-1 boundary
  of the space of holomorphic triangles has the following pieces:
  \begin{enumerate}[label=(T-\arabic*)]
  \item \label{e:fm} Limits where the point in $\BModPol$ consists of
    a pair of polygons, one a bimodule component containing $v_{13}$ and
    the other a bimodule map polygon component $v_{12}$ and
    $v_{23}$. These correspond to terms of the form
    \[ f_{1+n}^v(m_{1+m}^w(\x,a_1,\dots,a_m),a_{m+1},\dots,a_{m+n}).\] (This
    includes the case that the component containing $v_{12}$ is a
    bigon.)
  \item \label{e:mf} A pair of polygons, one a module map component containing $v_{12}$ and
    $v_{13}$ and the other a component containing $v_{23}$. These correspond to
    terms of the form
    \[ m_{1+n}^v(f_{1+m}^w(\x,a_1,\dots,a_m),a_{m+1},\dots,a_{m+n}).\] (This
    includes the case that the component containing $v_{13}$ is a
    bigon.)
  \item \label{e:fmu} Limits where the point in $\BModPol$ consists of
    a pair, or possibly a triple, of polygons, one a module map component containing all of
    $v_{12}$, $v_{23}$, and $v_{13}$ and a second one containing
    punctures and marked points on one of $e_2$ or $e_3$. In cases
    where the second component corresponds to a map to $e\infty$, there
    can be a third component that meets it, as with a composite boundary
    degeneration.  These ends correspond to terms of the form
    \[ f_{1+n}^v(\x,a_1,\dots,a_{i-1},\mu_{1+m}^w(a_i,\dots,a_{i+m}),\dots,a_{m+n}).\]
    (This includes the case $m=-1$, which corresponds to an orbit
    curve end.)
  \item \label{e:bdeg} Limits consisting of a module map component and a
    disk bubble with no interior or boundary punctures or marked
    points. There are algebraically zero ends of this form, because
    the count of disks with boundary on a single torus vanishes
    (Lemma~\ref{lem:OS-no-disks}) and standard gluing arguments.
  \item \label{e:pair} Limits consisting of a module map component containing $v_{13}$
    and $v_{23}$ and a polygon containing $v_{12}$.
  \end{enumerate}
  (See Figure~\ref{fig:TriangleDegens}.)

\begin{figure}
  \centering
  \includegraphics{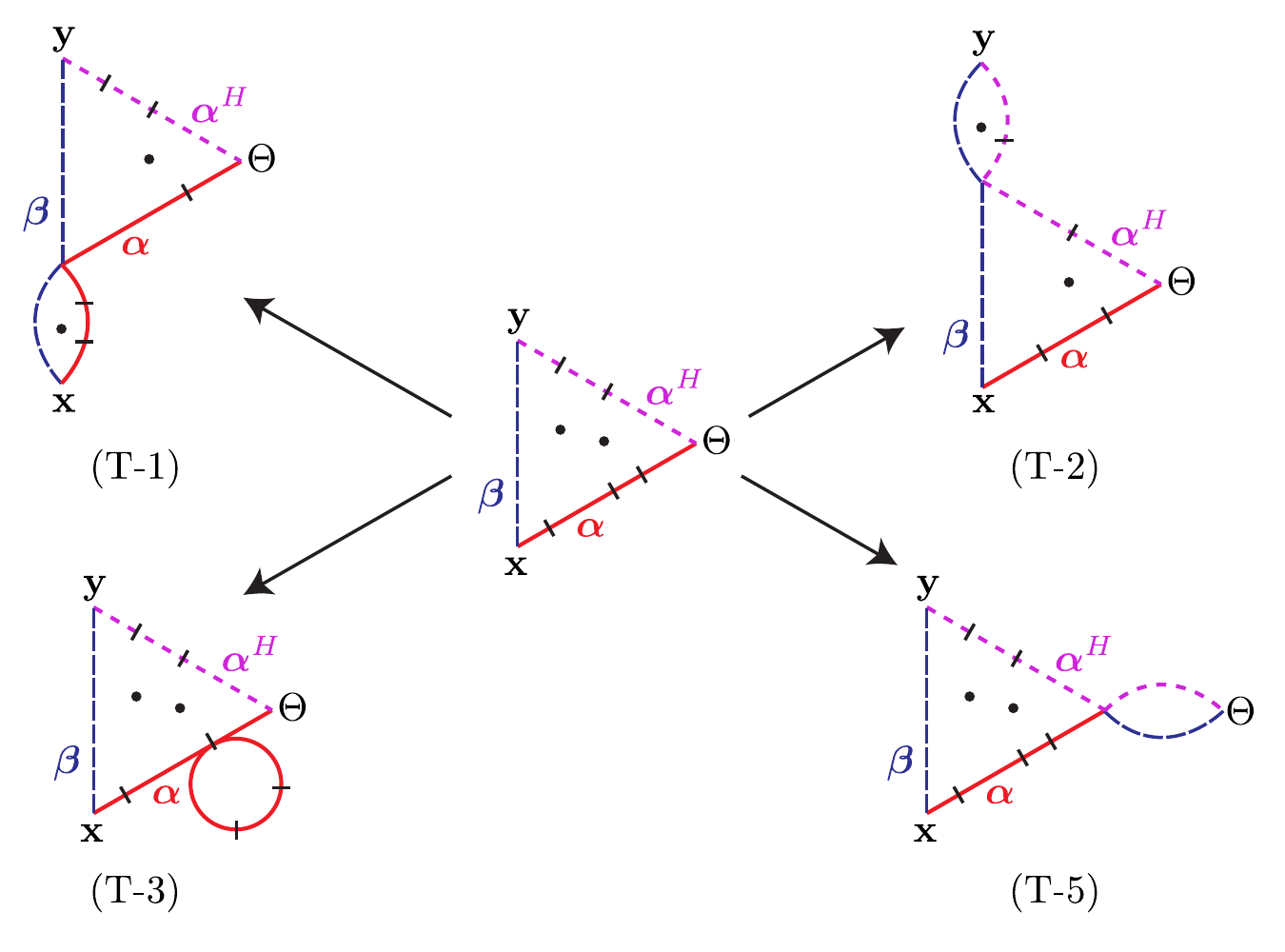}
  \caption[Triangle degenerations]{\label{fig:TriangleDegens}\textbf{Triangle degenerations.}  We
    have illustrated here four of the possible degenerations
    occurring for triangles in codimension one.}
\end{figure}

In more detail, the codimension one ends of the form~\ref{e:fm}
and~\ref{e:mf} are analogous to the two-story buildings from
Theorem~\ref{thm:master}. The fact that they each contribute with
multiplicity $1$ follows from a straightforward adaptation of
Proposition~\ref{prop:glue-2-story}. Thus, the algebraic contributions
of these ends to the one-dimensional moduli spaces are indeed
\begin{align*}
  f_{1+n}^v(m_{1+m}^w&(\x,a_1,\dots,a_m),a_{m+1},\dots,a_{m+n})
&{\text{and}}\\
  m_{1+n}^v(f_{1+m}^w&(\x,a_1,\dots,a_m),a_{m+1},\dots,a_{m+n}),
\end{align*}
as stated.

Type~\ref{e:fmu} ends can be classified: they can be collision ends (like
Case~\ref{end:Collision} from Theorem~\ref{thm:master}), where one of
the polygons is constant; they can correspond to an orbit curve degeneration
(like Case~\ref{end:EscapingOrbit}), where the degenerating polygon is
a 1-gon containing an orbit; or they can contain a configuration like
Case~\ref{end:CompositeBoundaryDegeneration} consisting of a
holomorphic triangle with a composite boundary degeneration
at one Reeb chord along $e_1$ or $e_2$.

The fact that only these kinds of Type~\ref{e:fmu} ends
contribute follows from index calculations and the fact the complex
structure is sufficiently pinched, as in the proof of
Theorem~\ref{thm:master}; and the
fact that each of these ends counts with multiplicity $1$ uses a
straightforward adaptation of the gluing methods used there
(cf.~Propositions~\ref{prop:glue-2-story},~\ref{prop:glue-split},~\ref{prop:glue-orbit},~\ref{prop:glue-pseudo-split},~\ref{prop:glue-degen},
and~\ref{prop:glue-join}). Thus, the contribution of these ends of
Type~\ref{e:fmu} is indeed
\[ f_{1+n}^v(\x,a_1,\dots,a_{i-1},\mu_{1+m}^w(a_i,\dots,a_{i+m}),\dots,a_{m+n}).\]
(Collision ends correspond to the case of $\mu_2^0$, orbit curve ends
to $\mu_0^1$, and the remaining ends to $\mu_m^w$ for $m+2w>0$.)

As mentioned above, ends of type~\ref{e:bdeg}
do not contribute to the count. It remains to consider ends of
Type~\ref{e:pair}, to prove that all non-trivial such ends cancel in
pairs.

\begin{figure}
  \centering
  \includegraphics{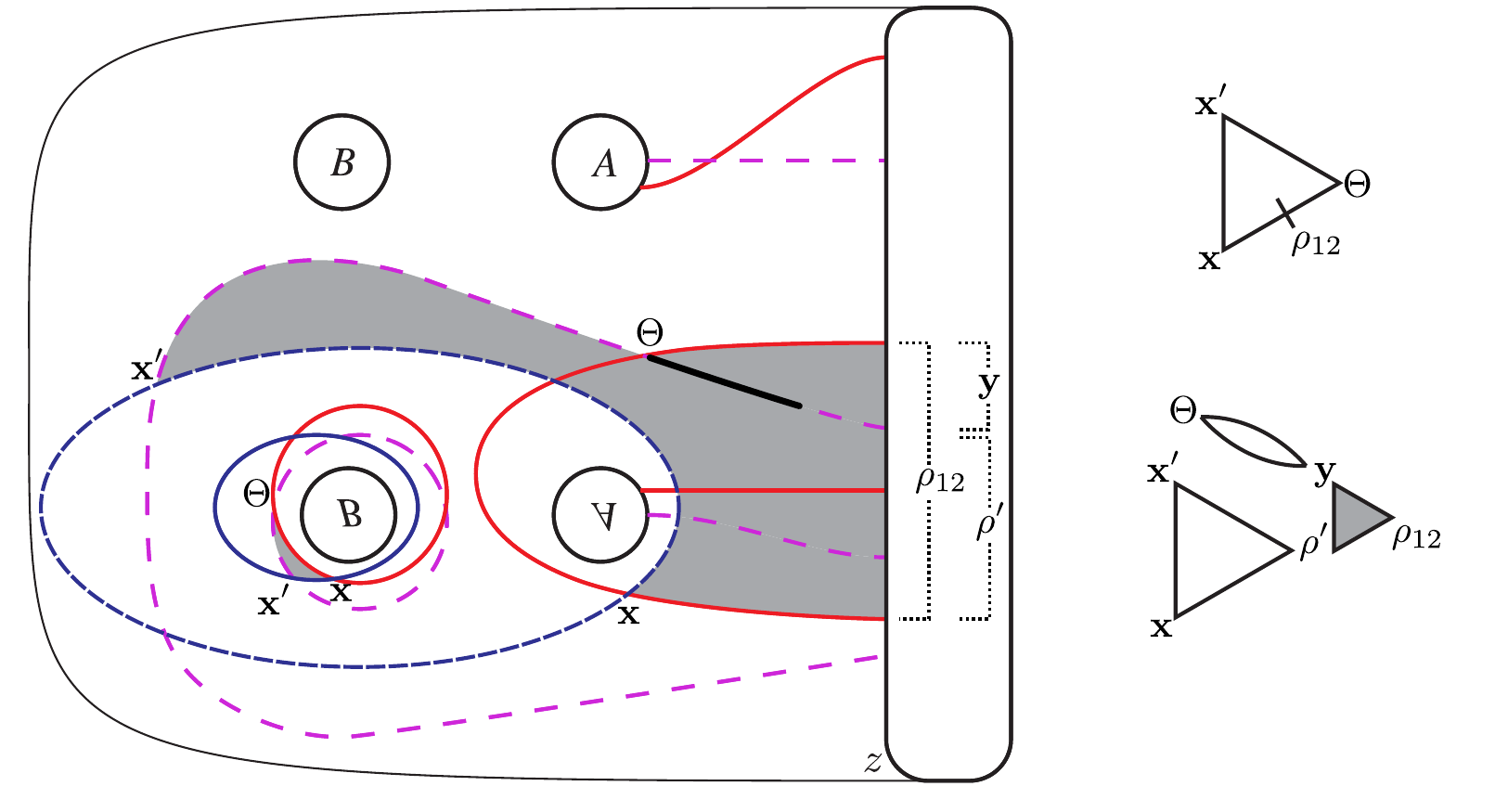}
  \caption[An end of type~\ref{e:pair}]{\textbf{An end of
      type~\ref{e:pair}.} On the left is a family of holomorphic
    triangles in the 1-dimensional moduli space
    $\cM(\x,\x',\Theta;\rho_{12};)$, whose domain is shaded. As the
    thick cut approaches the boundary of $\Sigma$, a degeneration of
    Type~\ref{e:pair} occurs. The components of the limit are
    indicated on the right; the shaded triangle is at $e\infty$.}
  \label{fig:pair-end-eg}
\end{figure}

The usual index considerations show that if the limit contains some
component containing $v_{12}$, but not $v_{13}$ and $v_{23}$, then in
fact that building consists of a rigid polygon containing $v_{13}$,
$v_{23}$, and some chords and orbits, a rigid polygon containing
$v_{12}$, in a $\cM^B_{J_\epsilon}(\Theta_\ell,\y,\vec{a},\vec{b})$,
for some sequence of chords $\vec{a}$ on $\alphas$ and $\vec{b}$ on
$\alphas^H$ and some generalized generator $\y$, and perhaps a
triangle at $e\infty$. (See Figure~\ref{fig:pair-end-eg} for an
example.)
These moduli spaces could {\em a priori} contain quite complicated
curves; for example, curves that cover the basepoint $z$.  However,
Lemma~\ref{lem:CancelInPairs} ensures $n_z(B)=0$. Thus, these ends
cancel in pairs exactly as they did in the $\HFa$ case~\cite{LOT1}
(see for example~\cite[Proposition~6.39]{LOT1}).
\end{proof}

\begin{proof}[Proof of Proposition~\ref{prop:handleslide-inv}]
  The proof now follows using the same logic as in the case of
  bordered $\HFa$;~\cite[Proposition~6.45]{LOT1} (which in turn is
  similar to the proof in the closed case~\cite{OS04:HolomorphicDisks}),
  so we will be brief.  Using
  $(\Sigma,\alphas^H,\alphas,\betas)$, we constructed a map by
  counting triangles $f\co \CFAmb(\Hab)\to\CFAmb(\HaHb)$, which we
  showed in Lemma~\ref{lem:HandleSlideMap} is a weighted
  $A_{\infty}$ homomorphism. Reverse the roles of $\alphas^H$ and
  $\alphas$ and displace the latter curves slightly to get a new
  set of curves $\alphas'$.
  Construct an analogous map $f'\co
  \CFAmb(\HaHb)\to\CFAmb(\HD_{\alphas',\betas})$, which is a weighted homomorphism by the
  same reasoning.
  Counting quadrilaterals (compare~\cite[Proposition~6.44]{LOT1})
  now gives a weighted $A_{\infty}$-homotopy between the 
  the composite
  \[ f'\circ f\co \CFAmb(\Hab)\to \CFAmb(\Hapb) \]
  and another triangle-counting map 
  \[ \phi\co \CFAa(\Hab)\to \CFAa(\Hapb),\] associated to the
  triple $(\Sigma,\alphas,\alphas',\betas)$.  (The fact that
  quadrilateral-counting furnishes a weighted $A_\infty$ chain
  homotopy uses an analogue of Lemma~\ref{lem:CancelInPairs}, which keeps algebra
  elements from escaping in $\alphas^H$-$\alphas$ and
  $\alphas^H$-$\alphas'$ bigons.)  We claim that the latter
  triangle-counting map $\phi$ is an isomorphism of weighted
  $A_\infty$ modules, which follows immediately once we show that
  $\phi^0_1\co \CFAmb(\Hab)\to\CFAmb(\Hapb)$ is an isomorphism of
  $\Field[U]$-modules.

  Setting $U=0$ in $\phi^0_1$, we obtain the isomorphism of
  $A_\infty$ modules $f^{\alphas,\alphas',\betas}\co \CFAa(\Hab)\to
  \CFAa(\Hapb)$ from the $\HFa$ case~\cite[Proof of
  Proposition~7.25]{LOT1}, which we now 
  denote $\widehat{\phi}^0_1$.
  Recall that our basepoints are on the boundary, so there is an
  isomorphism of chain complexes $\CFAmb(\Hab)\cong
  \CFAa(\Hab)\otimes_{\Field}\Field[U]$ and the map
  \[ \phi^0_1\co \CFAmb(\Hab)\to \CFAmb(\Hapb) \]
  is induced from $\widehat{\phi}^0_1$ by a change of coefficients (i.e., 
  tensoring with $\Field[U]$).
  Since $\widehat{\phi}^0_1$ is an isomorphism of vector spaces~\cite{LOT1},
  it follows that $\phi^0_1$ is an isomorphism of $\Field[U]$-modules.
  Hence, $f$ is a homotopy equivalence of weighted $\Ainf$-modules, as needed.
\end{proof}

\subsubsection{Cancellation of flows out of \textalt{$\Theta_\ell$}{the top graded generator}}
\label{subsec:cancel-in-pairs}

The aim of this section is to prove Lemma~\ref{lem:CancelInPairs}.
To do this, 
we will find it convenient to consider first a diagram
$\HD^0$, whose Heegaard surface has genus one and one boundary
component, equipped with two pairs of arcs 
$\alphas'=\{\alpha'_1,\alpha'_2\}$
and
$\alphas=\{\alpha_1,\alpha_2\}$.
(We suppress the ``a'' superscript as there are no $\alpha$-circles.)
Here, for $i=1,2$,  $\alpha'_i$ is a small isotopic
translate of $\alpha_i$, translated against the boundary 
orientation, as shown in 
Figure~\ref{fig:SmallDiagramH}. 
\begin{figure}
  \centering
  \includegraphics{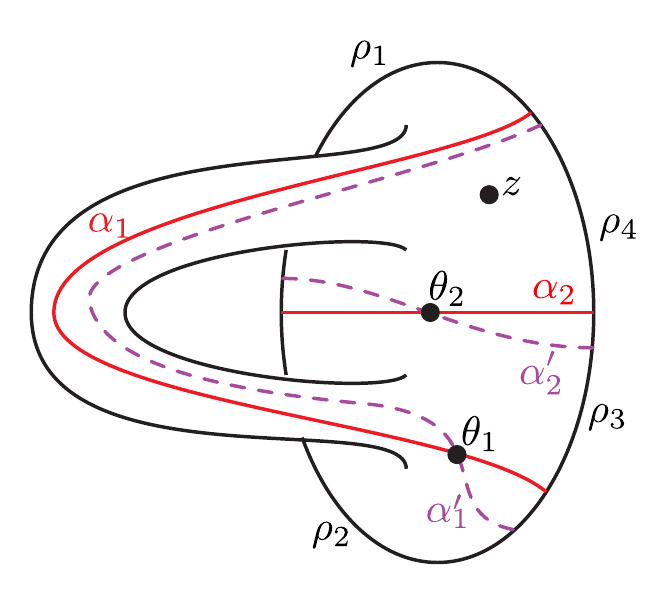}
  \caption[Isotopic curves]{\textbf{The genus one diagram $\HD^0$.} }
  \label{fig:SmallDiagramH}
\end{figure}
The Reeb chords $\rho_i$ connecting the endpoints in
$\alpha_1\cup\alpha_2$ have analogous nearby copies, which we also
denote $\rho_i$. For each $i=1,2$ there are two {\em distinguished short
  chords} from $\alpha'_i$ to $\alpha_i$ which do not contain
any points of $\partial \alpha'_j$ or $\partial \alpha^{a}_j$ in
their interior, and two {\em distinguished small bigons} $R^a_{i,1}$
and $R^a_{i,2}$ from $\theta_i^a$ to these four chords.

The natural analogues of generalized generators
(Definition~\ref{def:GenGen}) are the intersection point
$\theta_1^a$ of $\alpha_1$ with $\alpha_1'$, the intersection
point $\theta_2^a$ of $\alpha_2$ with $\alpha_2'$, and any Reeb
chord connecting some endpoint of $\alpha_i$ with $\alpha'_j$.

We will be concerned with moduli spaces
$\cM^B(x,y;\vec{a},\vec{b};w)$, where $x\in\{\theta_1^a,\theta_2^a\}$,
$y$ is a
generalized generator, $\vec{a}$ is a sequence of algebra
elements whose Reeb
chords connect endpoints of $\alpha_1\cup\alpha_2$, and
$\vec{b}$ is a sequence of algebra elements whose Reeb chords connect endpoints of
$\alpha_1'\cup\alpha_2'$. (The Reeb sequence $\vec{a}$ is
listed in order of increasing $t$ parameter in $[0,1]\times \RR$,
while the sequence $\vec{b}$ is listed in decreasing $t$
parameter, as suggested by the boundary orientation on $[0,1]\times
\RR$.)  Note that our curves have a degree $g=1$ projection to the
bigon; in particular, the source curves all have genus $0$.

\begin{example}
  Consider the moduli space
    $\cM^B(\theta_1^a,\rho_1^*; (\rho_3,\rho_{23},\rho_2), 
    (\rho_{41},\rho_4);0)$, where $\rho_1^*$ 
    is the generalized generator represented by the chord connecting $\alpha'_1$ to $\alpha_2$ approximating $\rho_1$, and $B$ is the domain illustrated in
  Figure~\ref{fig:SampleCurve}.  This is a one-dimensional moduli
  space of disks (after dividing out by the usual $\RR$ action)
  parameterized by a cut parameter at $\theta_1^a$.
\end{example}

\begin{figure}
  \centering
  \includegraphics{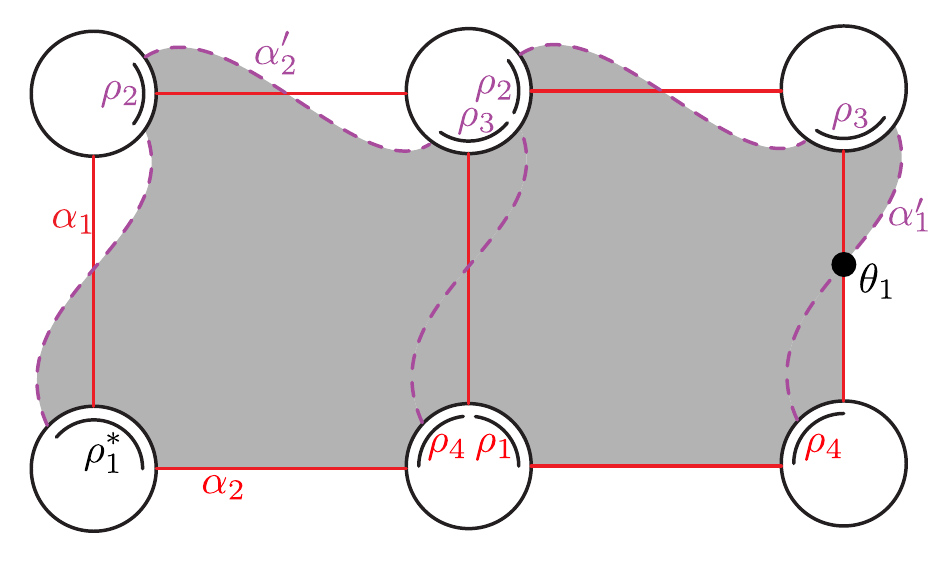}
  \caption[Disks out of $\theta_1^a$]{\textbf{Disks out of $\theta_1^a$.} 
    The shaded domain $B$ can be represented by a one-dimensional (index two) moduli space
    of pseudo-holomorphic disks. Asymptotics are specified by
    $\cM^B(\theta_1^a,\rho_1^*; \{\rho_3,\rho_{23},\rho_2\}, 
    \{\rho_{41},\rho_4\})$.
  \label{fig:SampleCurve} }
\end{figure}

\begin{lemma}
  \label{lem:IndexBranching}
  Fix $\ell\in \{1,2\}$.  Let $u$ be a non-constant holomorphic curve
  in some moduli space $\cM^B(\theta_\ell^a,y,\vec{a},\vec{b};w)$, and
  let $\br$ denote the number of branched points of the projection
  from $S$ to $[0,1]\times\RR$. Suppose $m$ of the elements of
  $\vec{a}\cup\vec{b}$ are not Reeby (i.e., have the form
  $U^i\iota_j$). Then, $u$ lies in a moduli space with expected
  dimension given by
  \begin{equation}
    \label{eq:IndexBranching}
    \ind=1+2\br+m.
  \end{equation}
\end{lemma}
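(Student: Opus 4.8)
The plan is to compute the expected dimension directly from the embedded index formula in Proposition~\ref{prop:emb-ind}, specialized to the genus-one diagram $\HD^0$, and then to relate the various terms using the elementary geometry of the diagram in Figure~\ref{fig:SmallDiagramH}. Since $g=1$ and the projection $\pi_\bD\circ u$ to the bigon has degree $g=1$, the source $S$ is planar; the Riemann--Hurwitz formula gives $\chi(S)=1-w+r-\br$ where $r$ is the total ramification at the Reeb orbits. (Recall that over the interior punctures the cover is also branched, but those contribute to $r$; the $\br$ here counts boundary branch points of the degree-one cover of $[0,1]\times\RR$.) Plugging into Formula~\eqref{eq:index-source} — or rather its embedded refinement — we get $\ind = g-\chi(S)+2e(B)+|\vec{a}|+w-r$, and the first task is to combine $\chi(S)$ from Riemann--Hurwitz with the data of the domain $B$.

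First I would note that $B$ is a domain in the genus-one surface connecting $\theta_\ell^a$ to the generalized generator $y$, and because $\alpha_i'$ is a small translate of $\alpha_i$ pushed against the boundary orientation, the only regions of positive multiplicity are the small bigons and thin strips near the boundary, so $e(B)$ and $n_{\theta_\ell^a}(B)+n_y(B)$ can be read off combinatorially. The key computation is that for the specific combinatorial type forced by starting at $\theta_\ell^a$ (the ``top'' intersection point, characterized by the small triangles with boundary on $\bdy\Sigma$, $\alpha_i$, $\alpha_i'$), the contribution $e(B)+n_{\theta_\ell^a}(B)+n_y(B)+\iota(\vec{\rho})$ equals exactly $1$ when there is no branching and no non-Reeby input, and each boundary branch point raises it by $2$ while each non-Reeby input $U^i\iota_j$ raises both the moduli dimension and $|\vec{a}|$ by $1$. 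Concretely, I would substitute $\chi(S)=1-w+r-\br$ and $|\vec{a}| = |\vec{\rho}| + m$ into $\ind = g+e(B)+n_{\theta_\ell^a}(B)+n_y(B)+|\vec{a}|+\iota(\vec{\rho})+w-2r$ from Formula~\eqref{eq:emb-ind}, but it is cleaner to argue directly via the source-dependent formula and Riemann--Hurwitz: $\ind = 1 - (1-w+r-\br) + 2e(B) + |\vec{\rho}| + m + w - r = \br + 2e(B) + |\vec{\rho}| + m + 2w - 2r + 1 - 1$, and then identify $2e(B)+|\vec{\rho}| - 2r$ with $\br$ using the fact that the total ramification of $\pi_\Sigma\circ u$ (which is $\br$ plus the orbit ramification, since both projections have branching accounted on the genus-zero source) is pinned down by the Euler measure of $k[\Sigma^0]$.

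The main obstacle, and the step requiring genuine care rather than bookkeeping, is pinning down the combinatorial claim that the ``domain part'' of the index contributes precisely $1+2\br$ for curves out of $\theta_\ell^a$ — i.e., that $\theta_\ell^a$ behaves like the top-degree generator so that $\mu(B)$ (in the symmetric-product/triangle-count sense) is minimized, with each extra boundary branch point adding $2$. I would handle this by the same neck-stretching/degeneration reasoning used in the $\HFa$ handleslide invariance proof in~\cite[Section 6]{LOT1}: degenerate $\HD^0$ at the boundary, reducing to the closed small diagram where $\Theta$ is literally the top generator, and use additivity of the index (Proposition~\ref{prop:emb-ind-add}, Lemma~\ref{lem:bdy-degen-ind-add}) to propagate the statement. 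Alternatively, one can argue via the Riemann--Hurwitz identity $w + |\vec{\rho}|/2 - r = 1 + k + \br$ exactly as in the proof of Lemma~\ref{lem:bdy-deg-ind-2} (with the role of the boundary degeneration replaced by the triangle), combined with $e(B) = -k - \br$-type Euler measure bookkeeping for the degree-$k$ cover; this turns the index formula into $\ind = 1 + 2\br + m$ after the terms involving $k$ cancel. I expect the cleanest write-up routes everything through Riemann--Hurwitz plus the embedded index formula and simply records that the non-Reeby inputs each add $1$ on the nose (as already observed in the reduction step of the proof of Proposition~\ref{prop:emb-ind}), so the proof is short modulo citing those two ingredients.
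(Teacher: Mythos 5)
Your overall strategy is the same as the paper's: peel off the non‑Reeby inputs (each contributing $+1$), apply the fixed‑source index formula of Proposition~\ref{prop:index-source} with $g=1$, and close the computation by computing the Euler measure of the source in two ways (once via $\chi(S)$ and the corner count, once via Riemann--Hurwitz for the degree-$k$ projection to the Heegaard surface). However, the execution as written does not close, for a concrete reason: your formula $\chi(S)=1-w+r-\br$ is wrong. Since $\pi_\bD\circ u$ has degree one, it is unbranched, and $S$ is a disk with $w$ interior punctures, so $\chi(S)=1-w$ (for simple orbits). The quantity $\br$ is the boundary branching of $\pi_\Sigma\circ u$ and does not affect $\chi(S)$ at all; it enters only through the Euler-measure identity $e(S)=-k-\br$. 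If you run your displayed computation with your $\chi(S)$ and your claimed identification $2e(B)+|\vec\rho|-2r=\br$, you get $\ind=2\br+m+2w$, not $1+2\br+m$. With the correct $\chi(S)=1-w$ and the relation $-2k=2\br+1-2w-|\vec{a}|-|\vec{b}|$ (which comes from equating $e(S)=-k-\br$ with $e(S)=\chi(S)-|\vec{a}|/2-|\vec{b}|/2-1/2$, the $1/2$ being the two generator corners), the terms do cancel to give $1+2\br$. Relatedly, the identity $w+|\vec\rho|/2-r=1+k+\br$ you import from Lemma~\ref{lem:bdy-deg-ind-2} has the wrong constant here: that lemma concerns boundary degenerations with no $\pm\infty$ corners, whereas the present curves have two such corners, each contributing $1/4$ to $e(S)$.

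A second genuine gap: the lemma allows $y$ to be a generalized generator in the sense of Definition~\ref{def:GenGen}, i.e., possibly a Reeb chord rather than an intersection point. The paper treats this as a separate case, since both the index formula (the constant $1$ becomes $3/2$, cf.\ Formula~\eqref{eq:hs-index-source}) and the Euler measure of the domain ($e(B)=-k+1/4$, and $e(S)=-k+1/4-\br$) pick up quarter-integer corrections. Your write-up does not address this case, and the first, vaguer route you sketch (neck-stretching to identify $\theta_\ell^a$ with a top-graded generator) is not needed once the direct Euler-measure bookkeeping is done correctly.
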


\begin{proof}
  The result is, effectively, about disks in the plane, and hence is
  standard~\cite{SilvaRobbinSalamon}; nonetheless, we give a proof in
  the spirit of this present work, along the lines 
  of the proof of
  Lemma~\ref{lem:bdy-deg-ind-2}.  There are two slightly different
  cases, according to whether $y$ is an intersection point or a chord.

  Each non-Reeby element adds one unconstrained marked point, hence
  increases the expected dimension by $1$. So, it suffices to prove
  the result when all of the chords are Reeby.

  Assume first that it $y\in\{\theta_1^a,\theta_2^a\}$.
  Analogous to the proof of Lemma~\ref{lem:bdy-deg-ind-2},
  Proposition~\ref{prop:index-source} gives
  \[
    \ind(B,\Source,\vec{a},\vec{b},w,0) = 1-\chi(\Source)+2e(B)+|\vec{a}|+|\vec{b}|+w,
  \]
  as in this case $g=1$; also, we are only considering the case of simple
  orbits, i.e., $r=0$, and drop $r$ from the notation.
  Let $k=n_z(B)$, so $e(B)=-k$.
  Since $\pi_\CDisk\circ u$ is a $1$-fold cover, $\Source$ has genus $0$, so
  \begin{equation}
    \label{eq:EulerChar}
    \chi(\Source)=1-w; 
  \end{equation}
  and we find that 
  \begin{equation}
    \label{eq:IndexInPlane}
    \ind(B,\Source,\vec{a},\vec{b},w,0)=1-(1-w)-2k+|\vec{a}|+|\vec{b}|+w = 2w-2k+|\vec{a}|+|\vec{b}|.
  \end{equation}

  We can compute $e(\Source)$ either by projecting to the Heegaard surface,
  to find that 
  \begin{equation}
    \label{eq:EulerMeasureFromH}
    e(\Source)=-k-\br;
  \end{equation} or by comparing $e(S)$ with $\chi(S)$,
  via the formula 
  \begin{equation}
    \label{eq:EulerMeasureChar}
    e(S)=\chi(S)-|\vec{a}|/2-|\vec{b}|/2-1/2=1/2-w-|\vec{a}|/2-|\vec{b}|/2.
  \end{equation}
  (The term $1/2$ comes from the two generators,
  each of which contributes $1/4$ to the Euler measure.)
  Equations~\eqref{eq:EulerMeasureFromH} and~\eqref{eq:EulerMeasureChar} give
  \begin{equation}
    \label{eq:kis-2}
    -2k = 2\br+1-2w-|\vec{a}|-|\vec{b}|.
  \end{equation}
  Combining Equations~\eqref{eq:kis-2} and~\eqref{eq:IndexInPlane}
  gives the result, Equation~\eqref{eq:IndexBranching}, in the case
  where $y$ is not a chord.

  Next, consider the case that $y$ is a chord.
  In this case, $e(B)=-k+1/4$.
  The analogue of Proposition~\ref{prop:index-source} is
  \begin{equation}\label{eq:hs-index-source}
    \ind(B,\theta_1^a,y,\Source,\vec{a},\vec{b};w,0)= \frac{3}{2}-\chi(\Source)+2e(B)+  |\vec{a}|+|\vec{b}|+w.
  \end{equation}
  (Note that in the
  reduction to the closed case, we must now glue one half-bigon, whose
  Euler measure is 1/4; i.e., following the proof of
  Proposition~\ref{prop:index-source}, we now glue a region $B'$ with
  $e(B')=e(B)+|\vec{a}|/2 + |\vec{b}|/2 + 1/4$.)
  Equation~\eqref{eq:EulerMeasureFromH} is replaced by
  \[     e(\Source)=-k+1/4-\br, \]
  while Equation~\eqref{eq:EulerMeasureChar} remains unchanged.
  Equation~\eqref{eq:IndexBranching} follows.
\end{proof}

\begin{proposition}
  \label{prop:IndexOneFlowsIsotopyDiag}
  Let $\ell\in\{1,2\}$.
  If $u$ is a pseudo-holomorphic curve in some rigid moduli space
  $\cM^B(\theta_\ell^a,y,\vec{a},\vec{b})$, then $u$ is one of the two small bigons from $\theta_\ell^a$ to the
  boundary. In particular the sequences $\vec{a}$ and
  $\vec{b}$ are empty, and $y$ is one of the two short Reeb
  chords connecting a point on the boundary of $\alpha_1$ to the
  corresponding point on the boundary of $\alpha'_1$.
\end{proposition}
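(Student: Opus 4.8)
The proof should run closely parallel to the corresponding step in bordered $\HFa$~\cite{LOT1}, with Lemma~\ref{lem:IndexBranching} providing the bordered-specific input, after which the situation becomes ``effectively a holomorphic disk in the plane'' (as in the proof of Lemma~\ref{lem:IndexBranching}).

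The first step is to use rigidity to eliminate branching and non-Reeby inputs. If $\cM^B(\theta_\ell^a,y,\vec a,\vec b;w)$ is rigid then the unquotiented moduli space has expected dimension $1$, so Lemma~\ref{lem:IndexBranching} gives $1=1+2\br+m$ and hence $\br=0$ and $m=0$: the projection $\pi_\CDisk\circ u$ to $[0,1]\times\RR$ is unbranched, and no algebra element in $\vec a$ or $\vec b$ is of the form $U^i\iota_j$. Since $\HD^0$ has genus one, $\pi_\CDisk\circ u$ is then a degree-one unbranched cover of the strip, hence a biholomorphism onto its image; in particular the source $S$ is planar, and $u$ is, away from its east-infinity/orbit asymptotics, a section-like holomorphic curve of the familiar planar type.

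The second step is to identify $u$ by capping off. Gluing a holomorphic $1$-gon in $(\Sigma^\cap,\alphas^\cap)$ at each Reeb chord in $\vec a$, $\vec b$ and at the end-asymptotic chord $y$, and a copy of $\Sigma^\cap$ at each Reeb orbit, as in Section~\ref{sec:tautological}, produces a holomorphic disk $\overline u$ in the closed-up diagram, a small isotopy diagram on $\overline\Sigma=T^2$, connecting the top-graded generator $\theta_\ell^a$ of the isotopic circles $\overline{\alpha_\ell}$, $\overline{\alpha'_\ell}$ to the closed-up image of $y$. Capping preserves the expected dimension, so $\overline u$ is rigid; by the standard analysis of flows out of the top generator in a small isotopy diagram~\cite[Section~4]{OS04:HolomorphicDisks} (equivalently, the $\HFa$ bordered statement~\cite[Proposition~6.39]{LOT1}), $\overline u$ is one of the two innermost bigons, which are connected and of minimal domain. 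Since a bigon of minimal domain cannot contain the extra $1$-gons or copies of $\Sigma^\cap$ that a nonempty $\vec a$, $\vec b$, or $w>0$ would contribute, it follows that $\vec a$ and $\vec b$ are empty and $w=0$; de-capping, $u$ is one of the two distinguished small half-bigons $R^a_{\ell,1}$, $R^a_{\ell,2}$, whose $+\infty$ asymptotic is the corresponding short Reeb chord. Conversely, each $R^a_{\ell,j}$ is an embedded half-bigon with a unique holomorphic representative, which is regular (automatic transversality for disks in the plane, as in the proof of Proposition~\ref{prop:IdentifyAlgebraGenusOne}) and rigid, so these are exactly the rigid curves out of $\theta_\ell^a$.

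The substantive point, and the place to be careful, is the interaction of the two steps: one must be sure that the genuinely new phenomena available in the weighted theory---Reeb orbits, long chords, and curves with large $n_z(B)$---cannot occur in a rigid moduli space out of $\theta_\ell^a$. This is precisely what Lemma~\ref{lem:IndexBranching} delivers, since it shows that rigidity forces $\br=0$; once branching is gone the source is forced to be planar and the classical small-isotopy-diagram picture applies, the only remaining work being the (routine) bookkeeping in the capping/de-capping argument that converts ``minimal domain'' into ``empty chord and orbit sequences.''
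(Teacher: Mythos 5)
Your first step coincides with the paper's: rigidity plus Lemma~\ref{lem:IndexBranching} gives $1=1+2\br+m$, so $\br=0$ and $m=0$, and the degree-one projection to the strip forces a planar source. The gap is in your second step. The ``capping'' of Reeb chords by $1$-gons and of orbits by copies of $\Sigma^\cap$ is a \emph{pregluing} used in the proof of Proposition~\ref{prop:index-source} purely to compute expected dimensions; it produces an approximately holomorphic map, not a holomorphic disk $\overline{u}$ in the capped diagram. Turning it into an actual bijection between rigid curves before and after capping would require a gluing-plus-compactness (neck-stretching) argument of the kind used for stabilization invariance, which is far from ``routine bookkeeping'' and is essentially as much work as the classification you are trying to import. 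Moreover, even granting such a correspondence, the capped object is not a small isotopy diagram of two circles: the closures $\overline{\alpha_1^a},\overline{\alpha_2^a}$ (and their primed translates) meet in the cap, and the capped curve can have corners there --- these corners are precisely the Reeb chords you are trying to rule out. Neither \cite[Section~4]{OS04:HolomorphicDisks} nor \cite[Proposition~6.39]{LOT1} classifies rigid curves in that configuration; the latter is a cancellation-in-pairs statement, not a classification. Note also that index/domain bookkeeping alone cannot finish: with $\br=m=0$ the relation \eqref{eq:kis-2} still permits, e.g., $n_z(B)=0$ with exactly one chord, so something genuinely geometric is needed.

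The paper closes this gap with a direct argument on the source: it first observes that the curve emanates from a single quadrant at $\theta_\ell^a$ (a three-quadrant corner would produce a branch point, contradicting $\br=0$), notes that this quadrant lies in one of the two distinguished small bigons $R^a_{\ell,j}$, and then pulls back a foliation of a collar neighborhood of the puncture $p$ by concentric circles to a partial foliation of the source by arcs. Unbranchedness lets this foliation propagate from the initial corner until it would reach another boundary puncture, and the foliation structure excludes any boundary punctures besides $\pm\infty$; hence $B=R^a_{\ell,j}$, the sequences $\vec{a},\vec{b}$ are empty, and $y$ is the corresponding short chord. If you want to keep your outline, you should replace the capping reduction by this (or an equivalent) direct analysis of the curve itself.
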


\begin{proof}
  By Lemma~\ref{lem:IndexBranching}, our curve and all other curves in
  its moduli space have no branch points.  Further, near the initial
  puncture, the curve is asymptotic to an arc connecting one of the
  two quadrants out of $\theta_\ell^a$ (since otherwise there is a curve in
  the same moduli space with a branch point).  That quadrant is
  contained in one of the two distinguished small bigons $R^a_{1,j}$ out of
  $\theta_1^a$. In the rest of
  the proof we will show that the entire curve consists of that
  bigon. (This is clear, but we give a detailed argument.)

  Let $U$ be a collar neighborhood of the boundary of the Heegaard
  surface, large enough to contain $R^a_{1,j}$. Equivalently, we can
  think of $U$ as a neighborhood of $p\in {\overline \Sigma}$.
  Foliate $U$ by concentric circles $\{C_t\}_{t\in (0,1]}$ around $p$,
  restricting to a foliation of $R^a_{1,j}$ by arcs. See
  Figure~\ref{fig:Foliation}.

  \begin{figure}
    \centering
    \includegraphics{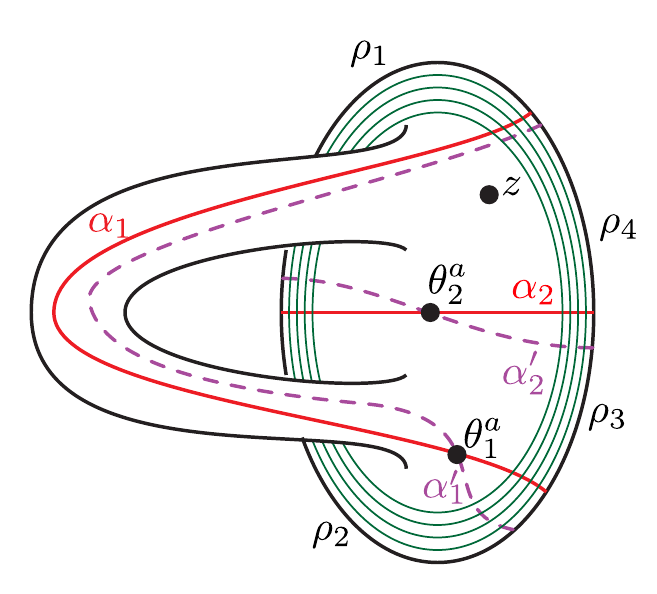}
    \caption[Foliation by circles]{\textbf{Foliation by circles.} 
      \label{fig:Foliation} The foliation is indicated by the
      \textcolor{darkgreen}{thin} ellipses. The foliation continues
      until it covers $\theta_1^a$.}
  \end{figure}

  Take the preimage of this foliation to obtain a partial foliation
  on the source curve $\Source$. (This foliation is
  defined only over $(\pi_\Sigma\circ u)^{-1}(U)$; as such, the
  foliation might not be defined near the $+\infty$ puncture of
  $\Source$, provided that the latter is not labeled by a Reeb
  chord; but it is defined near all other punctures.)  The asymptotics
  around each puncture ensure that the induced partial foliation
  is indeed a foliation around it by shrinking arcs. (See Figure~\ref{fig:FoliateSource}.) The condition
  of no branching ensures that the foliation by concentric arcs around
  the initial vertex extends until the point where one of the
  concentric arcs hits another boundary puncture.
  \begin{figure}
    \centering
    \includegraphics{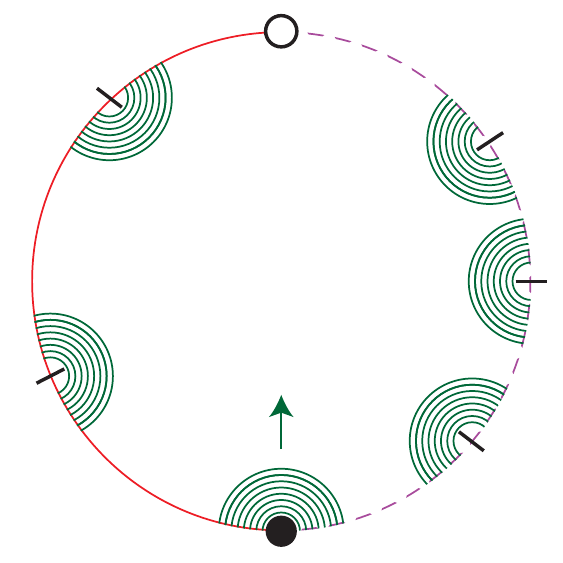}
    \caption[Induced foliation on the source]{\textbf{Induced foliation on the source.} 
      \label{fig:FoliateSource} The initial puncture $(-\infty)$ is indicated
      by the black dot. The induced partial foliation is indicated by the
      \textcolor{darkgreen}{thin} arcs.}
  \end{figure}
  But these conditions exclude the existence of any boundary
  punctures other than the two at $\pm \infty$, which in turn
  ensures that $B=R^a_{1,j}$.
\end{proof}

With these preliminaries in place, we turn our attention to
Lemma~\ref{lem:CancelInPairs}, which is proved by
destabilizing to the genus $1$ case.

\begin{proof}[Proof of Lemma~\ref{lem:CancelInPairs}]
  Fix a separating curve $C_1\subset \HD$ so that the $\alpha_i^a$ and
  $\alpha_i^{a,H}$, $i=1,2$, and $\alpha_1^c$, and $\alpha_1^{c,H}$
  are on one side of $C_1$, and the $\alpha_j^c$ and $\alpha_j^{c,H}$,
  $j=2,\dots,g-1$, are on the other side. Choose two more curves
  $C_2$, $C_3$ that together separate the $\alpha_i^a$ and
  $\alpha_i^{a,H}$ from $\alpha_1^c$ and $\alpha_1^{c,H}$. See
  Figure~\ref{fig:handleslide-pinch}.

  Let $J_\epsilon$ be a one-parameter family of tailored complex
  structures which are fixed outside a neighborhood of
  $C_1\cup C_2\cup C_3$ and so that a neighborhood of each $C_i$ is
  biholomorphic to $\{z\in\CC\mid 1<|z|<1/\epsilon\}$.
  Taking Gromov limits of curves in
  $\cM^B_{J_\epsilon}(\Theta_\ell,\y,\vec{a},\vec{b})$ as $\epsilon\to 0$
  gives rise to a curve in
  $\HD_0$. We will deduce topological properties of $B$ from the
  existence of these limits. For example, we will prove that if
  $\cM^B_{J_\epsilon}(\Theta_1,\y,\vec{a},\vec{b})$ is a rigid moduli space
  which is non-empty for all sufficiently small $J_\epsilon$,  then
  \begin{equation}
    \label{eq:ZeroMultAtW}
    n_z(B)=0.
  \end{equation}

  \begin{figure}
    \centering
    \includegraphics{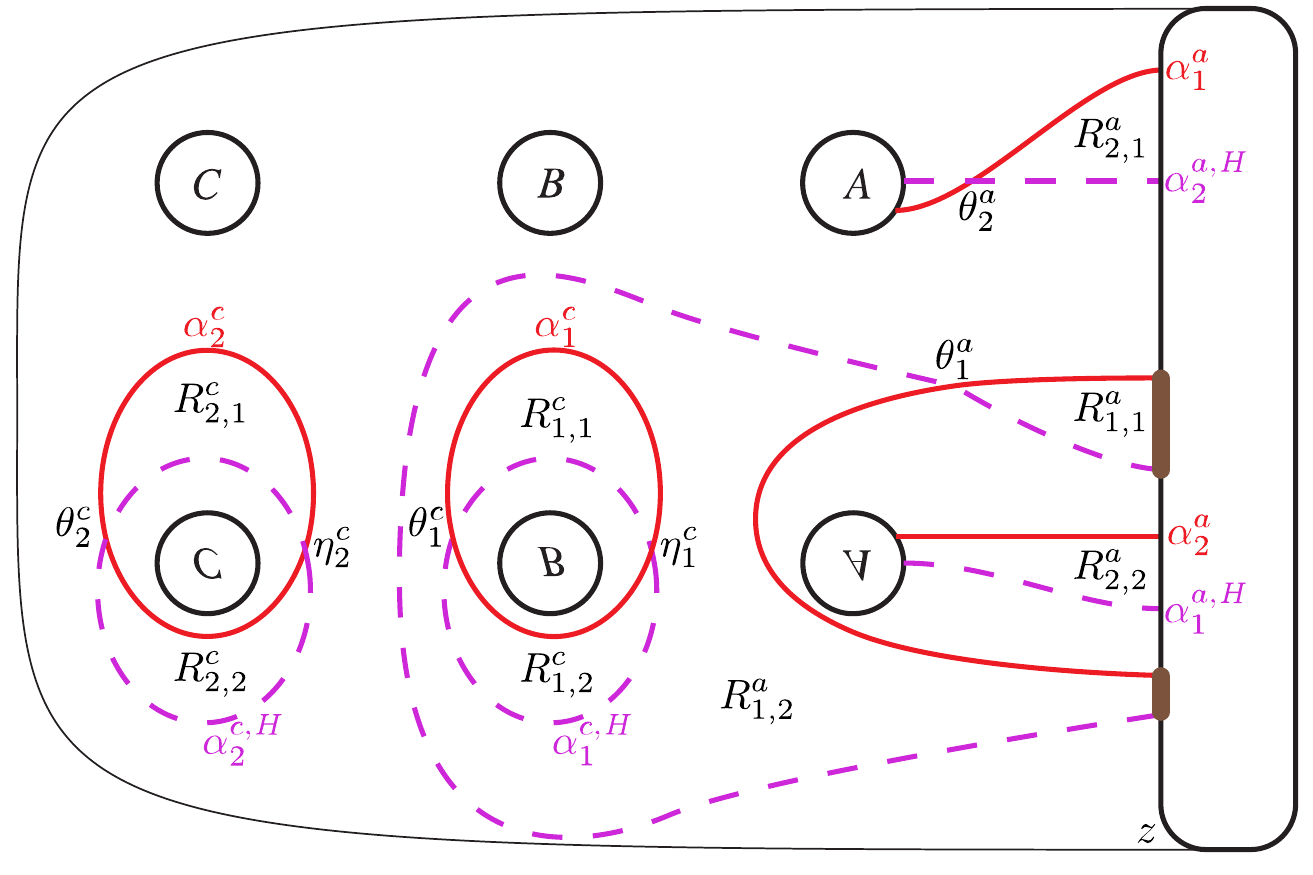}
    \caption[Regions in the handleslide diagram]{\textbf{Regions in
        the handleslide diagram.} The regions $R_{i,j}^c$ and
      $R_{i,j}^1$ are labeled.  figure is also adapted
      from~\cite[Figure 6.3]{LOT1}.}
    \label{fig:hs-curves-regions}
  \end{figure}

  In comparing the Heegaard diagram $\HD$ with $\HD_0$, note that
  $\HD$ has $2g-2$ elementary domains not present in $\HD_0$,
  corresponding to bigons connecting $\theta_i^c$ to $\eta_i^c$ with boundary
  on $\alpha_i^c\cup\alpha_i^{c,H}$. We label these domains
  $\{R^c_{i,1},R^c_{i,2}\}_{i=1,\dots,g-1}$; see
  Figure~\ref{fig:hs-curves-regions}.  Let $r^c_{i,j}$ denote
  the local multiplicity of $B$ at $R^c_{i,j}$ and let $k_{i,1}$ and
  $k_{i,2}$ denote the local multiplicities of $B$ at the other two
  regions adjacent to $\theta_i^c$.  (Note that when $i\neq 1$,
  $k_{i,1}=k_{i,2}$, since the two regions are not separated by any
  $\alpha$ or $\alpha^H$ curve.)  Let $J$ be the set of
  $i\in\{1,\dots,g-1\}$ for which $y_i\neq \theta^c_i$.  For each
  $i\in\{1,\dots,g-1\}$, we have the relation
  \[     
    r^c_{i,1}+r^c_{i,2}=k_{i,1}+k_{i,2}+\begin{cases}1 &{\text{if $i\in J$}} \\
      0 &{\text{otherwise.}}
    \end{cases}
  \]
  It follows that, if $K=\sum_{i=1}^{g-1}(k_{i,1}+k_{i,2})$
  then
  \begin{equation}\label{eq:ind-terms-destab}
    \begin{split}
      e(B)&=e(B_1)-K+\OneHalf|J|\\
      n_{\Theta_1}(B)&=n_{\theta_1^a}(B_1)+\frac{K}{2}+\frac{J}{4} \\
      n_{\y}(B)&=n_{y}(B_1)+\frac{K}{2}+\frac{J}{4}.
    \end{split}
  \end{equation}
  The proof of the embedded index formula, Formula~\eqref{eq:emb-ind},
  using the tautological correspondence
  (Section~\ref{sec:tautological}) and Sarkar's formula for the Maslov index of polygons in the
  symmetric product, applies equally well in this setting.
  So, by Formula~\eqref{eq:emb-ind},
  \begin{equation}\label{eq:ind-B-ind-B1}
    \ind(B,\Theta_1,\y,\vec{a},\vec{b},w,r)=
    \ind(B_1,\theta_1^a,y,\vec{a},\vec{b},w,r)+|J|.
  \end{equation}
  
  Applying compactness to the sequence
  $u_T\in\ModFlow^B_{J_\epsilon}(\Theta_1,\y,\vec{a},\vec{b})$ as
  $\epsilon\to 0$ gives rise to a holomorphic curve
  $u\in \ModFlow^{B_1}(\theta_1^a,y,\vec{a},\vec{b})$. (See also
  the proof of Lemma~\ref{lem:bdy-deg-ind-2}.)  
  Since we assumed that $\ind(B,\Theta_1,\y,\vec{a},\vec{b},w,r)=1$,
  we can conclude from Lemma~\ref{lem:IndexBranching} that
  $|J|\leq 1$. If $|J|=1$, then $u$ is a constant curve, so
  $n_z(u)=0$. If $|J|=0$, then $u$ has index $1$, so
  Proposition~\ref{prop:IndexOneFlowsIsotopyDiag} ensures that
  $n_z(B_1)=0$.
\end{proof}

\subsection{Stabilization invariance}\label{sec:CFA-stab-invariance}
Recall that by \emph{stabilizing} a bordered Heegaard diagram
$(\Sigma,\alphas,\betas)$ we mean taking the connected sum with the
standard genus-one Heegaard diagram for $S^3$ at some point in the
interior of $\Sigma\setminus (\alphas\cup\betas)$. (See~\cite[Section
4.1]{LOT1} for more discussion.)

\begin{proposition}\label{prop:stab-inv}
  Let $\HD=(\Sigma,\alphas,\betas)$ be a bordered Heegaard diagram and
  $\HD'=(\Sigma',\alphas',\betas')$ the result of stabilizing
  $\HD$. Then for appropriate choices of tailored almost complex
  structures for $\HD$ and $\HD'$ there is an isomorphism of weighted
  $\Ainf$-modules $\CFAmb(\HD)\cong \CFAmb(\HD')$.
\end{proposition}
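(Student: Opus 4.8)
The plan is to follow the strategy of the $\HFa$ case~\cite[Section~7.4]{LOT1}, adapting it to the weighted setting and keeping careful track of the extra degenerations (orbit curves, boundary degenerations) that can occur. First I would set up the geometry of the stabilization: write $\Sigma' = \Sigma \# E$ where $E$ is a genus-one surface carrying a single pair $\alpha_{g+1}'$, $\beta_{g+1}'$ meeting transversely in one point $c$, with the connected sum taken at a point $p_0 \in \Sigma \setminus (\alphas \cup \betas)$ far from $\bdy\Sigma$. Generators of $\HD'$ are canonically in bijection with generators of $\HD$ via $\x \mapsto \x' = \x \cup \{c\}$, and homology classes $\pi_2(\x',\y')$ in $\HD'$ are in bijection with $\pi_2(\x,\y)$ (since $c$ is an isolated intersection point and the new region contributes no nontrivial periodic domains). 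Under this bijection, $n_z$, $e(B)$, $n_\x(B)$, $n_\y(B)$ and hence $\ind(B,\vec a, w)$ are all preserved: this is immediate from Proposition~\ref{prop:emb-ind} together with the fact that the genus grows by $1$ but the Euler measure of the new domain is $0$ and $c$ contributes nothing to $n_\x$ or $n_\y$ for the classes under consideration. So the claimed \emph{isomorphism} of underlying $\Field[U]$-modules is clear; the content is matching up the moduli space counts $m_{1+n}^w$.

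Next I would run the standard neck-stretching argument. Choose the almost complex structure $J(T)$ on $\Sigma' \times [0,1]\times\RR$ (coherently over $\ModPol$) obtained from a tailored $J$ on $\Sigma \times[0,1]\times\RR$ by inserting a neck of length $T$ around $p_0$, product-like near $p_0$ in the sense of Definition~\ref{def:ProductLike}; take $J$ and the gluing point generic. For $T$ large, a compactness argument — modelled on~\cite[Section~12]{Lipshitz06:CylindricalHF}, and entirely analogous to the proof of Proposition~\ref{prop:StabilizationInvariance} in Section~\ref{subsec:StabilizationInvarianceBDeg} — shows that every rigid curve in $\cM^{B'}_{J(T)}(\x',\y';\vec a;w)$ breaks as a pair: a main-component curve $u_R \in \cM^B_J(\x,\y;\vec a;w)$ in $\Sigma \times [0,1]\times\RR$, meeting at its preimages of $p_0$ a "trivial" curve $u_L$ in $E \times[0,1]\times\RR$ whose projection to $E$ is a branched cover and whose projection to $[0,1]\times\RR$ prescribes the matching data. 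The key index bookkeeping (as in Proposition~\ref{prop:StabilizationInvariance}, using Proposition~\ref{prop:emb-ind}) forces $u_L$ to be \emph{unbranched} when $\ind(B,\vec a,w)=1$, so $u_L$ is the unique disk in $E$ through $c$ with the required asymptotics — in particular $u_L$ carries no algebra punctures, no interior punctures and covers $p_0$ with degree equal to $n_{p_0}(B)=0$ (since our classes have multiplicity $0$ near $p_0$, as $p_0 \notin \betas$ and $B$ is supported away from $p_0$ except possibly for copies of $[\Sigma]$, which are excluded for rigid curves by the index count). Conversely, gluing $u_R$ to this trivial disk produces, for $T \gg 0$, a bijection $\cM^B_J(\x,\y;\vec a;w) \cong \cM^{B'}_{J(T)}(\x',\y';\vec a;w)$, since the matching evaluation map from the $E$-side is a diffeomorphism onto the relevant configuration space (the analogue of Lemma~\ref{lem:rat-maps}, but simpler here since there is only one marked point to match and it is the fixed point $c$). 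This gives equality of the operations $m_{1+n}^w$, hence the isomorphism of weighted $\Ainf$-modules.

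The main obstacle, and the step I expect to require the most care, is the compactness/gluing analysis: one must verify that no \emph{other} degenerations survive the $T\to\infty$ limit. A priori the stretched curves could break off boundary degenerations near $p_0$, orbit curves, or bubbles; these must all be excluded by the index inequalities (Lemma~\ref{lem:bdy-deg-ind-2} for boundary degenerations, codimension-2 for orbit collisions and interior bubbles, Lemmas~\ref{lem:OS-no-spheres}–\ref{lem:OS-no-disks} together with Condition~\ref{item:tail-spheres}–\ref{item:tail-disk} for sphere and disk bubbles) exactly as in Step~2–4 of the proof of Theorem~\ref{thm:master}. One also needs the pinching and coherence conditions on $J(T)$ to be arranged so that $J(T)$ remains tailored and so that the almost complex structure over the main component converges to the chosen tailored $J$; this is the same kind of consistency-of-choices issue handled in the proof of Lemma~\ref{lem:admis-J-exists} and Corollary~\ref{cor:TailoredExist}. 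Finally, I would invoke Propositions~\ref{prop:J-inv}, \ref{prop:iso-inv}, \ref{prop:handleslide-inv}, and \ref{prop:stab-inv} together to assemble the full invariance statement (Theorem~\ref{thm:CFAmb-invt}), but that is a separate, formal step. I do not expect any genuinely new analytic input beyond Section~\ref{sec:moduli}; the proof is a careful transcription of the $\HFa$ argument with the weighted-algebra and $U$-power bookkeeping added.
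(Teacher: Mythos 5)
Your overall strategy---neck-stretching at the stabilization region, degenerating rigid curves into a main component on $\Sigma$ plus a component on the new handle, and matching the moduli spaces---is the same as the paper's, but there is a genuine gap at the central step. You assert that the curve $u_L$ on the new handle is the unique trivial disk through $c$, ``covering $p_0$ with degree equal to $n_{p_0}(B)=0$.'' In the $\HFm$ setting this is false. Unlike the $\HFa$ theory, rigid domains here are allowed to cross the basepoint and, more generally, to have arbitrary positive multiplicity at any interior point of $\Sigma$: this is precisely the source of the $U$-powers in Formula~\eqref{eq:m-on-CFA}, and for example boundary degenerations have domain $k[\Sigma]$ with $k>0$, while even the simplest solid-torus operations such as $m_3^0(a,\rho_4,\rho_3)=Ua$ come from index-one domains of multiplicity one over the stabilization region. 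There is no choice of connected-sum point at which all rigid domains have multiplicity zero. Consequently $u_L$ is a degree-$k$ branched cover of the new handle, where $k$ is the local multiplicity of $B$ at the stabilization point, and the limit $(u_L,u_R)$ satisfies a nontrivial matching condition: the $[0,1]\times\RR$-coordinates of the preimages of the neck under $u_R$ must agree with the corresponding asymptotics of $u_L$. The proof therefore requires (a) a classification of the possible curves $u_L$ and (b) a proof that for each matching datum arising from a rigid $u_R$ there is exactly one $u_L$, so that gluing identifies the two moduli spaces.

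This is where the paper's two technical devices enter, neither of which appears in your proposal. First, the stabilization is performed by stretching along two circles rather than one, so that the degenerate piece is a sphere rather than a once-punctured torus; after Schwartz reflection the curves $u_L$ correspond to rational maps $\CC P^1\to\CC P^1$ constrained by the $\alpha$- and $\beta$-boundary conditions, and the required existence and uniqueness is the Blaschke-product computation in the paper's proof of Proposition~\ref{prop:stab-inv} (the analogue of Lemma~\ref{lem:rat-maps}). With your single-point connected sum with a torus $E$, the corresponding classification of branched covers of $E$ through $c$ with prescribed matching data is not addressed and is not the ``simpler'' statement you claim. Second, because the neck length needed to achieve the degeneration depends on the energy of the moduli space under consideration, the tailored family for $\HD'$ must be assembled inductively one energy level at a time; a single stretching parameter $T$ does not suffice for all operations $m_{1+n}^w$ at once. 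Your bookkeeping of generators, homology classes, and indices is otherwise in the right spirit, but without the branched-cover analysis the argument does not go through.
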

\begin{proof}
  This follows from the same argument as in
  Section~\ref{subsec:StabilizationInvarianceBDeg} (which again is an
  extension of the usual proof of stabilization invariance for
  Heegaard Floer homology). The strategy is to show that for an
  appropriate choice of almost complex structures, the moduli spaces
  of holomorphic curves for $\HD$ and $\HD'$ are identified. The
  almost complex structures for $\HD'$ are obtained by stretching the
  neck along a pair of curves. The amount of stretching required
  depends, at least in principle, on the moduli space being
  considered. So, the proof proceeds one energy level at a time,
  producing a complex structure $J_E$ for $\HD$ and $J'_E$ for $\HD'$
  for which the rigid moduli spaces of energy $E$ holomorphic curves are
  identified, so that the $J_E$ and $J'_E$ form tailored families.

  Let $T\subset \Sigma'$ be the new
  punctured torus, so $\Sigma'\setminus T$ is identified with
  $\Sigma\setminus D^2$. Choose a tailored family of almost complex structures
  $J'$ for $\HD'$ which is split over $T$.  Let
  $\alpha_{g}^c\subset T$ be the new $\alpha$-circle and $\beta_{g+1}$
  the new $\beta$-circle. Let $Z=Z_1\amalg Z_2\subset T$ be the boundary of a
  tubular neighborhood of $\alpha_g^c$, chosen so that each $Z_i$
  intersects $\beta_{g+1}$ in a single point.  We will
  stretch the neck along $Z_1$ and $Z_2$, after with $\Sigma'$
  degenerates into a copy of $\Sigma$ and a sphere $\Sphere$. Let
  $z_L$, $w_L$ (respectively $z_R$, $w_R$) be the points in $\Sphere$
  (respectively $\Sigma$) corresponding to $Z_1$ and $Z_2$. Identify
  $\Sphere$ with $\CC\cup\{\infty\}$ so that $z_L$ corresponds to $0$
  and $w_L$ corresponds to $\infty$. Using isotopy invariance, we may
  assume that $\alpha_{g}^c\cap \Sphere$ is identified with the
  non-negative real axis $\RR_{\geq 0}$ and $\beta_{g+1}$ is identified
  with the unit circle $S^1$. Also because of isotopy invariance, we
  will not distinguish between the copy of $\Sigma$ obtained by
  stretching the neck and the original copy in $\HD$.

  For a generic, sufficiently pinched choice of $J'$, the family of
  almost complex structures $J$ in the limit is still tailored. (This
  uses the fact that the circle $Z$ is disjoint from the circle being
  pinched to make $J'$ or $J$ sufficiently pinched.)

  The proof is now inductive on the energy $E$. Assume that for all
  $\vec{a}$ and $w$ with $E(\vec{a},w)<E_0$ we have found a length $T$ so
  that the rigid moduli spaces with respect to $J_T$ and $J$ with
  asymptotics $(\vec{a},w)$ are identified. We will find a perhaps
  larger $T$ with this property for pairs $(\vec{a},w)$ with
  $E(\vec{a},w)=E_0$, and use this more stretched $J_T$ for energy
  $E_0$ moduli spaces (in our tailored family). There are finitely many 
  $(\vec{a},w)$ with $E(\vec{a},w)=E_0$, and finitely many positive domains $B$
  compatible with each $(\vec{a},w)$, so we may work one triple
  $(\vec{a},w,B)$, with $\ind(B,\vec{a},w)=1$, at a time.
  As we stretch the neck,
  a compactness argument (similar to the proof of~\cite[Proposition
  5.24]{LOT1} or~\cite[Proposition 9.6]{LOT1}) implies that any
  sequence $\{u_i\}$ of $J_T$-holomorphic curves in the homology class
  $B$ with asymptotics $(\vec{a},w)$ has a subsequence which
  converges to a pair $(u_L,u_R)$ where
  \begin{align*}
    u_L&\co (S_L,\bdy S_L)\to \bigl((\CC\cup\{\infty\})\times[0,1]\times\RR, \RR_{\geq 0}\cup S^1\bigr)\\
    u_R&\co (S_R,\bdy S_R)\to
         \bigl(\Sigma\times[0,1]\times\RR,(\alphas\times\{1\}\times\RR)\cup (\betas\times\{0\}\times\RR)\bigr).
  \end{align*}
  (The curve $u_L$ is analogous to the stabilizing boundary
  degenerations of Definition~\ref{def:stabilizing-degen}.) Elementary
  complex analysis, considering $\pi_\bD\circ u_R$, shows that in fact
  no component of $\bdy S_R$ maps to $\alpha_g^c\cap \Sigma$. The pair
  $(u_L,u_R)$ also satisfies a \emph{matching condition}: the multisets
  \[
    A=(\pi_\bD\circ u_R)\bigl((\pi_\Sigma\circ u_R)^{-1}(z_R)\bigr)\qquad\text{and}\qquad
    (\pi_\bD\circ u_L)\bigl((\pi_\Sphere\circ u_L)^{-1}(z_L)\bigr)
  \]
  agree, as do the sets
  \[
    B=(\pi_\bD\circ u_R)\bigl((\pi_\Sigma\circ u_R)^{-1}(w_R)\bigr)\qquad\text{and}\qquad
    (\pi_\bD\circ u_L)\bigl((\pi_\Sphere\circ u_L)^{-1}(w_L)\bigr).
  \]
  In particular, the degree of $\pi_\Sphere\circ u_L$ is the same as
  the multiplicity of the domain $B$ at $Z_1$ and $Z_2$.
  
  It follows from the index formula (Proposition~\ref{prop:emb-ind})
  that the index of the sequence converging to $(u_L,u_R)$ is the same
  as the index at $u_R$. In particular, since we are only interested
  in index-1 moduli spaces, $u_R$ is rigid. So, for a generic choice
  of almost complex structure, none of the points in $A$ or $B$ has
  multiplicity bigger than one, and $A\cap B=\emptyset$.

  We claim that for each choice of $A$ and $B$, there is a unique
  curve $u_L$ satisfying the matching condition.
  The projection $\pi_{\bD}\circ u_L$ is a degree-one branched cover,
  so an analytic isomorphism. So, the moduli space of maps $u_L$ is
  identified with the moduli space of maps from the strip to
  $\Sphere$. Identify $[0,1]\times\RR$ with the upper half-disk
  $\bD^+=\{x+iy\in\CC\mid x^2+y^2\leq 1,\ y\geq 0\}$ so that
  $\{1\}\times\RR$ is identified with the real axis. This moduli
  space is the same as the space of holomorphic maps $\bD^+\to \CC$
  sending the real axis to the positive real axis and the top
  semi-circle to $S^1$. By Schwartz reflecting (twice), this space is
  the same as the space of maps $f\co \CC P^1\to \CC P^1$ with
  $f(S^1)\subset S^1$ and $f(\RR)\subset \RR_{>0}$. Up to an scaling
  by a complex number, such a function is determined by its zeroes and
  poles: given two entire functions $f,g$ with the same zeroes and poles, $f/g$
  is an entire function with no zeroes (or poles), and hence is
  constant. The zeroes and poles satisfy the restrictions that zeroes
  (respectively poles) come in conjugate pairs (since $\RR$ maps to
  $\RR$) and zeroes correspond to poles under the map $z\mapsto 1/z$
  (since $S^1$ maps to $S^1$). Thus, up to scaling, such a function is
  determined by its set of zeroes and poles inside $\bD^+$. Again up
  to scaling, the condition that $f(\RR)\subset \RR_{>0}$ (rather than
  just $\RR$) is equivalent to $f$ having no zeroes or poles on
  $\RR$. Finally, given a finite set $C\subset \bD^+$, let $g_C$ be
  the Blaschke product
  \[
    f_C(z)=\prod_{c\in C} \left(\frac{z-c}{1-\overline{c}z}\right).
  \]
  Then given a set $Z$ of zeroes and $P$ of poles in the interior of
  $\bD^+$ (with $Z\cap P=\emptyset$) the function $f_Z(z)/f_P(z)$ has
  the specified zeroes and poles. To summarize, given disjoint sets of
  zeroes and poles in the interior of $\bD^+$, there is a unique
  function with these zeroes and poles.

  Thus, for each curve $u_R$ there is a unique curve $u_L$ satisfying
  the matching condition. So, by a gluing argument, the space of
  holomorphic curves with respect to $J_T$ for $T$ sufficiently large
  is in bijection with the space of holomorphic curves $u_R$. Since
  the latter is exactly the space used to define the operations on
  $\CFAmb(\HD)$, and the former the operations on $\CFAmb(\HD')$, the
  result follows.
\end{proof}

\subsection{The invariance theorem}
We assemble these pieces to quickly deduce that $\CFAmb$ is well-defined;
this is the analogue of Theorem~\ref{thm:CFAm-invt} for $\CFAmb$:

\begin{theorem}\label{thm:CFAmb-invt}
  Up homotopy equivalence (of weighted $\Ainf$-modules),
  $\CFAmb(\HD,\spinc)$ depends only on the bordered 3-manifold
  $(Y,\phi)$ specified by $\HD$, and its underlying $\SpinC$ structure $\spinc$.
\end{theorem}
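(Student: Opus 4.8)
The plan is to assemble the theorem from the invariance statements already in hand: Proposition~\ref{prop:J-inv} (change of tailored family of almost complex structures), Proposition~\ref{prop:iso-inv} (isotopy), Proposition~\ref{prop:handleslide-inv} (handleslide), and Proposition~\ref{prop:stab-inv} (stabilization). The topological input is that two bordered Heegaard diagrams represent the same bordered $3$-manifold $(Y,\phi)$ if and only if they are related by a finite sequence of Heegaard moves~\cite[Proposition 4.10]{LOT1}, together with the refinement that, since the modules are only defined for provincially admissible diagrams, when both endpoints are provincially admissible one can choose the sequence to pass only through provincially admissible diagrams~\cite[Proposition 4.25]{LOT1}.

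So fix provincially admissible diagrams $\HD$ and $\HD'$ for $(Y,\phi)$, together with tailored families of almost complex structures (which exist by Corollary~\ref{cor:TailoredExist}), and a sequence of provincially admissible diagrams $\HD=\HD_0,\HD_1,\dots,\HD_N=\HD'$ in which consecutive diagrams differ by a single Heegaard move. For each $i$ I would produce a homotopy equivalence $\CFAmb(\HD_i)\simeq\CFAmb(\HD_{i+1})$ of weighted $\Ainf$-modules over $\MAlg$: if the move is an isotopy this is Proposition~\ref{prop:iso-inv}; if it is a handleslide it is Proposition~\ref{prop:handleslide-inv}; and if it is a stabilization it is Proposition~\ref{prop:stab-inv}. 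The only bookkeeping point is that the last two propositions assert the equivalence only for certain \emph{appropriate} tailored families rather than for arbitrary ones; to handle this I would first apply Proposition~\ref{prop:J-inv} to pass from whatever tailored family has been fixed on $\HD_i$ (resp.\ $\HD_{i+1}$) to the one the handleslide or stabilization argument requires, then apply the relevant proposition, then apply Proposition~\ref{prop:J-inv} once more at the $\HD_{i+1}$ end. Composing these homotopy equivalences over $i=0,\dots,N-1$ yields $\CFAmb(\HD)\simeq\CFAmb(\HD')$.

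Finally, each of the maps and homotopies entering these equivalences is built from counts of pseudo-holomorphic curves (strips, triangles, and quadrilaterals, together with their stabilized analogues) in homology classes connecting generators of the relevant diagrams, and generators connected by a nonempty homology class carry the same $\Spinc$-structure (Section~\ref{sec:background}). Under the canonical identification of $\Spinc(Y)$ induced by each Heegaard move --- the same one used in the $\HFa$ case~\cite{LOT1}, since the moves do not change $(Y,\phi)$ --- this forces all of these maps to be block diagonal with respect to the splitting $\CFAmb(\HD)=\bigoplus_{\spinc\in\Spinc(Y)}\CFAmb(\HD,\spinc)$. Restricting to a single $\Spinc$-summand therefore gives $\CFAmb(\HD,\spinc)\simeq\CFAmb(\HD',\spinc)$ for each $\spinc$, which is the assertion of the theorem.

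I expect the only real difficulty to be organizational: one must check that ``tailored'' is flexible enough that the interpolating families in Propositions~\ref{prop:J-inv}, \ref{prop:handleslide-inv}, and~\ref{prop:stab-inv} can all be chosen with mutually compatible pinching functions and coherent families, and that the $\Spinc$-naturality of the chain maps under Heegaard moves is the verbatim analogue of the $\HFa$ argument in~\cite{LOT1}. Both are routine adaptations, so I would present the proof as the short composition-of-equivalences argument above, citing~\cite{LOT1} for the topological and $\Spinc$-naturality inputs.
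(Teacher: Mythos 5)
Your proposal is correct and follows essentially the same route as the paper: the paper's proof simply cites Proposition~\ref{prop:J-inv} for independence of the almost complex structure and then invokes Propositions~\ref{prop:iso-inv}, \ref{prop:handleslide-inv}, and~\ref{prop:stab-inv} together with the fact that any two bordered Heegaard diagrams for $(Y,\phi)$ are connected by a sequence of Heegaard moves. Your additional bookkeeping (interpolating between tailored families before and after each move, staying within provincially admissible diagrams, and $\Spinc$-naturality) is left implicit in the paper but is the intended argument.
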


\begin{proof}[Proof of Theorem~\ref{thm:CFAm-invt}]
  Independence of the almost complex structure is
  Proposition~\ref{prop:J-inv}. Next, any two bordered Heegaard
  diagrams representing the same bordered 3-manifold can be connected
  by a sequence of isotopies, handleslides over circles, and
  stabilizations~\cite[Proposition 4.10]{LOT1}, so this is immediate
  from Propositions,~\ref{prop:iso-inv},~\ref{prop:handleslide-inv},
  and~\ref{prop:stab-inv}.
\end{proof}


\section{Grounding \texorpdfstring{$U$}{U}}
\label{sec:GroundingU}

The aim of the present section is to modify the construction of
$\CFAmb$ above to define a  unital, weighted module
$\CFAm(\HD)$ whose operations are $U$-equivariant (i.e., which is
defined over the ground ring $\Ground[U]=\Ground\otimes\Field[U]$).

We construct $\CFAm$ in three steps:
\begin{enumerate}[label=(S-\arabic*),ref=(S-\arabic*)]
\item 
  \label{s:GroundingAlgebra}
    First, we define a larger weighted $\Ainf$-algebra
  $\MAlgg$, called the {\em grounding algebra}, equipped with a
  subalgebra $B\subset \MAlgg$ which is isomorphic, as a weighted
  $\Ainf$-algebra, to $\MAlg$.  (This isomorphism is not
  $U$-equivariant.)  Thus, we can view $\CFAmb$ as a weighted
  $\Ainf$-module over $B$.
\item 
  \label{s:ExtendToGroundingAlgebra}
  Next, we extend the actions on $\CFAmb$ 
  in a $U$-equivariant way to all of
  $\MAlgg$, to
  construct a weighted module $\CFAmg$ over $\MAlgg$.
\item 
  \label{s:RestrictFromGroundingAlgebra}
  There is a $U$-equivariant quasi-isomorphism
  $\MAlgg\simeq \MAlg$. Restriction of scalars now allows us to turn
  any $\MAlgg$-module into a (quasi-isomorphic)
  $\MAlg$-module. Applying this process to $\CFAmg$ gives $\CFAm$.
\end{enumerate}

\subsection{The grounding algebra}\label{sec:grounding-alg}

Informally, $\MAlgg$ from Step~\ref{s:GroundingAlgebra}
is obtained from $\MAlg$ by adding elements $X$ and $e$, satisfying
\[
d(X)=1+e
\qquad{\text{and}}\qquad
X^2=0,
\]
and replacing the algebra operations that output $U^n$ (respectively $U^n
\cdot \rho$) by operations that output $(U e)^n$ (respectively $(U e)^n\cdot
\rho$).  

More formally, the underlying algebra of $\MAlgg$ 
is obtained from the algebra underlying $\UnDefAlg$
by adjoining two commuting 
elements $e$ and $X$, subject to the relation $X^2=0$; i.e.,
\[
  A^g=\frac{\Field[U,e,X]}{(X^2)}\otimes _\Field\AsUnDefAlg.
\]
Equip $A^g$ with the differential $\mu_1^0\co A^g\to
A^g$ that vanishes on the subalgebra generated by
$\AsUnDefAlg$, $U$, and $e$, and that satisfies $\mu_1^0(X)=1+e$.  Let $B\subset \MAlgg$ be the
subalgebra generated by $U\cdot e$, $\rho_1$, $\rho_2$, $\rho_3$,
and $\rho_4$. 

There is an isomorphism of algebras $\phi\co
\MAlg\to B$ characterized by the property that $\phi$ is the
identity on $\AsUnDefAlg$ and $\phi(U)=U\cdot e$.  We can use this
isomorphism to define weighted $\Ainf$ operations on $B$;
explicitly, given a sequence $a_1,\dots,a_n\in \MAlg$, let
\[ \mu^w_n(\phi(a_1),\dots,\phi(a_n))=\phi(\mu^w_n(a_1,\dots,a_n)).\]
We extend these operations to all of $\MAlgg$ as follows.
The algebra $\MAlgg$ is generated by the subalgebra $B$, together
with the elements $U$, $e$, and $X$.  
Extend the operations on $B$ to maps
$\{\mu^w_n\co (A^g)^{\otimes n}\to A^g\}$
by the convention that all maps $\mu^w_n$ are equivariant under
multiplication by $U$, $e$, and $X$.  
Since $\mu_1^0X=1+e$ is central, the resulting operations satisfies the weighted
$\Ainf$ relations, giving $A^g$
the structure of weighted algebra $\MAlgg$, as desired.
By construction $B\subset \MAlgg$ is a weighted $\Ainf$-subalgebra.

\begin{example}
  In $B\subset \MAlgg$, we have the operation
  \[
    \mu^0_4(\rho_4,\rho_3,\rho_2,\rho_{12})=\rho_2 U e.
  \]
  This gives rise to further actions in $\MAlgg$, such as
  \begin{align*}
    \mu^0_4(\rho_4,\rho_3,X\cdot \rho_2,e\cdot\rho_{12})&=X \rho_2 U e^2 \\
    \mu^0_4(\rho_4,X\cdot \rho_3,X\cdot \rho_2,e\cdot\rho_{12})&=0.
  \end{align*}
\end{example}
The isomorphism $\phi\co \MAlg\to B$ allows us to view the module $\CFAmb$
as a weighted $\Ainf$-module over the $\Ainf$-subalgebra $B$,
which we denote  $\CFAmg(\HD)|_B$. More formally:

\begin{definition}
  \label{def:CFAmgB}
  Given a provincially admissible bordered Heegaard diagram $\HD$, the
  \emph{restricted module} $\CFAmg(\HD)|_B$ is defined as follows.  As
  a left $\Field[U]$-module it is freely generated by $\Gen(\HD)$.
  The weight $w$ algebra operation on $\CFAmg(\HD)|_B$ is specified on
  a generator $\x$ and a sequence of algebra elements
  $b_1,\dots,b_{n-1}\in B$ as
  $m^w_n(\x,\phi^{-1}(b_1),\dots,\phi^{-1}(b_{n-1}))$, where $m^w_n$
  is the action on $\CFAmb$ defined in Section~\ref{sec:CFA}.  That
  is, the action by $U^n e^n$ (respectively $U^n e^n\rho$) is
  identified with the action of $U^n$ (respectively $U^n\rho$) on
  $\CFAmb(\HD)$.
\end{definition}

This completes Step~\ref{s:GroundingAlgebra} in our construction of $\CFAm$.
Our next goal is to extend the restricted module
$\CFAmg(\HD)|_B$, defined over the weighted subalgebra $B\subset
\MAlgg$, to one defined over all of $\MAlgg$. We do this in Section~\ref{sec:grounding-CFA}, after setting 
up some preliminaries about almost complex structures and moduli spaces.

\begin{remark}
  Unlike the operations on $\MAlgg$ itself, the action of $\MAlgg$ on
  $\CFAmg$ constructed below is not $X$- or $e$-equivariant.
\end{remark}
    
\subsection{Families of complex structures}

In Section~\ref{sec:main-part}, we introduced the notion of
coherent families of $\eta$-admissible almost complex structures
(Definition~\ref{def:AdmissibleJs}), which associate a complex
structure on $P\times\Sigma$ to each conformal polygon $P$ with two
special vertices labeled $\pm \infty$ and all other vertices (the
``algebra vertices'') marked with integers, called their ``energies''.
Coherent families of almost complex structures satisfy
constraints imposed by the energies and
compatibility conditions as the underlying polygon
degenerates. In Section~\ref{sec:transversality}, we introduced the
notion of tailored families of almost complex structures
(Definition~\ref{def:tailored}), which are coherent families of
$\eta$-admissible almost complex structures satisfying various
further transversality conditions. The $\Ainf$ operations on the bordered
modules were then
defined by counting $J$-holomorphic curves over {\em
  bimodule polygons} in the sense of Definition~\ref{def:polygons},
where the almost complex structures over the product of the bimodule
polygon $P$ with the surface $\Sigma$ is endowed with a tailored
family of almost complex structures.  The energies of the punctures
of the bimodule
polygon, in turn, are specified by the pinching function associated to
the sequence of algebra elements.  

For the generalization to $\MAlgg$, the sequence of basic algebra
elements and polygon $P$ no longer specifies a single almost complex
structure (on $\Sigma\times P$): each copy of the new element $X$
corresponds to a 1-parameter family, as we explain next.

\begin{definition}\label{def:smear-bimod-poly}
  A {\em smeared bimodule polygon} is a bimodule polygon $P$ as in
  Definition~\ref{def:polygons}, except that now the sequence
  $\vec{E}=(E_1,\dots E_\ell)$, rather than being a sequence of
  integers, is a sequence of pairs of integers $E_i=(a_i,b_i)$ with
  $0\leq a_i\leq b_i$, called an {\em energy range}.
  Given a smeared bimodule polygon $P$, there is an associated
  hypercube $\mathbb{E}$, which is the product of intervals
  \[
    \mathbb{E}=\prod_{i=1}^k [a_i,b_i].
  \]
  Let $\ModPol^X_{k,\ell,\vec{E}',\vec{E},w}$ denote the
  moduli space of smeared bimodule polygons.
\end{definition}

Smeared bimodule polygons where all the $a_i=b_i$ (i.e. the hypercube
$\mathbb{E}$ is a point) and for which none of the $a_i=0$
correspond to  bimodule polygons as in Definition~\ref{def:polygons}.

Let $\ModPol^X$ denote the disjoint union of all the
$\ModPol^X_{k,\ell,\vec{E}',\vec{E},w}$.  For fixed $\vec{E}$, let
$\ModPol^X_{\vec{E}}\subset \ModPol^X$ denote the subset with given
$\vec{E}$ (i.e. where $k$, $\vec{E}'$, and $w$ are allowed to vary).

We adapt Definition~\ref{def:ac}. For our present purposes, a
\emph{pinching function} is a monotone-decreasing function $\eta\co
\RR_{>0}\to \RR_{>0}$.

\begin{definition}
  \label{def:acAgain}
  Fix $P\in \ModPol^X$.  An almost complex structure $J$ on the
  $\Sigma$-fibers of $\Sigma\times P\times \mathbb{E}$ is called
  {\em{$\eta$-admissible}} if for each $\mathbf{e}\in \mathbb{E}$, the complex
  structure $J|_{\Sigma\times P\times\{\mathbf{e}\}}$ satisfies
  Properties~\ref{item:J-piD}-\ref{NearlyConstant} from
  Definition~\ref{def:ac} and the following:
  \begin{enumerate}[label=(J-\arabic*${\thinspace}^\prime$)]
    \setcounter{enumi}{4}
  \item If $(a_i,b_i)$ is the energy range over the $i\th$ marked
    point of $P$, then $j_i$ is $\eta(a_i)$-pinched.
    (Note that this vacuously holds when $a_i=0$.)
  \end{enumerate}
\end{definition}

When the dimension of the hypercube $\mathbb{E}$ is zero, and
none of the components of $\mathbb{E}$ is marked by $(0,0)$,
Definition~\ref{def:acAgain} reduces to Definition~\ref{def:ac}.

The space of $\eta$-admissible almost complex structures is a bundle over
$\ModPol^X\times\mathbb{E}$.  Following Definition~\ref{def:ac}, a
coherent family of $\eta$-admissible almost complex structures is a
continuous section of this bundle.

Fix a sequence of pairs of integers $\vec{E}=((a_1,b_1),\dots,(a_k,b_k))$,
and let $\mathbb{E}$ be its associated hypercube.
If $b_j>a_j$, 
let $\vec{E}_{j;0}=((a_1',b_1'),\dots,(a_k',b_k'))$ 
be the sequence specified by 
$(a_i',b_i')=(a_i,b_i)$ if $i\neq j$, and $(a_j',b_j')=(a_j,a_j)$;
and $\vec{E}_{j;1}=((a_1'',b_1''),\dots,(a_k'',b_k''))$ specified similarly, except now 
$(a_j'',b_j'')=(b_j,b_j)$.
Let $\mathbb{E}_{j,0}$ and $\mathbb{E}_{j,1}$ denote their corresponding hypercubes.
Clearly, there are inclusion maps $\mathbb{E}_{j,0}\subset \mathbb{E}$ and 
$\mathbb{E}_{j,1}\subset \mathbb{E}$; and indeed
\begin{equation}
  \label{eq:BoundaryOfE}
  \partial\thinspace \mathbb{E}=\bigcup_{\{j\mid b_j>a_j\}} \left(\mathbb{E}_{j,0}
    \cup \mathbb{E}_{j,1}\right).
\end{equation}

\begin{definition}
  \label{def:AdmissibleJsAgain}
  Fix a pinching function $\eta$.  A \emph{coherent family of
    $\eta$-admissible almost complex structures} associates to each 
  sequence of pairs of integers $\vec{E}=(E_1',\dots,E_k')$
  a continuous section $J_{\vec{E}}$ of the bundle of $\eta$-admissible almost complex structures
  over $\ModPol_{\vec{E}}\times \mathbb{E}$, satisfying the following
  compatibility conditions:
  \begin{itemize}
    \item If $P$ is in the boundary of
      $\ModPol$ and $P'$ is a bimodule component of $P$, then the
      restriction of $J(P)$ to $P'$ agrees with $J(P')$.  
    \item For $\delta\in\{0,1\}$ and $j$ so that $b_j>a_j$, the restrictions of 
      $J$ to $\ModPol_{\vec{E}_{j,0}}\times \mathbb{E}_{j,0}\subset \ModPol_{\vec{E}}\times \mathbb{E}$
      and 
      $\ModPol_{\vec{E}_{j,1}}\times \mathbb{E}_{j,1}\subset \ModPol_{\vec{E}}\times \mathbb{E}$
      agree with $J_{\vec{E}_{j,0}}$ and $J_{\vec{E}_{j,1}}$.
  \end{itemize}
\end{definition}

We would like to establish the analogue of
Lemma~\ref{lem:admis-J-exists}, showing that admissible families $J$ exist.

For the purpose of the existence proof, it will be useful
 to slightly expand the notion of
bimodule polygons (in the unsmeared case,
Definition~\ref{def:polygons}), to include marked points with energy
constraint $0$. Specifically, such bimodule polygons
are equipped with  a function from the marked points (other than the two
distinguished boundary points) to $\ZZ_{\geq 0}$. Points with energy
$0$ are called {\em inert}. There is a
corresponding generalization of the notion of $\eta$-admissible
complex structure (compare Definition~\ref{def:ac}), which is a
complex structure over $\Sigma\times P$ satisfying the conditions from
that definition, with the understanding that
Condition~\ref{NearlyConstant} applies only over the sequence
$(1,t_1),\dots,(1,t_n)$ for which the corresponding energies are
non-zero. Thus, over the inert points there is no additional
requirement on the almost-complex structure.  Letting
$\ModPol_{\geq 0}$
denote the space of such module polygons, there is a continuous
forgetful map $\ModPol_{\geq 0}\to \ModPol$ which drops the inert points. Pulling back via this map
induces a map from the $\eta$-admissible complex structures over
$\ModPol$ to $\eta$-admissible complex structures over $\ModPol_{\geq 0}$.

Idempotents have energy zero; so, holomorphic curves over
$\ModPol_{\geq 0}$ appear when considering algebra sequences
$(a_1,\dots,a_j)$ where some of the $a_i$ are allowed to be
idempotents. If $a_i$ is marked by an idempotent, the space
$\tcM^B(\x,\y;a_1,\dots,a_n;w)$ is rarely rigid.  For example,
suppose that $i>1$. There is a map
\[ f\co \tcM^B(\x,\y;a_1,\dots,a_n;w)\to \RR_{>0}\]
defined by $u\mapsto t \circ u(p_i)-t\circ u(p_{i-1})$.
By moving $p_i$ around, we see that $f$ has open image.

\begin{lemma}\label{lem:admis-J-exists-again}
  For any pinching function $\eta$, a coherent family $J$ of
  $\eta$-admissible almost complex structures exist.
\end{lemma}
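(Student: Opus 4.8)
The statement is a direct analogue of Lemma~\ref{lem:admis-J-exists}, so the plan is to follow that proof closely, inducting on total energy and now, simultaneously, on the combinatorial complexity of the energy ranges. The key new wrinkle is that we are constructing a family over the larger parameter space $\ModPol^X_{\vec E}\times\mathbb E$, and we must make compatible choices not only as the polygon $P$ degenerates (the first bullet of Definition~\ref{def:AdmissibleJsAgain}) but also along the faces $\mathbb E_{j,\delta}$ of each hypercube $\mathbb E$ (the second bullet). First I would fix, once and for all, an $\RR$-invariant almost complex structure $J_\infty$ on $\Sigma\times[0,1]\times\RR$ and, for each integer $a>0$, an $\eta(a)$-pinched complex structure on $\Sigma$ to be used near any marked point whose energy range has lower endpoint exactly $a$ and which is the sole marked point of its bimodule component. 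Over inert marked points (lower endpoint $0$) there is no pinching constraint at all, which actually makes that part of the construction easier, not harder.

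\textbf{Order of construction.} The induction is on a pair of quantities ordered lexicographically: first the total energy $E_0$ (using the upper endpoints $b_i$ to define energy, as in Lemma~\ref{lem:bd-ev-proper} and the discussion around Formula~\eqref{eq:ExtendEnergy}), and then, for fixed $E_0$, the total ``width'' $\sum_i (b_i-a_i)$ of the energy ranges. The base of the inner induction is the unsmeared case $\sum(b_i-a_i)=0$, where Definition~\ref{def:acAgain} reduces to Definition~\ref{def:ac} (allowing inert points as in the $\ModPol_{\geq 0}$ discussion) and the construction is exactly Lemma~\ref{lem:admis-J-exists}, together with the pullback along $\ModPol_{\geq 0}\to\ModPol$ for the inert points. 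For the inductive step, given $\vec E$ with some $b_j>a_j$, the family $J$ is already determined on the subset $\ModPol_{\vec E_{j,0}}\times\mathbb E_{j,0}\cup\ModPol_{\vec E_{j,1}}\times\mathbb E_{j,1}$ of the boundary (by the inner induction, since each $\vec E_{j,\delta}$ has strictly smaller total width, and the total energy is unchanged), and on the locus where $P$ degenerates into several bimodule components (by the outer induction, since each such component has strictly smaller total energy). As in the proof of Lemma~\ref{lem:admis-J-exists}, one checks these prescriptions agree on overlaps — using Equation~\eqref{eq:BoundaryOfE} to see that the $\mathbb E_{j,\delta}$ exhaust $\partial\mathbb E$ — extends them slightly into collar neighborhoods by interpolating near the algebra punctures exactly as in that proof (and as with strip-like ends in \cite[Section (9g)]{SeidelBook}), and finally invokes contractibility of the space of $\eta$-admissible almost complex structures over $\ModPol^X_{\vec E}\times\mathbb E$ to extend over the interior. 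The contractibility statement is the evident parametrized version of Lemma~\ref{lem:admis-contract}: since $\mathbb E$ is a cube and the fiber over each $(P,\mathbf e)$ is contractible by Lemma~\ref{lem:admis-contract}, a section over a contractible base exists and the space of such sections is contractible; the only point to note is that the pinching requirement (J-5$'$) depends only on the lower endpoints $a_i$, which are locally constant on each stratum of $\ModPol^X_{\vec E}$, so the collar argument of Lemma~\ref{lem:admis-contract} goes through unchanged fiberwise.

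\textbf{Main obstacle.} The only genuinely new bookkeeping — and the step I expect to be the most delicate to write out carefully — is verifying that the prescriptions coming from the polygon-degeneration boundary and from the hypercube-face boundary are mutually consistent on their common overlaps (e.g.\ a corner of $\ModPol^X_{\vec E}\times\mathbb E$ where $P$ breaks into two bimodule components \emph{and} we sit on a face $\mathbb E_{j,\delta}$). This requires choosing the collar interpolations in the two inductions compatibly; as in Lemma~\ref{lem:admis-J-exists}, spelling this out is routine but takes some work, and is ``essentially the same as making consistent choices of strip-like ends in Floer theory''~\cite[Section (9g)]{SeidelBook}. Everything else is a mechanical transcription of the proof of Lemma~\ref{lem:admis-J-exists}, so I would keep the exposition brief and refer back to it. \qed
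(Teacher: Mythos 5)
Your proposal is correct and follows essentially the same route as the paper's proof: reduce the hypercube faces to earlier cases via Equation~\eqref{eq:BoundaryOfE}, reduce the polygon-degeneration boundary to earlier cases as in Lemma~\ref{lem:admis-J-exists}, match the prescriptions on overlaps via collars/strip-like ends, and finish by contractibility. The only (immaterial) difference is the induction variable — the paper inducts on $\dim\mathbb{E}$ while you use the lexicographic pair (total energy, total width); note that for the face $\mathbb{E}_{j,0}$ the upper endpoint $b_j$ is replaced by $a_j$, so your claim that ``the total energy is unchanged'' fails there, but the face still precedes $\vec{E}$ in your ordering, so the argument goes through.
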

\begin{proof}
  We will construct the $\eta$-admissible almost complex structures by
  induction on the dimension of the hypercube $\mathbb{E}$.  When the
  dimension is zero, and $(0,0)$ is not an energy range for any of the points,
  Definition~\ref{def:acAgain} reduces to 
  Definition~\ref{def:ac}, so this base case follows from
  Lemma~\ref{lem:admis-J-exists}.
  When $(0,0)$ does appear as the energy range of some of the points in $\vec{E}$,
  the moduli space $\ModPol_{\vec{E}}$, is identified with a component of $\ModPol_{\geq 0}$, over which we can construct the complex structure
  by pulling back via the forgetful map $\ModPol_{\geq 0}\to \ModPol$
  that forgets inert points.
  
  For the inductive step, our goal is to construct an almost-complex
  structure over $\ModPol_{E}\times \mathbb{E}$.  The complex
  structure over $\ModPol_{\vec{E}}\times \partial \mathbb{E}$ is
  constructed by the induction hypothesis and the observation that
  $\ModPol_{\vec{E}}\times (\partial \mathbb{E})$ can be expressed as
  a union of $\ModPol_{\widetilde {E}}\times \widetilde{\mathbb{E}}$
  for the faces $\widetilde{\mathbb{E}}\subset \mathbb{E}$,
  (Equation~\eqref{eq:BoundaryOfE}).  Extend the complex structure
  over $(\partial \ModPol_{\vec{E}})\times \mathbb{E}$ by an internal
  induction on the dimension of $\ModPol_{\vec{E}}$, as in the proof
  of Lemma~\ref{lem:admis-J-exists}. This defines the family of almost
  complex structures over
  $\partial(\ModPol_{\vec{E}}\times \mathbb{E})$. Extend the family to
  a neighborhood of the boundary as in the proof of
  Lemma~\ref{lem:admis-J-exists}, via compatible choices of gluings
  (or strip-like ends).  Finally, extend to all of
  $\ModPol_{\vec{E}}\times\mathbb{E}$ using contractibility of the
  space of $\eta$-admissible almost-complex structures.
\end{proof}

\subsection{Moduli spaces}
We are now ready to define the moduli spaces that we will use to
construct $\CFAmg$.

The following is a generalization of Definition~\ref{def:Basic} to $\MAlgg$.
\begin{definition}
  \label{def:BasicX}
  The \emph{basic algebra
  elements} in $\MAlgg$ are elements of the form $e^a X^b\iota_i$
  with
$a\in \ZZ_{\geq 0}$, $b\in\{0,1\}$, and $\min(a,b)=1$;
  or $e^a X^b \rho$,
where $\rho$ is a Reeb chord and
$a\in \ZZ_{\geq 0}$, $b\in\{0,1\}$.
\end{definition}
Note that we do not consider $U^b \iota_i$ a basic algebra element for
$\MAlgg$, although it was for $\MAlg$: the action we will define is
$U$-equivariant and  unital, so the action by sequences
containing $U^b \iota_i$ is already determined (to be either the
identity or $0$).

In Section~\ref{sec:moduli}, we defined an energy function for basic algebra elements
in $\MAlg$. The analogous definition for basic algebra elements in $\MAlgg$ is
\begin{equation}
  \label{eq:NewE}
  E(e^a X^b \rho) =4a+|\rho|
\end{equation}
for $a\geq 0$ and $b\in\{0,1\}$.
This determines a grading on $\MAlgg$, with the understanding that multiplication by $U$ leaves the grading unchanged.
The grading extends to weighted sequences
exactly as in Equation~\eqref{eq:ExtendEnergy}.
(The energy from Section~\ref{sec:moduli}
changes by $4$ with multiplication by $U$; this is consistent
with the present conventions since the element $U\in \MAlg$ corresponds
to the element $Ue$ in $B\subset \MAlgg$.)

In Section~\ref{sec:moduli}, the energies on the module polygons were
specified by $E$ of the sequence.  In the present extension, the
function $E$ provides only a constraint on the energies.  Specifically,
if $(a_1,\dots,a_m)$ is a sequence of basic algebra elements in
$\MAlgg$ and $w$ is a non-negative integer $w$, there is a
corresponding component of $\ModPol^X$, denoted
$\ModPol^X(a_1,\dots,a_m;w)$, which is the closure of the stratum with:
\begin{itemize}
\item $w$ interior marked points of weight $4$,
\item no marked points on $\{0\}\times \RR$,
\item $m$ boundary marked points on $\{1\}\times \RR$, corresponding
  to $a_1,\dots,a_m$, and
\item the energy range associated to the $i\th$ boundary marked point
  is $(E(a_i),E(a_i)+1)$ if $a_i$ is divisible by $X$ and
  $(E(a_i),E(a_i))$ otherwise.
\end{itemize}

Given the sequence $(a_1,\dots,a_m)$, 
there is an associated hypercube $\mathbb{E}$, whose dimension coincides with the number of $a_i$ that
are divisible by $X$.
Given an $\eta$-admissible family $J$ of almost complex
structures, let ${\mathcal J}(a_1,\dots,a_m;w)$ denote the restriction
of $J$ to $\ModPol^X(a_1,\dots,a_m;w)\times \mathbb{E}$; in this way $J$ associates 
 to $\mathbf{e}\in \mathbb{E}$ and $P\in
\ModPol^X(a_1,\dots,a_m;w)$  an almost complex structure $J_{\mathbf e}$ over
$\Sigma\times P$.

Adapting Definition~\ref{def:decorated-source}, our source curves
$\Source$ have boundary punctures, one of which is labeled $+\infty$,
another is labeled $-\infty$, and all others are labeled by 
basic algebra elements, now for $\MAlgg$. Adapting
Definition~\ref{def:respectful}, the sequence of basic algebra
elements $(a_1,\dots,a_m)$ specifies the asymptotics of the curve,
and a smooth map
\begin{equation}
  \label{eq:u-source-targ-again}
  u\co (S,\bdy S) \to (\Sigma\times[0,1]\times\RR,\alphas\times\{1\}\times\RR\cup\betas\times\{0\}\times\RR)
\end{equation}
respects the source if the basic algebra elements (and $\x$ and $\y$)
specify the asymptotics of the source at the punctures. (Note that
these notions are independent of the factors of $e$, $X$, and $U$ that
can appear in each basic algebra element; these factors only influence
the moduli spaces through the complex structures which are
considered.)

Definition~\ref{def:moduli-fixed-source} can be adapted as follows.
Fix an $\eta$-admissible family $J$ of almost complex structures.
Given a pair of generators $\x,\y\in\Gen(\HD)$, homology class
$B\in\pi_2(\x,\y)$, and a decorated source $\Source$, let
$\tcM^B(\x,\y;a_1,\dots,a_\ell;w;\Source)$ be the set of choices
$\mathbf{e}\in \mathbb{E}$ and maps as in
Formula~\ref{eq:u-source-targ-again} which are $(j,J_{\mathbf{e}})$-holomorphic for some complex structure $j$ on the source, with
specified algebra sequence $\vec{a}$ and weight $w$.  Recall that
$J_{\mathbf{e}}$ is the complex structure on the product $\Sigma\times
P$ specified by the point $\mathbf{e}\in \mathbb{E}$ and the
$\eta$-admissible family of almost-complex structures.  It is
sometimes useful to think of $\mathbf{e}$ as determining a continuous
function
\[ \mathbf{e}\co \tcM^B(\x,\y;\vec{a};w;\Source)\to \mathbb{E}.\]

Let $d$ denote the number of algebra elements in the
sequence $(a_1,\dots a_m)$ that are divisible by $X$; i.e., $d=\dim\mathbb{E}$.
Following Proposition~\ref{prop:index-source}, the expected dimension of $\tcM^B(\x,\y;\vec{a};w;\Source)$ is given by
\begin{equation}\label{eq:index-source-again}
  \ind(B,\Source;\vec{a};w,r)\coloneqq  g-\chi(\Source)+2e(B)+m+w+d-r,
\end{equation}
where $\vec{a}=\vec{a}(\Source)$, $m=|\vec{a}|$ denotes the number
of algebra punctures of $S$, $w$ denotes the number of interior punctures of
$S$, $r$ is the ramification index at the Reeb orbits, and $d$
is as above.

We call an $\eta$-admissible family of almost complex structures $J$
\emph{tailored for $\MAlgg$} if it is as in Definition~\ref{def:tailored}, bearing
in mind that now the index takes into account the number of smeared
boundary punctures, and the moduli spaces
$\tcM^B(\x,\y;\vec{a};w;\Source)$ now have source curves marked by
basic algebra elements in the larger algebra $\MAlgg$. (In particular,
this makes use of a sufficient pinching function as in
Definition~\ref{def:sufficient-pinching}.)

Proposition~\ref{prop:transv-exists} has the following adaptation:
\begin{proposition}\label{prop:transv-exists-again}
  For any pinching function $\eta$, the set of coherent families of
  $\eta$-admissible almost complex structures $J$ so that the moduli
  spaces $\tcM^B(\x,\y;\vec{a};w;\Source)$ are all
  transversely cut out is co-meager.
\end{proposition}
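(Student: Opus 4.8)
The plan is to reduce Proposition~\ref{prop:transv-exists-again} to the standard transversality machinery for pseudo-holomorphic curves, exactly as Proposition~\ref{prop:transv-exists} was reduced to the bordered $\HFa$ case~\cite{LOT1}, which in turn rests on the cylindrical case~\cite[Section~3]{Lipshitz06:CylindricalHF} and ultimately on McDuff--Salamon~\cite{MS04:HolomorphicCurvesSymplecticTopology}. The one genuinely new feature here is that a point of the moduli space $\tcM^B(\x,\y;\vec{a};w;\Source)$ is a pair $(\mathbf{e},u)$ where $\mathbf{e}\in\mathbb{E}$ lies in the hypercube of energy parameters and $u$ is $J_{\mathbf{e}}$-holomorphic; so we must perturb a \emph{family} of almost complex structures over $\ModPol^X_{\vec{E}}\times\mathbb{E}$ rather than over $\ModPol$. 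The key point is that this is harmless: one works over the universal moduli space fibered over $\ModPol^X_{\vec{E}}\times\mathbb{E}$ and applies the Sard--Smale theorem there.

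First I would set up the universal moduli space: fix a decorated source $\Source$ (with its topological type, hence $\chi(\Source)$, fixed) and consider the space of triples $(\mathbf{e},u,J)$ where $J$ ranges over coherent $\eta$-admissible families as in Definition~\ref{def:AdmissibleJsAgain}, $\mathbf{e}\in\mathbb{E}$, and $u$ respects $\Source$ and is $(j,J_{\mathbf{e}})$-holomorphic for some $j$ on $S$. The linearization of the $\dbar$-operator at $(\mathbf{e},u,J)$ involves, besides the usual deformation-of-$u$ and deformation-of-$j$ directions, the deformation-of-$\mathbf{e}$ directions and the deformation-of-$J$ directions. Since $u$ has a non-constant projection to $\Sigma$ somewhere (equivalently, by boundary monotonicity and the structure of the source, $u$ is somewhere injective in $\Sigma\times P$ away from the diagonal and the punctures), the standard argument shows that perturbing $J$ near a regular point of $u$ already surjects onto the cokernel of the restricted linearization; one does not even need to use the $\mathbf{e}$-directions, but they are available and cause no trouble. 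The hypotheses of the Sard--Smale theorem are then met: the universal moduli space is a Banach manifold, the projection to the space of $J$'s is Fredholm, and a comeager set of $J$ are regular values. Intersecting over the countably many $(\Source,B,\vec{a},w)$ with $\ind\le 2$ (or, for the full statement, over all of them, since a countable intersection of comeager sets is comeager) gives the claim.

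The main obstacle I anticipate is bookkeeping the compatibility/coherence conditions on the family $J$ from Definition~\ref{def:AdmissibleJsAgain}: the perturbation of $J$ must be supported so as not to violate (i) the requirement that $J$ restricted to a bimodule subcomponent of a boundary point of $\ModPol$ agrees with the chosen $J$ on that lower-energy component, and (ii) the matching of restrictions to the faces $\mathbb{E}_{j,\delta}\subset\mathbb{E}$. As in the proof of Lemma~\ref{lem:admis-J-exists-again}, this is handled by an induction on the total energy and the dimension of $\mathbb{E}$: one fixes $J$ on all lower strata (where transversality already holds by induction), then perturbs only in the interior of the top stratum $\ModPol^X_{\vec{E}}\times\mathbb{E}$, away from the algebra-type components (over which the family is required to be constant), using contractibility of the space of $\eta$-admissible almost complex structures (Lemma~\ref{lem:admis-contract}, or its evident analogue for Definition~\ref{def:acAgain}) to see that the perturbations one needs can be made within the admissible class. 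Since this is word-for-word the scheme used in Proposition~\ref{prop:transv-exists} with one extra (compact, finite-dimensional) parameter space $\mathbb{E}$ along for the ride, I would simply indicate this and refer the reader to the proof of Proposition~\ref{prop:transv-exists} and the citations therein, noting only the one modification that the perturbation is of a family of almost complex structures and that the index formula~\eqref{eq:index-source-again}, with its extra $+d$ from $\dim\mathbb{E}$, is the relevant expected dimension.
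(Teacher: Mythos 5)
Your proposal is correct and follows essentially the same route as the paper: the paper's proof simply defers to the standard transversality arguments cited in Proposition~\ref{prop:transv-exists} (bordered $\HFa$, the cylindrical case, McDuff--Salamon), and your expanded version—universal moduli space, Sard--Smale, somewhere-injectivity from boundary monotonicity, and an induction respecting the coherence conditions with $\mathbb{E}$ as a harmless extra compact parameter—is exactly what those references carry out.
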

\begin{proof}
  This follows from the same standard transversality arguments as
  Proposition~\ref{prop:transv-exists}, and we refer the reader to the
  references there.
\end{proof}

We have the following analogue of Corollary~\ref{cor:TailoredExist}:
\begin{corollary}
  \label{cor:TailoredExist-again}
  Tailored families of almost complex structures for $\MAlgg$ exist.
\end{corollary}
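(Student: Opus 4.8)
\textbf{Proof plan for Corollary~\ref{cor:TailoredExist-again}.}

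The plan is to mimic the proof of Corollary~\ref{cor:TailoredExist} essentially verbatim, keeping track of the enlarged index formula \eqref{eq:index-source-again} and the fact that smeared polygons carry a hypercube parameter $\mathbb{E}$. First I would fix a sufficient pinching function $\eta$, which exists by Corollary~\ref{cor:pinch-exist}; sufficiency is a condition purely about boundary degenerations in $\Sigma\times\HHH$, and those moduli spaces are untouched by the passage from $\MAlg$ to $\MAlgg$ (the factors of $e$, $X$, $U$ affect only which almost complex structure is used, not the curves), so the same $\eta$ works here. Next I would fix a generic $\RR$-invariant almost complex structure $J_\infty$ on $\Sigma\times[0,1]\times\RR$ satisfying Conditions~\ref{item:tail-spheres}, \ref{item:tail-disk}, and the special case of~\ref{item:tail-main-comp} with no algebra punctures; such $J_\infty$ exists by Corollary~\ref{cor:no-spheres} and Proposition~\ref{prop:transv-exists-again} (applied to the empty algebra sequence, which forces $\mathbb{E}$ to be a point, so this reduces to the statement already used in the $\CFAmb$ case). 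Then I would build a coherent family of $\eta$-admissible almost complex structures agreeing with $J_\infty$ near $\pm\infty$, using Lemma~\ref{lem:admis-J-exists-again} in place of Lemma~\ref{lem:admis-J-exists}; the inductive construction there is over the dimension of $\mathbb{E}$ and then over $\ModPol^X_{\vec E}$, and at each stage I would additionally insist that the generic-perturbation conclusion of Proposition~\ref{prop:transv-exists-again} holds, which is possible since that proposition asserts co-meagerness, and a countable intersection of co-meager sets is still co-meager (there are countably many tuples $(B,\Source,\vec{a},w,r)$ to control).

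The remaining conditions are obtained exactly as in the proof of Corollary~\ref{cor:TailoredExist}. Condition~\ref{item:tail-main-comp} is Proposition~\ref{prop:transv-exists-again}. For Condition~\ref{item:tail-bdy} I would invoke Sard's theorem: for generic $\w\in\alpha_1^c\times\cdots\times\alpha_{g-1}^c$ and generic $\w'\in(\alpha_1^a\cup\alpha_2^a)\times\alpha_1^c\times\cdots\times\alpha_{g-1}^c$ the moduli spaces $\cN(\rho\times\w;\bdSource)$ and $\cN(\w';\bdSource)$ are smoothly cut out, and a further generic choice of $J$ forces the index-$1$ moduli spaces $\cM^B(\x,\y;\Source)$ to be asymptotic only at such generic points; this argument does not interact with the $X$, $e$, $U$ bookkeeping because the boundary-degeneration moduli spaces $\cN$ are the same ones studied in Section~\ref{sec:algebra}, independent of the extra formal variables. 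Conditions~\ref{item:tail-spheres} and~\ref{item:tail-disk} were arranged at the outset in the choice of $J_\infty$ and are preserved since all $J(P)$ agree with $J_\infty$ near $\pm\infty$ and the relevant disk/sphere moduli spaces only involve $J_\infty$. Finally, Condition~\ref{item:tail-pinched} is the sufficiency of $\eta$, fixed at the start.

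I do not expect a genuine obstacle here: every ingredient is already in place. The one point that requires a sentence of care is that the bundle of $\eta$-admissible almost complex structures now lives over $\ModPol^X\times\mathbb{E}$ rather than $\ModPol$, so "generic" must be interpreted in that larger parameter space; but Proposition~\ref{prop:transv-exists-again} is stated precisely in those terms, and Lemma~\ref{lem:admis-J-exists-again} already performs the induction over $\dim\mathbb{E}$, so the genericity can be imposed compatibly with the coherence conditions of Definition~\ref{def:AdmissibleJsAgain}. Thus the proof is a one-line citation of the analogous statements with the enlarged setup, and I would write it as such.

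\begin{proof}
  This follows from the same argument as
  Corollary~\ref{cor:TailoredExist}. Fix a sufficient pinching
  function $\eta$, which exists by Corollary~\ref{cor:pinch-exist};
  sufficiency is a condition about boundary degenerations in
  $\Sigma\times\HHH$, which are unaffected by the passage to
  $\MAlgg$. By Corollary~\ref{cor:no-spheres} and the special case of
  Proposition~\ref{prop:transv-exists-again} with no algebra
  punctures, a generic $\RR$-invariant almost complex structure
  $J_\infty$ close to a split one satisfies
  Conditions~\ref{item:tail-spheres}, \ref{item:tail-disk}, and the
  no-algebra-puncture case of~\ref{item:tail-main-comp}. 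Fix such a
  $J_\infty$, and construct a coherent family of $\eta$-admissible
  almost complex structures agreeing with $J_\infty$ near $\pm\infty$
  as in the proof of Lemma~\ref{lem:admis-J-exists-again}, using
  Proposition~\ref{prop:transv-exists-again} (and the fact that a
  countable intersection of co-meager sets is co-meager) to arrange
  Condition~\ref{item:tail-main-comp} for all $(B,\Source,\vec{a},w,r)$
  with index $\leq 2$. To ensure Condition~\ref{item:tail-bdy}, by
  Sard's theorem, for generic $\w\in\alpha_1^c\times\cdots\times
  \alpha_{g-1}^c$ and generic $\w'\in(\alpha_1^a\cup\alpha_2^a)\times
  \alpha_1^c\times\cdots\times\alpha_{g-1}^c$ the moduli spaces
  $\cN(\rho\times\w;\bdSource)$ and $\cN(\w';\bdSource)$ are smoothly
  cut out, and for a generic choice of $J$ the index-$1$ moduli spaces
  $\cM^B(\x,\y;\Source)$ are asymptotic only to such generic points.
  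Conditions~\ref{item:tail-spheres} and~\ref{item:tail-disk} hold by
  the choice of $J_\infty$, and Condition~\ref{item:tail-pinched} by
  the choice of $\eta$.
\end{proof}
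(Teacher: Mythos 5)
Your proposal is correct and follows essentially the same route as the paper: the paper's proof likewise reduces to the argument of Corollary~\ref{cor:TailoredExist}, invoking Corollary~\ref{cor:no-spheres} for sphere bubbles, Proposition~\ref{prop:transv-exists} for disk bubbles, Proposition~\ref{prop:transv-exists-again} for the main components, and the same Sard-type argument for Condition~\ref{item:tail-bdy}. Your version just spells out the induction via Lemma~\ref{lem:admis-J-exists-again} in more detail than the paper does.
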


\begin{proof}
  The proof is similar to the proof of
  Corollary~\ref{cor:TailoredExist}. In particular, sphere bubbles
  carry index $2$, hence bubbling one off drops the expected dimension
  of the moduli space by $2$, so the only way these can occur is when
  they are glued to a trivial strip, in which case
  Corollary~\ref{cor:no-spheres} applies (as long as the single
  $\mathbb{R}$-invariant almost complex structure we chose near
  $\pm\infty$ is close to being
  split). Proposition~\ref{prop:transv-exists} (for disk bubbles with
  boundary on the $\beta$-circles) and
  Proposition~\ref{prop:transv-exists-again} (for the main component)
  imply that for generic families, there are no disk bubbles and the
  main components are transversely cut out. The proof that moduli
  spaces of boundary degenerations are transversely cut out for
  generic choices of almost complex structures
  (Condition~\ref{item:tail-bdy}) follows from the same argument as in
  Corollary~\ref{cor:TailoredExist}.
\end{proof}

From now on, we will abuse terminology and refer to families which are
tailored for $\MAlgg$ simply as \emph{tailored}.

We have the following analogue of Theorem~\ref{thm:master}.

\begin{theorem}\label{thm:masterB}
  Fix a provincially admissible bordered Heegaard diagram $\HD$ and a
  tailored family of almost complex structures $J$.  Given a homology
  class $B$ and
  sequence $\vec{a}=(a_1,\dots,a_k)$ of basic algebra elements for
  $\MAlgg$ so that $\ind(B;a_1,\dots,a_k;w)=2$, the sum of the
  following kinds of ends is $0\pmod{2}$: ends as described in
  Theorem~\ref{thm:master}
  (Types~\ref{end:TwoStoryBuilding}-\ref{end:CompositeBoundaryDegeneration}),
  and new kinds of ends corresponding to the differential of $X$;
  i.e., ends of the form
  \[
    \sum_{a_i= X\cdot b_i}\#\cM^B(\x,\y;a_1,\dots,a_{i-1},b_i,a_{i+1},\dots,a_n;w)
  \]
  and
  \[
    \sum_{a_i=X\cdot b_i}\#\cM^B(\x,\y;a_1,\dots,a_{i-1},e\cdot b_i,a_{i+1},\dots,a_n;w),
  \]
  provided that $a_i= b_i\cdot X$. Here, in the special case that
  $a_i=X\iota_j$, we define
  \[
    \#\cM^B(\x,\y;a_1,\dots,a_{i-1},\iota_j,a_{i+1},\dots,a_n;w)=
    \begin{cases}
      0 & n>1\text{ or }w>0\\
      1 & (n,w)=(1,0).
    \end{cases}
  \]
\end{theorem}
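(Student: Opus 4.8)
\textbf{Proof proposal for Theorem~\ref{thm:masterB}.}
The plan is to reduce the statement to Theorem~\ref{thm:master} by analyzing the limit configurations that can arise from a sequence of holomorphic curves in an index-$2$ moduli space $\cM^B(\x,\y;\vec a;w)$ for $\MAlgg$, running through the exact same compactness scheme used in Section~\ref{sec:compactness}. The point is that the geometric degenerations (two-story buildings, collision ends, orbit curve degenerations, simple and composite boundary degenerations) are classified entirely by the underlying chord sequence $\vec\rho$ and the homology class $B$, and these are insensitive to the factors of $e$, $X$, $U$ carried by the basic algebra elements; only the almost complex structure used depends on them. So the first step is to observe that every compactification phenomenon in Theorem~\ref{thm:master} still occurs here, with the same multiplicities, via the gluing results of Section~\ref{sec:gluing} (which apply verbatim since the smeared moduli spaces are, fiberwise over $\mathbb E$, the same kinds of moduli problems). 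The index bookkeeping is governed by Formula~\eqref{eq:index-source-again}, which differs from Formula~\eqref{eq:index-source} only by the term $+d$ counting $X$-divisible inputs; I would track how this extra term interacts with the degeneration analysis.

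The genuinely new feature is the hypercube parameter $\mathbf e\co\cM^B(\x,\y;\vec a;w;\Source)\to \mathbb E=\prod[E(a_i),E(a_i)+1]$ associated with the $X$-divisible inputs. Thus the relevant index-$2$ moduli spaces are compact $1$-manifolds-with-boundary, and their \emph{actual boundary} — as opposed to their SFT-style ends — comes from $\mathbf e$ hitting a facet of $\partial\mathbb E$. By Equation~\eqref{eq:BoundaryOfE} and the coherence condition in Definition~\ref{def:AdmissibleJsAgain}, a facet where the $i$th coordinate is pinned to its left endpoint $E(a_i)$ is governed by $J_{\vec E_{i,0}}$, which by construction is the tailored family appropriate to the sequence with $a_i$ replaced by $b_i$ (dropping the $X$); a facet where the $i$th coordinate is pinned to its right endpoint $E(a_i)+1$ is governed by $J_{\vec E_{i,1}}$, appropriate to $a_i$ replaced by $e\cdot b_i$. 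Hence these boundary points are in bijection with points of $\cM^B(\x,\y;a_1,\dots,b_i,\dots,a_n;w)$ and $\cM^B(\x,\y;a_1,\dots,e\cdot b_i,\dots,a_n;w)$ respectively — these are exactly the two new sums in the statement, matching the relation $\mu_1^0(X)=1+e$. The convention for $a_i=X\iota_j$ is just the statement that the moduli space over the corresponding inert/energy-$0$ facet is the (rigid, weightless) strip, which is forced because idempotent inputs carry energy $0$ and the only curve is the trivial one.

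The remaining step is to put these together: count the total number of boundary and ends of the compact $1$-manifold $\cM^B(\x,\y;\vec a;w)$ modulo $2$; it is $0$. By the classification, the ends split into those of Types~\ref{end:TwoStoryBuilding}--\ref{end:CompositeBoundaryDegeneration} from Theorem~\ref{thm:master} plus the two families of $\partial\mathbb E$-boundary points just described. One subtlety to check carefully is that the compactness argument of Section~\ref{sec:compactness} still terminates: a sequence could a priori degenerate in the $\mathbb E$-direction \emph{and} the source-curve direction simultaneously, but since the $\mathbb E$-coordinate is continuous and the limiting facet is again a tailored family (by coherence), such a limit is simply an honest broken configuration over a boundary facet, and the index count of Step~2 in the proof of Theorem~\ref{thm:master} — now with the $+d$ correction — bounds the number of boundary-degeneration components as before.

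\textbf{Main obstacle.} The hard part will be the simultaneous-degeneration bookkeeping: verifying that when the hypercube coordinate $\mathbf e$ approaches a facet at the same time the holomorphic curve degenerates (e.g.\ a collision end involving an $X$-divisible element, or a composite boundary degeneration where the escaping algebra element carries an $X$), the gluing/compactness picture is still a manifold-with-corners whose relevant boundary strata are exactly enumerated above, with no extra contributions and no double-counting. This amounts to checking that the coherence conditions of Definition~\ref{def:AdmissibleJsAgain} are genuinely strong enough to make the restriction of $J$ to each corner stratum agree with the tailored family produced by the lower-dimensional induction in Lemma~\ref{lem:admis-J-exists-again}, so that the combinatorics of $\partial\mathbb E$ and the combinatorics of the SFT degenerations factor cleanly; once that compatibility is in hand, the count follows formally from Theorem~\ref{thm:master} applied to each lower-complexity sequence.
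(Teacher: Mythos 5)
Your overall approach is the same as the paper's: the ends of Types~\ref{end:TwoStoryBuilding}--\ref{end:CompositeBoundaryDegeneration} are handled exactly as in Theorem~\ref{thm:master}, and the two new families of ends are precisely the points where $\mathbf{e}$ reaches a facet of $\partial\mathbb{E}$, with the lower endpoint $E(b_i)$ giving the $b_i$-moduli space, the upper endpoint $E(b_i)+1$ giving the $e\cdot b_i$-moduli space (matching $\mu_1^0(X)=1+e$), and the $a_i=X\iota_j$ convention accounting for the energy-$0$ facet, where the only rigid configuration is the trivial strip. That identification of the new ends is correct and is exactly what the paper does.

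The one concrete point you leave unresolved --- and which the paper does address, with a short argument rather than the ``hard manifold-with-corners analysis'' you anticipate --- is the collision end in which \emph{both} $a_i$ and $a_{i+1}$ are divisible by $X$. Since $X^2=0$ in $\MAlgg$, the product $a_ia_{i+1}$ vanishes, so Type~\ref{end:Collision} contributes no corresponding term to the statement; one must therefore check that such collisions do not occur as codimension-one ends of the index-$2$ moduli space at all. The resolution is a dimension count: before the collision the two $X$-divisible inputs contribute two dimensions to $\mathbb{E}$, while after the collision the single merged vertex contributes only one, and the dimension of $\ModPol$ also drops by one; so this degeneration is codimension $2$ and is invisible in the one-dimensional moduli spaces. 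Without this observation your end count would either overcount (by including collisions with no algebraic counterpart) or require the separate corner analysis you defer. The rest of your ``simultaneous degeneration'' worry is handled by the coherence conditions of Definition~\ref{def:AdmissibleJsAgain} exactly as you suggest, so once the codimension-$2$ point above is inserted your argument is complete and agrees with the paper's.
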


\begin{proof}
  This follows mostly as in Theorem~\ref{thm:master}, with two
  additional remarks, one regarding ends of Types~\ref{end:Collision},
  and another regarding the new types of ends, when the map
  \[
    \mathbf{e}\co
    \cM^B(\x,\y;a_1,\dots,a_{i-1},a_i,a_{i+1},\dots,a_n;w)\to
    \mathbb{E}
  \]
  hits $\partial \mathbb{E}\subset \mathbb{E}$.

  For collision levels (Type~\ref{end:Collision}), note that if both
  $a_i$ and $a_{i+1}$ are divisible by $X$, then the product
  $a_i a_{i+1}$ vanishes, so the statement of the theorem implies that
  such collision levels do not contribute to the count. This is true
  because before the collision, each of $a_i$ and $a_{i+1}$
  contributes a dimension to $\mathbb{E}$, while after the collision
  the new vertex on the main component contributes only one dimension
  to $\mathbb{E}$. The dimension of the space $\ModPol$ also drops by
  one with the collision, so these kinds of collisions, in all, have
  codimension $2$.

  The two new kinds of ends occur when the value of $\mathbf{e}$
  approaches a point in $\partial \mathbb{E}\subset\mathbb{E}$;
  i.e., when the $i^{th}$ component of ${\mathbf e}$ corresponding to
  some algebra element $a_i=X\cdot b_i$ approaches $E(b_i)$ or
  $E(b_i)+1$. Finally, there are the special cases when
  $b_i=\iota_j$, i.e., $E(b_i)=0$. Assuming $w>0$, $n>1$, or
  $B\neq 0$, the corresponding end of the moduli space is never rigid:
  there is no constraint at the corner corresponding to $b_i$. If
  $w=0$, $n=1$, and $B=0$ the moduli space consists of a single
  trivial strip.
\end{proof}

\subsection{Grounding \texorpdfstring{$\CFAm$}{CFA-minus}}\label{sec:grounding-CFA}

We now define $\CFAmg$ over $\MAlgg$.  The operations on $\CFAmg(\HD)$
are defined to be  unital and $U$-equivariant; so, to specify these operations, it remains to
define $m_{n+1}^w(\x,a_1,\dots,a_n)$ for each $n$, $w$, and sequence
$(a_1,\dots,a_n)$ of basic algebra elements (Definition~\ref{def:BasicX}). Define
\begin{equation}\label{eq:CFAg-m-def}
  m_{n+1}^w(\x,a_1,\dots,a_n)=\sum_{\y\in\Gen(\HD)}\sum_{\substack{B\in\pi_2(\x,\y)\\\ind(B;a_1,\dots,a_n;w)=1}}\#\cM^B(\x,\y;a_1,\dots,a_n;w)
  U^{n_z(B)} \y.
\end{equation}
The sums make sense, according to the following extension of Lemma~\ref{lem:admis}

\begin{lemma}\label{lem:admisG}
  Fix generators $\x,\y\in\Gen(\HD)$, basic algebra elements $a_1,\dots,a_n$,
  and $w\in\ZZ_{\geq 0}$.  If the bordered Heegaard diagram $\HD$ is
  provincially admissible then there are finitely many homology
  classes $B\in\pi_2(\x,\y)$ so that $\ind(B;a_1,\dots,a_n;w)=1$
  and $\cM^B(\x,\y;a_1,\dots,a_n;w)$ is non-empty.
\end{lemma}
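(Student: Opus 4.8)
The statement is a routine finiteness result, closely modeled on Lemma~\ref{lem:admis} (and its closed-case ancestor \cite[Lemma~4.13]{OS04:HolomorphicDisks}, see also \cite[Proposition~4.28]{LOT1}). The plan is to reduce, as in the proof of Lemma~\ref{lem:admis}, to the observation that provincial admissibility forces all positive domains with prescribed boundary behavior to have the same area, and that there are only finitely many positive domains of a fixed area.

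First I would observe that if $\cM^B(\x,\y;a_1,\dots,a_n;w)\neq\emptyset$ then $B$ is a positive domain (all local multiplicities are non-negative); this is immediate from the existence of a $J$-holomorphic representative, exactly as in the closed case. Next, note that the underlying chord sequence $\vec{\rho}$ of $\vec{a}$ (in the sense of Definition~\ref{def:Basic}, now adapted to $\MAlgg$ via Definition~\ref{def:BasicX}) together with the weight $w$ determines $\bdy^\bdy B\in\ZZ^4$, hence the multiplicities of $B$ in the four regions adjacent to $\bdy\Sigma$ as well as $n_z(B)$. Consequently any two homology classes $B$, $B'$ in $\pi_2(\x,\y)$ with the same $(\vec{\rho},w)$ differ by a provincial periodic domain. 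By provincial admissibility, \cite[Proposition~4.28]{LOT1} provides an area form on $\Sigma$ for which every provincial periodic domain has area zero; hence all positive domains $B$ compatible with the data $(\x,\y,\vec{\rho},w)$ have one and the same (finite) area. Since there are only finitely many positive domains of any fixed area, only finitely many homology classes $B$ can support a non-empty moduli space; a fortiori only finitely many with $\ind(B;a_1,\dots,a_n;w)=1$.

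I do not expect any genuine obstacle here; the only point requiring a word of care, compared to Lemma~\ref{lem:admis}, is that in $\MAlgg$ a basic algebra element may be divisible by $e$ or $X$. But the factors of $e$, $X$, and $U$ affect the moduli space $\cM^B(\x,\y;a_1,\dots,a_n;w)$ only through the choice of (family of) almost complex structures, not through the asymptotics of the holomorphic curves; in particular they do not alter $\bdy^\bdy B$ or $n_z(B)$. So the reduction to a fixed-area argument proceeds verbatim. (The second, finiteness-of-points-in-a-single-moduli-space statement of Lemma~\ref{lem:admis} is not asserted here, since Lemma~\ref{lem:admisG} as stated only claims finiteness of the set of relevant homology classes; if wanted, that part is Lemma~\ref{lem:0d-compactness}, whose proof via Theorem~\ref{thm:masterB} applies in the present setting.)
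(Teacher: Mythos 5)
Your proposal is correct and is essentially the paper's own argument: the paper proves Lemma~\ref{lem:admisG} by citing the proof of Lemma~\ref{lem:admis}, which is exactly the positivity-plus-fixed-area reduction you describe. Your added remark that the factors of $e$, $X$, and $U$ only enter through the choice of almost complex structure, and hence do not disturb the argument, is the right (and only) point needing care in the extension.
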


\begin{proof}
  This follows from the same proof as Lemma~\ref{lem:admis}.
\end{proof}

\begin{theorem}
  \label{thm:CFAmg-is}
  For any provincially admissible bordered Heegaard diagram and
  tailored family of almost complex structures, the operations
  $m_{1+k}^w$ satisfy the weighted $\Ainf$ relations, thus making $\CFAmg$ 
  into a weighted $\Ainf$-module over $\MAlgg$. Moreover, the restriction 
  of $\CFAmg$ to $B$ coincides with the restricted module  from 
  Definition~\ref{def:CFAmgB}.
\end{theorem}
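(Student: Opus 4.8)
The plan is to verify the weighted $\Ainf$-relations for $\CFAmg$ by the usual strategy of counting ends of index-$2$ moduli spaces, now applying Theorem~\ref{thm:masterB} in place of Theorem~\ref{thm:master}. Fix a weight $w$, generators $\x,\y$, and a sequence $(a_1,\dots,a_n)$ of basic algebra elements for $\MAlgg$; since the operations are declared to be $U$-equivariant and unital, it suffices to check the relation for such sequences, and the case where some $a_i$ is an idempotent (now necessarily $e^aX^b\iota_j$) is handled by the unitality conventions and the special cases built into Theorem~\ref{thm:masterB}. First I would set up the term-by-term dictionary between the ends enumerated in Theorem~\ref{thm:masterB} and the terms of the weighted $\Ainf$-module relation, exactly as in the proof of Theorem~\ref{thm:CFAmb-is}: two-story buildings (Type~\ref{end:TwoStoryBuilding}) give the composition terms $m^{w-w'}_{1+n-j}(m^{w'}_{1+j}(\x,a_1,\dots,a_j),\dots)$; collisions of levels (Type~\ref{end:Collision}) give the $\mu_2^0$-terms; orbit curve degenerations (Type~\ref{end:EscapingOrbit}) give the $\mu_0^1$-terms; simple and composite boundary degenerations (Types~\ref{end:SimpleBoundaryDegeneration} and~\ref{end:CompositeBoundaryDegeneration}) give the remaining $\mu_j^{w'}$-terms, where by Theorem~\ref{thm:AlgOfSurface} these boundary-degeneration counts are precisely the structure constants of the centered and extended operations on $\MAlg$, hence (after the identification $\phi$) of the subalgebra $B\subset\MAlgg$. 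The genuinely new input is the pair of end types corresponding to the differential $\mu_1^0(X)=1+e$: the first family of new ends gives the module term $m^w_n(\x,\dots,a_{i-1},\mu_1^0(X b_i'),\dots)$ with the ``$1$'' part, and the second gives the ``$e$'' part; together they reproduce exactly the contribution of the differential of the algebra to the $\Ainf$-relation. Summing all contributions and invoking Theorem~\ref{thm:masterB} (each such end occurring with multiplicity one, by the gluing results of Section~\ref{sec:gluing} adapted as in the proof of Theorem~\ref{thm:masterB}) shows the coefficient of each $U^m\y$ vanishes mod $2$.

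For the last sentence of the theorem---that the restriction of $\CFAmg$ to the subalgebra $B$ agrees with $\CFAmg(\HD)|_B$ from Definition~\ref{def:CFAmgB}---I would argue as follows. A basic algebra element of $B$ is of the form $(Ue)^a\rho$ or $(Ue)^a$; under $\phi^{-1}$ these correspond to $U^a\rho$ and $U^a$ in $\MAlg$. Comparing Formula~\eqref{eq:CFAg-m-def} with Formula~\eqref{eq:CFA-m-def} (equivalently Formula~\eqref{eq:CFA-m-def} for $\CFAmb$), the only differences are bookkeeping: in $\CFAmg$ the factors of $U$ coming from the $e$-exponents are absorbed into the ground field action rather than appearing as $U^{\sum r_i}$, and the moduli spaces in the two definitions coincide because the elements of $B$ carry $b=0$ (no factor of $X$), so the associated hypercube $\mathbb{E}$ is a point and $\ModPol^X(\dots)$ reduces to the unsmeared $\ModPol(\dots)$, with the same energy-specified pinching. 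Hence the almost complex structures used agree (for a tailored family restricted to the relevant strata), so $\cM^B(\x,\y;a_1,\dots,a_n;w)$ is the same space in both constructions, and therefore the operations agree on the nose.

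The main obstacle I expect is not conceptual but one of careful bookkeeping around the new ends: one must confirm that when the evaluation map $\mathbf{e}\co \cM^B(\dots)\to\mathbb{E}$ approaches a boundary facet of the hypercube $\mathbb{E}$, the corresponding end is a genuine codimension-one end of the moduli space (transversely glued, multiplicity one), and that the two facets of each interval $[E(b_i),E(b_i)+1]$ contribute the ``$1$'' and the ``$e$'' terms respectively with the correct signs (which over $\Field$ just means they both appear). This is essentially the content of the last two paragraphs of the proof of Theorem~\ref{thm:masterB}, so in the writeup I would simply cite that theorem and the gluing package of Section~\ref{sec:gluing}; the only real subtlety---already dispatched in Theorem~\ref{thm:masterB}---is the degenerate case $b_i=\iota_j$, where one checks that the relevant end is either non-rigid (when $w>0$, $n>1$, or $B\neq 0$) and so contributes nothing, or is a single trivial strip reproducing the unitality relation $m^0_2(\x,\iota_j)=\x$. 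With those points in hand the weighted $\Ainf$-relations follow formally, and the restriction statement is immediate from the definitions.
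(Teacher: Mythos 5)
Your proposal is correct and follows the same route as the paper: the paper's proof consists of citing Theorem~\ref{thm:masterB} and referring back to the end-by-end dictionary from the proof of Theorem~\ref{thm:CFAmb-is}, which is exactly what you spell out (including the two new end types coming from $\mu_1^0(X)=1+e$ and the comparison of definitions for the restriction to $B$). You have simply written out in full the details the paper leaves implicit.
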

\begin{proof}
  This is an immediate consequence
  of Theorem~\ref{thm:masterB}; compare the proof of
  Theorem~\ref{thm:CFAmb-is}.
\end{proof}

Theorem~\ref{thm:CFAmg-is} completes Step~\ref{s:ExtendToGroundingAlgebra} in
the construction of $\CFAm$.  To establish
Step~\ref{s:RestrictFromGroundingAlgebra}, 
we will use homological perturbation theory to induce a weighted
$\Ainf$-algebra structure on $\AsUnDefAlg[U]\subset\MAlgg$, and verify
that in fact this induced structure is exactly $\MAlg$. We then use
restriction of scalars to turn $\CFAmg(\HD)$ into a weighted
$\Ainf$-module over $\MAlg$; the result is $\CFAm(\HD)$. We start by
formulating the Homological Perturbation Lemma in this context:
\begin{lemma}
  \label{lem:HomologicalPertAinfAlg}
  Let $A$ be a  unital weighted $\Ainf$-algebra and $B$ be a
  chain complex, both over a ground ring $\Ring$.  Consider
  $\Ring$-module maps
  \[ 
\begin{tikzpicture}
  \node at (0,0) (a) {$A$};
  \node at (2,0) (b) {$B$};
  \draw[->] (a)[bend left=15] to node[above]{\lab{\Pi}} (b);
  \draw[->] (b)[bend left=15] to node[below]{\lab{i}} (a);
  \draw[->] (a)  [loop left] to node[left]{\lab{h}} (a);
  \end{tikzpicture},
\]
and suppose that the following properties hold:
\begin{itemize}
\item Both the unit $\One\in A$ and the elements $\mu_0^w\in A$, for
  all $w>0$, are in the image of the map $i$.
\item The map $h$ satisfies $h^2=0$, $\Pi\circ h=0$, and $h\circ i=0$.
\item The maps $\Pi$ and $i$ are chain maps.
\item The maps satisfy $\Pi\circ i=\Id_B$ and
  $i\circ \Pi=\Id_A+\partial h + h \partial$.
\end{itemize}
(Using the terminology  from Section~\ref{def:CFDm},
 this data forms a \emph{strong deformation
   retraction} from $A$ to $B$.)
 Then, $B$ can be given the structure of a  unital weighted $\Ainf$-algebra
 with $\mu_1^0$ the given differential on $B$ and the map $i$ can be
 extended to a homomorphism of weighted $\Ainf$-algebras
 $\{i^w_n\co B^{\otimes n}\to A \}_{w\geq 0,n\geq 1}$ so that
 $i^0_1=i$.

 Further, if $h(\mu_0^w)=0$ for all $w$ then the curvature terms
 $\mu_0^w$ on $B$ are equal to the projections $\Pi(\mu_0^w)$ of the
 curvature terms on $A$.
\end{lemma}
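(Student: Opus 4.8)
\textbf{Proof plan for Lemma~\ref{lem:HomologicalPertAinfAlg}.}
The plan is to construct the transferred weighted $\Ainf$-structure on $B$ and the extension $\{i_n^w\}$ of $i$ by the usual tree-sum formula of homological perturbation theory, adapted to the weighted setting, and then to extract the statement about curvature terms as a bookkeeping consequence of the hypothesis $h(\mu_0^w)=0$. First I would set up the combinatorial framework: following Section~\ref{sec:intro-DD}, the transferred operations $\mu_n^w$ on $B$ and the morphism components $i_n^w$ are indexed by stably-weighted trees in $\Trees_{n,w}$. Concretely, for a tree $T$ one forms the operation $B^{\otimes n}\to A$ by placing $i$ on each input leaf, the operation $\mu_k^{w(v)}$ of $A$ at each internal vertex $v$ of valence $k+1$, the homotopy $h$ on each internal edge, and either $\Pi$ (for the $\mu_n^w$ on $B$) or $h$ (for the $i_n^w$) at the root. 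Then $\mu_n^w$ is the signed sum over $T\in\Trees_{n,w}$ with $\Pi$ at the root (with $\mu_1^0$ the given differential, coming from the single degenerate tree), and $i_n^w$ the analogous sum with $h$ at the root. This is a finite sum for each $(n,w)$ because a stably-weighted tree with $n$ inputs and total weight $w$ has boundedly many internal vertices.

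Next I would verify that these formulas satisfy the weighted $\Ainf$-relations~\eqref{eq:wAlg-rel} and the $\Ainf$-morphism relations. The standard argument is to apply $\bdy$ (in the tree complex) together with the side relations $\Pi\circ h=0$, $h\circ i=0$, $h^2=0$, $\Pi i=\Id$, $i\Pi=\Id+\bdy h+h\bdy$, and the weighted $\Ainf$-relations on $A$; the trees appearing in $\bdy T$ are exactly those produced by contracting an edge, which matches the quadratic terms in the weighted $\Ainf$-relations on $B$, and the ``$\Id+\bdy h+h\bdy$'' identity is what lets one cancel the remaining terms. Unitality is handled separately: because $\One\in A$ is in the image of $i$, one sets $\One_B=\Pi(\One)$, checks $i(\One_B)=\One$ using $i\Pi\One=\One+\bdy h\One+h\bdy\One=\One$ (as $\One$ is a cycle and $h\One$ may be taken to vanish after a standard adjustment of $h$, or one argues directly that the unit-containing trees telescope), and observes that any tree with an input labeled $\One_B$ collapses via $\mu_2^0(\One,-)=\mu_2^0(-,\One)=\id$ down to a single $\mu_1^0$ or identity term, giving strict unitality on $B$. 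I expect this to be the main obstacle: carefully matching the signs (trivial here since we are in characteristic $2$, cf.\ the ground ring conventions in Section~\ref{sec:intro:alg}) is easy, but tracking where the distinguished curvature inputs $\mu_0^w\in A$ sit in the trees — since by hypothesis they are in the image of $i$ — and confirming the $\mu_0^0=0$ condition survives requires care. Since $\mu_0^0=0$ on $A$ and $\Pi$ preserves gradings, one gets $\mu_0^0=0$ on $B$, so $B$ is genuinely a weighted algebra and not merely a curved one.

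Finally, for the last sentence of the lemma: the transferred curvature term $\mu_0^w$ on $B$ is the sum over weighted trees $T\in\Trees_{0,w}$ with no inputs and $\Pi$ at the root. Any such tree with more than one internal vertex has at least one internal edge carrying $h$, and tracing from the leaves (which are all stumps, feeding $0$-input vertices labeled $\mu_0^{v}$ with $v>0$) one sees the innermost contribution is $h(\mu_0^v)$, which vanishes by hypothesis; hence every tree with more than one internal vertex contributes $0$. The unique surviving tree is the corolla $\wcorolla{0}{w}$, whose operation is $\Pi(\mu_0^w)$. Therefore $\mu_0^w$ on $B$ equals $\Pi(\mu_0^w)$, as claimed. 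This step is short once the tree formalism is in place; it is essentially the observation that $h$ annihilating the curvature inputs decouples them from the transfer machinery.
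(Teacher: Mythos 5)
Your proposal is correct and follows essentially the same route as the paper's proof: the standard homological-perturbation tree formula (inputs labeled $i$, internal edges labeled $h$, root labeled $\Pi$ for the operations and $h$ for the morphism), strict unitality of $A$ together with the side relations $h^2=\Pi\circ h=h\circ i=0$ for unitality of $B$, and the observation that any $0$-input vertex whose output edge carries $h$ contributes $h(\mu_0^v)=0$, so only the bare corolla survives in the curvature computation. The one spot to tighten is the unitality step: after the unit is channeled into a $\mu_2^0$ vertex, the collapsed tree does not leave a ``$\mu_1^0$ term'' but rather a composition $h\circ h$, $\Pi\circ h$, or $h\circ i$, each of which vanishes (and $h(\One)=0$ needs no adjustment of $h$ --- it follows from $h\circ i=0$ because $\One$ is assumed to lie in the image of $i$).
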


\begin{proof}
  The unweighted case of this lemma is well known (see for
  example~\cite[Theorem 1]{Kadeisvili80:hpt},~\cite[Section 6.4]{KontsevichSoibelman01:HMS},
  or~\cite[Theorem~2.3]{KellerPertTheory}).
  We outline the modifications needed for the weighted case.

  Operations with weight $w\geq 0$ are specified by trees (which we refer
  to as ``operation trees'') whose input
  edges are labeled by the inclusion map $i$, output edge is
  labeled by the projection map $\Pi$, edges between internal vertices are labeled
  by the homotopy operator $h$, and vertices are labeled by integer
  weights, so that the sum of these weights is $w$. The portion of the
  map $i^w_n$ with $(w,n)\neq (0,1)$ is specified by trees labeled
  similarly, except that now the output edge is labeled by $h$ rather
  than $\Pi$.

  It is straightforward to verify that the resulting operations on $B$
  satisfy the weighted $\Ainf$ relation. Strict unitality of $B$
  (using the element whose image under $i$ is the unit in $A$) uses strict unitality of $A$ and the hypothesis that $h^2=0$.
  Specifically, strict unitality of $A$ implies that, on $B$, the $\mu^w_n$ operations with $(w,n)\neq
  (0,2)$ and where an input is $1$ vanish except in the special
  cases where the input $1$ is channeled into a
  $\mu^0_2$ operation.  For
  these special trees, the contribution vanishes by the hypothesis
  that $h^2=\Pi\circ h=h\circ i=0$.

  Finally, in the case that $h(\mu^w_0)=0$, if an operation tree has
  some node with no inputs, then that tree consists of simply that
  node and an output edge labeled by $\Pi$.
\end{proof}

The algebra $\MAlgg$ inherits a grading from $\MAlg$
(Sections~\ref{sec:build-alg-combinatorially} and~\ref{sec:gradings})
by declaring that $X$ has grading $\lambda^1$ and $e$ has grading
$\lambda^0$.

In the next lemma, we will consider the inclusion of vector spaces
$i\co \AsUnDefAlg[U]\to \MAlgg$ which surjects onto the subspace of
$\MAlgg$ spanned by elements of the form $U^n\iota_j$ and $U^n\rho$
(with no factors of $e$ or $X$).

\begin{lemma}
  \label{lem:InducedOnMAlg}
  The inclusion map $i\co \AsUnDefAlg[U]\subset \MAlgg$ of chain
  complexes can be completed to a diagram as in
  Lemma~\ref{lem:HomologicalPertAinfAlg} (with $B=\AsUnDefAlg[U]$ and $A=\MAlgg$). The induced weighted
  $\Ainf$-algebra structure on $\AsUnDefAlg[U]$ is $\MAlg$.
\end{lemma}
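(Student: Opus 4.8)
We want to construct a strong deformation retraction $i\co \AsUnDefAlg[U]\hookrightarrow\MAlgg$, $\Pi\co\MAlgg\to\AsUnDefAlg[U]$, $h\co\MAlgg\to\MAlgg$ satisfying the hypotheses of Lemma~\ref{lem:HomologicalPertAinfAlg}, and then identify the induced weighted $\Ainf$-structure on $\AsUnDefAlg[U]$ with $\MAlg$. The first step is purely linear algebra: $\MAlgg=\frac{\Field[U,e,X]}{(X^2)}\otimes_\Field\AsUnDefAlg$ with differential $\mu_1^0(X)=1+e$ (extended $\Field[U,\AsUnDefAlg]$-linearly, and $\mu_1^0(1)=\mu_1^0(e)=0$). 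So as a complex $\MAlgg\cong\AsUnDefAlg[U]\otimes_\Field\bigl(\Field[e]\oplus X\Field[e]\bigr)$ with the differential sending $X e^k\mapsto e^k+e^{k+1}$. The homology is spanned (over $\AsUnDefAlg[U]$) by the class of $1$, since $\{e^k+e^{k+1}\}_{k\ge 0}$ together with $1$ is a basis of $\Field[e]$; concretely define $\Pi$ to kill every monomial divisible by $e$ or $X$ and to be the identity on the $\AsUnDefAlg[U]$-summand, let $i$ be the inclusion of that summand, and let $h$ be the $\AsUnDefAlg[U]$-linear contracting homotopy determined by $h(e^k)=X(e^{k-1}+e^{k-2}+\dots+1)=X\sum_{j=0}^{k-1}e^j$ for $k\ge 1$, $h(1)=0$, and $h(Xe^k)=0$. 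One then checks directly the identities $h^2=0$, $\Pi h=0$, $hi=0$, $\Pi i=\Id$, and $i\Pi=\Id+\mu_1^0 h+h\mu_1^0$; these are all immediate from the formulas. Crucially $\One=\iota_0+\iota_1$ and the curvature terms $\mu_0^w$ lie in the image of $i$ (they are $\Field$-combinations of the $\rho_{ijkl}$, with no $e$ or $X$), and moreover $h(\mu_0^w)=0$ because $\mu_0^w$ has no factor of $e$. Thus Lemma~\ref{lem:HomologicalPertAinfAlg} applies, and its last clause tells us the curvature terms of the induced structure on $\AsUnDefAlg[U]$ are $\Pi(\mu_0^w)=\mu_0^w$, matching $\MAlg$.

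\textbf{Identifying the induced structure with $\MAlg$.} Having the weighted $\Ainf$-algebra structure on $\AsUnDefAlg[U]$ from homological perturbation, I would invoke the uniqueness characterization of $\MAlg$ from \cite[Theorem~\ref{TA:thm:MAlg-unique}]{LOT:torus-alg}: it suffices to check that the induced structure (i) has underlying associative algebra $\AsUnDefAlg$ with $\mu_2^0$ given by multiplication, (ii) is $(\bigGroup,\lambda)$-graded with weight grading $\lambda_w=(1;1,1,1,1)$, and (iii) has $\mu_0^1=\rho_{1234}+\rho_{2341}+\rho_{3412}+\rho_{4123}$. Item (iii) is exactly the curvature computation just made. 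For (i): the perturbation formula for $\mu_2^0$ on $B$ is $\Pi\circ\mu_2^{0,A}\circ(i\otimes i)$ plus correction trees involving $h$; since $i$ lands in the $e,X$-free part where $\mu_2^0$ of $\MAlgg$ is just the path-algebra multiplication (landing again in the $e,X$-free part, because multiplying elements of $B\subset\MAlgg$ by each other never introduces $e$ or $X$ — recall $B$ is generated by $Ue,\rho_1,\dots,\rho_4$, but the map $i$ picks out the $e$-free representatives $U,\rho_i$ and their products, and there is no differential to feed the $h$-corrections), all correction terms vanish and $\mu_2^0$ is the associative multiplication on $\AsUnDefAlg[U]$. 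That the higher $\mu_n^w$ preserve $\AsUnDefAlg[U]$ and are $U$-equivariant follows because $\Pi$, $i$, $h$, and every $\mu_n^{w,A}$ on $\MAlgg$ are $U$-equivariant. For (ii): $\MAlgg$ carries the $\bigGroup$-grading extending that of $\MAlg$ (with $\gr(X)=\lambda$, $\gr(e)=1$, as noted in the excerpt just before the lemma), and $i$, $\Pi$ are homogeneous of degree $0$ while $h$, built from multiplication by $X$, is homogeneous of the degree of $X$, which is exactly the degree of $\mu_1^0$ shifted appropriately — i.e.\ $h$ has the correct degree for a homotopy — so the perturbation-induced operations are automatically $(\bigGroup,\lambda)$-homogeneous with the inherited weight grading $\lambda_w$. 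Hence the induced structure satisfies the three defining properties, so by uniqueness it equals $\MAlg$.

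\textbf{Main obstacle.} The genuinely delicate point is not the existence of the deformation retraction (routine) but making the degree bookkeeping in step (ii) airtight: one must confirm that the homotopy operator $h$ is homogeneous of the correct $\bigGroup$-degree so that all operation trees contributing to $i^w_n$ and to the induced $\mu^w_n$ are homogeneous, and in particular that no term of the "wrong" weight or Maslov grading is produced. This amounts to checking $\gr(X)$ is precisely $\lambda$ times the shift built into the homotopy convention, which is exactly what the grading conventions on $\MAlgg$ recalled in Section~\ref{sec:GroundingU} are set up to guarantee; I would spell this out by noting $\gr_{\MAlgg}(X)$ and $\gr_{\MAlgg}(e)$ are chosen so that $\mu_1^0(X)=1+e$ is homogeneous, which forces $h$ to be homogeneous of complementary degree, and then the standard fact that homological perturbation of graded data yields graded operations finishes the argument. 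A secondary check — that the induced operations with $n\ge 1$, $n+2w\ge 3$ indeed reproduce all of $\MAlg$'s higher operations rather than merely agreeing in low order — is subsumed by the uniqueness theorem, so no direct tree computation is required.
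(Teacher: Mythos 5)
Your strong deformation retraction is exactly the one the paper uses ($\Pi(e^r)=1$, $\Pi(Xe^r)=0$, $h(e^n)=X(1+\cdots+e^{n-1})$, $h(Xe^r)=0$), and your verification of the SDR identities and of $h(\mu_0^w)=0$ is correct. Where you diverge is in identifying the induced structure with $\MAlg$: the paper does \emph{not} appeal to the uniqueness theorem. Instead it observes that the image of $h$ is divisible by $X$, all operations on $\MAlgg$ are $X$-equivariant, $X^2=0$, and both $\Pi$ and $h$ kill multiples of $X$; hence every perturbation tree with more than one internal vertex vanishes, and the induced operations are literally $\mu_n^{w}=\Pi\circ\mu_n^{w,\MAlgg}\circ i^{\otimes n}$ (and $i_n^w=h\circ\mu_n^{w,\MAlgg}\circ i^{\otimes n}$). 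Since the operations on $\MAlgg$ restricted to the $e,X$-free part output the $\MAlg$-operation with $U$ replaced by $Ue$, and $\Pi(e^r)=1$, this gives equality with $\MAlg$ on the nose, with no grading bookkeeping and no appeal to uniqueness. This collapse is the real content of the lemma, and it also disposes of the issue you flag as the ``main obstacle.''

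Your alternative route through \cite[Theorem~\ref{TA:thm:MAlg-unique}]{LOT:torus-alg} has a concrete gap: the first hypothesis of that theorem is that the \emph{undeformed $\Ainf$-algebra} of the structure is $\UnDefAlg$, which is strictly more than your condition (i) that $\mu_2^0$ is the path-algebra multiplication. By the characterization of $\UnDefAlg$, you must also verify the normalization $\mu_4^0(\rho_4,\rho_3,\rho_2,\rho_1)=U\iota_1$ for the \emph{induced} structure (otherwise the undeformed part could a priori be the trivial deformation $\AsUnDefAlg[U]$, which also satisfies your (i) and (ii)). This check is easy --- it is $\Pi(Ue\,\iota_1)=U\iota_1$, as in Example~\ref{eg:induced-mu4} --- but it is missing, and without it the uniqueness theorem does not apply. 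Two further caveats if you insist on this route: uniqueness only yields the induced structure \emph{up to isomorphism}, whereas the lemma (and its use in defining $\CFAm=i^*\CFAmg$) asserts it \emph{is} $\MAlg$; and you would genuinely have to carry out the grading verification you defer, since gradedness is one of the hypotheses you are invoking. All of these issues evaporate once you make the tree-collapse observation.
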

\begin{proof}
  Let $\Pi\co \MAlgg\to \AsUnDefAlg[U]$ be the projection to
  $\AsUnDefAlg[U]$ induced by the properties that $\Pi(e^r)=1$,
  $\Pi(X e^r)=0$, for all $r\geq 0$, and $\Pi$ respects the
  multiplications $\mu_2^0$.
  
  Consider the $\AsUnDefAlg[U]$-equivariant map $h\co \MAlgg\to
  \MAlgg$ characterized by the property that $h(X e^r)=0$ and
  $h(e^n)=X(1+\dots+e^{n-1})$. It is easy to see that $\Pi\circ
  i=\Id$, $h^2=0$, $\Pi\circ h=0$, and
  \[
    \Id+\partial\circ h + h\circ \partial=i\circ \Pi.
  \]
  It follows that $i$ is a chain homotopy equivalence, so
  Lemma~\ref{lem:HomologicalPertAinfAlg} gives $\AsUnDefAlg[U]$ the
  structure of a weighted $\Ainf$-algebra. Note that in this
  application of the lemma, the operations are rather simple: the
  image of $h$ is divisible by $X$, all the operations on $\MAlgg$ are
  $X$-equivarant, $X^2=0$, and $\Pi$ vanishes on multiples of $X$.  It
  follows that the morphism constructed by the lemma is given simply by
  \[ i^w_n(a_1,\dots,a_n)=h\circ \mu^w_n(i(a_1),\dots,i(a_n)),\]
  and the induced operations $\{\underline{\mu}^w_n\}$ on $B$ are given by
  \[ \mathbf{\mu}^w_n(a_1,\dots,a_n)=\Pi\circ \mu^w_n(a_1,\dots,a_n).\]
  So, this induced structure on $B$ coincides with $\MAlg$.
\end{proof}

\begin{example}\label{eg:induced-mu4}
  For example, observe that $\mu^0_4(\rho_4,\rho_3,\rho_2,\rho_1)=Ue\iota_1$ in $\MAlgg$.
  Since $\Pi(e)=1$, we conclude that $\mu^0_4(\rho_4,\rho_3,\rho_2,\rho_1)=U\iota_1$
  for the weighted algebra structure on $\AsUnDefAlg[U]$ induced by the inclusion $i$.
  Note also that $i^0_4(\rho_4,\rho_3,\rho_2,\rho_1)=UX\iota_1$.
\end{example}

Next, we would like to use restriction of scalars to pull
$\CFAmg(\HD)$ back to a module $\CFAm(\HD)$ over $\MAlg$. To do this,
we first need to define a restriction of scalars functor
in this weighted context.

Given a weighted $\Ainf$-algebra ${\mathcal A}$ over $\Ground$, we consider its bar complex
$\Tensor^*(A)=\bigoplus_{n\geq 0} A^{\otimes n}$ (where tensor
products are taken over $\Ground$), equipped with maps
$\{\partial^w\co \Tensor^*(A)\to \Tensor^*(A)\}$, $w\geq 0$, defined by
\[
  \partial^0(a_1\otimes\dots \otimes a_n)=\sum_{s=1}^n\sum_{r=1}^{n+1-s}
 a_1\otimes\dots\otimes a_{r-1} \otimes \mu^0_s(a_{r}\otimes\dots\otimes a_{r+s-1})\otimes a_{r+s}\otimes\dots\otimes a_n;
\]
or, when $w>0$,
\[
  \partial^w(a_1\otimes\dots \otimes a_n)=
\sum_{s=0}^n \sum_{r=1}^{n+1-s} a_1\otimes\dots\otimes a_{r-1} \otimes \mu^w_s(a_{r}\otimes\dots\otimes a_{r+s-1})\otimes a_{r+s}\otimes\dots\otimes a_n.
\]
(Note that for $w>0$, the terms in $\partial^w$ include the operations $\mu^w_0$.)
The structure equations for a weighted algebra can be summarized by requiring
that $\partial=\sum_{w\geq 0}t^w\partial^w$ is a differential.

A homomorphism $f\co \mathcal{A}\to \mathcal{B}$ of weighted $\Ainf$
algebras is a set of maps
$\{f^w_n\co A^{\otimes n}\to B\}_{\{n,w\in \ZZ^{\geq 0}\mid (n,w)\neq
  (0,0)\}}$, satisfying a suitable structure
relation~\cite[Definition~\ref{Abs:def:wAlg-homo}]{LOT:abstract}.  We
say that $f$ is {\em  unital} if $f^w_n(a_1,\dots,a_n)=0$ if
$a_i=\One_A$, unless $(n,w)=(1,0)$ in which case
$f^0_1(\One_A)=\One_B$; c.f. Section~\ref{subsec:Unitality}.

Given a homomorphism $f\co \Alg \to \Blg$ of weighted $\Ainf$-algebras over
$\Ground$, with components
$\{f^w_n\co A^{\otimes n}\to B\}_{n,w\in\ZZ\mid (n,w)\neq (0,0)}$
there are induced maps
$\{F^w\co \Tensor^*(A)\to \Tensor^*(B) \}_{w\geq 0}$ specified by
\begin{multline*}
  F^w(a_1\otimes\dots\otimes a_n)\\
  =\sum_{\substack
    {n_1+\dots+n_k=n \\ w_1+\dots+w_n=w \\(n_i,w_i)\neq (0,0)}}
  f^{w_1}_{n_1}(a_1,\dots,a_{n_1})\otimes
  f_{n_2}^{w_2}(a_{n_1+1},\dots,a_{n_1+n_2})\otimes \dots\otimes
  f_{n_{k}}^{w_k}(a_{n-n_k+1},\dots,a_n).
\end{multline*}

Fix  a weighted $\Ainf$-module 
$\fModule=(M,\{m^w_{n}\co M\otimes B^{\otimes n-1}\to M\}_{n\geq 1,w\geq 0})$,
and abbreviate $m^w =\sum m^w_n$, thought of as a map
$m^w\co M\otimes \Tensor^*(B)\to M$. 
We can endow $M$ with operations 
$\{{\underline m}^w_{n}\co M\otimes A^{\otimes (n-1)}\to
M\}_{\text{\tiny$\begin{subarray}{l}\thinspace \thinspace w,n\geq 0 \\ (w,n)\neq (0,0)\end{subarray}$}}$,
specified by 
\begin{equation}
  \label{eq:PullbackOperations}
  \underline{m}^w_n(x\otimes a_1\otimes\dots\otimes a_{n-1})
  = \sum_{w_1+w_2=w} m^{w_1}(x,F^{w_2}(a_1\otimes\dots\otimes a_{n-1})).
\end{equation}

\begin{lemma}
  \label{lem:RestrictionFunctor}
  Fix weighted $A_\infty$ algebras $\Alg$ and $\Blg$, a weighted
  $A_\infty$ module $\fModule$ over $\Blg$, a morphism of weighted $A_\infty$ algebras
  $f\co \Alg\to \Blg$. The operations
  $\{{\underline m}^w_{n}\co M\otimes A^{\otimes (n-1)}\to M\}_{w\geq
    0, n\geq 1}$
  in Equation~\eqref{eq:PullbackOperations}
  endow $\fModule$ with the structure of a weighted
  $\Ainf$-module over $\Alg$, which we denote $f^*(\fModule)$.  Moreover, if
  $\phi\co \fModule\to \fNodule$ is a weighted $\Ainf$-homomorphism with
  components
  $\{\phi^w\co M\otimes \Tensor^*(B)\to N\}_{w\geq 0}$,
  then there is an induced weighted $\Ainf$-homomorphism
  $f^*\phi\co f^*(\fModule)\to f^*(\fNodule)$ with components
  $(f^*\phi)^w=\sum_{w_1+w_2=w} \phi^{w_1}\circ (\Id_M\otimes
  F^{w_2})$.   If $\phi,\psi\co \fModule\to \fNodule$ are chain homotopic
  weighted $A_\infty$-module homomorphisms, then so are $(f^*\phi)$
  and $(f^*\psi)$.

  The above statements also hold in the unital case:
  when $f$ and $\fModule$ are  unital, so is $f^*(\fModule)$;
  if $\phi\co \fModule\to \fNodule$ is also  unital, so is $f^*(\phi)$;
  and if $\phi$ and $\psi$ are chain homotopic as  unital morphisms,
  then so are $f^*(\phi)$ and $f^*(\psi)$.
\end{lemma}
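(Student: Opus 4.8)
The plan is to prove Lemma~\ref{lem:RestrictionFunctor} by a direct but systematic verification, treating it as the standard ``restriction of scalars along an $A_\infty$-algebra morphism'' statement, adapted to the weighted setting where the bookkeeping is organized by the bar-complex operators $\partial^w$ and the tensor maps $F^w$. The key organizing principle is the identity $F = \sum_w t^w F^w$ is a chain map from $(\Tensor^*(A), \partial_A = \sum t^w \partial_A^w)$ to $(\Tensor^*(B), \partial_B = \sum t^w \partial_B^w)$, which is precisely the (weighted) $A_\infty$-algebra morphism structure equation for $f$; this is stated, e.g., in \cite[Definition~\ref{Abs:def:wAlg-homo}]{LOT:abstract}. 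I would first record this reformulation, so that all subsequent checks become formal manipulations with $F$, $\partial_A$, $\partial_B$, and the module action $m = \sum t^w m^w \colon M \otimes \Tensor^*(B) \to M$.

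First I would verify that $\{\underline{m}^w_n\}$ satisfies the weighted $A_\infty$-module relations. Packaging the pulled-back operations as $\underline{m} = m \circ (\Id_M \otimes F)\colon M \otimes \Tensor^*(A) \to M$, the module structure equation on $\fModule$ over $\Blg$ says $m \circ (\Id_M \otimes \partial_B) + m \circ (m \otimes \Id) \circ (\text{comultiplication on } \Tensor^*(B)) = 0$ in the appropriate sense; composing with $\Id_M \otimes F$ on the right and using that $F$ is a coalgebra map (it respects deconcatenation) together with $F \circ \partial_A = \partial_B \circ F$ converts this into the module structure equation for $\underline{m}$ over $\Alg$. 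This is the same bar-complex computation as in the unweighted case (see, e.g., the references cited in the proof of Lemma~\ref{lem:HomologicalPertAinfAlg}), with the weight grading carried along as a formal parameter $t$; the only thing to check carefully is that $F$ does respect comultiplication, which is immediate from its definition as a sum over ordered partitions of $(a_1,\dots,a_n)$.

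Next I would handle morphisms. Given $\phi\colon \fModule \to \fNodule$ with components $\phi^w\colon M \otimes \Tensor^*(B) \to N$, packaged as $\phi = \sum t^w \phi^w$, the fact that $\phi$ is a morphism is again an identity of maps $M \otimes \Tensor^*(B) \to N$ involving $\partial_B$, the actions $m$, $m'$, and the comultiplication. Precomposing with $\Id_M \otimes F$ and again using that $F$ is a coalgebra chain map shows that $f^*\phi := \phi \circ (\Id_M \otimes F)$, whose components are exactly $\sum_{w_1+w_2=w}\phi^{w_1}\circ(\Id_M \otimes F^{w_2})$, is a morphism $f^*(\fModule) \to f^*(\fNodule)$. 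For the homotopy statement: if $\phi - \psi = \partial(H)$ for a homotopy $H\colon M \otimes \Tensor^*(B) \to N$ (where $\partial$ is the differential on the morphism complex, built from $\partial_B$ and the actions), then precomposition with $\Id_M \otimes F$ is a chain map on morphism complexes, so $f^*\phi - f^*\psi = \partial(H \circ (\Id_M \otimes F))$, exhibiting $H \circ (\Id_M \otimes F)$ as a homotopy of $\Alg$-module morphisms. Finally, the unital case: if $f$ is unital and $\fModule$ is unital, then $F$ sends any tensor word containing $\One_A$ to a sum of words in which that slot becomes $\One_B$ (from $f^0_1(\One_A) = \One_B$) in terms where $\One_A$ sits alone, and to zero otherwise, so $\underline{m}^w_n$ vanishes on inputs containing $\One_A$ except that $\underline{m}^0_2(x, \One_A) = m^0_2(x,\One_B) = x$; the same argument applies verbatim to $f^*\phi$ and to the homotopies, using unitality of $\phi$, $H$.

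I expect the main obstacle to be purely organizational rather than conceptual: setting up the bar-complex notation so that the weighted structure equations for $f$, for $\fModule$, and for $\phi$ all become clean identities involving $F$, $\partial_A$, $\partial_B$, and the coalgebra structure, and then tracking the weight-grading parameter $t$ through each composition so that the claimed formulas for the components of $f^*\phi$ and of the homotopy come out correctly (in particular that the convolution $\sum_{w_1+w_2=w}$ appears exactly where stated). Once the coalgebra-chain-map property of $F$ is in hand, every verification is a one-line diagram chase; the bulk of the writing is in fixing conventions, and I would keep that to the minimum needed, referring to \cite{LOT:abstract} for the precise weighted sign-free structure equations since the paper works over $\Field_2$.
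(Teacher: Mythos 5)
Your proposal is correct and follows essentially the same route as the paper's proof: the paper also reduces everything to the observation that $\phi\mapsto \phi\circ(\Id_M\otimes F)$ is a chain map between the morphism complexes $\Mor_\Blg(\fModule,\fNodule)=\prod_w\Hom(M\otimes\Tensor^*(B),N)$ and $\Mor_\Alg(f^*\fModule,f^*\fNodule)$, with the module structure relation and unitality dismissed as immediate from the definitions. Your explicit appeal to $F$ being a coalgebra chain map is precisely the content behind the paper's ``easily established fact,'' so the two arguments coincide.
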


\begin{proof}
  The fact that the operations $\underline{m}_n^w$ make $f^*(\fModule)$ into
  a weighted $\Ainf$-module is straightforward from the
  definitions. For the statements about functoriality, recall that the
  chain complex of weighted $\Ainf$-module morphisms from $\fModule$ to
  $\fNodule$, denoted $\Mor_{\Blg}(\fModule,\fNodule)$, is given by
  \[
    \Mor_{\Blg}(\fModule,\fNodule)=\prod_{w\geq 0} \Hom(M\otimes \Tensor^*(B),N);
  \]
  i.e., a morphism $\phi$ has components $\{\phi^w\co M\otimes
  \Tensor^*(B) \to N\}_{w\geq 0}$~\cite[Section~4.5]{LOT:abstract}. The differential of a
  morphism is the morphism with components $\{(d
  \phi)^w\co M\otimes\Tensor^*(B)\to N\}_{w\geq 0}$ given
  by
  \[
    (d \phi)^w=\sum_{w_1+w_2=w} m^{w_1}_{N}\circ (\phi^{w_2}\otimes\Id_{\Tensor^*(A)})\circ (\Id_M\otimes \Delta)+ 
    \phi^{w_1}\circ (m^{w_2}_M\otimes \Id_{\Tensor^*(A)})\circ (\Id_M\otimes \Delta) 
    + 
    \phi^{w_1}\circ (\Id_M\otimes \partial^{w_2}),
  \]
  where $\Delta\co \Tensor^*(A)\to \Tensor^*(A)\otimes\Tensor^*(A)$ is
  the sum of all ways of breaking the tensor product into two pieces (one of
  which is allowed to be empty).
  A weighted $\Ainf$-homomorphism is a cycle in the
  morphism space and two weighted $\Ainf$-homomorphisms are homotopic
  if and only if the corresponding cycles are homologous.
  So, the last two statements in the lemma follow from the easily
  established fact that the map
  \[
    \Mor_\Blg(\fModule,\fNodule)\to \Mor_A(f^*(M),f^*(N)),
  \]
  defined by $\phi\mapsto f^*\phi$ (as in the statement of the lemma)
  is a chain map.

  The statements about strict unitality are immediate from the construction.
\end{proof}

\begin{definition}
  \label{def:CFAm}
  Let $\CFAm(\HD)$ be the weighted $\Ainf$-module over $\MAlg$
  defined by $i^*(\CFAmg(\HD))$.
\end{definition}

\begin{theorem}\label{thm:CFAm-is}
  Assuming that the bordered Heegaard diagram $\HD$ is provincially
  admissible and the family of almost complex structures $J$ is tailored
  the operations $m_{1+k}^w$ give $\CFAm(\HD)$ the structure
  of a  unital, weighted $\Ainf$-module over $\MAlg$.
\end{theorem}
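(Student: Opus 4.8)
The plan is to deduce Theorem~\ref{thm:CFAm-is} entirely from results already in hand: Theorem~\ref{thm:CFAmg-is}, which says $\CFAmg(\HD)$ is a unital weighted $\Ainf$-module over $\MAlgg$; Lemma~\ref{lem:InducedOnMAlg}, which produces a unital diagram of strong deformation retraction data $(\Pi,i,h)$ from $\MAlgg$ onto $\AsUnDefAlg[U]$ whose induced weighted $\Ainf$-algebra structure is exactly $\MAlg$; and Lemma~\ref{lem:RestrictionFunctor}, which says that restriction of scalars along a unital morphism of weighted $\Ainf$-algebras sends unital weighted modules to unital weighted modules. Given these, there is essentially nothing left to prove: $\CFAm(\HD)$ is \emph{defined} as $i^*(\CFAmg(\HD))$ (Definition~\ref{def:CFAm}), where $i\co \MAlg=\AsUnDefAlg[U]\to\MAlgg$ is the weighted $\Ainf$-algebra morphism extending the inclusion, as constructed in Lemma~\ref{lem:InducedOnMAlg}.

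First I would record that, by Theorem~\ref{thm:CFAmg-is}, under the hypotheses (provincial admissibility of $\HD$ and a tailored family $J$), $\CFAmg(\HD)$ is a unital weighted $\Ainf$-module over $\MAlgg$. Next I would invoke Lemma~\ref{lem:InducedOnMAlg} to get the unital weighted $\Ainf$-algebra morphism $i\co\MAlg\to\MAlgg$ with $i^0_1$ the inclusion of chain complexes; here it matters that $\MAlg$ is the \emph{induced} structure on $B=\AsUnDefAlg[U]$, not merely isomorphic to it, so that the target of $i$ really is $\MAlg$ as defined in Section~\ref{sec:build-alg-combinatorially}, and that $i$ is unital in the sense of Section~\ref{subsec:Unitality} (which is immediate from the construction in Lemma~\ref{lem:HomologicalPertAinfAlg} together with $h(\One)=0$, $\Pi(\One)=\One$). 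Then I would apply the first and last parts of Lemma~\ref{lem:RestrictionFunctor} with $\Alg=\MAlg$, $\Blg=\MAlgg$, $\fModule=\CFAmg(\HD)$, $f=i$: the conclusion is that $f^*(\fModule)=i^*(\CFAmg(\HD))=\CFAm(\HD)$ is a unital weighted $\Ainf$-module over $\MAlg$, with operations given by Formula~\eqref{eq:PullbackOperations}. This is exactly the assertion of the theorem.

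The one genuinely substantive point to verify — and the place I expect the only real friction — is the identification of the operations $m^w_{1+k}$ of the theorem statement with the pulled-back operations $\underline m^w_{1+k}$ of Equation~\eqref{eq:PullbackOperations}: one should check that the module $\CFAm(\HD)$ so defined does have underlying $\Field[U]$-module $\Field[U]\otimes_\Field\CFAa(\HD)$ with $U$-equivariant operations. $U$-equivariance is inherited because $\CFAmg(\HD)$ is $U$-equivariant (it is defined over $\Ground[U]$) and the morphism $i$ is $U$-equivariant (Lemma~\ref{lem:InducedOnMAlg}; $U$ is central and untouched by $h,\Pi$), so the formula $\underline m^w(x,F^*(a_1\otimes\cdots))$ is $U$-equivariant in each $a_i$. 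I would also remark — this is a convenience rather than a necessity — that because the image of $h$ is divisible by $X$ and $X^2=0$ while all operations on $\MAlgg$ are $X$-equivariant, the higher components $i^w_n$ simplify to $i^w_n(a_1,\dots,a_n)=h\circ\mu^w_n(i(a_1),\dots,i(a_n))$, so the pulled-back operations on $\CFAm(\HD)$ are given by a finite, transparent sum of curve counts in $\CFAmg(\HD)$ pre-composed with the $i^w_n$; this makes it concrete that $\CFAm(\HD)$ incorporates counts of curves over the one-parameter families of almost complex structures indexed by the $X$'s, as advertised in Section~\ref{sec:intro-CFA-def}. Finally I would note that the $\SpinC$-decomposition \eqref{eq:spinc-decomp} of $\CFAm(\HD)$ descends from the corresponding decomposition of $\CFAmg(\HD)$, since restriction of scalars respects direct sums.
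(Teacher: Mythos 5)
Your proof is correct and follows exactly the paper's route: the paper's proof of this theorem is precisely "immediate from Theorem~\ref{thm:CFAmg-is} and Lemma~\ref{lem:RestrictionFunctor}," with the identification of the induced structure on $\AsUnDefAlg[U]$ with $\MAlg$ supplied by Lemma~\ref{lem:InducedOnMAlg} and Definition~\ref{def:CFAm}, as you describe. Your additional remarks on $U$-equivariance, unitality, and the $\SpinC$-decomposition are consistent with the surrounding text and do not change the argument.
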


\begin{proof}
  This follows immediately from Theorem~\ref{thm:CFAmg-is}
  and Lemma~\ref{lem:RestrictionFunctor}.
\end{proof}

\begin{example}
  For example, for the induced module structure on $\CFAm(\HD)$, the
  operation ${\underline m}_5^0(\x,\rho_4,\rho_3,\rho_2,\rho_1)$ is
  computed as a sum of two terms:
  $m_5^0(\x,\rho_4,\rho_3,\rho_2,\rho_1)+U m_2^0(\x,X\iota_1)$
  (cf.~Example~\ref{eg:induced-mu4}).  
\end{example}

The invariance theorem for $\CFAmb$ can be extended to $\CFAmg$, as follows:

\begin{theorem}\label{thm:CFAmg-invt}
  Up homotopy equivalence (of weighted $\Ainf$-modules),
  $\CFAmg(\HD,\spinc)$ depends only on the bordered 3-manifold
  $(Y,\phi)$ specified by $\HD$, and its underlying $\SpinC$ structure $\spinc$.
\end{theorem}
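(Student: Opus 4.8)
The plan is to prove the theorem by adapting, essentially line for line, the four invariance results established for $\CFAmb$ in Section~\ref{sec:CFA}: invariance under change of almost complex structure (Proposition~\ref{prop:J-inv}), under isotopy (Proposition~\ref{prop:iso-inv}), under handleslides (Proposition~\ref{prop:handleslide-inv}), and under stabilization (Proposition~\ref{prop:stab-inv}). Since any two provincially admissible bordered Heegaard diagrams representing the same bordered 3-manifold are connected by a sequence of such Heegaard moves through provincially admissible diagrams~\cite[Proposition 4.10]{LOT1}, and since $\pi_2(\x,\y)=\emptyset$ unless $\spinc(\x)=\spinc(\y)$ so that $\CFAmg(\HD)$ splits along $\Spinc(Y)$ exactly as $\CFAmb(\HD)$ does, these four statements together imply the theorem. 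The master technical input throughout is now Theorem~\ref{thm:masterB} in place of Theorem~\ref{thm:master}: the codimension-$1$ boundary of the relevant $1$-dimensional moduli spaces of curves over (smeared) bimodule polygons contains, besides the five types of ends of Theorem~\ref{thm:master}, the two new families of ends corresponding to the differential $\mu_1^0(X)=1+e$ of $\MAlgg$, and one checks in each case that these match the extra terms in the structure equation (for a module, a morphism, a homotopy) coming from $\mu_1^0(X)$.

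Concretely, for invariance under almost complex structure I would, as in the proof of Proposition~\ref{prop:J-inv}, introduce interpolating tailored families $\wt J$ of $\eta$-admissible almost complex structures over the space of bimodule map polygons, now carrying the hypercube $\mathbb{E}$-parameter of Definition~\ref{def:smear-bimod-poly}; existence follows as in Corollary~\ref{cor:TailoredExist-again} by the usual contractibility argument. Counting index-$0$ curves over these polygons defines a weighted $\Ainf$-module morphism $f=\{f_{1+n}^w\}$ over $\MAlgg$, because the $X$-differential ends of the index-$1$ spaces $\cM^B(\wt J;\x,\y;\vec{a};w)$ from Theorem~\ref{thm:masterB} match the terms in the morphism structure equation that involve $\mu_1^0(X)$. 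Running the same argument with the roles of $J$ and $J'$ exchanged, and then over a space of bimodule homotopy polygons (with four distinguished $\{1\}\times\RR$-marked points), gives homotopies $G\circ F\simeq \Id$ and $F\circ G\simeq \Id$. Isotopy invariance is the same argument with varying boundary conditions replacing varying almost complex structures.

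For handleslide invariance I would adapt Proposition~\ref{prop:handleslide-inv}: the triangle-counting map $f\co\CFAmg(\HD)\to\CFAmg(\HD')$ is defined over the space $\BModPol$ of bimodule map polygons with $\Theta_1,\Theta_2$ feeding the $v_{12}$ corner, and one verifies that $f$ is a weighted $\Ainf$-module homomorphism by the classification (as in Lemma~\ref{lem:HandleSlideMap}) of the codimension-$1$ ends of the triangle moduli spaces, now including the $X$-differential ends. This requires re-proving Lemma~\ref{lem:CancelInPairs} for the larger algebra; the destabilization-to-genus-one argument and the index computation of Lemma~\ref{lem:IndexBranching} carry over once one observes that each $X$-marked puncture contributes a single extra dimension (the quantity $d$ in Formula~\eqref{eq:index-source-again}) and otherwise affects the moduli space only through the $\mathbb{E}$-family of complex structures, so that the inequality $|J|\le 1$ and Proposition~\ref{prop:IndexOneFlowsIsotopyDiag} still force $n_z(B)=0$; the quadrilateral-counting homotopy argument is similarly unchanged. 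Finally, stabilization invariance follows from the neck-stretching argument of Proposition~\ref{prop:stab-inv} and Section~\ref{subsec:StabilizationInvarianceBDeg} verbatim, since the stabilizing region is disjoint from the curves being pinched and the matching-condition and rational-map analysis (Lemma~\ref{lem:rat-maps}) is insensitive to the $e$-, $X$-, and $U$-powers on the algebra elements.

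I expect the main obstacle to be bookkeeping rather than new mathematics: one must propagate the smeared-polygon / hypercube formalism of Section~\ref{sec:GroundingU} through the triangle, quadrilateral, and homotopy-polygon moduli problems (existence of tailored interpolating and triangular families), and verify that the two observations from the proof of Theorem~\ref{thm:masterB} — that the new $X$-boundary ends contribute exactly the $\mu_1^0(X)$-terms, and that a collision of two $X$-divisible vertices has codimension $2$ and hence does not contribute — continue to hold in each of those settings. Given Theorem~\ref{thm:masterB} and Corollary~\ref{cor:TailoredExist-again}, each such check is routine, which is why invariance of $\CFAmg$ is indeed only a straightforward extension of Theorem~\ref{thm:CFAmb-invt}.
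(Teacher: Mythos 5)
Your proposal is correct and follows exactly the route the paper intends: the paper's own proof of this theorem consists of the single remark that the argument is similar to the $\CFAmb$ invariance proofs (Propositions~\ref{prop:J-inv},~\ref{prop:iso-inv},~\ref{prop:handleslide-inv}, and~\ref{prop:stab-inv}) and is left to the reader. Your outline simply fills in the adaptation the paper leaves implicit, with the correct technical substitution of Theorem~\ref{thm:masterB} for Theorem~\ref{thm:master} and the right checks on the new $X$-differential ends.
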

\begin{proof}
  The proof is similar to the proof of invariance for $\CFAmb$ given
  in Section~\ref{sec:CFA}
  (Propositions~\ref{prop:J-inv},~\ref{prop:iso-inv},~\ref{prop:handleslide-inv},
  and~\ref{prop:stab-inv}) and is left to the reader.
\end{proof}

The results above immediately imply topological invariance of $\CFAm$:

\begin{proof}[Proof of Theorem~\ref{thm:CFAm-invt}]
  This follows from Theorem~\ref{thm:CFAmg-invt} and Lemma~\ref{lem:RestrictionFunctor}.
\end{proof}

Finally, we note that $\CFAm$ and $\CFAmb$ are homotopy equivalent, non-$U$-equivariantly:
\begin{proposition}\label{prop:CFA-is-CFA}
  For any provincially admissible Heegaard diagram $\HD$, there is a
  homotopy equivalence of weighted $\Ainf$-modules
  $\CFAm(\HD)\simeq\CFAmb(\HD)$ over $\MAlg$, thought of as a weighted
  algebra over $\Field$ (rather than $\Field[U]$).
\end{proposition}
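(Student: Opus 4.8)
The plan is to produce the homotopy equivalence by a two-step argument, paralleling the chain of equivalences implicit in the construction of $\CFAm$. First, $\CFAmg(\HD)$ restricted to the subalgebra $B\subset\MAlgg$ is, by Theorem~\ref{thm:CFAmg-is}, exactly the restricted module $\CFAmg(\HD)|_B$ of Definition~\ref{def:CFAmgB}, which is obtained from $\CFAmb(\HD)$ by transport along the isomorphism $\phi\co\MAlg\to B$. Since $\phi$ is an isomorphism of weighted $\Ainf$-algebras (not $U$-equivariant, and that is the source of the ``non-$U$-equivariantly'' caveat), transport along $\phi$ is an equivalence of module categories, so $\CFAmg(\HD)|_B$ and $\CFAmb(\HD)$ determine the same object up to the kind of equivalence asserted. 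Second, the inclusion $i\co\MAlg\cong\AsUnDefAlg[U]\hookrightarrow\MAlgg$ is a quasi-isomorphism of weighted $\Ainf$-algebras by Lemma~\ref{lem:InducedOnMAlg} (together with the strong deformation retraction data $(\Pi,i,h)$ exhibited there), and $\CFAm(\HD)=i^*(\CFAmg(\HD))$ by Definition~\ref{def:CFAm}. So the statement reduces to: restriction of scalars along a quasi-isomorphism of weighted $\Ainf$-algebras, followed by further restriction along the inclusion $B\hookrightarrow\MAlgg$, relates $i^*(\CFAmg(\HD))$ and $\CFAmg(\HD)|_B$ up to homotopy equivalence, once one remembers that all three algebras $\MAlg$, $B$, $\MAlgg$ become quasi-isomorphic over $\Field$.

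In more detail, I would argue as follows. Work throughout over the ground field $\Field$ (forgetting $U$-linearity), so that $\MAlg$, $B$, and $\MAlgg$ are all weighted $\Ainf$-algebras over $\Field$ connected by quasi-isomorphisms: $\MAlg\xrightarrow{\phi}B\hookrightarrow\MAlgg$ and $\MAlg\xrightarrow{i}\MAlgg$, where the composite $B\hookrightarrow\MAlgg$ of $\phi$ with the inclusion and the map $i$ are related by the strong deformation retraction of Lemma~\ref{lem:InducedOnMAlg} (the homotopy $h$ is $X$-linear and kills everything divisible by $X$). The key abstract input is that restriction of scalars along a quasi-isomorphism of weighted $\Ainf$-algebras sends a weighted module to a homotopy-equivalent one, and that restricting along two homotopic algebra maps gives homotopic modules; these are exactly the kind of statements packaged in Lemma~\ref{lem:RestrictionFunctor} together with~\cite[Corollary~\ref{Abs:cor:w-DT-preserve-hequiv}]{LOT:abstract} and the standard fact that an $\Ainf$-quasi-isomorphism induces an equivalence on (derived) module categories. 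Applying this to $\CFAmg(\HD)$: the $\MAlg$-module $i^*(\CFAmg(\HD))=\CFAm(\HD)$ is homotopy equivalent, via pullback along the retraction, to the $\MAlg$-module $(\phi^{-1})^*$ of $\CFAmg(\HD)|_B$, which is by definition (Definition~\ref{def:CFAmgB}) exactly $\CFAmb(\HD)$ viewed as an $\MAlg$-module. Concretely, the homotopy equivalence $\CFAm(\HD)\to\CFAmb(\HD)$ has components built from $h$ and the module operations $m^w_n$ of $\CFAmg$, in the same way that the $\Ainf$-morphism $i^w_n(a_1,\dots,a_n)=h\circ\mu^w_n(i(a_1),\dots,i(a_n))$ is built in Lemma~\ref{lem:InducedOnMAlg}; the homotopy inverse is induced by $\Pi$.

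The main obstacle I expect is bookkeeping rather than conceptual: one must check that the abstract ``restriction of scalars along a quasi-isomorphism is an equivalence'' statement is available in the \emph{weighted} $\Ainf$ setting with the conventions of~\cite{LOT:abstract}, including convergence of the relevant infinite sums (each $\underline m^w_n$ in Formula~\eqref{eq:PullbackOperations} is a finite sum because $h$ is a nilpotent-type operator and raises $X$-degree, so this is not serious), and that the resulting equivalence does not secretly require $U$-linearity (it does not, precisely because $\phi$ is not $U$-linear, which is why the proposition only claims an equivalence of weighted $\Ainf$-modules over $\MAlg$ regarded as an algebra over $\Field$). Given Lemma~\ref{lem:RestrictionFunctor}, Lemma~\ref{lem:InducedOnMAlg}, and Theorem~\ref{thm:CFAmg-is}, the proof should then be short: assemble the strong deformation retraction of Lemma~\ref{lem:InducedOnMAlg}, push it through to modules using the functoriality in Lemma~\ref{lem:RestrictionFunctor} and~\cite[Corollary~\ref{Abs:cor:w-DT-preserve-hequiv}]{LOT:abstract}, and identify the endpoints with $\CFAm(\HD)$ and $\CFAmb(\HD)$ via Definitions~\ref{def:CFAm} and~\ref{def:CFAmgB} and Theorem~\ref{thm:CFAmg-is}.
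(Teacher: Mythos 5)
Your proposal follows essentially the same route as the paper: both proofs identify $\CFAmb(\HD)$ with $\phi^*\CFAmg(\HD)$ and $\CFAm(\HD)$ with $i^*\CFAmg(\HD)$, and both exploit the projection $\Pi$ and the homotopy $h$ from Lemma~\ref{lem:InducedOnMAlg} to compare the two restrictions.

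The one place where you lean on something the paper's toolkit does not actually supply is the ``key abstract input'': Lemma~\ref{lem:RestrictionFunctor} only says that restriction of scalars is a functor sending homotopic \emph{module} morphisms to homotopic module morphisms; it does not say that restriction along a quasi-isomorphism of weighted algebras is an equivalence, nor that restriction along two homotopic algebra maps yields homotopy-equivalent modules, and the cited corollary from~\cite{LOT:abstract} concerns box tensor products, not restriction of scalars. The paper avoids invoking any such general principle. Instead it uses that $\Pi$ is a strict homomorphism of weighted algebras with $\Pi\circ i=\Pi\circ\phi=\Id_{\MAlg}$, reduces the proposition to the single claim that $M\simeq(i\circ\Pi)^*M$ for any weighted $\MAlgg$-module $M$, and then proves that claim by writing down explicit morphisms $f$, $g$ and homotopies $k$, $\ell$ whose components interleave $h$ and $\Pi$ among the inputs of the operations $m^w_{1+n}$ (exactly the shape you gesture at when you say the equivalence is ``built from $h$ and the module operations''). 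So your architecture is right, but to make the proof complete you would need to either supply those explicit formulas and verify the structure relations (as the paper does, using the identity $h(\mu^w_n(a_1,\dots,a_n))=\sum_j\mu^w_n(a_1,\dots,h(a_j),\Pi(a_{j+1}),\dots,\Pi(a_n))$), or prove the weighted analogue of the ``quasi-isomorphisms induce equivalences of module categories'' statement separately; as written, that step is a genuine gap in the justification, though not in the strategy.
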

\begin{proof}
  There are two inclusions of $\MAlg$ into $\MAlgg$: the inclusion
  $\phi$ as the sub-algebra $B$ from Section~\ref{sec:grounding-alg}
  and the inclusion as $\AsUnDefAlg[U]$ from
  Section~\ref{sec:grounding-CFA}. As in the proof of
  Lemma~\ref{lem:InducedOnMAlg}, denote this second inclusion map by $i$.
  There is also a projection map $\Pi\co \MAlgg\to\MAlg$ from the
  proof of Lemma~\ref{lem:InducedOnMAlg}; 
  note that while there we
  only use the fact that $\Pi$ is a chain map, it is true that the
  sequence of maps $\{\Pi^w_n\}$ specified by
  \[
    \Pi^w_k=
    \begin{cases}
      \Pi & \text{if $(w,k)=(0,1)$} \\
      0 & \text{otherwise}
    \end{cases}
  \]
  is a homomorphism of weighted algebras. The maps $i$ and $\Pi$ are
  $U$-equivariant while $\phi$ is not. Also, $\Pi\circ i$ and
  $\Pi\circ\phi$ are both the identity map of $\MAlg$.

  By construction, for corresponding choices of tailored families of
  almost complex structures, $\CFAmb(\HD)\cong \phi^*\CFAmg(\HD)$, and
  by definition, $\CFAm(\HD)=i^*\CFAmg(\HD)$. We will show that for
  any weighted module $M$ over $\MAlgg$,
  $M\simeq (i\circ\Pi)^*M$. Assuming this, we have
  \[
    \CFAmb(\HD)\cong \phi^*\CFAmg(\HD)\simeq \phi^*(i\circ\Pi)^*\CFAmg(\HD)
    \cong (\Pi\circ\phi)^*i^*\CFAmg(\HD)=i^*\CFAmg(\HD),
  \]
  since $\Pi\circ\phi=\Id$, completing the proof.

  To verify that $M\simeq (i\circ\Pi)^*M$, observe that since
  $i_n^w=0$ and $\Pi_n^w=0$ for $(n,w)\neq(1,0)$, the operations
  $\underline{m}_n^w$ on $(i\circ\Pi)^*M$ are given by
  \[
    \underline{m}_{1+n}^w(x,a_1,\dots,a_n)=m_{1+n}^w(x,i(\Pi(a_1)),\dots,i(\Pi(a_n))).
  \]
  Also, since $i$ is the obvious inclusion, we will drop it from the
  notation.

  Recall the homotopy $h$ from the proof of
  Lemma~\ref{lem:InducedOnMAlg}. Define a map
  $f\co M\to (i\circ\Pi)^*M$ by $f_1^0(x)=x$, $f_1^w=0$ for $w>0$, and
  \[
    f_{1+n}^w(x,a_1,\dots,a_n)=\sum_{i=0}^n m_{1+n}^w(x,a_1,\dots,a_{i-1},h(a_i),\Pi(a_{i+1}),\dots,\Pi(a_n))
  \]
  for $n\geq 1$. Using the fact that the operations on $\MAlgg$ are
  equivariant with respect to multiplication by $e$ and $X$, it is
  easy to verify that
  \begin{align*}
    h(\mu_n^w(a_1,\dots,a_n))&=\sum_{i=1}^n\mu_n^w(a_1,\dots,a_{i-1},h(a_i),\Pi(a_{i+1}),\dots,\Pi(a_{n}))\\
    &=\sum_{i=1}^n\mu_n^w(\Pi(a_1),\dots,\Pi(a_{i-1}),h(a_i),a_{i+1},\dots,a_{n})
  \end{align*}
  from the first of which it in turn follows easily that $f$ is a
  weighted module homomorphism.

  Define $g\co (i\circ \Pi)^*M\to M$ by $g_1^0(x)=x$, $g_1^w=0$ for
  $w>0$, and 
  \[
    g_{1+n}^w(x,a_1,\dots,a_n)=\sum_{i=0}^n m_{1+n}^w(x,\Pi(a_1),\dots,\Pi(a_{i-1}),h(a_i),a_{i+1},\dots,a_n).
  \]
  The same argument as above shows that $g$ is a weighted module
  homomorphism. Finally, there is a homotopy $k$ from $g\circ f$ to
  the identity map given by
  \begin{multline*}
    k_{1+n}^w(x,a_1,\dots,a_n)\\
    =\sum_{1\leq i\leq j\leq n}m_{1+n}^w(x,a_1,\dots,a_{i-1},h(a_i),\Pi(a_{i+1}),\dots,\Pi(a_{j-1}),h(a_j),a_{j+1},\dots,a_n)
  \end{multline*}
  and a homotopy $\ell$ from $f\circ g$ to the identity map given by
  \begin{multline*}
    \ell_{1+n}^w(x,a_1,\dots,a_n)\\
    =\sum_{1\leq i\leq j\leq n}m_{1+n}^w(x,\Pi(a_1),\dots,\Pi(a_{i-1}),h(a_i),a_{i+1},\dots,a_{j-1},h(a_j),\Pi(a_{j+1}),\dots,\Pi(a_n)).
  \end{multline*}
  Again, verifying that these are homotopies is straightforward from
  the relation above. This concludes the proof.
\end{proof}

\begin{remark}
  \label{rem:UtoTheN}
  A key complication in our construction of $\CFAm$ is the fact that
  the amount of pinching required to achieve regularity depends on the
  energy of the algebra sequence; this traces back to the fact that,
  in the statement of Lemma~\ref{lem:bdy-deg-ind-2}, the choice of
  $\epsilon$ comes after the sequence of algebra elements. So, to
  define $\CFAmg$, we are forced to use families of almost complex
  structures parameterized by the moduli space of polygons
  (Definition~\ref{def:acAgain}). If we are only interested in the
  quotient module $\CFAm(Y,\spinc)/U^n$ for some fixed $n$, we could
  bypass $\MAlgg$ entirely and work with a single
  $\mathbb{R}$-invariant almost complex structure on
  $\Sigma\times[0,1]\times\RR$ which is $\epsilon$-pinched for some
  $\epsilon$ sufficiently small that Lemma~\ref{lem:bdy-deg-ind-2}
  holds for all moduli spaces crossing $z$ at most $n$ times.
\end{remark}


\section{The module \texorpdfstring{$\CFDm$}{CFD}}\label{sec:CFD}
As mentioned in the introduction, we define $\CFDm$ in terms of
$\CFAm$ and the dualizing bimodule $\CFDDm(\Id)$. The basic properties
of $\CFDm$
then follow from corresponding properties of $\CFAm$. We spell out
this quick definition in Section~\ref{def:CFDm}, where we also discuss
how $\CFDm$ extends $\CFDa$. Before that, in
Section~\ref{sec:DD-Id} we verify that $\CFDDm$ is, in fact, a type
\DD\ bimodule, and in Section~\ref{sec:boundedness} we discuss some
boundedness conditions needed for the homological algebra.

\subsection{The Koszul-dualizing module \texorpdfstring{$\CFDDm(\Id)$}{CFDD(Id)}}\label{sec:DD-Id}
We begin by verifying that the type \DD\ $(\MAlg^{U=1},\MAlg)$-bimodule
$\CFDDm(\Id)$ from Section~\ref{sec:intro-DD-Id} actually is a type \DD\
bimodule, for any choice of weighted algebra diagonal.
The proof makes use of the grading on $\CFDDm(\Id)$, and it turns out
to be simpler to prove the result for a variant $\uId$ of
$\CFDDm(\Id)$ and then deduce it for $\CFDDm(\Id)$. Specifically, the
type \DD\ bimodule $\uId$ is a bimodule over $(\MAlg,\MAlg)$ (rather
than $(\MAlg^{U=1},\MAlg)$). Denote the variable $U$ for the first
copy of $\MAlg$ by $U_1$ and the variable for the second by $U_2$. The
variables $\Yvar_1$ and $\Yvar_2$ from the definition of a weighted algebra
diagonal (Definition~\ref{def:algebra-diagonal}) act by $U_1$ and
$U_2$, respectively. The bimodule $\uId$ has the same generators
$\iota_0\otimes\iota_0$ and $\iota_1\otimes\iota_1$ as $\CFDDm(\Id)$
and differential given by the same formula
\begin{equation}
\begin{split}
  \delta^1(\iota_0\otimes\iota_0)&=(\rho_1\otimes\rho_3+\rho_3\otimes\rho_1+\rho_{123}\otimes\rho_{123}+\rho_{341}\otimes\rho_{341})\otimes(\iota_1\otimes\iota_1)\\
  \delta^1(\iota_1\otimes\iota_1)&=(\rho_2\otimes\rho_2+\rho_4\otimes\rho_4+\rho_{234}\otimes\rho_{412}+\rho_{412}\otimes\rho_{234})\otimes(\iota_0\otimes\iota_0).
\end{split}
\end{equation}
So, $\CFDDm(\Id)$ is obtained from $\uId$ by setting $U_1=1$ and
$U_2=U$. In particular, the structure equation for $\uId$ implies
the structure equation for $\CFDDm(\Id)$.

Before giving the proof of the structure equation, we discuss the
gradings on $\uId$ and $\CFDDm(\Id)$.

Let $\bigGroup$ be the big grading group for $\MAlg$, from
Section~\ref{sec:big-gr-group}
(or~\cite[Section~\ref{TA:sec:grading}]{LOT:torus-alg}).  Let $r'$ be
the anti-automorphism of $\bigGroup$ defined by
\[
  r'(m; a,b,c,d) = (m;-c,-b,-a,-d).
\]
Since
$\gr'(U)=(-1;1,1,1,1)$, the algebra $\MAlg^{U=1}$ is graded by
$\bigGroup/(-1;1,1,1,1)$.

The bimodule $\uId$ is graded by $\bigGroup$ as a $\bigGroup
\times_\ZZ \bigGroup$-set, with action
\[
  (g,h) \cdot \langle x \rangle = \langle h  x r'(g) \rangle.
\]
(Here, we are using angle brackets to distinguish elements of
$\bigGroup$ viewed as a $\bigGroup
\times_\ZZ \bigGroup$-set from elements of $\bigGroup$ viewed as a group.)
The generators $\iota_i \otimes \iota_i$ both have grading
$\langle 0;0,0,0,0\rangle$. In particular,
$\grb(U_1 \otimes \iota) = \langle -1;-1,-1,-1,-1\rangle$ while $\grb(\iota
\otimes U_2) = \grb(U) = \langle -1;1,1,1,1\rangle$.
Recall from Section~\ref{sec:gradings-abstract} that each weighted
algebra has a weight grading
$\lambda_w$. For the first copy of $\MAlg$, the weight grading is
$\lambda_{w,1} = (1;-1,-1,-1,-1)$, while for the second it is
$\lambda_{w,2}=(1;1,1,1,1)$.

\begin{lemma}
  \label{lem:IdIsDD}
  The bimodules $\uId$ and $\CFDDm(\Id)$ satisfy the type \DD\ structure
  equation.
\end{lemma}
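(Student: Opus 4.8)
The plan is to verify the type \DD\ structure equation
\[
  \sum_{n,w\geq 0}(\mu_n^w\otimes\Id)\circ\delta^n = 0
\]
for $\uId$, for any choice of weighted algebra diagonal $\wADiag$, and
then obtain the statement for $\CFDDm(\Id)$ by specializing $U_1=1$,
$U_2=U$. Here $\mu_n^w$ denotes the operation on the tensor product
algebra $\MAlg\otimes_{\wADiag}\MAlg$, which is a sum of terms
$\mu_\MAlg(S)\otimes\mu_\MAlg(T)$ over pairs of trees $(S,T)$ in
$\wDiagCell{n}{w}$. First I would observe that, because $\delta^1$ is
so simple (it outputs pairs of a single Reeb chord with itself, or a
pair of interchanged length-3 chords), the iterated map $\delta^n$
applied to $\iota_i\otimes\iota_i$ is a sum over length-$n$ alternating
words in the chords $\{\rho_1,\rho_3,\rho_{123},\rho_{341}\}$ and
$\{\rho_2,\rho_4,\rho_{234},\rho_{412}\}$, each term contributing a pair
of tensor factors (one for each side). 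So the structure equation
becomes a finite collection of identities, one for each such word $\vec b$
(together with a choice of output idempotent), asserting that the sum
over $(n,w)$ and over trees $(S,T)\in\wDiagCell{n}{w}$ of
$\mu_\MAlg(S)(\vec b^{\,\mathrm{left}})\otimes\mu_\MAlg(T)(\vec
b^{\,\mathrm{right}})$ vanishes.

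The key simplification is the grading argument. I would show that
$\uId$ is graded by $\bigGroup$ as a $\bigGroup\times_\ZZ\bigGroup$-set
with the action described above, i.e.\ that $\delta^1$ shifts grading
by $\lambda_d^{-1}$; this is a direct check using
$\grb(\rho_i)=(-\tfrac12;\dots)$, $\grb(\rho_{123})$, etc., and the
definitions of $r'$ and the tensor-product grading group from
Section~\ref{sec:gradings-abstract}. Having established the grading, the
point is that the grading constrains which terms
$\mu_\MAlg(S)\otimes\mu_\MAlg(T)$ can appear in a given matrix entry of
$(\mu_n^w\otimes\Id)\circ\delta^n$ very tightly: each of
$\mu_\MAlg(S)(\vec b^{\,\mathrm{left}})$ and $\mu_\MAlg(T)(\vec
b^{\,\mathrm{right}})$ must be a homogeneous element of $\MAlg$ of a
specified grading, and by the structure of $\MAlg$ (the classification
of its operations, in particular the fact that its nonzero operations
output either a single basic Reeby element or $U^k\iota_j$, from
Definition~\ref{def:holo-def-alg} and Theorem~\ref{thm:alg-well-defd}),
there are only a handful of possibilities. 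In fact I expect that the
only terms that survive are the "obvious" ones coming from the
non-degeneracy clauses $\wDiagCell{2}{0}=\wcorolla20\otimes\wcorolla20$
and $\wDiagCell01=\wcorolla01\otimes\wcorolla01+\Yvar_1\stump\otimes\wcorolla01+\Yvar_2\wcorolla01\otimes\stump$, which is exactly what
makes the equation independent of the choice of diagonal — all the
higher cells $\wDiagCell{n}{w}$ for $n+2w>2$ feed into gradings where
one of the two factors would have to be an operation of $\MAlg$ that
simply does not exist.

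Concretely, after the grading reduction I would enumerate the surviving
terms: the $\mu_2^0$-type terms pair $\mu_2^0$ of two consecutive
$\delta$-outputs on each side (these are the usual "$d^2=0$ for a path
algebra with relations" cancellations, organized by the fact that
$\rho_i\rho_{i+1}$ are the relations); the $\mu_4^0$-type centered
terms produce factors of $U_1$ resp.\ $U_2$ via
$\mu_4^0(\rho_4,\rho_3,\rho_2,\rho_1)=U\iota_1$ and its cyclic rotations
(Example after Definition~\ref{def:holo-def-alg}), which must cancel in
pairs against $\mu_2^0(U^k\iota,\text{–})$ terms; and the $\mu_0^1$
curvature term $\rho_{1234}+\rho_{2341}+\rho_{3412}+\rho_{4123}$, which
pairs against a $\stump$ on the other side via the $\Yvar$-terms in
$\wDiagCell01$ and cancels against the contributions of the length-3
chords $\rho_{123},\rho_{341},\rho_{234},\rho_{412}$ in $\delta^1$. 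I
would check each of these cancellations as a short finite computation.
The main obstacle I anticipate is bookkeeping: making the grading
argument airtight enough that it genuinely rules out contributions from
\emph{all} cells $\wDiagCell{n}{w}$ with $n+2w>2$ (not just the first
few), and correctly matching up the $U_1$- and $U_2$-powers so that
the specialization $U_1=1,U_2=U$ really does give the stated
$\CFDDm(\Id)$. Once $\uId$ satisfies the structure equation over
$(\MAlg,\MAlg)$, the statement for $\CFDDm(\Id)$ is immediate by
applying the algebra map setting $U_1=1$, $U_2=U$, since that map sends
the structure equation to the structure equation.
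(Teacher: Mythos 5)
Your proposal takes essentially the same route as the paper: establish the $\bigGroup$-grading on $\uId$, use the fact that every term of the structure relation must have grading $\langle\lambda^{-2}\rangle$ to reduce to a short explicit list of possible output terms, check that these cancel in pairs using $\mu_2^0$, the centered $\mu_4^0$ operations, and the curvature $\mu_0^1$, and then obtain $\CFDDm(\Id)$ by setting $U_1=1$. One point to correct: your middle paragraph asserts that all cells $\wDiagCell{n}{w}$ with $n+2w>2$ are killed by the grading, but this is false and is contradicted by your own enumeration --- the $U_1$- and $U_2$-producing terms come from $\wDiagCell{4}{0}$ via a binary tree of $\mu_2^0$'s paired with the corolla computing $\mu_4^0(\rho_4,\rho_3,\rho_2,\rho_1)=U\iota_1$ and its rotations, so diagonal-independence rests not on higher cells vanishing but on the observation that these particular pairs contribute with a well-defined total coefficient in any weighted diagonal (plus the fixed form of $\wDiagCell{0}{1}$).
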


In particular, the proof will show that the type \DD\ structure
equation involves only finitely many terms. (A different proof of this
fact is in Section~\ref{sec:boundedness}.)

\begin{proof}
  Abbreviate an element $(\xi\otimes \eta)\otimes(\iota_i\otimes\iota_i)$
  of (the modulification of) $\uId$ as just $\xi\otimes \eta$,
  so $\grb(\xi \otimes \eta) = \langle\grb(\eta)
  r'(\grb(\xi))\rangle$.

  Every term in $\delta^1(\iota_i\otimes\iota_i)$ has grading
  $\langle\lambda^{-1}\rangle = \langle -1;0,0,0,0\rangle$ while every term in
  the type \DD\ structure relation has grading $\langle\lambda^{-2}\rangle$.
  To see what the possible elements $\xi \otimes \eta$ of grading
  $\langle \lambda^{-2}\rangle$ are, write $\xi =
  U_1^{j_1} \rho_a$ and $\eta = U_2^{j_2} \rho_b$, and write $\rho_a =
  \rho'_a \rho''_a$ and $\rho_b = \rho'_b \rho''_b$, where $\rho''_a$
  and $\rho''_b$ have length $4k_1$ and $4k_2$, respectively, and
  $\rho'_a$ and $\rho'_b$ have length $\le 3$.

  The component of $\grb(\eta) r'(\grb(\xi))$ in $H_1(Z,\CircPts)$ must
  be $0$, which means that
  \begin{equation}\label{eq:CFDD-Id-spinc-comps}
    (j_1 + k_1) [Z] + [\rho'_a] = (j_2 + k_2)[Z] + [\rho'_b],
  \end{equation}
  where $[\rho]\in H_1(Z,\CircPts)=\ZZ^4$ is the $\SpinC$ component of
  the grading (the last four entries) and $[Z]=(1,1,1,1)$ is the
  homology class represented by the circle $Z$ itself. 
  In particular, $\rho'_a$ and $\rho'_b$ are either both trivial or
  both nontrivial and, moreover, $\grb(\rho'_a)$ and $\grb(\rho'_b)$
  commute. So,
  the homological (first) component of $\grb(\eta) r'(\grb(\xi))$ is
  \[
    m = -j_1 - k_1 - j_2 - k_2 + m'_a + m'_b
  \]
  where $m'_a$ and $m'_b$ are the homological components of $\rho'_a$
  and $\rho'_b$ (which are $0$ for trivial chords and $-1/2$
  otherwise). Thus, $m$ is even if and only if $\rho'_a$ and
  $\rho'_b$ are both trivial. So, for $m$ to be $-2$,
  $j_1+k_1+j_2+k_2=2$ and thus, from
  Equation~\eqref{eq:CFDD-Id-spinc-comps},
  $j_1+k_1=j_2+k_2=-1$. Hence, we have the following possible
  terms, and the result of cyclically permuting the indices in them:
  \[
    \rho_{1234} \otimes U_2, \quad
    U_1 \otimes \rho_{1234}, \quad
    \rho_{1234} \otimes \rho_{1234}, \quad
    \rho_{1234} \otimes \rho_{3412}, \quad
    U_1 \otimes U_2.
  \]
  (The cyclic permutation rotates the indices in
  opposite directions on the left and right.)
  
  The only ways these terms arise are:
  \begin{itemize}
  \item $\rho_{1234}\otimes U_2$. Since any operation $\mu_n^w$ except
    $\mu_2^0$ and $\mu_0^1$ lands in $U\MAlg$, the output
    $\rho_{1234}$ must come from $\mu_2^0$-operations or a single
    $\mu_0^1$-operation. So, the only contributions of this form are
    obtained by applying a binary tree of $\mu_2^0$s on the left and a
    single $\mu_4^0$ on the right, to the type $DD$ module's outputs
    $(\rho_1\otimes\rho_3),(\rho_2\otimes\rho_2),(\rho_3\otimes\rho_1),(\rho_4\otimes\rho_4)$,
    or the operation $\mu^1_0\otimes U_2$.
  \item $U_1\otimes \rho_{1234}$. This is symmetric to the previous case.
  \item $\rho_{1234}\otimes \rho_{1234}$. By a similar argument to the
    previous cases, these come from the operations $\mu_2^0$ on the two
    sides applied to
    $(\rho_{123}\otimes\rho_{123}),(\rho_4\otimes\rho_4)$ and from the
    operation $\mu_0^1\otimes\mu_0^1$. The terms $\rho_{2341}\otimes
    \rho_{4123}$, $\rho_{3412}\otimes\rho_{3412}$, and
    $\rho_{4123}\otimes\rho_{2341}$ are similar.
  \item $\rho_{1234}\otimes\rho_{3412}$. These come from the
    operations $\mu_2^0$ on the two sides applied to 
    $(\rho_1\otimes\rho_3),(\rho_{234}\otimes \rho_{412})$ and from
    the operation $\mu_0^1\otimes\mu_0^1$. The terms
    $\rho_{2341}\otimes\rho_{2341}$, $\rho_{3412}\otimes\rho_{1234}$,
    and $\rho_{4123}\otimes\rho_{4123}$ are similar.
  \item $U_1 \otimes U_2$. It is straightforward to see that
    such a term cannot arise from actions  on the type $DD$  outputs.
  \end{itemize}
  So, in each case, the terms cancel in pairs.

  As noted earlier, the result for $\CFDDm(\Id)$ follows by setting
  $U_1 = 1$.
\end{proof}

While $\uId$ is graded by $\bigGroup$, setting $U_1=1$ forces the
grading to be by the smaller set $S'_{\DD} = \bigGroup/(r'(\grb(U))) =
\bigGroup/(1;1,1,1,1)$. Conveniently, this quotient is compatible with
the projection map $\pi \co \bigGroup\to\interGroup$ recalled in
Section~\ref{sec:int-gr}, so we may also take $\CFDDm(\Id)$ to be
graded by $\interGroup$. See Section~\ref{sec:gradings} for further discussion.

\subsection{Boundedness}\label{sec:boundedness}
We digress to discuss boundedness hypotheses for $\MAlg$, $\CFAm$, and
$\CFDDm(\Id)$: we need these in order for the box products involved in the
definition of $\CFDm$ and the pairing theorem to make sense (and be
well-behaved).

There is a filtration---in fact, grading---on $\MAlgc$ by the length
of an element, where each $\rho_i$ has length $1$ and $U$ has length
$4$. The operations $\mu_n^w$ respect this filtration, in the sense
that 
\begin{equation}
  \mu_n^w\co \Filt^{m_1}A\otimes\cdots\otimes\Filt^{m_n}A\to \Filt^{m_1+\cdots+m_n+4w}A.
\end{equation}
We will take this property as our definition of a filtered weighted
$\Ainf$-algebra, except that the $4$ may be replaced by any positive
number.

Let $\MAlgc$ denote the completion of $\MAlg$ with respect to the length
filtration, and $\CFAc$ the completion of $\CFAm$ with respect to the $U$-power
filtration, i.e., $\CFAc=\CFAm\otimes_{\Field[U]}\Field[[U]]$. In
particular, $\MAlgc$ is an algebra over $\Field[[U]]$.

Recall that a weighted $\Ainf$-algebra $\Alg=(A,\{\mu_n^w\})$ is \emph{bonsai} if for
any stably weighted tree $T$ with $n$ inputs and dimension
$\dim(T)$ sufficiently large, the corresponding operation $\mu(T)\co A^{\otimes n}\to A$
vanishes~\cite[Definition~\ref{Abs:def:w-bonsai}]{LOT:abstract}. A weighted
algebra $\Alg$ with a descending filtration
$\Filt^0A\supset \Filt^1A\supset\cdots$ is \emph{filtered bonsai} if $A$ is
complete with respect to this filtration and each quotient $A/\Filt^mA$ is
bonsai~\cite[Section~\ref{Abs:sec:boundedness}]{LOT:abstract}. In particular,
$\MAlgc$ is filtered bonsai
(compare~\cite[Proposition~\ref{TA:lem:Bonsai}]{LOT:torus-alg}).

To make sense of the structure relation for a type $D$ (or \DD)
bimodule, it suffices to know that the algebra is filtered bonsai, as
is the case for~$\MAlgc$.

The two versions of the pairing theorem involve
$\bigl(\CFAm_{U=1}(Y_1)\otimes\CFAm(Y_2)\bigr)\DT\CFDDm(\Id)$ and its
reparenthesization $\CFAm(Y_2)\DT\bigl(\CFAm_{U=1}(Y_1)\DT\CFDDm(\Id)\bigr)$. To make
sense of these, we need some further conditions and results.

In our previous paper, we gave a boundedness condition for type
$D$
structures~\cite[Definition~\ref{Abs:def:wD-bounded}]{LOT:abstract},
but this condition only makes sense when the charge (which for the
present paper is $1$) is nontrivial. An alternate condition that is
relevant for $\CFDDm(\Id)$ is the following. Fix weighted algebras
$\wAlg$ and $\wBlg$ over $\Field[\Yvar_1]$ and $\Field[\Yvar_2]$ and a
filtration $\Filt$ on $\wBlg$ with respect to which $\wBlg$ is
complete. Call a type \DD\ bimodule
$(P,\delta^1)$ over $\wAlg$ and $\wBlg$
\emph{right-filtered short} if the variable $\Yvar_2$ lies in
$\Filt^1B$ and the image of $\delta^1$ lies in
$(A\otimes\Filt^1B)\otimes P$. Similarly, call a type $D$ structure
$(P,\delta^1)$ over $\wBlg$ \emph{filtered short} if the image of
$\delta^1$ lies in $(\Ground\oplus \Filt^1B)\otimes P$.

Recall that $\CFDDm(\Id)$ (Section~\ref{sec:intro-DD-Id}) is a type \DD\
$(\MAlg^{U=1},\MAlg)$-bimodule, i.e., a type $D$ structure over
$(\MAlg^{U=1})\otimes\MAlg$.
Abusing notation, let $(\MAlg^{U=1})\otimes\MAlgc$ denote the
completion of $(\MAlg^{U=1})\otimes\MAlg$ with respect to the length
filtration on the second copy of $\MAlg$. Then,
$(\MAlg^{U=1})\otimes\MAlgc$ is filtered bonsai and $\CFDDm(\Id)$ is
right-filtered short.

The definitions of \emph{bonsai} and \emph{filtered bonsai} for a weighted
$\Ainf$-module are exactly analogous to the definitions for a weighted
algebra (see, e.g.,~\cite[Definition~\ref{Abs:def:filtered-bonsai}]{LOT:abstract}). Recall that a Heegaard diagrams is \emph{admissible} (respectively
\emph{provincially admissible}) if every periodic domain (respectively
provincial periodic domain) has both positive and negative
coefficients. We have:
\begin{lemma}\label{lem:admis-bounded}
  If $\HD$ is admissible then $\CFAc(\HD)$ is a well-defined module over
  $\MAlgc$ and, in fact, is filtered bonsai.
\end{lemma}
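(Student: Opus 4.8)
The plan is to parallel the $\HFa$ argument~\cite[Lemma~\ref{LOT1:...}]{LOT1} (the analogue is~\cite[Lemma 2.?]{LOT1}) with the usual adjustments for the $U$-power bookkeeping. The key point is that admissibility bounds, in terms of a single area parameter, the number of positive domains with controlled boundary behavior, and hence controls both finiteness of the operations and the bonsai property. First I would recall the area-form argument: admissibility of $\HD$ is equivalent~\cite[Proposition 4.25]{LOT1} to the existence of an area form on $\Sigma$ for which every (not just provincial) periodic domain has area $0$. Fix such an area form. Then for any $\x,\y$ and any prescribed sequence $\vec{a}$ of basic algebra elements and integer $w$, the multiplicities of a domain $B\in\pi_2(\x,\y)$ near $\bdy\Sigma$ are determined by $\vec{\rho}$ and $w$, so any two candidate domains differ by a periodic domain, hence have the same area. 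Positivity of domains (required for $\cM^B(\x,\y;\vec{a};w)\neq\emptyset$) then forces all contributing $B$ into a single area class, and there are only finitely many positive domains of a given area; so $m_{1+n}^w$ is a finite sum in $\CFAc(\HD)$ for each $(\vec{a},w)$, and thus $\CFAc(\HD)=\CFAm(\HD)\otimes_{\Field[U]}\Field[[U]]$ is a well-defined weighted $\Ainf$-module over $\MAlgc$ (the completion ensures that the infinitely many $(\vec{a},w)$ with growing $U$-power, hence growing length, assemble into a convergent operation).

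Next I would establish the filtered bonsai property. Recall $\CFAc(\HD)$ is filtered by $U$-power, with quotients $\CFAc(\HD)/U^k$; I must show each such quotient is bonsai, i.e., for any stably-weighted module tree $T$ of sufficiently large dimension the induced operation $m^M(T)$ vanishes mod $U^k$. The mechanism is the same area bound: an operation tree $T$ with $n$ inputs and total weight $w$ produces, after composing the $m_{1+?}^?$ and $\mu_?^?$ along the edges, a sum of moduli space counts indexed by homology classes $B$ whose total $U$-power is $n_z(B)+m$ (with $m$ the $U$-powers in the inputs); modulo $U^k$ only domains with $n_z(B)+m<k$ survive. But $\ind(B,\vec{a},w)$ is additive (Proposition~\ref{prop:emb-ind-add}, Lemma~\ref{lem:bdy-degen-ind-add}), so a nonzero contribution of $m^M(T)$ forces $\ind(B,\vec{a},w)$ to equal a fixed small number determined by $\dim(T)$; combined with the embedded index formula $\ind = e(B)+n_\x(B)+n_\y(B)+|\vec{a}|+\iota(\vec{\rho})+w$ and the area bound, this confines $B$, $\vec{a}$, and $w$ to a finite set once $n_z(B)<k$ is imposed. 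Since $\dim(T)=n+2w-v-1$ grows without bound over trees contributing to a fixed $(B,\vec{a},w)$ only finitely often, for $\dim(T)$ large enough no such $(B,\vec{a},w)$ exists, so $m^M(T)\equiv 0 \pmod{U^k}$. This is precisely the filtered bonsai condition.

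The main obstacle is making the bonsai count genuinely finite: one must check that, for fixed residue class mod $U^k$, the set of triples $(B,\vec{a},w)$ with $\ind(B,\vec{a},w)$ equal to a given value, $B$ positive, $n_z(B)<k$, and $\cM^B(\x,\y;\vec{a};w)\neq\emptyset$ is finite, and moreover that each such triple is hit by operation trees of only bounded dimension. The first finiteness follows from the area argument above (positive domains of bounded area and bounded $n_z$ are finite in number, and bounded $n_z$ plus the index formula bounds $|\vec{a}|$ and $w$). The second---bounding $\dim(T)$ in terms of $(B,\vec{a},w)$---follows because $w$ is bounded (it is the total weight, which appears in the index and the area), $n=|\vec{a}|$ is bounded, and the number of internal vertices $v$ of a tree realizing a nonzero $\mu$-composition on an algebra/module with these bounded data is itself bounded (each internal $\mu_{j}^{v}$-vertex with $j\le 1$ must have positive weight by stability, and the bounded total weight caps their number; $\mu_2^0$-chains can be absorbed). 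I would write this last bookkeeping step carefully since it is where the argument could silently fail, but it is entirely parallel to~\cite[Proposition~\ref{TA:lem:Bonsai}]{LOT:torus-alg} and~\cite[Section~\ref{Abs:sec:boundedness}]{LOT:abstract}, so the exposition can be brief, citing those.
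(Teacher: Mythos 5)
Your proposal is correct and rests on the same mechanism as the paper's proof: admissibility yields an area form with $A(P)=n_z(P)$ for $P\in\pi_2(\x,\x)$, so all domains contributing to a fixed output $U^n\y$ lie in one area class, and there are only finitely many positive domains of a given area. The paper condenses both claims (well-definedness over $\MAlgc$ and filtered bonsai) into the single statement that for each $n$ there is a bound on $w+\sum|\rho^i|$ over nonzero operations outputting $U^n\y$; note that your first paragraph proves finiteness in the wrong direction (fixed inputs rather than fixed output $U$-power) and defers the needed statement to a parenthetical, but your bonsai discussion supplies exactly that statement --- with the small correction that it is the bounded boundary multiplicities of the finitely many admissible domains, rather than the index formula, that bound $|\vec{a}|$ and $w$.
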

\begin{proof}
  To verify that $\CFAc(\HD)$ is both well-defined and filtered bonsai, it
  suffices to show that for any integer $n$ there is a bound on
  $w+\sum |\rho^i|$ so that there is a non-zero operation on $\CFAm(\HD)$ of the
  form
  \begin{equation}\label{eq:admis-contra}
    m_{1+k}^w(\x,\rho^1,\dots,\rho^k)=U^n\y.
  \end{equation}
  Fix an area form on $\Sigma$ so that any element of $\pi_2(\x,\x)$
  has area $A(P)=n_z(P)$; this is possible by admissibility of $\HD$~\cite[Lemma 4.26]{LOT1}.
  Suppose there are infinitely
  many non-zero operations as in Formula~\eqref{eq:admis-contra}. Since there
  are finitely many intersection points, infinitely many of these operations
  have the same starting generator $\x$ and ending generator $\y$. The domains
  of these operation differ by periodic domains (because each
  domain has $n_z=n$), and hence have the same area; but there are only finitely
  many positive domains with any given area.
\end{proof}

\begin{proposition}\label{prop:l-b-DT}
  Let $\Alg$ and $\Blg$ be weighted algebras over $\Field[\Yvar_1]$ and
  $\Field[\Yvar_2]$ so that $\Blg$ is filtered and complete with
  respect to the filtration and $\Yvar_2\in\Filt^1B$.
  Let $\fModule$ be a filtered $\Alg$-module, $\fNodule$ a
  $\Blg$-module, and $P$ a type \DD\ $(\Alg,\Blg)$ bimodule. Assume
  that $\fNodule$ is
  filtered bonsai and $P$ is right-filtered short. Fix any weighted module
  diagonal $\TrMDiag$ and weighted module diagonal primitive $\TrMPrim$. Then:
  \begin{enumerate}
  \item The box product $(\fModule\otimes_{\TrMDiag}\fNodule)\DT P$ is
    a well-defined chain complex.
  \item If $\TrMDiag'$ is another module diagonal and $\fModule'$ and
    $\fNodule'$ are
    modules with $\fNodule'$ filtered bonsai, $\fNodule'\simeq
    \fNodule$ by a filtered bonsai
    homotopy equivalence, and $\fModule'\simeq \fModule$, then
    $(\fModule\otimes_{\TrMDiag}\fNodule)\DT P\simeq
    (\fModule'\otimes_{\TrMDiag'}\fNodule')\DT P$.
  \item The box product $\fModule\DT_{\TrMPrim}P$ is a well-defined
    type $D$ structure, and is filtered short.
  \item If $\TrMPrim'$ is another module diagonal primitive and $M'$ is a 
    modules with $\fModule'\simeq \fModule$ then
    $\fModule\DT_{\TrMPrim}P\simeq \fModule'\DT_{\TrMPrim'}P$.
  \item There is a homotopy equivalence of chain complexes
    $(\fModule\otimes_{\TrMDiag}\fNodule)\DT P\simeq
    \fNodule\DT(\fModule\DT_{\TrMPrim}P)$.
\end{enumerate}
\end{proposition}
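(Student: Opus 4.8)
\textbf{Proof plan for Proposition~\ref{prop:l-b-DT}.}
The plan is to reduce everything to abstract facts about filtered, bonsai-type convergence for weighted $\Ainf$-modules and type $D$ structures, most of which are proved in~\cite{LOT:abstract}. Parts (1)--(4) are the direct analogues of the constructions and invariance statements for the three flavors of box products discussed in Sections~\ref{sec:intro-DD} and~\ref{sec:intro-primitives}; the only new ingredient compared to the cited lemmas is that here $\Blg$ carries a (possibly non-charge) filtration rather than a charge grading, so the convergence has to be extracted from the combination ``$\fNodule$ filtered bonsai'' $+$ ``$P$ right-filtered short'' rather than from a boundedness condition stated in terms of a charge variable. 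First I would set up the bookkeeping: filter $(\fModule\otimes_{\TrMDiag}\fNodule)\DT P$ by the $\Filt$-degree in the $\Blg$-tensor factors coming out of $\delta_P$, noting that the right-filtered-shortness of $P$ forces every application of $\delta^1_P$ to raise $\Filt$-degree by at least $1$, while the bonsai hypothesis on $\fNodule$ bounds (modulo each $\Filt^m$) the number of $\Blg$-operations a single term of the differential can string together via a weighted module tree. Together with completeness of $\Blg$, this gives termwise finiteness and convergence of the sum~\eqref{eq:d-on-box-prod}; that $\partial^2=0$ and that the result is a bifunctor up to homotopy are then~\cite[Lemmas~\ref{Abs:lem:w-DT-sq-0} and~\ref{Abs:lem:w-DT-bifunc}]{LOT:abstract} applied to $\fModule\otimes_{\TrMDiag}\fNodule$ and $P$ after checking (via the same filtration estimate) that all the homotopies in those proofs also converge. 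Parts (3)--(4), for the one-sided product $\fModule\DT_{\TrMPrim}P$, are~\cite[Proposition~\ref{Abs:prop:one-sided-DT-works}, Lemma~\ref{Abs:lem:w-one-side-DT-id-chain-map}, Corollary~\ref{Abs:cor:w-DT-preserve-hequiv}]{LOT:abstract}, again with the charge-boundedness hypothesis there replaced by our filtered-short/filtered-bonsai hypotheses; and filtered-shortness of $\fModule\DT_{\TrMPrim}P$ is immediate, since $\delta^1$ on the one-sided product only outputs algebra elements that are already present in $\delta_P$ (which land in $\Ground\oplus\Filt^1 B$) or are produced by further $\Blg$-operations, which preserve or raise filtration.

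The substantive statement is part (5), the reparenthesization
\[
  (\fModule\otimes_{\TrMDiag}\fNodule)\DT P\ \simeq\ \fNodule\DT(\fModule\DT_{\TrMPrim}P).
\]
Here the key point is purely algebraic: by~\cite[Lemma~\ref{Abs:lem:wprim-gives-diag}]{LOT:abstract} the module diagonal primitive $\TrMPrim$ induces a module diagonal $\LRjoinW'((\wTrPMDiag{*}{*})^{\otimes\bullet})$, and by the bifunctoriality just established (part (2)) the left-hand side computed with $\TrMDiag$ is homotopy equivalent to the left-hand side computed with this induced diagonal. So I would first replace $\TrMDiag$ by the $\TrMPrim$-induced diagonal. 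With that choice, the two sides of (5) have \emph{literally the same underlying module} $N\otimes_{\Field[\Yvar_1,\Yvar_2]}M\otimes_{\Ground}P$, and one compares the two differentials term by term: an operation tree contributing to the differential on $(\fModule\otimes_{\TrMDiag}\fNodule)\DT P$ is, by the definition of $\LRjoinW'$, a pair consisting of an $\fNodule$-tree (the $\LeftJoin$ of the $T_i$) and an $\fModule$-tree-plus-$\Blg$-tree (the $\wRootJoin$ of a $\DegenTree$ with the $S_i$), fed by iterated $\delta_P$; reading the $\wRootJoin$ part as ``first build $\fModule\DT_{\TrMPrim}P$, then feed its $\delta^1$ into $\fNodule$'' shows this is exactly a term of the differential on $\fNodule\DT(\fModule\DT_{\TrMPrim}P)$. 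The extra $\DegenTree$ in $\LRjoinW'$ versus $\LRjoinW$ is precisely the one $\fModule$-module-operation-slot that, in the one-sided product, sits as the ``module'' input of $\fNodule$'s operations — this is the whole reason the primed versions $\circ'$, $\LRjoinW'$ were introduced, and tracking it carefully is where the bijection of terms is made. I expect this term-by-term matching to be essentially a formal unwinding once the right diagonal is fixed; the cited references in~\cite{LOT:abstract} (especially~\cite[Proposition~\ref{Abs:prop:one-sided-DT-works}]{LOT:abstract}) already carry this out in the charge-bounded setting, so the only thing to re-check is that every sum in sight converges, which follows from the same $\Filt$-degree estimate used for parts (1)--(4).

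\textbf{Main obstacle.} The essential difficulty is entirely in the convergence bookkeeping, not in the algebra: one must verify that the filtered-bonsai condition on $\fNodule$ together with right-filtered-shortness of $P$ genuinely control \emph{all} the sums appearing — not only the differentials on the three box products, but also the chain homotopies witnessing $\partial^2=0$, bifunctoriality, and the reparenthesization — in the regime where the charge-based boundedness hypotheses of~\cite{LOT:abstract} do not directly apply. Concretely, the worry is a term that strings together unboundedly many $\fModule$-operations or $\Blg$-operations without raising $\Filt$-degree; I would rule this out by observing that each pass through $\delta_P$ raises $\Filt$-degree (right-filtered-shortness), and between two such passes the module tree on the $\fNodule$ side is, modulo $\Filt^m\Blg$, of bounded dimension (filtered bonsai), while on the $\fModule$ side the weighted module tree is of bounded dimension because $\CFAm$-type modules — more generally any module arising as a one-sided box product of a filtered-bonsai module with a right-filtered-short $P$ — inherit filtered-bonsai-ness (this is part of what (3) must say). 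Making the last clause precise, i.e.\ promoting ``filtered short'' for $\fModule\DT_{\TrMPrim}P$ to whatever finiteness is needed to then box it with $\fNodule$, is the one place where I would need to be careful rather than merely cite; everything else is a translation of the arguments already present in the excerpt and in~\cite{LOT:abstract}.
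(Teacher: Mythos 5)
Your proposal is correct and follows essentially the same route as the paper, whose entire proof consists of the remark that one must check the stated boundedness hypotheses make each expression in the cited proofs from~\cite{LOT:abstract} converge, with the verification left to the reader. Your filtration-degree bookkeeping (each pass through $\delta^1_P$ raises $\Filt$-degree by right-filtered-shortness, while the filtered bonsai condition bounds the trees modulo each $\Filt^m$) is precisely the verification the paper omits, so you have in fact supplied more detail than the published argument.
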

\begin{proof}
  The proof involves checking that the boundedness hypotheses in the statement
  of the proposition are sufficient for each expression in the proofs
  of~\cite[Theorem~\ref{Abs:thm:wTripleTensorProduct}, Proposition~\ref{Abs:prop:one-sided-DT-works} and Corollary~\ref{Abs:cor:w-DT-preserve-hequiv}]{LOT:abstract} to make sense. This is left to the reader.
\end{proof}

\begin{corollary}\label{cor:bounded-DTs}
  Fix bordered Heegaard diagrams $\HD_1$ and $\HD_2$ representing $Y_1$ and
  $Y_2$, respectively, so that $\HD_1$ is provincially admissible and $\HD_2$ is
  admissible. Then, the tensor products
  $\bigl(\CFAm_{U=1}(\HD_1)\otimes \CFAc(\HD_2)\bigr)\DT\CFDDm(\Id)$ and
  $\CFAm_{U=1}(\HD_1)\DT\CFDDm(\Id)$ are well-defined and independent of the choices
  of Heegaard diagrams $\HD_i$, almost complex structures, module diagonal, and
  module diagonal primitive. Further,
  \[
    \bigl(\CFAm_{U=1}(\HD_1)\otimes \CFAc(\HD_2)\bigr)\DT\CFDDm(\Id)\simeq \CFAc(\HD_2)\DT\bigl(\CFAm_{U=1}(\HD_1)\DT\CFDDm(\Id)\bigr).
  \]
\end{corollary}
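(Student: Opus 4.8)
\textbf{Proof proposal for Corollary~\ref{cor:bounded-DTs}.}

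The plan is to deduce the corollary directly from Proposition~\ref{prop:l-b-DT} together with the boundedness and invariance results already established for the pieces. First I would record the required boundedness: $\MAlgc$ is filtered bonsai (noted in Section~\ref{sec:boundedness}), $\MAlg^{U=1}\otimes_{\wADiag}\MAlgc$ is filtered bonsai with the length filtration on the second factor (also noted there), $\CFDDm(\Id)$ is a type \DD\ bimodule by Lemma~\ref{lem:IdIsDD} and is right-filtered short with respect to that filtration (since $\delta^1$ outputs only terms whose $\MAlg$-component is a positive-length Reeb chord, and $U = \Yvar_2$ lies in $\Filt^1\MAlgc$), and $\CFAc(\HD_2)$ is a well-defined $\MAlgc$-module which is filtered bonsai by Lemma~\ref{lem:admis-bounded} (using admissibility of $\HD_2$). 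The module $\CFAm_{U=1}(\HD_1)$ is a weighted $\Ainf$-module over $\MAlg^{U=1}$ by Theorem~\ref{thm:CFAm-is}; it is filtered by $U$-power in a trivial way (or one simply notes that over $\MAlg^{U=1}$ the relevant filtration is used only on the $\CFAc(\HD_2)$ factor and on $\CFDDm(\Id)$), so the roles of $\fModule, \fNodule, P$ in Proposition~\ref{prop:l-b-DT} are played by $\CFAm_{U=1}(\HD_1)$, $\CFAc(\HD_2)$, and $\CFDDm(\Id)$ respectively, with $\Alg = \MAlg^{U=1}$ over $\Field[\Yvar_1]$ and $\Blg = \MAlgc$ over $\Field[\Yvar_2]$.

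With these identifications in hand, parts (1) and (3) of Proposition~\ref{prop:l-b-DT} give that $\bigl(\CFAm_{U=1}(\HD_1)\otimes_{\wMDiag}\CFAc(\HD_2)\bigr)\DT\CFDDm(\Id)$ is a well-defined chain complex and $\CFAm_{U=1}(\HD_1)\DT_{\TrMPrim}\CFDDm(\Id)$ is a well-defined (filtered short) type $D$ structure; part (5) gives the reassociation homotopy equivalence
\[
  \bigl(\CFAm_{U=1}(\HD_1)\otimes_{\wMDiag}\CFAc(\HD_2)\bigr)\DT\CFDDm(\Id)\simeq \CFAc(\HD_2)\DT\bigl(\CFAm_{U=1}(\HD_1)\DT_{\TrMPrim}\CFDDm(\Id)\bigr),
\]
which is the displayed equivalence in the corollary. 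For independence of the choices, combine parts (2) and (4) of Proposition~\ref{prop:l-b-DT} with the invariance theorems: $\CFAm(\HD_i)$ depends up to homotopy equivalence only on the bordered $3$-manifold and $\SpinC$ structure (Theorem~\ref{thm:CFAm-invt}), and the completion $\CFAc(\HD_2)$ inherits a filtered homotopy equivalence from a homotopy equivalence of the uncompleted modules (one checks that the homotopy equivalences produced in Section~\ref{sec:CFA} respect the $U$-power filtration, hence extend to the completions; alternatively, any homotopy equivalence of $\Field[U]$-modules completes). One should note $\CFAc(\HD_2)$ remains filtered bonsai after such a homotopy equivalence, so part (2) applies. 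Independence of the almost complex structure is the special case $\HD_i = \HD_i'$ of these invariance statements, and independence of the weighted module diagonal and module diagonal primitive is exactly the content of parts (2) and (4).

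The main obstacle I anticipate is the bookkeeping around filtrations and completions on the type $A$ side: one must verify that $\CFAm_{U=1}(\HD_1)$ genuinely fits the hypotheses of Proposition~\ref{prop:l-b-DT} as a module over $\Alg = \MAlg^{U=1}$ (here no completion is needed on the $\Alg$ factor, since $U$ acts by $1$), and that the homotopy equivalences of $\CFAc(\HD_2)$ needed for part (2) are genuinely filtered and preserve the filtered bonsai property. The first point is essentially a matter of checking the statement of Proposition~\ref{prop:l-b-DT} does not require any boundedness of $\fModule$ over $\Alg$ beyond being filtered, which is automatic. The second requires observing that the chain homotopy equivalences constructed in the invariance proofs of Section~\ref{sec:CFA} are built from counts of holomorphic curves weighted by $U^{n_z}$, hence are $U$-filtered, and that filtered bonsai-ness of $\CFAc$ is preserved under $U$-filtered homotopy equivalence (this is a general fact about filtered bonsai modules, cf.\ the analogous statements in~\cite{LOT:abstract}). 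Everything else is a direct citation of Proposition~\ref{prop:l-b-DT} and the already-established invariance theorems.
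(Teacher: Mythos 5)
Your proposal is correct and follows the same route as the paper, which deduces the corollary directly from Lemma~\ref{lem:admis-bounded}, Proposition~\ref{prop:l-b-DT}, and the invariance of $\CFAm$ (Theorem~\ref{thm:CFAm-invt}); your additional verification of the filtration hypotheses is sound elaboration rather than a divergence.
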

\begin{proof}
  This is immediate from Lemma~\ref{lem:admis-bounded},
  Proposition~\ref{prop:l-b-DT}, and invariance of $\CFAm(\HD)$
  (Theorem~\ref{thm:CFAm-invt}).
\end{proof}

\begin{remark}
  Recall that our definition of admissibility for bordered Heegaard
  diagrams corresponds to weak admissibility for all
  $\SpinC$-structures in the closed case. In particular, if $\HD_1$ is
  provincially admissible and $\HD_2$ is admissible then the closed
  Heegaard diagram $\HD_1\cup\HD_2$ is weakly admissible for all
  $\SpinC$-structures. So, the $U$-completed complex
  $\CFmm(\HD_1\cup\HD_2)$ is defined, but the uncompleted complex
  $\CFm(\HD_1\cup\HD_2)$ typically is not.
\end{remark}

\subsection{Definition and first properties of \texorpdfstring{$\CFD^-$}{CFD}}
\begin{definition}\label{def:CFDm}
  Given a provincially admissible bordered Heegaard diagram $\HD$ with torus
  boundary define $\CFDm(\HD)$ to be the type $D$ structure
  \[
    \CFDm(\HD)=\CFAm_{U=1}(\HD)\DT \CFDDm(\Id)
  \]
  over $\MAlgc$. Given a bordered $3$-manifold $Y$ with torus boundary, let
  $\HD$ be any provincially admissible bordered Heegaard diagram representing
  $Y$ and define $\CFDm(Y)=\CFDm(\HD)$.
\end{definition}

\begin{proof}[Proof of Theorem~\ref{thm:intro-CFD}]
  This is a restatement of part of Corollary~\ref{cor:bounded-DTs}.
\end{proof}

\begin{remark}\label{rem:alg-diag-dependent}
  We have not shown that $\CFDm(\HD)$ is independent of the choice of
  algebra diagonal. Doing so requires some further homological
  algebra---a relationship between module diagonal primitives with
  respect to different algebra diagonals---which we leave to future
  work.
\end{remark}

In practice, one can often deduce $\CFDm$ from $\CFDa$, via the
following principle, which is illustrated in Section~\ref{sec:knot}:
\begin{proposition}\label{prop:extend-CFDa}
  Fix a bordered manifold $Y$ with torus boundary. Let $P$ be a type
  $D$ structure over $\Alg(T^2)$ which is homotopy equivalent to
  $\CFDa(Y)$. Assume that $P$ is reduced, that is, that the image of
  the differential $\delta^1$ lies in the augmentation ideal (has no
  terms of the form $1\otimes x$).
  Then, $P$ can be extended to a model for $\CFDm(Y)$ in
  the following sense: there are operations $D_\rho\co P\to P$,
  associated to the Reeb chords $\rho$ in $\MAlg$,
  so that:
  \begin{itemize}
  \item The differential on $P$ is given by
    \[
      \delta^1=
      \rho_1\otimes D_{\rho_1}+
      \rho_2\otimes D_{\rho_2}+
      \rho_3\otimes D_{\rho_3}+
      \rho_{12}\otimes D_{\rho_{12}}+
      \rho_{23}\otimes D_{\rho_{23}}+
      \rho_{123}\otimes D_{\rho_{123}}
    \]
  \item If we define $Q$ to be the type $D$ structure over $\MAlg$
    with the same underlying vector space as $P$ and differential given by
    \[
      \delta^1=
      \sum_{\text{chords $\rho$ in $\MAlg$}}\rho\otimes D_{\rho}
    \]
    then $P$ is homotopy equivalent to $\CFDm(Y)$.
  \end{itemize}
\end{proposition}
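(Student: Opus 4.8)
The plan is to reduce the statement to an algebraic fact about the one-sided box product $\DT_{\TrMPrim}$ applied to $\CFDDm(\Id)$, using the invariance of $\CFAm$ and Proposition~\ref{prop:extend-CFDa}'s hypothesis as the starting point. First I would apply Proposition~\ref{prop:extend-CFDa} to extend $\CFDa(Y)$ to a collection of operations $\{D_\rho\}$ producing a type $D$ structure over $\MAlg$, and observe that by construction this extension is homotopy equivalent to $\CFDm(Y)$ when we set $U=0$. The genuine content is to promote ``homotopy equivalent after $U=0$'' to ``homotopy equivalent over $\MAlg$,'' which is exactly what the proposition asserts; so the real work is already stated. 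Therefore what I actually need to prove is the last bullet, i.e.\ that the specific type $D$ structure $Q$ assembled from the $D_\rho$ is homotopy equivalent to $\CFDm(Y) = \CFAm_{U=1}(Y)\DT\CFDDm(\Id)$.

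The strategy I would follow is a filtered/graded comparison argument. Both $Q$ and $\CFDm(Y)$ carry filtrations by $U$-power, and by Remark~\ref{rem:HFAmFree} (and Lemma~\ref{lem:admis-bounded}) the associated graded of $\CFDm(Y)$ with respect to this filtration is $\CFDa(Y)\otimes_\Field\Field[U]$ (with its $U=0$ reduction $\CFDa(Y)$). Since $P$ was chosen reduced and homotopy equivalent to $\CFDa(Y)$, and since the operations $D_\rho$ restrict on the $U=0$ quotient to the differential of $P$, the associated graded of $Q$ is likewise $P\otimes_\Field\Field[U]$, which is homotopy equivalent to the associated graded of $\CFDm(Y)$. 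I would then invoke a standard filtered homological algebra lemma: a filtered chain map (or, in the type $D$ context, a filtered $\DT$-compatible morphism) between filtered-short type $D$ structures over the filtered bonsai algebra $\MAlgc$ which induces a homotopy equivalence on associated graded pieces is itself a homotopy equivalence. Concretely, one builds the morphism $Q\to \CFDm(Y)$ by a term-by-term induction on $U$-power, using the $U=0$ homotopy equivalence between $P$ and $\CFDa(Y)$ as the base case and solving the obstruction at each stage (the obstruction is killed because it lives in a subquotient that the lower-order equivalence already trivializes); convergence is guaranteed by completeness of $\MAlgc$ and the filtered-short condition on $\CFDDm(\Id)$ (Section~\ref{sec:boundedness}), which forces the box product $\CFAm_{U=1}(Y)\DT\CFDDm(\Id)$ to be filtered short as well by Proposition~\ref{prop:l-b-DT}.

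More precisely, the cleanest route is probably to \emph{define} $Q$ honestly as $P' \DT_{\TrMPrim}\CFDDm(\Id)$ for some choice of $\CFAm_{U=1}$-model $P'$ of $\CFAm_{U=1}(Y)$ extending $P$, and then read off that $P' \DT_{\TrMPrim}\CFDDm(\Id)$ has differential of the stated form $\sum_\rho \rho\otimes D_\rho$. The point is that $\CFDDm(\Id)$ has only the six nonzero chord-type components $\rho_1,\rho_2,\rho_3,\rho_{123},\rho_{341}$ (and the idempotent pattern) feeding into its $\delta^1$, together with the operations $\rho_2,\rho_4,\rho_{234},\rho_{412}$ in the other grading; combined with the length grading on $\MAlgc$ and the fact that $Q$ is being taken reduced, only chords of length $\le 3$ can appear as the $\MAlg$-coefficient when we use the algebra diagonal's non-degeneracy properties (Definition~\ref{def:algebra-diagonal}) to see which trees contribute. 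This gives exactly the six terms $\rho_1,\rho_2,\rho_3,\rho_{12},\rho_{23},\rho_{123}$ listed in the first bullet, matching the shape of $\CFDa(Y)$ for the genus $1$ pointed matched circle, with higher-$U$ corrections packaged into the $D_\rho$. The homotopy equivalence $Q\simeq\CFDm(Y)$ is then just invariance of $\CFAm_{U=1}(Y)$ (Theorem~\ref{thm:CFAm-invt}) together with the bifunctoriality of $\DT_{\TrMPrim}$ (Proposition~\ref{prop:l-b-DT}, item (4)).

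The main obstacle I anticipate is the bookkeeping in the second paragraph's induction: showing that the naive extension $Q$ built by hand from an arbitrary reduced $P\simeq\CFDa(Y)$ coincides up to homotopy equivalence with the honest box product, rather than merely agreeing to first order in $U$. This is where one must be careful that the $\Ainf$-module operations $m_{1+n}^w$ on $\CFAm$ with $w>0$ or with $U$-powers in the algebra inputs contribute to the $D_\rho$ in a way compatible with the type $D$ structure equation over $\MAlgc$ — i.e.\ the extension is \emph{constrained}, not free, and one needs the structure equation for $\CFAm$ (Theorem~\ref{thm:CFAm-is}) plus the fact that $\CFDDm(\Id)$ satisfies the type \DD\ relation (Lemma~\ref{lem:IdIsDD}) to see that the hand-built $Q$ automatically satisfies it. Once that compatibility is in place, the filtered-equivalence lemma finishes the argument; everything else is routine given the boundedness infrastructure of Section~\ref{sec:boundedness}.
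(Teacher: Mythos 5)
There is a genuine gap. The heart of the proposition is not the homotopy equivalence $Q\simeq\CFDm(Y)$ in the abstract, but the existence of an extension $Q$ whose \emph{underlying vector space is literally $P$} and whose differential literally restricts to the given differential on $P$ (with the higher chords and $U$-powers added on top). Your proposal never produces such a $Q$. You open by ``applying Proposition~\ref{prop:extend-CFDa}'' to build the $D_\rho$ --- which is circular --- and later pivot to defining $Q=P'\DT\CFDDm(\Id)$ for ``some choice of $\CFAm_{U=1}$-model $P'$ extending $P$.'' But $P$ is a type $D$ structure, not an $\Ainf$-module, and the existence of an $\Ainf$-module $\widehat{N}$ over $\Alg(T^2)$ with $\widehat{N}\DT\CFDDa(\Id)\cong P$ \emph{on the nose} is precisely the nontrivial input: the paper invokes the Hedden--Levine result that every reduced type $D$ structure over $\Alg(T^2)$ arises this way from an $\Ainf$-module with vanishing differential. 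The paper then needs two further steps you omit: Lemma~\ref{lem:hpt} to replace $\CFAm(Y)$ by a model $\fModule$ whose $m_1^0$ has image in $U\cdot M$ (so that $\widehat{M}=M/UM$ has zero differential and hence $\widehat{N}\simeq\widehat{M}$ upgrades to an honest isomorphism $\widehat{N}\cong\widehat{M}$), and Lemma~\ref{lem:extend-iso} to transport the weighted $\MAlg$-module structure across that isomorphism. With those in hand, $Q=\fNodule\DT\CFDDm(\Id)$ extends $P$ by construction and the homotopy equivalence to $\CFDm(Y)$ is immediate from bifunctoriality of $\DT$; no filtered obstruction theory is needed.

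Two further problems. First, your paragraph on the $U$-power filtration asserts that the obstruction at each stage ``is killed because it lives in a subquotient that the lower-order equivalence already trivializes''; this is not an argument, and the mixing of filtration levels by the higher operations $\mu_n^w$ on $\MAlgc$ is exactly where such an induction would need care. Second, your claim that only chords of length $\le 3$ appear as $\MAlg$-coefficients in the differential of $Q$ is false: the worked examples in Section~\ref{sec:CFD-eg} show terms with coefficients $\rho_4$, $\rho_{34}$, $\rho_{341}$, $\rho_{412}$, $\rho_{234}$, and $U^n\rho$. The first bullet of the proposition concerns the differential of $P$ over $\Alg(T^2)$, which is automatic from the hypotheses (the augmentation ideal of $\Alg(T^2)$ is spanned by exactly those six chords); you have conflated the differentials of $P$ and of $Q$.
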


One step in the proof of Proposition~\ref{prop:extend-CFDa} is a
version of the Homological Perturbation Lemma for weighted
$\Ainf$-modules:
\begin{lemma}\label{lem:hpt}
  Let $\fModule=(M,\{m_n^w\})$ be a weighted module over a weighted
  algebra $\Alg$ and $N$ a chain complex homotopy equivalent to
  $(M,m_1^0)$. Then, the differential on $N$ can be extended to a
  weighted module structure $\fNodule$ on $N$ which is homotopy
  equivalent to $\fModule$ as a weighted module over $\Alg$.
\end{lemma}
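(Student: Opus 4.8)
The plan is to prove Lemma~\ref{lem:hpt} by a standard application of the Homological Perturbation Lemma, adapted to the weighted $\Ainf$ setting, following the same strategy used for Lemma~\ref{lem:HomologicalPertAinfAlg} but now for modules rather than algebras. First I would fix a strong deformation retraction data between $(M,m_1^0)$ and $N$: maps $i\co N\to M$, $\Pi\co M\to N$, and a homotopy $h\co M\to M$ with $h^2=0$, $\Pi h=0$, $hi=0$, $\Pi i=\Id_N$, and $i\Pi=\Id_M+m_1^0 h+h m_1^0$, where $i$ and $\Pi$ are chain maps with respect to $m_1^0$ and the differential on $N$. Such data exists because $N\simeq (M,m_1^0)$ as chain complexes over $\Ground$ (one can always arrange the side conditions by the usual trick of replacing $h$ by $h m_1^0 h$, etc.). The transferred operations $\{\underline m_{1+n}^w\co N\otimes A^{\otimes n}\to N\}$ are then defined by the usual tree sum: sum over stably-weighted module trees $T$ with $1+n$ inputs and total weight $w$, where the leftmost-strand vertices carry the module operations $m_{1+k}^v$ of $\fModule$ and the other vertices carry the algebra operations $\mu_k^v$ of $\Alg$, the module input is fed through $i$, each internal edge along the leftmost strand carries $h$, each internal edge off the leftmost strand carries the algebra identity, and the output carries $\Pi$. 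Simultaneously one defines the components of the homotopy equivalence $\fNodule\to\fModule$ by the same trees but with the output edge labeled $h$ instead of $\Pi$, and its components $\fModule\to\fNodule$ symmetrically (feeding the module input through $i$ replaced by $\Id_M$ on that side).

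The key steps, in order, are: (1) set up the strong deformation retraction and verify the side conditions; (2) write down the transferred operations $\underline m_{1+n}^w$ via the tree sum, noting that for this to be well-defined one needs the sum to be finite for each fixed input degree, which follows because $\Alg$ is filtered bonsai (here $\Alg=\MAlgc$) and $\fModule$ is filtered bonsai (here $\CFAc(\HD)$, by Lemma~\ref{lem:admis-bounded})—or, if one does not want to assume boundedness, one works modulo each filtration level; (3) verify that the $\underline m_{1+n}^w$ satisfy the weighted $\Ainf$-module relations; (4) verify that the accompanying tree sums define mutually inverse weighted $\Ainf$-module homomorphisms, exhibiting $\fModule\simeq\fNodule$. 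Step~(3) and step~(4) are both formal consequences of the strong deformation retraction identities together with the weighted $\Ainf$-relations for $\fModule$ and $\Alg$: expanding the structure relation for $\{\underline m\}$ produces a telescoping cancellation exactly as in the classical (unweighted) case, the only new bookkeeping being the weights on vertices, which add up correctly because the weighted module trees complex $\wMTreesCx{*}{*}$ has a differential given precisely by edge contraction (Section~\ref{sec:intro-DD}). I would present this by citing the classical HPT for $\Ainf$-modules (e.g.~as in the references already invoked for Lemma~\ref{lem:HomologicalPertAinfAlg}: \cite{Kadeisvili80:hpt,KontsevichSoibelman01:HMS,KellerPertTheory}) and spelling out only the modifications for weights, exactly parallel to the proof of Lemma~\ref{lem:HomologicalPertAinfAlg}.

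The main obstacle is convergence of the tree sums: unlike Lemma~\ref{lem:HomologicalPertAinfAlg}, where the homotopy $h$ landed in the ideal generated by $X$ and hence automatically truncated the sums, there is no such structural simplification here in general. The cleanest fix is to invoke the boundedness already available in our setting—$\MAlgc$ is filtered bonsai (Section~\ref{sec:boundedness}) and $\CFAc(\HD)$ is filtered bonsai (Lemma~\ref{lem:admis-bounded})—so that modulo each filtration level only finitely many trees contribute; one should also require the chain homotopy equivalence $N\simeq(M,m_1^0)$ to be filtered (respecting the $U$-power filtration) so that the transferred structure is again filtered bonsai. I would state the lemma in this filtered form, which is all that is needed for the application to Proposition~\ref{prop:extend-CFDa}. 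With convergence handled, the remaining verification that $\{\underline m_{1+n}^w\}$ satisfies the weighted $\Ainf$-relations and that the transfer maps are homotopy equivalences is routine diagram-chasing of the kind already carried out for the algebra case, and I would leave those computations to the reader exactly as Lemma~\ref{lem:HomologicalPertAinfAlg} does.
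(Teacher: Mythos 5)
Your overall strategy (homotopy transfer for weighted modules) is the one the paper uses, but your very first step is unjustified, and it is exactly where the paper's proof does half of its work. You assert that a strong deformation retraction of $(M,m_1^0)$ onto $N$ exists ``because $N\simeq(M,m_1^0)$,'' invoking the usual trick for arranging side conditions. That trick (replacing $h$ by $h\partial h$, the Lambe--Stasheff formulas) only repairs the side conditions of a deformation retraction you already have; it does not upgrade a bare homotopy equivalence to one. In general neither of two homotopy equivalent complexes need be a retract of the other (for instance $N$ could be $M$ plus a large contractible summand, or vice versa), so you cannot assume $\Pi\circ i=\Id_N$ with the homotopy living on $M$. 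The paper's proof handles this by factoring the homotopy equivalence through a mapping cylinder to obtain a zig-zag of strong deformation retractions, and then treats the two directions separately: when the module carrying the known structure is the retract of the larger complex, the transferred operations are the trivial conjugation $f\circ m^w_{1+k}\circ(g\otimes\Id)$; only when the new complex is the retract does one need the iterated-homotopy formula. Your proposal sets up and covers only the latter direction.

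Two further points. First, your transferred-operations formula sums over all stably-weighted module trees, placing $\mu^v_k$ at vertices off the leftmost strand with identity-labeled internal edges. The correct formula (and the paper's) is the ``caterpillar'' sum $\sum f\circ_1 m^{w_\ell}_{i_\ell}\circ_1 h\circ_1\cdots\circ_1 h\circ_1 m^{w_1}_{i_1}\circ_1 g$ involving only module operations along the module strand: the homotopy lives only on $M$, so trees with algebra-operation vertices contribute nothing, and including them with identity edges would add precisely the terms $m(\dots,\mu(\dots),\dots)$ of the structure relation and break the weighted $\Ainf$-identities. Second, the convergence concern you flag as the ``main obstacle'' is a red herring: for fixed $(n,w)$ the stability condition $i_j+2w_j\geq 2$ at each vertex bounds the number of vertices in a contributing caterpillar by $(n+2w)/2$, so each $\underline{m}^w_{1+n}$ is a finite sum and no bonsai or filtered hypotheses are needed. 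Weakening the lemma to a filtered statement, as you propose, is therefore unnecessary and would only complicate its use in the proof of Proposition~\ref{prop:extend-CFDa}.
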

\begin{proof}
  The proof is obtained from the unweighted case
  by summing over the weights.

  Given (possibly ungraded) chain complexes $(C,\bdy_C)$ and
  $(D,\bdy_D)$, a \emph{deformation retraction} from $C$ to $D$
  consists of chain maps $f\co C\to D$ and $g\co D\to C$ and a chain
  homotopy $T\co C\to C$ so that $f\circ g = \Id_D$ and $\Id_C+g\circ
  f= \partial_C\circ T + T\circ\partial_C$. A \emph{strong
    deformation retraction} is a deformation retraction so that
  $T\circ g=0$, $f\circ T=0$, and $T\circ T=0$. Lambe and
  Stasheff~\cite[Section 2.1]{LambeStasheff87:perturb}
  give magic formulas for turning any deformation retraction into a
  strong deformation retraction: replace $T$ by $T''$ where
  \begin{align*}
    T' &= (\Id_C+g\circ f)\circ T\circ (\Id+g\circ f)\\
    T'' &= T'\circ \partial_C\circ T'.
  \end{align*}
  (In verifying these formulas, it is helpful to notice that
  $(g\circ f+\Id_C)^2=(g\circ f+\Id_C)$.)

  Any chain homotopy equivalence can be turned into a zig-zag of
  deformation retractions, by taking mapping cylinders. Specifically,
  given a homotopy equivalence consisting of chain maps $f\co C\to D$
  and $g\co D\to C$ and homotopies $T\co C\to C$ and $U\co D\to D$,
  the mapping cylinder of $f$ is $C\oplus C[1]\oplus D$ with
  differential
  \[
    \begin{bmatrix}
      \bdy_C & \Id & 0\\
      0 & \bdy_C & 0\\
      0 & f & \bdy_D
    \end{bmatrix}.
  \]
  This cylinder retracts to $D$ via the obvious inclusion, the projection
  $ 
  \begin{bmatrix}
    f & 0 & \Id
  \end{bmatrix}
  $
  and the homotopy
  \[
    \begin{bmatrix}
      0 & T & g\\
      \Id & 0 & 0\\
      0 & 0 & 0
    \end{bmatrix}.
  \]
  The cylinder retracts to $C$ via the inclusion as the first summand,
  the projection $
  \begin{bmatrix}
    \Id & T & g
  \end{bmatrix}
  $ and the homotopy
  \[
    \begin{bmatrix}
      0 & 0 & 0\\
      0 & T & g\\
      fT+Uf & 0 & U
    \end{bmatrix}.
  \]

  So, to prove the lemma, it suffices to prove two special cases:
  \begin{enumerate}
  \item The case that $(M,m_1^0)$ is a strong deformation retraction
    of $(N,\bdy)$.
  \item The case that $(N,\bdy)$ is a strong deformation retraction of
    $(M,m_1^0)$.
  \end{enumerate}

  In the first case, define the operations $n_{1+k}^w$ on $N$ by
  \[
    n_{1+k}^w(x,a_1,\dots, a_k)=f(m_{1+k}^w(g(x),a_1,\dots,a_k)).
  \]
  It is immediate that these satisfy the weighted
  $\Ainf$-relations. Promote $f$ and $g$ to maps of weighted modules
  by setting $f_1^0=f$, $f_{k}^w=0$ for $k+2w>1$, $g_1^0=g$, and
  $g_k^w=0$ for $k+2w>1$. Again, the weighted $\Ainf$ relations for
  these maps are immediate. Finally, promote the homotopy $U$ from
  $f\circ g$ to the identity to a weighted homotopy in the same way.

  The second case is more interesting. Given maps $\alpha\co W\otimes
  V^{\otimes a}\to X$ and $\beta\co X\otimes V^{\otimes b}\to Y$ let
  \[
    \beta\circ_1\alpha=\beta\circ (\alpha\otimes\Id_{V^{\otimes b}})\co
    W\otimes V^{a+b}\to Y.
  \]
  Let $f\co M\to N$, $g\co N\to M$, and $T\co M\to M$ denote the
  strong deformation retract from $M$ to $N$. Define the operations on
  $N$ by
  \[
    n_{1+k}^w=
    \sum_{\substack{i_1+\cdots+i_\ell=k\\ w_1+\cdots+w_\ell=w}}
    f\circ_1 m_{i_\ell}^{w_\ell}\circ_1 T\circ_1 m_{i_{\ell-1}}^{w_{\ell-1}}\circ_1\cdots\circ_1 
    T\circ_1 m_{i_1}^{w_1} \circ_1 g
  \]
  for $k+w>0$ and declaring that $n_1^0$ is the given
  differential. In this sum (and later ones), each $m_{i_j}^{w_j}$ has
  $i_j+2w_j\geq 2$. (Compare, for instance,~\cite[Lemma 9.6]{LOT4}.) The
  special case $\ell=1$ is allowed in this sum; that term is
  $f\circ m_{1+k}^w\circ g$.

  Promote $f$ to a weighted module map by declaring that
  $f_1^0$ is the given map $f$ and for $k+w>0$,
  \[
    f_{1+k}^w=
    \sum_{\substack{i_1+\cdots+i_\ell=k\\ w_1+\cdots+w_\ell=w}}
    f\circ_1 m_{i_{\ell}}^{w_{\ell}}\circ_1 T\circ_1\cdots\circ_1 
    m_{i_1}^{w_1}\circ_1 T.
  \]
  Promote $g$ to a weighted module map similarly, via the formula
  \[
    g_{1+k}^w=
    \sum_{\substack{i_1+\cdots+i_\ell=k\\ w_1+\cdots+w_\ell=w}}
    T\circ_1 m_{i_{\ell}}^{w_{\ell}}\circ_1\cdots\circ_1 T\circ_1
    m_{i_1}^{w_1} \circ_1 g.
  \]
  Promote $T$ to a homotopy of weighted module maps similarly, via the
  formula
  \[
    T_{1+k}^w=
    \sum_{\substack{i_1+\cdots+i_\ell=k\\ w_1+\cdots+w_\ell=w}}
    T\circ_1 m_{i_{\ell}}^{w_{\ell}}\circ_1\cdots\circ_1 T\circ_1
    m_{i_1}^{w_1} \circ_1 T.
  \]

  Verifying that the $f_k^w$, $g_k^w$, and $T_k^w$ satisfy the required structure
  relations is left to the reader; note that the fact that $T^2=0$ and
  $f\circ g=\Id$ imply that $\{f_k^w\}\circ\{g_k^w\}=\Id$.
\end{proof}

Another step in the proof of Proposition~\ref{prop:extend-CFDa} is to
show that we can push weighted $\Ainf$-module structures through
certain isomorphisms (which is obvious until one writes a proof):
\begin{lemma}\label{lem:extend-iso}
  Fix a weighted $\Ainf$-module $\fModule$ over $\MAlg$, and let
  $\widehat{M}=M/UM$, viewed as an $\Ainf$-module over $\Alg(T^2)$
  (the $\HFa$ version of the bordered algebra, from~\cite{LOT1}). Let
  $\widehat{N}$ be another $\Ainf$-module over $\Alg(T^2)$, and
  $\widehat{F}=\{\widehat{f}_{1+n}\}\co \widehat{M}\to\widehat{N}$ and $\widehat{G}=\{\widehat{g}_{1+n}\}\co
  \widehat{N}\to \widehat{M}$ be inverse isomorphisms of $\Ainf$-modules over
  $\Alg(T^2)$. Let $N=\widehat{N}\otimes_{\Field}\Field[U]$, with
  its induced module structure over $\Alg(T^2)$. Then, the operations
  on $N$ extend to a weighted $\Ainf$-module structure $\fNodule$ over
  $\MAlg$ isomorphic to $M$.
\end{lemma}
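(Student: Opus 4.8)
The plan is to reduce Lemma~\ref{lem:extend-iso} to a transport-of-structure statement for weighted $\Ainf$-modules along an isomorphism, analogous to the way one transports an ordinary $\Ainf$-module structure along an isomorphism of the underlying modules. First I would set up the data carefully. We have a weighted $\Ainf$-module $\fModule=(M,\{m_{1+n}^w\})$ over $\MAlg$ with $M=\widehat M\otimes_\Field\Field[U]$ and $\widehat M=M/UM$; the operations $m_{1+n}^0$ descend to operations $\widehat m_{1+n}$ on $\widehat M$ over $\Alg(T^2)=\MAlg/U\MAlg$. (Recall that $\mu_n^w$ for $w>0$ lands in $U\MAlg$, and $\mu_0^1$ has no $U$, but as an input to a module operation on $M$ with output required in $N$ modulo $U$ it contributes nothing new; so the $U=0$ reduction of $\fModule$ is precisely the $\Ainf$-module $\widehat M$ over $\Alg(T^2)$.) We are given inverse $\Ainf$-isomorphisms $\widehat F=\{\widehat f_{1+n}\}$ and $\widehat G=\{\widehat g_{1+n}\}$ between $\widehat M$ and $\widehat N$; by ``inverse'' I will mean that $\widehat G\circ\widehat F=\Id_{\widehat M}$ and $\widehat F\circ\widehat G=\Id_{\widehat N}$ strictly as $\Ainf$-module morphisms (if one only has homotopy inverses, first replace them by strict inverses, which is possible since a homotopy equivalence that is an isomorphism on underlying complexes has a strict inverse up to the standard mapping-cylinder argument, or simply invert the triangular matrix of maps).

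The core step is the following transport construction. The isomorphism $\widehat F$, $\widehat G$ over $\Alg(T^2)$ has only finitely many nonzero components on each input (indeed the bordered algebra $\Alg(T^2)$ and the diagrams involved are such that the morphisms are finite; in any case one works degree by degree). Tensor everything with $\Field[U]$: this gives $\Field[U]$-linear families $F=\{f_{1+n}\}$ and $G=\{g_{1+n}\}$ of morphisms between $M=\widehat M\otimes\Field[U]$ and $N=\widehat N\otimes\Field[U]$ which are inverse $\Ainf$-module isomorphisms over $\MAlg/U\MAlg$ acting on $M,N$ viewed merely as $\Field[U]$-modules with operations using only $\mu_n^0$ modulo $U$. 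Now I want to push the full weighted structure on $M$ through $F,G$. The natural formula is the ``conjugation'' one: define operations on $N$ by declaring that the collection $\{n_{1+n}^w\}$ is characterized by the condition that $F$ (completed to a weighted morphism with components $F^w$ defined analogously to the restriction-functor bar-map $F^w$ in Section~\ref{sec:GroundingU}, using the higher $f$'s and zero in the weight direction) becomes a strict weighted $\Ainf$-isomorphism $\fModule\to\fNodule$. Concretely, one can write $n_{1+n}^w$ as an alternating sum over ways of composing $m$'s, $f$'s and $g$'s along a tree, exactly as in the proof of the homological perturbation Lemma~\ref{lem:hpt} but with the homotopy $T$ replaced by zero and $f,g$ the given isomorphisms; since $\widehat G\widehat F=\Id$ and $\widehat F\widehat G=\Id$ the resulting $\fNodule$ is a genuine weighted module and $F$, $G$ are inverse weighted isomorphisms. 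I would organize this as: (i) promote $F$ to $\{F^w\}$ and $G$ to $\{G^w\}$ by the bar-map formula; (ii) define $n^w = F^{(\cdot)}\circ m^{(\cdot)}\circ G^{(\cdot)}$ in the appropriate convolution sense; (iii) verify the weighted $\Ainf$-relations for $\{n_{1+n}^w\}$ by a direct manipulation, using the $\Ainf$-relations for $\{m\}$ and the fact that $G$ is a morphism and $FG=\Id$; (iv) observe $n_{1+n}^0$ reduces mod $U$ to $\widehat n_{1+n}$, the transported structure on $\widehat N$, which by construction (since $\widehat F,\widehat G$ are isomorphisms over $\Alg(T^2)$) equals the given module structure on $\widehat N$, hence $N$ with the $n$'s really does extend the given $\Alg(T^2)$-structure on $\widehat N=N/UN$ tensored up. Alternatively, and more cleanly, one can avoid rederiving the relations: apply Lemma~\ref{lem:hpt} to the strict deformation retraction in which $M$ and $N$ are related by the (strict) isomorphism $F$ with homotopy $T=0$ — a strict isomorphism is a strong deformation retraction with $T=0$ — which directly outputs a weighted module structure $\fNodule$ on $N$ and a weighted homotopy equivalence $\fModule\simeq\fNodule$; then note the equivalence is in fact an isomorphism because on underlying complexes $F$ is an isomorphism and the $T=0$ input makes all the higher homotopy terms vanish.

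The remaining point is to check that this $\fNodule$ really is defined over $\MAlg$ and not some completion, and that it is ``isomorphic to $\fModule$'' in the intended strong sense rather than merely homotopy equivalent. For the former: all the sums defining $n_{1+n}^w$ are finite for fixed $n,w$ because $F$ and $G$ have finitely many nonzero components and the $\Ainf$-relations are used only locally; no completion is needed, matching the situation for $\MAlg$ (which is not completed, cf.\ Section~\ref{sec:boundedness}). For the latter: $F=\{f_{1+n}^w\}$ (with $f^w$ as in (i)) is by construction a strict weighted $\Ainf$-morphism $\fModule\to\fNodule$ with $f_1^0$ the underlying $\Field[U]$-module isomorphism $\widehat F_1\otimes\Field[U]$, and $G$ is its strict two-sided inverse, so $\fModule$ and $\fNodule$ are isomorphic weighted $\Ainf$-modules, as claimed.

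The main obstacle I anticipate is bookkeeping, not conceptual: correctly defining the weighted bar-extension $\{F^w\}$ of the isomorphism so that the conjugation formula genuinely satisfies the weighted $\Ainf$-relations, and in particular handling the weight-$0$-in-the-morphism-but-positive-weight-in-the-algebra terms (the $\mu_0^1$ insertions) so that nothing is double counted. Concretely, $F^w$ for $w>0$ must be taken to be zero (the isomorphism itself carries no weight), so the only way weight enters $n_{1+n}^w$ is through the internal $m^{w'}$'s, and one must check that this truncation is consistent with the structure equations — it is, because the weighted $\Ainf$-relations are weight-graded and the morphism-side weights only ever add to the module-side weights. Using Lemma~\ref{lem:hpt} as a black box with $T=0$ sidesteps essentially all of this, so I would present that as the primary argument and relegate the explicit conjugation formulas to a remark; the only thing to verify by hand is then the elementary claim that a strict isomorphism of chain complexes is a strong deformation retraction with vanishing homotopy, and that Lemma~\ref{lem:hpt}'s output in that degenerate case is again a strict isomorphism of weighted modules.
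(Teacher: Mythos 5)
Your overall strategy---extend $\widehat{F}$ and $\widehat{G}$ by zero in the weight direction and transport the structure of $\fModule$ across the isomorphism---is the same as the paper's, but the argument you propose to make primary has a genuine gap. Lemma~\ref{lem:hpt} takes only \emph{chain-level} retraction data $(f,g,T)$ as input, so applying it with $T=0$ transports the structure along the linear component $\widehat{f}_1$ alone. The resulting operations on $N$ reduce modulo $U$ to $\widehat{f}_1\circ\widehat{m}_{1+k}\circ_1\widehat{f}_1^{-1}$, which is the structure on $\widehat{N}$ obtained by linear transport from $\widehat{M}$, \emph{not} the given structure on $\widehat{N}$ (these agree only when $\widehat{F}$ has no higher components $\widehat{f}_{1+n}$, $n\geq 1$). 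So this route produces \emph{a} weighted module structure on the underlying space of $N$, but not one extending the given operations on $N$, which is the whole content of the lemma and is essential for its use in Proposition~\ref{prop:extend-CFDa} (where the fixed type $D$ structure $P\cong\widehat{N}\DT\CFDDa(\Id)$ must be what gets extended).

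Your fallback, the conjugation formula ``$F\circ m\circ G$,'' is also incomplete as stated. When $F$ and $G$ have higher components, pure conjugation does not satisfy the weighted $\Ainf$-relations: the paper's Formula~\eqref{eq:extend-iso-how} contains a second sum, in which an algebra operation $\mu_{r+1}^w$ is applied to a block of algebra inputs sitting \emph{between} $g^0_{1+q}$ and $f^0_{1+p}$ (fed into $f$, not into $m$). This correction term is exactly what is needed for the structure relation to close, and cancelling it uses the identity coming from $\widehat{F}\circ\widehat{G}=\Id$ in the specific form displayed in the paper's proof. Relatedly, you do not say how elements of $\MAlg$ are to be fed into $\widehat{f}_{1+n}$, which only accepts inputs from $\Alg(T^2)$: the paper uses the projection $\Pi$ killing the ideal generated by $\rho_4$, and the fact that this is \emph{not} an $\Ainf$-ideal (since $\mu_4^0(\rho_4,\rho_3,\rho_2,\rho_1)=U\iota_1$) is precisely why the correction term and the cancellation identity are needed rather than the bookkeeping being automatic, as your last paragraph suggests.
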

\begin{proof}
  Extend the isomorphisms $\widehat{F}$ and $\widehat{G}$ to weighted maps
  $F\co \fModule\to \fNodule$ and $G\co \fNodule\to \fModule$ defined by $f_{1+n}^w=0$ and $g_{1+n}^w=0$
  for $w>0$, linearity over $\FF_2[U]$, and the formulas
  \begin{align*}
    f_{1+n}^0(x,a_1,\dots,a_n)&=\widehat{f}_{1+n}(x,\Pi(a_1),\dots,\Pi(a_n))\\
      g_{1+n}^0(x,a_1,\dots,a_n)&=\widehat{g}_{1+n}(x,\Pi(a_1),\dots,\Pi(a_n))
  \end{align*}
  where $\Pi\co \MAlg\to\Alg(T^2)\otimes\Field[U]$ is the quotient map by the ideal
  generated by $\rho_4$. (Note that this is not an $\Ainf$-ideal,
  since $\mu_4^0(\rho_4,\rho_3,\rho_2,\rho_1)=U\iota_1$.)

  The operations on $\fNodule$ are defined by
  \begin{multline}\label{eq:extend-iso-how}
    m_{1+n}^w(x,a_1,\dots,a_n)=\sum_{p+q+r=n} f_{1+p}^0(m_{1+q}^w(g_{1+r}^0(x,a_1,\dots,a_r),a_{r+1},\dots,a_{r+q}),a_{r+q+1},\dots,a_{n})\\
    +\sum_{\substack{p+q+r=n\\q+1\leq i\leq n-r}} f_{1+p}^0(g_{1+q}^0(x,a_1,\dots,a_q),a_{q+1},\dots,a_{i-1},\mu_{r+1}^w(a_i,\dots,a_{i+r}),a_{i+r+1},\dots,a_n),
  \end{multline}
  where the $m_{1+q}^w$ on the right is the operation on $\fModule$.
  In the graphical notation of our previous paper~\cite{LOT:abstract},
  these operations are
  \[
    \mathcenter{    \begin{tikzpicture}
      \node at (0,0) (tc) {};
      \node at (0,-1) (g) {$g^0_\bullet$};
      \node at (0,-2) (m) {$m^\bullet_\bullet$};
      \node at (0,-3) (f) {$f^0_\bullet$};
      \node at (0,-4) (bc) {};
      \node at (1.5,0) (tr) {};
      \node at (1.5,-1) (Delta) {$\Delta$};
      \draw[taa] (tr) to (Delta);
      \draw[taa] (Delta) to (g);
      \draw[taa] (Delta) to (m);
      \draw[taa] (Delta) to (f);
      \draw[moda] (tc) to (g);
      \draw[moda] (g) to (m);
      \draw[moda] (m) to (f);
      \draw[moda] (f) to (bc);
    \end{tikzpicture}
  }
  +
  \mathcenter{
    \begin{tikzpicture}
      \node at (0,0) (tc) {};
      \node at (2,0) (tr) {};
      \node at (0,-2) (g) {$g^0_\bullet$};
      \node at (0,-3) (f) {$f^0_\bullet$};
      \node at (0,-4) (bc) {};
      \node at (2,-1) (Delta) {$\Delta$};
      \node at (1,-2) (mu) {$\mu_\bullet^\bullet$};
      \draw[moda] (tc) to (g);
      \draw[moda] (g) to (f);
      \draw[moda] (f) to (bc);
      \draw[alga] (mu) to (f);
      \draw[taa] (tr) to (Delta);
      \draw[taa] (Delta) to (mu);
      \draw[taa, bend left=15] (Delta) to (f);
      \draw[taa, bend right=15] (Delta) to (f);
      \draw[taa] (Delta) to (g);
    \end{tikzpicture}.
  }
  \]

  The fact that $\widehat{F}$ and $\widehat{G}$ are
  inverse isomorphisms (between $\widehat{M}$ and $\widehat{N}$) implies
  that the second sum in Equation~\eqref{eq:extend-iso-how} is equal
  to
  \[
    \sum_{\substack{p+q+r=n\\1\leq i\leq q}} f^0_{1+p}(g^0_{1+q}(x,a_1,\dots,a_{i-1},\mu_{r+1}^w(a_i,\dots,a_{i+r}),a_{i+r+1},\dots,a_{q+r}),a_{q+r+1},\dots,a_n),
    \]
  where the sum over $r$ starts from $r=-1$;
  or, in graphical notation,
  \[
    \mathcenter{
      \begin{tikzpicture}
        \node at (0,0) (tc) {};
        \node at (2,0) (tr) {};
        \node at (0,-2) (g) {$g^0_\bullet$};
        \node at (0,-3) (f) {$f^0_\bullet$};
        \node at (0,-4) (bc) {};
        \node at (2,-1) (Delta) {$\Delta$};
        \node at (1,-2) (mu) {$\mu_\bullet^\bullet$};
        \draw[moda] (tc) to (g);
        \draw[moda] (g) to (f);
        \draw[moda] (f) to (bc);
        \draw[alga] (mu) to (f);
        \draw[taa] (tr) to (Delta);
        \draw[taa] (Delta) to (mu);
        \draw[taa, bend left=15] (Delta) to (f);
        \draw[taa, bend right=15] (Delta) to (f);
        \draw[taa] (Delta) to (g);
      \end{tikzpicture}
    }
    +
    \mathcenter{
      \begin{tikzpicture}
        \node at (0,0) (tc) {};
        \node at (2,0) (tr) {};
        \node at (0,-2) (g) {$g^0_\bullet$};
        \node at (0,-3) (f) {$f^0_\bullet$};
        \node at (0,-4) (bc) {};
        \node at (2,-1) (Delta) {$\Delta$};
        \node at (1,-1.5) (mu) {$\mu_\bullet^\bullet$};
        \draw[moda] (tc) to (g);
        \draw[moda] (g) to (f);
        \draw[moda] (f) to (bc);
        \draw[alga] (mu) to (g);
        \draw[taa] (tr) to (Delta);
        \draw[taa] (Delta) to (mu);
        \draw[taa, bend left=15] (Delta) to (g);
        \draw[taa, bend right=15] (Delta) to (g);
        \draw[taa] (Delta) to (f);
      \end{tikzpicture}
    }=0.
  \]
  Using this, verifying the structure relations is an exercise.
\end{proof}

\begin{proof}[Proof of Proposition~\ref{prop:extend-CFDa}]
  By Lemma~\ref{lem:hpt}, there is a weighted module $\fModule$ so
  that the
  image of $m_1^0$ lies in $U\cdot M$ and a homotopy equivalence
  $\CFAm(Y)\simeq \fModule$. Consequently
  (by~\cite[Lemma~\ref{Abs:lem:w-DT-bifunc}]{LOT:abstract})
  $\CFDm(Y)\simeq \fModule\DT\CFDDm(\Id)$.
  Let $\widehat{M}=M/UM$ viewed as a module over $\Alg(T^2)$ as in
  Lemma~\ref{lem:extend-iso}. The homotopy equivalence $\CFAm(Y)\simeq
  \fModule$ induces a homotopy equivalence $\CFAa(Y)\simeq
  \widehat{M}$, and hence $\CFDa(Y)\simeq \widehat{M}\DT\CFDDa(\Id)$.

  Hedden-Levine show that, given a reduced type $D$ structure $P$ over
  $\Alg(T^2)$ there is an $\Ainf$-module $\widehat{N}$ over
  $\Alg(T^2)$ so that $P\cong \widehat{N}\DT\CFDDa(\Id)$, and the
  differential $m_1$ on $\widehat{N}$ vanishes. 
  Taking $P=\widehat{M}\DT\CFDDa(\Id)$, since tensoring with $\CFDDa(\Id)$ is an
  equivalence of categories, $\widehat{N}\simeq \CFAa(Y)$. Thus,
  $\widehat{N}\simeq\widehat{M}$, but since the differential on both
  vanishes, in fact $\widehat{N}\cong\widehat{M}$, as $\Ainf$-modules
  over $\Alg(T^2)$. Thus, by Lemma~\ref{lem:extend-iso}, $\widehat{N}$
  can be extended to a weighted $\Ainf$-module $\fNodule$ over $\MAlg$
  isomorphic to $\fModule$.

  We claim that $\fNodule\DT\CFDDm(\Id)$ is the desired type $D$
  structure. Since $\fNodule$ extends $\widehat{N}$ and
  $\widehat{N}\DT\CFDDa(\Id)\cong P$, $N\DT\CFDDm(\Id)$ extends
  $P$. Further, $\fNodule\DT\CFDDm(\Id)\cong M\DT\CFDDm(\Id)\simeq
  \CFAm(Y)\DT\CFDDm(\Id)=\CFDm(Y)$, as claimed.
\end{proof}

Given a knot $K$ in $S^3$, we showed previously that
$\CFDa(S^3\setminus\nbd(K))$ is determined by the knot Floer complex
of $K$~\cite[Chapter 11]{LOT1}. A key step in the proof was a
structure relation for the \emph{generalized coefficient
  maps}~\cite[Proposition 11.30]{LOT1}. Specifically:
\[
  D\circ D_{0123}+D_3\circ D_{012} + D_{23}\circ D_{01} + D_{123} \circ D_0
  + D_{0123}\circ D =\Id
\]
and similarly for any cyclic permutation of the indices $\{0,1,2,3\}$.
We note that the structure equation for $\CFDm$ implies these relations:
\begin{proposition}\cite[Proposition 11.30]{LOT1}
  The generalized coefficient maps $D_I$ satisfy the structure
  relations above.
\end{proposition}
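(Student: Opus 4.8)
The plan is to read the relations off from the weighted type $D$ structure equation for $\CFDm$. By Definition~\ref{def:CFDm} and Corollary~\ref{cor:bounded-DTs}, $\CFDm(Y)$ is a filtered-short weighted type $D$ structure over $\MAlgc$, so its structure map $\delta^1\co P\to\MAlgc\otimes P$ satisfies $\sum_{n,w\geq 0}(\mu_n^w\otimes\Id_P)\circ\delta^n=0$, where $\delta^n$ is the $n$-fold iterate of $\delta^1$ (Theorem~\ref{thm:intro-CFD}). The first step is to fix a model of $\CFDm(Y)$ and a bijection between its generalized coefficient maps and those of~\cite[Section~11]{LOT1}: writing $\delta^1=\sum_I\rho_I\otimes D_I$ with $I$ ranging over the cyclic intervals of $\ZZ/4\ZZ$ — with $\rho_\emptyset$ the identity idempotent, so that $D_\emptyset=D$ is the internal differential, and $\rho_{\{0,1,2,3\}}=\rho_{0123}$ the length-four loop chord — the maps $D_I$ are precisely these components. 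Identifying them with the maps of~\cite{LOT1} uses Proposition~\ref{prop:extend-CFDa} (together with the known description of $\CFDa$) and the explicit form~\eqref{eq:DD-id-diff} of $\CFDDm(\Id)$, which guarantees that only chords $\rho_I$ with $I$ a cyclic interval occur.

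Next I would iterate, $\delta^n(x)=\sum_{I_1,\dots,I_n}\rho_{I_1}\otimes\cdots\otimes\rho_{I_n}\otimes(D_{I_n}\circ\cdots\circ D_{I_1})(x)$, substitute into the structure equation, and project the resulting identity in $\MAlgc\otimes P$ onto the summand $\Field\,\rho_{0123}\otimes P$. By Definition~\ref{def:holo-def-alg} (equivalently Theorem~\ref{thm:AlgOfSurface}), the only operations on $\MAlg$ with a chord output and no factor of $U$ are the binary multiplication $\mu_2^0$ and the curvature operation $\mu_0^1=\rho_{1234}+\rho_{2341}+\rho_{3412}+\rho_{4123}$ (the centered operations output $U^k\iota_\ell$, and the remaining non-centered operations $\rho^0\cdot(\text{centered})$ or $(\text{centered})\cdot\rho^{n+1}$ carry a factor of $U$), so the $\rho_{0123}$-component of the structure equation receives contributions only from the $n=0$ term $\mu_0^1\otimes\Id_P$ and the $n=2$ term $(\mu_2^0\otimes\Id_P)\circ\delta^2$. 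The former contributes $\rho_{0123}\otimes\Id_P$. The latter contributes $\rho_{0123}\otimes\bigl(\sum D_{I_2}\circ D_{I_1}\bigr)(x)$, where the sum is over factorizations $\rho_{0123}=\rho_{I_1}\rho_{I_2}$ into two cyclic-interval chords (allowing one factor to be an idempotent), namely $\emptyset\cdot\{0,1,2,3\}$, $\{0\}\cdot\{1,2,3\}$, $\{0,1\}\cdot\{2,3\}$, $\{0,1,2\}\cdot\{3\}$, and $\{0,1,2,3\}\cdot\emptyset$. Since we are over $\Field$, the vanishing of the $\rho_{0123}$-component is exactly
\[
  \Id_P=D\circ D_{0123}+D_3\circ D_{012}+D_{23}\circ D_{01}+D_{123}\circ D_0+D_{0123}\circ D,
\]
and the three cyclic permutations of $\{0,1,2,3\}$ come identically from the $\rho_{1230}$-, $\rho_{2301}$-, and $\rho_{3012}$-components of the same equation (equivalently, from the $\ZZ/4\ZZ$-symmetry of $\MAlg$).

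I expect the main obstacle to be the conventions in the first step: one must check carefully that the coefficient maps extracted from the chosen model of $\CFDm(Y)$ literally coincide with the generalized coefficient maps $D_I$ of~\cite[Proposition~11.30]{LOT1}, including the identification of $D_\emptyset$ with an internal differential and of $D_{0123}$ with the length-four component. This forces the use of a model that need not be reduced, so one has to track how reduction — which kills $D_\emptyset$, $D_0$, and $D_{0123}$ and makes the relation vacuous — interacts with the packaging. Convergence of the sum in the structure equation is not an issue, since $\MAlgc$ is filtered bonsai and $\CFDm(Y)$ is filtered short (Section~\ref{sec:boundedness}), so only finitely many terms land in the $\rho_{0123}$-summand; and, everything being over $\Field$, there are no signs to verify.
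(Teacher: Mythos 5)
Your proposal is correct and is essentially the paper's own argument, just written out in full: the paper's proof likewise reads the relation off as the $\rho_{0123}$-coefficient (and its cyclic permutations) of the weighted type $D$ structure equation for $\CFDm(Y)$, with the identity term coming from the curvature operation $\mu_0^1$ and the five compositions coming from $\mu_2^0\circ\delta^2$ over the factorizations of $\rho_{0123}$. Your additional remarks on identifying the $D_I$ with those of the earlier paper, on non-reduced models, and on convergence are sensible elaborations but do not change the route.
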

\begin{proof}
  The map $D_I$ is the coefficient of $\rho_I$ in the differential
  $\delta^1$ on $\CFDm(Y)$, so the equation shown is the coefficient
  of $\rho_{0123}$ in the weighted type $D$ structure equation (and
  the three equations obtained by cyclically permuting the indices are
  the coefficients of $\rho_{1230}$, $\rho_{2301}$, and
  $\rho_{3012}$). The identity map on the right comes from the
  operation $\mu_0^1$ that appears in the structure relation.
\end{proof}


\section{Gradings}\label{sec:gradings}
In this section we discuss the gradings on the bordered algebras. This
is a straightforward extension of the $\HFa$ case~\cite[Chapter
10]{LOT1}, as the grading on the algebra was~\cite[Section
4]{LOT:torus-alg}. Generalities on gradings were already discussed in
Section~\ref{sec:gradings-abstract}. Sections~\ref{sec:big-gr},~\ref{sec:int-gr},
and~\ref{sec:small-gr} recall the big, intermediate, and small grading
groups from our previous paper~\cite{LOT:torus-alg} and construct the
gradings on $\CFAm$ and $\CFDm$ by them. Section~\ref{sec:gr-pairing}
states a graded version of the pairing theorem.

\subsection{Gradings by the big grading group}\label{sec:big-gr}

Fix a bordered Heegaard diagram $\HD$. We will define a
$\bigGroup$-set $S'_A(\HD)$ and a grading on $\CFAm(\HD)$ by
$S'_A(\HD)$, where $\bigGroup$ is the grading group from
Section~\ref{sec:big-gr-group}.
The grading set $S'_A(\HD)$ is a disjoint union over $\SpinC$-structures,
$S'_A(\HD)=\coprod_{\spinc\in\Spinc(Y)}S'_A(\HD,\spinc)$, and we will
work one $\SpinC$-structure at a time. Fix a generator~$\x_0$ with
$\spinc(\x)=\spinc$. (If no such generator exists, the module
$\CFAm(\HD)$ is trivial, so there is nothing to grade, but if one
wants to construct the grading set anyway perform an isotopy to create such a
generator.) Associated to each domain $B\in\pi_2(\x,\y)$ are the
multiplicities
$\bdy^\bdy(B)\in H_1(Z,\alphas\cap Z)=\ZZ^4$. Let
\[
  g'(B)=(-e(B)-n_\x(B)-n_\y(B);\bdy^\bdy(B))
\]
(compare~\cite[Formula~(10.2)]{LOT1} and
Formula~\eqref{eq:emb-ind}). Let $[\Sigma]$ be the domain
with multiplicity $1$ everywhere.
\begin{lemma}
  For any domain $B$, $g'(B)\in \bigGroup$. Further,
  $g'(B+[\Sigma])=\grb(U)\cdot g'(B)$.
\end{lemma}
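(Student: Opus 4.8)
The plan is to handle the two assertions separately, both essentially by direct computation with the group law on $\OneHalf\ZZ\times\ZZ^4$, using Lemma~\ref{lem:iota-is-gr} and the embedded index formula as the only real inputs.

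First I would dispose of the identity $g'(B+[\Sigma])=\grb(U)\cdot g'(B)$, which is a direct computation. Since $[\Sigma]$ has multiplicity $1$ on every region it has no corners, so $e([\Sigma])=\chi(\Sigma)=1-2g$; its average local multiplicity at each of the $g$ points of a generator is $1$, so $n_\x([\Sigma])=n_\y([\Sigma])=g$; and $\bdy^\bdy([\Sigma])=(1,1,1,1)$. Hence $g'([\Sigma])=(-(1-2g)-g-g;1,1,1,1)=(-1;1,1,1,1)=\grb(U)$. For a general $B\in\pi_2(\x,\y)$ one has $e(B+[\Sigma])=e(B)+(1-2g)$, $n_\x(B+[\Sigma])=n_\x(B)+g$, $n_\y(B+[\Sigma])=n_\y(B)+g$ and $\bdy^\bdy(B+[\Sigma])=\bdy^\bdy B+(1,1,1,1)$, so
\[
g'(B+[\Sigma])=\bigl(-(e(B)+n_\x(B)+n_\y(B))-1;\ \bdy^\bdy B+(1,1,1,1)\bigr).
\]
On the other hand $\grb(U)=(-1;1,1,1,1)$ is central: in the group law the four $2\times2$ determinants all have a row equal to $(1,1,1,1)$, and their sum telescopes to $0$, so multiplication by $\grb(U)$ on either side merely adds components. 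Multiplying $g'(B)$ by $\grb(U)$ therefore reproduces exactly the displayed right-hand side, giving the claim.

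Next, for $g'(B)\in\bigGroup$, I would first use the identity just proved to reduce to the case $\bdy^\bdy B\geq 0$: pick $k\gg 0$ with $\bdy^\bdy B+k(1,1,1,1)\geq 0$, so $g'(B+k[\Sigma])=\grb(U)^k\,g'(B)$, and since $\grb(U)\in\bigGroup$ it suffices to show $g'(B+k[\Sigma])\in\bigGroup$, i.e.\ we may assume $\bdy^\bdy B$ has non-negative entries. In that case $\bdy^\bdy B$ is the support of a sequence of Reeb chords $\vec{\rho}=(\rho^1,\dots,\rho^m)$ whose idempotents are consecutive in the sense of Lemma~\ref{lem:iota-is-gr} (the same Eulerian-path bookkeeping as in~\cite{LOT1}). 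By that lemma $\grb(\rho^1)\cdots\grb(\rho^m)=(\iota(\vec{\rho});\bdy^\bdy B)$, which lies in $\bigGroup$ since each $\grb(\rho^i)$ is a product of $\grb(\rho_1),\dots,\grb(\rho_4)$. A short computation (the cross-term determinants of a vector and its negative vanish) gives
\[
g'(B)\cdot\bigl(\grb(\rho^1)\cdots\grb(\rho^m)\bigr)^{-1}=\bigl(-(e(B)+n_\x(B)+n_\y(B)+\iota(\vec{\rho}));\ 0,0,0,0\bigr).
\]
Now Formula~\eqref{eq:emb-ind} identifies $e(B)+n_\x(B)+n_\y(B)+\iota(\vec{\rho})$ with $\ind(B,\vec{\rho},w)-|\vec{\rho}|-w$, and the proof of Proposition~\ref{prop:emb-ind} shows $\ind(B,\vec{\rho},w)$ equals the Maslov index of the associated polygon class in the symmetric product, hence is an integer (regardless of whether a holomorphic representative exists). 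So the element above is $(n;0,0,0,0)$ for some $n\in\ZZ$, which lies in $\bigGroup$ because $\lambda=(1;0,0,0,0)\in\bigGroup$; multiplying back by $\grb(\rho^1)\cdots\grb(\rho^m)\in\bigGroup$ yields $g'(B)\in\bigGroup$.

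Everything here is routine arithmetic except two points, and I expect the first to be the main (still elementary) obstacle: verifying that any non-negative $1$-chain arising as $\bdy^\bdy B$ for a genuine domain $B\in\pi_2(\x,\y)$ is the support of a chord sequence with consecutive idempotents as required by Lemma~\ref{lem:iota-is-gr}; this is handled exactly as in the $\HFa$ case. The second point, that the index formula always evaluates to an integer, is immediate from the identification with Maslov indices of polygon classes via the tautological correspondence used in the proof of Proposition~\ref{prop:emb-ind}.
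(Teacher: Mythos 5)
Your computation of the second identity is exactly the paper's: the $\SpinC$-components visibly agree, and the Maslov component follows from $e(B+[\Sigma])=e(B)+1-2g$, $n_\x(B+[\Sigma])=n_\x(B)+g$, $n_\y(B+[\Sigma])=n_\y(B)+g$, together with the (correct) observation that the half-determinant corrections against $(1,1,1,1)$ telescope to zero. For the first assertion, however, you take a genuinely different route: the paper simply reduces (via the second identity) to domains with $n_z=0$ and then cites the $\HFa$-case result \cite[Lemma 10.3]{LOT1}, whereas you re-derive that fact by realizing $\bdy^\bdy B$ as the support of a consecutive chord sequence, invoking Lemma~\ref{lem:iota-is-gr}, and extracting integrality of the leftover Maslov component from the embedded index formula (whose identification with a symmetric-product Maslov index is purely topological, so no holomorphic representative is needed). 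This is correct, and it has the virtue of making the lemma self-contained modulo one combinatorial step that you rightly flag as the crux: not every non-negative vector in $\ZZ^4$ is the support of a consecutive chord sequence (e.g.\ $(1,0,1,0)$ is not), so one must use that $B\in\pi_2(\x,\y)$ forces the directed Eulerian-path condition $|(a+c)-(b+d)|\leq 1$ on $\bdy^\bdy B=(a,b,c,d)$. That condition does follow from the constraint that, along each $\alpha$-arc, the multiplicity of $\bdy^\alpha B$ can jump only at the (at most one each) points of $\x$ and $\y$ on that arc, and it is preserved under adding $k[\Sigma]$; with it, the Eulerian path in the two-vertex quiver exists and your argument closes up. The trade-off is that your version costs this extra bookkeeping but avoids importing the external lemma, while the paper's one-line reduction is shorter but opaque without \cite{LOT1} in hand.
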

\begin{proof}
  The first statement follows from the second and the corresponding
  fact in the $\HFa$ case~\cite[Lemma 10.3]{LOT1}. For the second
  statement, it
  is obvious that the $\SpinC$-components agree.
  For the Maslov
  components, observe that $e(B+[\Sigma])=e(B)-2g+1$,
  $n_\x(B+[\Sigma])=n_\x(B)+g$, and $n_\y(B+[\Sigma])=n_\y(B)+g$.
\end{proof}

Now, define the type~$A$ grading set to be the quotient of the grading
group by the gradings of the periodic domains:
\[
  S'_A(\HD,\spinc)=
  P'(\x_0) \backslash\bigGroup \quad\text{where}\quad
  P'(\x_0) = \langle g'(B)\mid B\in\pi_2(\x_0,\x_0),\ n_z(B)=0\rangle,
\]
Given a generator $\x$ with $\spinc(\x)=\spinc$, fix
$B_x\in\pi_2(\x_0,\x)$ with $n_z(B_x)=0$ and define
\[
  \gr'(\x)=P'(\x_0)g'(B_x)
\]
(compare~\cite[Definition 10.11]{LOT1}).
\begin{proposition}
  The function $\gr'$ is independent of the choice of domains $B_x$
  and defines a grading on $\CFAm(\HD,\spinc)$ by $S'_A(\HD,\spinc)$.
\end{proposition}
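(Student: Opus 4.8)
The plan is to mimic the proof of the analogous statement in the $\HFa$ case, \cite[Proposition~10.14]{LOT1} and surrounding discussion, adapting it for the $U$-power in the operations. There are two things to check: first, that $\gr'(\x)$ is well-defined, i.e., independent of the choice of $B_x\in\pi_2(\x_0,\x)$ with $n_z(B_x)=0$; second, that the weighted operations $m_{1+n}^k$ respect this grading in the sense of Section~\ref{sec:gradings-abstract}, with the central elements $\lambda_d=\lambda=(1;0,0,0,0)$ and weight grading $\lambda_w=\grb(\rho_{1234})\cdot\lambda^2=(1;1,1,1,1)$ recalled in Section~\ref{sec:big-gr-group}.

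For well-definedness: if $B_x, B_x'\in\pi_2(\x_0,\x)$ both have $n_z=0$, then $B_x-B_x'\in\pi_2(\x_0,\x_0)$ with $n_z=0$, so $g'(B_x-B_x')\in P'(\x_0)$; combined with additivity of $g'$ under concatenation of domains (which follows from additivity of $e$, $n_\x$, $n_\y$, and $\bdy^\bdy$, exactly as in \cite[Lemma~10.4]{LOT1}), this gives $P'(\x_0)g'(B_x)=P'(\x_0)g'(B_x')$. The additivity $g'(B_1*B_2)=g'(B_1)g'(B_2)$ is really the content of Lemma~\ref{lem:iota-is-gr} translated into the present notation, together with the observation that $-e(B)-n_\x(B)-n_\y(B)$ differs from $\ind(B,\vec\rho,w)$ by $|\vec\rho|+\iota(\vec\rho)+w$ (Formula~\eqref{eq:emb-ind}); so this step is routine once the analogue of \cite[Lemma~10.4]{LOT1} is invoked. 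I would also note that $B_x$ with $n_z(B_x)=0$ exists by the discussion of $\pi_2$ in Section~\ref{sec:background} (since $\spinc(\x)=\spinc(\x_0)$ and we can subtract copies of $[\Sigma]$).

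For the grading compatibility: suppose $m_{1+n}^w(\x,a_1,\dots,a_n)$ has a nonzero $\y$-coefficient, coming from a class $B\in\pi_2(\x,\y)$ with $\ind(B,\vec a,w)=1$, contributing $U^{m+n_z(B)}\y$ where $m=\sum r_i$ is the total $U$-power in the $a_i$. I want to show
\[
  \gr'(\y)\cdot\grb(U)^{m+n_z(B)}=\gr'(\x)\cdot\lambda^{n-1}\lambda_w^{w}\grb(a_1)\cdots\grb(a_n).
\]
Choosing $B_\y=B_\x*B$ (which has $n_z=n_z(B_\x)+n_z(B)=n_z(B)$, so we then further subtract $n_z(B)[\Sigma]$ to restore $n_z=0$, accounting for the $\grb(U)^{n_z(B)}$ factor via the lemma above), this reduces to the identity $g'(B)\cdot\grb(U)^m = \lambda^{n-1}\lambda_w^w\,\grb(\rho^1)\cdots\grb(\rho^n)$ where $\vec\rho$ is the underlying chord sequence and the extra $\grb(U)^m$ absorbs the explicit $U$-powers in the $a_i$. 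This is precisely Formula~\eqref{eq:emb-ind} ($\ind(B,\vec a,w)=e(B)+n_\x(B)+n_\y(B)+|\vec a|+\iota(\vec\rho)+w=1$) for the Maslov component together with Lemma~\ref{lem:iota-is-gr} for the $\SpinC$/Maslov bookkeeping of the chord product, plus the observation that $|\vec a|=|\vec\rho|+(\text{number of factors of }U)$ and each factor of $U$ shifts the Maslov grading by $-1$ and the $\SpinC$ grading by $(-1,-1,-1,-1)=\grb(U)$'s spin$^c$ part. I should also treat separately, but trivially, the case where some $a_i$ is an idempotent or unit (then $m_2^0(\x,\iota_i)=\x$ and the grading is unchanged because $\grb(\iota_i)$ is the identity). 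The main obstacle is purely organizational: carefully matching the sign and $U$-power conventions in Formula~\eqref{eq:emb-ind} against the group law on $\bigGroup$, exactly as in \cite[Section~10.3]{LOT1}, and confirming that $\lambda_w=(1;1,1,1,1)$ is the correct weight-grading element so that the $\mu_0^1$ term $\rho_{1234}+\dots$ (which has $\grb(\rho_{1234})=(-1;1,1,1,1)=\lambda^{-2}\lambda_w$) is homogeneous of the right degree; this last point is the "graded miracle" already used in Section~\ref{sec:DD-Id}, so it is available.
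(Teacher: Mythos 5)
Your proposal is correct and follows essentially the same route as the paper: well-definedness via additivity of $g'$ under concatenation (the extension of \cite[Lemma 10.4]{LOT1}), and compatibility with the operations via the embedded index formula of Proposition~\ref{prop:emb-ind} combined with Lemma~\ref{lem:iota-is-gr}, with the $U$-powers absorbed by the central element $\grb(U)$. The only quibble is that your displayed reduction $g'(B)\cdot\grb(U)^m=\lambda^{n-1}\lambda_w^w\grb(\rho^1)\cdots\grb(\rho^n)$ should have the $\grb(U)^m$ cancel from both sides (leaving exactly the identity the paper verifies), but your surrounding discussion makes clear this is what you intend.
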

\begin{proof}
  From~\cite[Lemma 10.4]{LOT1} (and an easy extension to cover cases with
  $n_z \ne 0$), given $B_1\in\pi_2(\w,\x)$ and $B_2\in\pi_2(\x,\y)$,
  \[
    g'(B_1*B_2)=g'(B_1)*g'(B_2).
  \]
  So,
  given another domain $B'_x\in\pi_2(\x_0,\x)$ with
  $n_z(B'_x) = 0$,
  $B_x*(B'_x)^{-1}\in\pi_2(\x_0,\x_0)$ and hence
  \[
    g'(B_x)=g'(B_x*(B'_x)^{-1})g'(B'_x)
  \]
  so these descend to the same element of $S'_A(\HD,\spinc)$.

  It remains to verify that if $U^\ell\y$ occurs as a term in
  $m_{1+n}^k(\x,\rho^1,\dots,\rho^n)$ then
  \[
    \grb(U)^\ell\gr'(\y)=\grb(\x)\lambda_d^{n-1}\lambda_w^k\gr'(\rho^1)\cdots\gr'(\rho^n).
  \]
  (Recall that $\lambda_d=(1;0,0,0,0)$ and $\lambda_2=(1;1,1,1,1)$.)
  By construction, there is a domain $B\in\pi_2(\x,\y)$ so that
  $\ind(B,\rho^1,\dots,\rho^n,w)=1$. From the first part of the proof,
  we may assume that $B_y*\ell[\Sigma]=B_x*B$ (since $n_z(B)=\ell$).
  Then, $\grb(U)^\ell\gr'(\y)=\gr'(\x)g'(B)$ so it
  suffices to show that
  $g'(B)=\lambda_d^{n-1}\lambda_w^k\gr'(\rho^1)\cdots\gr'(\rho^n)$. It
  is immediate from the definitions that the $\SpinC$-components of
  the two sides agree.
  Considering the Maslov components, by
  Proposition~\ref{prop:emb-ind},
  \[
    1=e(B)+n_\x(B)+n_\y(B)+n+\iota(\vec{\rho})+k
  \]
  so the Maslov component of $g'(B)$ is given by
  \[
    -e(B)-n_\x(B)-n_\y(B)=n-1+\iota(\vec{\rho})+k.
  \]
  By Lemma~\ref{lem:iota-is-gr}, the Maslov component of
  $\gr'(\rho^1)\cdots\gr'(\rho^n)$ is $\iota(\vec{\rho})$. The Maslov
  component of $\lambda_w^k$ is $k$ and the Maslov component of
  $\lambda_d^{n-1}$ is $n-1$, as desired.
\end{proof}

\begin{lemma}
  Let $\x_0$ and $\x_0'$ be base generators representing $\spinc$, and
  let $S'_A(\HD,\spinc,\x_0)$ and $S'_A(\HD,\spinc,\x'_0)$ be the
  corresponding $\bigGroup$-sets. Then there is an isomorphism
  $S'_A(\HD,\spinc,\x_0)\to S'_A(\HD,\spinc,\x'_0)$ of $\bigGroup$-sets so
  that the diagram
  \[
    \begin{tikzcd}
      & \Gen(\HD)\arrow[dl,"\gr'_{\x_0}"  above]\arrow[dr,"\gr'_{\x'_0}" above]& \\
      S'_A(\HD,\spinc,\x_0)\arrow[rr] & & S'_A(\HD,\spinc,\x'_0)
    \end{tikzcd}
  \]
  commutes.
\end{lemma}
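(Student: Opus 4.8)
The plan is to exhibit the isomorphism of $\bigGroup$-sets explicitly, using the fact that both $S'_A(\HD,\spinc,\x_0)$ and $S'_A(\HD,\spinc,\x'_0)$ are quotients of the single group $\bigGroup$ by conjugate subgroups, and that the conjugating element is $g'(B)$ for any domain $B\in\pi_2(\x_0,\x'_0)$ with $n_z(B)=0$. First I would recall that since $\spinc(\x_0)=\spinc(\x'_0)$, the set $\pi_2(\x_0,\x'_0)$ is nonempty (see Section~\ref{sec:background}), so we may pick $B_0\in\pi_2(\x_0,\x'_0)$ with $n_z(B_0)=0$. Set $h=g'(B_0)\in\bigGroup$. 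The key algebraic input is the concatenation formula $g'(B_1*B_2)=g'(B_1)*g'(B_2)$ from the proof of the preceding proposition (an extension of~\cite[Lemma 10.4]{LOT1} to the case $n_z\neq 0$), which gives for any periodic domain $B\in\pi_2(\x'_0,\x'_0)$ with $n_z(B)=0$ that $B_0*B*B_0^{-1}\in\pi_2(\x_0,\x_0)$ has $n_z=0$ and $g'(B_0*B*B_0^{-1})=h\,g'(B)\,h^{-1}$. Hence $P'(\x_0)=h\,P'(\x'_0)\,h^{-1}$, so right multiplication by $h$ on $\bigGroup$ descends to a well-defined bijection $\Phi\colon P'(\x_0)\backslash\bigGroup\to P'(\x'_0)\backslash\bigGroup$, $P'(\x_0)g\mapsto P'(\x'_0)h^{-1}g$, which is visibly a map of right $\bigGroup$-sets.

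Next I would check commutativity of the triangle. Fix a generator $\x$ with $\spinc(\x)=\spinc$ and domains $B_x\in\pi_2(\x_0,\x)$, $B'_x\in\pi_2(\x'_0,\x)$, each with $n_z=0$, used to define $\gr'_{\x_0}(\x)$ and $\gr'_{\x'_0}(\x)$ respectively. Then $B_0*B'_x$ and $B_x$ both lie in $\pi_2(\x_0,\x)$ with $n_z=0$, so (by the independence-of-choices part of the preceding proposition, or directly since their difference is a null-homologous periodic domain) $P'(\x_0)g'(B_x)=P'(\x_0)g'(B_0*B'_x)=P'(\x_0)\,h\,g'(B'_x)$. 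Applying $\Phi$ to this element gives $P'(\x'_0)h^{-1}\,h\,g'(B'_x)=P'(\x'_0)g'(B'_x)=\gr'_{\x'_0}(\x)$, which is exactly the required commutativity $\Phi\circ\gr'_{\x_0}=\gr'_{\x'_0}$.

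The bulk of the work is purely bookkeeping: verifying that $\Phi$ is independent of the choice of $B_0$ (two such choices differ by a periodic domain, so differ by an element of $P'(\x_0)$, which is killed on passing to the quotient), that $\Phi$ is indeed $\bigGroup$-equivariant for the right actions, and that it is a bijection with inverse induced by right multiplication by $h^{-1}$ (equivalently, by $g'(B_0^{-1})$). None of these present genuine difficulty; the one point requiring a little care is confirming that the concatenation formula $g'(B_1*B_2)=g'(B_1)*g'(B_2)$ holds in the generality needed here (i.e.\ allowing $n_z\neq 0$), which is where the group law on $\bigGroup$ interacts with the quantities $e$, $n_\x$, $n_\y$, $\bdy^\bdy$; this is the analogue of~\cite[Lemma 10.4]{LOT1} and follows by the same computation together with the identity $g'(B+[\Sigma])=\grb(U)\cdot g'(B)$ established above. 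I expect this verification of the concatenation identity, rather than anything about $\Phi$ itself, to be the main (modest) obstacle, and it is already essentially contained in the proof of the preceding proposition. An entirely parallel statement and proof holds for $\CFDm$, with right actions replaced by left actions throughout.
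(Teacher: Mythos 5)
Your proof is correct and is precisely the argument the paper has in mind: the paper's own proof simply defers to the $\HFa$ case (\cite[Theorem 10.18]{LOT1}), which is this same construction --- conjugation of the stabilizer subgroups by $h=g'(B_0)$ for a connecting domain $B_0\in\pi_2(\x_0,\x_0')$ with $n_z=0$, resting on the concatenation identity $g'(B_1*B_2)=g'(B_1)g'(B_2)$ already established in the proof of the preceding proposition. One small wording slip: the map $P'(\x_0)g\mapsto P'(\x_0')h^{-1}g$ is induced by \emph{left} multiplication by $h^{-1}$ (as any morphism of right $\bigGroup$-sets of this form must be), not by ``right multiplication by $h$'' as you describe it.
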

\begin{proof}
  This follows from the same proof as in the case of
  $\HFa$~\cite[Theorem 10.18]{LOT1}, and is left to the reader.
\end{proof}

We turn next to the grading on $\CFDm(\HD)=\CFAm_{U=1}(\HD)\DT\CFDDm(\Id)$.
Recall that $\CFDDm(\Id)$ is a bimodule over $\MAlg^{U=1}$ and
$\MAlg$. The algebra $\MAlg^{U=1}$ inherits a grading by
$\bigGroup/(-1;1,1,1,1)$, with weight grading
$\lambda'_w=(1;1,1,1,1)=(\lambda'_d)^2$.  The type \DD\ structure
$\CFDDm(\Id)$ has a grading by the left
$\bigl(\bigGroup/(-1;1,1,1,1)\bigr)\times_\ZZ\bigGroup$-set
\[
  S'_{\DD}=\bigGroup/r'(\grb(U))=\bigGroup/(-1;-1,-1,-1,-1)
\]
as at the end of Section~\ref{sec:DD-Id}. Here, the variables $\Yvar_1$ and
$\Yvar_2$ in the definition of a weighted diagonal act by $1$ and~$U$,
respectively, which have gradings
$(0;0,0,0,0)=(\lambda'_d)^{-2}\lambda'_w$ and
$(-1;1,1,1,1)=\lambda_d^{-2}\lambda_w$, as required (see
Section~\ref{sec:gradings-abstract}). Thus, $\CFDm(\HD)$ inherits a
grading by
\begin{align*}
  S'_D(\HD,\spinc)&=\bigl(S'_A(\HD,\spinc)/(-1;1,1,1,1)\bigr)\times_{\bigGroup/(-1;1,1,1,1)}S'_{\DD}\\
  &\cong \bigGroup/\bigl\langle(1;1,1,1,1),\ r'(g'(B))\mid B\in\pi_2(\x_0,\x_0),\ n_z(B)=0\bigr\rangle.
\end{align*}

Note that in $S'_\DD$ the element $1\otimes U$ has grading
\[
  (-1;1,1,1,1)=(-2;0,0,0,0)
\]
so the action of $U$ on $\CFDm(\HD)$ shifts the grading by
$\lambda^{-2}$; this is analogous to the fact that $U$ has Maslov
grading $-2$ in $\CF^-$.

\subsection{Projection to the intermediate grading group}\label{sec:int-gr}

We discuss two ways to reduce the $\bigGroup$-gradings to gradings by
smaller groups. The first is analogous to the construction used
in~\cite[Chapter 11]{LOT1}. Let
\[
  \interGroup=\biggl\{(m;a,b)\in (\OneHalf\ZZ)^3\biggm\vert a+b\in\ZZ, m+\frac{(2a+1)(a+b+1)+1}{2}\in\ZZ\biggr\}
\]
(cf. \cite[Section 4.2]{LOT:torus-alg}).
We call $G$ the \emph{intermediate grading group}. There is a
homomorphism $\pi\co \bigGroup\to \interGroup$ defined by
\[
  \pi(j;a,b,c,d)=\left(j-d;\frac{a+b-c-d}{2},\frac{-a+b+c-d}{2}\right).
\]
Given a homogeneous algebra element $a$, let
$\gr(a)=\pi(\grb(a))$. Since $\pi$ is a homomorphism and $\grb$
defines a
grading on $\MAlg$, $\gr$ also defines a grading on $\MAlg$. For this
grading, the central elements are $\lambda_d=(-1;0,0)$ and
$\lambda_w=\pi(1;1,1,1,1)=(0;0,0)$.  As usual, we will often
abbreviate $\lambda_d$ as just $\lambda$.

Because we will use them in Section~\ref{sec:knot}, we record the
intermediate gradings of certain algebra elements explicitly; from
these, it is easy to obtain the grading of any algebra element:
\begin{gather}\label{eq:gr-comps}
\begin{aligned}
  \gr(\rho_1)&=(-\OneHalf;\OneHalf,-\OneHalf) &
    \gr(\rho_{12})&=(-\OneHalf;1,0) &
      \gr(\rho_{123})&=(-\OneHalf;\OneHalf,\OneHalf) \\
  \gr(\rho_2)&=(-\OneHalf;\OneHalf,\OneHalf)  &
    \gr(\rho_{23})&=(-\OneHalf;0,1) &
      \gr(\rho_{234})&=(-\textstyle\frac{3}{2};-\OneHalf,\OneHalf) \\
  \gr(\rho_3)&=(-\OneHalf;-\OneHalf,\OneHalf) &
    \gr(\rho_{34})&=(-\textstyle\frac{3}{2};-1,0)&
      \gr(\rho_{341})&=(-\textstyle\frac{3}{2};-\OneHalf,-\OneHalf) \\
  \gr(\rho_4)&=(-\textstyle\frac{3}{2};-\OneHalf,-\OneHalf)\quad &
    \gr(\rho_{41})&=(-\textstyle\frac{3}{2};0,-1)\quad &
      \gr(\rho_{412})&=(-\textstyle\frac{3}{2};\OneHalf,-\OneHalf)
  \end{aligned}\\
  \gr(\rho_{1234})=\gr(\rho_{2341})=\gr(\rho_{3412})=\gr(\rho_{4123})=\gr(U)=(-2;0,0).\nonumber
\end{gather}
(A version of this table is in~\cite[Section 4.2]{LOT:torus-alg}.)
The $\bigGroup$-set gradings on $\CFAm$ and $\CFDm$ induce
$\interGroup$-set gradings. Specifically, the homomorphism $\pi$ makes
$\interGroup$ into a left $\bigGroup$-set (via $g'\cdot g=\pi(g')g$
for $g'\in\bigGroup$ and $g\in\interGroup$). So, we can set
\begin{equation}\label{eq:inter-gr-set-1}
  S_A(\HD,\spinc)=S'_A(\HD,\spinc)\times_{\bigGroup}\interGroup,
\end{equation}
as a right $\interGroup$-set. More explicitly,
\begin{equation}\label{eq:inter-gr-set-2}
  S_A(\HD,\spinc) = \langle \pi(g'(B))\mid B\in\pi_2(\x_0,\x_0),\ n_z(B)=0\rangle \backslash \interGroup.
\end{equation}
With respect to the identification~\eqref{eq:inter-gr-set-1}, the
grading of an element $\x\in\CFAa(\HD,\spinc)$ is given by
$\gr(x)=\grb(\x)\times (0;0,0)$; with respect to the more concrete
identification~\eqref{eq:inter-gr-set-2}, the grading of $\x$ is
simply $\gr(x)=\pi(\grb(\x))$.

Similarly, to define $S_D(\HD,\spinc)$ we can view $\interGroup$ as
having a right action by $\bigGroup$ via $\pi$ and let
\[
  S_A(\HD,\spinc)=\interGroup\times_{\bigGroup}S'_A(\HD,\spinc)
\]
as a left $\interGroup$-set. The grading on $\CFDm$ is then defined
similarly to the grading on $\CFAm$.

\subsection{Refinement to the small grading group}\label{sec:small-gr}
Recall that the small grading group $\smallTGroup$ is the central extension
\[
  0\to \ZZ\to \smallTGroup\to H_1(T^2)\to 0
\]
with commutation relation $gh=\lambda^{2[g]\cdot[h]}hg$ where $\cdot$
denotes the intersection pairing on $H_1(T^2)$. Explicitly, we can
describe $\smallTGroup$ as elements
$(m;a,b)\in \OneHalf\ZZ\times \ZZ^2$ with $m+\frac{a+b}{2}\in\ZZ$ and
multiplication as above.

The group $\smallTGroup$ is identified with the subgroup of
$\bigGroup$ consisting of tuples $(m;a,b,c,d)$ where $b=a+c$ and
$d=0$; the subquotient of $\bigGroup$ consisting of elements with
$a+c=b+d$ modulo $\langle(1;1,1,1,1)\rangle$; and the subgroup of
$\interGroup$ of tuples $(m;a,b)$ where
$a,b\in\ZZ$~\cite[Lemma~\ref{TA:lem:G-is-subgroup}]{LOT:torus-alg}. We
will mainly use the first of these identifications. The map from
$\smallTGroup$ to this subgroup is $(m;a,b)\mapsto (m;a,a+b,b,0)$.

As in the $\HFa$ case~\cite{LOT1}, to define a grading on the algebra
and modules by $\smallTGroup$ we use \emph{grading refinement
  data}. Specifically, choose a base idempotent, which we take to
be~$\iota_0$. Choose
also an element $\psi_1=(m_1;a_1,b_1,c_1,d_1)\in \bigGroup$ so that
$a_1-b_1+c_1-d_1=1$. (That is, $\psi_1$ connects $\iota_0$ to
$\iota_1$ in a suitable sense.) For example,
$\psi_1=\grb(\rho_1)=(-1/2;1,0,0,0)$ works. Define $\psi_0$ to be the
identity element in $\bigGroup$.

Let $\ol{\pi}\co \bigGroup\to \bigGroup/\langle(1;1,1,1,1)\rangle$ be
the quotient map. (It can be convenient to identify
$\bigGroup/\langle(1;1,1,1,1)\rangle$ with the subgroup of $\bigGroup$
where $d=0$.) Then, the $\smallTGroup$-grading on $\MAlg$ is given by
\[
  \gr_\psi(a)=\ol{\pi}(\psi_i \grb(a)\psi_j^{-1})\qquad\qquad\text{if }\iota_ia\iota_j=a.
\]
If we let $\ol{\psi}_i=\ol{\pi}(\psi_i)$, then we can rewrite this as
\[
  \gr_\psi(a)=\ol{\psi}_i\ol{\pi}(\grb(a))\ol{\psi}_j^{-1}. 
\]
The distinguished central elements are $\lambda_w=0$ and
$\lambda_d=\lambda$.
We record the refined gradings of some elements of the
algebra (see also~\cite[Section~\ref{TA:sec:gr-refine}]{LOT:torus-alg}):
\begin{align*}
  \gr_\psi(\rho_1) &= (0;0,0) &
    \gr_\psi(\rho_{123}) &= (1/2;0,1) &
      \gr_\psi(U)&=(-2;0,0). \\
  \gr_\psi(\rho_2) &= (-1/2;1,0) &
    \gr_\psi(\rho_{234}) &= (-2;0,0)\\
  \gr_\psi(\rho_3)&=(0;-1,1) &
    \gr_\psi(\rho_{341}) &= (-3/2;-1,0) \\
  \gr_\psi(\rho_4) &= (-5/2;0,-1) &
    \gr_\psi(\rho_{412}) &= (-2;1,-1)
\end{align*}

The refined grading on $\CFAm(\HD)$ is now defined just as in the
$\HFa$ case. Given $B\in\pi_2(\x,\y)$ where $\x\iota_i=\x$ and
$\y\iota_j=\y$ define
\[
  g(B)=\ol{\pi}(\psi_i g'(B)\psi_j^{-1}).
\]
Fix a base generator $\x_0$ representing the $\SpinC$-structure
$\spinc$ and let
\[
  \widehat{S}_A(\HD,\spinc)= \langle g(B)\mid B\in\pi_2(\x_0,\x_0),\
  n_z(B)=0\rangle\backslash\smallTGroup.
\]
Define $\gr(\x)=g(B)\in \widehat{S}_A(\HD,\spinc)$ for any
$B\in\pi_2(\x_0,\x)$.

\begin{lemma}
  The $\smallTGroup$-set $\widehat{S}_A(\HD,\spinc)$ and function $\gr$
  define a grading on $\CFAm(\HD)$.
\end{lemma}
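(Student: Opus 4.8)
The plan is to deduce this from the grading of $\CFAm(\HD,\spinc)$ by the big grading group $\bigGroup$ already constructed in Section~\ref{sec:big-gr}, paralleling the $\HFa$ case \cite[Chapter~10]{LOT1} with the refinement data machinery of \cite[Section~10.3]{LOT1}. Recall that $\smallTGroup$ is identified with the subquotient of $\bigGroup$ of elements $(m;a,b,c,d)$ with $a+c=b+d$, modulo $\langle(1;1,1,1,1)\rangle$, and that $\ol{\pi}$ denotes the quotient map $\bigGroup\to\bigGroup/\langle(1;1,1,1,1)\rangle$. The first step is to verify that $g(B)=\ol{\pi}(\psi_i g'(B)\psi_j^{-1})$ really lands in $\smallTGroup$ whenever $\x\iota_i=\x$ and $\y\iota_j=\y$ and $B\in\pi_2(\x,\y)$. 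The ``discrepancy'' $p_1-p_2+p_3-p_4$ of the $\Spinc$-component $\bdy^\bdy B=(p_1,p_2,p_3,p_4)$ of $g'(B)$ is an additive (under concatenation) homomorphism to $\ZZ$, and the constraint $\bdy(\bdy^\bdy B)=$ (endpoint of $\y$ on its $\alpha$-arc) $-$ (endpoint of $\x$) forces this discrepancy to equal $j-i$. Since $\psi_0=\mathrm{id}$ and $\psi_1$ is chosen with discrepancy $1$, the discrepancy of $\psi_i g'(B)\psi_j^{-1}$ is $i+(j-i)-j=0$, so after $\ol{\pi}$ this element lies in $\smallTGroup$; this is \cite[Section~10.3]{LOT1} transcribed verbatim.

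Next I would check well-definedness of $\gr(\x)$. Given two domains $B_x,B_x'\in\pi_2(\x_0,\x)$ with $n_z=0$, their difference $B_x*(B_x')^{-1}$ lies in $\pi_2(\x_0,\x_0)$ with $n_z=0$. From additivity $g'(B_1*B_2)=g'(B_1)g'(B_2)$ (\cite[Lemma~10.4]{LOT1}, extended to allow $n_z\neq 0$) and the observation that for $B_1\in\pi_2(\w,\x)$, $B_2\in\pi_2(\x,\y)$ the $\alpha$-arc occupied by $\x$ is common to both ends, the conjugating elements $\psi$ telescope, giving $g(B_1*B_2)=g(B_1)g(B_2)$ as well. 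Hence $g(B_x)$ and $g(B_x')$ differ by an element of $\langle g(B)\mid B\in\pi_2(\x_0,\x_0),\ n_z(B)=0\rangle$, so they determine the same class in $\widehat{S}_A(\HD,\spinc)$. (If $\spinc$ is represented by no generator, $\CFAm(\HD,\spinc)=0$ and there is nothing to grade; otherwise, if needed, one first performs an isotopy to create $\x_0$, exactly as in Section~\ref{sec:big-gr}.)

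Finally, compatibility with the operations: suppose $U^\ell\y$ occurs in $m_{1+n}^k(\x,\rho^1,\dots,\rho^n)$, arising from $B\in\pi_2(\x,\y)$ with $\ind(B,\vec{\rho},k)=1$; non-Reeby inputs $U^r\iota_j$ act by $U^r$ or $0$, change no idempotents, and may be suppressed. One must show
\[
  \gr(\y)\,\gr_\psi(U)^{\ell}=\gr(\x)\,\lambda^{n-1}\,\gr_\psi(\rho^1)\cdots\gr_\psi(\rho^n)
\]
in $\widehat{S}_A(\HD,\spinc)$; the weight factor $\lambda_w^k$ does not appear because the weight grading of $\MAlg$ by $\smallTGroup$ is trivial. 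This follows by applying $\ol{\pi}\bigl(\psi_{i_0}(-)\psi_{i_n}^{-1}\bigr)$ to the corresponding $\bigGroup$-relation $\grb(U)^{\ell}\gr'(\y)=\gr'(\x)\,\lambda_d^{n-1}\,\lambda_w^{k}\,\grb(\rho^1)\cdots\grb(\rho^n)$ already proved in Section~\ref{sec:big-gr} (with $B$ chosen so that $B_y*\ell[\Sigma]=B_x*B$): strong boundary monotonicity forces the left idempotent of $\rho^{m+1}$ to equal the right idempotent of $\rho^m$ for every nonzero term, and to equal those of $\x$ and $\y$ at the two ends, so the inserted factors $\psi_{i_m}^{-1}\psi_{i_m}$ cancel in pairs and the big-group identity becomes the small-group identity (using that $\gr_\psi(U)=\lambda^{-2}$ is central). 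The only part requiring genuine care, rather than bookkeeping, is precisely this telescoping of the refinement elements $\psi_{i_m}$ along the idempotent chain together with the correct handling of the $U$-powers and non-Reeby inputs; everything else is a direct transcription of \cite[Section~10.3 and Theorem~10.18]{LOT1}.
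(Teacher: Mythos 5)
Your argument is correct and is exactly the routine verification the paper has in mind: the paper's own ``proof'' is simply ``This is routine, and is left to the reader,'' and what you have written — checking that the discrepancy homomorphism vanishes on $\psi_i g'(B)\psi_j^{-1}$, deducing well-definedness from additivity of $g'$ and telescoping of the refinement elements, and pushing the $\bigGroup$-grading relation through $\ol{\pi}(\psi_{i_0}(-)\psi_{i_n}^{-1})$ using strong boundary monotonicity to match idempotents — is the standard transcription of \cite[Chapter~10]{LOT1} that the authors intend. No gaps.
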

\begin{proof}
  This is routine, and is left to the reader.
\end{proof}

The grading on $\CFDm(\HD)$ can be obtained from the grading on
$\CFAm(\HD)$ using the same strategy as at the end of
Section~\ref{sec:big-gr}. To make this work, for the copy of
$\smallTGroup$ corresponding to $\MAlg^{U=1}$ we use different grading
refinement data and view it as a subquotient of $\bigGroup$ in a
different way. Specifically, if $\psi_i=(m;a,b,c,d)$ then let
$\phi_i=(-m;c,b,a,d) = r'(\psi_i^{-1})$.  So, for our particular choice,
$\phi_0=(0;0,0,0,0)$ and $\phi_1=(1/2;0,0,1,0)$. Use $\phi$ as grading
refinement data and view $\smallTGroup$ as the subgroup
$\{(m;a,a+b,b,0)\}$ of $\bigGroup/\langle(-1;1,1,1,1)$. This gives
gradings
\begin{align*}
  \gr_\phi(\rho_1) &= (-1;1,-1) &
    \gr_\phi(\rho_{123}) &=(-3/2;1,0) &
      \gr_\phi(U)&=(0;0,0). \\
  \gr_\phi(\rho_2) &=(-1/2;0,1) &
    \gr_\phi(\rho_{234}) &=(1;-1,1) \\
  \gr_\phi(\rho_3)&= (-1;0,0) &
    \gr_\phi(\rho_{341}) &= (1/2;0,-1) \\
  \gr_\phi(\rho_4) &= (3/2;-1,0) &
    \gr_\phi(\rho_{412}) &= (1;0,0)
\end{align*}

There is an anti-automorphism $r$ of $\smallTGroup$ defined by
$r(m;a,b)=(m;-b,-a)$. View $\smallTGroup$ as a set with two left
actions by $\smallTGroup$ where the first action is the obvious one
and the second is given by $g\cdot x = x r(g)$, i.e., is the obvious
right action re-interpreted as a left action via the anti-automorphism
$r$. The bimodule $\CFDDm(\Id)$ is graded by this set, where the
grading group of the action by $\MAlg$ uses the grading refinement
data $\psi_i$ and quotient map $\pi$ (corresponding to quotienting by
$(1;1,1,1,1)$) and the grading group of the action by $\MAlg^{U=1}$
uses the grading refinement data $\phi_i$ and the projection
corresponding to quotienting by $(-1;1,1,1,1)$. Give both generators
of $\CFDDm(\Id)$ grading $(0;0,0)$. Then, every term in $\delta^1$ on
$\CFDDm(\Id)$ has grading $(-1;0,0)$.  (For the specific grading
refinement data above, this is a simple, direct check, while in
general, assuming $\phi_i=r'(\psi_i^{-1})$, it is easy to see that
the grading refinement data drops out and the formula reduces to
$\grb(\rho)r'(\grb(\sigma))=(-1;0,0,0,0)$ whenever $\rho\otimes\sigma$
occurs in the differential on $\CFDDm(\Id)$.) So, if
we grade $\CFAm_{U=1}$ by 
$\smallTGroup$ using the grading refinement data $\phi_i$ and the
quotient by $(-1;1,1,1,1)$ then the box tensor product induces a
grading on $\CFDm(\Id)$, as desired.

Let $\widehat{S}_D(\HD,\spinc)$ denote the refined grading set for
$\CFDm(\HD,\spinc)$. Let $\widehat{S}_A(\HD)=\coprod_{\spinc}\widehat{S}_A(\HD,\spinc)$
and $\widehat{S}_D(\HD)=\coprod_{\spinc}\widehat{S}_D(\HD,\spinc)$.

The refined grading group is the same as the (genus $1$ case of the)
small grading group for the $\HFa$-case~\cite{LOT1}. Further, the
refined gradings we have constructed agree with the gradings in the
$\HFa$-case:
\begin{lemma}\label{lem:gr-same-HFa}
  The refined grading sets $\widehat{S}_A(\HD)$ and $\widehat{S}_D(\HD)$ agree
  with the refined grading sets in the $\HFa$-case~\cite[Section
  10.5]{LOT1}, and the induced gradings on the $U=0$-specializations
  of $\CFAm(\HD)$ and $\CFDm(\HD)$ agree with the gradings on
  $\CFAa(\HD)$ and $\CFDa(\HD)$.
\end{lemma}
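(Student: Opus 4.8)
The plan is to unwind the definitions of Section~\ref{sec:small-gr} and observe that, term by term, they reproduce the grading data of \cite[Chapter 10]{LOT1}. The refined grading group $\smallTGroup$ has already been identified in the excerpt with the genus-$1$ small grading group of the $\HFa$ case, so the content of the lemma is the identification of the grading \emph{sets} (as $\smallTGroup$-sets) and of the grading \emph{functions}.

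For $\CFAm$, recall that $\widehat{S}_A(\HD,\spinc)$ is the quotient of $\smallTGroup$ by $\langle g(B)\mid B\in\pi_2(\x_0,\x_0),\ n_z(B)=0\rangle$, where $g(B)=\ol{\pi}(\psi_ig'(B)\psi_j^{-1})$ and $g'(B)=(-e(B)-n_\x(B)-n_\y(B);\bdy^\bdy B)$. This is precisely \cite[Formula~(10.2)]{LOT1} evaluated with the same grading refinement data (the choice $\psi_0=e$, $\psi_1=\grb(\rho_1)$ matches the $\HFa$ convention), and in \cite{LOT1} periodic domains satisfy $n_z=0$ by that monograph's convention; so the sets, and the functions $\gr(\x)=g(B_\x)$, agree on the nose. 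For the $U=0$ specialization I would use that setting $U=0$ kills every contribution of a curve with $n_z(B)>0$, since such a curve carries a factor $U^{n_z(B)}$; in particular every operation involving $\rho_4$, a longer chord containing $\rho_4$, or a Reeb orbit dies (each forces $n_z(B)>0$, since $n_z(B)$ is the $\rho_4$-multiplicity of $\bdy^\bdy B$ and a Reeb orbit likewise contributes positively to it), leaving precisely the operations over $\Alg(T^2)$ that define $\CFAa(\HD)$. The grading by $\widehat{S}_A(\HD)$ is equivariant with $U$ shifting the grading by $\grb(U)$, which reduces to $\lambda^{-2}$ in $\smallTGroup$, so it descends to $\CFAm(\HD)|_{U=0}\cong\CFAa(\HD)$, where the formula for $g(B)$ with $n_z(B)=0$ is literally the $\HFa$ grading.

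For $\CFDm$, the set $\widehat{S}_D(\HD)$ is built from $\widehat{S}_A(\HD)$ (graded via the refinement data $\phi_i=r'(\psi_i^{-1})$ and the projection killing $\grb(U)$) and the grading set of $\CFDDm(\Id)$, via the box product $\CFDm=\CFAm_{U=1}\DT\CFDDm(\Id)$. The key point — already verified in Section~\ref{sec:small-gr} — is that under this grading every term of $\delta^1$ on $\CFDDm(\Id)$ is homogeneous of degree $\lambda^{-1}$, exactly as for $\CFDDa(\Id)$, and that the grading set of $\CFDDm(\Id)$ reduces to that of $\CFDDa(\Id)$ under the same identifications. Combined with the type $A$ case and the $\HFa$ identity $\CFDa=\CFAa\DT\CFDDa(\Id)$, this gives agreement of $\widehat{S}_D(\HD)$ with the $\HFa$ grading set, and the $U=0$ specialization recovers $\CFDa(\HD)$ with its grading; alternatively one can invoke Proposition~\ref{prop:extend-CFDa}, which realizes $\CFDm(Y)$ as an extension of a reduced model of $\CFDa(Y)$ agreeing with $\CFDa(Y)$ on the short chords, and then use that the grading shift of each coefficient map $D_\rho$ matches the $\HFa$ value because $\gr_\phi(\rho)$ is as tabulated in Section~\ref{sec:small-gr} and the type $D$ condition $\gr(\delta^1x)=\lambda^{-1}\gr(x)$ is the same in both settings.

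The main obstacle is purely bookkeeping: keeping track of which embedding of $\smallTGroup$ into $\bigGroup$ and which refinement data ($\psi_i$ for the $\MAlg$-action, $\phi_i=r'(\psi_i^{-1})$ for the $\MAlg^{U=1}$-action, together with the $r'$-twist on the $\CFDDm(\Id)$-grading) are used on each side, and confirming these are the very choices made in \cite[Chapter 10]{LOT1}. Since the grading formulas are identical term by term, and the excerpt already supplies the explicit tables for $\gr_\psi$ and $\gr_\phi$ and the fact that $\delta^1$ on $\CFDDm(\Id)$ has degree $\lambda^{-1}$, this reduces to a routine line-by-line comparison of the two definitions.
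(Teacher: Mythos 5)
Your proposal is correct and follows essentially the same route as the paper: the type $A$ case is a direct term-by-term comparison of the definitions, and the type $D$ case rests on the graded identity $\CFDa(\HD)\simeq\CFAa(\HD)\DT\CFDDa(\Id)$, which is exactly the graded pairing theorem for bimodules from the $\HFa$ setting that the paper cites. Your extra observations (the $U=0$ specialization killing curves with $n_z>0$, and the alternative via Proposition~\ref{prop:extend-CFDa}) are harmless elaborations of the same argument.
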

\begin{proof}
  For $\CFA$, this is immediate from the definitions; for $\CFD$ it
  follows from the graded pairing theorem for bimodules in the
  $\HFa$-case~\cite[Section 7.1.1]{LOT2}.
\end{proof}

\subsection{Graded pairing theorems}\label{sec:gr-pairing}
Suppose we have bordered $3$-manifolds $Y_1$ and $Y_2$ with boundary
$T^2$.  Given grading refinement data as in
Section~\ref{sec:small-gr}, there are associated $\smallTGroup$-sets
$\widehat{S}_A(Y_1)$ and $\widehat{S}_D(Y_2)$. There is also a grading $\ZZ$-set
$S(Y)$ for the closed $3$-manifold $Y=Y_1\cup_{T^2}Y_2$. The
$\ZZ$-orbits in $S(Y)$ correspond to
$\SpinC$-structures on $Y$. The $\ZZ$-orbit corresponding to $\spinc(\x)$ is
$\ZZ/\divis(c_1(\spinc))$, though the identification depends on a
choice (e.g., of a base generator).

By Lemma~\ref{lem:gr-same-HFa}, the grading sets $\widehat{S}_A(Y)$ and
$\widehat{S}_D(Y)$ agree with the grading sets in the $\HFa$-case. The
graded pairing theorem in that case~\cite[Theorem 10.43]{LOT1} says
that there is a (canonical, and easy to construct) isomorphism of
$\ZZ$-sets
\[
  S(Y)\cong \widehat{S}_A(Y_1)\times_{\smallTGroup}\widehat{S}_D(Y_2)
\]
which is covered by a chain homotopy equivalence
\[
  \CFa(Y)\simeq \CFAa(Y_1)\DT\CFDa(Y_2).
\]
In the sequel we will prove the analogous result in the $\HFm$-case:
\begin{theorem}\label{thm:pairing-gradings}\cite{LOT:torus-pairing}
  For any choice of grading refinement data $\psi$ for $\MAlg$
  (Section~\ref{sec:small-gr}), the homotopy equivalence in the
  pairing theorem, Theorem~\ref{thm:pairing2}, respects the grading on
  the two sides by
  $S(Y)\cong \widehat{S}_A(Y_1)\times_{\smallTGroup}\widehat{S}_D(Y_2)$.
\end{theorem}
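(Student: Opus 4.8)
The plan is to deduce the statement from a graded refinement of the first pairing theorem, Theorem~\ref{thm:pairing1}, combined with the grading bookkeeping for box products and restriction of scalars already set up in Section~\ref{sec:gradings-abstract}, and with the $\HFa$ version of the graded pairing theorem \cite[Theorem~10.43]{LOT1}. In the notation of Section~\ref{sec:gr-pairing}, write $Y=Y_1\cup_{T^2}Y_2$ with $Y_1$ on the type-$A$ side and $Y_2$ on the type-$D$ side, so that Theorem~\ref{thm:pairing2} reads $\CFmm(Y)\simeq\CFAc(Y_1)\DT\CFDm(Y_2)$. Recall (Corollary~\ref{cor:bounded-DTs}) that this homotopy equivalence is obtained by combining the homotopy equivalence of Theorem~\ref{thm:pairing1}, the re-association of Proposition~\ref{prop:l-b-DT}, and the definition $\CFDm(Y_2)=\CFAm_{U=1}(Y_2)\DT\CFDDm(\Id)$. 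So it suffices to: (i) prove a graded form of Theorem~\ref{thm:pairing1}; (ii) check that the re-association and the one-sided box product preserve gradings; and (iii) identify the resulting $\ZZ$-set grading with $S(Y)$.

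For step (i), I would first work over the big grading group $\bigGroup$. By the machinery of Section~\ref{sec:gradings-abstract}, if $\CFAm_{U=1}(Y_2)$ is graded by a right $\bigl(\bigGroup/r'(\grb(U))\bigr)$-set, $\CFAc(Y_1)$ by a right $\bigGroup$-set, and $\CFDDm(\Id)$ by the left $\bigl(\bigGroup/r'(\grb(U))\bigr)\times_{\ZZ}\bigGroup$-set $S'_{\DD}$ of Section~\ref{sec:DD-Id}, then the triple box product inherits a grading by a $\langle\lambda_d\rangle\cong\ZZ$-set, provided the diagonal variables $\Yvar_1,\Yvar_2$ are assigned the gradings recorded at the end of Section~\ref{sec:DD-Id}. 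The geometric input is to carry the grading shift through the holomorphic-curve proof of Theorem~\ref{thm:pairing1} given in \cite{LOT:torus-pairing}: a curve in the glued Heegaard diagram degenerates into a curve counted by $\CFAm(Y_1)$, a curve counted by $\CFAm(Y_2)$, and ``splitting-torus'' data recorded by $\CFDDm(\Id)$, and additivity of the embedded index and Euler-measure formulas (Propositions~\ref{prop:emb-ind}, \ref{prop:emb-ind-add}, \ref{prop:emb-ind-bdy-degen} and Lemma~\ref{lem:bdy-degen-ind-add}) shows that $g'$ of the total domain is the $\bigGroup$-product of the three contributions, exactly as in the $\HFa$ computation of \cite[Chapter~10]{LOT1}. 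Passing to the intermediate and small grading groups is then routine: one uses the projection $\pi\co\bigGroup\to\interGroup$ of Section~\ref{sec:int-gr} and grading refinement data, grading $\CFAm(Y_1)$ via $\psi$ and $\CFDm(Y_2)=\CFAm_{U=1}(Y_2)\DT\CFDDm(\Id)$ via the compatible pair $(\phi,\psi)$ with $\phi_i=r'(\psi_i^{-1})$, exactly as in the construction of the grading on $\CFDm$ in Section~\ref{sec:small-gr}.

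Step (ii) is pure abstract algebra: Section~\ref{sec:gradings-abstract} already records that restriction of scalars and the one-sided box product of a graded weighted module with a graded weighted type \DD\ bimodule yield graded objects with the stated grading sets, and that the triple box product $\fModule\DT P\DT\fModule'$ and its re-parenthesization $\fModule'\DT(\fModule\DT P)$ carry the same $\ZZ$-set grading; one only needs to observe that the homotopy equivalence in the last part of Proposition~\ref{prop:l-b-DT} is realized by the explicit chain homotopy of \cite[Theorem~\ref{Abs:thm:wTripleTensorProduct} and Proposition~\ref{Abs:prop:one-sided-DT-works}]{LOT:abstract}, which visibly preserves gradings. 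For step (iii), by Lemma~\ref{lem:gr-same-HFa} the refined grading sets $\widehat{S}_A(Y_1)$ and $\widehat{S}_D(Y_2)$, and the $\ZZ$-set $S(Y)$ (which is $\coprod_{\spinc}\ZZ/\divis(c_1(\spinc))$ in both the $\HFa$ and the $\HFm$ settings, multiplication by $U$ moving one by $-2$ within each $\ZZ$-orbit), coincide with those of the $\HFa$ case. Hence the canonical $\ZZ$-set isomorphism $S(Y)\cong\widehat{S}_A(Y_1)\times_{\smallTGroup}\widehat{S}_D(Y_2)$ of \cite[Theorem~10.43]{LOT1} applies here verbatim, and one verifies that the $\HFm$-level homotopy equivalence covers it by checking that the entire construction is compatible with setting $U=0$, which recovers the $\HFa$ pairing.

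The main obstacle is step (i): faithfully carrying the grading through the proof of Theorem~\ref{thm:pairing1} in \cite{LOT:torus-pairing}, and in particular verifying that the grading on $\CFDDm(\Id)$ pinned down abstractly in Section~\ref{sec:DD-Id} is exactly the grading shift carried by the splitting-torus part of a degenerating holomorphic curve. Since no holomorphic-curve model for $\CFDDm(\Id)$ is given in this paper, this comparison must be made indirectly, by matching the two gradings term by term using the additivity of the embedded index formulas, in parallel with the $\HFa$ computation of \cite[Section~10.5]{LOT1}; the independence of the structure relation for $\CFDDm(\Id)$ from the choice of algebra diagonal (the ``graded miracle'' of Section~\ref{sec:intro-DD-Id}) is what keeps this argument from depending on any particular diagonal.
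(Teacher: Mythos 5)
The first thing to say is that this paper contains no proof of Theorem~\ref{thm:pairing-gradings} to compare your proposal against: the theorem is stated with the citation \cite{LOT:torus-pairing}, and the sentence immediately preceding it reads ``In the sequel we will prove the analogous result in the $\HFm$-case.'' The same is true of the ungraded pairing theorems (Theorems~\ref{thm:pairing1} and~\ref{thm:pairing2}) on which your argument rests. So your proposal can only be judged as a plan, not checked step by step against the authors' argument.

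As a plan, it is consistent with the scaffolding the paper does provide: the $G$-set grading formalism and the grading behavior of the one-sided and triple box products (Section~\ref{sec:gradings-abstract}), the grading on $\CFDDm(\Id)$ and the choice $\phi_i=r'(\psi_i^{-1})$ (Sections~\ref{sec:DD-Id} and~\ref{sec:small-gr}), Lemma~\ref{lem:gr-same-HFa}, and the big-grading-group version Theorem~\ref{thm:pairing3}. But your step (i) is not a reduction of the theorem to something known; it \emph{is} the theorem, and it requires the holomorphic-curve proof of Theorem~\ref{thm:pairing1}, which lives entirely in \cite{LOT:torus-pairing}. In particular, the claim that the degenerating ``splitting-torus'' curves carry exactly the grading shift assigned abstractly to $\CFDDm(\Id)$ cannot be verified from anything in this paper, since no holomorphic model for $\CFDDm(\Id)$ is given here; you acknowledge this, but it means the proposal defers its only substantive content to the same unavailable source as the theorem itself. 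One concrete point worth fixing in step (iii): checking compatibility with setting $U=0$ does not suffice to show the $\HFm$-level homotopy equivalence is graded, because any term of the equivalence carrying a positive power of $U$ vanishes at $U=0$ and its grading behavior goes unchecked; the Maslov shift of such terms must instead be extracted from the index formulas for the curves (or algebraic terms) producing them, as in Propositions~\ref{prop:emb-ind} and~\ref{prop:emb-ind-add}.
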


One can also extract the gradings on the tensor product using the
intermediate or big grading groups, as we did (for the large grading
group) in a previous paper~\cite{LOT4}. (See also~\cite[Remark
10.44]{LOT1}.) Before giving the theorem we discuss the general
setting.

Suppose we are given a (grading) group~$G$, and two subgroups $P_1$ and
$P_2$, determining left and right $G$-sets $S_1 = P_1 \backslash G$
and $S_2 = G / P_2$, respectively.  Given a larger
group $H$ containing $G$ as a normal subgroup, the double coset spaces
$P_1 \backslash G / P_2 = S_1 \times_G S_2$ and
$P_1 \backslash H / P_2$ are related as follows:

\begin{proposition}
  For $H$ a group, $G \subset H$ a normal subgroup, and $P_1, P_2
  \subset G$ a pair of subgroups of~$G$, the double coset space
  \[
    P_1 \backslash H / P_2
  \]
  has a natural map to the quotient group $H / G$, so that the
  subset of $P_1 \backslash H/P_2$ mapping to the identity in $H/G$ is
  in bijection with $P_1 \backslash G/P_2$. The bijection preserves the action by
  the center of~$G$ on both sets.
\end{proposition}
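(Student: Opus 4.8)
The plan is to construct the map $P_1\backslash H/P_2 \to H/G$ directly from the quotient homomorphism $q\co H\to H/G$, observe it is well-defined because $P_1, P_2\subset G = \ker q$, and then analyze the fiber over the identity. First I would note that the composite $H\xrightarrow{q} H/G$ is constant on double cosets $P_1 h P_2$: for $p_1\in P_1$ and $p_2\in P_2$ we have $q(p_1 h p_2) = q(p_1)q(h)q(p_2) = q(h)$ since $p_1,p_2\in G$. This gives the natural map, and it is clearly surjective (since $q$ is). The fiber over the identity coset $eG\in H/G$ consists of those double cosets $P_1 h P_2$ with $h\in G$; since $G$ is a subgroup, every such $h$ lies in $G$, and two elements $h,h'\in G$ lie in the same $H$-double coset $P_1\backslash H/P_2$ if and only if $h' = p_1 h p_2$ for some $p_1\in P_1$, $p_2\in P_2$ — but because $P_1, P_2, h\in G$, this is exactly the condition that $h,h'$ lie in the same $G$-double coset $P_1\backslash G/P_2$. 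Hence the inclusion $G\hookrightarrow H$ induces a bijection $P_1\backslash G/P_2 \to q^{-1}(eG)\subset P_1\backslash H/P_2$.

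Next I would address the statement about the action of the center $Z(G)$. Since $G$ is normal in $H$, the center $Z(G)$ need not be central in $H$, but it does act on both $P_1\backslash H/P_2$ and $P_1\backslash G/P_2$: for $z\in Z(G)$ the action on a double coset $P_1 h P_2$ is $z\cdot (P_1 h P_2) = P_1 (zh) P_2$, which is well-defined because $z$ normalizes $P_1$ (indeed centralizes $G\supset P_1$), and for the $G$-side analogously. One then checks that the bijection $P_1\backslash G/P_2 \to q^{-1}(eG)$ intertwines these two actions: if $h\in G$ then $zh\in G$ as well, so the image of $z\cdot(P_1 h P_2)$ under the inclusion map is $P_1(zh)P_2$, which is precisely $z$ acting on the image $P_1 h P_2$. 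This is a one-line verification once the actions are written down carefully.

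The only genuine subtlety — and the step I would be most careful about — is making sure the action of $Z(G)$ on the double coset spaces is genuinely well-defined, i.e., that the choice of representative $h$ does not matter: if $P_1 h P_2 = P_1 h' P_2$ with $h' = p_1 h p_2$, then $zh' = z p_1 h p_2 = p_1 (z h) p_2$ using that $z$ commutes with $p_1\in P_1\subset G$, so indeed $P_1(zh')P_2 = P_1(zh)P_2$. This uses centrality of $z$ in $G$ in an essential way, which is exactly why the proposition is phrased in terms of the center of $G$ rather than of $H$. In our application, $G$ is the intermediate (or big) grading group, $H$ is the correspondingly enlarged group, $\lambda_d$ (the Maslov-type generator) is central in $G$, and this proposition lets us read off the $\SpinC$-refined grading structure on a triple box product from the coarser $\ZZ$-graded structure, exactly as in the analogous $\widehat{\HF}$ statement of~\cite{LOT4}. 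I would close by remarking that iterating the proposition (or applying it with $H$ replaced by an intermediate group) recovers the chain of grading groups big $\to$ intermediate $\to$ small used throughout Section~\ref{sec:gradings}.
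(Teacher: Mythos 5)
Your proposal is correct and follows essentially the same route as the paper's proof: define the map by sending $P_1 h P_2$ to $hG$ (well-defined since $P_1,P_2\subset G$ and $G$ is normal), observe that the fiber over the identity consists of double cosets of elements $h\in G$, where the $H$-double coset coincides with the $G$-double coset, and check compatibility with the central action. The paper dismisses the center statement as immediate, whereas you spell out the well-definedness of the $Z(G)$-action on representatives; that extra care is harmless and correct.
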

\begin{proof}
  For $h \in H$, we map $P_1 h P_2 \in P_1 \backslash H / P_2$ to
  $hG \in H/G$; this is well-defined since $P_1,P_2 \subset G$
  and $G$ is normal (so $P_1h \subset Gh = hG$).

  Those double cosets $P_1 h P_2$ mapping to the identity in $H/G$
  are those with $h \in G$. (This does not depend on the double coset
  representative $h$.) In this case, $P_1 h P_2 \subset G \subset H$,
  i.e., the double coset in $H$ is the same set as the double coset
  in~$G$. The statement about the action of the center is immediate.
\end{proof}

Here is the other version of the graded pairing theorem.
\begin{theorem} \cite{LOT:torus-pairing}\label{thm:pairing3}
  Given bordered $3$-manifolds $Y_1$ and $Y_2$ with torus boundary,
  for any generator $\x_1\otimes\x_2\in \CFAm(Y_1)\DT\CFDm(Y_2)$ the
  grading of $\x_1\otimes \x_2$ in
  $S'_A(Y_1)\times_{\bigGroup}S'_D(Y_2)$ lies over the identity in
  $\bigGroup/\smallTGroup$, and the chain homotopy equivalence
  \[
    \CFAm(Y_1)\DT\CFDm(Y_2)\simeq\CFmm(Y_1\cup_{T^2}Y_2)
  \]
  respects the induced grading by
  $\widehat{S}_A(Y_1)\times_{\smallTGroup}\widehat{S}_D(Y_2)\cong
  S(Y_1\cup_{T^2}Y_2)$.
\end{theorem}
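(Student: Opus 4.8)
The plan is to derive Theorem~\ref{thm:pairing3} from the ungraded pairing theorem (Theorem~\ref{thm:pairing2}), its refinement to the small grading group (Theorem~\ref{thm:pairing-gradings}), the abstract double-coset Proposition stated just above, and the graded pairing theorem for $\HFa$ (via Lemma~\ref{lem:gr-same-HFa} and \cite[Theorem~10.43]{LOT1}). The only genuinely new input is the first assertion of the theorem: that for every generator $\x_1\otimes\x_2$ of $\CFAm(Y_1)\DT\CFDm(Y_2)$, the grading $\grb(\x_1\otimes\x_2)$ in $S'_A(Y_1)\times_{\bigGroup}S'_D(Y_2)$ lies over the identity coset of $\bigGroup/\smallTGroup$. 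Once that is established, the rest is repackaging.

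First I would set up the group theory. Using the description of $\smallTGroup$ from Section~\ref{sec:small-gr} as the subgroup $\{(m;a,b,c,d)\mid b=a+c,\ d=0\}$ of $\bigGroup$, one checks that $\smallTGroup$ is normal in $\bigGroup$ and that $\bigGroup/\smallTGroup$ is the abelian group recording the two $\SpinC$-quantities $a+c-b$ and $d$. One then checks that the subgroups by which the type~$A$ and type~$D$ grading sets are defined lie in $\smallTGroup$: for $\CFAm(Y_1)$ this is $P_1=\langle g'(B)\mid B\in\pi_2(\x_1^0,\x_1^0),\ n_z(B)=0\rangle$, and for $\CFDm(Y_2)=\CFAm_{U=1}(Y_2)\DT\CFDDm(\Id)$ the corresponding subgroup $P_2$ is generated (see Section~\ref{sec:big-gr}) by $(1;1,1,1,1)=\grb(U)$ together with the $r'$-images of the analogous periodic-domain classes of $Y_2$; each generator of $P_1$ and $P_2$ has boundary multiplicities satisfying $a+c=b+d$, so $P_1,P_2\subset\smallTGroup$. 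Applying the Proposition with $H=\bigGroup$ and $G=\smallTGroup$ then yields a well-defined map $S'_A(Y_1)\times_{\bigGroup}S'_D(Y_2)=P_1\backslash\bigGroup/P_2\to\bigGroup/\smallTGroup$ whose fiber over the identity is $P_1\backslash\smallTGroup/P_2=\widehat{S}_A(Y_1)\times_{\smallTGroup}\widehat{S}_D(Y_2)$, the bijection being equivariant under the action of the center (that is, under the $\lambda$-shift).

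Next I would prove the assertion about generators. Choose base generators $\x_1^0$ for $Y_1$ and $\x_2^0$ for $Y_2$ that glue to a generator of $\HD_1\cup\HD_2$ representing the relevant $\SpinC$-structure of the glued manifold. For an arbitrary generator $\x_1\otimes\x_2$ in that $\SpinC$-structure (otherwise the grading set is empty), choose domains $B_i$ with $n_z(B_i)=0$ connecting $\x_i^0$ to $\x_i$ which glue to a domain $B_1\cup B_2$ of $\HD_1\cup\HD_2$. Using additivity of $g'$ under concatenation (\cite[Lemma~10.4]{LOT1}) and the construction of the grading on $\CFDm$ through $\CFDDm(\Id)$ (Section~\ref{sec:big-gr}), the double-coset representative of $\grb(\x_1\otimes\x_2)$ is $g'(B_1)$ times the appropriate $r'$-twist of $g'(B_2)$; here the grading-refinement conventions $\psi$ and $\phi=r'(\psi^{-1})$ on the two sides cancel. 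Because $\bdy^\bdy B_1$ and $\bdy^\bdy B_2$ are opposite across the gluing circle under the matching of the genus-one pointed matched circle ($\rho_1\leftrightarrow\rho_3$, with $\rho_2,\rho_4$ fixed, together with orientation reversal), and this matching is precisely what the anti-automorphism $r'(m;a,b,c,d)=(m;-c,-b,-a,-d)$ encodes, the $\SpinC$-component of this representative satisfies $a+c=b+d$; normalizing $d=n_z=0$ using $\grb(U)=(-1;1,1,1,1)$, the representative lies in $\smallTGroup$, as desired. This is the direct analogue of the $\SpinC$-component computation in the $\HFa$ graded pairing theorem \cite[Theorem~10.43]{LOT1} and in the big-grading-group bimodule pairing of \cite{LOT4}.

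Combining the two previous paragraphs, the $\bigGroup$-set grading on $\CFAm(Y_1)\DT\CFDm(Y_2)$ carries exactly the same information as its $\smallTGroup$-set grading by $\widehat{S}_A(Y_1)\times_{\smallTGroup}\widehat{S}_D(Y_2)$. By Lemma~\ref{lem:gr-same-HFa} this set agrees with the small grading set in the $\HFa$ case, which by \cite[Theorem~10.43]{LOT1} is canonically isomorphic to $S(Y_1\cup_{T^2}Y_2)$; and Theorem~\ref{thm:pairing-gradings}, after relabeling the roles of $Y_1$ and $Y_2$ and matching the completed and uncompleted versions of $\CFAm$ exactly as in the passage from Theorem~\ref{thm:pairing1} to Theorem~\ref{thm:pairing2} in the introduction, says the homotopy equivalence $\CFAm(Y_1)\DT\CFDm(Y_2)\simeq\CFmm(Y_1\cup_{T^2}Y_2)$ respects that grading. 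I expect the main obstacle to be the third paragraph: keeping the orientation-reversal and pointed-matched-circle matching bookkeeping straight so that the $\SpinC$-component cancellation lands exactly in $\{b=a+c,\ d=0\}$, and verifying the two distinct grading-refinement conventions on the type~$A$ and type~$D$ sides are compatible. This is conceptually routine but error-prone, and the $U$-power bookkeeping — the observations $\grb(U)=\grb(\rho_{1234})$ and $\lambda_w=\lambda^2\grb(U)$ from the end of Section~\ref{sec:big-gr-group} — is what makes the ``$d=0$, modulo $(1;1,1,1,1)$'' normalization go through.
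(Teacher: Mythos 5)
First, a caveat about the comparison you asked for: this paper does not actually prove Theorem~\ref{thm:pairing3}. Both it and Theorem~\ref{thm:pairing-gradings} are cited to the sequel \cite{LOT:torus-pairing}, so there is no in-paper proof to measure your argument against; your strategy (reduce to Theorem~\ref{thm:pairing-gradings} via the double-coset Proposition plus a generator-level $\SpinC$ computation) is clearly the reduction the surrounding text intends, and relying on the deferred Theorem~\ref{thm:pairing-gradings} is fair game. However, as written your argument has two genuine gaps. The first is group-theoretic. You invoke the identification of $\smallTGroup$ with the subgroup $\{(m;a,b,c,d)\mid b=a+c,\ d=0\}$ of $\bigGroup$ and then assert $P_1,P_2\subset\smallTGroup$ because the generators have boundary multiplicities satisfying $a+c=b+d$. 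These are different conditions. The subgroup $P_2$ defining $S'_D(Y_2)$ is generated not only by the elements $r'(g'(B))$ but also by $(1;1,1,1,1)$ (coming from setting $U_1=1$ on the $\MAlg^{U=1}$ side of $\CFDDm(\Id)$), and $(1;1,1,1,1)$ satisfies $a+c=b+d$ but has $d=1\neq 0$, so $P_2\not\subset\smallTGroup$ and the Proposition cannot be applied with $G=\smallTGroup$. The repair is to take $G=\{(m;a,b,c,d)\mid a+c=b+d\}$, which is normal and does contain both $P_1$ and $P_2$, and then use the paper's \emph{second} identification of $\smallTGroup$ (as that subgroup modulo $\langle(1;1,1,1,1)\rangle$) to recognize the identity fiber $P_1\backslash G/P_2$ as $\widehat{S}_A(Y_1)\times_{\smallTGroup}\widehat{S}_D(Y_2)$, precisely because $(1;1,1,1,1)$ already lies in $P_2$. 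Without this adjustment the map to ``$\bigGroup/\smallTGroup$'' is not even well defined on the double coset space.

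The second gap is in your generator-level computation. You choose domains $B_i\in\pi_2(\x_i^0,\x_i)$ ``which glue to a domain of $\HD_1\cup\HD_2$,'' and deduce that the matched boundary multiplicities cancel. A gluable pair exists only when $\x_1\cup\x_2$ and $\x_1^0\cup\x_2^0$ represent the same $\SpinC$-structure on the closed manifold, which can fail even when $\spinc(\x_i)=\spinc(\x_i^0)$ on each piece separately; so your argument covers only one $\ZZ$-orbit of the grading set. For arbitrary generators the correct mechanism is idempotent matching: the image of $g'(B_i)$ in $\bigGroup/G\cong\ZZ$ is $(a+c)-(b+d)$ of $\bdy^\bdy B_i$, which computes the difference of occupied $\alpha$-arcs between $\x_i$ and $\x_i^0$, and the contributions of $g'(B_1)$ and $r'(g'(B_2))$ cancel exactly because $\x_1\otimes\x_2$ and $\x_1^0\otimes\x_2^0$ are generators of the box product, i.e., have matching idempotents. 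That weaker statement is all that is true in general (and all the theorem claims); your matched-boundary argument proves the stronger, only-sometimes-available statement that the representative can be chosen in $G$ with all boundary multiplicities equal to zero.
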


In practice, one uses Theorem~\ref{thm:pairing3} as follows. One
computes the $\bigGroup$-set gradings on $\CFAm(Y_1)$ and
$\CFDm(Y_2)$. Then, for any pair of generators $\x_1\otimes\x_2$ and
$\y_1\otimes\y_2$ for $\CFmm(Y_1\cup_{T^2}Y_2)$, $\x_1\otimes\x_2$ and
$\y_1\otimes\y_2$ represent the same $\SpinC$-structure if and only if
$\grb(\x_1)\times\grb(\x_2)$ and $\grb(\y_1)\times\grb(\y_2)$ lie in the
same $\ZZ$-set, in which case their difference in $\ZZ$ is their
relative grading. (If the $\SpinC$-structure is not torsion, the
relative grading is only well-defined up to the divisibility of the
first Chern class, as usual.) That is, practical computations work
exactly as in the $\HFa$-case.

\begin{remark}
  The extra double cosets in $P_1\backslash H/P_2$ (corresponding to
  non-identity elements of $H/G$) might have different behavior than
  the original ones. Consider the inclusion
  $\smallTGroup \subset \interGroup$ and
  $P_1 = P_2 = P = \langle(0;1,0)\rangle$, with the double coset
  spaces as $\ZZ$-sets from the central action of $\lambda$. (This is
  relevant for computing $\HFa(S^1\times S^2)$ by pairing two solid
  tori.) Then the $\ZZ$ orbit of $P(0;0,n)P$ is isomorphic to
  $\ZZ/2n$. (This is easily verified by conjugating $(0;0,n)$ by
  $(0;1,0)$.) On the other hand, in $P \backslash G / P$, the $\ZZ$
  orbit of $P(\OneHalf;\OneHalf,n+\OneHalf)P$ is isomorphic to
  $\ZZ/(2n+1)$.
\end{remark}


\definecolor{ycolor}{rgb}{0,0.3,0}
\definecolor{xcolor}{rgb}{0,0,0}
\section{Knot complements in \textalt{$S^3$}{the three sphere}}\label{sec:knot}
Recall that, given a knot in $S^3$, there is a formula computing
$\CFDa(S^3\setminus\nbd(K))$ (for any integer framing) from
$\CFKm(S^3,K)$~\cite[Theorem 11.27]{LOT1}. In this section,
we show that, in many cases, one can deduce
$\CFDm(S^3\setminus\nbd(K))$ from $\CFDa(S^3\setminus\nbd(K))$. We
then use these computations to give an example of using $\CFDm$ to
compute $\HFm$ of a Dehn surgeries and $\CFKm$ of a cable,
assuming the pairing theorem (deferred to~\cite{LOT:torus-pairing}).

\subsection{Constructing \textalt{$\CFDm$}{CFD minus} from \textalt{$\CFK$}{CFK}: examples}
\label{sec:CFD-eg}
We start with two of the examples studied in the $\HFa$ case~\cite[Figure 11.5]{LOT1}. 
First, consider the left-handed $(3,4)$ torus knot,
$T_{-3,4}$. Its knot Floer
complex has five generators $x_1,\dots,x_5$ with Alexander gradings
\[
A(x_1)=3 \qquad A(x_2)=2 \qquad A(x_3)=0 \qquad A(x_4)=-2 \qquad A(x_5)=-3
\]
and differentials
\begin{align*}
  \bdy(x_1)&=x_2 & \bdy(x_3)&=U^2x_2 + x_4\\
  \bdy(x_5) &=U x_4 & \bdy(x_2)&=\bdy(x_4)=0.
\end{align*}
As is customary, we encode this graphically as
\[
\mathcenter{\begin{tikzpicture}
  \node at (0,0) (a1) {$\bullet$};
  \node at (0,-1) (a2) {$\bullet$};
  \node at (2,-1) (a3) {$\bullet$};
  \node at (2,-3) (a4) {$\bullet$};
  \node at (3,-3) (a5) {$\bullet$};
  \draw[->] (a1) to (a2);
  \draw[->] (a3) to (a2);
  \draw[->] (a3) to (a4);
  \draw[->] (a5) to (a4);
\end{tikzpicture}}
\qquad\text{ or }\qquad
\mathcenter{\begin{tikzpicture}
  \node at (0,0) (a1) {$x_1$};
  \node at (0,-1) (a2) {$x_2$};
  \node at (2,-1) (a3) {$x_3$};
  \node at (2,-3) (a4) {$x_4$};
  \node at (3,-3) (a5) {$x_5$};
  \draw[->] (a1) to (a2);
  \draw[->] (a3) to (a2);
  \draw[->] (a3) to (a4);
  \draw[->] (a5) to (a4);
\end{tikzpicture}}
\]
where a length $n$ vertical arrow from $x$ to $y$ indicates that $y$
is a term in $\bdy(x)$ and $A(x)-A(y)=n$, and a length $n$ horizontal
arrow from $x$ to $y$ indicates that $U^ny$ is a term in
$\bdy(x)$ and $A(x)-A(y)=-n$. All arrows in the diagram have length $1$ or $2$.

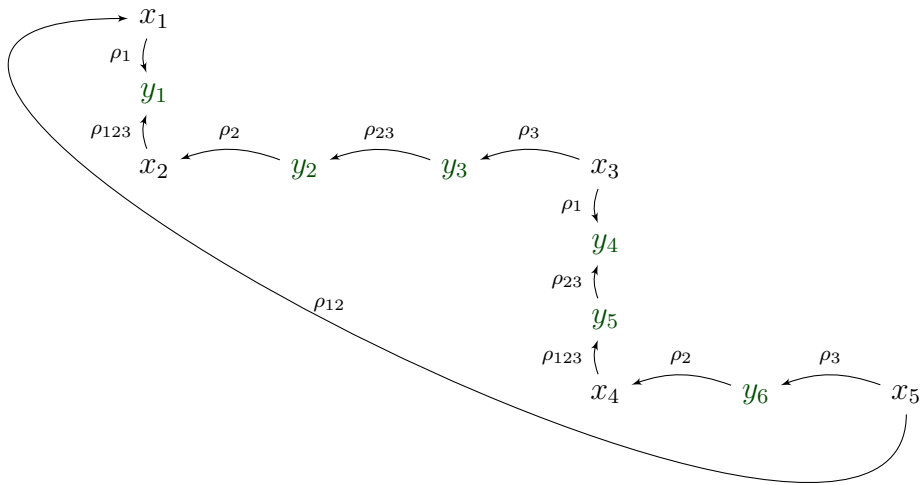
\begin{figure}
  \centering
  \begin{tikzpicture}
    \useasboundingbox (-.5,-6.5) rectangle (8.5,0.5);
    \node[text=xcolor] at (0,0) (a1) {$x_1$};
    \node[text=xcolor] at (0,-2) (a2) {$x_2$};
    \node[text=xcolor] at (6,-2) (a3) {$x_3$};
    \node[text=xcolor] at (6,-5) (a4) {$x_4$};
    \node[text=xcolor] at (10,-5) (a5) {$x_5$};
    \node[text=ycolor] at (0,-1) (b1) {$y_1$};
    \node[text=ycolor] at (2,-2) (b2) {$y_2$};
    \node[text=ycolor] at (4,-2) (b3) {$y_3$};
    \node[text=ycolor] at (6,-3) (b4) {$y_4$};
    \node[text=ycolor] at (6,-4) (b5) {$y_5$};
    \node[text=ycolor] at (8,-5) (b6) {$y_6$};
    \draw[->, bend right=20] (a1) to node[left]{\lab{\rho_1}} (b1);
    \draw[->, bend left=20] (a2) to node[left]{\lab{\rho_{123}}} (b1);
    \draw[->, bend right=20] (a3) to node[above]{\lab{\rho_3}} (b3);  
    \draw[->, bend right=20] (b3) to node[above]{\lab{\rho_{23}}} (b2);
    \draw[->, bend right=20](b2) to node[above]{\lab{\rho_{2}}} (a2);
    \draw[->, bend right=20] (a3) to node[left]{\lab{\rho_1}} (b4);
    \draw[->, bend left=20] (b5) to node[left]{\lab{\rho_{23}}} (b4);
    \draw[->, bend left=20] (a4) to node[left]{\lab{\rho_{123}}} (b5);
    \draw[->, bend right=20] (a5) to node[above]{\lab{\rho_3}} (b6);
    \draw[->, bend right=20] (b6) to node[above]{\lab{\rho_2}} (a4);
    \draw[->, bend left=60] (a5) .. controls (10,-9) and (-7,0) .. node[above]{\lab{\rho_{12}}} (a1);
  \end{tikzpicture}
  \caption[$\CFDa(S^3\setminus T_{-3,4},-6)$.]{\textbf{The
      complex $\CFDa(S^3\setminus T_{-3,4},-6)$}. The generators $x_i$
    satisfy $\iota_0x_i=x_i$, the generators $y_i$ satisfy
    $\iota_1y_i=y_i$, and an arrow from $x$ to $y$ labeled by an
    algebra element $a$ indicates that $a\otimes y$ is a term in
    $\delta^1(x)$.  This computation is from~\cite{LOT1}.
    \label{fig:CFDa-T34}}
\end{figure}

The type $D$ module associated to the
complement of this knot, with framing $-6$, $\CFDa(S^3\setminus
T_{-3,4},-6)$, is homotopy equivalent to the complex shown in
Figure~\ref{fig:CFDa-T34} \cite[Figure 11.5]{LOT1}. 
By Proposition~\ref{prop:extend-CFDa}, $\CFDm(S^3\setminus
T_{-3,4},-6)$ can be obtained from this complex by adding terms to the
differential corresponding to algebra elements $U^n\rho$ where either
$n>0$ or where $\rho$ contains $\rho_4$ (i.e.,
$\rho=\rho'\rho_4\rho''$ for some $\rho'$ and $\rho''$), or both. The
result must be a type $D$ structure over the weighted algebra
$\MAlg$. In particular, for any generator $x$, there is a term
\[
  \mu^1_0\otimes
  x=(\rho_{1234}+\rho_{2341}+\rho_{3412}+\rho_{4123})\otimes x
\]
in the structure equation. (For any given $x$ only two of
these terms are non-zero, because of the idempotents.) To cancel these
terms in the structure relation, many other terms are forced in
$\delta^1$. For instance, the only way to cancel the term
$\rho_{1234}$ from $x_1$ to itself is for there to be a differential
from $y_1$ to $x_1$ with algebra element $\rho_{234}$. The only way to
cancel the term $\rho_{3412}$ from $x_1$ to itself is for there to be
a differential from $x_1$ to $x_5$ with algebra element
$\rho_{34}$. Continuing with this logic gives the complex
$\CFDm(S^3\setminus T_{-3,4},-6)$ shown in Figure~\ref{fig:CFDm-T34}.
\begin{figure}
  \centering
    \begin{tikzpicture}
        \useasboundingbox (-.5,-6.5) rectangle (9.5,0.5);
        \node[text=xcolor] at (0,0) (a1) {$x_1$};
        \node[text=xcolor] at (0,-2) (a2) {$x_2$};
        \node[text=xcolor] at (6,-2) (a3) {$x_3$};
        \node[text=xcolor] at (6,-5) (a4) {$x_4$};
        \node[text=xcolor] at (10,-5) (a5) {$x_5$};
        \node[text=ycolor] at (0,-1) (b1) {$y_1$};
        \node[text=ycolor] at (2,-2) (b2) {$y_2$};
        \node[text=ycolor] at (4,-2) (b3) {$y_3$};
        \node[text=ycolor] at (6,-3) (b4) {$y_4$};
        \node[text=ycolor] at (6,-4) (b5) {$y_5$};
        \node[text=ycolor] at (8,-5) (b6) {$y_6$};
        \draw[->, bend right=20] (a1) to node[left]{\lab{\rho_1}} (b1);
        \draw[->, bend right=20] (b1) to node[right]{\lab{\rho_{234}}} (a1); 
        \draw[->, bend left=20] (a2) to node[left]{\lab{\rho_{123}}} (b1);
        \draw[->, bend left=20] (b1) to node[right]{\lab{\rho_{4}}} (a2); 
        \draw[->, bend right=20] (a3) to node[above]{\lab{\rho_3}} (b3);
        \draw[->, bend right=20] (b3) to node[below]{\lab{\rho_{412}}} (a3); 
        \draw[->, bend right=20] (b3) to node[above]{\lab{\rho_{23}}} (b2);
        \draw[->, bend right=20] (b2) to node[below]{\lab{\rho_{41}}} (b3); 
        \draw[->, bend right=20] (b2) to node[above]{\lab{\rho_{2}}} (a2);
        \draw[->, bend right=20] (a2) to node[below]{\lab{\rho_{341}}} (b2); 
        \draw[->, bend right=20] (a3) to node[left]{\lab{\rho_1}} (b4);
        \draw[->, bend right=20] (b4) to node[right]{\lab{\rho_{234}}} (a3); 
        \draw[->, bend left=20] (b5) to node[left]{\lab{\rho_{23}}} (b4);
        \draw[->, bend left=20] (b4) to node[right]{\lab{\rho_{41}}} (b5); 
        \draw[->, bend left=20] (a4) to node[left]{\lab{\rho_{123}}} (b5);
        \draw[->, bend left=20] (b5) to node[right]{\lab{\rho_{4}}} (a4); 
        \draw[->, bend right=20] (a5) to node[above]{\lab{\rho_3}} (b6);
        \draw[->, bend right=20] (b6) to node[below]{\lab{\rho_{412}}} (a5); 
        \draw[->, bend right=20] (b6) to node[above]{\lab{\rho_2}} (a4);
        \draw[->, bend right=20] (a4) to node[below]{\lab{\rho_{341}}} (b6); 
        \draw[->, bend left=60] (a5) .. controls (10,-9) and (-7,0) .. node[above]{\lab{\rho_{12}}} (a1);
        \draw[->, bend left=30] (a1) .. controls (6,-1) .. node[above]{\lab{\rho_{34}}}  (a5); 
      \end{tikzpicture}
      \qquad
      \begin{tikzpicture}[every node/.style={inner sep=0pt}, xscale=1.5, yscale=1.5]
      \draw[thin, gray] (-1,0) grid (1,6);
    \node[text=xcolor] at (0,0) (x1) {$x_1$};
    \node[text=ycolor] at (-1/2,1/2) (y1) {$y_1$};
    \node[text=xcolor] at (0,1) (x2) {$x_2$};
    \node[text=ycolor] at (1/2,3/2) (y2) {$y_2$};
    \node[text=ycolor] at (1/2,5/2) (y3) {$y_3$};
    \node[text=xcolor] at (0,3) (x3) {$x_3$};
    \node[text=ycolor] at (-1/2,7/2) (y4) {$y_4$};
    \node[text=ycolor] at (-1/2,9/2) (y5) {$y_5$};
    \node[text=xcolor] at (0,5) (x4) {$x_4$};
    \node[text=ycolor] at (1/2,11/2) (y6) {$y_6$};
    \node[text=xcolor] at (0,6) (x5) {$x_5$};
    \node[text=xcolor] at (-1,6) (x1p) {$x_1$};
    \draw[->] (x1) to node[below,sloped]{\lab{\rho_1}} (y1);
    \draw[->] (y1) to node[below,sloped]{\lab{\rho_4}} (x2);
    \draw[->] (x2) to node[below,sloped]{\lab{\rho_{341}}} (y2);
    \draw[->] (y2) to node[left]{\lab{\rho_{41}}} (y3);
    \draw[->] (y3) to node[below,sloped]{\lab{\rho_{412}}} (x3);
    \draw[->] (x3) to node[below,sloped]{\lab{\rho_{1}}} (y4);
    \draw[->] (y4) to node[right]{\lab{\rho_{41}}} (y5);
    \draw[->] (y5) to node[below,sloped]{\lab{\rho_{4}}} (x4);
    \draw[->] (x4) to node[below,sloped]{\lab{\rho_{341}}} (y6);
    \draw[->] (y6) to node[below,sloped]{\lab{\rho_{412}}} (x5);
    \draw[->] (x5) to node[below]{\lab{\rho_{12}}} (x1p);
     \draw[dashed] (x1p) .. controls (-1,2) and (-1,0) .. node[right]{\lab{=}} (x1);
  \end{tikzpicture}
  \caption[$\CFDm(S^3\setminus T_{-3,4},-6)$ and its
  relative gradings]{\textbf{The complex
      $\CFDm(S^3\setminus T_{-3,4},-6)$ (left) and the relative
      gradings on it (right).} On the right, the relative
    $\SpinC$-gradings $(a,b)$ are shown, with $a$ horizontal and $b$
    vertical. The gray squares are $1\times 1$, and the first and last dots in the
chain both correspond to the same generator.}
  \label{fig:CFDm-T34}
\end{figure}
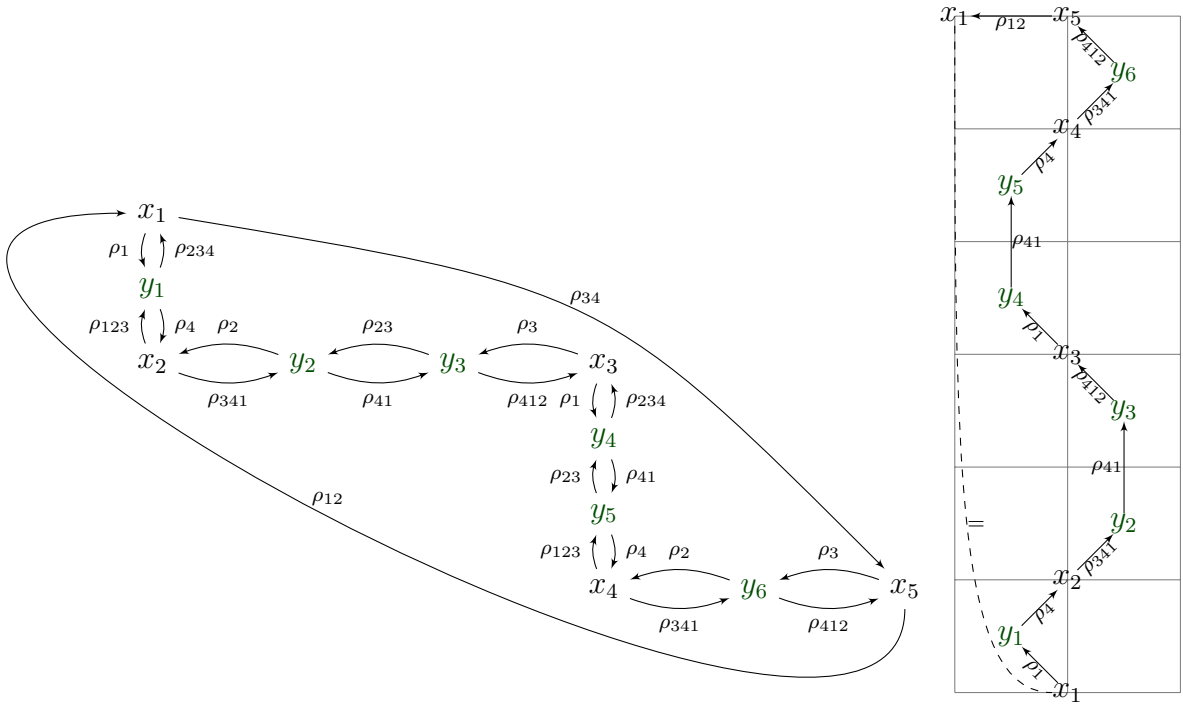

It remains to see there are no other terms in $\delta^1$. For this, we
use the relative gradings. We will use the intermediate grading (from
Section~\ref{sec:int-gr}).  The intermediate grading of an algebra
element is a triple $(m;a,b)$. We first compute the relative $\SpinC$
gradings (the pairs $(a,b)$), by starting at the top-left generator
and following the arrows.
So, for instance, if $\delta^1(x)=\rho_1\otimes y$ then
$\gr(y)=(-1/2;-1/2,1/2)\gr(x)$.  The resulting relative intermediate
$\SpinC$ gradings $(a,b)$ are also shown in Figure~\ref{fig:CFDm-T34}.
The grading ambiguity (periodic domain) corresponds to the loop, so
has $\SpinC$-grading $(-1,6)$.

Since every element of the form $U^m\rho_I$ has intermediate grading
$(m;a,b)$ with $|a|+|b|\in\{0,1\}$ (see Formula~\eqref{eq:gr-comps}),
any additional differential must
go between two generators which lie at points $(x,y)$ and $(z,w)$
satisfying $(x,y)-(z,w)=n\cdot(-1,6)+(\epsilon_1,\epsilon_2)$ where
$|\epsilon_1|+|\epsilon_2|\leq 1$. So, inspecting the grading diagram in
Figure~\ref{fig:CFDm-T34}, the only possible additional terms are
from a generator to itself, or between generators where there is
already an arrow in the complex in Figure~\ref{fig:CFDm-T34}. Additional
differentials $\delta^1(x)=U^m\rho_I\otimes y$ where the total length $|U^m\rho_I|>3$
are now ruled out by considering the Maslov component of the
gradings. Additional differentials with algebra element in $\Alg(T^2)$
were already ruled out, by Proposition~\ref{prop:extend-CFDa}. This
leaves chords with length $\leq 3$ containing $\rho_4$. In each case,
the grading would have to be the same as the grading of the
term which actually does occur; so, for instance, if the occurring term is $\rho_{4}$ then the
new term would have to have grading $(-3/2;-1/2,-1/2)$ hence be either
$\rho_4$ (excluded, by the structure equation) or $\rho_{341}$
(excluded by idempotents). In fact, inspecting the gradings in
Section~\ref{sec:int-gr}, the only pairs of chords of length $\leq 3$ with the
same gradings have different left (and right) idempotents, so if one
occurs the other does not. This completes the proof that
Figure~\ref{fig:CFDm-T34} gives the correct complex.

A similar strategy works for more complicated knots. For instance,
consider the $(-2,1)$ cable of the left handed trefoil, which
Hedden~\cite{HeddenThesis} showed has knot Floer complex
\begin{equation}\label{eq:Hedden-comp}
\mathcenter{\begin{tikzpicture}
  \node at (0,0) (a1) {$\bullet$};
  \node at (1,0) (a2) {$\bullet$};
  \node at (2,0) (a3) {$\bullet$};
  \node at (0,-1) (a4) {$\bullet$};
  \node at (2,-1) (a5) {$\bullet$};
  \node at (1,-2) (a6) {$\bullet$};
  \node at (2,-2) (a7) {$\bullet$};
  \draw[->] (a1) to (a4);
  \draw[->] (a2) to (a4);
  \draw[->] (a2) to (a6);
  \draw[->] (a3) to (a2);
  \draw[->] (a3) to (a5);
  \draw[->] (a5) to (a4);
  \draw[->] (a5) to (a6);
  \draw[->] (a7) to (a6);
\end{tikzpicture}.}
\end{equation}
(Again, all arrows have length $1$ or $2$. We verify this computation
using techniques from this paper in Section~\ref{sec:surgery}.)
The complex $\CFDa(S^3\setminus K)$
with framing $-4$ is shown in Figure~\ref{fig:CFDa-Hedden} \cite[Figure 11.5]{LOT1}.

\begin{figure}
  \centering
  \begin{tikzpicture}
    \useasboundingbox (-0.5,-8.5) rectangle (8.5,0.5);
    \node[text=xcolor] at (0,0) (a1) {$x_1$};
    \node[text=xcolor] at (4,0) (a2) {$x_5$};
    \node[text=xcolor] at (8,0) (a3) {$x_4$};
    \node[text=xcolor] at (0,-4) (a4) {$x_2$};
    \node[text=xcolor] at (8,-4) (a5) {$x_3$};
    \node[text=xcolor] at (4,-8) (a6) {$x_6$};
    \node[text=xcolor] at (8,-8) (a7) {$x_7$};
    \node[text=ycolor] at (6,0) (b1) {$y_5$};
    \node[text=ycolor] at (0,-2) (b2) {$y_1$};
    \node[text=ycolor] at (4,-2) (b3) {$y_6$};
    \node[text=ycolor] at (8,-2) (b4) {$y_4$};
    \node[text=ycolor] at (2,-4) (b5) {$y_2$};
    \node[text=ycolor] at (6,-4) (b6) {$y_3$};
    \node[text=ycolor] at (4,-6) (b7) {$y_7$};
    \node[text=ycolor] at (6,-8) (b8) {$y_8$};  
    \draw[->, bend right=20] (a1) to node[left]{\lab{\rho_1}} (b2);
    \draw[->, bend right=20] (a2) to node[left]{\lab{\rho_1}} (b3);
    \draw[->, bend right=20] (b1) to node[above]{\lab{\rho_2}} (a2);
    \draw[->, bend right=20] (a3) to node[above]{\lab{\rho_3}} (b1);
    \draw[->, bend right=20] (a3) to node[left]{\lab{\rho_1}} (b4);
    \draw[->, bend left=20] (a4) to node[left]{\lab{\rho_{123}}} (b2);
    \draw[->, bend right=20] (b5) to node[above]{\lab{\rho_{2}}} (a4);
    \draw[->, bend right=20] (b6) to node[above]{\lab{\rho_{23}}} (b5);
    \draw[->, bend right=20] (a5) to node[above]{\lab{\rho_{3}}} (b6);
    \draw[->, bend left=20] (a5) to node[left]{\lab{\rho_{123}}} (b4);
    \draw[->, bend left=20] (b7) to node[left]{\lab{\rho_{23}}} (b3);
    \draw[->, bend left=20] (a6) to node[left]{\lab{\rho_{123}}} (b7);
    \draw[->, bend right=20] (b8) to node[above]{\lab{\rho_2}} (a6);
    \draw[->, bend right=20] (a7) to node[above]{\lab{\rho_3}} (b8);
    \draw[->] (a7) .. controls (5,-12) and (-7,0).. node[below]{\lab{\rho_{12}}} (a1);
  \end{tikzpicture}  
  \caption[$\CFDa$ of the complement of a cable of a
  trefoil]{\textbf{The complex $\CFDa(S^3\setminus K,-4)$ where $K$ is
      the (-2,1)-cable of the left-handed trefoil.} Conventions are as
    in Figure~\ref{fig:CFDa-T34}. This computation is from~\cite{LOT1}.}
  \label{fig:CFDa-Hedden}
\end{figure}
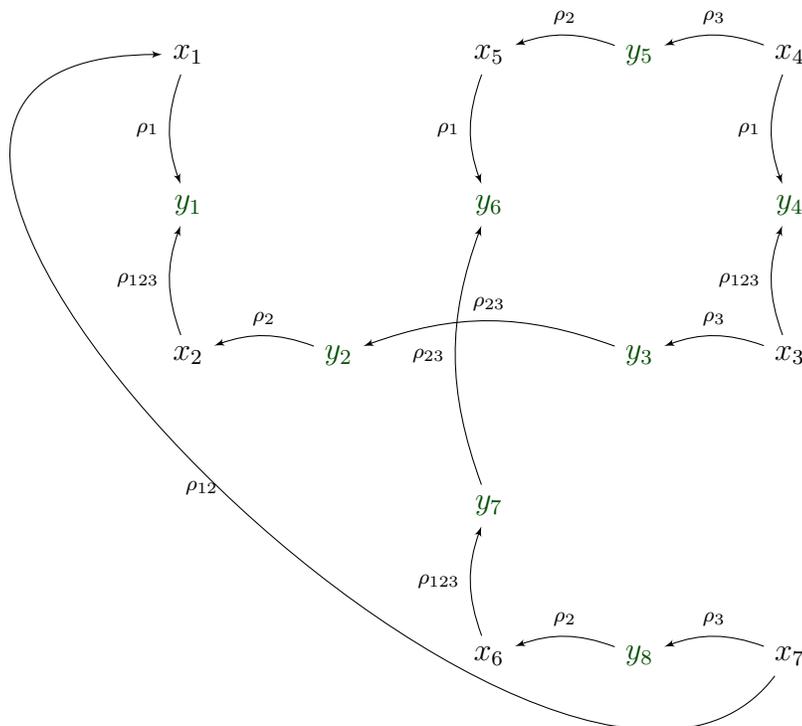

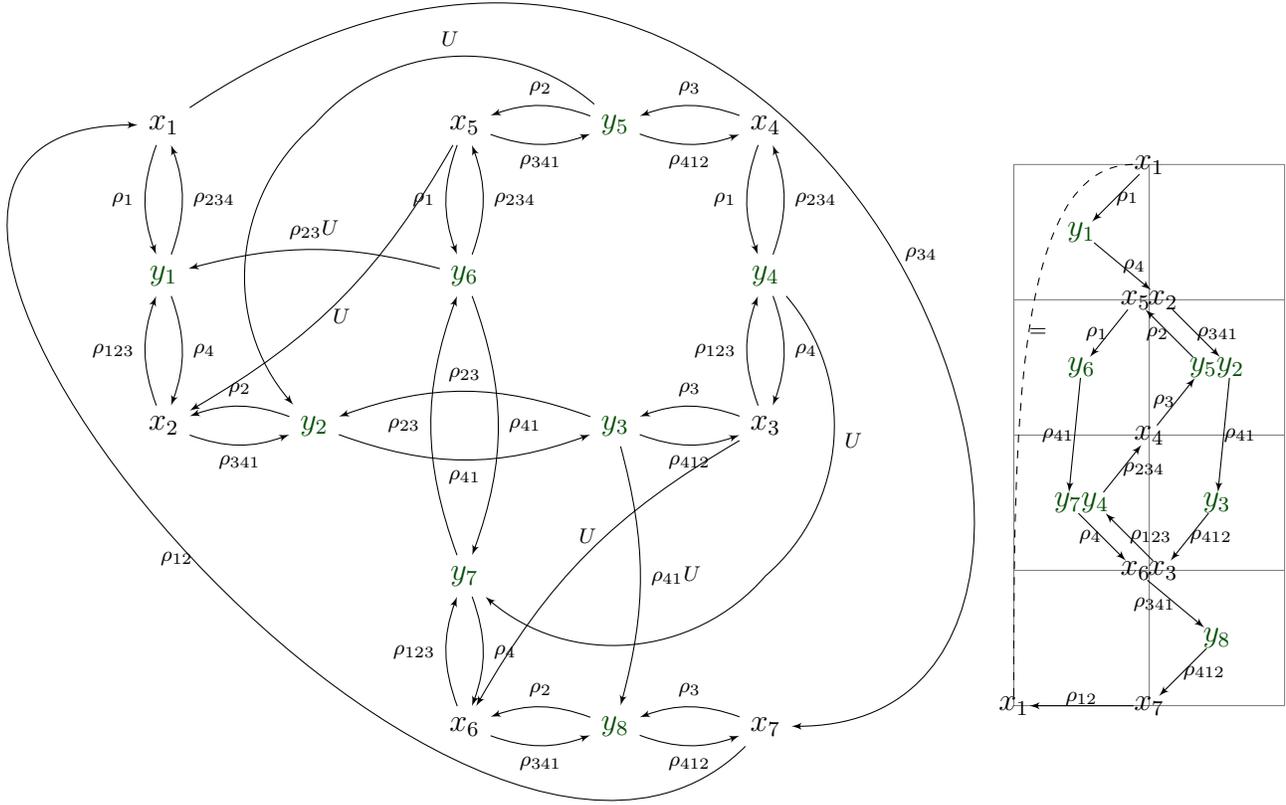
\begin{figure}
  \centering
  \begin{tikzpicture}
    \useasboundingbox (-1.5,-9.5) rectangle (10,3.5);
    \node[text=xcolor] at (0,0) (a1) {$x_1$};
    \node[text=xcolor] at (4,0) (a2) {$x_5$};
    \node[text=xcolor] at (8,0) (a3) {$x_4$};
    \node[text=xcolor] at (0,-4) (a4) {$x_2$};
    \node[text=xcolor] at (8,-4) (a5) {$x_3$};
    \node[text=xcolor] at (4,-8) (a6) {$x_6$};
    \node[text=xcolor] at (8,-8) (a7) {$x_7$};
    \node[text=ycolor] at (6,0) (b1) {$y_5$};
    \node[text=ycolor] at (0,-2) (b2) {$y_1$};
    \node[text=ycolor] at (4,-2) (b3) {$y_6$};
    \node[text=ycolor] at (8,-2) (b4) {$y_4$};
    \node[text=ycolor] at (2,-4) (b5) {$y_2$};
    \node[text=ycolor] at (6,-4) (b6) {$y_3$};
    \node[text=ycolor] at (4,-6) (b7) {$y_7$};
    \node[text=ycolor] at (6,-8) (b8) {$y_8$};  
    \draw[->, bend right=20] (a1) to node[left]{\lab{\rho_1}} (b2);
    \draw[->, bend right=20] (b2) to node[right]{\lab{\rho_{234}}} (a1); 
    \draw[->, bend right=20] (a2) to node[left]{\lab{\rho_1}} (b3); 
    \draw[->, bend right=20] (b3) to node[right]{\lab{\rho_{234}}} (a2); 
    \draw[->, bend right=20] (b1) to node[above]{\lab{\rho_2}} (a2);
    \draw[->, bend right=20] (a2) to node[below]{\lab{\rho_{341}}} (b1); 
    \draw[->, bend right=20] (a3) to node[above]{\lab{\rho_3}} (b1);
    \draw[->, bend right=20] (b1) to node[below]{\lab{\rho_{412}}} (a3); 
    \draw[->, bend right=20] (a3) to node[left]{\lab{\rho_1}} (b4);
    \draw[->, bend right=20] (b4) to node[right]{\lab{\rho_{234}}} (a3); 
    \draw[->, bend left=20] (a4) to node[left]{\lab{\rho_{123}}} (b2);
    \draw[->, bend left=20] (b2) to node[right]{\lab{\rho_{4}}} (a4); 
    \draw[->, bend right=20] (b5) to node[above]{\lab{\rho_{2}}} (a4);
    \draw[->, bend right=20] (a4) to node[below]{\lab{\rho_{341}}} (b5); 
    \draw[->, bend right=20] (b6) to node[above]{\lab{\rho_{23}}} (b5);
    \draw[->, bend right=20] (b5) to node[below]{\lab{\rho_{41}}} (b6); 
    \draw[->, bend right=20] (a5) to node[above]{\lab{\rho_{3}}} (b6);
    \draw[->, bend right=20] (b6) to node[below]{\lab{\rho_{412}}} (a5); 
    \draw[->, bend left=20] (a5) to node[left]{\lab{\rho_{123}}} (b4);
    \draw[->, bend left=20] (b4) to node[right]{\lab{\rho_{4}}} (a5); 
    \draw[->, bend left=20] (b7) to node[left]{\lab{\rho_{23}}} (b3);
    \draw[->, bend left=20] (b3) to node[right]{\lab{\rho_{41}}} (b7); 
    \draw[->, bend left=20] (a6) to node[left]{\lab{\rho_{123}}} (b7);
    \draw[->, bend left=20] (b7) to node[right]{\lab{\rho_{4}}} (a6); 
    \draw[->, bend right=20] (b8) to node[above]{\lab{\rho_2}} (a6);
    \draw[->, bend right=20] (a6) to node[below]{\lab{\rho_{341}}} (b8); 
    \draw[->, bend right=20] (a7) to node[above]{\lab{\rho_3}} (b8);
    \draw[->, bend right=20] (b8) to node[below]{\lab{\rho_{412}}} (a7); 
    \draw[->, bend right=45] (b1) to node[above]{\lab{U}} (2,0) to (b5); 
    \draw[->, bend left=15] (a2) to node[below]{\lab{U}} (a4); 
    \draw[->, bend right=15] (b3) to node[above]{\lab{\rho_{23}U}} (b2);
    \draw[->, bend right=15] (a5) to node[above]{\lab{U}} (a6);
    \draw[->, bend left=15] (b6) to node[right]{\lab{\rho_{41}U}} (b8);
    \draw[->, bend left=45] (b4) to node[right]{\lab{U}} (8,-6) to (b7);
    \draw[->] (a7) .. controls (4,-12) and (-6,0).. node[below]{\lab{\rho_{12}}} (a1);
    \draw[->] (a1) .. controls (9,6) and (14,-8)..  node[right]{\lab{\rho_{34}}} (a7); 
  \end{tikzpicture}
  \qquad
  \raisebox{4em}{
  \begin{tikzpicture}[every node/.style={inner sep=0pt}, xscale=1.8, yscale=-1.8]
    \draw[thin, gray] (-1,0) grid (1,4);
    \node[text=xcolor] at (0,0) (x1) {$x_1$};
    \node[text=ycolor] at (-.5,.5) (y1) {$y_1$};
    \node[text=xcolor] at (.1,1) (x2) {$x_2$};
    \node[text=ycolor] at (.6,1.5) (y2) {$y_2$};
    \node[text=ycolor] at (.5,2.5) (y3) {$y_3$};
    \node[text=xcolor] at (.1,3) (x3) {$x_3$};
    \node[text=ycolor] at (-.4,2.5) (y4) {$y_4$};
    \node[text=xcolor] at (0,2) (x4) {$x_4$};
    \node[text=ycolor] at (.4,1.5) (y5) {$y_5$};
    \node[text=xcolor] at (-.1,1) (x5) {$x_5$};
    \node[text=ycolor] at (-.5,1.5) (y6) {$y_6$};
    \node[text=ycolor] at (-.6,2.5) (y7) {$y_7$};
    \node[text=xcolor] at (-.1,3) (x6) {$x_6$};
    \node[text=ycolor] at (.5,3.5) (y8) {$y_8$};
    \node[text=xcolor] at (0,4) (x7) {$x_7$};
    \node[text=xcolor] at (-1,4) (x1p) {$x_1$};
    \draw[->] (x1) to node[right]{\lab{\rho_1}} (y1);
    \draw[->] (y1) to node[right]{\lab{\rho_4}} (x2);
    \draw[->] (x2) to node[right]{\lab{\rho_{341}}} (y2);
    \draw[->] (y2) to node[right]{\lab{\rho_{41}}} (y3);
    \draw[->] (y3) to node[right]{\lab{\rho_{412}}} (x3);
    \draw[->] (x3) to node[right]{\lab{\rho_{123}}} (y4);
    \draw[->] (y4) to node[right]{\lab{\rho_{234}}} (x4);
    \draw[->] (x4) to node[left]{\lab{\rho_{3}}} (y5);
    \draw[->] (y5) to node[left]{\lab{\rho_{2}}} (x5);
    \draw[->] (x5) to node[left]{\lab{\rho_{1}}} (y6);
    \draw[->] (y6) to node[left]{\lab{\rho_{41}}} (y7);
    \draw[->] (y7) to node[left]{\lab{\rho_{4}}} (x6);
    \draw[->] (x6) to node[left]{\lab{\rho_{341}}} (y8);
    \draw[->] (y8) to node[right]{\lab{\rho_{412}}} (x7);
    \draw[->] (x7) to node[above]{\lab{\rho_{12}}} (x1p);
    \draw[dashed] (x1p) .. controls (-1,2) and (-1,0) .. node[right]{\lab{=}} (x1);
  \end{tikzpicture}}
  \caption[$\CFDm$ of the complement of a cable of the trefoil]{\textbf{The complex $\CFDm(S^3\setminus K,-4)$ where $K$ is
      the (-2,1)-cable of the left-handed trefoil, and relative
      gradings on it.} Conventions are as in Figure~\ref{fig:CFDm-T34}.}
  \label{fig:CFDm-Hedden}
\end{figure}
The complex $\CFDm(S^3\setminus K)$ with framing $-4$ is shown in
Figure~\ref{fig:CFDm-Hedden}.
The proof is similar to the previous example. The type $D$ structure
relation and Proposition~\ref{prop:extend-CFDa} force the indicated
terms to occur. For instance, the term $U\otimes y_2$ in
$\delta^1(y_5)$ is needed to cancel the sequence of differentials
\[
  y_5\stackrel{\rho_{412}}{\lra}x_4\stackrel{\rho_1}{\lra}y_4\stackrel{\rho_4}{\lra}x_3\stackrel{\rho_3}{\lra}y_3
\]
combined with the $\Ainf$ operation
$\mu_4^0(\rho_{412},\rho_1,\rho_4,\rho_3)=U\rho_{41}$.

To see that there are no other terms, we again appeal to the relative
gradings. If we declare that $x_1$ has intermediate grading $(0;0,0)$
then the gradings of the other generators are given by
\begin{center}
  \begin{tabular}{ccccccc}
    \toprule
    $x_1$&$x_2$&$x_3$&$x_4$&$x_5$&$x_6$&$x_7$\\
    $(0;0,0)$ & $(1/2;0,1)$ & $(1/2;0,3)$ & $(0;0,2)$ & $(-1/2;0,1)$ & $(3/2;0,3)$ & $(2;0,4)$\\
  \end{tabular}
  \begin{tabular}{cccc}
    $y_1$&$y_2$&$y_3$&$y_4$\\ 
    $(-1/2;-1/2,1/2)$ & $(3/2;1/2,3/2)$ & $(3/2;1/2,5/2)$ & $(-3/2;-1/2,5/2)$\\
    $y_5$&$y_6$&$y_7$&$y_8$\\
     $(1/2;1/2,3/2)$ & $(-3/2,-1/2,3/2)$ & $(-1/2;-1/2,5/2)$ & $(7/2;1/2,7/2)$\\
    \bottomrule
  \end{tabular}
\end{center}
The $\SpinC$-gradings are shown in Figure~\ref{fig:CFDm-Hedden}.  
The grading ambiguity is $(-5/2;-1,4)$.

The relative Maslov grading implies that there are no other
differentials between generators already connected by a differential,
except perhaps a term like $\delta^1(y_5)=\rho_{4123}y_2$ between
generators connected by a differential with coefficient $U$ (or the
analogue for $\rho_{41}U$ and $\rho_{23}U$). The relative
$\SpinC$-grading implies that the only possible additional
differentials are between terms like $x_1$ and $x_2$ or $y_5$ and
$y_6$ or $x_4$ and $y_3$ whose $\SpinC$-grading difference has length
$1$ in the Manhattan metric. Algebra elements with $\SpinC$-grading
$(0,1)$ have left and
right idempotent $\iota_2$, so cannot be coefficients of differentials
from $x_i$ to $x_j$. Similarly, algebra elements with $\SpinC$-grading
$(1,0)$ cannot be coefficients of differentials from $y_i$ to
$y_j$. Computing the $\SpinC$-gradings in the remaining cases and
comparing them with the gradings of the algebra elements from
Section~\ref{sec:int-gr} (and keeping in mind the idempotents), the
other possibilities are:
\begin{itemize}
\item A term $\rho_2\otimes x_4$ in $\delta^1(y_3)$ or a term
  $\rho_ {123}\otimes y_6$ in $\delta^1(x_4)$. These are prohibited by
  Proposition~\ref{prop:extend-CFDa}.
\item A term $\rho_4\otimes x_4$ in $\delta^1(y_6)$. This violates the
  structure equation (from $y_6$ to $\rho_{41}\otimes y_4$).
\item A term $\rho_{341}\otimes y_3$ in $\delta^1(x_4)$.
\item The terms mentioned above obtained by replacing a copy of
  $U$ by a length-4 chord.
\end{itemize}
(This analysis can easily be---and in fact has been---checked by
computer.) These last two cases are not independent. By the structure equation,
the term $\rho_{341}\otimes y_3$ occurs in $\delta^1(x_4)$ if and only
if $\rho_{4123}\otimes y_2$ occurs in $\delta^1(y_5)$. These terms can
be gauged away by the change of basis replacing $x_4$ by
$x_4+\rho_3\otimes y_2$. Similarly, for the remaining terms:
\begin{center}
  \begin{tabular}{ll}
    \toprule
    Terms that occur together& Gauge away via\\
    \midrule
    $x_5\mapsto \rho_{3412}\otimes x_2$, $y_5\mapsto \rho_{2341}\otimes y_2$  & $y'_5=y_5+\rho_2\otimes x_2$\\
    $x_5\mapsto \rho_{1234}\otimes x_2$, $y_6\mapsto \rho_{234123}\otimes y_1$ & $y'_6=y_6+\rho_{234}\otimes x_2$\\
    $x_3\mapsto \rho_{3412}\otimes x_6$, $y_3\mapsto \rho_{412341}\otimes y_8$ & $y_3'=y_3+\rho_{412}\otimes x_6$\\
    $x_3\mapsto \rho_{1234}\otimes x_6$, $y_4\mapsto \rho_{4123}\otimes y_7$ & $y'_4=y_4+\rho_4\otimes x_6$\\
    \bottomrule
  \end{tabular}
\end{center}
Finally, the term $\rho_{2341}\otimes y_7$ in $\delta^1(y_4)$ violates the structure equation, so does not occur. This completes the verification of Figure~\ref{fig:CFDm-Hedden}.


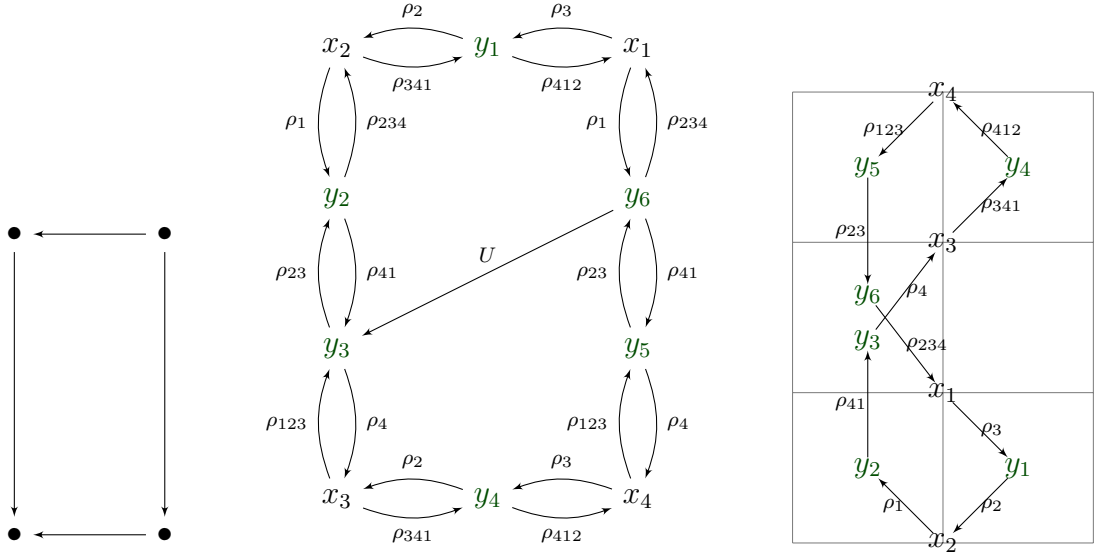
\begin{figure}
  \centering
  \begin{tikzpicture}
    \node at (0,0) (a1) {$\bullet$};
    \node at (2,0) (a2) {$\bullet$};
    \node at (0,-4) (a3) {$\bullet$};
    \node at (2,-4) (a4) {$\bullet$};
    \draw[->] (a2) to (a1);
    \draw[->] (a2) to (a4);
    \draw[->] (a1) to (a3);
    \draw[->] (a4) to (a3);
  \end{tikzpicture}
  \qquad
  \begin{tikzpicture}
    \node[text=xcolor] at (0,0) (a1) {$x_2$};
    \node[text=xcolor] at (4,0) (a2) {$x_1$};
    \node[text=xcolor] at (0,-6) (a3) {$x_3$};
    \node[text=xcolor] at (4,-6) (a4) {$x_4$};
    \node[text=ycolor] at (2,0) (b1) {$y_1$};
    \node[text=ycolor] at (0,-2) (b2) {$y_2$};
    \node[text=ycolor] at (4,-2) (b3) {$y_6$};
    \node[text=ycolor] at (0,-4) (b4) {$y_3$};
    \node[text=ycolor] at (4,-4) (b5) {$y_5$};
    \node[text=ycolor] at (2,-6) (b6) {$y_4$};
    \draw[->, bend right=20] (b1) to node[above]{\lab{\rho_2}} (a1);
    \draw[->, bend right=20] (a1) to node[below]{\lab{\rho_{341}}} (b1);
    \draw[->, bend right=20] (b1) to node[below]{\lab{\rho_{412}}} (a2);
    \draw[->, bend right=20] (a2) to node[above]{\lab{\rho_{3}}} (b1);
    \draw[->, bend right=20] (a1) to node[left]{\lab{\rho_{1}}} (b2);
    \draw[->, bend right=20] (b2) to node[right]{\lab{\rho_{234}}} (a1);
    \draw[->, bend left=20] (b4) to node[left]{\lab{\rho_{23}}} (b2);
    \draw[->, bend left=20] (b2) to node[right]{\lab{\rho_{41}}} (b4);
    \draw[->, bend left=20] (a3) to node[left]{\lab{\rho_{123}}} (b4);
    \draw[->, bend left=20] (b4) to node[right]{\lab{\rho_{4}}} (a3);
    \draw[->, bend right=20] (b6) to node[above]{\lab{\rho_2}} (a3);
    \draw[->, bend right=20] (a3) to node[below]{\lab{\rho_{341}}} (b6);
    \draw[->, bend right=20] (b6) to node[below]{\lab{\rho_{412}}} (a4);
    \draw[->, bend right=20] (a4) to node[above]{\lab{\rho_{3}}} (b6);
    \draw[->, bend right=20] (a2) to node[left]{\lab{\rho_{1}}} (b3);
    \draw[->, bend right=20] (b3) to node[right]{\lab{\rho_{234}}} (a2);
    \draw[->, bend left=20] (b5) to node[left]{\lab{\rho_{23}}} (b3);
    \draw[->, bend left=20] (b3) to node[right]{\lab{\rho_{41}}} (b5);
    \draw[->, bend left=20] (a4) to node[left]{\lab{\rho_{123}}} (b5);
    \draw[->, bend left=20] (b5) to node[right]{\lab{\rho_{4}}} (a4);
    \draw[->] (b3) to node[above]{\lab{U}} (b4);
  \end{tikzpicture}
  \qquad
  \begin{tikzpicture}[every node/.style={inner sep=0pt}, xscale=2, yscale=2]
    \draw[thin, gray] (-1,-1) grid (1,2);
    \node[text=xcolor] at (0,0) (x1) {$x_1$};
    \node[text=xcolor] at (0,-1) (x2) {$x_2$};
    \node[text=xcolor] at (0,1) (x3) {$x_3$};
    \node[text=xcolor] at (0,2) (x4) {$x_4$};
    \node[text=ycolor] at (.5,-.5) (y1) {$y_1$};
    \node[text=ycolor] at (-.5,-.5) (y2) {$y_2$};
    \node[text=ycolor] at (-.5,.35) (y3) {$y_3$};
    \node[text=ycolor] at (.5,1.5) (y4) {$y_4$};
    \node[text=ycolor] at (-.5,1.5) (y5) {$y_5$};
    \node[text=ycolor] at (-.5,.65) (y6) {$y_6$};
    \draw[->] (x1) to node[right]{\lab{\rho_3}} (y1);
    \draw[->] (y1) to node[right]{\lab{\rho_2}} (x2);
    \draw[->] (x2) to node[left]{\lab{\rho_{1}}} (y2);
    \draw[->] (y2) to node[left]{\lab{\rho_{41}}} (y3);
    \draw[->] (y3) to node[right]{\lab{\rho_{4}}} (x3);
    \draw[->] (x3) to node[right]{\lab{\rho_{341}}} (y4);
    \draw[->] (y4) to node[right]{\lab{\rho_{412}}} (x4);
    \draw[->] (x4) to node[left]{\lab{\rho_{123}}} (y5);
    \draw[->] (y5) to node[left]{\lab{\rho_{23}}} (y6);
    \draw[->] (y6) to node[right]{\lab{\rho_{234}}} (x1);
  \end{tikzpicture}
  \caption[A $1\times 2$ rectangle and the associated type $D$
  structure.]{\textbf{A $1\times 2$ rectangle and the associated type $D$
      structure.} Left: a possible summand of a knot Floer complex, a
    $1\times 2$ rectangle. Center: the associated module
    $\CFDm$. Right: the relative $\SpinC$-gradings. Conventions are as
    in Figures~\ref{fig:CFDa-T34} and~\ref{fig:CFDm-T34}.}
  \label{fig:rect-cx}
\end{figure}
Another interesting example comes from a $1\times 2$ rectangle shown
in Figure~\ref{fig:rect-cx}. (This is not the knot Floer complex of a
knot, but is a summand of the knot Floer complex of some knots.)  The
gradings are given by
\begin{center}
  \begin{tabular}{cccc}
    \toprule
    $x_1$&$x_2$&$x_3$&$x_4$\\
    $(0;0,0)$ & $(-1/2;0,-1)$ & $(3/2;0,1)$ & $(2;0,2)$ \\
  \end{tabular}
  \begin{tabular}{ccc}
    $y_1$&$y_2$&$y_3$\\ 
    $(-1/2;1/2,-1/2)$ & $(-1/2;-1/2,-1/2)$ & $(1/2;-1/2,1/2)$ \\
    $y_4$ & $y_5$&$y_6$\\
    $(5/2;1/2,3/2)$ & $(1/2;-1/2,3/2)$ & $(-1/2;-1/2,1/2)$ \\
    \bottomrule
  \end{tabular}
\end{center}
The grading ambiguity here is $(0;0,0)$. The relative $\SpinC$
gradings are drawn in Figure~\ref{fig:rect-cx}.

As usual, the type $D$ structure relation forces the differentials
shown. In particular, the term $U\otimes y_3$ in $\delta^1(y_6)$ is
forced by the higher product
$\mu_4(\rho_3,\rho_2,\rho_1,\rho_{41})=U\rho_1$. The relative gradings
imply that the only other possible differentials are between $x_2$ and
$x_1$, $x_1$ and $x_3$, or $x_3$ and $x_4$, all of which are
prohibited by the fact that there is no algebra element with
idempotent $\iota_1$ and intermediate grading $(*;0,\pm 1)$; between
$y_1$ and $y_6$ or $y_4$ and $y_5$, which are similarly prohibited by
the idempotents; or between $y_2$ and $y_6$, $y_3$ and $y_5$, $x_1$
and $y_2$, or $x_3$ and $y_5$. Taking these in turn, if
$\delta^1(y_2)=a\otimes y_6$ then $\gr(a)=(-1/2;-1,0)$, and no algebra
element has this grading. If $\delta^1(y_6)=a\otimes y_2$ then
$\gr(a)=(-3/2;0,1)$ and again no algebra element has this
grading. Similarly, $\delta^1(y_3)=a\otimes y_5$ implies
$\gr(a)=(-1/2;0,-1)$ and $\delta^1(y_5)=a\otimes y_3$ implies
$\gr(a)=(-3/2;0,1)$, and there are no algebra elements in these
gradings.  If $\delta^1(x_1)=a\otimes y_2$ then
$\gr(a)=(-1/2;1/2,1/2)$ so $a=\rho_2$ or $\rho_{123}$, which are
prohibited by Proposition~\ref{prop:extend-CFDa}.  If
$\delta^1(y_2)=a\otimes x_1$ then $\gr(a)=(-3/2;-1/2,-1/2)$ so
$a=\rho_4$, which is ruled out by the structure equation, or
$\rho_{341}$, which is prohibited by the idempotents.  If
$\delta^1(x_3)=a\otimes y_5$ then $\gr(a)=(-1/2;1/2,-1/2)$ so
$a=\rho_1$ which is prohibited by Proposition~\ref{prop:extend-CFDa}.
If $\delta^1(y_5)=a\otimes x_3$ then $\gr(a)=(-3/2;-1/2,1/2)$ so
$a=\rho_{234}$ which is ruled out by the structure equation.  So, we
have found all the terms.

It is also interesting to consider larger rectangles or pieces of
them; for instance, a length-$2$ horizontal arrow followed by a
length-$2$ vertical arrow gives a complex $\CFDm$ in which a
$\mu_6^0$-operation on the algebra contributes to the structure
relation.

The module $\CFDm$ may be much simpler with some framings than
others. For instance, recall $\CFDm$ for the complement of the $(3,4)$ torus knot with
framing $-6$, from Figure~\ref{fig:CFDm-T34}. Let us consider the
same knot but with framing $-1$. Its module and relative $\SpinC$
grading graph are given in Figure~\ref{fig:T34m1}.
\begin{figure}
  \centering
  \begin{tikzpicture}[xscale=.9]
    \node[text=xcolor] at (0,0) (a1) {$x_1$};
    \node[text=xcolor] at (0,-2) (a2) {$x_2$};
    \node[text=xcolor] at (6,-2) (a3) {$x_3$};
    \node[text=xcolor] at (6,-5) (a4) {$x_4$};
    \node[text=xcolor] at (10,-5) (a5) {$x_5$};
    \node[text=ycolor] at (0,-1) (b1) {$y_1$};
    \node[text=ycolor] at (2,-2) (b2) {$y_2$};
    \node[text=ycolor] at (4,-2) (b3) {$y_3$};
    \node[text=ycolor] at (6,-3) (b4) {$y_4$};
    \node[text=ycolor] at (6,-4) (b5) {$y_5$};
    \node[text=ycolor] at (8,-5) (b6) {$y_6$};
    \node[text=ycolor] at (-2, -2) (c1) {$y_{11}$};
    \node[text=ycolor] at (0,-4) (c2) {$y_{10}$};
    \node[text=ycolor] at (2,-5) (c3) {$y_9$};
    \node[text=ycolor] at (4,-6) (c4) {$y_{8}$};
    \node[text=ycolor] at (6,-7) (c5) {$y_{7}$};
    \draw[->, bend right=20] (a1) to node[left]{\lab{\rho_1}} (b1);
    \draw[->, bend right=20] (b1) to node[right]{\lab{\rho_{234}}} (a1); 
    \draw[->, bend left=20] (a2) to node[left]{\lab{\rho_{123}}} (b1);
    \draw[->, bend left=20] (b1) to node[right]{\lab{\rho_{4}}} (a2); 
    \draw[->, bend right=20] (a3) to node[above]{\lab{\rho_3}} (b3);
    \draw[->, bend right=20] (b3) to node[below]{\lab{\rho_{412}}} (a3); 
    \draw[->, bend right=20] (b3) to node[above]{\lab{\rho_{23}}} (b2);
    \draw[->, bend right=20] (b2) to node[below]{\lab{\rho_{41}}} (b3); 
    \draw[->, bend right=20] (b2) to node[above]{\lab{\rho_{2}}} (a2);
    \draw[->, bend right=20] (a2) to node[below]{\lab{\rho_{341}}} (b2); 
    \draw[->, bend right=20] (a3) to node[left]{\lab{\rho_1}} (b4);
    \draw[->, bend right=20] (b4) to node[right]{\lab{\rho_{234}}} (a3); 
    \draw[->, bend left=20] (b5) to node[left]{\lab{\rho_{23}}} (b4);
    \draw[->, bend left=20] (b4) to node[right]{\lab{\rho_{41}}} (b5); 
    \draw[->, bend left=20] (a4) to node[left]{\lab{\rho_{123}}} (b5);
    \draw[->, bend left=20] (b5) to node[right]{\lab{\rho_{4}}} (a4); 
    \draw[->, bend right=20] (a5) to node[above]{\lab{\rho_3}} (b6);
    \draw[->, bend right=20] (b6) to node[below]{\lab{\rho_{412}}} (a5); 
    \draw[->, bend right=20] (b6) to node[above]{\lab{\rho_2}} (a4);
    \draw[->, bend right=20] (a4) to node[below]{\lab{\rho_{341}}} (b6); 
    \draw[->, bend right=20] (a1) to node[right]{\lab{\rho_{341}}} (c1);
    \draw[->, bend left=70] (c1) to node[above]{\lab{\rho_{2}}} (a1);
    \draw[->, bend left=20] (c1) to node[above]{\lab{\rho_{41}}} (c2);
    \draw[->, bend left=20] (c2) to node[above]{\lab{\rho_{41}}} (c3);
    \draw[->, bend left=20] (c3) to node[above]{\lab{\rho_{41}}} (c4);
    \draw[->, bend left=20] (c4) to node[above]{\lab{\rho_{41}}} (c5);
    \draw[->, bend left=20] (c2) to node[below]{\lab{\rho_{23}}} (c1);
    \draw[->, bend left=20] (c3) to node[below]{\lab{\rho_{23}}} (c2);
    \draw[->, bend left=20] (c4) to node[below]{\lab{\rho_{23}}} (c3);
    \draw[->, bend left=20] (c5) to node[below]{\lab{\rho_{23}}} (c4);
    \draw[->, bend right=20] (c5) to node[above]{\lab{\rho_4}} (a5);
    \draw[->, bend left=90] (a5) to node[below]{\lab{\rho_{123}}} (c5);
    \draw[->] (c2) to node[above,sloped]{\lab{U}} (b2);
    \draw[->] (c2) to node[above,sloped]{\lab{U\rho_4}} (a3);
    \draw[->] (c3) to node[above,sloped]{\lab{U}} (b3);
    \draw[->] (c3) to node[above,sloped]{\lab{U}} (b4);
    \draw[->] (c4) to node[below,sloped]{\lab{U}} (b5);
    \draw[->] (c4) to node[above,sloped]{\lab{U\rho_2}} (a3);
  \end{tikzpicture}
  \quad
  \begin{tikzpicture}[every node/.style={inner sep=0pt}, xscale=1.5, yscale=1.5]
    \draw[thin, gray] (-1,0) grid (1,6);
    \node[text=xcolor] at (0,0) (x1) {$x_1$};
    \node[text=ycolor] at (-1/2,1/2) (y1) {$y_1$};
    \node[text=xcolor] at (0,1) (x2) {$x_2$};
    \node[text=ycolor] at (1/2,3/2) (y2) {$y_2$};
    \node[text=ycolor] at (1/2,5/2) (y3) {$y_3$};
    \node[text=xcolor] at (0,3) (x3) {$x_3$};
    \node[text=ycolor] at (-.4,7/2) (y4) {$y_4$};
    \node[text=ycolor] at (-.4,9/2) (y5) {$y_5$};
    \node[text=xcolor] at (0,5) (x4) {$x_4$};
    \node[text=ycolor] at (1/2,11/2) (y6) {$y_6$};
    \node[text=xcolor] at (0,6) (x5) {$x_5$};
    \node[text=ycolor] at (-1/2,11/2) (y7) {$y_7$};
    \node[text=ycolor] at (-.6,9/2) (y8) {$y_8$};
    \node[text=ycolor] at (-.6,7/2) (y9) {$y_9$};
    \node[text=ycolor] at (-1/2,5/2) (y10) {$y_{10}$};
    \node[text=ycolor] at (-1/2,3/2) (y11) {$y_{11}$};
    \node[text=xcolor] at (-1,1) (x1p) {$x_1$};
    \draw[->] (x1) to node[below,sloped]{\lab{\rho_1}} (y1);
    \draw[->] (y1) to node[below,sloped]{\lab{\rho_4}} (x2);
    \draw[->] (x2) to node[below,sloped]{\lab{\rho_{341}}} (y2);
    \draw[->] (y2) to node[left]{\lab{\rho_{41}}} (y3);
    \draw[->] (y3) to node[below,sloped]{\lab{\rho_{412}}} (x3);
    \draw[->] (x3) to node[below,sloped]{\lab{\rho_{1}}} (y4);
    \draw[->] (y4) to node[right]{\lab{\rho_{41}}} (y5);
    \draw[->] (y5) to node[below,sloped]{\lab{\rho_{4}}} (x4);
    \draw[->] (x4) to node[below,sloped]{\lab{\rho_{341}}} (y6);
    \draw[->] (y6) to node[above,sloped]{\lab{\rho_{412}}} (x5);
    \draw[->] (x5) to node[above,sloped]{\lab{\rho_{123}}} (y7);
    \draw[->] (y7) to node[left]{\lab{\rho_{23}}} (y8);
    \draw[->] (y8) to node[left]{\lab{\rho_{23}}} (y9);
    \draw[->] (y9) to node[left]{\lab{\rho_{23}}} (y10);
    \draw[->] (y10) to node[left]{\lab{\rho_{23}}} (y11);
    \draw[->] (y11) to node[above,sloped]{\lab{\rho_{2}}} (x1p);
    \draw[dashed] (x1p) .. controls (-1,.25) .. node[right]{\lab{=}} (x1);
   \end{tikzpicture}
   \caption[$\CFDm(S^3\setminus T_{-3,4},-1)$ and its relative
   gradings]{\textbf{The complex $\CFDm(S^3\setminus T_{-3,4},-1)$ (left) and the relative
       gradings on it (right).} Conventions are as in Figure~\ref{fig:CFDm-T34}.}
   \label{fig:T34m1}
 \end{figure}
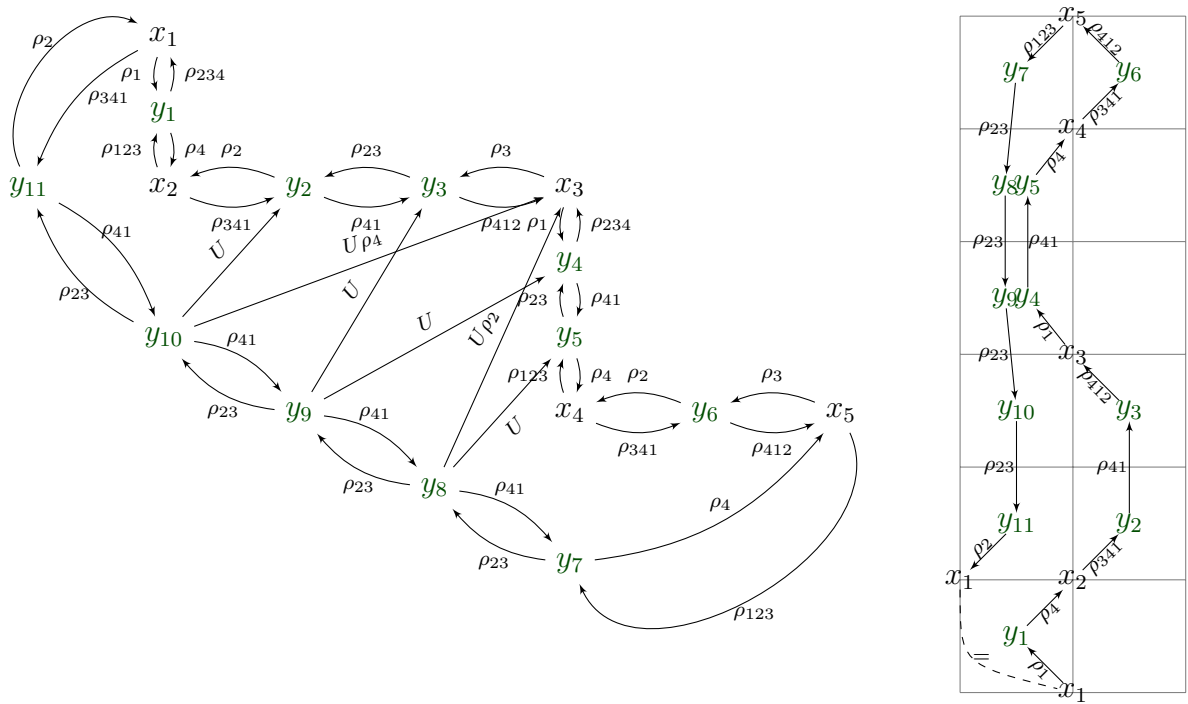

The grading ambiguity is $(6;1,1)$.  To see these are all the
terms, computer computation shows that the only possible differentials
respecting the relative gradings and idempotents are:
\begin{align*}
\delta^1(x_1) &= (\rho_1 + \boldsymbol{U^{3}\rho_3}) \otimes y_{1} + \rho_{341} \otimes y_{11}\\
\delta^1(x_{2}) &= \rho_{123} \otimes y_{1} + \rho_{341} \otimes y_{2}+ \rho_3 \otimes y_{11}\\
\delta^1(x_{3}) &= (\rho_3 + \rho_1) \otimes y_{3} + (\rho_1 + \rho_3) \otimes y_{4}\\ 
\delta^1(x_{4}) &= \rho_{123} \otimes y_{5} + \rho_{341} \otimes y_{6} + \rho_1 \otimes y_{7}\\
\delta^1(x_{5}) &= (\rho_3 + \boldsymbol{U^{3}\rho_1}) \otimes y_{6} + \rho_{123} \otimes y_{7}\\
  \delta^1(y_{1}) &= \rho_{234} \otimes x_1 + \rho_4 \otimes x_{2}\\
  \delta^1(y_{2}) &= \rho_2 \otimes x_{2} + \rho_{41} \otimes y_{3} + \boldsymbol{\rho_{41}} \otimes y_{4}\\
  \delta^1(y_{3}) &= \rho_{23} \otimes y_{2} + (\rho_{412} + \boldsymbol{\rho_{234}}) \otimes x_{3} + \boldsymbol{\rho_{41}} \otimes y_{5}\\
\delta^1(y_{4}) &= \rho_{23} \otimes y_{2} + (\rho_{234} + \rho_{412}) \otimes x_{3} + \rho_{41} \otimes y_{5}\\
\delta^1(y_{5}) &= \rho_{23} \otimes y_{3} + \rho_{23} \otimes y_{4} + \rho_4 \otimes x_{4}\\
\delta^1(y_{6}) &= \rho_2 \otimes x_{4} + \rho_{412} \otimes x_{5}\\ 
\delta^1(y_{7}) &= (\boldsymbol{\rho_{234}} + \boldsymbol{U^{2}\rho_{412}}) \otimes x_{4} + \rho_4 \otimes x_{5} + \rho_{23} \otimes y_{8}\\ 
\delta^1(y_{8}) &= U^{1}\rho_2 \otimes x_{3} + U^{1} \otimes y_{5} + \rho_{41} \otimes y_{7} + \rho_{23} \otimes y_{9}\\ 
\delta^1(y_{9}) &= U^{1} \otimes y_{3} + U^{1} \otimes y_{4} + \rho_{41} \otimes y_{8} + \rho_{23} \otimes y_{10}\\ 
\delta^1(y_{10}) &= U^{1}\otimes y_{2} + U^{1}\rho_4 \otimes x_{3} + \rho_{41} \otimes y_{9} + \rho_{23} \otimes y_{11}\\ 
\delta^1(y_{11}) &= \rho_2 \otimes x_1 + (\boldsymbol{U^{2}\rho_{234}} + \boldsymbol{\rho_{412}}) \otimes x_{2} + \rho_{41} \otimes y_{10}.
\end{align*}
Here, any factor of $U$ can also be replaced by a length-4 chord, so
for instance $U^3\rho_3$ also includes the possibilities of
$U^2\rho_{34123}$, $U\rho_{341234123}$, and
$\rho_{3412341234123}$. Terms which do not appear in
Figure~\ref{fig:T34m1} and which are not excluded by $\CFDa$ and
Proposition~\ref{prop:extend-CFDa} are in bold; these are the terms
we still must rule out. Additionally, any of the terms $U$ appearing
in Figure~\ref{fig:T34m1} could also be replaced by a length-4 chord
(e.g., the term $U\rho_2\otimes x_3$ in $\delta^1(y_8)$ could be
replaced by $\rho_{23412}\otimes x_3$), and we must rule out these
terms as well. 

We work roughly in order of increasing length. If the term
$\rho_{41}\otimes y_4$ occurs in $\delta^1(y_2)$ then the structure
equation implies that the term $\rho_{234}\otimes x_3$ must occur in
$\delta^1(y_3)$ but then $\rho_{2341}\otimes y_4$ occurs in the
structure equation with input $y_3$, a contradiction. So, neither of
these terms occurs. Then, a term $\rho_{41}\otimes
y_5$ in $\delta^1(y_3)$ violates the structure equation (with output
$\rho_{4123}\otimes y_4$). Adding the pair of terms
$\delta^1(y_7)=\rho_{234}\otimes x_4$ and
$\delta^1(y_8)=\rho_{4123}\otimes y_5$ does not violate the structure
equation, but these can be gauged away by replacing $y_7$ by
$y_7+\rho_{23}\otimes y_5$. A term $\delta^1(y_8)=\rho_{2341}\otimes
y_5$ forces a term $\delta^1(y_9)=\rho_{4123}y_4$, and this pair can
be gauged away by replacing $y_8$ by $y_8+\rho_{23}\otimes y_4$. A
term $\rho_{2341}\otimes y_4$ in $\delta^1(y_9)$ forces a term
$\rho_{41234}\otimes x_3$ in $\delta^1(y_{10})$, and this pair can be
gauged away by replacing $y_9$ by $y_9+\rho_{234}\otimes x_3$.

Perhaps it is worth pausing to summarize where we are. The remaining
terms to be ruled out are: $U^3\rho_3\otimes y_1$ and its cousins in $\delta^1(x_1)$;
$(U^2\rho_{234}+\rho_{412})\otimes x_2$ in $\delta^1(y_{11})$;
$(\rho_{2341}+\rho_{4123})\otimes y_2$ in $\delta^1(y_{10})$;
$(\rho_{2341}+\rho_{4123})\otimes y_3$ in $\delta^1(y_{9})$;
$\rho_{23412}\otimes x_3$ in $\delta^1(y_8)$; $U^2\rho_{412}\otimes
x_4$ and cousins in $\delta^1(y_7)$; and $U^3\rho_1\otimes y_6$ and cousins in
$\delta^1(y_7)$. All of these except the ones involving $U^3$, $U^2$,
and their cousins can be gauged away by a similar process to the
previous paragraph starting, say, by observing that a term
$\rho_{412}\otimes x_2$ in $\delta^1(y_{11})$ forces a term
$\rho_{2341}\otimes y_2$ in $\delta^1(y_{10})$, which can be gauged
away by replacing $y_{11}$ with $y_{11}+\rho_{41}\otimes y_2$.

A term $U^3\rho_3\otimes y_1$ in $\delta^1(x_1)$ or a term
$U^3\rho_1\otimes y_6$ in $\delta^1(x_5)$ violates the structure
equation. The pair $U^2\rho_{34123}\otimes y_1$ in $\delta^1(x_1)$ and
$U^2\rho_{234}$ in $\delta^1(y_{11})$ (which must occur together) can
be gauged away by replacing $y_{11}$ by $y_{11}+U^2\rho_{23}\otimes y_1$. The
pair $U\rho_{341234123}\otimes y_1$ in $\delta^1(x_1)$ and
$U\rho_{2341234}$ in $\delta^1(y_{11})$ (which must occur together)
can be gauged away by replacing $y_{11}$ by
$y_{11}+U\rho_{234123}\otimes y_1$. The pair $\rho_{3412341234123}\otimes y_1$
in $\delta^1(x_1)$ and $\rho_{23412341234}$ in $\delta^1(y_{11})$
(which must occur together) can be gauged away by replacing $y_{11}$
by $y_{11}+\rho_{2341234123}\otimes y_1$. The story for the terms in
$\delta^1(y_7)$ and $\delta^1(x_5)$ is similar, and is left to the
reader. This completes the verification of Figure~\ref{fig:T34m1}.

\begin{figure}
  \centering
  \begin{tikzpicture}[yscale=1.5, xscale=2]
    \node[text=xcolor] at (0,0) (a1) {$x_1$};
    \node[text=xcolor] at (0,-2) (a2) {$x_2$};
    \node[text=xcolor] at (2,-2) (a3) {$x_3$};
    \node[text=ycolor] at (0,-1) (b1) {$y_1$};
    \node[text=ycolor] at (1,-2) (b2) {$y_2$};
    \node[text=ycolor] at (1.5,-.5) (b3) {$y_3$};
    \draw[->, bend right=10] (a1) to node[left]{\lab{\rho_1+U\rho_3}} (b1);
    \draw[->, bend right=10] (b1) to node[right]{\lab{\rho_{234}}} (a1);
    \draw[->, bend right=10] (b1) to node[left]{\lab{\rho_4}} (a2);
    \draw[->, bend right=10] (a2) to node[right]{\lab{\rho_{123}}} (b1);
    \draw[->, bend right=10] (a2) to node[below]{\lab{\rho_{341}}} (b2);
    \draw[->, bend right=10] (b2) to node[above]{\lab{\rho_{2}}} (a2);
    \draw[->, bend right=10] (b2) to node[below]{\lab{\rho_{412}}} (a3);
    \draw[->, bend right=10] (a3) to node[above]{\lab{\rho_{3}+U\rho_1}} (b2);
    \draw[->, bend right=10] (a3) to node[right]{\lab{\rho_{123}}} (b3);
    \draw[->, bend right=10] (b3) to node[left]{\lab{\rho_{4}}} (a3);
    \draw[->, bend right=10] (b3) to node[above right]{\lab{\rho_2}} (a1);
    \draw[->, bend right=10] (a1) to node[left]{\lab{\rho_{341}}} (b3);
  \end{tikzpicture}\qquad
    \begin{tikzpicture}[every node/.style={inner sep=0pt}, xscale=2, yscale=2]
    \draw[thin, gray] (-1,0) grid (1,2);
    \node[text=xcolor] at (0,0) (x1) {$x_1$};
    \node[text=xcolor] at (0,1) (x2) {$x_2$};
    \node[text=xcolor] at (0,2) (x3) {$x_3$};
    \node[text=xcolor] at (-1,1) (x1p) {$x_1$};
    \node[text=ycolor] at (-.5,.5) (y1) {$y_1$};
    \node[text=ycolor] at (.5,1.5) (y2) {$y_2$};
    \node[text=ycolor] at (-.5,1.5) (y3) {$y_3$};
    \draw[->] (x1) to node[below left]{\lab{\rho_{1}}} (y1);
    \draw[->] (y1) to node[above left]{\lab{\rho_{4}}} (x2);
    \draw[->] (x2) to node[below right]{\lab{\rho_{341}}} (y2);
    \draw[->] (y2) to node[above right]{\lab{\rho_{412}}} (x3);
    \draw[->] (x3) to node[above left]{\lab{\rho_{123}}} (y3);
    \draw[->] (y3) to node[above left]{\lab{\rho_{2}}} (x1p);
    \draw[dashed] (x1p) .. controls (-1,.35) .. node[right]{\lab{=}} (x1);
    \end{tikzpicture}
    \caption[$\CFDm(S^3\setminus T_{-2,3},-1)$]{\textbf{The complex
        $\CFDm(S^3\setminus T_{-2,3},-1)$.} As usual, the complex is on the left
      and the grading graph is on the right.}
  \label{fig:T23}
\end{figure}
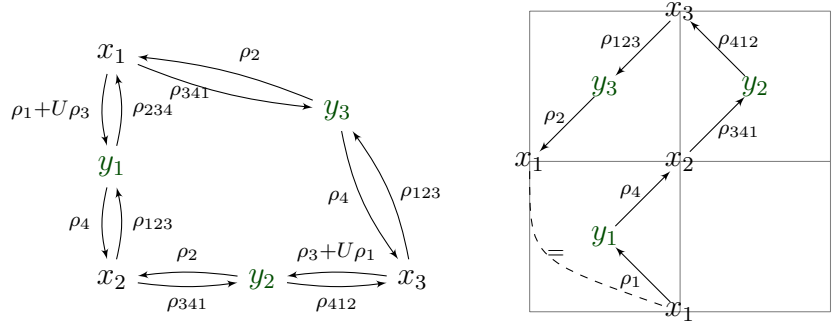

We end the section with a simpler example: the $(-1)$-framed
left-handed trefoil. The module $\CFDm(S^3\setminus T_{-2,3},-1)$ is
shown in Figure~\ref{fig:T23}. The gradings of the generators are
given by
\begin{center}
  \begin{tabular}{ccc}
    \toprule
    $x_1$&$x_2$&$x_3$\\
    $(0;0,0)$ & $(1/2;0,1)$ & $(1;0,2)$ \\
    \midrule
    $y_1$&$y_2$&$y_3$\\
    $(-1/2;-1/2,1/2)$ & $(3/2;1/2,3/2)$ & $(-1/2;-1/2,3/2)$ \\
     \bottomrule
  \end{tabular}
\end{center}
and the grading ambiguity is $(-2;-1,1)$. 

The possible differentials consistent with the gradings are:
\begin{align*}
\delta^1(x_1) &= (\rho_1 + U\rho_3) \otimes y_1 + \rho_{341} \otimes y_3\\ 
\delta^1(y_1) &= \rho_{234} \otimes x_1 + \rho_4 \otimes x_2\\ 
\delta^1(x_2) &= \rho_{123} \otimes y_1 + \rho_{341} \otimes y_2 + \boldsymbol{(\rho_1 + \rho_3)} \otimes y_3\\ 
\delta^1(y_2) &= \rho_2 \otimes x_2 + \rho_{412} \otimes x_3\\ 
\delta^1(x_3) &= (\rho_3 + U\rho_1) \otimes y_2 + \rho_{123} \otimes y_3\\ 
\delta^1(y_3) &= \rho_2 \otimes x_1 + \boldsymbol{(\rho_{234} + \rho_{412})} \otimes x_2 + \rho_4 \otimes x_3.
\end{align*}
All the terms not in bold are forced by the structure equation. (The
terms with coefficients $U\rho_3$ and $U\rho_2$ are forced by the
$\mu_4$ on the algebra, from $y_3$ to $y_1$ and from $y_3$ to $y_2$, respectively.)
As usual, we will rule out the terms in bold. The bold terms with
coefficient $\rho_1$ or $\rho_3$ are ruled out by
Proposition~\ref{prop:extend-CFDa}. The other two bold terms are ruled
out by the structure equation. (In fact, the first two bold terms are
also ruled out by the structure equation.)

\subsection{The type \texorpdfstring{$A$}{A} module for a solid torus}\label{sec:CFA-eg}
For genus $1$ Heegaard diagrams, it is straightforward to compute the
module $\CFAm$ to any finite order in $U$. In this section, we give
the first few operations on $\CFAm$ for two examples, which we will
use in Section~\ref{sec:surgery} for a surgery computation and a
cabling computation.

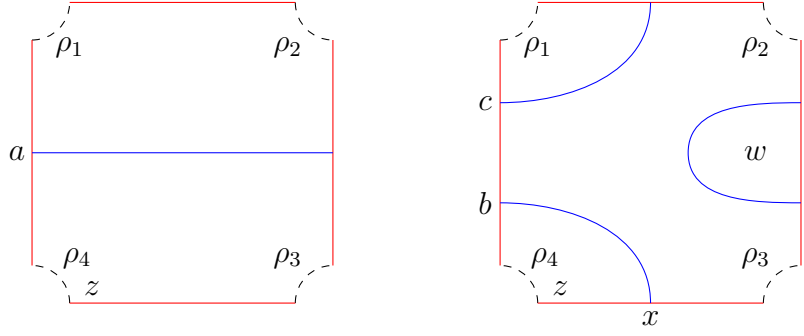
\begin{figure}
  \centering
  \begin{tikzpicture}[scale=2]
    \draw[dashed] (.25,0) arc[radius=.25, start angle = 0, end angle = 90];
    \draw[dashed] (0,1.75) arc[radius=.25, start angle = 270, end angle = 360];
    \draw[dashed] (1.75,0) arc[radius=.25, start angle = 180, end angle = 90];
    \draw[dashed] (1.75,2) arc[radius=.25, start angle = 180, end angle = 270];
    \draw[color = red] (0.25,0) to (1.75,0);
    \draw[color = red] (0,0.25) to (0,1.75);
    \draw[color = red] (0.25,2) to (1.75,2);
    \draw[color = red] (2,0.25) to (2,1.75);
    \node at (.25,1.7) (r1) {$\rho_1$};
    \node at (1.7,1.7) (r2) {$\rho_2$};
    \node at (1.7,.3) (r3) {$\rho_3$};
    \node at (.3,.3) (r4) {$\rho_4$};
    \node at (.4,.1) (z) {$z$};
    \draw[color = blue] (0,1) to (2,1);
    \node at (-.1,1) (a) {$a$};
    \node at (1,-.1) (x) {\phantom{$x$}};
  \end{tikzpicture}
  \qquad\qquad
  \begin{tikzpicture}[scale=2]
    \draw[dashed] (.25,0) arc[radius=.25, start angle = 0, end angle = 90];
    \draw[dashed] (0,1.75) arc[radius=.25, start angle = 270, end angle = 360];
    \draw[dashed] (1.75,0) arc[radius=.25, start angle = 180, end angle = 90];
    \draw[dashed] (1.75,2) arc[radius=.25, start angle = 180, end angle = 270];
    \draw[color = red] (.25,0) to (1.75,0);
    \draw[color = red] (0,0.25) to (0,1.75);
    \draw[color = red] (.25,2) to (1.75,2);
    \draw[color = red] (2,0.25) to (2,1.75);
    \node at (.25,1.7) (r1) {$\rho_1$};
    \node at (1.7,1.7) (r2) {$\rho_2$};
    \node at (1.7,.3) (r3) {$\rho_3$};
    \node at (.3,.3) (r4) {$\rho_4$};
    \node at (.4,.1) (z) {$z$};
    \draw[color = blue] (1,0) to[in=0, out=90] (0,.667);
    \draw[color = blue] (1,2) to[in=0,out=270] (0,1.333);
    \draw[color = blue] (2,.667) to[in=270,out=180] (1.25,1) to[in=180,out=90] (2,1.333);
    \node at (-.1,.667) (b) {$b$};
    \node at (-.1,1.333) (c) {$c$};
    \node at (1,-.1) (x) {$x$};
    \node at (1.7,1) (w) {$w$};
  \end{tikzpicture}  
  \caption[The $0$-framed solid torus and the $(2,1)$-cable]{\textbf{The
      $0$-framed solid torus and the $(2,1)$-cable.} }
  \label{fig:cable-HD}
\end{figure}

A bordered Heegaard diagram for a $0$-framed solid torus $\HD_0$ is shown in
Figure~\ref{fig:cable-HD}. As an $\Field[U]$-module,
$\CFAm(\HD_0)=\Field[U]\langle a\rangle$ is a free, rank one module. Since this
diagram has genus $1$, holomorphic curves correspond to immersions of disks in
the diagram.
Here are the operations $m_n^k$ for $n\leq 6$, as well as some of the
operations $m_8^0$:
\begin{align*}
  m_3^0(a,\rho_2,\rho_1)&=a &
  m_3^0(a,\rho_4,\rho_3)&=Ua\\
  m_4^0(a,\rho_2,\rho_{12},\rho_1)&=a&
  m_4^0(a,\rho_4,\rho_{34},\rho_3)&=U^2a\\
  m_5^0(a,\rho_2,\rho_{12},\rho_{12},\rho_1)&=a&
  m_5^0(a,\rho_4,\rho_{34},\rho_{34},\rho_3)&=U^3a\\
  m_5^0(a,\rho_{23},\rho_2,\rho_1,\rho_{41})&=Ua&
  m_5^0(a,\rho_{41},\rho_4,\rho_3,\rho_{23})&=U^2a\\
  m_6^0(a,\rho_2,\rho_{12},\rho_{12},\rho_{12},\rho_1)&=a&
  m_6^0(a,\rho_4,\rho_{34},\rho_{34},\rho_{34},\rho_3)&=U^4a\\
  m_6^0(a,\rho_{23},\rho_2,\rho_1,\rho_{412},\rho_1)&=Ua&
  m_6^0(a,\rho_2,\rho_{123},\rho_2,\rho_1,\rho_{41})&=Ua\\
  m_6^0(a,\rho_{41},\rho_4,\rho_3,\rho_{234},\rho_3)&=U^3a&
  m_6^0(a,\rho_4,\rho_{341},\rho_4,\rho_3,\rho_{341})&=U^3a\\
  m_6^1(a,\rho_{23},\rho_2,\rho_{12},\rho_1,\rho_{41})&=U^2a&
  m_6^1(a,\rho_{41},\rho_4,\rho_{34},\rho_3,\rho_{23})&=U^4a\\
  m_8^0(a,\rho_{23},\rho_2,\rho_{12},\rho_1,\rho_4,\rho_{3412},\rho_1)&=U^2a&
  m_8^0(a,\rho_{23},\rho_2,\rho_1,\rho_{4123},\rho_2,\rho_1,\rho_{41})&=U^2a\\
  m_8^0(a,\rho_2,\rho_{1234},\rho_3,\rho_2,\rho_{12},\rho_1,\rho_{41})&=U^2a&
  m_8^0(a,\rho_{4},\rho_{3412},\rho_1,\rho_4,\rho_{34},\rho_3,\rho_{23})&=U^4a\\
  m_8^0(a,\rho_{41},\rho_4,\rho_{34},\rho_3,\rho_2,\rho_{1234},\rho_3)&=U^4a&
  m_8^0(a,\rho_{41},\rho_4,\rho_3,\rho_{2341},\rho_4,\rho_3,\rho_{23})&=U^4a.
\end{align*}

It seems these operations can be encoded simply via an extension of a
construction of Hedden-Levine~\cite[Section 2.3]{HeddenLevine16:splicing}. Given
an operation $m_{1+m}^v(x_1,\rho^1,\dots,\rho^m)=U^kx_2$ and an operation
$m_{1+n}^w(x_2,\rho^{m+1},\dots,\rho^{m+n})=U^\ell x_3$, if $\rho^m\rho^{m+1}\neq 0$ there
is an induced operation
\[
  m_{m+n}^{v+w}(x_1,\rho^1,\dots,\rho^m\rho^{m+1},\dots,\rho^{m+n})=U^{k+\ell}x_3.
\]
This operation corresponds to gluing the edge of the first disk between $x_2$
and $\rho^m$ to the edge of the second disk between $x_2$ and
$\rho^{m+1}$. (Recall that we are discussing a genus-1 Heegaard diagram, so
curve counts are combinatorial.) For example, taking both of the input
operations to be the operation $m_3^0(a,\rho_2,\rho_1)=a$ above gives the new
operation $m_4^0(a,\rho_2,\rho_{12},\rho_1)=a$.

For $\CFAm$, there are two more ways to obtain new operations from old. First, given an
operation $m_{1+n}^w(x,\rho^{1},\dots,\rho^{n})=U^ky$ and an integer $j$ so that
$\rho^j\rho^{j+1}=0$, there is a new operation
\[
  m_{3+n}^w(x,\rho^{1},\dots,\rho^{j-1}, \rho^j\rho_{i},\rho_{i-1},\rho_{i-2},\rho_{i-3}\rho^{j+1},\rho^{j+2},\dots, \rho^{n})=U^{k+1}y.
\]
Here, $\rho_i$ is the unique length-1 chord so that $\rho^j\rho_{i}\neq 0$ (and
also $\rho_{i-3}\rho^{j+1}$).  This corresponds to gluing a copy of the whole
torus, cut open along the $\alpha$-arcs, to the disk inducing the operation,
along the edge between $\rho^j$ and $\rho^{j+1}$. For example, the operation
$m_3^0(a,\rho_4,\rho_3)=Ua$ induces a new operation
$m_5^0(a,\rho_{41},\rho_4,\rho_3,\rho_{23})=U^2a$ this way.

Second, given an operation $m_{1+n}^w(x,\rho^1,\dots,\rho^n)=U^ky$ and an
integer $1<j<n$ so that $\rho^j$ has length $4$, there is a new operation
\[
  m_{n-1}^{w+1}(x,\rho^1,\dots,\rho^{j-2},\rho^{j-1}\rho^{j+1},\rho^{j+2},\dots,\rho^n)=U^ky.
\]
This corresponds to gluing the edges on either side of the chord $\rho^j$
together. For example, the operation
\[
  m_8^0(a,\rho_2,\rho_{1234},\rho_3,\rho_2,\rho_{12},\rho_1,\rho_{41})=U^2a
\]
induces an operation
\[
  m_6^1(a,\rho_{23},\rho_2,\rho_{12},\rho_1,\rho_{41})=U^2a.
\]
The same operation can be induced in more than one way: this $m_6^1$
is also induced by
$m_8^0(a,\rho_{23},\rho_2,\rho_{12},\rho_1,\rho_4,\rho_{3412},\rho_1)$ and
$m_8^0(a,\rho_{23},\rho_2,\rho_1,\rho_{4123},\rho_2,\rho_1,\rho_{41})$.

In particular, this suggests encoding $\CFAm(\HD_0)$ with the directed graph
\[
\begin{tikzpicture}
  \node at (0,0) (x) {$\circ$};
  \node at (-1,0) (lphant) {};
  \node at (1,0) (rphant) {};
  \draw[->, bend left=60] (x) to node[left]{\lab{U\rho_4\otimes\rho_3}} (lphant) to (x);
  \draw[->, bend right=60] (x) to node[right]{\lab{\rho_2\otimes\rho_1}} (rphant) to (x);
\end{tikzpicture}.
\]
All the operations we have written down are obtained from this graph by
repeatedly performing the three constructions above.
We do not prove this description of $\CFAm$ here, 
though in this case and the next one, one could likely prove the graph
does, in fact, give $\CFAm$ by elementary combinatorics, similar to
the description of the operations on $\MAlg(T^2)$ via tiling patterns
in our previous paper~\cite[Section 3]{LOT:torus-alg}.

Turning next to the cabling operation, consider the second bordered
Heegaard diagram in Figure~\ref{fig:cable-HD}, which we will denote
$\mathcal{C}_2$. This bordered Heegaard diagram has an extra
basepoint, $w$. We use $U$ to track the number of times a holomorphic
curve crosses $w$ and introduce a new variable $V$ to track the number
of times a curve crosses $z$.
The resulting module $\CFAm_{U,V}(\mathcal{C}_2)$ is then a module not
over $\MAlg$ but rather over
$\MAlg^{U,V}=\MAlg\otimes_{\Field[U]}\Field[U,V]$, where the
operations are obtained from the operations on $\MAlg$ by replacing
every instance of the variable $U$ by the product $UV$.

Again, $\mathcal{C}_2$ has genus $2$, so computing the operations to any finite
order is combinatorial. The operations which output terms $U^iV^jr$ where $i+j\leq 3$ are:
\begin{align*}
  \boldsymbol{m^0_1(b)}&=Uc &
  \boldsymbol{m^0_2(x,\rho_1)}&=c\\
  \boldsymbol{m^0_2(b,\rho_4)}&=Vx &
  m^0_2(b,\rho_{41})&=Vc\\
  \boldsymbol{m^0_3(x,\rho_3,\rho_2)}&=U^2x &
  m^0_4(x,\rho_3,\rho_2,\rho_1)&=Ub\\
  \boldsymbol{m^0_4(c,\rho_4,\rho_3,\rho_2)}&=UVx &
  m^0_5(c,\rho_2,\rho_1,\rho_4,\rho_3)&=Vb\\
  m^0_5(x,\rho_{12},\rho_1,\rho_4,\rho_3)&=Vb &
  m^0_5(c,\rho_2,\rho_1,\rho_4,\rho_{34})&=V^2x\\
  \boldsymbol{m^0_5(c,\rho_2,\rho_1,\rho_4,\rho_{341})}&=V^2c &
  m^0_5(b,\rho_{412},\rho_1,\rho_4,\rho_3)&=V^2b\\
  m^0_5(x,\rho_3,\rho_{23},\rho_2,\rho_1)&=U^3b &
  m^0_5(b,\rho_{412},\rho_1,\rho_4,\rho_{34})&=V^3x\\
  m^0_5(b,\rho_{412},\rho_1,\rho_4,\rho_{341})&=V^3c &
  m^0_5(x,\rho_{12},\rho_1,\rho_4,\rho_{34})&=V^2x\\
  m^0_5(x,\rho_{12},\rho_1,\rho_4,\rho_{341})&=V^2c&
  m^0_6(x,\rho_{34},\rho_3,\rho_2,\rho_{12},\rho_1)&=U^2Vb\\
  m^0_6(x,\rho_3,\rho_{23},\rho_2,\rho_1,\rho_{41})&=U^2Vb &
  m^0_7(c,\rho_{23},\rho_2,\rho_1,\rho_{41},\rho_4,\rho_3)&=UV^2b\\
  m^0_7(c,\rho_2,\rho_{12},\rho_1,\rho_4,\rho_{34},\rho_3)&=UV^2b&
  m^0_7(c,\rho_2,\rho_1,\rho_{41},\rho_4,\rho_3,\rho_{23})&=UV^2b\\
  m^0_7(x,\rho_{123},\rho_2,\rho_1,\rho_{41},\rho_4,\rho_3)&=UV^2b &
  m^0_7(x,\rho_{12},\rho_{12},\rho_1,\rho_4,\rho_{34},\rho_3)&=UV^2b \\
  m^0_7(x,\rho_{12},\rho_1,\rho_{41},\rho_4,\rho_3,\rho_{23})&=UV^2b &
  m^0_8(c,\rho_2,\rho_1,\rho_4,\rho_{3412},\rho_1,\rho_4,\rho_3)&=V^3b\\
  m^0_8(x,\rho_{12},\rho_1,\rho_4,\rho_{3412},\rho_1,\rho_4,\rho_3)&=V^3b &
  m_6^{1}(c,\rho_2,\rho_1,\rho_{41},\rho_4,\rho_3)&=V^3b\\
  m_6^1(x,\rho_{12},\rho_1,\rho_{41},\rho_4,\rho_3)&=V^3b 
\end{align*}
The terms in bold are used in the cable computation in
Section~\ref{sec:surgery}, but have no special meaning here.

In terms of the operation $*$ introduced above, this module appears to
be determined by the following graph:
\[
  \begin{tikzpicture}[scale=2]
    \node at (0,0) (x) {$x$};
    \node at (2,1) (c) {$c$};
    \node at (2,-1) (b) {$b$};
    \draw[->, bend right=15] (x) to node[below]{\lab{\rho_1}} (c);
    \draw[->, bend right=15] (c) to node[above,sloped]{\lab{UV\rho_4\otimes\rho_3\otimes\rho_2}} (x);
    \draw[->, bend left=70] (x) to node[left]{\lab{U^2\rho_3\otimes\rho_2}} (-.5,0) to (x);
    \draw[->, bend right=15] (b) to node[right]{\lab{U}} (c);
    \draw[->, bend right=15] (c) to node[above, sloped]{\lab{V\rho_2\otimes\rho_1\otimes\rho_4\otimes\rho_3}} (b);
    \draw[->, bend right=15] (x) to node[below, sloped]{\lab{U\rho_3\otimes\rho_2\otimes\rho_1}} (b);
    \draw[->, bend right=15] (b) to node[above]{\lab{V\rho_4}} (x);
  \end{tikzpicture}.
\]
Again, we will not prove this graph determines the module $\CFAm$.

\subsection{Surgeries and satellites: examples}\label{sec:surgery}

\subsubsection{A surgery computation}
Assuming the pairing theorem, we can tensor the complex
$\CFDm(S^3\setminus T_{-3,4},-1)$ from Figure~\ref{fig:T34m1} with
$\CFAm(\HD_0)$ from Section~\ref{sec:CFA-eg} to obtain $\CFmm$ of $-1$
surgery on the left-handed $(3,4)$ torus knot.  The result is shown in
Figure~\ref{fig:m1-surg}.

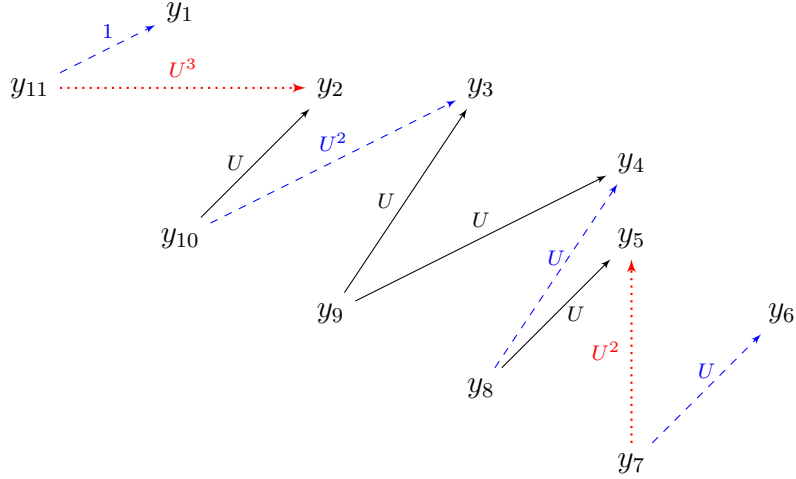
\begin{figure}
  \centering
  \[
\begin{tikzpicture}
  \node at (0,-1) (b1) {$y_{1}$};
  \node at (2,-2) (b2) {$y_2$};
  \node at (4,-2) (b3) {$y_3$};
  \node at (6,-3) (b4) {$y_4$};
  \node at (6,-4) (b5) {$y_5$};
  \node at (8,-5) (b6) {$y_6$};
  \node at (-2, -2) (c1) {$y_{11}$};
  \node at (0,-4) (c2) {$y_{10}$};
  \node at (2,-5) (c3) {$y_9$};
  \node at (4,-6) (c4) {$y_8$};
  \node at (6,-7) (c5) {$y_7$};
  \draw[->] (c2) to node[left]{\lab{U}} (b2);
  \draw[->] (c3) to node[left]{\lab{U}} (b3);
  \draw[->] (c3) to node[above]{\lab{U}} (b4);
  \draw[->] (c4) to node[right]{\lab{U}} (b5);
  \draw[->, blue, dashed] (c5) to node[above]{\lab{U}} (b6);
  \draw[->, blue, dashed] (c1) to node[above]{\lab{1}} (b1);
  \draw[->, blue, dashed] (c2) to node[above]{\lab{U^2}} (b3);
  \draw[->, blue, dashed] (c4) to node[above]{\lab{U}} (b4);
  \draw[->, red, dotted, thick] (c1) to node[above]{\lab{U^3}} (b2);
  \draw[->, red, dotted, thick] (c5) to node[left]{\lab{U^2}} (b5);
\end{tikzpicture}
\]
\caption[Negative 1 surgery on the left-handed (3,4) torus
knot]{\textbf{Negative 1 surgery on the left-handed (3,4) torus knot.} This is
  the tensor product $\CFAc(\HD_0)\DT\CFDm(S^3\setminus T_{-3,4},-1)$ of the
  complexes from Figure~\ref{fig:T34m1} and Section~\ref{sec:CFA-eg}. Generators
  $y_i$ stand for $a\otimes y_i$.}
\label{fig:m1-surg}
\end{figure}
Some of these operations are interesting. The black arrows come from the
differentials in Figure~\ref{fig:T34m1} labeled by $U$. The \textcolor{blue}{dashed} arrows come
from $m_3^0$ operations on $\CFAm(\HD_0)$. The \textcolor{red}{dotted} arrows come from the higher operations
\begin{align*}
  m_4^0(x,\rho_{41},\rho_4,\rho_3,\rho_{23})&=Ux\\
  m_4^0(x,\rho_{23},\rho_2,\rho_1,\rho_{41})&=Ux.
\end{align*}
on $\CFAm(\HD_0)$.

To see there are no other operations we use the relative gradings. By the
pairing theorem, the Maslov component gives a relative $\ZZ$-grading on the
complex. From Figure~\ref{fig:T34m1}, it is easy to compute these relative
Maslov gradings; up to a shift, they are:
\begin{center}
  \begin{tabular}{cccc}
    \toprule
    Generator & Grading & Generator & Grading\\
    \midrule
    $y_1$ & 0 & $y_{11}$ & 1\\
    $y_2$ & 6 & $y_{10}$ & 5\\
    $y_3$ & 8 & $y_{9}$ & 7\\
    $y_4$ & 8 & $y_{8}$ & 7\\
    $y_5$ & 8 & $y_{7}$ & 5\\
    $y_6$ & 6 \\
    \bottomrule
  \end{tabular}
\end{center}
Consequently, the highest $U$-power that can occur in a differential on the
tensor product is $U^4$. Thus, the $\Ainf$ operations on $\CFAm(\HD_0)$ that can
contribute can only use $\rho_4$ at most four times (including weights). Also,
the complex $\CFDm(S^3\setminus T_{-3,4},-1)$ has no pairs of consecutive arrows
labeled $\rho_2,\rho_1$. So, there are only finitely many operations on
$\CFAm(\HD_0)$ which could contribute; it would be straightforward for a
computer to enumerate them and check if they contribute in the tensor
product. (One could avoid the ad hoc discussion of arrows labeled
$\rho_2,\rho_1$ by appealing to the other components of the relative grading.)

\begin{figure}
  \centering
    \begin{tikzpicture}[scale=1]
    \begin{scope}[shift={(0,0)}]
      \draw[dashed] (0,1.75) arc[radius=.25, start angle = 270, end angle = 360];
      \draw[dashed] (1.75,2) arc[radius=.25, start angle = 180, end angle = 270];
      \draw[color = red] (0,1) to (0,1.75);
      \draw[color = red] (0.25,2) to (1.75,2);
      \draw[color = red] (2,1) to (2,1.75);
      \node at (.25,1.7) (r1) {$\rho_1$};
      \node at (1.7,1.7) (r2) {$\rho_2$};
      \draw[color = blue] (0,1) to (2,1);
    \end{scope}
    \begin{scope}[shift={(2,0)}]
      \draw[dashed] (0,1.75) arc[radius=.25, start angle = 270, end angle = 360];
      \draw[dashed] (1.75,2) arc[radius=.25, start angle = 180, end angle = 270];
      \draw[color = red] (0,1) to (0,1.75);
      \draw[color = red] (0.25,2) to (1.75,2);
      \draw[color = red] (2,1) to (2,1.75);
      \node at (.25,1.7) (r1) {$\rho_1$};
      \node at (1.7,1.7) (r2) {$\rho_2$};
      \draw[color = blue] (0,1) to (2,1);
    \end{scope}
    \begin{scope}[shift={(4,0)}]
      \draw[dashed] (0,1.75) arc[radius=.25, start angle = 270, end angle = 360];
      \draw[dashed] (1.75,2) arc[radius=.25, start angle = 180, end angle = 270];
      \draw[color = red] (0,1) to (0,1.75);
      \draw[color = red] (0.25,2) to (1.75,2);
      \draw[color = red] (2,1) to (2,1.75);
      \node at (.25,1.7) (r1) {$\rho_1$};
      \node at (1.7,1.7) (r2) {$\rho_2$};
      \draw[color = blue] (0,1) to (2,1);
    \end{scope}
  \end{tikzpicture}\qquad
  \begin{tikzpicture}[scale=1]
    \begin{scope}[shift={(0,0)}]
      \draw[dashed] (0,1.75) arc[radius=.25, start angle = 270, end angle = 360];
      \draw[dashed] (1.75,2) arc[radius=.25, start angle = 180, end angle = 270];
      \draw[color = red] (0,1) to (0,1.75);
      \draw[color = red] (0.25,2) to (1.75,2);
      \draw[color = red] (2,1) to (2,1.75);
      \node at (.25,1.7) (r1) {$\rho_1$};
      \node at (1.7,1.7) (r2) {$\rho_2$};
      \draw[color = blue] (0,1) to (2,1);
    \end{scope}
    \begin{scope}[shift={(2,0)}]
      \draw[dashed] (0,1.75) arc[radius=.25, start angle = 270, end angle = 360];
      \draw[dashed] (1.75,2) arc[radius=.25, start angle = 180, end angle = 270];
      \draw[color = red] (0,1) to (0,1.75);
      \draw[color = red] (0.25,2) to (1.75,2);
      \draw[color = red] (2,1) to (2,1.75);
      \node at (.25,1.7) (r1) {$\rho_1$};
      \node at (1.7,1.7) (r2) {$\rho_2$};
      \draw[color = blue] (0,1) to (2,1);
    \end{scope}
    \begin{scope}[shift={(2,2)}]
      \draw[dashed] (.25,0) arc[radius=.25, start angle = 0, end angle = 90];
      \draw[dashed] (0,1.75) arc[radius=.25, start angle = 270, end angle = 360];
      \draw[dashed] (1.75,0) arc[radius=.25, start angle = 180, end angle = 90];
      \draw[dashed] (1.75,2) arc[radius=.25, start angle = 180, end angle = 270];
      \draw[color = red] (0.25,0) to (1.75,0);
      \draw[color = red] (0,0.25) to (0,1.75);
      \draw[color = red] (0.25,2) to (1.75,2);
      \draw[color = red] (2,0.25) to (2,1.75);
      \node at (.25,1.7) (r1) {$\rho_1$};
      \node at (1.7,1.7) (r2) {$\rho_2$};
      \node at (1.7,.3) (r3) {$\rho_3$};
      \node at (.3,.3) (r4) {$\rho_4$};
      \draw[color = blue] (0,1) to (2,1);
    \end{scope}
    \begin{scope}[shift={(2,4)}]
      \draw[dashed] (.25,0) arc[radius=.25, start angle = 0, end angle = 90];
      \draw[dashed] (0,1.75) arc[radius=.25, start angle = 270, end angle = 360];
      \draw[dashed] (1.75,0) arc[radius=.25, start angle = 180, end angle = 90];
      \draw[dashed] (1.75,2) arc[radius=.25, start angle = 180, end angle = 270];
      \draw[color = red] (0.25,0) to (1.75,0);
      \draw[color = red] (0,0.25) to (0,1.75);
      \draw[color = red] (0.25,2) to (1.75,2);
      \draw[color = red] (2,0.25) to (2,1.75);
      \node at (.25,1.7) (r1) {$\rho_1$};
      \node at (1.7,1.7) (r2) {$\rho_2$};
      \node at (1.7,.3) (r3) {$\rho_3$};
      \node at (.3,.3) (r4) {$\rho_4$};
      \draw[color = blue] (0,1) to (2,1);
    \end{scope}
        \begin{scope}[shift={(0,4)}]
      \draw[dashed] (.25,0) arc[radius=.25, start angle = 0, end angle = 90];
      \draw[dashed] (0,1.75) arc[radius=.25, start angle = 270, end angle = 360];
      \draw[dashed] (1.75,0) arc[radius=.25, start angle = 180, end angle = 90];
      \draw[dashed] (1.75,2) arc[radius=.25, start angle = 180, end angle = 270];
      \draw[color = red] (0.25,0) to (1.75,0);
      \draw[color = red] (0,0.25) to (0,1.75);
      \draw[color = red] (0.25,2) to (1.75,2);
      \draw[color = red] (2,0.25) to (2,1.75);
      \node at (.25,1.7) (r1) {$\rho_1$};
      \node at (1.7,1.7) (r2) {$\rho_2$};
      \node at (1.7,.3) (r3) {$\rho_3$};
      \node at (.3,.3) (r4) {$\rho_4$};
      \draw[color = blue] (0,1) to (2,1);
    \end{scope}
    \begin{scope}[shift={(0,6)}]
      \draw[dashed] (.25,0) arc[radius=.25, start angle = 0, end angle = 90];
      \draw[dashed] (0,1.75) arc[radius=.25, start angle = 270, end angle = 360];
      \draw[dashed] (1.75,0) arc[radius=.25, start angle = 180, end angle = 90];
      \draw[dashed] (1.75,2) arc[radius=.25, start angle = 180, end angle = 270];
      \draw[color = red] (0.25,0) to (1.75,0);
      \draw[color = red] (0,0.25) to (0,1.75);
      \draw[color = red] (0.25,2) to (1.75,2);
      \draw[color = red] (2,0.25) to (2,1.75);
      \node at (.25,1.7) (r1) {$\rho_1$};
      \node at (1.7,1.7) (r2) {$\rho_2$};
      \node at (1.7,.3) (r3) {$\rho_3$};
      \node at (.3,.3) (r4) {$\rho_4$};
      \draw[color = blue] (0,1) to (2,1);
    \end{scope}
  \end{tikzpicture}
    \qquad
  \begin{tikzpicture}[scale=1]
    \begin{scope}[shift={(0,0)}]
      \draw[dashed] (0,1.75) arc[radius=.25, start angle = 270, end angle = 360];
      \draw[dashed] (1.75,2) arc[radius=.25, start angle = 180, end angle = 270];
      \draw[color = red] (0,1) to (0,1.75);
      \draw[color = red] (0.25,2) to (1.75,2);
      \draw[color = red] (2,1) to (2,1.75);
      \node at (.25,1.7) (r1) {$\rho_1$};
      \node at (1.7,1.7) (r2) {$\rho_2$};
      \draw[color = blue] (0,1) to (2,1);
    \end{scope}
    \begin{scope}[shift={(2,0)}]
      \draw[dashed] (0,1.75) arc[radius=.25, start angle = 270, end angle = 360];
      \draw[dashed] (1.75,2) arc[radius=.25, start angle = 180, end angle = 270];
      \draw[color = red] (0,1) to (0,1.75);
      \draw[color = red] (0.25,2) to (1.75,2);
      \draw[color = red] (2,1) to (2,1.75);
      \node at (.25,1.7) (r1) {$\rho_1$};
      \node at (1.7,1.7) (r2) {$\rho_2$};
      \draw[color = blue] (0,1) to (2,1);
    \end{scope}
    \begin{scope}[shift={(2,2)}]
      \draw[dashed] (.25,0) arc[radius=.25, start angle = 0, end angle = 90];
      \draw[dashed] (0,1.75) arc[radius=.25, start angle = 270, end angle = 360];
      \draw[dashed] (1.75,0) arc[radius=.25, start angle = 180, end angle = 90];
      \draw[dashed] (1.75,2) arc[radius=.25, start angle = 180, end angle = 270];
      \draw[color = red] (0.25,0) to (1.75,0);
      \draw[color = red] (0,0.25) to (0,1.75);
      \draw[color = red] (0.25,2) to (1.75,2);
      \draw[color = red] (2,0.25) to (2,1.75);
      \node at (.25,1.7) (r1) {$\rho_1$};
      \node at (1.7,1.7) (r2) {$\rho_2$};
      \node at (1.7,.3) (r3) {$\rho_3$};
      \node at (.3,.3) (r4) {$\rho_4$};
      \draw[color = blue] (0,1) to (2,1);
    \end{scope}
    \begin{scope}[shift={(2,4)}]
      \draw[dashed] (.25,0) arc[radius=.25, start angle = 0, end angle = 90];
      \draw[dashed] (0,1.75) arc[radius=.25, start angle = 270, end angle = 360];
      \draw[dashed] (1.75,0) arc[radius=.25, start angle = 180, end angle = 90];
      \draw[dashed] (1.75,2) arc[radius=.25, start angle = 180, end angle = 270];
      \draw[color = red] (0.25,0) to (1.75,0);
      \draw[color = red] (0,0.25) to (0,1.75);
      \draw[color = red] (0.25,2) to (1.75,2);
      \draw[color = red] (2,0.25) to (2,1.75);
      \node at (.25,1.7) (r1) {$\rho_1$};
      \node at (1.7,1.7) (r2) {$\rho_2$};
      \node at (1.7,.3) (r3) {$\rho_3$};
      \node at (.3,.3) (r4) {$\rho_4$};
      \draw[color = blue] (0,1) to (2,1);
    \end{scope}
        \begin{scope}[shift={(0,4)}]
      \draw[dashed] (.25,0) arc[radius=.25, start angle = 0, end angle = 90];
      \draw[dashed] (0,1.75) arc[radius=.25, start angle = 270, end angle = 360];
      \draw[dashed] (1.75,0) arc[radius=.25, start angle = 180, end angle = 90];
      \draw[dashed] (1.75,2) arc[radius=.25, start angle = 180, end angle = 270];
      \draw[color = red] (0.25,0) to (1.75,0);
      \draw[color = red] (0,0.25) to (0,1.75);
      \draw[color = red] (0.25,2) to (1.75,2);
      \draw[color = red] (2,0.25) to (2,1.75);
      \node at (.25,1.7) (r1) {$\rho_1$};
      \node at (1.7,1.7) (r2) {$\rho_2$};
      \node at (1.7,.3) (r3) {$\rho_3$};
      \node at (.3,.3) (r4) {$\rho_4$};
      \draw[color = blue] (0,1) to (2,1);
    \end{scope}
    \begin{scope}[shift={(0,2)}]
      \draw[dashed] (.25,0) arc[radius=.25, start angle = 0, end angle = 90];
      \draw[dashed] (0,1.75) arc[radius=.25, start angle = 270, end angle = 360];
      \draw[dashed] (1.75,0) arc[radius=.25, start angle = 180, end angle = 90];
      \draw[dashed] (1.75,2) arc[radius=.25, start angle = 180, end angle = 270];
      \draw[color = red] (0.25,0) to (1.75,0);
      \draw[color = red] (0,0.25) to (0,1.75);
      \draw[color = red] (0.25,2) to (1.75,2);
      \draw[color = red] (2,0.25) to (2,1.75);
      \node at (.25,1.7) (r1) {$\rho_1$};
      \node at (1.7,1.7) (r2) {$\rho_2$};
      \node at (1.7,.3) (r3) {$\rho_3$};
      \node at (.3,.3) (r4) {$\rho_4$};
      \draw[color = blue] (0,1) to (2,1);
    \end{scope}
  \end{tikzpicture}
  \caption[Domains of some holomorphic curves.]{\textbf{Domains of some
      holomorphic curves.} Left: a long rectangle. Center: a domain with chord
    sequence
    $\rho_{23},\rho_{23},\rho_{2},\rho_{123},\rho_2,\rho_1,\rho_{41},\rho_4,\rho_{341},\rho_{412},\rho_1$
    which, in particular, contains $\rho_2,\rho_1$. Right: a domain with no
    consecutive $\rho_2,\rho_1$ pair.}
  \label{fig:tens-eg-domains}
\end{figure}
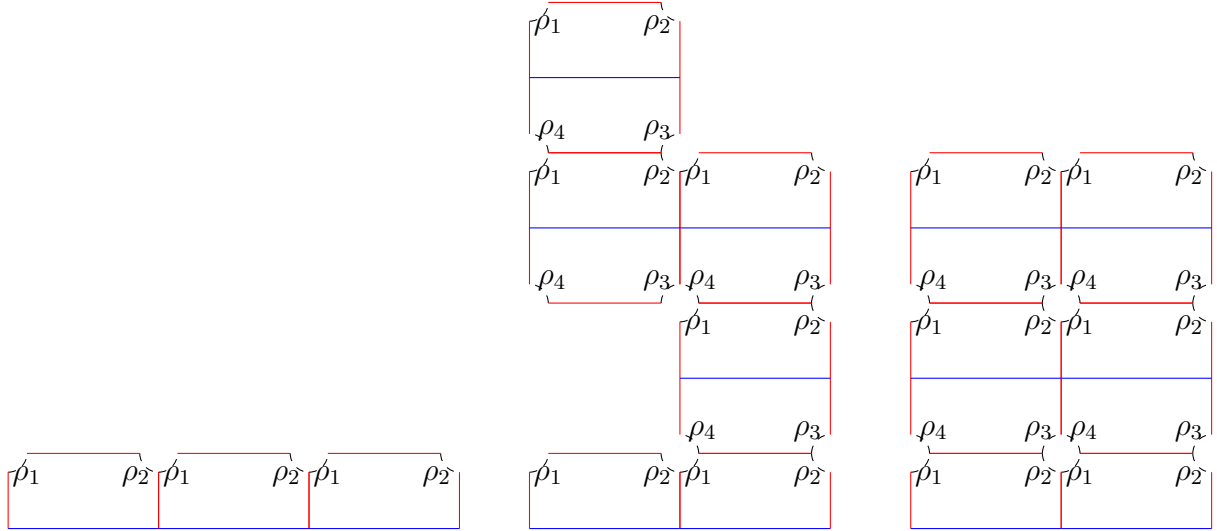

Rather than doing this, we rule out additional terms with coefficient $U^n$,
$n\leq 4$, by a short, ad hoc argument. Consider an operation
$m_{1+n}^k(a,\rho^1,\dots,\rho^n)=U^ma$ on $\CFAm(\HD_0)$ which contributes in
the tensor product. From the discussion above, the source of the holomorphic
curve representing the operation is built from some long horizontal strip,
with chord sequence either $\rho_2,\rho_{12},\dots,\rho_{12},\rho_1$ or
$\rho_4,\rho_{23},\dots,\rho_{23},\rho_4$, by attaching up to four copies of the
rectangle (with chords $\rho_4,\rho_3,\rho_2,\rho_1$). See
Figure~\ref{fig:tens-eg-domains}.

There are no arrows in Figure~\ref{fig:T34m1} labeled by $\rho_{12}$ or
$\rho_{34}$, so the glued-up domain cannot have any such chords. (In particular,
this limits the length of the horizontal strip to at most 9, since it
has at most 4 rectangles glued to it.) Consider the case
that the horizontal strip has a chord sequence of the form
$\rho_2,\rho_{12},\dots,\rho_{12},\rho_1$; the other case is similar but
slightly easier (because copies of $\rho_4$ contribute $U$). The only sequences
of arrows in Figure~\ref{fig:T34m1} containing $\rho_2,\rho_1$ as consecutive terms are
\[
  \cdots,\rho_{23},\rho_2,\rho_1,\rho_4,\rho_{341},\dots
\]
and
\[
\cdots,\rho_{23},U\rho_2,\rho_1,\rho_{41},\rho_4,\rho_{341},\dots.
\]
It is impossible to glue together at most four rectangles and a strip to obtain
a chord sequence $\rho_2,\rho_1,\rho_4,\rho_{341}$ or a chord sequence
$\cdots,\rho_{23},\rho_2,\rho_1$ or $\cdots,\rho_{23},\rho_2,\rho_1,\rho_4$
(that ends at $\rho_1$ or $\rho_4$, respectively). So, the only way for the
first sequence to contribute is $m_{3}^0(a,\rho_2,\rho_1)=a$, which we have
already counted. Similarly, for the second term, if gluing together a strip and
some rectangles gives a chord sequence containing
$\rho_2,\rho_1,\rho_{41},\rho_4$ then it must have two rectangles glued together
vertically, with the top edge of the top rectangle not glued to anything, and
the preceding term in the chord sequence then has length at least $3$, so is not
$\rho_{23}$. Finally, if there is no consecutive pair $\rho_2,\rho_1$
then the domain has chords $\rho_{12}$ on its boundary, which we already excluded.

The fact that this tensor product computes $\CFmm$ uses the fact that the complex
$\CFDm(S^3\setminus T_{-3,4},-1)$ is filtered bounded with respect to a
filtration that takes into account the multiplicity at $\rho_4$, and
$\CFAc(\HD_0)$ is filtered bonsai with respect to this filtration.

It is interesting to compare this answer with the integer surgery formula~\cite{IntSurg}.

\subsubsection{A cabling computation}
We conclude by computing the complex $\CFKm$ for the $(-2,1)$-cable of the
left-handed trefoil, giving another derivation of
Formula~\eqref{eq:Hedden-comp}. Note that the $(-2,1)$-cable of the
$\infty$-framed trefoil is the same as the $(2,1)$-cable of the $-1$-framed
trefoil. So, we can obtain the desired complex by tensoring the module
$\CFDm(S^3\setminus T_{-2,3},-1)$ from Section~\ref{sec:CFD-eg} with the module
$\CFAm_{U,V}(\mathcal{C}_2)$ from Section~\ref{sec:CFA-eg}. More precisely, we
must first extend scalars for $\CFDm(S^3\setminus T_{-2,3},-1)$ from $\MAlg$ to
$\MAlg^{U,V}$; this means that instead of the terms $U\rho_3\otimes y_1$ in
$\delta^1(x_1)$ and $U\rho_1\otimes y_2$ in $\delta^1(x_3)$ we have terms
$UV\rho_3\otimes y_1$ and $UV\rho_1\otimes y_2$. The module
$\CFAc(\mathcal{C}_2)$ is filtered bonsai (with respect to the filtration by
total $U$ plus $V$ power), so the tensor product in question is well-defined
(over $\FF_2[[U,V]]$).

The resulting complex is shown in Figure~\ref{fig:2m1comp}. The
differentials come from the operations on $\CFAc(\mathcal{C}_2)$ in
Section~\ref{sec:CFA-eg} displayed in bold. Note that the differential
from $x\otimes x_3$ to $c\otimes y_2$ has coefficient $UV$ because of
the way we extended scalars on $\CFDm(S^3\setminus T_{-2,3},-1)$ from
$\FF_2[[U]]$ to $\FF_2[[U,V]]$: this came from the differential
$x_3\to V\rho_1\otimes y_2$ on $\CFDm(S^3\setminus T_{-2,3},-1)$.

To see that there are no other differentials, note that the differential
preserves the $U$-grading (i.e., the Maslov index minus twice $n_w$) and the
$V$-grading (the Maslov index minus twice $n_z$). So, any other term would
output $U^iV^j$ with $i+j\leq 3$; but all such operations on
$\CFAc(\mathcal{C}_2)$ are listed in Section~\ref{sec:CFA-eg}.

\begin{figure}
  \centering
  \begin{tikzpicture}[scale=2]
    \node at (0.5,0.5) (x1) {$x x_1$};
    \node at (0,0) (cy1) {$c y_1$};
    \node at (1,0) (by1) {$b y_1$};
    \node at (1,-1) (x2) {$x x_2$};
    \node at (2,0) (cy3) {$c y_3$};
    \node at (3,0) (by3) {$b y_3$};
    \node at (3,-1) (x3) {$x x_3$};
    \node at (2,-2) (cy2) {$c y_2$};
    \node at (3,-2) (by2) {$b y_2$};
    \draw[->] (x1) to (cy1);
    \draw[->] (by1) to node[above]{\lab{U}} (cy1);
    \draw[->] (by1) to node[left]{\lab{V}} (x2);
    \draw[->] (cy3) to node[above left]{\lab{UV}} (x2);
    \draw[->] (by3) to node[above]{\lab{U}} (cy3);
    \draw[->] (by3) to node[left]{\lab{V}} (x3);
    \draw[->] (x3) to node[above,xshift=-2em]{\lab{U^2}} (x2);
    \draw[->] (cy3) to node[left, yshift=-2em]{\lab{V^2}} (cy2);
    \draw[->] (by2) to node[above]{\lab{U}} (cy2);
    \draw[->] (x3) to node[above left]{\lab{UV}} (cy2);
  \end{tikzpicture}
  \caption[Computation of the $(-2,1)$-cable of the left-handed
  trefoil]{\textbf{Computation of the $(-2,1)$-cable of the left-handed
      trefoil.} We have dropped tensor product symbols from the notation so, for
    instance, $xx_1$ means $x\otimes x_1$.}
  \label{fig:2m1comp}
\end{figure}
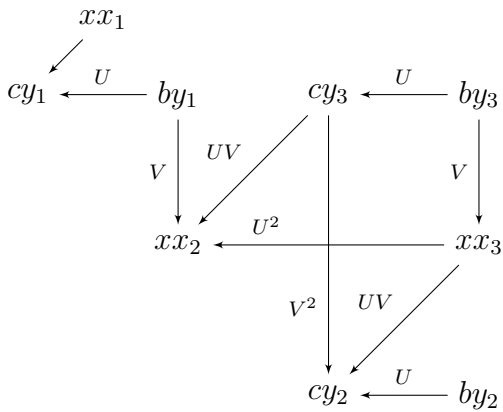


\bibliographystyle{../common/hamsalpha}\bibliography{../common/heegaardfloer}

\newcommand{\etalchar}[1]{$^{#1}$}
\providecommand{\bysame}{\leavevmode\hbox to3em{\hrulefill}\thinspace}
\providecommand{\href}[2]{#2}
\providecommand{\eprint}{\begingroup \urlstyle{rm}\Url}
\begin{thebibliography}{FOOO09}

\bibitem[BEH{\etalchar{+}}03]{BEHWZ03:CompactnessInSFT}
Fr\'ed\'eric Bourgeois, Yakov Eliashberg, Helmut Hofer, Kris Wysocki, and
  Eduard Zehnder, \emph{Compactness results in symplectic field theory}, Geom.
  Topol. \textbf{7} (2003), 799--888, \eprint{arXiv:math.SG/0308183}.

\bibitem[CGHY]{CGHH:ECH-OB}
Vincent Colin, Paolo Ghiggini, Ko~Honda, and Yuan Yau, \emph{Embedded contact
  homology and open book decompositions}, \eprint{arXiv:1008.2734}.

\bibitem[dSRS14]{SilvaRobbinSalamon}
Vin de~Silva, Joel~W. Robbin, and Dietmar~A. Salamon, \emph{Combinatorial
  {F}loer homology}, Mem. Amer. Math. Soc. \textbf{230} (2014), no.~1080,
  v+114.

\bibitem[FOOO09]{FOOO1}
Kenji Fukaya, Yong-Geun Oh, Hiroshi Ohta, and Kaoru Ono, \emph{Lagrangian
  intersection {F}loer theory: anomaly and obstruction. {P}art {I}}, AMS/IP
  Studies in Advanced Mathematics, vol.~46, American Mathematical Society,
  Providence, RI; International Press, Somerville, MA, 2009.

\bibitem[Han23]{Hanselman:HFm}
Jonathan Hanselman, \emph{Knot {F}loer homology as immersed curves}, 2023,
  \eprint{arXiv:2305.16271}.

\bibitem[Hed05]{HeddenThesis}
Matthew Hedden, \emph{On knot {F}loer homology and cabling}, Ph.D. thesis,
  Columbia University, 2005.

\bibitem[HL16]{HeddenLevine16:splicing}
Matthew Hedden and Adam~Simon Levine, \emph{Splicing knot complements and
  bordered {F}loer homology}, J. Reine Angew. Math. \textbf{720} (2016),
  129--154, \eprint{arXiv:1210.7055}.

\bibitem[HLS97]{HLS97:GenericityHoloCurves}
Helmut Hofer, V{\'e}ronique Lizan, and Jean-Claude Sikorav, \emph{On genericity
  for holomorphic curves in four-dimensional almost-complex manifolds}, J.
  Geom. Anal. \textbf{7} (1997), no.~1, 149--159.

\bibitem[HRW16]{HRW}
Jonathan Hanselman, Jacob Rasmussen, and Liam Watson, \emph{Bordered {F}loer
  homology for manifolds with torus boundary via immersed curves}, 2016,
  \eprint{arXiv:1604.03466}.

\bibitem[Kad80]{Kadeisvili80:hpt}
T.~V. Kadei\v{s}vili, \emph{On the theory of homology of fiber spaces}, Uspekhi
  Mat. Nauk \textbf{35} (1980), no.~3(213), 183--188, International Topology
  Conference (Moscow State Univ., Moscow, 1979).

\bibitem[Kel06]{KellerPertTheory}
Bernhard Keller, \emph{{$A$}-infinity algebras, modules and functor
  categories}, Trends in representation theory of algebras and related topics,
  Contemp. Math., vol. 406, Amer. Math. Soc., Providence, RI, 2006, pp.~67--93.

\bibitem[KS01]{KontsevichSoibelman01:HMS}
Maxim Kontsevich and Yan Soibelman, \emph{Homological mirror symmetry and torus
  fibrations}, Symplectic geometry and mirror symmetry ({S}eoul, 2000), World
  Sci. Publ., River Edge, NJ, 2001, pp.~203--263.

\bibitem[Lip06]{Lipshitz06:CylindricalHF}
Robert Lipshitz, \emph{A cylindrical reformulation of {H}eegaard {F}loer
  homology}, Geom. Topol. \textbf{10} (2006), 955--1097,
  \eprint{arXiv:math.SG/0502404}.

\bibitem[LOT]{LOT:torus-pairing}
Robert Lipshitz, Peter~S. Ozsv{\'a}th, and Dylan~P. Thurston, \emph{Pairing
  theorems for bordered {$\mathit{HF}^-$} with torus boundary}, In preparation.

\bibitem[LOT14]{LOT4}
\bysame, \emph{Computing {$\widehat{\mathit{HF}}$} by factoring mapping
  classes}, Geom. Topol. \textbf{18} (2014), no.~5, 2547--2681,
  \eprint{arXiv:1010.2550}.

\bibitem[LOT15]{LOT2}
\bysame, \emph{Bimodules in bordered {H}eegaard {F}loer homology}, Geom. Topol.
  \textbf{19} (2015), no.~2, 525--724, \eprint{arXiv:1003.0598}.

\bibitem[LOT18]{LOT1}
\bysame, \emph{Bordered {H}eegaard {F}loer homology}, Mem. Amer. Math. Soc.
  \textbf{254} (2018), no.~1216, viii+279, \eprint{arXiv:0810.0687}.

\bibitem[LOT20]{LOT:abstract}
\bysame, \emph{Diagonals and {$A$}-infinity tensor products}, 2020,
  \eprint{arXiv:2009.05222}.

\bibitem[LOT21]{LOT:torus-alg}
\bysame, \emph{A bordered {$\mathit{HF}^-$} algebra for the torus}, 2021,
  \eprint{arXiv:2108.12488}.

\bibitem[LS87]{LambeStasheff87:perturb}
Larry Lambe and Jim Stasheff, \emph{Applications of perturbation theory to
  iterated fibrations}, Manuscripta Math. \textbf{58} (1987), no.~3, 363--376.

\bibitem[MO10]{ManolescuOzsvath:surgery}
Ciprian Manolescu and Peter Ozsv{\'a}th, \emph{Heegaard {F}loer homology and
  integer surgeries on links}, 2010, \eprint{arXiv:1011.1317}.

\bibitem[MS04]{MS04:HolomorphicCurvesSymplecticTopology}
Dusa McDuff and Dietmar Salamon, \emph{{$J$}-holomorphic curves and symplectic
  topology}, American Mathematical Society Colloquium Publications, vol.~52,
  American Mathematical Society, Providence, RI, 2004.

\bibitem[OSS]{OSSz}
P.~S. Ozsv{\'a}th, A.~Stipsicz, and Z.~Szab{\'o}, \emph{{H}eegaard {F}loer
  homology}, In preparation.

\bibitem[O{\relax Sz}04]{OS04:HolomorphicDisks}
Peter~S. Ozsv{\'a}th and Zolt{\'a}n {\relax Sz}ab{\'o}, \emph{Holomorphic disks
  and topological invariants for closed three-manifolds}, Ann. of Math. (2)
  \textbf{159} (2004), no.~3, 1027--1158, \eprint{arXiv:math.SG/0101206}.

\bibitem[O{\relax Sz}06]{OS06:HolDiskFour}
\bysame, \emph{Holomorphic triangles and invariants for smooth four-manifolds},
  Adv. Math. \textbf{202} (2006), no.~2, 326--400,
  \eprint{arXiv:math.SG/0110169}.

\bibitem[O{\relax Sz}08]{IntSurg}
\bysame, \emph{Knot {F}loer homology and integer surgeries}, Algebr. Geom.
  Topol. \textbf{8} (2008), no.~1, 101--153.

\bibitem[O{\relax Sz}11]{OS11:RatSurg}
\bysame, \emph{Knot {F}loer homology and rational surgeries}, Algebr. Geom.
  Topol. \textbf{11} (2011), no.~1, 1--68.

\bibitem[OZ11]{OhZhu11:scale}
Yong-Geun Oh and Ke~Zhu, \emph{Floer trajectories with immersed nodes and
  scale-dependent gluing}, J. Symplectic Geom. \textbf{9} (2011), no.~4,
  483--636.

\bibitem[Sar11]{Sarkar11:IndexTriangles}
Sucharit Sarkar, \emph{Maslov index formulas for {W}hitney {$n$}-gons}, J.
  Symplectic Geom. \textbf{9} (2011), no.~2, 251--270.

\bibitem[Sei08]{SeidelBook}
Paul Seidel, \emph{Fukaya categories and {P}icard-{L}efschetz theory}, Zurich
  Lectures in Advanced Mathematics, European Mathematical Society (EMS),
  Z\"urich, 2008.

\bibitem[SU04]{SU04:Diagonals}
Samson Saneblidze and Ronald Umble, \emph{Diagonals on the permutahedra,
  multiplihedra and associahedra}, Homology Homotopy Appl. \textbf{6} (2004),
  no.~1, 363--411, \eprint{arXiv:math.AT/0209109}.

\bibitem[Zema]{Zemke:bordered}
Ian Zemke, \emph{Bordered manifolds with torus boundary and the link surgery
  formula}, arXiv:2109.11520.

\bibitem[Zemb]{Zemke:lattice}
\bysame, \emph{The equivalence of lattice and {H}eegaard {F}loer homology},
  arXiv:2111.14962.

\end{thebibliography}
\end{document}